\def\cD{\mathscr{D}}
\def\PP{\mathscr{P}}
\def\XX{\mathfrak{X}}
\def\UU{\mathfrak{U}}
\def\YY{\mathfrak{Y}}
\def\SS{\mathfrak{S}}
\def\bR{\mathbb{R}}
\def\Rep{\mathbf{Rep}}
\def\FHom{\mathscr{H}om}
\def\Ab{\mathbf{Ab}}
\def\Dbhol{\rD_{\hol}^{\rb}}
\def\Gm{\mathbb{G}_{m}}
\def\Ga{\mathbb{G}_{a}}
\def\A1{\mathbb{A}^1}
\def\P1{\mathbb{P}^1}
\def\cG{\check{G}}
\def\cB{\check{B}}
\def\gg{\mathfrak{g}}
\def\so{\mathfrak{so}}
\def\Gzt{\mathcal{G}(0,2)}
\def\Got{\mathcal{G}(1,2)}
\def\Gmn{\mathcal{G}(m,n)}
\def\cH{{}^{\textnormal{c}}\mathcal{H}}
\def\hH{\mathcal{H}}
\def\wX{\mathbb{X}^{\bullet}}
\def\cwX{\mathbb{X}_{\bullet}}
\def\cT{\check{T}}
\def\Pro{\mathbf{Pro}}
\def\Fr{F\textnormal{-}}
\def\Hyp{\textnormal{Hyp}}
\def\DHyp{\mathscr{H}yp}
\def\THyp{\widetilde{\mathscr{H}}yp}
\def\EE{\mathsf{E}}
\DeclareMathOperator{\Ad}{Ad}
\DeclareMathOperator{\Hke}{Hecke}
\DeclareMathOperator{\Hk}{Hk}
\DeclareMathOperator{\gr}{gr}
\DeclareMathOperator{\et}{\textnormal{\'{e}t}}
\DeclareMathOperator{\coh}{coh}
\DeclareMathOperator{\hol}{hol}
\DeclareMathOperator{\Hol}{Hol}
\DeclareMathOperator{\ad}{ad}
\DeclareMathOperator{\Gal}{Gal}
\DeclareMathOperator{\Ker}{Ker}
\DeclareMathOperator{\Coker}{Coker}
\DeclareMathOperator{\Hom}{Hom}
\DeclareMathOperator{\Sym}{Sym}
\DeclareMathOperator{\End}{End}
\DeclareMathOperator{\Ext}{Ext}
\DeclareMathOperator{\Spec}{Spec}
\DeclareMathOperator{\Spf}{Spf}
\DeclareMathOperator{\rH}{H}
\DeclareMathOperator{\rb}{b}
\DeclareMathOperator{\rD}{D}
\DeclareMathOperator{\rE}{E}
\DeclareMathOperator{\rL}{L}
\DeclareMathOperator{\rR}{R}
\DeclareMathOperator{\rc}{c}
\DeclareMathOperator{\Gr}{Gr}
\DeclareMathOperator{\DR}{DR}
\DeclareMathOperator{\dR}{dR}
\DeclareMathOperator{\id}{id}
\DeclareMathOperator{\rig}{rig}
\DeclareMathOperator{\alg}{alg}
\DeclareMathOperator{\an}{an}
\DeclareMathOperator{\Iso}{Isoc}
\DeclareMathOperator{\Kl}{Kl}
\DeclareMathOperator{\Be}{Be}
\DeclareMathOperator{\Std}{Std}
\DeclareMathOperator{\PGL}{PGL}
\DeclareMathOperator{\GL}{GL}
\DeclareMathOperator{\SL}{SL}
\DeclareMathOperator{\Sp}{Sp}
\DeclareMathOperator{\SO}{SO}
\DeclareMathOperator{\Spin}{Spin}
\DeclareMathOperator{\SP}{Sp}
\DeclareMathOperator{\Conn}{Conn}
\DeclareMathOperator{\Frob}{Frob}
\DeclareMathOperator{\GR}{GR}
\DeclareMathOperator{\AS}{AS}
\DeclareMathOperator{\IC}{IC}
\DeclareMathOperator{\Bun}{Bun}
\DeclareMathOperator{\Loc}{LocSysm}
\DeclareMathOperator{\Tr}{Tr}
\DeclareMathOperator{\Nm}{Nm}
\DeclareMathOperator{\Frac}{Frac}
\DeclareMathOperator{\Sm}{Sm}
\DeclareMathOperator{\un}{un}
\DeclareMathOperator{\act}{act}
\DeclareMathOperator{\pr}{pr}
\DeclareMathOperator{\Log}{Log}
\DeclareMathOperator{\Sat}{Sat}
\DeclareMathOperator{\Vect}{Vec}
\DeclareMathOperator{\Hen}{Hen}
\DeclareMathOperator{\Aut}{Aut}
\DeclareMathOperator{\Out}{Out}
\DeclareMathOperator{\red}{red}
\DeclareMathOperator{\op}{op}
\DeclareMathOperator{\gal}{gal}
\DeclareMathOperator{\geo}{geo}
\DeclareMathOperator{\arith}{arith}
\DeclareMathOperator{\Irr}{Irr}
\DeclareMathOperator{\rank}{rank}
\DeclareMathOperator{\Char}{char}
\DeclareMathOperator{\SSS}{ss}
\DeclareMathOperator{\Con}{Con}
\DeclareMathOperator{\CT}{CT}
\DeclareMathOperator{\sm}{sm}
\DeclareMathOperator{\sing}{sing}
\DeclareMathOperator{\im}{Im}
\DeclareMathOperator{\uni}{uni}
\DeclareMathOperator{\MC}{MC}
\DeclareMathOperator{\MCF}{MCF}
\DeclareMathOperator{\sep}{sep}
\DeclareMathOperator{\nil}{nil}
\DeclareMathOperator{\Lie}{Lie}
\DeclareMathOperator{\Conj}{Conj}
\DeclareMathOperator{\NP}{NP}
\DeclareMathOperator{\HP}{HP}
\DeclareMathOperator{\Weil}{Weil}
\DeclareMathOperator{\tame}{tame}
\DeclareMathOperator{\Sw}{Sw}
\DeclareMathOperator{\aff}{aff}
\DeclareMathOperator{\Sol}{Sol}
\DeclareMathOperator{\Pic}{Pic}
\newtheorem{theorem}{Theorem}[subsection]
\newtheorem{prop}[theorem]{Proposition}
\newtheorem{lemma}[theorem]{Lemma}
\newtheorem{coro}[theorem]{Corollary}
\theoremstyle{definition}
\newtheorem{rem}[theorem]{Remark}
\newtheorem{definition}[theorem]{Definition}
\newtheorem{secnumber}[theorem]{}
\numberwithin{equation}{section}
\numberwithin{equation}{theorem}
\newcolumntype{L}{>{$}l<{$}} % math-mode version of "l" column type
\newcommand{\quash}[1]{}
\title{Bessel $F$-isocrystals for reductive groups}
\author{Daxin Xu, Xinwen Zhu}
\date{\today}
\begin{document}
\selectlanguage{english}
\maketitle

\begin{abstract}
	We construct the Frobenius structure on a rigid connection $\Be_{\cG}$ on $\Gm$ for a split reductive group $\cG$ introduced by Frenkel-Gross. 
	These data form a $\cG$-valued overconvergent $F$-isocrystal $\Be_{\cG}^{\dagger}$ on $\mathbb{G}_{m,\mathbb{F}_p}$, which is the $p$-adic companion of the Kloosterman $\cG$-local system $\Kl_{\cG}$ constructed by Heinloth-Ng\^o-Yun. 
	By studying the structure of the underlying differential equation, we calculate the monodromy group of $\Be_{\cG}^{\dagger}$ when $\cG$ is almost simple (which recovers the calculation of monodromy group of $\Kl_{\cG}$ due to Katz and Heinloth-Ng\^o-Yun), and establish functoriality between different Kloosterman $\cG$-local systems as conjectured by Heinloth-Ng\^o-Yun. We show that the Frobenius Newton polygons of $\Kl_{\cG}$ are generically ordinary for every $\cG$ and are everywhere ordinary on $|\mathbb{G}_{m,\mathbb{F}_p}|$ when $\cG$ is classical or $G_2$.  
\end{abstract}
\tableofcontents
\section{Introduction}

\subsection{Bessel equations and Kloosterman sums}

\begin{secnumber} \label{Bessel conn intro}
	The classical Bessel differential equation (of rank $n$) with a parameter $\lambda$
         \begin{equation}\label{Bessel connection intro}
               \biggl(x\frac{d}{dx}\biggr)^n(f) - \lambda^n x\cdot f = 0
         \end{equation}
	 has a unique solution which is holomorphic at $0$ : 
	 \begin{equation}
		 \oint_{(S^1)^{n-1}} \exp \lambda\biggl(z_1+z_2+\cdots+z_{n-1}+ \frac{x}{z_1\cdots z_{n-1}}\biggr) \frac{dz_1\cdots dz_{n-1}}{ (2\pi i)^{n-1}z_1\cdots z_{n-1}} =
		 \sum_{r\ge 0} \frac{1}{(r!)^n} (\lambda^n x)^{r}. 
		 \label{solution Bes eqn}
	 \end{equation}

	One may reinterpret this fact using the language of algebraic $\mathscr{D}$-modules as follows. 
	Let $K$ be a field of characteristic zero. The Bessel equation \eqref{Bessel connection intro} can be converted to a connection $\Be_n$ on the rank $n$ trivial bundle on the multiplicative group $\mathbb{G}_{m,K}$
	 \begin{equation} \label{Bessel intro}
		\Be_n:	\quad\nabla=	d + \begin{pmatrix}
		0                  & 1 & 0 & \dots & 0 \\
		0                  & 0 & 1 & \dots & 0 \\
		\vdots          & \vdots & \ddots & \ddots & \vdots \\
		0                  & 0 & 0 & \dots & 1 \\
		\lambda^nx & 0 & 0 & \dots & 0  \end{pmatrix} \frac{dx}{x},
	\end{equation}
         which we call the \textit{Bessel connection (of rank $n$)}. On the other hand, we consider the following diagram
         \begin{equation}\label{Del diag}
	\xymatrix{
		&\mathbb{G}_m^n \ar[rd]^{\mathrm{add}} \ar[ld]_{\mathrm{mult}}& \\
		\mathbb{G}_m && \mathbb{A}^1, }
        \end{equation}
        where $\mathrm{add}$ (resp. $\mathrm{mult}$) denotes the morphism of taking sum (resp. product) of $n$ coordinates of $\Gm^{n}$,
        and define the \textit{Kloosterman $\mathscr{D}$-module} on $\mathbb{G}_{m,K}$ as
	\begin{equation}\label{Kl Dmod intro}
		\Kl_n^{\dR}: =  \rR^{n-1}\mathrm{mult}_{!}(\mathrm{add}^{*}(\EE_{\lambda})),
	\end{equation}
        where
        \begin{equation}
		\EE_{\lambda}=(\mathscr{O}_{\mathbb{A}^1_K}, \nabla=d- \lambda dx)
		\label{exp D-mod}
	\end{equation}
	is the \textit{exponential $\mathscr{D}$-module} on $\mathbb{A}^1_K$. With these notations, the fact that \eqref{solution Bes eqn} is a solution of \eqref{Bessel connection intro} reflects an isomorphism of algebraic $\mathscr{D}$-modules on $\mathbb{G}_{m,K}$
	\[
		\Be_n\simeq \Kl_n^{\dR}.
	\]

	The connection $\Be_n$ is regular singular at $0$ with a unipotent monodromy with a single Jordan block, and is irregular at $\infty$ with irregularity $=1$. Its differential Galois group was calculated by Katz \cite{Katz87}. 
\end{secnumber}

\begin{secnumber}
	There is a parallel theory in positive characteristic. 
	Let $p$ a prime number. 
	For every finite extension $\mathbb{F}_q/\mathbb{F}_p$ and $a\in \mathbb{F}_q^{\times}$, the \textit{Kloosterman sum $\Kl(n;a)$} in $n$-variables is defined by \footnote{The sum \eqref{Kl sum} is slightly different from the standard definition by a factor $(-\frac{1}{\sqrt{q}})^{n-1}$.}
	\begin{equation} \label{Kl sum}
		\Kl(n;a)=\biggl(-\frac{1}{\sqrt{q}}\biggr)^{n-1}\sum_{z_i\in \mathbb{F}_q^{\times}} \exp\biggl(\frac{2\pi i}{p}\Tr_{\mathbb{F}_q/\mathbb{F}_p}\bigl(z_1+z_2+\cdots+z_{n-1}+\frac{a}{z_1\cdots z_{n-1}}\bigr)\biggr).
	\end{equation}
	It admits a sheaf-theoretic interpretation by Deligne \cite{Del77}. Namely, the analog of the exponential $\mathscr{D}$-module in positive characteristic is the Artin-Schreier sheaf $\AS_{\psi}$ on $\mathbb{A}^1_{\mathbb{F}_p}$ associated to a non-trivial character $\psi:\mathbb{F}_p\to \mathbb{Q}_{\ell}(\mu_p)^{\times}$. 
	Deligne defined the \textit{Kloosterman sheaf} $\Kl_n$ as the following complex on $\mathbb{G}_{m,\mathbb{F}_p}$:
\begin{equation}
	\Kl_n=\rR\mathrm{mul}_!(\mathrm{add}^{*}(\AS_{\psi}))[n-1]\biggl(\frac{n-1}{2}\biggr),
	\label{definition Kl}
\end{equation}
and showed the following properties (\cite{Del77} 7.4, 7.8):
\begin{itemize}
	\item[(i)] Fix an embedding $\iota:\mathbb{Q}_{\ell}(\mu_p)\to \mathbb{C}$ such that $\iota\psi(x)=\exp(2\pi i x/p)$ for $x\in \mathbb{F}_p$. The Frobenius trace of $\Kl_n$ at each closed point $a\in \mathbb{F}_q^{\times}$ is equal to the Kloosterman sum $\Kl(n;a)$. 

	\item[(ii)] The complex $\Kl_n$ is concentrated in degree $0$ and is an irreducible local system of rank $n$ and of weight $0$, which implies the Weil bound of the Kloosterman sum $|\Kl(n;a)|\le n$. 

	\item[(iii)] The sheaf $\Kl_n$ is tamely ramified at $0$, and the monodromy is unipotent with a single Jordan block. 

	\item[(iv)] The sheaf $\Kl_n$ is wildly ramified at $\infty$ with Swan conductor $\Sw_{\infty}(\Kl_n)=1$. 
\end{itemize}
	
In (\cite{Katz88} \S~11), Katz calculated the (global) geometric and arithmetic monodromy group of $\Kl_n$ as follow:
\begin{equation} \label{Katz mono Kln}
		G_{\geo}(\Kl_n)=G_{\arith}(\Kl_n)= 
		\left\{
			\begin{array}{ll}
				\SP_n & n~ even, \\
				\SL_n & pn~ odd, \\
				\SO_n & p=2, n~ odd, n\neq 7, \\
				G_2 & p=2, n=7.
			\end{array} 
			\right.
	\end{equation}
	Surprisingly, the exceptional group $G_2$ appears as the monodromy group. 
\end{secnumber}

\begin{secnumber}
	In 70's \cite{Dw74}, Dwork showed that there exists a Frobenius structure on the Bessel connection \eqref{Bessel intro} whose Frobenius traces give the Kloosterman sum. 
	Here a \textit{Frobenius structure} is a horizontal isomorphism between the Bessel connection and its pullback by the ``Frobenius endomorphism'' $F:\mathbb{G}_{m,K}\to \mathbb{G}_{m,K}$ over $K$ defined by $x\mapsto x^{p}$. 
	Although the Bessel connection is an algebraic connection, such a horizontal isomorphism is not algebraic but of $p$-adic analytic nature. 
	
	To explain Dwork's result, we need to introduce certain ring of $p$-adic analytic functions. 
	We set $K=\mathbb{Q}_p(\mu_p)$, equipped with a $p$-adic valuation $|\textnormal{-}|_p$ normalised by $|p|_p=p^{-1}$, and denote by $A^{\dagger}$ the ring of $p$-adic analytic functions with a convergence radius $>1$:
	\begin{equation}
		A^{\dagger}=\bigg\{ \sum_{n=0}^{+\infty} a_nx^n ~|~ a_n\in K, \exists~ \rho>1, \lim_{n\to +\infty} |a_n|_p\rho^n =0 \bigg\}.  
		\label{def A dagger}
	\end{equation}
	This ring is called \textit{the ring of $p$-adic analytic functions on $\P1$ overconvergent along $\{\infty\}$} by Berthelot \cite{Ber96II}. 

	We take an algebraic closure $\overline{K}$ of $K$ and fix an isomorphism $\iota:\overline{K}\to \mathbb{C}$. 	
	There exists a unique element $\pi$ of $K$ which satisfies $\pi^{p-1}=-p$ and corresponds to the character $\exp2\pi i(\frac{-}{p}):\mathbb{F}_p\to \mathbb{C}^{\times}$ (cf. \ref{Dwork isocrystal}(i)). 
\end{secnumber}

\begin{theorem}[Dwork, Sperber \cite{Dw74,Sp77,Sp80}] \label{Dwork thm intro}
	Let $n$ be an integer prime to $p$ and set $\lambda=-\pi$ as above.
	There exists a unique $\varphi(x)\in \GL_n(A^{\dagger})$ such that

	\textnormal{(i)} 
		The matrix $\varphi$ satisfies the differential equation: 
	\begin{displaymath}
		x \frac{d\varphi}{dx} \varphi^{-1} + \varphi 
		\begin{pmatrix}
		0                  & 1 & 0 & \dots & 0 \\
		0                  & 0 & 1 & \dots & 0 \\
		\vdots          & \vdots & \ddots & \ddots & \vdots \\
		0                  & 0 & 0 & \dots & 1 \\
		\lambda^nx & 0 & 0 & \dots & 0  \end{pmatrix} 
		\varphi^{-1} =
		p \begin{pmatrix}
		0                  & 1 & 0 & \dots & 0 \\
		0                  & 0 & 1 & \dots & 0 \\
		\vdots          & \vdots & \ddots & \ddots & \vdots \\
		0                  & 0 & 0 & \dots & 1 \\
		\lambda^nx^p & 0 & 0 & \dots & 0  \end{pmatrix}.
		\end{displaymath}
		That is, $\varphi$ defines a horizontal isomorphism $F^{*}(\Be_n)\xrightarrow{\sim} \Be_n$.

		\textnormal{(ii)}  For $a\in \mathbb{F}_{q}^{\times}$, we have $\iota\Tr \varphi_a=\Kl(n;a)$, where $\varphi_a=\prod_{i=0}^{\deg(a)-1} \varphi(\widetilde{a}^{p^i})$ and $\widetilde{a}\in \overline{K}$ denotes the Teichmüller lifting of $a$. 

		\textnormal{(iii)} If $\{\alpha_1,\cdots,\alpha_n\}$ denote the eigenvalues of $\varphi_a$, then we have $|\alpha_i|_p=p^{\frac{n+1-2i}{2}\deg(a)}$ after reordering $\alpha_i$. 
\end{theorem}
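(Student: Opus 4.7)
The plan is to construct the Frobenius structure $\varphi$ as the rigid-cohomological analogue of Deligne's construction~\eqref{definition Kl} of $\Kl_{n}$, and then to read off (i), (ii), and (iii) from standard inputs.

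First I would introduce the Dwork overconvergent $F$-isocrystal $\EE_{\pi}^{\dagger}$ on $\A1_{\mathbb{F}_{p}}$: its underlying convergent connection is $(\mathscr{O}_{\A1},\,d-\pi\, dx)$, and its Frobenius structure is Dwork's splitting function $\theta(x)=\exp(\pi(x-x^{p}))$, which is overconvergent precisely because $\pi^{p-1}=-p$; this $\EE_{\pi}^{\dagger}$ is the $p$-adic companion of $\AS_{\psi}$. Next I would form the higher direct image in rigid cohomology with compact support
\[
\mathcal{K}\ :=\ \rR^{n-1}\mathrm{mult}_{\rig,!}\bigl(\mathrm{add}^{*}\EE_{\pi}^{\dagger}\bigr)
\]
on $\Gm$ over $\mathbb{F}_{p}$, which is an overconvergent $F$-isocrystal. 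The technical heart is the verification that (a) this direct image is concentrated in cohomological degree $n-1$, and (b) its underlying overconvergent connection is canonically isomorphic to the base change to $K$ of $\Be_{n}$. Point (a) is an Artin-vanishing argument, using the hypothesis $(n,p)=1$ to ensure the Laurent polynomial $z_{1}+\cdots+z_{n-1}+x/(z_{1}\cdots z_{n-1})$ is non-degenerate mod $p$, so that Adolphson--Sperber-type estimates apply. Point (b) follows by identifying the de~Rham realisation of $\mathcal{K}$ over $K$ with the Kloosterman $\mathscr{D}$-module $\Kl_{n}^{\dR}$ of \S\ref{Bessel conn intro}, which is $\Be_{n}$. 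The Frobenius structure on $\mathcal{K}$ then furnishes the matrix $\varphi\in\GL_{n}(A^{\dagger})$ satisfying (i).

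For uniqueness, any two solutions $\varphi,\varphi'$ of (i) differ by a horizontal automorphism of $F^{*}\Be_{n}$. Since $\Be_{n}$ is irreducible (its differential Galois group having been computed by Katz~\cite{Katz87}), such an automorphism is a scalar $c\in K^{\times}$, and $c=1$ is pinned down by a normalisation at $x=0$ arising from the canonical Frobenius action on the unique horizontal formal solution to~\eqref{Bessel connection intro} with value $1$ at the origin. Statement (ii) is then the Grothendieck--Lefschetz trace formula for rigid cohomology with compact support (Étesse--Le Stum): specialising $\mathcal{K}$ at the Teichmüller lift $\widetilde{a}$ identifies $\iota\Tr(\varphi_{a})$ with the exponential sum $\Kl(n;a)$ of~\eqref{Kl sum}, the low-degree cohomology vanishing as in (a). Statement (iii) is Sperber's computation~\cite{Sp77,Sp80} of the Newton polygon of the Dwork cohomology attached to the Laurent polynomial above: the slopes (before the Tate half-twist in~\eqref{Kl sum}) form the arithmetic progression $0,1,\ldots,n-1$, which gives the stated eigenvalue valuations.

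The main obstacle is the overconvergence of $\varphi$: even granting the formal construction of $\varphi$ as a power series solution to (i) at $x=0$ (built from the ratio $F(x)/F(x^{p})$ with $F(x)=\sum_{r\ge 0}(\pi^{n}x)^{r}/(r!)^{n}$), one must show that the resulting matrix of power series actually converges on a disc of radius $>1$. Classically this is done via Dwork's congruences on the coefficients of $F$; cohomologically, it is precisely the finiteness and base-change theorems for rigid cohomology with compact support that guarantee the existence of $\mathcal{K}$, and it is here that the hypothesis $(n,p)=1$ plays its essential role.
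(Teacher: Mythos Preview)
The paper does not give its own proof of this theorem: it is stated as a known result with citations to Dwork and Sperber, and serves as motivation for the paper's main generalization (Theorem~\ref{def Bessel crystal}) to arbitrary reductive groups. Your proposal is correct and is precisely the rigid-cohomological reformulation of the classical Dwork--Sperber argument; it is also what the paper's general machinery collapses to when specialized to $\cG=\GL_n$ and the standard representation (see \ref{Bessel SL}, where the diagram \eqref{triangle} reduces to Deligne's diagram \eqref{Del diag}).

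The one methodological difference worth noting: your construction of $\mathcal{K}$ proceeds directly via the higher direct image along $\mathrm{mult}$, whereas the paper's general construction of $\Kl_{\cG}^{\rig}$ goes through the Heinloth--Ng\^o--Yun Hecke-eigensheaf formalism on $\Bun_{\Gzt}$ and requires the geometric Satake equivalence for arithmetic $\mathscr{D}$-modules developed in \S\ref{sec Sat}. For $\GL_n$ and $V=\Std$ these agree, and your route is substantially lighter; the paper's heavier approach is what buys the extension to arbitrary $\cG$. Your comparison step (identifying the underlying connection of $\mathcal{K}$ with $\Be_n$) is the $\GL_n$-case of the specialization argument in \S\ref{Frob str Bessel}, and your handling of uniqueness (irreducibility plus a normalization) matches the paper's Lemma~\ref{Uniqueness Frobenius} together with the pinning at $0$ in Proposition~\ref{Frobenius 0}. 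For part (iii) both you and the paper simply invoke Sperber's slope computation.
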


Overconvergent ($F$-)isocrystals on a variety $X$ over $\mathbb{F}_p$ are $p$-adic analogues of $\ell$-adic (Weil) local systems. 
Roughly speaking, they are vector bundles with an integrable connection and a Frobenius structure on some rigid analytic space associated to certain lifting of $X$ to characteristic zero. 
The data $(\Be_n,\varphi)$ form an overconvergent $F$-isocrystal on $\mathbb{G}_{m,\mathbb{F}_p}$ (relative to $K$), which we call the \textit{Bessel $F$-isocrystal (of rank $n$)} and denote by $\Be_n^{\dagger}$. 
By (ii), $\Be_n^{\dagger}$ is the \textit{$p$-adic companion} of the Kloosterman sheaf $\Kl_n$ in the sense of \cite{WeilII,Abe18}.

\subsection{Generalization for reductive groups}
\begin{secnumber} \label{summary of results}
	Recently, there are two generalizations of above results (corresponding to the $\GL_n$-case) for reductive groups from different perspectives. 
	The first one is due to Frenkel and Gross \cite{FG09} from the viewpoint of the Bessel equations. Namely, for each (split) reductive group $\cG$ over a field $K$ of characteristic zero, Frenkel-Gross wrote down an explicit $\cG$-connection $\Be_{\cG}$ on $\mathbb{G}_m$, which specializes to $\Be_n$ when $\cG=\GL_n$. We will call $\Be_{\cG}$ the \textit{Bessel connection of $\cG$} in this paper.
	Another one, due to Heinloth, Ng\^o and Yun \cite{HNY}, is from the viewpoint of the Kloosterman sums. Namely, the authors explicitly constructed, for each (split) reductive group $G$ over the rational function field $\mathbb{F}_p(t)$, a Hecke eigenform of $G$, and defined $\Kl_{\cG}$ as its Langlands parameter, which is an $\ell$-adic $\cG$-local system on $\mathbb{G}_m$ that specializes to $\Kl_n$ if $\cG=\GL_n$. The authors call $\Kl_{\cG}$ the \textit{Kloosterman sheaf of $\cG$}.

	The main subject of this article is to study the $p$-adic aspects of this theory and to unify the previous two constructions. Our main results can be summarized as follows:
\begin{enumerate}
         \item[(i)] we construct the Frobenius structure on $\Be_{\cG}$ and obtain the Bessel $F$-isocrystal $\Be_{\cG}^{\dagger}$ of $\cG$, which is  the $p$-adic companion of $\Kl_{\cG}$ in appropriate sense;
         \item[(ii)] we calculate its geometric and arithmetic monodromy group;
	 \item[(iii)] we show that the Frobenius Newton polygons of $\Be_{\cG}$ (and therefore $\Kl_{\cG}$) are generically ordinary and when $\cG$ is classical or $G_2$ they are everywhere ordinary on $|\mathbb{G}_{m,\mathbb{F}_p}|$.  	 
\end{enumerate}
	 It turns out that our $p$-adic theory also has applications to the $\ell$-adic theory and the arithmetic property of exponential sums associated to $\Kl_{\cG}$. Namely,
\begin{enumerate}
	\item[(iv)] we obtain a different (and more conceptual) calculation of the monodromy group of $\Kl_{\cG}$ (\eqref{Katz mono Kln} and one of the main results of \cite{HNY}), based on the structure of the connection $\Be_{\cG}$;
	 \item[(v)] we prove a conjecture of Heinloth-Ng\^o-Yun on the functoriality of Kloosterman sheaves (\cite{HNY} conjecture 7.3) and therefore obtain identities between different exponential sums.
\end{enumerate}
	We discuss these results in more details in the sequel.
\end{secnumber}

\begin{secnumber}
	Let $\cG$ be a split almost simple group over a field $K$ of characteristic zero. Fix a Borel subgroup $\cB\subset \cG$, and a principal nilpotent element $N$ in the Lie algebra of $\cB$. Let $E$ denote a basis vector of the lowest root space in $\check{\mathfrak{g}}=\Lie(\cG)$.
	In \cite{FG09}, Frenkel and Gross considered a connection on the trivial $\cG$-bundle over $\Gm$: 
	\begin{equation}
		\nabla=d+N\frac{dx}{x}+\lambda^{h} E dx,
		\label{FG connection intro}
	\end{equation}
	where $x$ is a coordinate of $\Gm$, $\lambda\in K$ is a parameter and $h$ is the Coxeter number of $\cG$. 
	We may regard it as a tensor functor from the category of representations of $\cG$ to the category of connections on the trivial bundles on $\Gm$ 
	\begin{equation}
		\Be_{\cG}: \Rep(\cG)\to \Conn(\Gm), \quad (\rho:\cG\to \GL(V)) \to d+d\rho(N\frac{dx}{x}+ \lambda^h E dx).
		\label{functor Be}
	\end{equation}
	This connection is rigid and has a regular singularity at $0$ and an irregular singularity at $\infty$. 
\end{secnumber}

\begin{secnumber}
         Let $G$ be a split almost simple group over $\mathbb{F}_p(t)$ whose dual group is $\cG$. In \cite{HNY}, Heinloth-Ng\^o-Yun wrote down a cuspidal Hecke eigenform $f$ on $G$, and defined the Kloosterman sheaf $\Kl_{\cG}$ for $\cG$ as the Langlands parameter of $f$. For simplicity, we assume that $G$ is simply-connected. If we fix opposite Iwahori subgroups $I(0)^{\mathrm{op}}\subset G(\mathscr O_0)$ and $I(0)\subset G(\mathscr O_{\infty})$ at $0,\infty$, and a non-degenerate character $\varphi: I(1)/I(2)\to \mathbb{Q}(\mu_p)^\times$, where $I(i)$ denotes the $i$th step in the Moy-Prasad filtration of $I(0)$, then $f$ is the unique (up to scalar) non-zero function on $G(\mathbb{F}_p(t))\backslash G(\mathbb A)$ that is, 
         \begin{itemize}
         \item invariant under $G(\mathscr O_x)$ for every place $x\neq 0,\infty$;
         \item invariant under $I(0)^{\mathrm{op}}$ at $0$; 
         \item $(I(1),\varphi)$-equivariant at $\infty$.
         \end{itemize}
	 Then Heinloth-Ng\^o-Yun  defined $\Kl_{\cG}:  \Rep(\cG)\to \Loc(\mathbb{G}_{m,\mathbb{F}_p})$ as a tensor functor from the category of representations of $\cG$ (over $\overline{\mathbb{Q}}_\ell$) to the category of $\ell$-adic local systems on $\mathbb{G}_{m,\mathbb{F}_p}$, such that
	 for every $V\in \Rep(\cG)$ and every $a\in|\mathbb{G}_{m,\mathbb{F}_p}|$,
         \[
		 T_{V,a}(f) = \Tr(\Frob_a,(\Kl_{\cG,V})_{\overline{a}})f
         \]
         where $T_{V,a}$ is the Hecke operator associated to $(V,a)$.
         The actual construction of $\Kl_{\cG}$ uses the geometric Langlands correspondence (see \ref{HNY Kl}). 
\end{secnumber}

Our first main result is the existence of a Frobenius structure on Bessel connections for reductive groups. 

\begin{theorem}[\ref{def Bessel crystal}, \ref{Frobenius slope}] \label{intro main thm}
	Let $K=\mathbb{Q}_p(\mu_p)$, $\overline{K}$ an algebraic closure of $K$ and set $\lambda=-\pi$ as in \ref{Dwork thm intro}. 

	\textnormal{(i)} There exists a unique $\varphi(x)\in \cG(A^{\dagger})$ satisfying the differential equation
	\begin{displaymath}
		x \frac{d\varphi}{dx} \varphi^{-1} + \Ad_{\varphi} (N+\lambda^{h} x E) = p(N+\lambda^h x^p E)
	\end{displaymath}
	and such that via a (fixed) isomorphism $\overline{K}\simeq \overline{\mathbb{Q}}_{\ell}$, for every $a\in \mathbb{F}_q^{\times}$ and $V\in \Rep(\cG)$ 
	\begin{displaymath}
		\Tr(\varphi_a,V)=\Tr(\Frob_a,(\Kl_{\cG,V})_{\overline{a}}),  
	\end{displaymath}
	where $\varphi_a=\prod_{i=0}^{\deg(a)-1} \varphi(\widetilde{a}^{p^i})$ and $\widetilde{a}\in \overline{K}$ denotes the Teichmüller lifting of $a$. 

	In particular, the analytification of the Bessel connection $\Be_{\cG}$ on $\mathbb{G}_{m,K}^{\an}$ is overconvergent and underlies a tensor functor from $\Rep(\cG)$ to the category of overconvergent $F$-isocrystals on $\mathbb{G}_{m,\mathbb{F}_p}$:
	\begin{equation}
		\Be_{\cG}^{\dagger}:\Rep(\cG)\to \Fr\Iso^{\dagger}(\mathbb{G}_{m,\mathbb{F}_p}/K),
	\end{equation}
which can be regarded as the $p$-adic companion of $\Kl_{\cG}$.

	\textnormal{(ii)} Let $\rho\in \wX(T)=\cwX(\cT)$ be the half sum of positive roots. 
	When $\cG$ is of type $A_n,B_n,C_n,D_n$ or $G_2$, for every $a\in |\mathbb{G}_{m,\mathbb{F}_p}|$, the set of $p$-adic order of eigenvalues of $\varphi_a\in \cG(\overline{K})$ (also known as the Frobenius slopes at $a$) is same as that of $\rho(p^{\deg(a)})\in \cG(\overline{K})$. 
	When $\cG$ is of other exceptional type, the same assertion holds generically on $|\mathbb{G}_{m,\mathbb{F}_p}|$.
\end{theorem}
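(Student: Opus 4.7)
The plan is to construct $\varphi$ formally, upgrade to overconvergence via a cohomological realisation, and then establish the slope statement by Hodge--Newton comparison. Writing $\varphi=\sum_{n\ge 0}\varphi_n x^n$, the constant term of the differential equation yields $\Ad_{\varphi_0}(N)=pN$, which determines $\varphi_0$ up to the centraliser of the principal nilpotent; the natural normalisation is $\varphi_0=\rho(p)$ coming from the principal $\SL_2$-embedding in $\cG$, and the higher-order terms of the equation produce a triangular recursion determining each $\varphi_n$ uniquely. This gives a unique formal $\varphi\in\cG(K[[x]])$.

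To prove overconvergence and to match Frobenius traces with $\Kl_\cG$, I would give a $p$-adic construction of $\Be_\cG^{\dagger}$ parallel to the Heinloth--Ng\^o--Yun construction of $\Kl_\cG$, with the Artin--Schreier sheaf $\AS_\psi$ replaced by Dwork's overconvergent $F$-isocrystal $\EE_\pi^{\dagger}$ on $\mathbb{A}^1_{\mathbb{F}_p}$. For each $V\in\Rep(\cG)$ this produces an overconvergent $F$-isocrystal $\mathcal{E}_V$ on $\mathbb{G}_{m,\mathbb{F}_p}$, whose underlying connection is algebraically isomorphic to $\Be_\cG(V)$ by the rigidity of the Frenkel--Gross connection, and whose Frobenius traces match those of $\Kl_\cG$ by the $p$-adic Grothendieck--Lefschetz trace formula, exactly as in the $\GL_n$ case of Theorem~\ref{Dwork thm intro}. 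Comparing with the formal $\varphi$ above gives both overconvergence and the uniqueness statement. The main obstacle at this step is verifying that the cohomological construction genuinely recovers the Frenkel--Gross connection for every $V$, which requires a representation-theoretic analysis using the principal $\SL_2$ and the Whittaker normalisation of \cite{HNY}.

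For part (ii), the Hodge polygon of $\Be_\cG(V)$ attached to the principal grading (induced by $\rho$) has slopes $\{\langle\rho,\mu\rangle : \mu\text{ a weight of }V\}$, coinciding with the $p$-adic valuations of the eigenvalues of $\rho(p^{\deg(a)})$. The general Newton-above-Hodge inequality gives a lower bound on the Frobenius slopes at every closed point, and the claim is that this bound is attained. Generic ordinarity for all $\cG$ follows by a sandwich: the generic Newton polygon is bounded above, by semicontinuity, by the polygon at the boundary $x=0$, where $\varphi$ specialises to $\varphi_0=\rho(p)$ and realises exactly the Hodge slopes. To upgrade to the everywhere ordinary statement for classical $\cG$ and for $G_2$, I would argue representation by representation: for classical $\cG$ it suffices to treat the standard representation (and, for $\Spin$, also the spin representation), where Sperber's pointwise slope analysis of hypergeometric/Kloosterman sums applies; for $G_2$ the $7$-dimensional representation factors through $\SO_7$ in characteristic $p>2$, reducing to the classical case. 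The main obstacle, and the reason $F_4,E_6,E_7,E_8$ are excluded from the pointwise statement, is the absence of such a small faithful representation whose Newton polygon one can control at every closed point.
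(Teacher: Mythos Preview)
Your overall architecture for part~(i) matches the paper's: run the Heinloth--Ng\^o--Yun construction $p$-adically with the Dwork isocrystal in place of the Artin--Schreier sheaf to produce $\Kl_\cG^{\rig}$, identify its underlying connection with $\Be_\cG$, and read off the trace identity via $\ell$-independence. The normalisation $\varphi_0=\rho(p)$ (more precisely $2\rho(\sqrt{q})$, established in the paper by comparison with nearby cycles on the $\ell$-adic side) is also correct.

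The genuine gap is the sentence ``whose underlying connection is algebraically isomorphic to $\Be_\cG(V)$ by the rigidity of the Frenkel--Gross connection.'' Rigidity of $\Be_\cG$ is a statement about \emph{algebraic} connections on $\mathbb{G}_{m,K}$: it says there are no algebraic deformations with the prescribed local data. But the connection underlying $\Kl_\cG^{\rig}$ is a priori only a $j^\dagger\mathscr{O}$-module with connection on a rigid-analytic space, and there is no rigidity principle that identifies an analytic connection with an algebraic one from local data alone. The paper instead runs the HNY construction \emph{twice}---once for arithmetic $\mathscr{D}$-modules (giving $\Kl_\cG^{\rig}$) and once for algebraic $\mathscr{D}$-modules over $K$ (giving $\Kl_\cG^{\dR}$)---and proves $(\Kl_\cG^{\dR})^\dagger \simeq \Kl_\cG^{\rig}$ by directly comparing relative de Rham and rigid cohomology along $\GR_V^\circ\to X$, using the specialisation/cospecialisation morphisms and cleanness; this is the content of Theorem~\ref{main thm Frob FG} and is not bypassed by rigidity. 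The identification $\Kl_\cG^{\dR}\simeq\Be_\cG$ is then purely characteristic-zero, via the quantised Hitchin map and Beilinson--Drinfeld's oper description (Theorem~\ref{thm Zhu}), rather than a Whittaker-normalisation argument. You also do not mention that making the $p$-adic HNY construction work requires the geometric Satake equivalence for arithmetic $\mathscr{D}$-modules (\S\ref{sec Sat}), which is a substantial piece of the paper.

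For part~(ii), your strategy---bound the Newton polygon, show the bound is attained at $0$, deduce generic ordinarity by semicontinuity, then treat the remaining types via small representations---is the paper's. But the bound does not come from ``Newton above Hodge'': there is no Hodge filtration on $\Be_\cG^\dagger$, and the principal grading does not supply one. The correct input is V.~Lafforgue's inequality $\NP(\varphi_x)\le\rho$ for Hecke eigenvalues, combined with $\NP(\varphi_0)=\rho$ and Grothendieck--Katz specialisation. For the pointwise statement, the paper's reduction is via the functoriality of Proposition~\ref{functoriality}: types $A_n,C_n$ reduce to $\Be_n^\dagger$ (Dwork--Sperber--Wan), while types $B_n,D_n,G_2$ reduce to $\Be_{\SO_{2n+1},\Std}^\dagger$, which is then identified with a hypergeometric $F$-isocrystal (Proposition~\ref{Be SO2n+1 Std}) so that Wan's ordinariness criterion for toric exponential sums applies (Lemma~\ref{Frob slope Hyp}). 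The ``Hodge polygon'' that appears in that last step is the combinatorial Adolphson--Sperber polygon attached to a Laurent polynomial, not a grading on $\check{\gg}$; your invocation of Sperber alone, without the hypergeometric identification and Wan's criterion, does not cover the $B_n,D_n,G_2$ cases.
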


\begin{rem}
	(i) For a $\cG$-valued overconvergent $F$-isocrystal on a smooth variety $X$ over $\mathbb{F}_p$, we say its Newton polygon is ordinary at $a$ if the Frobenius slopes at $a$ are given by $\rho$ (in the above sense). 
	We expect that the Newton polygons of $\Be_{\cG}^{\dagger}$ are always ordinary at each closed point of $\mathbb{G}_{m,\mathbb{F}_p}$.

	(ii) V. Lafforgue \cite{VL11} showed that $\rho$ is the upper bound for the $p$-adic valuations of Hecke eigenvalues of Hecke eigenforms (cf. \ref{def NT polygons} for a precise statement). 
	Drinfeld and Kedlaya \cite{DK17} proved an analogous result for the Frobenius slopes of an indecomposable convergent $F$-isocrystal on a smooth scheme. 
\end{rem}

\begin{secnumber}
	\textbf{Global monodromy groups.} 
	In (\cite{FG09} Cor. 9,10), Frenkel and Gross calculate the differential Galois group $G_{\alg}$ of $\Be_{\cG}$ over $\overline{K}$, which we list in the following table (up to central isogeny):
	\begin{equation}\label{table monodromy}
	\begin{tabular}{L|L}
		\cG & G_{\gal} \\
		\hline
		A_{2n} & A_{2n} \\
		A_{2n-1}, C_{n} & C_{n} \\
		B_{n},D_{n+1} (n\ge 4) & B_{n} \\
		E_{7} & E_{7} \\
		E_{8} & E_{8} \\
		E_{6}, F_{4} & F_{4} \\
		B_{3}, D_{4}, G_{2} & G_{2}.
	\end{tabular}
	\end{equation}
	If we denote by $G_{\geo}$ the geometric monodromy group of $\Be_{\cG}^{\dagger}$ over $\overline{K}$, there exists a canonical homomorphism
\begin{equation}
	G_{\geo}\to G_{\gal}.
	\label{homomorphism gal monodromy groups}
\end{equation}

\begin{theorem}[\ref{thm geometric mono}] \label{intro monodromy gp}
	 \textnormal{(i)} If either $\cG$ is not of type $A_{2n}$, or $p>2$,  the above morphism is an isomorphism. 
	
	 \textnormal{(ii)} If $p=2$ and $\cG=\SL_{2n+1}$, then $G_{\geo}\simeq \SO_{2n+1}$ if $n\neq 3$ and $G_{\geo}\simeq G_2$ if $n=3$. 

	 \textnormal{(iii)} The arithmetic monodromy group $G_{\arith}$ of $\Be_{\cG}^{\dagger}$ is isomorphic to $G_{\geo}$. 
\end{theorem}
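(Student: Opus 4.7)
The plan is to identify both $G_{\geo}$ and $G_{\arith}$ with the Frenkel--Gross group $G_{\gal}$ of \eqref{table monodromy} in all cases except $p=2$ and $\cG=\SL_{2n+1}$. Both monodromy groups of $\Be_{\cG}^{\dagger}$ are naturally realized as subgroups of $\cG$, and the canonical map \eqref{homomorphism gal monodromy groups} is induced by comparing Tannakian fibers of the underlying connection with those of the full $F$-isocrystal; to prove (i) it suffices to show this map is an isomorphism, while statement (ii) amounts to locating $G_{\geo}$ precisely as a proper subgroup of $G_{\gal}=\SL_{2n+1}$ in the exceptional case.

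The key geometric input is the local monodromy at the tame point $0$. The underlying connection of $\Be_{\cG}^{\dagger}$ is $\Be_{\cG}=d+N\frac{dx}{x}+\lambda^{h}E\,dx$ with $N$ principal nilpotent, so its tame monodromy at $0$ is a regular unipotent element of $\cG$; this element persists in the overconvergent $F$-isocrystal picture, so $G_{\geo}$ contains a regular unipotent of $\cG$. Moreover, $G_{\geo}$ is reductive by purity of $\Be_{\cG}^{\dagger}$ (either from the companion relationship with $\Kl_{\cG}$, or directly from the slope information in Theorem \ref{intro main thm}(ii)). One then invokes the classification of connected reductive subgroups of an almost simple $\cG$ that contain a regular unipotent element (Dynkin, refined by Saxl--Seitz and Testerman): such a subgroup is either $\cG$ itself or lies on a short explicit list. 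Intersecting this list with the Frenkel--Gross table \eqref{table monodromy} yields $G_{\geo}=G_{\gal}$ in all cases outside those flagged in (ii).

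For the exceptional case $p=2$, $\cG=\SL_{2n+1}$, an additional ingredient is required. In characteristic $2$ the Kloosterman sheaf $\Kl_n$ is self-dual with a symmetric autoduality, and one should transport this duality to the $F$-isocrystal side to produce a non-degenerate symmetric bilinear form on the standard representation of $\cG$ pulled back through $\Be_{\cG}^{\dagger}$. This forces $G_{\geo}\subseteq \SO_{2n+1}$, which by the regular unipotent classification above is an equality for all $n\neq 3$. The $n=3$ case reflects the exceptional embedding $G_2\hookrightarrow \SO_7$ via the seven-dimensional representation; one verifies the further descent $G_{\geo}\subseteq G_2$ by constructing the defining alternating trilinear invariant at the level of $F$-isocrystals, matched with the $\cG=B_3$ construction which already lands in $G_2$ via \eqref{table monodromy}.

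Finally, for (iii), the quotient $G_{\arith}/G_{\geo}$ is a quotient of $\Gal(\overline{\mathbb F}_p/\mathbb F_p)=\widehat{\mathbb Z}$ and hence abelian, while in every case $G_{\geo}$ is quasi-simple with small (in particular finite abelian) outer automorphism group. Purity of weight $0$ together with the explicit Frobenius traces of Theorem \ref{intro main thm}(i) rule out any non-trivial extension, yielding $G_{\arith}=G_{\geo}$. The principal obstacle throughout is the construction in (ii) of the characteristic-$2$ self-duality (and the further trilinear invariant for $n=3$) at the level of the $F$-isocrystal rather than merely at the level of the $\ell$-adic sheaf, since this requires input not accessible from the regular unipotent classification alone.
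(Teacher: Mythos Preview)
Your outline captures the starting point correctly---the regular unipotent at $0$ forces $G_{\geo}$ to lie on a short list of subgroups of $\cG$---but the step ``intersecting this list with the Frenkel--Gross table yields $G_{\geo}=G_{\gal}$'' is a genuine gap. The principal $\PGL_2$ is \emph{always} on that list and is always contained in $G_{\gal}$, so the intersection argument does not pin down $G_{\geo}$. The paper closes this gap with a $p$-adic slope argument (Lemma \ref{exclude sl2}): Baldassarri's theorem bounds the $p$-adic slopes of $\Be_{\cG,\Ad}^{\dagger}$ at $\infty$ by the formal slope $1/\check{h}$ of $\Be_{\cG,\Ad}$, giving $\Irr_{\infty}(\Be_{\cG,\Ad}^{\dagger})\le r(\cG)$; if the image sat in a principal $\PGL_2$, a direct computation of the decomposition $\Ad\simeq\bigoplus S^{2\ell_i}$ would force $\Irr_{\infty}>r(\cG)$. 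This is the missing idea, and without it your argument does not exclude the smallest candidate.

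There is a second gap in part (i) for type $A_{2n}$ with $p>2$: even after excluding $\PGL_2$, the candidate list for $G_{\geo}\subset\SL_{2n+1}$ still contains $\SO_{2n+1}$ (and $G_2$ when $n=3$), and your proposal offers no mechanism to rule these out. The paper handles this with a separate argument: the outer involution gives $\Be_{\SL_{2n+1},\Std}^{\dagger}\simeq(-1)^+\Be_{\SL_{2n+1},\Std^{\vee}}^{\dagger}$, so if $G_{\geo}\subset\SO_{2n+1}$ one would obtain a descent of $\Be_{2n+1}^{\dagger}$ through $[2]:\Gm\to\Gm$, forcing $\Sw_\infty\ge 2$; but Baldassarri again gives $\Sw_\infty=1$.

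For part (ii) the paper takes an entirely different route from your self-duality transfer: it proves directly that $\Be_{2n+1}^{\dagger}\simeq\Be_{\SO_{2n+1},\Std}^{\dagger}$ as overconvergent $F$-isocrystals by comparing their maximal-slope quotients via a $2$-adic refinement of Dwork's congruences and invoking Tsuzuki's minimal-slope theorem (appendix \ref{appendix}), then reduces to part (i) for $\SO_{2n+1}$. Your companion-based approach to producing the bilinear form is plausible for the inclusion $G_{\geo}\subset\SO_{2n+1}$, but the opposite inclusion again needs the $\PGL_2$-exclusion above, and the $G_2$ case for $n=3$ is not settled by ``constructing a trilinear invariant'' without an actual construction. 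Finally, for (iii) the paper does not argue abstractly about $G_{\arith}/G_{\geo}$; it uses the explicit $\Out(G)$-equivariance of the construction \eqref{inv outer automorphism} to place $G_{\arith}\subset\cG^{\Sigma}$, together with the Frobenius at $0$ (Proposition \ref{Frobenius 0}) for the residual $B_3$ case.
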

In fact, the second part of the theorem follows from the first part and theorem \ref{intro functoriality}(ii) below. 
By companion, this theorem allows us to recover Katz's result on the monodromy group of $\Kl_n$ \eqref{Katz mono Kln} and Heinloth-Ng\^o-Yun's result on the geometric monodromy group of $\Kl_{\cG}$ \cite{HNY} in a different way. 
For instance, the $G_2$-symmetry on $\Be_{7}^{\dagger}$ when $p=2$ \eqref{Katz mono Kln} appears naturally in our approach, compared with Katz' original approach via point counting.
In addition, we also avoid some difficult geometry related to quasi-minuscule and adjoint Schubert varieties, as analyzed in \cite{HNY}. 
\end{secnumber}

Inspired by the rigidity properties of hypergeometric sheaves proved by Katz \cite{Katz90}, Heinloth, Ng\^o and Yun conjectured certain functoriality between Kloosterman sheaves for different groups (\cite{HNY} conjecture 7.3). 
As an application of our $p$-adic theory, we prove this conjecture. 

\begin{theorem}[\ref{thm functoriality}, \ref{functoriality p=2}] \label{intro functoriality}
        \textnormal{(i)}
         For $\cG'\subset \cG$ appearing in the same line in the left column of the above diagram, $\Kl_{\cG}$ is isomorphic to the push-out of $\Kl_{\cG'}$ along $\cG'\to\cG$.
         
	 \textnormal{(ii)} If $p=2$, $\Kl_{\SL_{2n+1}}$ is the push-out of $\Kl_{\SO_{2n+1}}$ along $\SO_{2n+1}\to \SL_{2n+1}$.
\end{theorem}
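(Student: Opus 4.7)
The plan is to transfer both statements to the $p$-adic side via the companion relation of Theorem \ref{intro main thm}(i) and then exploit the monodromy calculation of Theorem \ref{intro monodromy gp} together with the rigidity of Frenkel--Gross Bessel connections. Since push-out along a group homomorphism $\cG'\to \cG$ commutes with $p$-adic companion formation at the level of tensor functors, it suffices to show in each case that $\Be_{\cG}^{\dagger}$ is the push-out of $\Be_{\cG'}^{\dagger}$ along $\cG'\hookrightarrow \cG$.

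For part (i), fix one of the pairs from the table: $(C_n,A_{2n-1})$, $(B_n,D_{n+1})$, $(F_4,E_6)$, or an inclusion in the chain $G_2\subset B_3\subset D_4$. By Theorem \ref{intro monodromy gp}(i), the geometric monodromy of $\Be_{\cG}^{\dagger}$ equals $G_{\gal}(\cG)=\cG'$ (up to central isogeny and conjugation). Tannakian formalism for overconvergent $F$-isocrystals then produces a unique $\cG'$-overconvergent $F$-isocrystal $\mathcal{F}'$ on $\Gm$ whose push-out recovers $\Be_{\cG}^{\dagger}$. To identify $\mathcal{F}'$ with $\Be_{\cG'}^{\dagger}$, I would observe that in each pair the Coxeter numbers coincide ($h=2n,2n,12,6$ respectively), so the Frenkel--Gross formula $d+N\tfrac{dx}{x}+\lambda^h E\,dx$, applied once a principal nilpotent and lowest root vector of $\cG'$ have been lined up with those of $\cG$, shows that $\mathcal{F}'$ has the same underlying connection as $\Be_{\cG'}^{\dagger}$. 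Alternatively and more cleanly, both $\mathcal{F}'$ and $\Be_{\cG'}^{\dagger}$ are rigid $\cG'$-connections on $\Gm$ with the same local data at $0$ (principal unipotent regular singularity) and $\infty$ (irregular of slope $1/h$), hence rigidity gives the isomorphism. Compatibility of Frobenius structures then follows by comparing Frobenius traces at a single closed point via the companion relation, which pins down the remaining ambiguity (a central element) in a Frobenius structure on a rigid $\cG'$-connection.

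Part (ii) follows the same template with $\cG=\SL_{2n+1}$ and $\cG'=\SO_{2n+1}$. When $p=2$, Theorem \ref{intro monodromy gp}(ii) shows that the geometric monodromy of $\Be_{\SL_{2n+1}}^{\dagger}$ is $\SO_{2n+1}$ if $n\neq 3$ and $G_2\subset \SO_7$ if $n=3$; either way it factors through $\SO_{2n+1}\hookrightarrow \SL_{2n+1}$. Tannakian descent then yields an $\SO_{2n+1}$-overconvergent $F$-isocrystal whose push-out recovers $\Be_{\SL_{2n+1}}^{\dagger}$, and rigidity together with the matching of local data identifies it with $\Be_{\SO_{2n+1}}^{\dagger}$. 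Running the companion relation in reverse gives the desired identity $\Kl_{\SL_{2n+1}}\cong (\SO_{2n+1}\to \SL_{2n+1})_*\Kl_{\SO_{2n+1}}$.

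The main obstacle is the rigidity input on the $p$-adic side. Frenkel--Gross establish uniqueness up to isomorphism of $\cG$-connections with the prescribed local data over a field of characteristic zero, but one must deduce a version of this statement that applies to overconvergent $F$-isocrystals on $\mathbb{G}_{m,\mathbb{F}_p}$ and in particular implements the Frobenius compatibility; this in practice reduces to a rigid-cohomology vanishing calculation together with the identification of the analytification of the algebraic Bessel connection with the one constructed here. A secondary subtlety is the case-by-case verification for the exceptional inclusions $F_4\subset E_6$ and $G_2\subset B_3\subset D_4$ that the principal $\mathfrak{sl}_2$-triple and lowest root vector of the smaller group map to their counterparts in the larger one, which uses standard facts such as the realization of $F_4$ as the fixed-point subgroup of the outer involution of $E_6$ and the triality description of $G_2\subset B_3\subset D_4$.
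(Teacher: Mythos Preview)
Your approach to part~(i) is broadly on the right track but takes a detour. The paper does not Tannakian-descend from the monodromy calculation and then appeal to rigidity; instead it observes directly that when $\cG'\subset\cG$ preserves the pinning one has $\check{\gg}'_{\mathrm{aff}}(1)\subset\check{\gg}_{\mathrm{aff}}(1)$, so the very same generic element $\check\xi$ defines both $\Be_{\cG'}(\check\xi)$ and $\Be_{\cG}(\check\xi)$, and the push-out statement for the underlying connections is immediate from the formula~\eqref{FG connection formula}. The remaining work is to match Frobenius structures: one shows any two differ by a central element (Lemma~\ref{Uniqueness Frobenius}), then pins this down using the explicit Frobenius at $0$ (Proposition~\ref{Frobenius 0}). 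Your rigidity input is therefore unnecessary. Also, the passage from $\Be^\dagger$ back to the $\ell$-adic $\Kl$ is not as automatic as ``running companions in reverse'': trace equality of $\Kl_{\cG,V}^{\et}$ and $\Kl_{\cG',V}^{\et}$ for $V\in\Rep(\cG)$ does not by itself give an isomorphism of $\cG$-local systems; the paper uses surjectivity of $K(\Rep(\cG))\otimes\overline{\mathbb Q}_\ell\to K(\Rep(G_{\geo}))\otimes\overline{\mathbb Q}_\ell$ to separate Frobenius conjugacy classes inside $G_{\geo}$ and conclude the two homomorphisms $\pi_1\to G_{\geo}$ are conjugate.

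Part~(ii) has a genuine gap. First, the dependency is reversed: in the paper Theorem~\ref{intro monodromy gp}(ii) is \emph{deduced from} Theorem~\ref{intro functoriality}(ii) (both rest on Proposition~\ref{Carlitz diff} in the appendix), so citing the former to prove the latter is circular. Second, and more fundamentally, your rigidity argument cannot work here even granting the monodromy statement. The underlying algebraic connections are not isomorphic: $\Be_{\SL_{2n+1}}$ has Coxeter number $2n+1$ and maximal formal slope $1/(2n+1)$ at $\infty$, while $\Be_{\SO_{2n+1}}$ has Coxeter number $2n$ and slope $1/(2n)$; indeed Frenkel--Gross compute $G_{\alg}(\Be_{\SL_{2n+1}})=\SL_{2n+1}$, so the $\SL_{2n+1}$-connection does not factor through $\SO_{2n+1}$ at all. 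The two overconvergent $F$-isocrystals are defined by the visibly different operators in~\eqref{Be3 BeSO3}. Their isomorphism is a purely $2$-adic analytic phenomenon, established in the paper by comparing the unique power-series solutions at $0$ via a refinement of Dwork's congruences, identifying the maximal-slope quotients, and then invoking Tsuzuki's minimal-slope theorem (Theorem~\ref{Tsuzuki thm}) to promote this to an isomorphism of the full irreducible $F$-isocrystals. No connection-level rigidity is available.
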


\begin{secnumber} \label{identity exp sums intro}
	The above theorem allows us to identify various exponential sums associated to Kloosterman sheaves defined by different groups. 	Here are some examples (cf. corollary \ref{identities exp sums}):

	(i) When $\cG=\SO_3\simeq \PGL_2$, we have the following identity for $a\in \mathbb{F}_q^{\times}$:
	\begin{eqnarray} \label{SO3 hyp intro}
		&& \biggl(\sum_{x\in \mathbb{F}_q^{\times}}\psi(\Tr_{\mathbb{F}_q/\mathbb{F}_p}(x+\frac{a}{x}))\biggr)^2-q \\
		&=& \left\{
			\begin{array}{ll}
			        \displaystyle -\frac{1}{\sqrt{q}}\sum_{x_1,x_2\in \mathbb{F}_{q}^{\times}}\psi\biggl(\Tr_{\mathbb{F}_q/\mathbb{F}_p}(x_1+x_2+\frac{a}{x_1x_2})\biggr), & p=2, \\
				\displaystyle \frac{1}{G(\psi^{-1},\rho^{-1})} \sum_{x_1 x_2 x_3=4ay, x_i\in \mathbb{F}_q^{\times}} \psi\biggl(\Tr_{\mathbb{F}_q/\mathbb{F}_p}(x_1+x_2+x_3-y)\biggr)\rho^{-1}(y), & p>2 \\
				
			\end{array} 
			\right. \nonumber
	\end{eqnarray}
	where $\psi(-)=\exp\frac{2\pi i}{p}(-)$, $\rho$ denotes the quadratic character of $\mathbb{F}_q^{\times}$ and $G(\psi^{-1},\rho^{-1})$ the associated Gauss sum. The identity is due to Carlitz \cite{Car69} when $p=2$ and Katz (\cite{Katz09} \S~3) when $p>2$. Our method is completely different from these works.  

	(ii) For $n\ge 2$, via the inclusion $\SO_{2n+1}\to \SO_{2n+2}$, for $a\in \mathbb{F}_q^{\times}$, we have
	\begin{eqnarray} \label{SO2n+1 SO2n+2}
		&& \sum_{uv=a, u,v\in \mathbb{F}_q^{\times}} \biggl(\bigl(\sum_{x\in \mathbb{F}_q^{\times}}
		\psi(\Tr_{\mathbb{F}_q/\mathbb{F}_p}(x+\frac{u}{x}))\bigr)^2-q\biggr)
		\biggl(\sum_{x_i\in \mathbb{F}_q^{\times}}
		\psi(\Tr_{\mathbb{F}_q/\mathbb{F}_p}(x_1+\cdots+x_{2n-3}+\frac{v}{x_1\cdots x_{2n-3}})) \biggr) \\
		&&= -\frac{1}{\sqrt{q}} \biggl(\sum_{x_i\in \mathbb{F}_q^{\times}}
		\psi\biggl(\Tr_{\mathbb{F}_q/\mathbb{F}_p}(x_1+x_2+\cdots +x_{2n}+a\frac{x_1+x_2}{x_1x_2\cdots x_{2n}})\biggr) - q^{n-1}\biggr). \nonumber
	\end{eqnarray}
	One can obtain other identities between different exponential sums, whose sheaf-theoretic incarnations were obtained by Katz \cite{Katz09}. 
\end{secnumber}

\begin{secnumber} We have partial results about the local monodromy of $\Be_{\cG}^\dagger$ (and $\Kl_{\cG}$) at $\infty$. Namely, we show that the nilpotent monodromy operator is trivial and the local Galois representation $\varphi_{\infty}:I_{\infty}\to \cG$ is a simple wild parameter in the sense of Gross-Reeder \cite{GR10} \S~6 (see corollary \ref{local monodromy} (ii)). If $p\nmid n$, one can even show that the local monodromy of $\Be_{n}^\dagger$ at $\infty$ coincides with the Galois representations constructed in \cite{GR10} \S~6.2, by studying the solutions of Bessel differential equation \eqref{Bessel connection intro} at $\infty$. 
%We refer to (\cite{Ohk18} 6.7) for a treatment in the case $n=2$.
(The corresponding $\ell$-adic statement for $\Kl_n$ was proved by Fu and Wan \cite{FW05} thm. 1.1.) 
    
       The above result, together with theorem \ref{intro functoriality}(ii), implies that when $p=2$ and $n$ is an odd integer, the associated local Galois representation for $\Be_{\SO_n}^{\dagger}$ (and $\Kl_{\SO_{n}}$) at $\infty$ coincides with the simple wild parameter constructed in \cite{GR10} \S~6.3. For example, the image of the inertia group $I_{\infty}$ in the case $\cG=\SO_3$ is isomorphic to $A_4$. 	Together with $\Be_{\SO_3,\Std}^{\dagger} \simeq \Be_{\SL_2,\Sym^2}^{\dagger}$ \eqref{SL2 vs SO3}, this allows us to recover Andr\'e's result on the local monodromy group of $\Be_{2}^{\dagger}$ at $\infty$ when $p=2$ (\cite{And02} sections 7, 8). 

\end{secnumber}

\subsection{Strategy of the proof and the organization of the article}  
\begin{secnumber} \label{intro main thm proof}
	Now we outline the strategy to prove the above results. Part (i) of theorem \ref{intro main thm} follows by combining following three ingredients: 
	\begin{itemize}
		\item[(i)] We first mimic Heinloth-Ng\^o-Yun's construction to produce a $\cG$-valued overconvergent $F$-isocrystal $\Kl_{\cG}^{\rig}$ on $\mathbb{G}_{m,\mathbb{F}_p}$ and a $\cG$-bundle with connection $\Kl_{\cG}^{\dR}$ on $\mathbb{G}_{m,K}$ (section \ref{construction HNY}). 
	A key step is to develop the geometric Satake equivalence for arithmetic $\mathscr{D}$-modules, which we will discuss latter \eqref{intro geo Satake}. 

\item[(ii)] Then we show that the overconvergent isocrystal $\Kl_{\cG}^{\rig}$ is isomorphic to the analytification of the $\cG$-connection $\Kl_{\cG}^{\dR}$ (section \ref{Frob str Bessel}) by comparing certain relative de Rham cohomologies and relative rigid cohomologies. 

\item[(iii)] We strengthen a result of the second author \cite{Zhu17} to identify $\Kl_{\cG}^{\dR}$ with $\Be_{\cG}$ (section \ref{compare Kl Be}). 
	\end{itemize}
\end{secnumber}
\begin{secnumber} \label{intro monodromy proof}
	The local monodromy of $\Be_{\cG}^{\dagger}$ at $0$ is principal unipotent, which implies that 
	its geometric monodromy $G_{\geo}$ contains a principal $\SL_2$. This puts strong restrictions on the possible Dynkin diagrams of $G_{\geo}$ (cf. \ref{list lie algebras} for a possible list). 
	A result of Baldassarri \cite{Ba82} (cf. \cite{And02} 3.2), which implies that the $p$-adic slope of $\Be_{\cG}^{\dagger}$ at $\infty$ is less or equal to the formal slope of $\Be_{\cG}$ at $\infty$, allows us to exclude the case $G_{\geo}=\PGL_2$ (or $\SL_2$) in most cases. Together with certain symmetry on $\Be_{\cG}^{\dagger}$, this implies theorem \ref{intro monodromy gp}(i).
\end{secnumber}
\begin{secnumber} \label{functoriality proof}
	The analogous functoriality for Bessel connections $\Be_{\cG}$ (theorem \ref{intro functoriality}(i))  follows from their definition. 
	Then we deduce the functoriality between $\Be_{\cG}^{\dagger}$'s by theorems \ref{intro main thm}(i) and \ref{intro monodromy gp}(i). 
	For theorem \ref{intro functoriality}(ii) (and therefore theorem \ref{intro monodromy gp}(ii)), we construct an isomorphism between the maximal slope quotients of  $\Be_{2n+1}^{\dagger}$ and $\Be_{\SO_{2n+1},\Std}^{\dagger}$ using a refinement of Dwork's congruences \cite{Dw68} in the $2$-adic case. Then we conclude that $\Be_{2n+1}^{\dagger}\simeq \Be_{\SO_{2n+1},\Std}^{\dagger}$ by a recent theorem of Tsuzuki \cite{Tsu19} (cf. appendix \ref{appendix}). 
	Since $\Be_{\cG}^{\dagger}$ is the $p$-adic companion of $\Kl_{\cG}$, theorem \ref{intro functoriality} follows.
\end{secnumber}

\begin{secnumber}
By functoriality, we reduce theorem \ref{intro main thm}(ii) to the corresponding assertion for (Frobenius) Newton polygon of $\Be_{\SL_n,\Std}^{\dagger}$ and of $\Be_{\SO_{2n+1},\Std}^{\dagger}$, which are isomorphic to $\Be_n^{\dagger}$ and a hypergeometric overconvergent $F$-isocrystal \cite{Miy} respectively. 
In these cases, the assertion follows from the results of Dwork, Sperber and Wan \cite{Dw74,Sp80,Wan93}. 
\end{secnumber}

\begin{secnumber} \label{intro geo Satake}
	As mentioned above, in order to carry through the first step of \ref{intro main thm proof}, we need to establish a version of the geometric Satake equivalence for arithmetic $\mathscr{D}$-modules. 
	This is based on the recent development of the six functors formalism, weight theory and nearby/vanishing cycle functors for arithmetic $\mathscr{D}$-modules \cite{Car09,CT12,AC17,AC18,Abe18II}. 
	We will review these theories in subsections \ref{review isocrystals}--\ref{nearby vanishing cycle}.

	To state our result, we first introduce some notations. 
	Let $k$ be a finite field with $q=p^s$ elements and $K$ a finite extension of $\mathbb{Q}_q$. Suppose that there exists a lift $\sigma:K\to K$ of the $t$-th Frobenius automorphism of $k$ for some integer $t$. 
	Let $G$ be a split reductive group over $k$, $\cG$ its Langlands dual group over $K$, $\Gr_G$ the affine Grassmannian of $G$, and $L^+G$ the positive loop group of $G$. 

	Given a $k$-scheme $X$, one may consider the category $\Hol(X/K)$ of holonomic arithmetic $\mathscr{D}$-modules on $X$ and the category $\Hol(X/K_F)$ of objects of $\Hol(X/K)$ with a Frobenius structure, which are the analogues of the category of $\ell$-adic sheaves on $X_{\overline{k}}$ and the category of Weil sheaves on $X$ respectively. 
	We denote by $\Hol_{L^+G}(\Gr_G/K)$ (resp. $\Hol_{L^+G}(\Gr_G/K_F)$) the category of $L^+G$-equivariant objects in $\Hol(\Gr_G/K)$ (resp. $\Hol(\Gr_G/K_F)$). 
       
	The geometric Satake equivalence (for geometric coefficients) states that the category $\Hol_{L^+G}(\Gr_G/K)$ is a neutral Tannakian category over $K$ whose Tannakian group is $\cG$ \eqref{thm Tannakian cat}. 	
	The Tannakian structure and the Frobenius structure on $\Hol_{L^+G}(\Gr_G/K_F)$ allows us to define a homomorphism $\iota: \mathbb{Z}\to \Aut(\cG(K))$ \eqref{general Tannakian Z} and hence a semi-direct product $\cG(K)\rtimes \mathbb{Z}$.

	\begin{theorem} \label{geo satake intro}
		\textnormal{(i) (Geometric coefficients \ref{thm Tannakian cat})} There exists a natural equivalence of monoidal categories between $\Hol_{L^+G}(\Gr_{G}/K)$ and $\Rep(\cG)$. 

		\textnormal{(ii) (Arithmetic coefficients \ref{Satake arith})} 
		There exists an equivalence of monoidal categories between $\Hol_{L^+G}(\Gr_{G}/K_F)$ and the category $\Rep^{\circ}_{K,\sigma}(\cG(K)\rtimes \mathbb{Z})$ of certain $\sigma$-semi-linear representations of $\cG(K)\rtimes \mathbb{Z}$ (cf. \ref{notation semilinear L}).
	\end{theorem}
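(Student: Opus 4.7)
The plan is to adapt the Mirković--Vilonen proof of geometric Satake to the setting of arithmetic $\mathscr{D}$-modules, relying crucially on the six-functor formalism, purity/weight theory, and the nearby cycle functors developed by Caro and Abe--Caro. First I would set up $\Hol_{L^+G}(\Gr_G/K)$ by writing $\Gr_G=\varinjlim_{\mu}\Gr_G^{\le\mu}$ as a filtered colimit of projective $k$-schemes and taking the corresponding $2$-colimit of equivariant holonomic categories. The simple objects should be the intersection cohomology modules $\IC_\mu$ supported on the Schubert varieties, indexed by dominant coweights; semisimplicity of the category will follow from Abe--Caro's decomposition theorem for pure arithmetic $\mathscr{D}$-modules, applied to the Bott--Samelson resolutions of $\overline{\Gr}_\mu$, together with parity vanishing for the Schubert stratification.

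Next I would construct the convolution product $\ast$ through the standard diagram
\begin{equation*}
	\Gr_G\times\Gr_G \xleftarrow{\,q\,} LG\times\Gr_G \xrightarrow{\,p\,} LG\overset{L^+G}{\times}\Gr_G \xrightarrow{\,m\,} \Gr_G,
\end{equation*}
using the twisted product formalism for $L^+G$-equivariant holonomic modules. Semi-smallness of $m$, combined with the decomposition theorem, forces $\ast$ to be t-exact for the holonomic perverse t-structure. For the commutativity constraint I would introduce the Beilinson--Drinfeld Grassmannian $\Gr_{G,X^2}$ over a smooth affine curve $X$, extend each $\IC_\mu$ to a global family over $X^2$, and invoke the arithmetic nearby cycle functor of Abe--Caro to construct a canonical isomorphism between the nearby fiber along the diagonal and the convolution $\IC_{\mu_1}\ast\IC_{\mu_2}$; the commutativity constraint then arises from the $\mathbb{Z}/2$-symmetry of $X^2$ away from the diagonal.

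For the fiber functor I would take $\omega=\bigoplus_i \rH^i\rR\Gamma(\Gr_G,-)$; its exactness follows from the t-exactness of $\ast$ and the cohomological purity of $\IC_\mu$. The Tannakian formalism then yields a pro-reductive group $\widetilde{G}$ over $K$ together with an equivalence $\Hol_{L^+G}(\Gr_G/K)\simeq\Rep(\widetilde{G})$. To identify $\widetilde{G}$ with $\cG$ I would analyze the weight decomposition of $\omega(\IC_\mu)$ under hyperbolic restriction along a generic cocharacter $\Gm\to T$, using the description of $T$-fixed points of $\Gr_G$ by the cocharacter lattice of $T$: the Mirković--Vilonen cycles will then show that the weight multiplicities match the character of the irreducible $\cG$-representation of highest weight $\mu$, while minuscule Schubert varieties (being smooth) pin down both the tensor structure and the root datum.

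For part (ii), Frobenius pullback acts on $\Hol_{L^+G}(\Gr_G/K_F)$, commuting with $\ast$ and with $\rR\Gamma$ but acting $\sigma$-semi-linearly on $K$-coefficients; this produces a $\sigma$-semi-linear tensor automorphism of $\omega$ and hence a canonical class in $\cG(K)\rtimes\mathbb{Z}$, yielding the homomorphism $\iota$ and the desired equivalence with $\Rep^{\circ}_{K,\sigma}(\cG(K)\rtimes\mathbb{Z})$. The principal obstacle will be the arithmetic fusion step: one must develop nearby cycles on the ind-scheme $\Gr_{G,X^2}$, which is not of finite type over $X^2$, and verify the compatibility $\Psi(\IC_\mu^{X^2})\simeq\IC_{\mu_1}\ast\IC_{\mu_2}$ in the arithmetic $\mathscr{D}$-module setting compatibly with the Frobenius structure of part (ii). A secondary difficulty is checking that the Abe--Caro weight theory is sharp enough to supply the semisimplicity and hard Lefschetz statements that the Tannakian machine requires.
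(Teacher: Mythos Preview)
Your overall strategy---adapting the Mirkovi\'c--Vilonen argument via convolution, Beilinson--Drinfeld fusion, hypercohomology as fiber functor, and hyperbolic restriction to pin down the root datum---is exactly what the paper does. The technical route, however, differs in one important respect. You plan to realize the fusion isomorphism by computing arithmetic nearby cycles on $\Gr_{G,X^2}$ along the diagonal, and you correctly flag this as the principal obstacle (nearby cycles on ind-schemes). The paper sidesteps this entirely by introducing a notion of \emph{universal local acyclicity} (ULA) for arithmetic $\mathscr{D}$-modules: it shows that the twisted external product $\mathcal{A}_1\widetilde{\boxtimes}\mathcal{A}_2$ is ULA over $X^2$, hence $m_+(\mathcal{A}_1\widetilde{\boxtimes}\mathcal{A}_2)$ is ULA, and ULA objects satisfy $\Delta^+[-1]\simeq\Delta^![1]$ and coincide with the intermediate extension from the complement of the diagonal. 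This gives both the holonomicity of convolution and the commutativity constraint without ever computing $\Psi$ on the BD Grassmannian. Your route should work, but the ULA approach is cleaner precisely because ULA is stable under proper pushforward and is easy to define on ind-schemes.

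Two smaller remarks. First, you invoke semi-smallness of $m$ and hard Lefschetz; the paper uses neither---t-exactness of $\ast$ falls out of the fusion argument, and hard Lefschetz plays no role anywhere. Second, you mention hyperbolic restriction but not that Braden's theorem itself must be proved in the arithmetic $\mathscr{D}$-module setting; the paper devotes a subsection to this, reducing to the affine case via Sumihiro and then arguing as in Braden's original paper.
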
 
       
       Although the strategy of the proof of this theorem is same as the $\ell$-adic case, we need to establish some foundational results in the setting of arithmetic $\mathscr{D}$-modules. 
       We introduce a notion of \emph{universal local acyclicity} (ULA) for arithmetic $\mathscr{D}$-modules and discuss its relation with the nearby/vanishing cycle functors introduced by Abe-Caro and Abe \cite{AC17,Abe18II} in subsection \ref{subsection ULA}. 
       We also prove a version of Braden's hyperbolic localization theorem is this setting in subsection \ref{Braden thm sec}. 
       
       Recall that there are motivic versions of the geometric Satake equivalence \cite{Zhu18, RS19}. The above theorem can be regarded as their $p$-adic realization. (But as far as we know, there is no general construction of the realization functor as we need so the above theorem is not a formal consequence of \emph{loc. cit.}.)
       On the other hand, there is a very recent work of R. Cass \cite{Ca19} on the geometric Satake equivalence for perverse $\mathbb{F}_p$-sheaves. 
       It would be very interesting to see whether there is a version of the geometric Satake equivalence for some $\mathbb Z_p$-coefficient sheaf theory, which after inverting $p$ and mod $p$ specializes to our version and Cass' version respectively. 
        
        We hope our article will lead further investigation of the $p$-adic aspect of the geometric Langlands program in the future.
\end{secnumber}

\begin{secnumber}
	We briefly go over the organization of this article. Section \ref{Dmods} contains a review of and some complements on the theory of arithmetic $\mathscr{D}$-modules and overconvergent ($F$-)isocrystals. 
	In section \ref{sec Sat}, we establish the geometric Satake equivalence for arithmetic $\mathscr{D}$-modules \eqref{geo satake intro}. 
	Subsections \ref{construction HNY}-\ref{Generalised Bessel} are devoted to the proof of theorem \ref{intro main thm}(i) (cf. \ref{intro main thm proof}). 
	We calculate the monodromy group of $\Be_{\cG}^{\dagger}$ in subsection \ref{sec monodromy} (cf. theorem \ref{intro monodromy gp} and \ref{intro monodromy proof}). 
	In subsection \ref{sec functoriality}, we prove the functoriality of Bessel $F$-isocrystals and of Kloosterman sheaves (cf. theorem \ref{intro functoriality}(i) and \ref{functoriality proof}). 
	In subsections \ref{sec hyper} and \ref{Kl classical}, we identify the Bessel $F$-isocrystals for classical groups with certain hypergeometric differential equations studied by Katz and Miyatani \cite{Katz90,Miy}. 
	In particular, we obtain identities in \ref{identity exp sums intro}. 
	In the last subsection \eqref{NT polygons}, we study the Frobenius Newton polygon of $\Be_{\cG}^{\dagger}$ and prove theorem \ref{intro main thm}(ii).
	Appendix \ref{appendix} is devoted to a proof of theorem \ref{intro functoriality}(ii) from the perspective of $p$-adic differential equations. 
\end{secnumber}

\begin{secnumber} \label{basic notation}
	In this article, we fix a prime number $p$. 
	Let $s$ be a positive integer and set $q=p^s$. 
	Let $k$ be a perfect field of characteristic $p$, $\overline{k}$ an algebraic closure of $k$ and $R$ a complete discrete valuation ring with residue field $k$. We set $K=\Frac(R)$. 
	We fix an algebraic closure $\overline{K}$ of $K$. 
	We assume moreover that the $s$-th Frobenius endomorphism $k\xrightarrow{\sim} k,~ x\mapsto x^q$ lifts to an automorphism $\sigma:R\xrightarrow{\sim}R$.  

	By a \textit{$k$-scheme} (resp. \textit{$R$-scheme}), we mean a separated scheme of finite type over $k$ (resp. over $R$). 
\end{secnumber}

\textbf{Acknowledgement.} 
	We would like to thank Benedict Gross, Shun Ohkubo, Daqing Wan, Liang Xiao and Zhiwei Yun for valuable discussions. X. Z. is partially supported by the National Science Foundation under agreement Nos. DMS-1902239.

\section{Review and complements on arithmetic $\mathscr{D}$-modules}
\label{Dmods}

\subsection{Overconvergent ($F$-)isocrystals and their rigid cohomologies} \label{review isocrystals}
In this subsection, we briefly recall the definition of overconvergent (resp. convergent) isocrystal following \cite{Ber96}. 

\begin{secnumber} \label{overconvergent function}
	Let $X$ be a $k$-scheme. 
	\textit{A frame of $X$} is a quadruple $(Y,j,\PP,i)$ (written as $(Y,\PP)$ for short) consisting of an open immersion $j:X\to Y$ of $k$-schemes, and a closed immersion $i:Y\to \PP$, where $\PP$ is a separated formal $R$-scheme which is
	smooth over $\Spf(R)$ in a neighborhood of $X$. 
	We denote by $\PP^{\rig}$ the rigid analytic space associated to $\PP$ and by $]X[_{\PP}$, $]Y[_{\PP}$ the tube of $X,Y$ in $\PP^{\rig}$ respectively (\cite{Ber96} \S~1).
		
	\textit{A strict neighborhood of $]X[_{\PP}$ in $]Y[_{\PP}$} is an admissible open subspace $V$ of $]Y[_{\PP}$ such that $V\cup ]Y\setminus X[_{\PP}$ 
	forms an admissible covering of $]Y[_{\PP}$ (\cite{Ber96} 1.2). 
	There exists an exact functor $j^{\dagger}$ from the category $\Ab(V)$ of abelian sheaves on $V$ to itself (\cite{Ber96} 2.1.1), 	
	defined by 
	\begin{equation} \label{functor j dagger}
		j^{\dagger}E=\varinjlim_{U} j_{U,V*}j_{U,V}^{-1}(E),
	\end{equation}
where the inductive limit is taken over all strict neighborhoods $j_{U,V}:U\to V$ of $]X[_{\PP}$ in $V$. 
	It is known that $j^{\dagger}\mathscr{O}_{V}$ is coherent as a sheaf of rings. 
	
	The notion of a morphism of frames is naturally defined, and a morphism $u:(Y',\PP')\to (Y,\PP)$ of frames induces a tensor functor:
	\begin{displaymath}
		u^{*}: \textnormal{ (Coherent $j^{\dagger}\mathscr{O}_{]Y[_{\PP}}$-modules) } \to 
			\textnormal{ (Coherent $j^{\dagger}\mathscr{O}_{]Y'[_{\PP'}}$-modules) }.
	\end{displaymath}	

	We denote by $\Conn(j^{\dagger}\mathscr{O}_{]Y[_{\PP}})$ the category of coherent $j^{\dagger}\mathscr{O}_{]Y[_{\PP}}$-modules $\mathscr{M}$ equipped with a $K$-linear morphism $\nabla:\mathscr{M}\to \mathscr{M}\otimes_{j^{\dagger}\mathscr{O}_{]Y[_{\PP}}}j^{\dagger}\Omega_{]Y[_{\PP}}^1$ satisfying the Leibniz rule and the usual integrability condition. 
\end{secnumber}

\begin{secnumber}\label{overcon isoc}
	For $n=1,2,3$, we denote by $\PP^n$ the fiber product of $n$-copies of $\PP$ over $\Spf(R)$. 
	Then $(Y,\PP^n)$ is a frame of $X$ and we have projections $p_i:(Y,\PP^2)\to (Y,\PP)$ ($i=1,2$), $p_{ij}:(Y,\PP^{3})\to (Y,\PP^{2})$ ($1\le i< j\le 3$) and the diagonal morphism $\Delta:(Y,\PP)\to (Y,\PP^2)$. 

	We denote by $\Iso^{\dagger}(X,Y/K)$ the category of pairs $(\mathscr{M},\varepsilon)$ consisting of a coherent $j^{\dagger}\mathscr{O}_{]Y[_{\PP}}$-module $\mathscr{M}$ and an isomorphism
		\begin{displaymath}
			\varepsilon: p_2^{*}(\mathscr{M})\xrightarrow{\sim} p_1^{*}(\mathscr{M})
		\end{displaymath}
	satisfying $\Delta^{*}(\varepsilon)=\id$ and $p_{13}^{*}(\varepsilon)=p_{12}^{*}(\varepsilon)\circ p_{23}^{*}(\varepsilon)$. Such a pair is called an \textit{isocrystal on $X$ overconvergent along $Y-X$ (relative to $K$)}. 
	This category is independent of the choice of $\PP$ up to canonical equivalences (\cite{Ber96} 2.3.1). 
	
	When $Y=X$, we have $j^{\dagger}\mathscr{O}_{]Y[_{\PP}}=\mathscr{O}_{]X[_{\PP}}$. Such a pair is also called a \textit{convergent isocrystal on $X/K$}. 
	The category $\Iso^{\dagger}(X,X/K)$ is simply denoted by $\Iso(X/K)$. 
	
	When $\overline{X}$ is a compactification of $X$ over $k$, the category $\Iso^{\dagger}(X,\overline{X}/K)$ is independent of the choice of $\overline{X}$ up to canonical equivalences (\cite{Ber96} 2.3.5) and is simply called the category of \textit{overconvergent isocrystals on $X/K$}, denoted by $\Iso^{\dagger}(X/K)$. 
\end{secnumber}

\begin{secnumber} \label{overconvergent connection}
	There exists an exact and fully faithful functor (\cite{Ber96} 2.2.5, 2.2.7)
	\begin{displaymath}
		\Iso^{\dagger}(X,Y/K)\to \Conn(j^{\dagger}\mathscr{O}_{]Y[_{\PP}}),\qquad (\mathscr{M},\varepsilon)\mapsto (\mathscr{M},\nabla).
	\end{displaymath}

	When $X=Y$ (resp. $Y=\overline{X}$ is a compactification of $X$), we say that $(\mathscr{M},\nabla)$ is \textit{convergent} (resp. \textit{overconvergent}) if it is contained in the essential image of the above functor.

	Let $\mathscr{M}$ be an overconvergent isocrystal on $X$ relative to $K$ and $\mathscr{M}\otimes_{j^{\dagger}\mathscr{O}_{]\overline{X}[_{\PP}}}j^{\dagger}\Omega_{]\overline{X}[_{\PP}}^{\bullet}$ the associated de Rham complex with respect to a frame $(\overline{X},\PP)$. 
	The \textit{rigid cohomology} $\bR \Gamma_{\rig}(X/K,\mathscr{M})$ is defined by 
	\begin{equation}
		\bR \Gamma_{\rig}(X/K,\mathscr{M})=
		\bR \Gamma(]\overline{X}[_{\PP},\mathscr{M}\otimes_{j^{\dagger}\mathscr{O}_{]\overline{X}[_{\PP}}} j^{\dagger}\Omega_{]\overline{X}[_{\PP}}^{\bullet}). 
		\label{rig coh def general}
	\end{equation}
\end{secnumber}

\begin{secnumber} \label{overconv F-iso}
	The category $\Iso^{\dagger}(X/K)$ is functorial with respect to pullbacks (\cite{Ber96} 2.3.6). 
	The absolute $s$-th Frobenius morphism $F_X:X\to X$ and endomorphism $\sigma:K\to K$ induce the Frobenius pullback functor:
	\begin{equation} \label{Frobenius pullback iso}
		F_X^{*}:\Iso^{\dagger}(X/K)\to \Iso^{\dagger}(X/K).
	\end{equation}

	\textit{An overconvergent $F$-isocrystal on $X/(K,\sigma)$ (or simply $X/K$)} is an overconvergent isocrystal $\mathscr{M}$ together with an isomorphism $\varphi:F_X^{*}(\mathscr{M})\xrightarrow{\sim} \mathscr{M}$, called \textit{($s$-th) Frobenius structure of $\mathscr{M}$}. 

	We denote by $\Fr\Iso^{\dagger}(X/K)$ the category of overconvergent $F$-isocrystals on $X/K$ and by $\Iso^{\dagger\dagger}(X/K)$ the thick full subcategory of $\Iso^{\dagger}(X/K)$ generated by those that can be endowed with an $s'$-th Frobenius structure for some integer $s|s'$. 
\end{secnumber}

\begin{secnumber} \label{Dwork isocrystal}
	In the following, we explain some examples of overconvergent isocrystals.  

	(i) \textbf{Dwork $F$-isocrystal.}
	Let $k=\mathbb{F}_p$ (i.e. $s=1$), $K=\mathbb{Q}_p(\mu_p)$, $R=\mathscr{O}_K$ and $\sigma=\id$. 
	We choose $\pi\in K$ such that $\pi^{p-1}=-p$ and take $\mathscr{P}=\widehat{\mathbb{P}}^1_R,Y=\P1_k,X=\A1_k$. Then $]Y[=\widehat{\mathbb{P}}^{1,\rig}_{R}$ and $]X[$ is the closed unit disc. 
		If $t$ denotes a coordinate of $\A1$, the connection on $j^{\dagger}\mathscr{O}_{]Y[}$ defined by
		\begin{displaymath}
			\nabla = d +\pi dt,
		\end{displaymath}
		is overconvergent and is called \textit{Dwork isocrystal}, denoted by $\mathscr{A}_{\pi}$. 

	By considering the lifting of the Frobenius of $\P1$ to $R$ given by $t\to t^p$, $F_{\A1_k}^{*}(\mathscr{A}_{\pi})$ is the module $j^{\dagger}\mathscr{O}_{]Y[}$ equipped with the connection $\nabla^{\sigma}$ defined by 
		\begin{displaymath}
			\nabla^{\sigma} = d + \pi p t^{p-1} dt.
		\end{displaymath}
	We define a Frobenius structure $\varphi:F_{\A1_k}^{*}(\mathscr{A}_{\pi})\to \mathscr{A}_{\pi}$ by the multiplication by $\theta_{\pi}(x)=\exp(\pi(x-x^{p}))$, which is a section of $j^{\dagger}\mathscr{O}_{]Y[}$. 
	This gives \textit{Dwork $F$-isocrystal} associated to $\pi$ on $\A1_k/K$. 

	There exists a unique nontrivial additive character $\psi:\mathbb{F}_p\to K^{\times}$ satisfying
	\begin{displaymath}
		\psi(1)=1+\pi \mod \pi^{2}.
	\end{displaymath}
	For each $x\in \mathbb{F}_{p}$, we denote by $\widetilde{x}$ the Teichmüller lifting of $x$ in $\mathbb{Q}_p$. Then $\theta_{\psi}(\widetilde{x})=\psi(x)$ (\cite{Ber84} 1.4).  So
	the Frobenius trace function of $\mathscr{A}_{\pi}$ is equal to $\psi\circ\Tr_{\mathbb{F}_q/\mathbb{F}_p}(-)$. 
	We also denote $\mathscr{A}_{\pi}$ by $\mathscr{A}_{\psi}$, as it plays a similar role of Artin-Schreier sheaf associated to $\psi$ in the $\ell$-adic theory. 

	(ii) \textbf{Kummer $F$-isocrystal.} Let $k$ be a finite field with $q=p^s$ elements. Set $K=\mathbb{Q}_q$, $R=\mathscr{O}_K$ and $\sigma=\id$. 
	We choose $a\in R$ and take $\PP=\widehat{\mathbb{P}}^1_R$, $Y=\mathbb{P}^1_k$ and $X=\mathbb{G}_{m,k}$. 
	If $x$ denotes a coordinate of $\Gm$, the connection on $j^{\dagger}\mathscr{O}_{]Y[}$ defined by 
		\begin{displaymath}
			\nabla = d - a\frac{dx}{x},
		\end{displaymath}
		is overconvergent, denoted by $\mathscr{K}_{a}$. With the lifting of Frobenius as in (i), $F_{\Gm}^{*}(\mathscr{K}_{a})$ is the module $j^{\dagger}\mathscr{O}_{]Y[}$ equipped with connection $\nabla^{\sigma}$ defined by 
		\begin{displaymath}
			\nabla^{\sigma} = d - a p \frac{dx}{x}.	
		\end{displaymath}
		The isocrystal $\mathscr{K}_a$ has a Frobenius structure if and only if $a\in \frac{1}{q-1}\mathbb{Z}$. 
		This Frobenius structure is given by multiplication by $t^{(q-1)a}$. Then we obtain the \textit{Kummer $F$-isocrystal} $\mathscr{K}_a$. 

		Let $\chi$ be a character of $k^{\times}$ such that $\chi(x)=\widetilde{x}^{(q-1)a}$, where $\widetilde{x}$ denotes the Teichmüller lifting of $x$. 
		We also denote $\mathscr{K}_{a}$ by $\mathscr{K}_{\chi}$ because the Frobenius trace function of $\mathscr{K}_{a}$ is equal to $\chi\circ \Nm_{k'/k}$. 
\end{secnumber}

\subsection{(Co)specialization morphism for de Rham and rigid cohomologies} 
\label{Sp cosp}

In this subsection, we review the specialization and cospecialization morphisms between the de Rham and rigid cohomology following (\cite{BB04} \S~1) and show the compatibility of these two morphisms in proposition \ref{compatibility sp cosp}. 

The results of this subsection will mainly be used in subsection \ref{Frob str Bessel}. 

\begin{secnumber} \label{dagger construction}
	In this subsection, $X$ denotes a smooth $R$-scheme of pure relative dimension $d$ and $X_{k}$ (resp. $X_{K}$) its special (resp. generic) fiber. 
	We use the corresponding calligraphic letter $\mathcal{X}$ to denote the rigid analytic space $X_K^{\an}$ associated to $X_{K}$ and the corresponding gothic letter $\XX$ to denote the $p$-adic completion of $X$. We denote by $\XX^{\rig}$ the rigid generic fiber of $\XX$. 
Let $\varepsilon:\mathcal{X}\to X_{K}$ denote the canonical morphism of topoi.

Let $(M,\nabla)$ be a coherent $\mathscr{O}_{X_K}$-module endowed with an integrable connection (relative to $K$). 
	We denote by $(M^{\an},\nabla^{\an})$ its pullback to $\mathcal{X}$ along $\varepsilon$.
	Then the canonical morphism of de Rham complexes $\varepsilon^{-1}(M\otimes_{\mathscr{O}_{X_K}}\Omega_{X_K}^{\bullet})\to M^{\an}\otimes_{\mathscr{O}_{\mathcal{X}}}\Omega_{\mathcal{X}}^{\bullet}$ induces a morphism from algebraic de Rham cohomology to analytic de Rham cohomology
	\begin{equation} \label{alg to an dR}
		\bR\Gamma_{\dR}(X_K,(M,\nabla))=\bR \Gamma(X_{K},M\otimes_{\mathscr{O}_{X_{K}}}\Omega_{X_{K}}^{\bullet})\to 
		\bR \Gamma(\mathcal{X},M^{\an}\otimes_{\mathscr{O}_{\mathcal{X}}}\Omega_{\mathcal{X}}^{\bullet})=\bR\Gamma_{\an}(\mathcal{X},(M^{\an},\nabla^{\an})).
	\end{equation}
\end{secnumber}

\begin{secnumber} \label{setting strict neighborhood}
	We assume that there exists a smooth proper $R$-scheme $\overline{X}$ and an open immersion $j:X\to \overline{X}$. 
	Let $\overline{\XX}$ be the $p$-adic completion of $\overline{X}$. Then the two rigid spaces $\overline{\XX}^{\rig}$ and $\overline{\mathcal{X}}=\overline{X}_{K}^{\an}$ are isomorphic, and $\XX^{\rig}$ is the tube $]X_{k}[_{\overline{\XX}}$ of $X_{k}$ in $\overline{\mathcal{X}}$. 
	In particular, $\mathcal{X}$ is a strict neighborhood of $\XX^{\rig}$ in $\overline{\XX}^{\rig}$. 
	We denote by $\Conn(X_{K})$ (resp. $\Conn(\mathcal{X})$) the category of coherent $\mathscr{O}_{X_K}$-modules with an integrable connection. 	

	We associate to $M^{\an}$ a $j^{\dagger}\mathscr{O}_{\overline{\XX}^{\rig}}$-module $M^{\dagger}=j^{\dagger}(M^{\an})$ \eqref{functor j dagger}, endowed with the corresponding connection. 
	In this setting, we have the following diagram \eqref{overconvergent connection}: 
	\begin{equation} \label{various categories}
		\xymatrix{
			\Conn(X_{K}) \ar[r]_{(-)^{\an}} \ar@/^2pc/[rr]^{(-)^{\dagger}} & 
			\Conn(\mathcal{X}) \ar[r]_{j^{\dagger}} & 
			\Conn(j^{\dagger}\mathscr{O}_{\overline{\XX}^{\rig}}) \ar[r]^{ |_{\XX^{\rig}}} & 
			\Conn(\mathscr{O}_{\XX^{\rig}}) \\
			\Fr\Iso^{\dagger}(X/K)\ar[r]&\Iso^{\dagger\dagger}(X/K) \ar@{^{(}->}[r]& \Iso^{\dagger}(X/K) \ar[r]^{|_{\XX^{\rig}}} \ar@{^{(}->}[u] &
			\Iso(X/K) \ar@{^{(}->}[u]
		}
	\end{equation}
	where the vertical arrows are fully faithful \eqref{overconvergent connection}. 
	When $\overline{X}_k\setminus X_k$ is a divisor, the functor $|_{\XX^{\rig}}$ is exact and faithful (\cite{Ber96II} 4.3.10). 
\end{secnumber}

\begin{secnumber} \label{dR rig coh}
	In the following, we assume moreover that the connection on $M^{\dagger}$ is \textit{overconvergent} (see \ref{overconvergent connection} for the definition). 
	The rigid cohomology $\bR \Gamma_{\rig}(X_{k}/K,M^{\dagger})$ \eqref{rig coh def general} can be calculated by 
	\begin{equation}
		\bR \Gamma_{\rig}(X_{k}/K,M^{\dagger})\xrightarrow{\sim} \bR \Gamma(\mathcal{X},M^{\dagger}\otimes_{\mathscr{O}_{\mathcal{X}}}\Omega_{\mathcal{X}}^{\bullet}). \label{rig coh def}
	\end{equation}

	The adjoint morphism $\id \to j^{\dagger}$ \eqref{functor j dagger} induces a canonical morphism on $\mathcal{X}$
	\begin{equation}
		M^{\an}\otimes_{\mathscr{O}_{\mathcal{X}}}\Omega_{\mathcal{X}}^{\bullet} \to 
		M^{\dagger}\otimes_{\mathscr{O}_{\mathcal{X}}}\Omega_{\mathcal{X}}^{\bullet}.
		\label{an to rig}
	\end{equation}
	By composing with \eqref{alg to an dR}, we deduce a canonical morphism, denoted by $\rho_{M}$ and called \textit{specialization morphism} for de Rham and rigid cohomologies: 
	\begin{equation} \label{specialization morphism}
		\rho_{M}: \bR \Gamma_{\dR}(X_{K},(M,\nabla)) \to \bR \Gamma_{\rig}(X_{k}/K,M^{\dagger}).
	\end{equation}

	Let $\mathbb{R}\underline{\Gamma}_{]X_k[}$ be the (derived) functor of local sections supported in the tube $]X_k[_{\XX}$ on $\mathcal{X}$ (or on $\overline{\mathcal{X}}$) (\cite{Ber96} 2.1.6). 
	The rigid cohomology with compact supports and coefficients in $M^{\dagger}$ is defined as: 
	\begin{eqnarray}
		\bR\Gamma_{\rig,\rc}(X_{k}/K,M^{\dagger})=\bR\Gamma(\mathcal{X}, \bR\underline{\Gamma}_{]X_k[} (M^{\an}\otimes\Omega_{\mathcal{X}}^{\bullet})). 	\label{def rig c}
	\end{eqnarray}
	
	The canonical morphism 
	\begin{equation} \label{local sections dR to dR}
		\bR\underline{\Gamma}_{]X_k[} (M^{\an}\otimes\Omega_{\mathcal{X}}^{\bullet})\to M^{\an}\otimes\Omega_{\mathcal{X}}^{\bullet}
	\end{equation}
	and \eqref{an to rig} induce a morphism
	\begin{equation} \label{forget map rig}
		\iota_{\rig}: \bR\Gamma_{\rig,\rc}(X_k/K,M^{\dagger})\to \bR \Gamma_{\rig}(X_k/K,M^{\dagger}).
	\end{equation}
\end{secnumber}

\begin{secnumber}
	We recall the definition of de Rham cohomology with compact supports and coefficients in $(M,\nabla)$ and the cospecialization morphism, following (\cite{BB04} 1.8 and \cite{AB} Appendix D.2). 
	
	Let $I$ be the ideal sheaf of the reduced closed subscheme $\overline{X}_{K}-X_{K}$ in $\overline{X}_{K}$. Take a coherent $\mathscr{O}_{\overline{X}_{K}}$-module $\overline{M}$ extending $M$. 
	The connection $\nabla$ extends to a connection on the pro-$\mathscr{O}_{\overline{X}_{K}}$-module $(I^{n}\overline{M})_{n}$ (\cite{AB} D.2.12). 
	This allows us to define the de Rham pro-complex $I^{\bullet}\overline{M}\otimes_{\mathscr{O}_{\overline{X}_K}} \Omega_{\overline{X}_{K}}^{\bullet}:= (I^{n}\overline{M})_{n}\otimes \Omega_{\overline{X}_K}^{\bullet}$. 
	The algebraic de Rham cohomology with compact supports and coefficients in $(M,\nabla)$, denoted by $\bR \Gamma_{\dR,\rc}(X_{K},(M,\nabla))$, is defined as (\cite{AB} D.2.16)
	\begin{eqnarray}
		\bR \Gamma_{\dR,\rc}(X_{K},(M,\nabla)) &=& \bR \Gamma(\overline{X}_{K}, \bR\varprojlim I^{\bullet}\overline{M}\otimes\Omega_{\overline{X}_{K}}^{\bullet}) 	\label{alg dR c}\\
		&\simeq& \bR \varprojlim \bR \Gamma(\overline{X}_K,I^{\bullet}\overline{M}\otimes\Omega_{\overline{X}_{K}}^{\bullet}). \nonumber
	\end{eqnarray}
	
	Let $j_K$ denote the open immersion $X_{K}\to \overline{X}_{K}$. There exists a canonical isomorphism on $X_{K}$:
	\begin{equation}
		j_{K}^{*}(\bR \varprojlim (I^{\bullet} \overline{M}\otimes \Omega_{\overline{X}_K}^{\bullet}))\xrightarrow{\sim} M\otimes \Omega_{X_K}^{\bullet}.
	\end{equation}
	We deduces from its adjoint $\bR \varprojlim (I^{\bullet} \overline{M}\otimes \Omega_{\overline{X}_K}^{\bullet}) \to \bR j_{K*}(M\otimes \Omega_{X_K}^{\bullet})$ a canonical morphism:
	\begin{equation}
		\iota_{\dR}: \bR \Gamma_{\dR,\rc}(X_K,(M,\nabla)) \to \bR \Gamma_{\dR}(X_K,(M,\nabla))
		\label{alg dRc to dR}
	\end{equation}

	By the rigid GAGA, there are canonical isomorphisms
	\begin{eqnarray}
		\qquad \bR \varprojlim \bR \Gamma(\overline{X}_K,I^{\bullet}\overline{M}\otimes \Omega_{\overline{X}_K}^{\bullet}) \xrightarrow{\sim}
		\bR \varprojlim \bR \Gamma(\overline{\mathcal{X}},I^{\bullet}\overline{M}^{\an}\otimes \Omega_{\overline{\mathcal{X}}}^{\bullet}) \xrightarrow{\sim} 
		\bR \Gamma(\overline{\mathcal{X}}, \bR \varprojlim I^{\bullet}\overline{M}^{\an}\otimes \Omega_{\overline{\mathcal{X}}}^{\bullet}). \label{alg dR an dR an}
	\end{eqnarray}
	We denote the right hand side by $\bR\Gamma_{\an,\rc}(\mathcal{X},(M^{\an},\nabla^{\an}))$. 
	Let $j^{\an}$ be the inclusion $\mathcal{X}\to \overline{\mathcal{X}}$. 
	Similarly, there exists a canonical morphism
	\begin{equation} \label{map define iota an}
		\bR \varprojlim (I^{\bullet} \overline{M}^{\an}\otimes \Omega_{\overline{\mathcal{X}}}^{\bullet}) \to 
		\bR j^{\an}_{*}(M^{\an}\otimes \Omega_{\mathcal{X}}^{\bullet}),
	\end{equation}
	which induces a morphism on analytic de Rham cohomologies
	\begin{equation}
		\iota_{\an}: \bR\Gamma_{\an,\rc}(\mathcal{X},(M^{\an},\nabla^{\an})) \to \bR\Gamma_{\an}(\mathcal{X},(M^{\an},\nabla^{\an})).
	\end{equation}

	Since $(\mathcal{X},](\overline{X}-X)_k[_{\XX})$ is an admissible covering of $\overline{\mathcal{X}}$, the canonical morphisms
		\begin{equation} \label{jstar compact support}
			\bR\underline{\Gamma}_{]X_k[}(\bR j^{\an}_{*}(E)) \to 
			\bR j^{\an}_{*}(\bR\underline{\Gamma}_{]X_k[} (E)), \quad
			\bR\underline{\Gamma}_{]X_k[}(E) \to 
			\bR\underline{\Gamma}_{]X_k[} \bR j^{\an}_{*}(j^{\an *}(E))
		\end{equation}
		are isomorphic for any complex of abelian sheaves $E$ on $\mathcal{X}$ (resp. $\overline{\mathcal{X}}$). 
		Then \eqref{map define iota an} induces an isomorphism
		\begin{equation} \label{iso support tube}
			\bR\underline{\Gamma}_{]X_k[}(\bR \varprojlim (I^{\bullet} \overline{M}^{\an}\otimes \Omega_{\overline{\mathcal{X}}}^{\bullet}) )
				\xrightarrow{\sim}
				\bR\underline{\Gamma}_{]X_k[} (\bR j^{\an}_{*}(M^{\an}\otimes \Omega_{\mathcal{X}}^{\bullet})).
		\end{equation}

		The \textit{cospecialization morphism}, denoted by $\rho_{\rc,M}$, is defined as the composition 
	\begin{eqnarray}
		\quad \rho_{\rc,M}: \bR\Gamma_{\rig,\rc}(X_{k}/K,M^{\dagger})& \stackrel{\eqref{jstar compact support}}{\simeq} & \bR\Gamma(\overline{\mathcal{X}}, \bR\underline{\Gamma}_{]X_k[} \bR j^{\an}_*(M^{\an}\otimes\Omega_{\mathcal{X}}^{\bullet}))
			\label{cospecialization morphism} \\
			& \stackrel{\eqref{iso support tube} }{\simeq} &  \bR\Gamma(\overline{\mathcal{X}}, \bR\underline{\Gamma}_{]X_k[}(\bR \varprojlim (I^{\bullet} \overline{M}^{\an}\otimes \Omega_{\overline{\mathcal{X}}}^{\bullet}) ))  \nonumber \\
			&\to& \bR\Gamma(\overline{\mathcal{X}}, \bR \varprojlim (I^{\bullet} \overline{M}^{\an}\otimes \Omega_{\overline{\mathcal{X}}}^{\bullet}))
			~(=\bR\Gamma_{\an,\rc}(\mathcal{X},(M^{\an},\nabla^{\an}))) \nonumber \\
			&\simeq & \bR\Gamma_{\dR,\rc}(X_{K},(M,\nabla)). \nonumber
	\end{eqnarray}
\end{secnumber}

\begin{prop} \label{compatibility sp cosp}	
	With the above notation and assumption, the following diagram is commutative:
	\begin{displaymath} 
		\xymatrix{
			\bR \Gamma_{\rig,\rc}(X_{k}/K,M^{\dagger}) \ar[r]^{\iota_{\rig}} \ar[d]_{\rho_{\rc,M}}&
			\bR \Gamma_{\rig}(X_{k}/K,M^{\dagger})\\
			\bR \Gamma_{\dR,\rc}(X_{K},(M,\nabla)) \ar[r]^{\iota_{\dR}} & 
			\bR \Gamma_{\dR}(X_{K},(M,\nabla)) \ar[u]_{\rho_{M}}.		
		} 
	\end{displaymath}
\end{prop}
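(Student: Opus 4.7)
The plan is to realize all four cohomology groups as $\bR\Gamma(\overline{\mathcal{X}},-)$ of explicit complexes of abelian sheaves on $\overline{\mathcal{X}}$, express every morphism in the square as arising from a natural morphism between such complexes, and then reduce the compatibility to the naturality of the forget-support map $\bR\underline{\Gamma}_{]X_k[}(-)\to(-)$.

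First, I would use rigid GAGA in the form \eqref{alg dR an dR an} and the Leray spectral sequence for $j^{\an}:\mathcal{X}\to\overline{\mathcal{X}}$ to identify
\begin{itemize}
\item $\bR\Gamma_{\dR,\rc}(X_K,(M,\nabla))\simeq \bR\Gamma(\overline{\mathcal{X}},C_{\rc})$, where $C_{\rc}:=\bR\varprojlim(I^{\bullet}\overline{M}^{\an}\otimes\Omega_{\overline{\mathcal{X}}}^{\bullet})$;
\item $\bR\Gamma_{\dR}(X_K,(M,\nabla))\simeq\bR\Gamma(\overline{\mathcal{X}},\bR j^{\an}_*(M^{\an}\otimes\Omega_{\mathcal{X}}^{\bullet}))$;
\item $\bR\Gamma_{\rig,\rc}(X_k/K,M^{\dagger})\simeq\bR\Gamma(\overline{\mathcal{X}},\bR\underline{\Gamma}_{]X_k[}(\bR j^{\an}_*(M^{\an}\otimes\Omega_{\mathcal{X}}^{\bullet})))$ via \eqref{jstar compact support};
\item $\bR\Gamma_{\rig}(X_k/K,M^{\dagger})\simeq\bR\Gamma(\overline{\mathcal{X}},\bR j^{\an}_*(M^{\dagger}\otimes\Omega_{\mathcal{X}}^{\bullet}))$.
\end{itemize}

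Next, I would rewrite each of the four morphisms in the square as the image under $\bR\Gamma(\overline{\mathcal{X}},-)$ of a morphism of complexes on $\overline{\mathcal{X}}$. The morphism $\iota_{\dR}$ and the map \eqref{map define iota an} both come from one and the same adjunction morphism $u\colon C_{\rc}\to \bR j^{\an}_*(M^{\an}\otimes\Omega_{\mathcal{X}}^{\bullet})$. The specialization morphism $\rho_M$ is induced by the map $v\colon \bR j^{\an}_*(M^{\an}\otimes\Omega_{\mathcal{X}}^{\bullet})\to \bR j^{\an}_*(M^{\dagger}\otimes\Omega_{\mathcal{X}}^{\bullet})$ obtained by applying $\bR j^{\an}_*$ to the adjunction $\id\to j^{\dagger}$. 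The cospecialization $\rho_{\rc,M}$ is the composition of $\bR\underline{\Gamma}_{]X_k[}(u)$, which is an isomorphism by \eqref{iso support tube}, with the forget-support map $\bR\underline{\Gamma}_{]X_k[}(C_{\rc})\to C_{\rc}$. Finally, by \eqref{forget map rig}, $\iota_{\rig}$ is the composition of the forget-support map on $\bR j^{\an}_*(M^{\an}\otimes\Omega_{\mathcal{X}}^{\bullet})$ with $v$.

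With these identifications, the required equality $\iota_{\rig}=\rho_M\circ \iota_{\dR}\circ \rho_{\rc,M}$ reduces, after applying $\bR\Gamma(\overline{\mathcal{X}},-)$ and postcomposing with $v$, to the commutativity of the square
\begin{displaymath}
\xymatrix{
\bR\underline{\Gamma}_{]X_k[}(C_{\rc}) \ar[r]^{\bR\underline{\Gamma}_{]X_k[}(u)} \ar[d] &
\bR\underline{\Gamma}_{]X_k[}(\bR j^{\an}_*(M^{\an}\otimes\Omega_{\mathcal{X}}^{\bullet})) \ar[d] \\
C_{\rc} \ar[r]^{u} &
\bR j^{\an}_*(M^{\an}\otimes\Omega_{\mathcal{X}}^{\bullet})
}
\end{displaymath}
whose vertical arrows are the canonical forget-support maps. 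This square commutes by the naturality of the transformation $\bR\underline{\Gamma}_{]X_k[}(-)\to(-)$ applied to the morphism $u$.

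The main technical point is to check that the identifications produced in the first step are truly compatible with the adjunction maps used in the definitions of $\iota_{\dR}$, $\iota_{\rig}$, $\rho_M$, and $\rho_{\rc,M}$. Concretely, one must verify that the isomorphism \eqref{iso support tube} coincides with $\bR\underline{\Gamma}_{]X_k[}(u)$, and that the composition \eqref{alg to an dR} defining $\rho_M$ matches the GAGA-plus-Leray identification $\bR\Gamma_{\dR}(X_K,(M,\nabla))\simeq \bR\Gamma(\overline{\mathcal{X}},\bR j^{\an}_*(M^{\an}\otimes\Omega_{\mathcal{X}}^{\bullet}))$ followed by $v$. Both are routine unwindings of the relevant adjunctions, but keeping track of them cleanly is where the bookkeeping lies.
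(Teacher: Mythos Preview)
Your proposal is correct and follows essentially the same route as the paper's proof. The paper likewise first replaces the algebraic diagram by the analytic one via rigid GAGA, then observes that the composite $\rho_M\circ\iota_{\an}\circ\rho_{\rc,M}$ is induced on $\overline{\mathcal{X}}$ by the composition $\bR\underline{\Gamma}_{]X_k[}(C_{\rc})\to C_{\rc}\xrightarrow{u}\bR j^{\an}_*(M^{\an}\otimes\Omega_{\mathcal{X}}^{\bullet})$, and checks that its restriction to $\mathcal{X}$ agrees with the defining morphism of $\iota_{\rig}$; your naturality square for $\bR\underline{\Gamma}_{]X_k[}(-)\to(-)$ is exactly this observation repackaged. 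One minor slip: in your description of $\rho_{\rc,M}$ you want the \emph{inverse} of $\bR\underline{\Gamma}_{]X_k[}(u)$ followed by forget-support (as is clear from your diagram), not $\bR\underline{\Gamma}_{]X_k[}(u)$ itself.
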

\begin{proof}
	The algebraic de Rham cohomology with compact supports is isomorphic to the analytic one \eqref{alg dR an dR an}. 
	It suffices to show the following diagram is commutative 
	\begin{equation} \label{rig an diag}
		\xymatrix{
			\bR \Gamma_{\rig,\rc}(X_{k}/K,M^{\dagger}) \ar[r]^{\iota_{\rig}} \ar[d]_{\rho_{\rc,M}}&
			\bR \Gamma_{\rig}(X_{k}/K,M^{\dagger}) \\
			\bR \Gamma_{\an,\rc}(\mathcal{X}, (M^{\an},\nabla^{\an})) \ar[r]^{\iota_{\an}} & 
			\bR \Gamma_{\an}(\mathcal{X}, (M^{\an},\nabla^{\an})) \ar[u].		
		}
	\end{equation}
	where right vertical arrow is induced by \eqref{an to rig}. 
	
	The morphism $\bR \Gamma_{\rig,\rc}(X_{k}/K,M^{\dagger})\to \bR \Gamma_{\an}(\mathcal{X}, (M^{\an},\nabla^{\an}))$ is induced by the composition on $\overline{\mathcal{X}}$:
	\begin{displaymath}
		\bR\underline{\Gamma}_{]X_k[}(\bR \varprojlim (I^{\bullet} \overline{M}^{\an}\otimes \Omega_{\overline{\mathcal{X}}}^{\bullet}) ) \to 
		\bR \varprojlim (I^{\bullet} \overline{M}^{\an}\otimes \Omega_{\overline{\mathcal{X}}}^{\bullet}) \xrightarrow{\eqref{map define iota an}}
		\bR j^{\an}_{*}(M^{\an}\otimes \Omega_{\mathcal{X}}^{\bullet}). 
	\end{displaymath}
	The restriction of the above morphism to $\mathcal{X}$ coincides with the canonical morphism \eqref{local sections dR to dR}, which induces $\iota_{\rig}$ \eqref{forget map rig}. Then the commutativity of \eqref{rig an diag} follows.
\end{proof}

\subsection{Six functors formalism for arithmetic $\mathscr{D}$-modules} \label{section six functors formalism}
	Rigid cohomology theory is a $p$-adic Weil cohomology for a variety in characteristic $p$. 
	Overconvergent $F$-isocrystals are ``local systems'' in the coefficients theory of rigid cohomology. 
	However, the category of overconvergent $F$-isocrystals is not stable under certain cohomological operators. 
	Inspired by the theory of algebraic $\mathscr{D}$-modules, Berthelot introduced the notion of arithmetic $\mathscr{D}$-modules \cite{Ber96II, Ber02}. A six functor formalism for these coefficients is recently achieved by Caro, Abe and etc. 

	We use the notation of arithmetic $\mathscr{D}$-modules \cite{Ber02}. 
	For a smooth formal $R$-scheme $\XX$ and a divisor $Z$ of the special fiber of $\XX$, let $\mathscr{O}_{\XX,\mathbb{Q}}(^{\dagger}Z)$ (resp. $\mathscr{D}_{\XX,\mathbb{Q}}^{\dagger}(^{\dagger}Z)$) denote the sheaf of rings of functions (resp. differential operators) on $\XX$ with singularities overconvergent along $Z$ (\cite{Ber96II} 4.2.4). 
	Note that $\mathscr{O}_{\XX,\mathbb{Q}}(^{\dagger}Z)$ is isomorphic to $\Sp_{*}(j^{\dagger}\mathscr{O}_{\XX^{\rig}})$ for a frame $(\XX_{k},\XX)$ of $\XX_{k}-Z$ (see \ref{overconvergent function}) (\cite{Ber96II} 4.3.2). 
	We omit $(^{\dagger}Z)$ if $Z$ is empty. We denote $\mathscr{D}_{\XX,\mathbb{Q}}^{\dagger}(^{\dagger}Z)$ by $\mathscr{D}_{\XX,\mathbb{Q}}^{\dagger}(Z)$ (or $\mathscr{D}_{\XX,\mathbb{Q}}^{\dagger}(\infty)$) for short. 

\begin{secnumber} \label{basic def}
	Let us begin by recalling basic notions of $p$-adic coefficients used in \cite{Abe18}. 
	Let $L$ be an extension of $K$ in $\overline{K}$ and $\mathfrak{T}=\{k,R,K,L\}$ the associated geometric base tuple (\cite{Abe18} 1.4.10, 2.4.14). 

	We will also work in the arithmetic setting ($p$-adic coefficients with Frobenius structure). For this purpose, we need to assume moreover that there exists an automorphism $L\to L$ extending $\sigma:K\to K$ that we still denote by $\sigma$, and that there exists a sequence of finite extensions $M_n$ of $K$ in $L$ satisfying $\sigma(M_n)\subset M_n$ and $\cup_{n} M_n=L$. 
	Then we obtain an arithmetic base tuple $\mathfrak{T}_F=\{k,R,K,L,s,\sigma\}$ (\cite{Abe18} 1.4.10, 2.4.14). 
	We set $L_0=L^{\sigma=1}$. 

	Let $X$ be a $k$-scheme. There exists an $L$-linear (resp. $L_0$-linear) triangulated category $\rD(X/L)$ (resp. $\rD(X/L_F)$) relative to the geometric base tuple $\mathfrak{T}$ (resp. arithmetic base tuple $\mathfrak{T}_F$). 
	This category is denoted by $\rD^{\rb}_{\hol}(X/\mathfrak{T})$ or $\rD^{\rb}_{\hol}(X/L)$ (resp. $\rD^{\rb}_{\hol}(X/\mathfrak{T}_F)$ or $\rD^{\rb}_{\hol}(X/L_F)$) in (\cite{Abe18} 1.1.1, 2.1.16). 
	When $L=K$ and $X$ is quasi-projective, there exists a classical description of $\rD(X/K)$ in terms of arithmetic $\mathscr{D}$-modules introduced by Berthelot \cite{Ber02}: 
	If $X\to \mathscr{P}$ is an immersion into a smooth proper formal $R$-scheme $\mathscr{P}$, then $\rD(X/K)$ is a full subcategory of $\rD_{\coh}^{\rb}(\mathscr{D}_{\mathscr{P},\mathbb{Q}}^{\dagger})$ with objects satisfy certain finiteness condition called \textit{overholonomicity}, certain support condition, and can be equipped with some Frobenius structure 
	\footnote{To define Frobenius structure on objects of $\rD^{\rb}_{\coh}(\mathscr{D}_{\mathscr{P},\mathbb{Q}}^{\dagger})$ and the category $\rD(X/L)$, we need to assume the existence of a pair $(s,\sigma)$ (as in \ref{basic notation}). 
	However, the category $\rD(X/L)$ is independent of the choice of data $(s,\sigma)$ up to equivalences (\cite{Abe18} 1.1.2).} (cf. \cite{Abe18} 1.1.1, \cite{AC17}).
	
	The category $\rD(X/L)$ (resp. $\rD(X/L_F)$) is equipped with a t-structure, called \textit{holonomic t-structure}, whose heart is denoted by $\Hol(X/L)$ (resp. $\Hol(X/L_F)$), called \textit{category of holonomic modules}. These categories are analogue to the category of perverse sheaves in the $\ell$-adic cohomology theory. 
	The category $\Hol(X/L)$ is Noetherian and Artinian (\cite{Abe18} 1.2.7). 
	We denote by $\hH^*$ the cohomological functor for holonomic t-structure. 

	When $X=\Spec(k)$, there exists an equivalence of monoidal categories between $\rD(X/L)$ and the derived category of bounded complexes of $L$-vector spaces with finite dimensional cohomology.  
\end{secnumber}

\begin{secnumber} \label{Basic properties}
	The six functor formalism for $\rD(X/L)$ (resp. $\rD(X/L_F)$) has been established recently. We refer to \cite{AC17,AC18} and (\cite{Abe18} 2.3) for details and to (\cite{Abe18} 1.1.3) for a summary. Here we only collect some results needed in the sequel.

	Let $f:X\to Y$ be a morphism of $k$-schemes. For $\blacktriangle\in \{\emptyset,F\}$, there exist triangulated functors
	\begin{equation} \label{pullback pushforward}
		f_+,f_!:\rD(X/L_{\blacktriangle})\to \rD(Y/L_{\blacktriangle}),\qquad f^+,f^!:\rD(Y/L_{\blacktriangle})\to \rD(X/L_{\blacktriangle}),
	\end{equation}
	such that $(f^+,f_+)$, $(f_!,f^!)$ are adjoint pairs. These functors satisfy following properties:

	(i) The category $\rD(X/L_{\blacktriangle})$ is a closed symmetric monoidal category, namely it is equipped with a tensor product functor $\otimes$ and the unit object $L_X=\pi^+(L)$, where $\pi:X\to \Spec(k)$ is the structure morphism and $L$ is the constant module in degree $0$. 
	The functor $\otimes$ admits a left adjoint functor $\FHom_X$, called the \textit{internal Hom}. 
	The functor $f^+$ is monoidal. 

	(ii) There exists a duality functor $\mathbb{D}_{X}=\FHom_{X}(-,p^{!}L):\rD(X/L_{\blacktriangle})^{\circ}\to \rD(X/L_{\blacktriangle})$ (\cite{Abe18} 1.1.4). The canonical morphism $\id\to \mathbb{D}_X\circ \mathbb{D}_X$ is an isomorphism. We set $(-)\widetilde{\otimes}(-)= \mathbb{D}_X ( \mathbb{D}_X(-)\otimes \mathbb{D}_X(-))$. 

	(iii) There exists a canonical morphism of functors $f_!\to f_+$, which is an isomorphism if $f$ is proper. 

	(iv) (Base change). Consider the following Cartesian diagram of $k$-schemes
	\begin{equation}
		\xymatrix{
			X'\ar[r]^{g'} \ar[d]_{f'} & X \ar[d]^{f} \\
			Y' \ar[r]^{g} & Y.
		}
		\label{base change diagram}
	\end{equation}
	Then we have a canonical isomorphism $g^+f_!\simeq f'_!g'^+$. When $f$ is proper, this isomorphism is the base change homomorphism defined by the adjointness of $(f^+,f_+)$. 

	(v) (Berthelot-Kashiwara's theorem). 
	Let $i$ be a closed immersion. Then $i_+$ is exact and fully faithful. The restriction of $i^!$ to the essential image of $i_+$ is exact and is a quasi-inverse to $i_+$ (\cite{AC18} 1.3.2(iii)). 

	(vi) Let $i$ be a closed immersion of $k$-schemes and $j$ the open immersion defined its complement. There exists a canonical isomorphism $j^+\xrightarrow{\sim} j^!$. 
	We have distinguished triangles (\cite{Abe18} 1.1.3(10), 2.2.9):
	\begin{displaymath}
		j_!j^+\to \id\to i_+i^+\to, \qquad i_+i^!\to \id\to j_+j^+\to ,
	\end{displaymath}
	where the first and second morphisms are defined by adjunctions. 

	(vii) (Poincar\'e duality) We refer to (\cite{Abe18} 1.4.13) for the definition of Tate twist functor $(-)$. 
	Let $f:X\to Y$ be a smooth morphism of relative dimension $d$. Then there exists a canonical isomorphism $\varphi:f^+(d)[2d]\xrightarrow{\sim} f^!$ (\cite{Abe18} 1.5.13). 
	Moreover, the functors $f^+[d],f^![-d]$ are exact.

	(viii) There exists a canonical equivalence of categories $\rD(X/L_{\blacktriangle})\simeq \rD^{\rb}(\Hol(X/L_{\blacktriangle}))$ (\cite{AC17}, \cite{Abe18} 2.2.26).

	(ix) Let $X_1,X_2$ be two $k$-schemes and $p_i:X_1\times X_2 \to X_i$ the projection for $i=1,2$. 
	There exists a canonical isomorphism of functors $p_1^+(-)\otimes p_2^+(-)\simeq p_1^!(-)\widetilde{\otimes}p_2^!(-)$ (ii), denoted by $-\boxtimes -$ and called \textit{external tensor product}. This functor is exact (\cite{AC18} 1.3.3). 
\end{secnumber}

\begin{rem} \label{base tuple}
	(i) If $L$ is a finite extension of $K$, an object of $\Hol(X/L)$ is defined as an object of $\Hol(X/K)$ equipped with an \textit{$L$-structure} (\cite{Abe18} 1.4.1). 
	In general, Abe used the 2-inductive limit method of Deligne to construct $\rD(X/L)$ (\cite{Abe18} 2.4.14). 
	If $L'$ is an algebraic extension of $L$ in $\overline{K}$, we have an \textit{extension of scalars functor} $\iota_{L'/L}:\rD(X/L)\to \rD(X/L')$, which is exact and commutes with cohomological functors. 
	
	(ii) The category $\rD(X/\mathfrak{T})$ does not depend on the choice of the base field $k$ under certain conditions. 	More precisely, if $\mathfrak{T}'=\{k',R',K',L\}$ is another geometric base tuple over $\mathfrak{T}$. Then there exists a canonical equivalence (\cite{Abe18} 1.4.11):
	\begin{displaymath}
		\rD(X\otimes_k k'/\mathfrak{T}') \xrightarrow{\sim} \rD(X/\mathfrak{T}),
	\end{displaymath}
	which commutes with cohomological functors.

	(iii) Let $\mathfrak{T}_{F}$ be an arithmetic base tuple. 
	The $s$-th Frobenius morphism $F_X:X\to X$ induces a $\sigma$-semi-linear equivalence of categories $F_X^{*}:\rD(X/L)\xrightarrow{\sim} \rD(X/L)$ commuting with cohomological functors, called \textit{($s$-th) Frobenius pullback} (\cite{Abe18} 1.1.3 lemma). 
	An object of $\Hol(X/L_F)$ is an object $\mathscr{E}$ of $\Hol(X/L)$ equipped with a\textit{(n $s$-th) Frobenius structure} $\varphi:F_{X}^*(\mathscr{E})\xrightarrow{\sim} \mathscr{E}$ (cf. \cite{Abe18} 1.4). 
\end{rem}

\begin{secnumber} \label{generality overconv isocrystal}
	Let $X$ be a smooth $k$-scheme of dimension $d:\pi_0(X)\to \mathbb{N}$. 
	There exists a full subcategory $\Sm(X/L_{\blacktriangle})$ of $\Hol(X/L_{\blacktriangle})[-d] \subset \rD(X/L)$ consisting of \textit{smooth objects} (\cite{Abe18} 1.1.3(12) and 2.4.15). 
	In general, we say a complex $\mathscr{M}\in \rD(X/L_{\blacktriangle})$ is \textit{smooth} if $\hH^i(\mathscr{M})[-d]$ belongs to $\Sm(X/L_{\blacktriangle})$ for every $i$. 
	
	When $L=K$, there exists an equivalence $\widetilde{\Sp}_{*}$ between $\Sm(X/K)$ (resp. $\Sm(X/K_{F})$) and $\Iso^{\dagger\dagger}(X/K)$ (resp. $\Fr\Iso^{\dagger}(X/K)$) \eqref{overconv F-iso}. 
	If $X$ admits a smooth compactification $\overline{X}$ with a smooth lifting $\overline{\XX}$ to $\Spf(R)$, this equivalence is induced by the specialisation morphism $\Sp_{*}:\overline{\XX}^{\rig}\to \overline{\XX}$:
	\begin{equation} 
		\widetilde{\Sp}_{*}= \Sp_{*}(-d)[-d]: \Iso^{\dagger\dagger}(X/K) ~(\textnormal{resp. } \Fr\Iso^{\dagger}(X/K) )\xrightarrow{\sim} \Sm(X/K_{\blacktriangle}) \subset \Dbhol(X/K_{\blacktriangle}).
		\label{mod specialisation}
	\end{equation}
	In the following, we identify these two categories by $\widetilde{\Sp}_{*}$ and we use alternatively these two notations. 

	Let $f:X\to Y$ be a morphism between smooth $k$-schemes. 
	Via $\widetilde{\Sp}_{*}$, we can identify the functor $f^{+}$ and the pullback functor of overconvergent ($F$-)isocrystals $f^{*}$ (\cite{Abe18} 2.4.15). If $d$ denotes $\dim(X)-\dim(Y)$, for any object $M$ of $\Sm(X/L_{\blacktriangle})$, there exists a canonical isomorphism:
	\begin{equation}
		f^{!}(M)\simeq f^+(M)(d)[2d].
		\label{pullback of smooth mods}
	\end{equation}
\end{secnumber}

\begin{secnumber} \label{constructible t-structure}
	Let $X$ be a $k$-scheme. There exists a \textit{constructible t-structure} (\textit{c-t-structure} in short) on $\rD(X/L)$ (cf. \cite{Abe18} 1.3, 2.2.23). 
	When $X=\Spec(k)$, the constructible t-structure coincides with the holonomic one \eqref{Basic properties}. 
	If $X$ a smooth $k$-scheme, any object of $\Sm(X/L)$ is constructible. 

	The heart of c-t-structure is denoted by $\Con(X)$, called the \textit{category of constructible modules}, and is analogue to the category of constructible sheaves in the $\ell$-adic theory. 
	The cohomology functor of c-t-structure is denoted by $\cH^*$. 
	
	Let $f:X\to Y$ be a morphism between $k$-schemes. The functor $f^+$ is c-t-exact and $f_+$ is left c-t-exact. 
	If $i$ is a closed immersion, then $i_+$ is c-t-exact. If $j$ is an open immersion, then $j_!$ is c-t-exact (\cite{Abe18} 1.3.4).

	A constructible module $\mathscr{M}$ on $X$ is zero if and only if $i_x^+\mathscr{M}=0$ for any closed point $i_x:x\to X$ (\cite{Abe18} 1.3.7). 
\end{secnumber} 

\begin{secnumber}	
In the end, we present a generalization of the specialization morphism \eqref{specialization morphism} in a relative situation using the direct image of arithmetic $\mathscr{D}$-modules. 
 
	Let $f:X=\Spec(B)\to S=\Spec(A)$ be a smooth morphism of affine smooth $R$-schemes of relative dimension $d$ and let $(M,\nabla)$ be a coherent $\mathscr{O}_{X_K}$-module endowed with an integrable connection relative to $K$. 
	Consider $M$ as a $\cD_{X_K}$-module. 
	The direct image $f_{+}^{\dR}(M)$ of $\cD$-modules is calculated by the relative de Rham complex $M\otimes \Omega_{X/S}^{\bullet}$. 
	Since $f$ is affine, the above complex is calculated by
	\begin{equation}
		\Gamma(S,f_{+}^{\dR}(M))\simeq \DR_{B/A}(M,\nabla)= M\to M\otimes_{B}\Omega_{B/A}^1\to \cdots,
	\end{equation}
	where we denote abusively by $M$ the global section $\Gamma(X_K,M)$. 
\end{secnumber}

\begin{secnumber} \label{rel rig coh}
	We assume moreover that $f$ admits a \textit{good compactification}, i.e. $f$ can be extended to a smooth morphism $\overline{f}:\overline{X}\to \overline{S}$ of smooth \textit{projective} $R$-schemes $\overline{X},\overline{S}$ such that $\overline{X}_k-X_k$, $\overline{S}_k-S_k$ are ample divisors. 
	We keep the notation of \ref{setting strict neighborhood} and assume that $M^{\dagger}=j^{\dagger}(M^{\an})$ is overconvergent as in \ref{dR rig coh}. 
	We denote abusively the $\mathscr{D}_{\overline{\XX},\mathbb{Q}}^{\dagger}(\infty)$-module $\Sp_{*}(M^{\dagger})$ \eqref{generality overconv isocrystal} by $M^{\dagger}$. 
	The direct image of $M^{\dagger}$ along $f_{k}:X_{k}\to S_{k}$ is calculated by a relative de Rham complex: 
	\begin{eqnarray} \label{complex pushforward rig}
		f_{k,+}(M^{\dagger})\xrightarrow{\sim} 
		\bR \overline{f}_{k,*}( \Sp_{*}(M^{\dagger}\otimes_{\mathscr{O}_{\mathcal{X}}}\Omega_{\mathcal{X}/\mathcal{S}}^{\bullet})).
	\end{eqnarray}
	The above complex is a complex of overholonomic (and hence coherent) $\mathscr{D}^{\dagger}_{\overline{\mathfrak{S}},\mathbb{Q}}(\infty)$-modules.

	We set $A^{\dagger}=\Gamma(\overline{\mathfrak{S}},\mathscr{O}_{\overline{\mathfrak{S}},\mathbb{Q}}(^{\dagger}\infty))$, $B^{\dagger}=\Gamma(\overline{\XX},\mathscr{O}_{\overline{\XX},\mathbb{Q}}(^{\dagger}\infty))$
	and $D^{\dagger}_{\overline{\mathfrak{S}}}(\infty)=\Gamma(\overline{\mathfrak{S}},\mathscr{D}^{\dagger}_{\overline{\mathfrak{S}},\mathbb{Q}}(\infty))$ \eqref{basic notation}. 
	By $\mathscr{D}^{\dagger}$-affinity (\cite{Huy98} 5.3.3), the complex \eqref{complex pushforward rig} is equivalent to a complex of coherent $D^{\dagger}_{\overline{\SS}}(\infty)$-modules:
	\begin{eqnarray*}
		\bR\Gamma(\overline{\mathfrak{S}}, f_{k,+}(M^{\dagger})) &\simeq& 
		\bR\Gamma(\overline{\XX},\Sp_{*}(M^{\dagger}\otimes_{\mathscr{O}_{\mathcal{X}}}\Omega_{\mathcal{X}/\mathcal{S}}^{\bullet})) \\
		&\simeq& (M\otimes_{B_K}B^{\dagger})\otimes_{B} \Omega^{\bullet}_{B/A}.
	\end{eqnarray*}
	We denote the complex in the second line by $\DR^{\dagger}_{B/A}(M^{\dagger})$. 
	Note that this complex is $A^{\dagger}$-linear.

	If we set $D_{S_K}=\Gamma(S_K,\mathscr{D}_{S_K})$, there exists a canonical $D_{S_K}$-linear morphism, called the \textit{(relative) specialisation morphism}
	\begin{equation} \label{rel sp map}
		\DR_{B/A}(M,\nabla)\to \DR^{\dagger}_{B/A}(M^{\dagger}). 
	\end{equation}
\end{secnumber}

\subsection{Complements on the cohomology of arithmetic $\mathscr{D}$-modules}
\begin{secnumber} \label{cohomology arith mods}
	Let $f:X\to \Spec(k)$ be a $k$-scheme and $\mathscr{F}$ an object of $\rD(X/L)$. 
	We set
	\begin{equation}
		\rH^*(X,\mathscr{F})=\hH^*f_+(\mathscr{F}),\qquad \rH^*_{\rc}(X,\mathscr{F})=\hH^*f_!(\mathscr{F}),
		\label{coh def}
	\end{equation}
	and call them \textit{cohomology groups of $\mathscr{F}$, compact support cohomology groups of $\mathscr{F}$}, respectively.
	Note that they are finite dimensional $L$-vector spaces. 
	If $\mathscr{F}$ is an object of $\rD(X/L_F)$, then above cohomology groups are equipped with a Frobenius structure.
	If there is no confusion, we simply write $\rH^{*}(X,L)$ for $\rH^{*}(X,L_X)$.
	We collect some properties that we will use in the following:

	(i) If $X$ has dimension $\le d$, then for any $\mathscr{M}\in \Con(X)$, the compact support cohomology groups $\rH^i_{\rc}(X,\mathscr{M})$ are concentrated in degrees $0\le i\le 2d$ (\cite{Abe18} 1.3.8). 
		
	(ii) Suppose $X$ admits a smooth compactification $\overline{X}$ such that $\overline{X}$ possesses a smooth lifting over $R$ and that $\overline{X}- X$ is a divisor.
	Given an object $M$ of $\Iso^{\dagger\dagger}(X/K)$ (resp. $\Fr\Iso^{\dagger}(X/K)$) \eqref{overconv F-iso}, we have canonical isomorphisms (\cite{Abe14} 5.9): 
	\begin{equation} \label{coh Dmods rig coh}
	\rH_{\rig}^{*}(X,M)\simeq \rH^{*}(X,\widetilde{\Sp}_{*}(M)),\qquad 
	\rH_{\rig,\rc}^{*}(X,M)\simeq \rH^{*}_{\rc}(X,\widetilde{\Sp}_{*}(M)),
	\end{equation}
as objects of $\Vect_K$ (resp. $\Fr\Vect_{K}$). 
	Via \eqref{coh Dmods rig coh}, the canonical morphism $\rH^{*}_{\rc}(X,\widetilde{\Sp}_{*}(M))\to \rH^{*}(X,\widetilde{\Sp}_{*}(M))$ induced by $f_{!}\to f_{+}$ is compatible with $\iota_{\rig}$ \eqref{forget map rig}.

	In particular, we have $\rH^{0}(\mathbb{A}^n,L)\simeq L$, $\rH^i(\mathbb{A}^n,L)=0$ for $i\neq 0$ and $\rH^{2n}(\mathbb{A}^n,L)\simeq L$, $\rH^i(\mathbb{A}^n,L)=0$ for $i\neq 2n$. 

	(iii) If $X$ is smooth over $k$, then the dimension of $\rH^0(X,L_X)$ is equal to the number of geometrically connected components of $X$. 
	The Frobenius acts on $\rH^0(X,L_X)$ as identity. 
\end{secnumber}

\begin{secnumber}
	Let $Y$ be a closed subscheme of $X$ and $\mathscr{F}$ an object of $\rD(X/L)$. In view of the distinguished triangle \ref{Basic properties}(iv), there exists a long exact sequence of cohomology groups: 
	\begin{equation} \label{long exact Hc}
		\cdots\xrightarrow{\partial} \rH^{i}_{\rc}(X-Y,\mathscr{F})\to \rH^{i}_{\rc}(X,\mathscr{F})\to \rH^{i}_{\rc}(Y,\mathscr{F})\xrightarrow{\partial}\cdots
	\end{equation}

	In general, suppose that there exists a finite filtration of closed subschemes $\{X_{i}\}_{i\in\mathbb{Z}}$ of $X$, with closed immersions $X_{i+1}\hookrightarrow X_{i}$ such that $X_{i}=X$ for $i$ small enough and $X_{i}=\emptyset$ for $i$ big enough. 
	Then we deduce a spectral sequence (cf. \cite{Del77} *2.5)
	\begin{equation}
		\rE_{1}^{ij} = \rH_{\rc}^{i+j}(X_i-X_{i+1},\mathscr{F})\Rightarrow \rH_{\rc}^{i+j}(X,\mathscr{F}). \label{ss stratification}
	\end{equation}
\end{secnumber}

\begin{coro} \label{top coh dim}
	Let $d$ be the dimension of $X$. 
	Then the dimension of the top degree compact support cohomology $\rH^{2d}_{\rc}(X,L_X)$ is equal to the number of geometrically irreducible components of $X$. The Frobenius on $\rH^{2d}_{\rc}(X,L_X)$ acts by multiplication by $q^d$. 
\end{coro}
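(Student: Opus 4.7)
The strategy is to excise a closed subscheme of strictly smaller dimension to reduce to a smooth dense open subscheme of pure dimension $d$, then to apply Poincaré duality together with the computation of $\rH^0$ on smooth schemes in \ref{cohomology arith mods}(iii). Note that the statement implicitly concerns only the $d$-dimensional geometrically irreducible components of $X$; those of smaller dimension will contribute nothing to $\rH^{2d}_{\rc}$.

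A preliminary reduction to $X$ reduced comes from observing that, for the nilpotent closed immersion $i\colon X_{\red}\hookrightarrow X$, the open-closed triangle \ref{Basic properties}(vi) applied to the empty complement gives $L_X\simeq i_+ L_{X_{\red}}$; since $i_+ = i_!$ for closed immersions \ref{Basic properties}(iii), one obtains $\rH^{*}_{\rc}(X,L_X)\simeq \rH^{*}_{\rc}(X_{\red},L_{X_{\red}})$. Assuming $X$ reduced, let $X^{<d}\subseteq X$ denote the union of those irreducible components of dimension $<d$, and set $V:=X\setminus X^{<d}$. Over the perfect field $k$ the smooth locus $U\subseteq V$ is open and dense in $V$, and its complement $Z:=X\setminus U$ satisfies $\dim Z\le d-1$. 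The vanishing in \ref{cohomology arith mods}(i) forces $\rH^{i}_{\rc}(Z,L_Z)=0$ for $i\ge 2d-1$, so the long exact sequence \eqref{long exact Hc} gives a Frobenius-equivariant isomorphism
\[
	\rH^{2d}_{\rc}(U,L_U)\xrightarrow{\sim}\rH^{2d}_{\rc}(X,L_X).
\]

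On the smooth open $U$ of pure dimension $d$ I would then apply Poincaré duality. By \ref{Basic properties}(vii) applied to the structure morphism $f\colon U\to \Spec k$, one has $f^!L\simeq L_U(d)[2d]$, hence $\mathbb{D}_U(L_U)\simeq L_U(d)[2d]$. Combined with the six functor identity $\mathbb{D}_{\Spec k}\circ f_+\simeq f_!\circ \mathbb{D}_U$, taking $\hH^0$ yields a Frobenius-equivariant isomorphism
\[
	\rH^{2d}_{\rc}(U,L_U)(d)\simeq \rH^{0}(U,L_U)^{\vee}.
\]
By \ref{cohomology arith mods}(iii), the right-hand side has dimension equal to the number of geometrically connected components of $U$, with trivial Frobenius action. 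Since $U$ is smooth, its geometrically connected components coincide with its geometrically irreducible components, which in turn are in bijection with the $d$-dimensional geometrically irreducible components of $X$. Untwisting the Tate twist converts the trivial Frobenius on $\rH^0$ into multiplication by $q^d$ on $\rH^{2d}_{\rc}(X,L_X)$, yielding both assertions.

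The main thing to verify carefully is the bookkeeping of the Tate twist (with the convention that Frobenius on $L(d)$ acts as $q^{-d}$), which is where the factor $q^d$ originates. Beyond that, the argument is a routine combination of excision, Poincaré duality, and the $\rH^0$ computation already recorded in the text.
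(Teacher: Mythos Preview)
Your proof is correct and follows exactly the paper's approach: excise a closed subscheme of dimension $\le d-1$ to reduce to a smooth open, then combine Poincar\'e duality with \ref{cohomology arith mods}(iii). The paper's proof simply uses the pair $(X_{\sm},X_{\sing})$ and is terser; your version spells out the reduction to $X$ reduced and the removal of lower-dimensional components, and tracks the Tate twist explicitly, but the substance is identical.
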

\begin{proof}
	We denote by $X_{\sm}$ (resp. $X_{\sing}$) the smooth (resp. singular) locus of $X$. 
	Then the assertion follows from the long exact sequence \eqref{long exact Hc} for $(X_{\sm},X_{\sing},X)$, Poincar\'e duality and \ref{cohomology arith mods}(iii).
\end{proof}

We show an analogue of (\cite{BBDG} 4.2.5) for arithmetic $\mathscr{D}$-modules.

\begin{prop} \label{BBD 4.2.5}
	Let $f:X\to Y$ be a smooth morphism of $k$-scheme of relative dimension $d$ with geometrically connected fibers. 
	Then the functor $f^+[d]$ induces a fully faithful functor $\Hol(Y/L_{\blacktriangle})\to \Hol(X/L_{\blacktriangle})$ for $\blacktriangle \in \{\emptyset, F\}$. 
\end{prop}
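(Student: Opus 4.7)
The plan is to combine the exactness of $f^+[d]$ coming from Poincar\'e duality with the projection formula for $f_!$ and the fiberwise geometric input that the fibers are geometrically connected. First, I would note that Poincar\'e duality (\ref{Basic properties}(vii)) gives $f^+[d]\simeq f^![-d](-d)$, so $f^+[d]$ is exact for the holonomic t-structure and hence restricts to a functor $\Hol(Y/L_\blacktriangle)\to \Hol(X/L_\blacktriangle)$. For full faithfulness, I would fix $\mathscr{M},\mathscr{N}\in\Hol(Y/L_\blacktriangle)$ and, using Poincar\'e duality, the $(f_!,f^!)$-adjunction, and the projection formula $f_!f^+\mathscr{M}\simeq \mathscr{M}\otimes f_!L_X$ in succession, identify
\[
\Hom_{\rD(X)}(f^+[d]\mathscr{M},f^+[d]\mathscr{N}) \simeq \Hom_{\rD(Y)}(\mathscr{M}\otimes f_!L_X[2d](d),\mathscr{N}).
\]

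The next step is to show that the canonical trace morphism $L_Y\to f_!L_X[2d](d)$ induces a bijection on $\Hom_{\rD(Y)}(\mathscr{M}\otimes-,\mathscr{N})$ for $\mathscr{M},\mathscr{N}\in\Hol(Y)$. Writing $\mathscr{K}$ for the cone of this map, I would verify that $\Hom_{\rD(Y)}(\mathscr{M}\otimes\mathscr{K},\mathscr{N})$ and $\Hom_{\rD(Y)}(\mathscr{M}\otimes\mathscr{K},\mathscr{N}[1])$ both vanish by arguing that $\mathscr{K}$ lies in strictly negative (holonomic) degrees, so that tensoring with $\mathscr{M}\in\Hol(Y)$ and then mapping to $\mathscr{N}\in\Hol(Y)$ gives zero by t-structure orthogonality.

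The geometric input enters pointwise. By base change (\ref{Basic properties}(iv)), the fiber of $f_!L_X[2d](d)$ at a closed point $y\in Y$ computes the compact-support rigid cohomology of $X_y$, suitably shifted and twisted; since $X_y$ is smooth and geometrically connected of dimension $d$, its top compactly supported cohomology is $L(-d)$ by Corollary \ref{top coh dim}, and the lower-degree contributions sit strictly below. The criterion that a constructible module vanishes if and only if it vanishes on every closed point (\ref{constructible t-structure}) then globalizes this fiberwise picture to the required statement about $\mathscr{K}$.

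The main obstacle will be the t-structure accounting: the fiberwise analysis is most naturally phrased in the constructible t-structure, while the conclusion lives in the holonomic one, so I will need to track how the two t-structures differ (by a shift on smooth schemes) and how the Tate twists and shifts in the composition of Poincar\'e duality, the $(f_!,f^!)$-adjunction, and the projection formula conspire to yield exactly $\Hom_{\Hol(Y)}(\mathscr{M},\mathscr{N})$ and no extraneous contributions.
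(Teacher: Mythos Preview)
Your approach is valid but dual to the paper's. The paper works on the $\FHom$ side: it proves Lemma~\ref{lemma Hom constructible degrees} (for $\mathscr{M},\mathscr{N}\in\Hol(Y)$ one has $\FHom_Y(\mathscr{M},\mathscr{N})\in{}^{\rc}\rD^{\ge 0}$), uses the isomorphism $f^+\FHom_Y(\mathscr{M},\mathscr{N})\simeq\FHom_X(f^+\mathscr{M},f^+\mathscr{N})$ for smooth $f$, and then reduces everything to the claim $\mathscr{F}\xrightarrow{\sim}\cH^0 f_+f^+\mathscr{F}$ for constructible $\mathscr{F}$, proved by smooth base change to a point and \ref{cohomology arith mods}(iii). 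You instead pass to the $\otimes$ side via the projection formula and analyze $f_!L_X[2d](d)$ fiberwise through Corollary~\ref{top coh dim}. These are adjoint arguments: your required vanishing $\Hom(\mathscr{M}\otimes\mathscr{K},\mathscr{N})=0$ for $\mathscr{K}\in{}^{\rc}\rD^{\le -1}$ is, by the $\otimes$--$\FHom$ adjunction, precisely $\Hom(\mathscr{K},\FHom_Y(\mathscr{M},\mathscr{N}))=0$, which is immediate from the paper's lemma. So the ``t-structure accounting'' obstacle you flag is resolved exactly by (a statement equivalent to) Lemma~\ref{lemma Hom constructible degrees}; you would need to prove either that lemma or the equivalent right t-exactness statement that tensoring a holonomic module with a constructible module lands in ${}^{p}\rD^{\le 0}$, neither of which is already available in the paper. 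One small correction: the natural map is the trace $f_!L_X[2d](d)=f_!f^!L_Y\to L_Y$ (the counit of $(f_!,f^!)$), not the direction you wrote; with the correct orientation the fiber lies in ${}^{\rc}\rD^{\le -1}$ and your long exact sequence argument goes through.
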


\begin{lemma}
	Let $\mathscr{M}$ be an object of $\rD^{\le 0}(X/L)$ and $\mathscr{N}$ an object of $\rD^{\ge 0}(X/L)$. Then $\FHom_X(\mathscr{M},\mathscr{N})$ belongs to $^{\rc}\rD^{\ge 0}(X/L)$ \eqref{constructible t-structure}. \label{lemma Hom constructible degrees}
\end{lemma}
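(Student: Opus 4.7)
The plan is to verify the constructible-t-structure bound pointwise at closed points, combined with a Noetherian induction on $\dim X$. First, by applying the c-t-exactness of $i_x^{+}$ for a closed immersion $i_x\colon x\to X$ (\ref{constructible t-structure}) to each constructible cohomology sheaf ${}^{\rc}\hH^{i}$ and using the detection of constructible modules by their closed-point stalks (same paragraph), the problem reduces to checking $i_x^{+}\FHom_{X}(\mathscr{M},\mathscr{N})\in \rD^{\ge 0}(x/L)$ for every closed point $x$; note that at a point the holonomic, constructible, and standard t-structures coincide.

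The key identity is
\[
 i^{!}\FHom_{X}(\mathscr{M},\mathscr{N})\simeq \FHom_{Z}(i^{+}\mathscr{M},\,i^{!}\mathscr{N})
\]
for any closed immersion $i\colon Z\hookrightarrow X$; it follows from the $(\otimes,\FHom)$-adjunction, the projection formula $i_{+}(A)\otimes B\simeq i_{+}(A\otimes i^{+}B)$, and the $(i_{+},i^{!})$-adjunction. Specialized to $i=i_x$ and combined with the right holonomic-t-exactness of $i_x^{+}$ and the left holonomic-t-exactness of $i_x^{!}$ --- formal consequences of the six-functor formalism together with the recollement triangles of \ref{Basic properties}(vi) --- one obtains $i_x^{+}\mathscr{M}\in \rD^{\le 0}(x/L)$ and $i_x^{!}\mathscr{N}\in \rD^{\ge 0}(x/L)$, hence
\[
i_x^{!}\FHom(\mathscr{M},\mathscr{N})\simeq \RHom(i_x^{+}\mathscr{M},\,i_x^{!}\mathscr{N})\in \rD^{\ge 0}(x/L).
\]

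To upgrade this $i_x^{!}$-bound to the $i_x^{+}$-bound required by the characterization above, we proceed by Noetherian induction on $\dim X$. Choose a dense open $j\colon U\hookrightarrow X$ with closed complement $i\colon Z\hookrightarrow X$ of strictly smaller dimension, and apply the localization triangle $i_{+}i^{!}\mathscr{G}\to \mathscr{G}\to j_{+}j^{+}\mathscr{G}$ to $\mathscr{G}=\FHom(\mathscr{M},\mathscr{N})$. The first term lies in ${}^{\rc}\rD^{\ge 0}(X/L)$ by the identity above (which rewrites $i^{!}\mathscr{G}$ as $\FHom_{Z}(i^{+}\mathscr{M},i^{!}\mathscr{N})$ with $i^{+}\mathscr{M}\in \rD^{\le 0}(Z/L)$ and $i^{!}\mathscr{N}\in \rD^{\ge 0}(Z/L)$), the inductive hypothesis on $Z$, and the c-t-exactness of $i_{+}$. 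The third term is controlled via $j^{+}\FHom(\mathscr{M},\mathscr{N})\simeq \FHom_{U}(j^{+}\mathscr{M},j^{+}\mathscr{N})$ (valid for the open immersion $j$) together with the left c-t-exactness of $j_{+}$. The main obstacle is the resulting smooth-case input on $U$, which is not reducible to smaller dimension; it is handled by further stratifying $U$ so that the restrictions of $\mathscr{M}$ and $\mathscr{N}$ become smooth (overconvergent isocrystals up to shift), computing $\FHom$ of smooth objects on a smooth variety explicitly via Poincar\'e duality (\ref{Basic properties}(vii)) as a tensor product of smooth objects lying in the $\rc$-heart, and assembling via the stratification's recollement.
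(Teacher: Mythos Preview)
Your core argument is the paper's proof: induction on $\dim X$, the localization triangle $i_{+}i^{!}\to\id\to j_{+}j^{+}$ applied to $\FHom(\mathscr{M},\mathscr{N})$, the identity $i^{!}\FHom\simeq\FHom(i^{+}\mathscr{M},i^{!}\mathscr{N})$ for the closed piece, and an explicit computation on a smooth open where the inputs are smooth. Two presentational differences are worth noting. First, your opening pointwise reduction via $i_x^{+}$ and the $i_x^{!}$-bound is a detour: as you yourself observe, it does not directly yield the needed $i_x^{+}$-bound, and once the Noetherian induction is run it proves the full statement $\FHom\in{}^{\rc}\rD^{\ge 0}$ outright, making the pointwise framing redundant. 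Second, rather than ``further stratifying $U$'' at the end, the paper first reduces (by d\'evissage on cohomological amplitude) to $\mathscr{M},\mathscr{N}\in\Hol(X/L)$; one can then choose a single dense smooth open $U$ on which both $\mathscr{M}|_U$ and $\mathscr{N}|_U$ are smooth, and $\FHom_U(\mathscr{M}|_U,\mathscr{N}|_U)\simeq\mathbb{D}_U(\mathscr{M}|_U\otimes\mathbb{D}_U(\mathscr{N}|_U))$ is then a smooth module sitting in constructible degree $0$, so $j_{+}$ (left c-t-exact) finishes. Your iterated stratification works too, but this reduction-to-the-heart step makes the smooth-case input a one-liner.
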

\begin{proof}
	We prove by induction on the dimension of $X$. The assertion is clear if $\dim X=0$. 
	To prove the assertion, we can reduce to the case where $\mathscr{M},\mathscr{N}\in \Hol(X/L)$. 
	Then there exists a dense smooth open subscheme $j:U\to X$ such that $\mathscr{M}|_U,\mathscr{N}|_U$ are smooth. Let $i:Z\to X$ be the complement of $U$ and consider the triangle
	\begin{displaymath}
		i_+i^!\FHom_X(\mathscr{M},\mathscr{N})\to \FHom_X(\mathscr{M},\mathscr{N}) \to j_+j^+\FHom_X(\mathscr{M},\mathscr{N})\to .
	\end{displaymath}
	Since $i^!\FHom_X(\mathscr{M},\mathscr{N})\simeq \FHom_X(i^+\mathscr{M},i^!\mathscr{N})$ (\cite{Abe18} 1.1.5), the first term belongs to $^{\rc}\rD^{\ge 0}(X/L)$ by induction hypotheses. 
	Note that $\FHom_U(\mathscr{M}|_U,\mathscr{N}|_U) \simeq \mathbb{D}_U(\mathscr{M}|_U\otimes \mathbb{D}_U(\mathscr{N}|_U))$ is a smooth module and of constructible degree $0$. Then $j_+j^+\FHom_X(\mathscr{M},\mathscr{N})$ belongs to $^{\rc}\rD^{\ge 0}(X/L)$ and the assertion follows.
\end{proof}

\begin{secnumber}
	\textit{Proof of proposition \ref{BBD 4.2.5}}. 
	Since Frobenius pullback induces an equivalence of categories, it suffices to show the assertion for $\Hol(-/L)$. 
	Let $\mathscr{M},\mathscr{N}$ be two objects of $\Hol(Y/L)$. 
	Since $f$ is smooth, we deduce from $f^!\FHom_Y(\mathscr{M},\mathscr{N})\simeq \FHom_{X}(f^+\mathscr{M},f^!\mathscr{N})$ (\cite{Abe18} 1.1.5) an isomorphism
	\begin{displaymath}
		f^+\FHom_Y(\mathscr{M},\mathscr{N})\xrightarrow{\sim} \FHom_{X}(f^+\mathscr{M},f^+\mathscr{N}).
	\end{displaymath}
	By applying $\cH^0f_+\cH^0(-)$ to the above isomorphism and lemma \ref{lemma Hom constructible degrees}, we have
	\begin{equation}
		\cH^0f_+ f^+ \bigl(\cH^0 (\FHom_Y(\mathscr{M},\mathscr{N}))\bigr)
		\xrightarrow{\sim} \cH^0f_+\cH^0(\FHom_{X}(f^+\mathscr{M}[d],f^+\mathscr{N}[d])).
		\label{iso pullback pushforward h0}
	\end{equation}

	We claim that for any constructible module $\mathscr{F}$ on $Y$, there is a canonical isomorphism
	\begin{equation}
		\mathscr{F}\xrightarrow{\sim} \cH^0f_+ f^+ \mathscr{F}.
		\label{iso adj constructible H0}
	\end{equation}
	Then, by \ref{lemma Hom constructible degrees}, the proposition follows by applying $\rH^0(Y,-)$ to the composition of \eqref{iso pullback pushforward h0} and \eqref{iso adj constructible H0}. 

	By smooth base change and \ref{constructible t-structure}, to prove \eqref{iso adj constructible H0}, we can reduce to the case where $Y$ is a point. 
	After extending the scalar $L$ and the base field $k$ \eqref{base tuple}, we may assume moreover that $Y=\Spec(k)$. 
	In this case, the isomorphism \eqref{iso adj constructible H0} follows from \ref{cohomology arith mods}(iii).\hfill $\qed$
\end{secnumber}

\subsection{Equivariant holonomic $\mathscr{D}$-modules}

In this subsection, we study the notion of \textit{equivariant holonomic $\mathscr{D}$-modules} over a $k$-scheme (or an ind-scheme). 
We write simply $\rD(X)$ (resp. $\Hol(X)$) for $\rD(X/L)$ or $\rD(X/L_F)$ (resp. $\Hol(X/L)$ or $\Hol(X/L_F)$). 

\begin{secnumber} \label{equivariant object}
	Let $X\to S$ be a morphism of $k$-schemes, $H$ a smooth affine group scheme over $S$ with geometrically connected fibers and $\act:H\times_S X\to X$ an action of $H$ on $X$. We denote by $\pr_2:H\times_S X\to X$ the projection. 
	We define the category $\Hol_{H}(X)$ of $H$-equivariant holonomic modules on $X$ as follow. An object of $\Hol_{H}(X)$ is a pair consisting of a holonomic module $\mathscr{M}$ on $X$ and an isomorphism $\theta:\act^{+}(\mathscr{M})\xrightarrow{\sim} \pr_2^{+}(\mathscr{M})$ in $\rD(H\times_S X)$, satisfying:

	\begin{itemize}
		\item[(i)] $e^{+}(\theta)=\id$, where $e:X\to H\times_S X$ is induced by the unit section of $H$; 

		\item[(ii)] a cocycle condition on $H\times_S H\times_S X$. 
	\end{itemize}
	A morphism between $(\mathscr{M}_1,\theta_1)$ and $(\mathscr{M}_2,\theta_2)$ is a morphism $\varphi:\mathscr{M}_1\to \mathscr{M}_2$ of $\Hol(X)$ such that 
\begin{equation}
	\pr_2^{+}(\varphi)\circ \theta_{1} \simeq \theta_2\circ \act^{+}(\varphi).
	\label{phi H equivariant}
\end{equation}
	It is clear that $\Hol_H(X)$ is an abelian subcategory of $\Hol(H)$. 
	
	Suppose that $[X/H]$ is represented by a separated scheme of finite type $\overline{X}$ over $S$. 
	By smooth descent of holonomic modules (\cite{Abe18} 2.1.13), the pullback functor along the canonical morphism $q:X\to \overline{X}$ induces an equivalence of categories:
	\begin{equation} \label{quotient descent}
		q^{+}[d_{H}]:\Hol(\overline{X})\xrightarrow{\sim} \Hol_H(X).
	\end{equation}
\end{secnumber}

\begin{lemma}
	The canonical functor $\Hol_H(X)\to \Hol(X)$ is fully faithful. 
	\label{lemma ff forget H}
\end{lemma}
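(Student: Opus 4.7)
The plan is to reduce the lemma to Proposition \ref{BBD 4.2.5} applied to the smooth projection $\pr_2 : H \times_S X \to X$. Faithfulness is immediate, since by definition a morphism in $\Hol_{H}(X)$ is a morphism of the underlying objects in $\Hol(X)$ subject to the extra compatibility \eqref{phi H equivariant}. The content of the lemma is therefore fullness: given $(\mathscr{M}_1,\theta_1),(\mathscr{M}_2,\theta_2)\in \Hol_{H}(X)$ and an arbitrary morphism $\varphi:\mathscr{M}_1\to \mathscr{M}_2$ in $\Hol(X)$, one must show that $\varphi$ is automatically $H$-equivariant.

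First I would set
\[
\beta \ =\ \theta_2\circ \act^{+}(\varphi)\circ \theta_1^{-1} \ \in\ \Hom_{\Hol(H\times_S X)}(\pr_2^{+}\mathscr{M}_1,\pr_2^{+}\mathscr{M}_2),
\]
so that the desired equivariance $\pr_2^{+}(\varphi)\circ\theta_1=\theta_2\circ\act^{+}(\varphi)$ is equivalent to the identity $\beta=\pr_2^{+}(\varphi)$. Since $H\to S$ is smooth with geometrically connected fibers, so is its base change $\pr_2:H\times_{S}X\to X$. Proposition \ref{BBD 4.2.5} then says that $\pr_2^{+}[d_H]$ is fully faithful on holonomic modules, hence the map
\[
\pr_2^{+}\ \colon\ \Hom_{\Hol(X)}(\mathscr{M}_1,\mathscr{M}_2)\ \longrightarrow\ \Hom_{\Hol(H\times_{S}X)}(\pr_2^{+}\mathscr{M}_1,\pr_2^{+}\mathscr{M}_2)
\]
is a bijection. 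In particular there exists a unique $\gamma:\mathscr{M}_1\to\mathscr{M}_2$ with $\pr_2^{+}(\gamma)=\beta$.

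To identify $\gamma$ with $\varphi$, I would pull back along the unit section $e:X\to H\times_{S}X$. Because $\pr_2\circ e=\act\circ e=\id_X$ one has $e^{+}\circ \pr_2^{+}=\id=e^{+}\circ\act^{+}$, and axiom (i) of \ref{equivariant object} gives $e^{+}(\theta_i)=\id$. Therefore
\[
\gamma \ =\ e^{+}(\pr_2^{+}\gamma) \ =\ e^{+}(\beta) \ =\ e^{+}(\theta_2)\circ e^{+}\act^{+}(\varphi)\circ e^{+}(\theta_1^{-1}) \ =\ \varphi,
\]
so $\beta=\pr_2^{+}(\varphi)$, which is the $H$-equivariance of $\varphi$.

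There is no real obstacle here; the only thing to verify carefully is the applicability of Proposition \ref{BBD 4.2.5} to $\pr_2$, which is guaranteed by the standing hypothesis that $H\to S$ is smooth with geometrically connected fibers (a property preserved under base change to $X$). Everything else is formal manipulation of the six-functor formalism and the unit/cocycle axioms of an equivariant structure, and the same argument works verbatim in the arithmetic setting $\blacktriangle=F$ since $F_X^{*}$ commutes with all the operations involved.
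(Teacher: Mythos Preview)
Your proof is correct and follows essentially the same route as the paper's: both arguments use Proposition~\ref{BBD 4.2.5} to conclude that $\pr_2^{+}[d_H]$ is fully faithful, then identify $\beta=\theta_2\circ\act^{+}(\varphi)\circ\theta_1^{-1}$ with $\pr_2^{+}(\varphi)$ by pulling back along the unit section $e$ and invoking axiom~(i). The paper phrases this last step as ``$e^{+}$ is injective on $\Hom(\pr_2^{+}\mathscr{M}_1,\pr_2^{+}\mathscr{M}_2)$'' (which is immediate from $e^{+}\circ\pr_2^{+}=\id$ together with bijectivity of $\pr_2^{+}$), while you write out the unique preimage $\gamma$ explicitly and compute $\gamma=\varphi$; these are the same computation. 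One cosmetic point: $\pr_2^{+}\mathscr{M}_i$ without the shift lies in $\Hol(H\times_{S}X)[-d_H]$ rather than $\Hol(H\times_{S}X)$, so strictly speaking the Hom you write should be taken in $\rD(H\times_{S}X)$ (or you should carry the shift $[d_H]$ throughout), but this does not affect the argument.
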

\begin{proof}
	Given two objects $(\mathscr{M}_1,\theta_1)$, $(\mathscr{M}_2,\theta_2)$ of $\Hol_H(X)$ and a morphism $\varphi:\mathscr{M}_1\to \mathscr{M}_2$ of $\Hol(X)$, one need to show \eqref{phi H equivariant}
	\begin{displaymath}
		\theta_2\circ \act^{+}(\varphi)\circ (\theta_{1})^{-1} \simeq \pr_2^{+}(\varphi).
	\end{displaymath}
	By proposition \ref{BBD 4.2.5}, $\pr_2^+$ is fully faithful. To show the above isomorphism, it suffices to show $e^+(\theta_2^{-1}\circ \act^{+}(\varphi)\circ \theta_{1}) \simeq e^+(\pr_2^{+}(\varphi))$, which follows from \ref{equivariant object}(i).
\end{proof}

\begin{lemma} \label{lemma descent cat equivariant}
	Let $H_1\subset H$ be a closed normal subgroup scheme over $S$. Suppose that $H/H_1, H_1$ are smooth over $S$ and that the action of $H$ on $X$ factors through $H/H_1$. Then, the canonical functor
	\begin{equation}
		\Hol_{H/H_1}(X) \to \Hol_H(X)
	\end{equation}
	is an equivalence of categories.
\end{lemma}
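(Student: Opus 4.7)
The plan is to descend the equivariance datum along the smooth projection
\[ \pi = q \times \id_X : H \times_S X \to (H/H_1) \times_S X, \]
where $q: H \to H/H_1$ is the quotient map. Since $H_1$ is smooth, $\pi$ is smooth of relative dimension $d_{H_1}$, with fibers modeled on $H_1$. Assuming $H_1$ has geometrically connected fibers (which holds in the intended applications; more generally one can reduce to this case by first quotienting by the identity component), Proposition \ref{BBD 4.2.5} applies to $\pi$ and yields full faithfulness of $\pi^+[d_{H_1}]$ on holonomic categories; the same holds for the analogous smooth projections $\pi_n : H^n \times_S X \to (H/H_1)^n \times_S X$ used below.

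Faithfulness and fullness of the forget functor $\Hol_{H/H_1}(X) \to \Hol_H(X)$ are immediate consequences of Lemma \ref{lemma ff forget H}, since both sides embed fully faithfully into $\Hol(X)$ via the functor sending an equivariant module to its underlying module, so a morphism in $\Hol(X)$ that is $H$-equivariant is automatically a morphism in $\Hol_{H/H_1}(X)$. For essential surjectivity, take $(\mathscr{M}, \theta) \in \Hol_H(X)$. The hypothesis that the $H$-action factors through $H/H_1$ gives factorizations $\act = \overline{\act} \circ \pi$ and $\pr_2 = \overline{\pr}_2 \circ \pi$, hence $\act^+ \mathscr{M} = \pi^+ \overline{\act}^+ \mathscr{M}$ and $\pr_2^+ \mathscr{M} = \pi^+ \overline{\pr}_2^+ \mathscr{M}$. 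Both $\overline{\act}$ and $\overline{\pr}_2$ are smooth (using the standard automorphism $(h,x) \mapsto (h, hx)$ of $(H/H_1) \times_S X$ to convert one into the other), so, up to the appropriate shift, the right-hand sides lie in the essential image of $\pi^+[d_{H_1}]$ applied to objects of $\Hol((H/H_1) \times_S X)$. Full faithfulness of $\pi^+[d_{H_1}]$ then produces a unique isomorphism $\overline{\theta}: \overline{\act}^+ \mathscr{M} \xrightarrow{\sim} \overline{\pr}_2^+ \mathscr{M}$ such that $\pi^+ \overline{\theta} = \theta$.

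It remains to verify that the descended datum $(\mathscr{M}, \overline{\theta})$ satisfies the unit condition \ref{equivariant object}(i) and the cocycle condition \ref{equivariant object}(ii) on $(H/H_1) \times_S X$ and on $(H/H_1)^2 \times_S X$. Both identities become the corresponding known identities for $\theta$ after pullback along $\pi$ and $\pi_2$ respectively; since those pullback functors are fully faithful on holonomic modules, this forces the identities already on the quotient. The main technical obstacle is merely the bookkeeping needed to ensure the smoothness and geometric connectivity of the fibers of each $\pi_n$ so that Proposition \ref{BBD 4.2.5} applies uniformly; once that is in place the descent argument is formal, and no new input beyond Lemma \ref{lemma ff forget H} and Proposition \ref{BBD 4.2.5} is required.
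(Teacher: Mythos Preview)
Your proof is correct and follows essentially the same approach as the paper: full faithfulness via Lemma~\ref{lemma ff forget H}, and essential surjectivity by descending the equivariance isomorphism along the smooth quotient map. The paper compresses the second step into a one-line citation of smooth descent for holonomic modules (\cite{Abe18}~2.1.13), whereas you spell it out explicitly using Proposition~\ref{BBD 4.2.5}; these are the same mechanism, and your version has the minor advantage of being self-contained within the paper's own toolkit.
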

\begin{proof}
	The essential surjectivity follows from smooth descent (\cite{Abe18} 2.1.13). The full faithfulness follows from \ref{lemma ff forget H}. 
\end{proof}

\begin{secnumber} \label{twisted prod}
	Keep the notation of \ref{equivariant object}. 
	Let $Y$ be a separated $S$-scheme of finite type and $\varpi:E\to Y$ an $H$-torsor over $S$ with trivial action of $H$ on $Y$. 
	We denote by $Y\widetilde{\times}_{S}X$ the quotient of $E\times_S X$ by $H$, where $H$ acts on $E\times_S X$ diagonally.  
	
	Let $\mathscr{M}$ be a holonomic module on $Y$ and $\mathscr{N}$ an $H$-equivariant holonomic module on $X$. 
	Assume that $\mathscr{M}\boxtimes_S \mathscr{N}$ is a holonomic module on $Y\times_S X$ (Note that it is true if the base $S=\Spec(k)$).
	Then $(\varpi^+\mathscr{M}[\dim H]) \boxtimes_S \mathscr{N}$ is holonomic on $E\times_S X$ and is $H$-equivariant by construction. 
	By \eqref{quotient descent}, it descents to a holonomic module on $Y\widetilde{\times}_S X$, denoted by $\mathscr{M}\widetilde{\boxtimes}_S \mathscr{N}$ and called the \textit{twisted external product} of $\mathscr{M}$ and $\mathscr{N}$. 
\end{secnumber}

\begin{secnumber}
	We say an fpqc sheaf $\mathcal{X}$ on the category of $k$-algebras is a \textit{(strict) ind-scheme over $k$} if there exists an isomorphism of fpqc-sheaves $\mathcal{X}\simeq \varinjlim_{i\in I} X_i$ for a filtered inductive system $(X_i)_{i\in I}$ of $k$-schemes, whose transition morphisms are closed immersion. 
	The inductive system $(X_i)_{i\in I}$ is called \textit{an ind-presentation of $\mathcal{X}$}. We have following properties:

	(i) If $Z$ is a $k$-scheme and $u:Z\to \mathcal{X}$ is a closed subfunctor, then there exists an index $i$ such that $u$ factors through $Z\to X_i$. 

	(ii) If $\mathcal{X}\simeq \varinjlim_{j\in J} X'_j$ is another ind-presentation, the for any $i$, there exists an index $j$ such that $X_i$ is a closed subscheme of $X_{j}'$ and vice versa.

	Given an ind-scheme $\mathcal{X}=\varinjlim_{i\in I} X_i$, we denote by $\mathcal{X}_{\red}=\varinjlim_{i\in I} X_{i,\red}$ the reduced ind-subscheme of $\mathcal{X}$.

	For a transition morphism $\varphi:X_i\to X_j$, the functor $\varphi_+:\rD(X_i)\to \rD(X_j)$ is exact and fully faithful. 
	We define a triangulated category $\rD(\mathcal{X})$ as the 2-inductive limit 
	\begin{displaymath}
		\rD(\mathcal{X})=\varinjlim_{i\in I} \rD(X_i).
	\end{displaymath}
	The definition is independent of the choice of a ind-presentation of $\mathcal{X}$. 
	Since $\varphi_+$ is exact, $\rD(\mathcal{X})$ is also equipped with a t-structure, whose heart is denoted by $\Hol(\mathcal{X})$. 
	Note that $\Hol(\mathcal{X})$ coincides with the full abelian subcategory $\varinjlim_{i\in I} \Hol(X_i)$ of $\rD(\mathcal{X})$. 
	
	Given a morphism $f=(f_i)_{i\in I}: \mathcal{X}=\varinjlim X_i\to S$ to a $k$-scheme $S$, the cohomology functors $f_{i,!}$'s and $f_{i,+}$'s allow us to define $f_!,f_+:\rD(\mathcal{X})\to \rD(S)$. 
\end{secnumber}

\begin{secnumber} \label{equiv hol indscheme}
	Let $\mathcal{X}=\varinjlim_{i\in I} X_i$ be an ind-scheme and $f:\mathcal{X}\to S$ a morphism to a $k$-scheme. 
	Let $(H_j)_{j\in J}$ be a projective system of smooth affine $S$-group schemes with geometrically connected fibers, whose transition morphisms are quotient. We set $H=\varprojlim_{j\in J} H_j$ its projective limit, which is an affine group scheme over $S$. 
	Assume that there exists an action of $H$ on $f:\mathcal{X}\to S$ such that it stabilizes each subfunctor $f|_{X_i}$ and that the $H$-action factors through a quotient $H_{j_i}$ on $X_i\to S$ for each $i\in I$. Then we define $\Hol_{H}(X_i)$ to be $\Hol_{H_{j_i}}(X_i)$. By lemma \ref{lemma descent cat equivariant}, the category $\Hol_{H}(X_i)$ is independent of the choice of $H_{j_i}$ up to canonical equivalences. 
	Therefore, for $i\le j$, we have a fully faithful functor $\Hol_H(X_i)\to \Hol_H(X_j)$. 
	We define the \textit{category $\Hol_H(\mathcal{X})$ of $H$-equivariant holonomic modules on $\mathcal{X}$} as the inductive limit:
	\begin{displaymath}
		\Hol_H(\mathcal{X})=\varinjlim_{i\in I} \Hol_{H}(X_i).
	\end{displaymath}

	Let $\mathcal{Y}=\varinjlim_{i\in I} Y_i$ an ind-scheme over $S$ and $\varpi: E \to \mathcal{Y}$ an $H$-torsor. 
	We can define an ind-scheme $\mathcal{Y}\widetilde{\times}_S \mathcal{X}$ as follows. 
	For $i,l\in I$, we denote by $E_{l,j_i}$ the $H_{j_i}$-torsor $E|_{Y_l}\times^H H_{j_i}\to Y_l$ and by $Y_l\widetilde{\times}_S X_i = E_{l,j_i}\times_S X_i /H_{j_i}$ the twisted product \eqref{twisted prod}. 
	For a surjection $H_{j'}\twoheadrightarrow H_{j_i}$, there exists a canonical isomorphism $E_{l,j'}\times_S X_i/H_{j'}\xrightarrow{\sim} E_{l,j_i}\times_S X_i/H_{j_i}$. 
	Then this allows us to represent the fpqc sheaf $\mathcal{Y}\widetilde{\times}_{S} \mathcal{X}$ as an inductive limit of $Y_l\widetilde{\times}_S X_i$. 

	Let $\mathscr{M}$ be an object of $\Hol(\mathcal{Y})$ supported in $Y_l$ and $\mathscr{N}$ an object of $\Hol_{H}(\mathcal{X})$ supported in $X_{i}$. 
	Assume that $\mathscr{M}\boxtimes_S \mathscr{N}$ is a holonomic module on $Y\times_S X$. 
	Then we can define an object $\mathscr{M}\widetilde{\boxtimes}_S \mathscr{N}$ in $\Hol(Y_l\widetilde{\times}_S X_i)$ \eqref{twisted prod} and then in $\Hol(\mathcal{Y}\widetilde{\times}_S \mathcal{X})$. 
	The construction is independent of the choice of $i,l\in I$. 
\end{secnumber}

\subsection{Intermediate extension and the weight theory}

\begin{secnumber} \label{intermediate extension}
	Let $u:Y\to X$ be a locally closed immersion. Then the functor $u_+$ (resp. $u_!$) is left exact (resp. right exact) (\cite{AC18} 1.3.13). 
	For $\blacktriangle \in \{\emptyset, F\}$ and  $\mathscr{E}\in \Hol(Y/L_{\blacktriangle})$, we consider the homomorphism $\theta^0_{u,\mathscr{E}}:\hH^0(u_!\mathscr{E})\to \hH^0(u_+\mathscr{E})$ and we define $u_{!+}(\mathscr{E})$ to be (\cite{AC18} 1.4.1)
	\begin{equation}
		u_{!+}(\mathscr{E})=\im(\theta^0_{u,\mathscr{E}}:\hH^0(u_!\mathscr{E})\to \hH^0(u_+\mathscr{E})).
		\label{IC def}
	\end{equation}
	This defines a functor $u_{!+}:\Hol(Y/L_{\blacktriangle})\to \Hol(X/L_{\blacktriangle})$, called the \textit{intermediate extension functor}. We recall the following results and refer to (\cite{AC18} \S 1.4) for general properties of this functor:

	(i) (\cite{AC18} 1.4.7) Suppose $\mathscr{E}$ is irreducible. Then, $u_{!+}(\mathscr{E})$ is the unique irreducible subobject of $\hH^0(u_+\mathscr{E})$ (resp. irreducible quotient of $\hH^0(u_!\mathscr{E})$) in $\Hol(X/L_{\blacktriangle})$.

	(ii) (\cite{AC18} 1.4.9)
	Let $\mathscr{F}$ be an irreducible object of $\Hol(X/L_{\blacktriangle})$. 
	Then there exists a locally closed immersion $u:Y\to X$ from a smooth $k$-scheme $Y$ and a smooth holonomic module $\mathscr{E}$ on $Y$ such that $\mathscr{F}\simeq u_{!+}(\mathscr{E})$. 
\end{secnumber}

\begin{coro}
	Let $j:U\to X$ be an open subscheme of $X$ and $i:Z\to X$ its complement.

	\textnormal{(i)} Given a holonomic module $\mathscr{E}$ on $U$, $j_{!+}(\mathscr{E})$ is the unique extension $\mathscr{F}$ of $\mathscr{E}$ to $\Hol(X/L_{\blacktriangle})$ such that $i^+\mathscr{F}\in \rD^{\le -1}(Z/L_{\blacktriangle})$ and that $i^!\mathscr{F}\in \rD^{\ge 1}(Z/L_{\blacktriangle})$. 

	\textnormal{(ii)} If $X$ is smooth and $\mathscr{F}$ is a smooth holonomic module on $X$, then $j_{!+}(\mathscr{F}|_{U})\simeq \mathscr{F}$. 
	\label{lemma uniqueness intermediate extension}
\end{coro}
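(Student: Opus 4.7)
For part (i), I would apply holonomic cohomology to the two recollement distinguished triangles $j_!j^+\mathscr{F}\to\mathscr{F}\to i_+i^+\mathscr{F}\to$ and $i_+i^!\mathscr{F}\to\mathscr{F}\to j_+j^+\mathscr{F}\to$ from \ref{Basic properties}(vi), applied to any holonomic extension $\mathscr{F}$ of $\mathscr{E}$. Using that $j_!$ is right $t$-exact and $j_+$ is left $t$-exact (\ref{intermediate extension}) while $i_+$ is $t$-exact by Berthelot--Kashiwara (\ref{Basic properties}(v)), the long exact sequences show that the vanishing $i^+\mathscr{F}\in\rD^{\le-1}$ is equivalent to the canonical map $\hH^0(j_!\mathscr{E})\to\mathscr{F}$ being surjective, and dually that $i^!\mathscr{F}\in\rD^{\ge 1}$ is equivalent to $\mathscr{F}\to\hH^0(j_+\mathscr{E})$ being injective. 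By construction $j_{!+}(\mathscr{E})=\im\theta^0_{j,\mathscr{E}}$ is simultaneously a quotient of $\hH^0(j_!\mathscr{E})$ and a subobject of $\hH^0(j_+\mathscr{E})$, so it satisfies both properties. For uniqueness, any such $\mathscr{F}$ yields a factorization $\hH^0(j_!\mathscr{E})\twoheadrightarrow\mathscr{F}\hookrightarrow\hH^0(j_+\mathscr{E})$ whose restriction along $j^+$ is $\id_\mathscr{E}$; the adjunction identity $\Hom(j_!\mathscr{E},j_+\mathscr{E})\simeq\End(\mathscr{E})$ combined with the $t$-placements of $j_!\mathscr{E}$ and $j_+\mathscr{E}$ forces this composition to coincide with $\theta^0_{j,\mathscr{E}}$, whence $\mathscr{F}=\im\theta^0_{j,\mathscr{E}}=j_{!+}(\mathscr{E})$.

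For part (ii), by the characterization in (i) it is enough to verify that a smooth holonomic $\mathscr{F}$ on smooth $X$ satisfies $i^+\mathscr{F}\in\rD^{\le-1}(Z/L_\blacktriangle)$ and $i^!\mathscr{F}\in\rD^{\ge 1}(Z/L_\blacktriangle)$. Since Verdier duality preserves smoothness up to Tate twist and shift (compare \eqref{pullback of smooth mods}) and satisfies $\mathbb{D}_Z\circ i^+\simeq i^!\circ \mathbb{D}_X$, the two conditions are equivalent and only the first needs to be shown. The plan is to stratify $Z$ by smooth locally closed subschemes of dimension strictly less than $d=\dim X$ and to run Noetherian induction on $\dim Z$ via the open--closed localization triangle \ref{Basic properties}(vi) for a smooth open stratum $Z_0\subset Z$. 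For such a stratum, write $\mathscr{F}=\mathscr{M}[d]$ with $\mathscr{M}\in\Sm(X/L_\blacktriangle)\subset\Hol(X/L_\blacktriangle)[-d]$ via \eqref{mod specialisation}; the pullback of the underlying overconvergent isocrystal to $Z_0$ is again an overconvergent isocrystal, hence a smooth module placed in degree $-\dim Z_0$, so $i_{Z_0}^+\mathscr{F}$ has holonomic cohomology concentrated in degree $-(d-\dim Z_0)\le -1$, as required.

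The main obstacle will be cleanly carrying out the stratification step in (ii), which requires tracking how $i^+$ interacts with the holonomic $t$-structure along potentially non-smooth closed immersions, and assembling the compatibilities between the specialisation equivalence $\widetilde{\Sp}_*$, pullback of overconvergent isocrystals, and Berthelot--Kashiwara's theorem. By comparison, part (i) is a direct formal manipulation of the recollement triangles once one verifies the $t$-exactness bounds on $j_!$, $j_+$, and $i_+$.
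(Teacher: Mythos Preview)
Your argument for (i) is essentially the paper's: both establish the two cohomological bounds for $j_{!+}(\mathscr{E})$ via the right/left $t$-exactness of $j_!$, $i^+$ (resp.\ $j_+$, $i^!$), and both deduce uniqueness from the factorization $\hH^0(j_!\mathscr{E})\twoheadrightarrow\mathscr{F}\hookrightarrow\hH^0(j_+\mathscr{E})$; the paper phrases this last step as an appeal to Berthelot--Kashiwara, while you invoke the adjunction $\Hom(j_!\mathscr{E},j_+\mathscr{E})\simeq\End(\mathscr{E})$ together with the $t$-placements --- these are the same computation.

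For (ii) there is a small but genuine organizational difference. The paper avoids your stratification induction entirely by citing stability of $j_{!+}$ under composition (\cite{AC18} 1.4.5): writing $j$ as a composite of open immersions whose successive complements are smooth reduces immediately to the case of smooth $Z$, where \eqref{pullback of smooth mods} gives $i^+\mathscr{F}\in\Hol(Z)[\dim Z-\dim X]$ and (i) applies. Your plan to verify the bound $i^+\mathscr{F}\in\rD^{\le -1}(Z)$ directly by open--closed induction on $Z$ also works (the key inputs --- right $t$-exactness of $j_{0,!}$, $t$-exactness of $i_{1,+}$, and the degree placement of $i_{Z_0}^+\mathscr{F}$ on a smooth stratum $Z_0$ --- are exactly what you identify), but the composition lemma packages this more cleanly and sidesteps the bookkeeping you flag as the main obstacle.
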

\begin{proof}
	(i) Since $j_!,i^+$ are right exact (\cite{AC18} 1.3.2), $\hH^0i^+(\hH^0(j_!(\mathscr{E})))=0$. By applying $i^+$ to $0\to \Ker(\theta_{j,\mathscr{E}}^0)\to \hH^0(j_!(\mathscr{E}))\to j_{!+}(\mathscr{E})\to 0$, we obtain $i^+(j_{!+}(\mathscr{E}))\in \rD^{\le -1}(Z/L)$. We prove $i^!\mathscr{F}\in \rD^{\ge 1}(Z/L)$ in a dual way. 

	Conversely, given such an extension $\mathscr{F}$, we can prove that the adjunction morphism $\hH^0j_!(\mathscr{E})\to \mathscr{F}$ (resp. $\mathscr{F}\to \hH^0j_+(\mathscr{E})$) is surjective (resp. injective) by the Berthelot-Kashiwara theorem. The assertion follows.

	(ii) The intermediate extension is stable under composition (\cite{AC18} 1.4.5). Then we can reduce to the case where $Z$ is smooth over $k$. 
	In this case, assertion (ii) follows from (i) and \eqref{pullback of smooth mods}. 
\end{proof}

\begin{secnumber} \label{def weight}
	We briefly recall the theory of weights for holonomic $F$-complexes developed by Abe and Caro \cite{AC18}.

	In the rest of this subsection, we assume $k$ has $q = p^s$ elements and we consider the arithmetic base tuple $\mathfrak{T}_F=\{k,R,K,L,s,\sigma=\id\}$ \eqref{basic def}. 
	We fix an isomorphism $\iota:\overline{K}\simeq \mathbb{C}$. 
	We refer to (\cite{AC18} 2.2.2, \cite{Abe18} 2.2.30) for the notion of being \textit{$\iota$-mixed} (resp. \textit{$\iota$-mixed of weight $\le w$, $\iota$-mixed of weight $\ge w$, $\iota$-pure}) for $\mathscr{M}\in \rD(X/L_F)$. 

	The weight behaves like the one in the $\ell$-adic theory:
	
	(i) (\cite{AC18} 4.1.3) The six operations preserve weights. More precisely, given a morphism $f:X\to Y$ of $k$-schemes, $f_+,f^!$ send $\iota$-mixed $F$-complexes of weight $\ge w$ to those of weight $\ge w$, $f_!,f^+$ send $\iota$-mixed $F$-complexes of weight $\le w$ to those of weight $\le w$. The dual functor $\mathbb{D}_X$ exchanges $\iota$-mixed $F$-complexes of weight $\le w$ to $\ge w$ and $\otimes$ sends $\iota$-mixed $F$-complexes of weight $(\le w, \le w')$ to $\le w+w'$. 
	
	(ii) (\cite{AC18} 4.2.4) Intermediate extension functor of an immersion preserves pure $F$-complexes and weights. 
	
	Moreover, we have a decomposition theorem for pure holonomic $F$-module.
\end{secnumber}

\begin{theorem}[\cite{AC18} 4.3.1, 4.3.6] \label{decomposition thm}
	Let $X$ be a $k$-scheme. 
	
	\textnormal{(i)} An $\iota$-pure $F$-holonomic module $\mathscr{E}$ on $X$ is semisimple in the category $\Hol(X/L)$ (not in $\Hol(X/L_F)$). 	

	\textnormal{(ii)} An $\iota$-pure $F$-holonomic complex $\mathscr{F}$ is isomorphic, in $\rD(X/L)$ to $\oplus_{n\in \mathbb{Z}} \hH^n(\mathscr{F})[n]$.  
	\label{decomposition}
\end{theorem}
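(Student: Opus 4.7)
The plan is to adapt the strategy of Be\u{\i}linson--Bernstein--Deligne--Gabber to the arithmetic $\mathscr{D}$-module setting, relying on the six-functor formalism of \ref{Basic properties} and the weight theory of \ref{def weight}. I would treat (i) and (ii) in sequence, since (ii) is essentially a formal consequence of (i) once the cohomology modules are known to be semisimple.

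For part (i), I would first reduce to showing that any nonzero morphism $\mathscr{F} \hookrightarrow \mathscr{E}$ from an irreducible subobject $\mathscr{F}$ in $\Hol(X/L)$ admits a retraction. By \ref{intermediate extension}(ii), each Jordan--H\"older factor of $\mathscr{E}$ in $\Hol(X/L)$ has the form $u_{!+}\mathscr{M}$ for a smooth holonomic module $\mathscr{M}$ on a smooth locally closed $u:Y\to X$. Since $u_{!+}$ preserves purity and weights (\ref{def weight}(ii)) and since the six operations respect weight bounds (\ref{def weight}(i)), all these factors are pure of the same weight $w$ as $\mathscr{E}$. The obstruction to splitting lies in $\Ext^1_{\Hol(X/L)}(\mathscr{F}_1,\mathscr{F}_2)$ for two such pure simple objects. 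The key step is then to show that any such extension carrying a compatible Frobenius structure (coming from the one on $\mathscr{E}$) is split in $\Hol(X/L)$: its total object is pure of weight $w$, and a Frobenius-eigenspace decomposition on the Hom spaces (using that distinct Frobenius eigenvalues have weight-incompatible products) produces the desired retraction after forgetting $F$.

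For part (ii), given (i) each $\hH^n(\mathscr{F})$ is pure of weight $w+n$ and semisimple in $\Hol(X/L)$. I would argue by induction on the amplitude of $\mathscr{F}$, using the standard truncation triangle
\[
\tau_{\leq n-1}\mathscr{F} \longrightarrow \tau_{\leq n}\mathscr{F} \longrightarrow \hH^n(\mathscr{F})[-n] \xrightarrow{\;\delta\;} \tau_{\leq n-1}\mathscr{F}[1].
\]
It suffices to show $\delta=0$. By a standard d\'evissage through the spectral sequence
\[
\Ext^{p}_{\Hol(X/L)}(\hH^n(\mathscr{F}),\hH^{m}(\mathscr{F})) \Longrightarrow \Hom_{\rD(X/L)}(\hH^n(\mathscr{F})[-n],\hH^m(\mathscr{F})[-m+1]),
\]
one reduces to the vanishing of $\Ext^{n-m+1}_{\Hol(X/L)}(\hH^n(\mathscr{F}),\hH^m(\mathscr{F}))$ for $m<n$. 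These vanish by the weight argument of (i) applied in higher Ext degree: the source has weight $w+n$, the target weight $w+m<w+n$, and under the Frobenius action on the Ext group these eigenvalue constraints are incompatible with a nonzero class.

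The main obstacle is the weight-based vanishing of higher Ext groups between pure holonomic modules. Unlike the $\ell$-adic setting, where these follow from Deligne's Weil~II together with standard perverse sheaf machinery, in the arithmetic $\mathscr{D}$-module setting one must carefully track the interaction of Frobenius eigenvalues with the six operations and with intermediate extensions, and check that the Ext groups in $\Hol(X/L)$ indeed carry the Frobenius action needed to run the eigenvalue argument. This is precisely the content of the weight theory developed by Abe--Caro in \cite{AC18}; once it is in place, the proof follows the classical template.
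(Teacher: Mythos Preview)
The paper does not give a proof of this statement: it is quoted from \cite{AC18} (Theorems 4.3.1 and 4.3.6), with only the remark that the same argument carries over from the category of overholonomic modules to $\Hol(X/L)$. So there is nothing to compare against beyond the general BBDG template that \cite{AC18} follows.

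Your sketch has the right overall shape, but there is a genuine gap in part~(i). You take an arbitrary simple subobject $\mathscr{F}\hookrightarrow\mathscr{E}$ in $\Hol(X/L)$ and then speak of ``any such extension carrying a compatible Frobenius structure (coming from the one on $\mathscr{E}$)''. But a simple subobject in $\Hol(X/L)$ has no reason to be stable under the Frobenius pullback $F_X^*$, so the short exact sequence $0\to\mathscr{F}\to\mathscr{E}\to\mathscr{E}/\mathscr{F}\to 0$ does not in general lift to $\Hol(X/L_F)$, and your weight/eigenvalue argument on $\Ext^1$ never gets off the ground. The classical BBDG proof (5.3.6--5.3.8) sidesteps this by a different induction: one chooses a smooth dense open $j:U\hookrightarrow X$ on which $j^*\mathscr{E}$ is smooth, shows via weight estimates that $j_{!*}j^*\mathscr{E}$ is an $F$-equivariant direct summand of $\mathscr{E}$ with complement supported on $X\setminus U$, and then invokes the semisimplicity of pure smooth objects (the analogue of Weil~II) plus induction on the dimension of the support. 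This is the argument transported to the arithmetic $\mathscr{D}$-module setting in \cite{AC18}.

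For part~(ii) your conclusion is right but the justification is misstated. The groups $\Ext^{n-m+1}_{\Hol(X/L)}(\hH^n(\mathscr{F}),\hH^m(\mathscr{F}))$ do \emph{not} vanish in general; what vanishes is the Frobenius-invariant part. The connecting map $\delta$ lives in $\Hom_{\rD(X/L_F)}$, i.e.\ it is fixed by the Frobenius action on $\Hom_{\rD(X/L)}(\hH^n(\mathscr{F})[-n],\tau_{\le n-1}\mathscr{F}[1])$; since the source is pure of weight $w$ and the target pure of weight $w+1$, the weight bounds of \ref{def weight}(i) force all Frobenius eigenvalues on this Hom space to have $\iota$-absolute value $>1$, so the fixed subspace is zero and $\delta=0$. (Your ``spectral sequence'' is also superfluous: for objects $A,B$ in the heart, $\Hom_{\rD}(A[-n],B[-m+1])=\Ext^{n-m+1}(A,B)$ on the nose.)
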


The original form of (\cite{AC18} 4.3.1, 4.3.6) states the decomposition in the category of overholonomic modules (resp. complexes) over $X$. 
We remark that the same argument shows the decomposition in the category $\Hol(X/L)$ (resp. $\rD(X/L)$).

\subsection{Nearby and vanishing cycles} \label{nearby vanishing cycle}

In a recent preprint \cite{Abe18II}, Abe formulated the nearby and vanishing cycle functors for holonomic arithmetic $\mathscr{D}$-modules, based on the unipotent nearby and vanishing cycle functors introduced by himself and Caro in \cite{AC17}. 
	We briefly recall these constructions in this subsection. 

	We write simply $\rD(X)$ for $\rD(X/L)$ or $\rD(X/L_F)$. 
	The construction are parallel in two cases. When $\rD(X)=\rD(X/L)$, the Tate twist $(n)$ denotes the identity functor. 

\begin{secnumber}\label{def Psi un}
	Let $f:X\to \mathbb{A}_k^1$ be a morphism of $k$-schemes. We denote by $j:U=f^{-1}(\mathbb{G}_m)\to X$ the open immersion and by $i:X_{0}=X-U\to X$ its complement. 
	Following Beilinson \cite{Bei}, Abe and Caro constructed the unipotent nearby and vanishing cycle functors (\cite{AC17}, \S 2)
	\begin{equation}
		\Psi_{f}^{\un}:\Hol(U)\to \Hol(X_{0}),\quad \Phi_{f}^{\un}:\Hol(X)\to \Hol(X_0).  \label{unipotent nearby}
	\end{equation}
	
	We briefly recall the definition of $\Psi_{f}^{\un}$. We denote $\mathscr{O}_{\widehat{\mathbb{P}}_R^1,\mathbb{Q}}(^{\dagger}\{0,\infty\})$ simply by $\mathcal{O}_{\Gm}$ (see \ref{basic notation}).
	For $n\ge 0$, we define a free $\mathcal{O}_{\Gm}$-module $\Log^n$ of rank $n$
	\begin{displaymath}
		\Log^n=\oplus_{i=0}^{n-1} \mathcal{O}_{\Gm}\cdot \log^{[i]},
	\end{displaymath}
	generated by the symbols $\log^{[i]}$. There exists a unique $\mathscr{D}_{\widehat{\mathbb{P}}^1_R,\mathbb{Q}}^{\dagger}(\{0,\infty\})$-module structure on $\Log^n$ defined for $i\ge 0$ and $g\in \mathcal{O}_{\Gm}$ by
	\begin{displaymath}
		\nabla_{\partial_t}(g\cdot \log^{[i]})= \partial_t(g)\cdot \log^{[i]} + \frac{g}{t} \cdot \log^{[i-1]},
	\end{displaymath}
	where $t$ is the local coordinate of $\Gm$ and $\log^{[j]}=0$ for $j<0$.
	There exists a canonical Frobenius structure on $\Log^n$ sending $\log^{[i]}$ to $q^i \log^{[i]}$. 
	This defines an overconvergent $F$-isocrystal on $\Gm$ and then a smooth object of $\Hol(\Gm/K_F)$.
	We still denote by $\Log^n$ the extension of scalars $\iota_{L/K}(\Log^n)$ in $\Hol(\Gm)$. 

	We set $\Log^n_{f}= f^+ \Log^n \in \Hol(U)$ and define for $\mathscr{F}\in \Hol(U)$ \footnote{We adopt the definition of \cite{Abe18II}, which is different from that of \cite{AC17} by a Tate twist.}:
	\begin{equation}
		\Psi_{f}^{\un}(\mathscr{F})= \varinjlim_{n\ge 0} \Ker(j_!(\mathscr{F}\otimes \Log^n_f)\to j_+(\mathscr{F}\otimes \Log^n_f)).
		\label{def unipotent nearby Abe}
	\end{equation}
	This limit is representable in $\Hol(X_0)$ by (\cite{AC17} lemma 2.4). 

	The functors $\Psi_f^{\un},\Phi_f^{\un}$ are exact (\cite{AC17} 2.7) and extend to triangulated categories. 
	There exists a distinguished triangle $i^+ [-1]\to \Psi_{f}^{\un}\to \Phi_{f}^{\un}\xrightarrow{+1}$. 
\end{secnumber}

\begin{prop}[\cite{AC17} 2.5] \label{nearby vanishing un dual prop}
	There exist canonical isomorphisms:
	\begin{equation}
		(\mathbb{D}_{X_0}\circ \Psi_{f}^{\un})(1)\simeq \Psi_{f}^{\un}\circ \mathbb{D}_{U}, \qquad \mathbb{D}_{X_0}\circ \Phi_{f}^{\un}\simeq \Phi_{f}^{\un}\circ \mathbb{D}_{X}.
		\label{nearby vanishing un dual}
	\end{equation}
\end{prop}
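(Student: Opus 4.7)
The argument is a direct adaptation of Beilinson's original strategy in \cite{Bei} to the arithmetic setting, exploiting the six functor formalism of \ref{Basic properties} and the explicit formula \eqref{def unipotent nearby Abe} for $\Psi_f^{\un}$. It proceeds in three steps, with the main technical input being a self-duality of the logarithm pro-system.

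\textbf{Step 1 (self-duality of $\Log^n$).} Using the explicit presentation of $\Log^n$ in \ref{def Psi un}, I would construct a horizontal pairing
\[
\Log^n\otimes_{\mathcal{O}_{\Gm}}\Log^n\longrightarrow \mathcal{O}_{\Gm}, \qquad \log^{[i]}\otimes \log^{[j]}\longmapsto (-1)^i\delta_{i+j,n-1},
\]
which one checks is compatible with the connection and, after tensoring by a suitable Tate twist, with the Frobenius structure given by $\log^{[i]}\mapsto q^i\log^{[i]}$. Since $\Log^n$ is locally free, the pairing is perfect, and combining it with Poincar\'e duality on the smooth curve $\Gm$ (\ref{Basic properties}(vii)) yields an isomorphism $\mathbb{D}_{\Gm}(\Log^n)\simeq \Log^n(1)$ in $\Hol(\Gm)$, compatible with the transition maps $\Log^n\hookrightarrow \Log^{n+1}$ up to reversing their direction. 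Pulling back along $f|_U$ and using the projection formula, this gives a functorial isomorphism $\mathbb{D}_U(\mathscr{F}\otimes \Log^n_f)\simeq \mathbb{D}_U(\mathscr{F})\otimes \Log^n_f(1)$.

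\textbf{Step 2 (duality for $\Psi_f^{\un}$).} The duality $\mathbb{D}_{X_0}$ is an exact anti-equivalence on $\Hol(X_0)$ that interchanges kernels with cokernels and inductive with projective limits; on the other hand, $\mathbb{D}_X\circ j_!\simeq j_+\circ \mathbb{D}_U$ and $\mathbb{D}_X\circ j_+\simeq j_!\circ\mathbb{D}_U$ from the open-immersion part of \ref{Basic properties}(vi). Applying $\mathbb{D}_{X_0}$ to \eqref{def unipotent nearby Abe} and substituting via Step 1, one obtains
\[
\mathbb{D}_{X_0}(\Psi_f^{\un}(\mathscr{F}))(1)\simeq \varprojlim_n \operatorname{coker}\!\bigl(j_!(\mathbb{D}_U\mathscr{F}\otimes \Log^n_f)\to j_+(\mathbb{D}_U\mathscr{F}\otimes\Log^n_f)\bigr).
\]
A Beilinson-type argument, parallel to the one used in the proof of \cite{AC17} lemma 2.4, identifies this $\varprojlim$-$\operatorname{coker}$ expression with the $\varinjlim$-$\operatorname{ker}$ expression defining $\Psi_f^{\un}(\mathbb{D}_U\mathscr{F})$, yielding the first isomorphism of the proposition.

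\textbf{Step 3 (duality for $\Phi_f^{\un}$).} I would apply $\mathbb{D}_{X_0}$ to the distinguished triangle $i^+[-1]\to \Psi_f^{\un}\circ j^+\to \Phi_f^{\un}\xrightarrow{+1}$ on $\Hol(X)$ from \ref{def Psi un}. Using $\mathbb{D}_X\circ j^+\simeq j^+\circ \mathbb{D}_X$ (open immersion), $\mathbb{D}_X\circ i^+\simeq i^!\circ \mathbb{D}_X$ (closed immersion), and Step 2, and comparing with the triangle obtained by applying the functors of \ref{def Psi un} to $\mathbb{D}_X\mathscr{F}$, the third vertices match by uniqueness of cones once one verifies the compatibility of the connecting morphisms. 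The Tate twist $(1)$ produced in Step 2 is precisely compensated by the shift between $i^+$ and $i^!$ in the matching, so no residual twist appears in the $\Phi$ statement.

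\textbf{Main obstacle.} The technical heart is Step 1: extracting the exact Tate twist in the self-duality of $\Log^n$ so that it is compatible along the tower and reproduces the normalization $(1)$ in the $\Psi$ statement; equivalently, one must track carefully how the Frobenius scaling $\log^{[i]}\mapsto q^i\log^{[i]}$ interacts with Poincar\'e duality on $\Gm$. Once this normalization is pinned down, Steps 2 and 3 are essentially formal manipulations with the six functor formalism.
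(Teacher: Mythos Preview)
The paper does not supply its own proof of this proposition; it is quoted verbatim from \cite{AC17}~2.5. Your sketch follows the Beilinson strategy underlying the argument in \cite{AC17}, and Steps~1--2 are essentially what happens there.

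One remark on Step~3: in \cite{AC17} the functor $\Phi_f^{\un}$ is not \emph{defined} as the cone on $i^+[-1]\to\Psi_f^{\un}\circ j^+$, but rather through Beilinson's maximal extension functor $\Xi_f^{\un}$ (built from the same $\Log^n$ tower). The self-duality $\mathbb{D}_{X_0}\circ\Phi_f^{\un}\simeq\Phi_f^{\un}\circ\mathbb{D}_X$ is then obtained directly from the self-duality of $\Xi_f^{\un}$, which in turn comes from the pairing in your Step~1. This matters because the phrase ``the third vertices match by uniqueness of cones'' does not by itself yield a \emph{canonical} isomorphism---cones are only unique up to non-canonical isomorphism---whereas the construction via $\Xi_f^{\un}$ gives canonicity for free. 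If you want to run the argument as written, you would need to exhibit an explicit map of triangles rather than invoke cone uniqueness.
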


\begin{secnumber}
	To define the full nearby/vanishing cycle functors of a morphism over a henselian trait, one need to extend the definition of holonomic arithmetic $\mathscr{D}$-modules to a larger class of schemes, which are closed under henselization. 
	We denote by $\Pro(k)$ the full subcategory of Noetherian schemes over $k$ which can be representable by a projective limit of a projective system of $k$-schemes whose transition morphisms are affine and \'etale. 
	In the rest of this subsection, we will work with schemes in the category $\Pro(k)$. 
	
	Given a morphism of finite type $X\to S$, if $S$ is an object of $\Pro(k)$ then so is $X$.
	The category $\Pro(k)$ is closed under henselization (resp. strict henselization) (\cite{Abe18II} 1.3). 

	Let $X=\varprojlim_{i\in I} X_{i}$ be an object of $\Pro(k)$ with a representation by $k$-schemes $X_{i}$. 
	For each transition morphism $\varphi:X_i\to X_{j}$ (which is affine and \'etale), we have a canonical isomorphism $\varphi^{+}\simeq \varphi^{!}:\rD(X_{j})\to \rD(X_{i})$. We define a triangulated category $\rD(X)$ of arithmetic $\mathscr{D}$-modules on $X$ as an inductive limit $\rD(X):=\varinjlim_{i\in I^{\circ}}\rD(X_{i})$.
	Since $\varphi^{+}$ is exact, $\rD(X)$ is equipped with a t-structure whose heart is denoted by $\Hol(X)$. 
	Moreover, we can extend the definition of cohomological functors \eqref{pullback pushforward} to $\rD(X)$ (cf. \cite{Abe18II} 1.4). 
\end{secnumber}

\begin{secnumber} \label{nearby vanishing}
	Let $(S,s,\eta)$ be a strict henselian trait in $\Pro(k)$ and $f:X\to S$ a morphism of finite type. With above preparations, we can define the unipotent nearby and vanishing cycles functors for $f$ (cf. \cite{Abe18II} 1.7,1.8)
	\begin{equation} \label{unipotent Psi Phi trait}
		\Psi_{f}^{\un}, \Phi_{f}^{\un}:\Hol(X)\to \Hol(X_{s}).
	\end{equation}
	
	We denote by $\Hen(S)$ the category of henselian traits over $S$ which is generically \'etale over $S$. Given an object $h:S'\to S$ of $\Hen(S)$, we denote abusively by $h$ the canonical morphism $X_{S'}\to X$, by $h_s:X_{s'}\xrightarrow{\sim} X_{s}$ the isomorphism on the special fibers and by $f':X_{S'}\to S'$ the base change of $f$ by $h$.

	Using \eqref{unipotent Psi Phi trait}, the full nearby and vanishing cycle can be defined as (cf. \cite{Abe18II} 1.9): 
	\begin{equation}
		\Psi_{f}=\varinjlim_{(S',h)\in \Hen(S)} h_{s+}\circ \Psi_{f'}^{\un}\circ h^{+}, \quad
		\Phi_{f}=\varinjlim_{(S',h)\in \Hen(S)} h_{s+}\circ \Phi_{f'}^{\un}\circ h^{+}.
		\label{nearby}
	\end{equation}
	By \cite{Abe18II} 2.2, they are well-defined functors
	\begin{displaymath}
		\Psi_{f}, \Phi_{f}: \Hol(X)\to \Hol(X_{s}).
	\end{displaymath}
\end{secnumber}

\subsection{Universal local acyclicity}
\label{subsection ULA}
\begin{secnumber} \label{LA setting}	
	Following Braverman-Gaitsgory (\cite{BG}, 5.1), we propose a notion of \textit{(universal) local acyclicity} for arithmetic $\mathscr{D}$-modules with respect to a morphism to a smooth target. 
	
	For a smooth $k$-scheme $X$, we denote by $d_{X}:\pi_{0}(X)\to \mathbb{N}$ the dimension of $X$. 
	Let $g:X_1\to X_2$ be a morphism of $k$-schemes and $\mathscr{F},\mathscr{F}'$ two objects of $\rD(X_2)$. We consider the composition 
	\begin{displaymath}
		g_{!}(g^{+}(\mathscr{F})\otimes g^{!}(\mathscr{F}'))\simeq \mathscr{F}\otimes g_{!}(g^{!}(\mathscr{F}'))\to \mathscr{F}\otimes\mathscr{F}'
	\end{displaymath}
	and its adjunction: 
	\begin{equation}
		g^{+}(\mathscr{F})\otimes g^{!}(\mathscr{F}')\to g^{!}(\mathscr{F}\otimes\mathscr{F}').
		\label{adjunction map}
	\end{equation}

	Now let $S$ be a smooth $k$-scheme and $f:X\to S$ a morphism of $k$-schemes. We set $X_{1}=X$, $X_{2}=X\times S$, $\mathscr{F}'=L_{X_2}$ and take $g$ to be the graph of $f$. 
	By Poincar\'e duality, we have $L_{X_{1}}(-d_S)[-2d_S]\xrightarrow{\sim} g^{!}(L_{X_{2}})$. Then, we obtain a canonical morphism
	\begin{equation}
		g^{+}(\mathscr{F}) \to g^{!}(\mathscr{F}) (d_S)[2d_S].
	\end{equation}
	By taking $\mathscr{F}$ to be $\mathscr{M}\boxtimes \mathscr{N}$, we obtain a canonical morphism (\ref{Basic properties}(ix))
	\begin{equation} \label{LA map}
		\mathscr{M}\otimes f^{+}(\mathscr{N}) \to (\mathscr{M}\widetilde{\otimes} f^{!}(\mathscr{N})) (d_S)[2d_S].
	\end{equation}
\end{secnumber}

\begin{definition} \label{def LA}
	Let $S$ be a smooth $k$-scheme and $f:X\to S$ a morphism of $k$-schemes.
	We say that an object $\mathscr{M}$ of $\rD(X)$ is \textit{locally acyclic (LA) with respect to $f$}, if the morphism \eqref{LA map} is an isomorphism for any object $\mathscr{N}$ of $\rD(S)$. 
	We say that $\mathscr{M}$ is \textit{universally locally acyclic (ULA) with respect to $f$}, if for any morphism of smooth $k$-schemes $S'\to S$, the $+$-inverse image of $\mathscr{M}$ to $X\times_S S'$ is locally acyclic with respect to $X\times_S S'\to S'$. 
\end{definition}

\begin{prop} \label{basic ULA}
	Keep the notation of \ref{def LA} and let $\mathscr{M}$ be an object of $\rD(X)$. 

	\textnormal{(i)} Any object $\mathscr{M}$ of $\rD(X)$ is ULA with respect to the structure morphism $X\to \Spec(k)$. 

	\textnormal{(ii)} Let $g:Y\to X$ be a smooth (resp. smooth surjective) morphism. Then $g^+(\mathscr{M})$ on $Y$ is LA with respect to $f\circ g$ if (resp. if and only if) $\mathscr{M}$ is LA with respect to $f$. 
	
	\textnormal{(iii)} If $g:S\to S'$ is a smooth morphism of smooth $k$-schemes and $\mathscr{M}$ is LA with respect to a morphism $f:X\to S$, then $\mathscr{M}$ is LA with respect to $g\circ f$. 
	
	\textnormal{(iv)} Let $h:Y\to S$ be a morphism of finite type and $g:X\to Y$ a proper $S$-morphism (resp. a closed immersion). Then $g_+(\mathscr{M})$ is LA with respect to $h$ if (resp. if and only if) $\mathscr{M}$ is LA with respect to $f$. 

	\textnormal{(v)} If $\mathscr{M}$ is LA with respect to $f$, then so is its dual $\mathbb{D}_X(\mathscr{M})$. 
\end{prop}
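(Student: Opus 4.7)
The plan is to transfer the arguments of Braverman--Gaitsgory (\cite{BG}, \S 5) from $\ell$-adic sheaves to the arithmetic $\mathscr{D}$-module setting, using the six-functor formalism recalled in \ref{Basic properties}. The five parts will be proved in the listed order, and each reduces to a formal manipulation once one identifies the LA morphism \eqref{LA map} under a given operation.

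For (i), I would first note that ULA with respect to $X\to \Spec(k)$ amounts to asking, for every smooth $k$-scheme $S'$, that $\pr_1^+\mathscr{M}$ on $X\times S'$ be LA with respect to $\pr_2:X\times S'\to S'$. The LA map in this case is
\[
\pr_1^+\mathscr{M}\otimes \pr_2^+\mathscr{N} \to \pr_1^+\mathscr{M}\,\widetilde\otimes\, \pr_2^!\mathscr{N}(d_{S'})[2d_{S'}].
\]
Using property \ref{Basic properties}(ix), namely $\pr_1^+(-)\otimes\pr_2^+(-)\simeq \pr_1^!(-)\widetilde\otimes\pr_2^!(-)$, together with Poincar\'e duality $\pr_1^!\simeq \pr_1^+(d_{S'})[2d_{S'}]$ (valid because $S'$ is smooth so $\pr_1$ is smooth of relative dimension $d_{S'}$), one identifies the two sides, proving (i).

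For (ii), the monoidality of $g^+$ and the smooth Poincar\'e duality $g^!\simeq g^+(d_g)[2d_g]$ show that the LA map for $g^+\mathscr{M}$ with respect to $f\circ g$ is, up to a twist and shift, the $g^+$-pullback of the LA map for $\mathscr{M}$ with respect to $f$; hence the ``if'' direction. The ``only if'' part in the surjective case uses smooth descent for holonomic modules (\cite{Abe18} 2.1.13), which guarantees that a morphism in $\rD(X)$ is an isomorphism precisely when its $g^+$-pullback is. Part (iii) is analogous: when $g:S\to S'$ is smooth, one replaces $\mathscr{N}$ by $g^+\mathscr{N}$ in the LA map for $g\circ f$ and checks, again via \ref{Basic properties}(ix) and the Poincar\'e isomorphism for $g$, that the result coincides with the LA map for $f$ applied to $g^+\mathscr{N}$. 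For (iv), one applies $g_+$ to the LA map for $\mathscr{M}$ and uses the projection formula together with proper base change (\ref{Basic properties}(iii)--(iv)) to obtain the LA map for $g_+\mathscr{M}$; in the closed immersion case, the ``only if'' direction is then provided by Berthelot--Kashiwara (\ref{Basic properties}(v)). Finally, (v) follows by applying $\mathbb{D}_X$ to the LA map for $\mathscr{M}$: duality intertwines $f^+\leftrightarrow f^!$, $\otimes\leftrightarrow \widetilde\otimes$, and preserves isomorphisms, so the LA map for $\mathbb{D}_X\mathscr{M}$ is an isomorphism iff the one for $\mathscr{M}$ is.

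The main obstacle is not any single step but the careful bookkeeping of the compatibilities between $\otimes$ and $\widetilde\otimes$ under pullbacks, since these are dual but not interchangeable and the LA map mixes them. One has to verify that the natural transformations produced by adjunctions, Poincar\'e duality, and the external product formula all fit into commutative diagrams that identify the LA map under each of the operations in (ii)--(v). All of the required inputs, however, are available in \cite{AC17,AC18,Abe18}, so each verification is formal once the relevant isomorphisms are spelled out explicitly.
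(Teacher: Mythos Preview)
Your proposal is correct and follows essentially the same approach as the paper: Poincar\'e duality and the external tensor identity for (i)--(iii), the projection formula plus Berthelot--Kashiwara for (iv), and applying $\mathbb{D}_X$ for (v). The only cosmetic difference is in (i): the paper rewrites the LA map via the graph $\id_X\times\Delta$ and reduces to the isomorphism $\mathscr{N}\xrightarrow{\sim}\Delta^!(p_2^+\mathscr{N})(2d_S)[2d_S]$ (citing \cite{Abe18}~1.5.14), whereas you invoke \ref{Basic properties}(ix) directly; these are two packagings of the same Poincar\'e-duality computation.
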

\begin{proof}
	(i) Let $S$ be a smooth $k$-scheme and $\mathscr{N}$ an object of $\rD(S)$. We need to show that the canonical morphism
	\begin{displaymath}
		(\id_X\times \Delta)^{+}(\mathscr{M}\boxtimes p_2^+(\mathscr{N}))\to (\id_X\times \Delta)^!(\mathscr{M}\boxtimes p_2^+(\mathscr{N}))(d_S)[2d_S]
	\end{displaymath}
	is an isomorphism, where $\Delta:S\to S\times S$ is the diagonal map and $p_2:S\times S\to S$ is the projection in the second component. 
	Then we reduce to show that the canonical morphism 
	\begin{displaymath}
		\mathscr{N}\to \Delta^{!}(p_2^{+}(\mathscr{N}))(2d_S)[2d_S]
	\end{displaymath}
	is an isomorphism. After taking dual functor, the assertion follows from (\cite{Abe18} 1.5.14). 

	Assertions (ii) and (iii) follow from (\ref{Basic properties}(vii)) and the smooth descent for $\rD(X)$ (\cite{Abe18} 2.1.13). 
	
	Assertion (iv) follows from the projection formula (\cite{Abe18} 1.1.3(9)) and the Berthelot-Kashiwara theorem. 

	If we apply the dual functor $\mathbb{D}_{X}$ to the morphism \eqref{LA map}, then we obtain the morphism \eqref{LA map}
	\begin{displaymath}
		\mathbb{D}_X(\mathscr{M})\otimes f^{+}(\mathbb{D}_S(\mathscr{N})) \to (\mathbb{D}_X(\mathscr{M})\widetilde{\otimes} f^{!}(\mathbb{D}_S(\mathscr{N}))) (d_S)[2d_S],		
	\end{displaymath}
	for the pair $(\mathbb{D}_X(\mathscr{M}),\mathbb{D}_S(\mathscr{N}))$. Then assertion (v) follows. 
\end{proof}

\begin{rem} \label{ULA indscheme}
	Let $S$ be a smooth $k$-scheme and $f:\mathcal{X}\to S$ a morphism from an ind-scheme to $S$. 
	In view of proposition \ref{basic ULA}(iv), we can define the notion of \textit{LA (resp. ULA) with respect to $f$} for objects of $\rD(\mathcal{X})$. 
\end{rem}

\begin{prop} \label{ULA hol}
	Keep the notation of \ref{def LA} and let $D$ be a smooth effective divisor in $S$, $i:Z=f^{-1}(D)\to X$ the closed immersion and $j:U\to X$ its complement. 
	Let $\mathscr{M}$ be an object of $\rD(X)$ such that it is LA with respect to $f$ and that $\mathscr{M}|_U$ is holonomic. 

	\textnormal{(i)} There exists canonical isomorphisms:
	\begin{equation} \label{intermediate extension ULA}
		\mathscr{M}\simeq j_{!+}(\mathscr{M}|_U),\qquad 
		i^{+}\mathscr{M}[-1]\xrightarrow{\sim} i^{!}\mathscr{M}(1)[1].
	\end{equation}
	In particular, $\mathscr{M}$ and $i^{+}\mathscr{M}[-1]$ are holonomic. 
	
	\textnormal{(ii)} The holonomic module $i^{+}\mathscr{M}[-1]$ is LA with respect to $f\circ i$ and $f|_Z:Z\to D$. 
\end{prop}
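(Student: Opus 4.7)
The plan is to test the local acyclicity condition of Definition~2.8.1 on the objects $\mathscr{N}=L_{S}$ and $\mathscr{N}=i_{D,+}L_{D}$ in $\rD(S)$, and to transfer the resulting isomorphisms on $X$ down to $Z$ by Berthelot--Kashiwara full faithfulness of $i_{+}$ and the projection formulas.

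For (i), the LA morphism at $\mathscr{N}=L_{S}$ (where $f^{+}L_{S}\simeq L_{X}$) already gives the identity $\mathscr{M}\,\widetilde{\otimes}\,f^{!}L_{S}\simeq \mathscr{M}(-d_{S})[-2d_{S}]$. Applied to $\mathscr{N}=i_{D,+}L_{D}$, proper base change along the Cartesian square $(Z\hookrightarrow X,\,D\hookrightarrow S)$ yields $f^{+}(i_{D,+}L_{D})\simeq i_{+}L_{Z}$ and $f^{!}(i_{D,+}L_{D})\simeq i_{+}(f|_{Z})^{!}L_{D}$, and the projection formulas for $\otimes$ and $\widetilde{\otimes}$ along the closed immersion $i$ (the latter obtained by dualising the former) give, after invoking the full faithfulness of $i_{+}$, an identification on $Z$
\begin{displaymath}
i^{+}\mathscr{M}\simeq \bigl(i^{!}\mathscr{M}\,\widetilde{\otimes}\,(f|_{Z})^{!}L_{D}\bigr)(d_{S})[2d_{S}].
\end{displaymath}
Since $D$ is a smooth divisor in smooth $S$, one has $L_{D}\simeq i_{D}^{!}L_{S}(1)[2]$, whence $(f|_{Z})^{!}L_{D}\simeq i^{!}(f^{!}L_{S})(1)[2]$. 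Combining this with the compatibility $i^{!}\mathscr{A}\,\widetilde{\otimes}\,i^{!}\mathscr{B}\simeq i^{!}(\mathscr{A}\,\widetilde{\otimes}\,\mathscr{B})$ (which follows from $\mathbb{D}_{Z}\circ i^{!}\simeq i^{+}\circ\mathbb{D}_{X}$ and the monoidality of $i^{+}$) and the identity from the $\mathscr{N}=L_{S}$ case, the right-hand side collapses to $i^{!}\mathscr{M}(1)[2]$, proving $i^{+}\mathscr{M}[-1]\simeq i^{!}\mathscr{M}(1)[1]$. To then conclude the first isomorphism in (i), note that since $j^{+}$ is t-exact and $\mathscr{M}|_{U}\in\Hol(U)$, the cohomology of $\mathscr{M}$ outside degree $0$ is supported on $Z$; using the two recollement triangles $j_{!}(\mathscr{M}|_{U})\to\mathscr{M}\to i_{+}i^{+}\mathscr{M}\to$ and $i_{+}i^{!}\mathscr{M}\to\mathscr{M}\to j_{+}(\mathscr{M}|_{U})\to$, the left/right t-exactness of $j_{\pm}$, and the just-established pairing of the cohomology of $i^{+}\mathscr{M}$ with that of $i^{!}\mathscr{M}$ shifted by $[2]$ and twisted by $(1)$, a cohomological diagram chase forces $i^{+}\mathscr{M}$ to be concentrated in degree $-1$ and $i^{!}\mathscr{M}$ in degree $+1$; in particular $i^{+}\mathscr{M}[-1]\in\Hol(Z)$ and $\mathscr{M}\in\Hol(X)$. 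The conditions $i^{+}\mathscr{M}\in\rD^{\le -1}(Z)$ and $i^{!}\mathscr{M}\in\rD^{\ge 1}(Z)$ then hold, so Corollary~2.6.2(i) identifies $\mathscr{M}$ with $j_{!+}(\mathscr{M}|_{U})$.

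For (ii), write $\mathscr{E}=i^{+}\mathscr{M}[-1]$. For LA of $\mathscr{E}$ with respect to $f|_{Z}:Z\to D$, test at $\mathscr{N}'\in\rD(D)$: the proper-base-change identities $i^{+}f^{+}(i_{D,+}\mathscr{N}')\simeq (f|_{Z})^{+}\mathscr{N}'$ and $i^{!}f^{!}(i_{D,+}\mathscr{N}')\simeq (f|_{Z})^{!}\mathscr{N}'$ allow one to rewrite the LA map for $\mathscr{M}$ evaluated at the test object $i_{D,+}\mathscr{N}'\in\rD(S)$; after invoking the projection formulas and the Step--1 comparison $i^{!}\mathscr{M}\simeq i^{+}\mathscr{M}(-1)[-2]$ to absorb the shift $d_{S}-d_{D}=1$ into the Tate twist, this becomes exactly the LA map for $\mathscr{E}$ with respect to $f|_{Z}$, which is an iso by hypothesis. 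For LA of $\mathscr{E}$ with respect to $f\circ i:Z\to S$, note that $\mathscr{M}|_{U}$ is LA with respect to $f|_{U}$ by Proposition~2.8.2(ii), so a projection-formula argument shows $j_{+}(\mathscr{M}|_{U})$ is LA with respect to $f$; the triangle $i_{+}i^{!}\mathscr{M}\to\mathscr{M}\to j_{+}(\mathscr{M}|_{U})\to$ then forces $i_{+}i^{!}\mathscr{M}$ to be LA with respect to $f$, and Proposition~2.8.2(iv) concludes.

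The principal obstacle lies in the first part of the argument: the correct book-keeping of Tate twists and cohomological shifts coming from $L_{D}\simeq i_{D}^{!}L_{S}(1)[2]$, the projection formulas for $\widetilde{\otimes}$, and the compatibility $\mathbb{D}\circ i^{!}=i^{+}\circ\mathbb{D}$ is the crux. A secondary subtlety is the cohomological diagram chase used to pin $i^{+}\mathscr{M}$ and $i^{!}\mathscr{M}$ to single degrees and hence derive holonomicity of $\mathscr{M}$ itself.
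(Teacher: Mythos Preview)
Your approach to part~(i) is correct and genuinely different from the paper's. The paper reduces (by \'etale descent) to the case $S=\mathbb{A}^1$, $D=\{0\}$, and then shows the unipotent vanishing cycle $\Phi_{f}^{\un}(\mathscr{M})$ vanishes by unwinding the definition of $\Psi_{f}^{\un}$ in terms of the objects $\Log^{n}$ and invoking the LA isomorphism with $\mathscr{N}=\overline{j}_{!}\Log^{n}$, $\overline{j}_{+}\Log^{n}$; holonomicity of $i^{+}\mathscr{M}[-1]$ then comes for free from the fact that $\Psi_{f}^{\un}$ is holonomic-valued, and the second isomorphism follows by duality of nearby cycles. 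Your argument instead tests LA directly at $\mathscr{N}=i_{D,+}L_{D}$ and obtains $i^{+}\mathscr{M}\simeq i^{!}\mathscr{M}(1)[2]$ from projection formulas and base change, bypassing the nearby-cycle machinery entirely. This is more elementary; on the other hand, the paper's route simultaneously yields $\Phi_{f}^{\un}(\mathscr{M})=0$, which is exactly what is used to prove the subsequent Corollary on vanishing cycles for ULA objects, so that corollary would need a separate argument in your framework.

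Two points deserve tightening. First, your cohomological diagram chase uses more than the general half-exactness of $j_{!}$ and $j_{+}$: to close the loop (e.g.\ to bound the top of $j_{+}(\mathscr{M}|_{U})$) you need that $j$ is an \emph{affine} open immersion, so that both $j_{!}$ and $j_{+}$ are $t$-exact. This holds here because $D$ is Cartier in $S$, hence $Z=f^{-1}(D)$ is Cartier in $X$; you should say so. Second, in part~(ii) your argument for LA of $\mathscr{E}$ with respect to $f\circ i$ via ``$j_{+}(\mathscr{M}|_{U})$ is LA by a projection-formula argument'' does not go through as written: the projection formula for $\otimes$ is $j_{!}(j^{+}B\otimes A)\simeq B\otimes j_{!}A$, and there is no analogue with $j_{+}$ in place of $j_{!}$ (take $B=i_{+}L_{Z}$ to see the failure). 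One fix is to run the triangle argument with $j_{!}(\mathscr{M}|_{U})\to\mathscr{M}\to i_{+}i^{+}\mathscr{M}$ instead, or simply to deduce LA with respect to $f\circ i$ from LA with respect to $f|_{Z}$ together with your identity $i^{+}\mathscr{M}\simeq i^{!}\mathscr{M}(1)[2]$ (which absorbs the codimension-one discrepancy between $d_{S}$ and $d_{D}$). The paper itself leaves (ii) to the reader, so this is a minor point.
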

\begin{proof}
	(i) By \'etale descent for holonomic modules (\cite{Abe18} 2.1.13), we may assume that there is a smooth morphism $g:S\to \mathbb{A}^1$ such that $D=g^{-1}(0)$. 
	By proposition \ref{basic ULA}(iii), $\mathscr{M}$ is LA with respect to $g\circ f:X\to \mathbb{A}^1$. 
	Then we can reduce to the case $f:X\to \mathbb{A}^1$ and $Z=f^{-1}(0)$. 
	
	We will show that $\Phi_{f}^{\un}(\mathscr{M})=0$, i.e. the canonical morphism 
	\begin{equation} \label{ULA implies vanish}
		i^{+}\mathscr{M}[-1]\to \Psi_{f}^{\un}(\mathscr{M})
	\end{equation}
	is an isomorphism. 

	We denote by $\overline{j}:\Gm\to \mathbb{A}^1$ be the canonical morphism and abusively by $f$ the restriction $f|_U:U\to \mathbb{G}_m$. 
	By the projection formula, we have
	\begin{displaymath}
		j_{!}(\mathscr{M}|_U\otimes f^{+}\Log^n)\xrightarrow{\sim} \mathscr{M}\otimes j_{!}f^+ \Log^{n}\simeq \mathscr{M}\otimes f^+ \overline{j}_!\Log^n.
	\end{displaymath}
	On the other hand, by the projection formula and the LA property of $\mathscr{M}$, we have 
	\begin{eqnarray*}
		j_+(\mathscr{M}|_U\otimes (f^+ \Log^n)) &\xrightarrow{\sim}& j_+(\mathscr{M}|_U \widetilde{\otimes} (f^! \Log^n))(d_X)[2d_X] \\
		&\xrightarrow{\sim} & \mathscr{M}\widetilde{\otimes} (j_+f^! \Log^n) (d_X)[2d_X] \\\
		&\simeq & \mathscr{M}\widetilde{\otimes} (f^!\overline{j}_+ \Log^n) (d_X)[2d_X] \\
		&\simeq & \mathscr{M}\otimes (f^+\overline{j}_+ \Log^n).
	\end{eqnarray*}
	
	Via the above isomorphisms, the canonical morphism $j_!(\mathscr{M}|_U\otimes (f^+ \Log^n))\to j_+(\mathscr{M}|_U\otimes (f^+ \Log^n))$ coincides with the canonical morphism 
	\begin{displaymath}
		\mathscr{M}\otimes (f^+(\overline{j}_! \Log^n \to \overline{j}_+\Log^n)).
	\end{displaymath}

	To prove that \eqref{ULA implies vanish} is an isomorphism, we can reduce to the case where $f$ is the identity map of $\mathbb{A}^1$ and $\mathscr{M}$ is the constant module $L_{\mathbb{A}^1}[1]$ on $\mathbb{A}^1$. 
	If we denote by $N_{n}$ the action induced by $t\partial_t$ on the fiber $(\Log^{n})_0$ of $\Log^n$ at $0$ (called \textit{residue morphism} in \cite{AC18} 3.2.11), then $\Ker(j_!(\Log_n)\to j_+(\Log_n))$ is isomorphic to $\Ker(N_n)$ (cf. \cite{AC17} proof of lemma 2.4). 
	In this case, \eqref{ULA implies vanish} is an isomorphism.	 
	
	In particular $i^{+}\mathscr{M}[-1]$ is holonomic.
	By propositions \ref{nearby vanishing un dual prop} and \ref{basic ULA}(v), the second isomorphism of \eqref{intermediate extension ULA} follows from \eqref{ULA implies vanish}: 
	\begin{equation}
		\Psi_f^{\un}(\mathscr{M})\simeq \mathbb{D}_{X_0} \Psi_{f}^{\un}(\mathbb{D}_{X}(\mathscr{M}))(1) \xrightarrow{\sim} i^!\mathscr{M}(1)[1].
		\label{ULA implies vanish dual}
	\end{equation}
	Then we deduce $\mathscr{M}\simeq j_{!+}(\mathscr{M}|_U)$ by (\ref{lemma uniqueness intermediate extension}(i)).	

	Assertion (ii) follows from the six functor formalism. We left the proof to readers. 
\end{proof}

\begin{coro} \label{ULA vanishing cycle}
	If an object $\mathscr{M}$ of $\rD(X)$ is ULA with respect to $f$, then, for any strict henselian trait $T$ and any morphism $g:T\to S$, we have $\Phi_{f_T}^{\un}(\mathscr{M}|_{X_{T}})=0$ and $\Phi_{f_T}(\mathscr{M}|_{X_{T}})=0$, where $f_T:X_T\to T$ is the base change of $f$ by $g$. 
\end{coro}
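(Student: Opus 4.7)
The plan is first to note that proving the vanishing of $\Phi_{f_T}^{\un}$ suffices, since by the definition \eqref{nearby}, $\Phi_{f_T}$ is a colimit of $\Phi_{f_{T'}}^{\un}$'s over $(T',h) \in \Hen(T)$, and each composite $T' \to T \to S$ is again a henselian trait mapping to $S$. Thus, assuming the vanishing of $\Phi^{\un}$ for arbitrary strict henselian traits mapping to $S$ gives the vanishing of $\Phi$.

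Next, I would reduce to a situation where Proposition \ref{ULA hol} applies directly. Since $T$ lies in $\Pro(k)$ and is a strict henselian trait, we may write $T = \varprojlim_{i} T_i$ as an inverse limit of étale neighborhoods of the closed point in a fixed smooth curve $C$ over $k$; in particular each $T_i$ is a smooth $k$-scheme of dimension one. Since $S$ is of finite type, the morphism $g:T\to S$ factors through some $T_{i_0}\xrightarrow{g_{i_0}} S$, and after shrinking $T_{i_0}$ to an étale neighborhood of the image of the closed point of $T$, we may assume $T_{i_0}$ is a smooth curve. Let $t_{i_0}\in T_{i_0}$ be that image; it is a smooth effective divisor. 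Setting $X_{i_0}=X\times_S T_{i_0}$, $f_{i_0}:X_{i_0}\to T_{i_0}$, and denoting by $\mathscr{M}_{i_0}$ the $+$-pullback of $\mathscr{M}$, the ULA hypothesis together with Definition \ref{def LA} applied to the smooth $k$-scheme $T_{i_0}$ shows that $\mathscr{M}_{i_0}$ is LA with respect to $f_{i_0}$. Applying Proposition \ref{ULA hol} with $S=T_{i_0}$ and $D=\{t_{i_0}\}$ (and in particular the key identity \eqref{ULA implies vanish} established in its proof) then yields $\Phi^{\un}_{f_{i_0}}(\mathscr{M}_{i_0})=0$.

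Finally, I would transfer this vanishing back to $T$ via the limit presentation $\rD(X_T)=\varinjlim_i\rD(X_i)$. The unipotent vanishing cycle $\Phi^{\un}_{f_T}$ on a trait is defined (in \cite{Abe18II}) using a uniformizer of $T$, equivalently a morphism $T\to \mathbb{A}^1_k$ with the closed point mapping to $0$, pulled back to $X_T$ to produce the logarithmic module $\Log^n$; this construction is built from tensor products, $j_!$ and $j_+$, all of which commute with étale base change. Consequently $\Phi^{\un}_{f_T}(\mathscr{M}|_{X_T})$ agrees (along the transition maps $T \to T_{i_0}$ of the limit) with the étale pullback of $\Phi^{\un}_{f_{i_0}}(\mathscr{M}_{i_0})$, and the vanishing at the finite stage forces $\Phi^{\un}_{f_T}(\mathscr{M}|_{X_T})=0$.

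The main obstacle I anticipate is this last compatibility step: carefully checking that the definition of $\Phi^{\un}$ on a strict henselian trait $T\in\Pro(k)$ is compatible with the presentation $T=\varprojlim T_i$ and matches, under the identification induced by étale base change, with the unipotent vanishing cycle computed at a finite stage $T_{i_0}$. This is essentially a bookkeeping exercise combining the explicit construction of $\Phi^{\un}$ in \cite{Abe18II} with étale base change properties of $\Log^n$, but it must be verified in detail since the usual ULA formalism is stated only for smooth $k$-schemes of finite type.
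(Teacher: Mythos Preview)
Your overall strategy matches the paper's: reduce to the unipotent vanishing cycle, spread out $(T,g)$ to a finite-type situation, invoke the vanishing \eqref{ULA implies vanish} established in the proof of Proposition~\ref{ULA hol}, and use the definition of $\Phi^{\un}$ on traits (cf.\ \cite{Abe18II} 1.7--1.8) to conclude. The compatibility you worry about in your final paragraph is exactly what the paper sweeps under ``By definition''.

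There is, however, one genuine gap. You assume $T$ can be realized as the strict henselization of a \emph{closed} point on a smooth \emph{curve} over $k$, so that each $T_i$ (and in particular $T_{i_0}$) is one-dimensional. This holds only when the residue field of $T$ is algebraic over $k$. In general a strict henselian trait $T\in\Pro(k)$ may have transcendental residue field---for instance $T$ could be the strict henselization of $\mathbb{A}^2_k$ at the generic point of a line---and then any presentation $T=\varprojlim T_i$ by affine \'etale maps has $\dim T_i\geq 2$, with the image $t_{i_0}$ of the closed point of $T$ a non-closed codimension-one point of $T_{i_0}$. The paper handles this by spreading out (via \cite{EGAIV} 8.8.2) to a smooth $k$-scheme $S'$ of possibly higher dimension carrying a smooth effective divisor $D$ such that $T$ is the strict henselization of $S'$ at the generic point $\eta_D$; after shrinking, one chooses a smooth morphism $\pi:S'\to\mathbb{A}^1_k$ with $D=\pi^{-1}(0)$, uses Proposition~\ref{basic ULA}(iii) to see that $\mathscr{M}|_{X_{S'}}$ is LA with respect to $\pi\circ f_{S'}$, and then applies \eqref{ULA implies vanish}. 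Your argument is easily repaired along the same lines: replace ``smooth curve'' by ``smooth $k$-scheme'' and ``closed point $t_{i_0}$'' by ``codimension-one point whose closure is a smooth divisor'', then supply the map to $\mathbb{A}^1$ as above.
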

\begin{proof}
	By definition, it suffices to show that the unipotent vanishing cycle $\Phi^{\un}_{f_T}(\mathscr{M}|_{X_T})$ vanishes. 
	
	By (\cite{EGAIV} 8.8.2), there exists a smooth $k$-scheme $S'$, a smooth effective divisor $D$ of $S'$ with generic point $\eta_D$ and a morphism $h:S'\to S$ such that the strict henselization of $S'$ at $\eta_D$ is isomorphic to $T$ and that $g$ is induced by $h$. 	
	We denote by $f_{S'}:X_{S'}\to S'$ the base change of $f$ by $h$. After shrinking $S'$, we may assume that there exists a smooth morphism $\pi:S'\to \mathbb{A}_k^1$ with $D=\pi^{-1}(0)$. 

	By definition (cf. \cite{Abe18II} 1.7-1.8), we reduce to show that $\Phi_{\pi\circ f_{S'}}^{\un}(\mathscr{M}|_{X_{S'}})=0$. 
	But this follows from proposition \ref{basic ULA}(iii) and the proof of \eqref{ULA implies vanish}. 
	Then the assertion follows. 
\end{proof}

\begin{coro} \label{ULA id local system}
	Let $X$ be a smooth $k$-scheme.
	If an object $\mathscr{M}$ of $\rD(X)$ is ULA with respect to the identity morphism, then each constructible cohomology module $\cH^{i}(\mathscr{M})$ is smooth (resp. each cohomology module $\hH^i(\mathscr{M})$ is smooth).
\end{coro}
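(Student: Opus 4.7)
The plan is to proceed by Noetherian induction on $X$. First note that the claim is local on $X$ and stable under pullback along smooth surjective maps, using Proposition \ref{basic ULA}(ii) together with smooth descent for $\rD(X/L_{\blacktriangle})$ (\cite{Abe18} 2.1.13). Moreover, the statement for the constructible cohomology $\cH^{i}(\mathscr{M})$ and for the holonomic cohomology $\hH^{i}(\mathscr{M})$ are equivalent up to degree shift on a smooth base, since smooth modules lie in $\Sm(X/L_{\blacktriangle}) \subset \Hol(X/L_{\blacktriangle})[-d_{X}]$ and the two t-structures relate in a controlled way for smooth objects. I therefore focus on proving that each $\hH^{i}(\mathscr{M})$ is smooth.

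The base case $\dim X = 0$ is immediate. For the induction step, I may assume $X$ is connected of pure dimension $d \geq 1$. Constructibility of $\mathscr{M}$ guarantees a dense open immersion $j : U \hookrightarrow X$ such that each $\cH^{i}(\mathscr{M})|_{U}$ is smooth; after further Zariski-shrinking $X$, I may arrange that the complement $i : D \hookrightarrow X$ is a smooth effective divisor. Now all hypotheses of Proposition \ref{ULA hol} are satisfied: $\mathscr{M}$ is LA with respect to $\id_{X}$ (in fact ULA), and $\mathscr{M}|_{U}$ is holonomic. Part (i) delivers
\begin{equation*}
  \mathscr{M} \simeq j_{!+}(\mathscr{M}|_{U}), \qquad i^{+}\mathscr{M}[-1] \xrightarrow{\sim} i^{!}\mathscr{M}(1)[1],
\end{equation*}
so in particular $\mathscr{N} := i^{+}\mathscr{M}[-1]$ is holonomic on $D$. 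Part (ii) asserts that $\mathscr{N}$ is itself ULA with respect to $\id_{D}$. Since $\dim D = d-1$, the inductive hypothesis applied to $\mathscr{N}$ on $D$ shows that each cohomology of $\mathscr{N}$ is a smooth module on $D$.

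To close the induction I must promote the two pieces of smoothness, namely $\mathscr{M}|_{U}$ smooth on $U$ and $i^{+}\mathscr{M}[-1]$ smooth on $D$, into smoothness of $\mathscr{M}$ on all of $X$. Via the specialization equivalence $\widetilde{\Sp}_{*}$ of \eqref{mod specialisation} this is the statement that an overconvergent isocrystal on $U$ whose boundary behaviour along the smooth divisor $D$ produces again an isocrystal extends to an overconvergent isocrystal on $X$. The key input is the identity $i^{+}\mathscr{M} \simeq i^{!}\mathscr{M}(1)[2]$, which is precisely the signature of a smooth holonomic module across a smooth closed immersion of codimension one: it says that $+$- and $!$-pullback agree up to the Tate twist and shift dictated by Poincar\'e duality \ref{Basic properties}(vii), hence no extra local cohomology along $D$ can appear.

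The main obstacle is this last propagation step. In the classical $\ell$-adic setting the analogous statement is standard because being a lisse sheaf can be checked pointwise on traits. Here the replacement is Corollary \ref{ULA vanishing cycle}, which combined with the ULA hypothesis guarantees that $\Phi_{g}^{\un}(\mathscr{M}|_{X_{T}}) = 0$ for every trait $g : T \to X$, ruling out any tame or wild obstruction to extending the isocrystal structure across $D$. Granted that vanishing, together with $\mathscr{M} \simeq j_{!+}(\mathscr{M}|_{U})$ and the smoothness of $i^{+}\mathscr{M}[-1]$ already established, a direct inspection through $\widetilde{\Sp}_{*}$ and Corollary \ref{lemma uniqueness intermediate extension}(ii) yields that each $\hH^{i}(\mathscr{M})$ lies in $\Sm(X/L_{\blacktriangle})$, completing the induction.
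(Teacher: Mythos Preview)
Your Noetherian induction framework is reasonable in spirit, but the argument has two real gaps.

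First, a minor one: to invoke Proposition \ref{ULA hol} you need $\mathscr{M}|_{U}$ to be \emph{holonomic}, i.e.\ concentrated in a single degree for the holonomic t-structure. Knowing that each $\cH^{i}(\mathscr{M})|_{U}$ is smooth does not give this when $\mathscr{M}$ has several nonzero cohomology groups. You would need to first reduce to the case where $\mathscr{M}$ is holonomic (e.g.\ by induction on cohomological amplitude, as the paper in fact does), but you have not done so, and it is not clear that each $\hH^{i}(\mathscr{M})$ inherits the ULA property from $\mathscr{M}$.

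Second, and more seriously, the ``propagation step'' is where the entire content lies, and you have not carried it out. Knowing that $\mathscr{M}|_{U}$ is smooth, that $\mathscr{M}\simeq j_{!+}(\mathscr{M}|_{U})$, and that $i^{+}\mathscr{M}[-1]$ is smooth on $D$ does \emph{not} formally imply that $\mathscr{M}$ is smooth on $X$: an irreducible overconvergent isocrystal on $U$ with nontrivial ramification along $D$ would satisfy the first two conditions, and one must rule this out. You correctly identify that Corollary \ref{ULA vanishing cycle} gives $\Phi_{g}(\mathscr{M}|_{X_{T}})=0$ for every trait $g:T\to X$, but the implication ``vanishing cycles vanish for all traits $\Rightarrow$ smooth'' is precisely the nontrivial input; it is the content of \cite{Abe18II}~3.8, which the paper cites directly. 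Your appeal to Corollary \ref{lemma uniqueness intermediate extension}(ii) goes in the wrong direction (it assumes smoothness to conclude $j_{!+}(\mathscr{F}|_{U})\simeq\mathscr{F}$, not the converse), and ``direct inspection through $\widetilde{\Sp}_{*}$'' is not an argument.

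The paper's proof is much shorter: for $\mathscr{M}$ constructible, combine Corollary \ref{ULA vanishing cycle} with \cite{Abe18II}~3.8 directly; then handle the general case by induction on the cohomological amplitude of $\mathscr{M}$. Your detour through Proposition \ref{ULA hol} and Noetherian induction on $\dim X$ does not avoid the need for \cite{Abe18II}~3.8; it merely reproduces the setup in which that result is applied.
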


\begin{proof}
	When $\mathscr{M}$ is constructible, it follows from \ref{ULA vanishing cycle} and (\cite{Abe18II} 3.8).
	We prove the general case by induction on the cohomological amplitude of $\mathscr{M}$.
\end{proof}
         
\begin{secnumber} \label{hol mods stack}
	In \ref{construction HNY}, we will use the notion of holonomic modules over a stack and apply cohomological functors of a \textit{schematic} morphism of algebraic stacks, that we briefly explain in the following. 
	Let $\XX$ be an algebraic stack of finite type over $k$. 	
	We refer to (\cite{Abe18} 2.1.16) for the definition of category $\Hol(\XX)$ of holonomic modules on $\XX$ and the category $\rD(\XX)$ (corresponds to the category $\rD^{\rb}_{\hol}(\XX)$ in \textit{loc. cit}). 
	The dual functor $\mathbb{D}_{\XX}$ is defined in (\cite{Abe18} 2.2). 
	Let $f:\XX\to \YY$ be a schematic morphism, $Y_{\bullet}\to \YY$ a simplicial algebraic space presentation. 
	By pullback, we obtain a simplicial presentation $X_{\bullet}\to \XX$ and a Cartesian morphisms $f_{\bullet}:X_{\bullet}\to Y_{\bullet}$. 
	Then the constructions of (\cite{Abe18} 2.1.10 and 2.2.14) allow us to define cohomological functors:
	\begin{displaymath}
		f_{+}:\rD(\XX)\simeq \rD_{\hol}^{\rb}(X_{\bullet})\rightleftarrows \rD_{\hol}^{\rb}(Y_{\bullet})\simeq \rD(\YY): f^{!}.
	\end{displaymath}

	Given a object $\mathscr{M}$ of $\rD(\XX)$ and a morphism $g:\XX\to S$ to a smooth $k$-scheme $S$, we say $\mathscr{M}$ is ULA with respect to $g$ if its $+$-pullback to a presentation $U\to \XX$ is ULA with respect to $U\to S$. 

	Suppose $S$ is moreover a curve. Let $s$ be a closed point of $S$ and $S_{(s)}$ the strict henselian at $s$. 
	Since nearby/vanishing cycle functors commute with smooth pullbacks, we can extend the definition of nearby/vanishing cycle functors for $g\times_{S}S_{(s)}$.   
\end{secnumber}

\subsection{Local monodromy of an overconvergent $F$-isocrystal}

\begin{secnumber} \label{review MC Swan}
	We briefly recall the local monodromy group of $p$-adic differential equations over the Robba ring following \cite{And02II,Mat02}. 
	
	We denote by $\mathcal{R}_K$ the Robba ring over $K$, by $\MC(\mathcal{R}_K/K)$ (resp. $\MC(\mathcal{R}/\overline{K})$) the category of $\nabla$-modules of finitely presented over $\mathcal{R}_K$ (resp. over $\mathcal{R}=\mathcal{R}\otimes_{K}\overline{K}$). 
	Each object of $\MC(\mathcal{R}/\overline{K})$ comes from the extension of scalar of an object of $\MC(\mathcal{R}_L/L)$ for some finite extension $L$ of $K$. 
	We denote by $\MC^{\uni}(\mathcal{R}/\overline{K})$ the full Tannakian subcategory of $\MC(\mathcal{R}/\overline{K})$ consisting of unipotent objects, i.e. objects which are isomorphic to successive extension of the trivial object (cf. \cite{Mat02} \S~4). 

	There is an equivalence between the category $\Vect^{\nil}_{\overline{K}}$ of finite dimensional $\overline{K}$-vector space with a nilpotent endomorphism and $\MC^{\uni}(\mathcal{R}/\overline{K})$, given by the functor $(V_0,N)\mapsto (V_0\otimes_{\overline{K}} \mathcal{R},\nabla_N)$, where the connection $\nabla_{N}$ is defined by $\nabla_N(v\otimes 1)=Nv \otimes dx/x$ (\cite{Mat02} 4.1).
	In particular, the Tannakian group of $\MC^{\uni}(\mathcal{R}/\overline{K})$ over $\overline{K}$ is isomorphic to $\Ga$. 

	We denote by $\MCF(\mathcal{R}/\overline{K})$ the full subcategory of $\MC(\mathcal{R}/\overline{K})$ consisting of objects admitting a Frobenius structure (\cite{And02II} 3.4). 
	The category $\MCF(\mathcal{R}/\overline{K})$ is a Tannakian category over $\overline{K}$, whose Tannakian group is denoted by $\mathcal{G}$.	
	Christol and Mebkhout introduce the notion of \textit{$p$-adic slope} for objects of $\MCF(\mathcal{R}/\overline{K})$ and show a Hasse-Arf type result \cite{CM00}. 
	This allows one to define a Hasse-Arf type filtration on $\MCF(\mathcal{R}/\overline{K})$ and then a decreasing filtration of closed normal subgroups $\{\mathcal{G}^{>\lambda}\}_{\lambda\ge 0}$ of $\mathcal{G}$ (cf. \cite{And02II} \S~1, 3.4). 
	If we denote by $I$ (resp. $P$) the inertia (resp. wild inertia) subgroup of $\Gal(k ( (t))^{\sep}/k( (t)))$, regarded as pro-algebraic groups, then there exist canonical isomorphisms of affine $\overline{K}$-groups (\cite{And02II} 7.1.1)
	\begin{equation}
		\mathcal{G}\simeq I\times \Ga,\quad \mathcal{G}^{>0}\simeq P.
		\label{local monodromy Tannakian}
	\end{equation}

	The local monodromy theorem says that any object of $\MCF(\mathcal{R}/\overline{K})$ is quasi-unipotent \cite{And02II,Ked04,Meb02}. 
	Given an object $M$ of $\MCF(\mathcal{R}/\overline{K})$, the action of $P$ on $M$ factors through a finite quotient. 
	By a theorem of Matsuda-Tsuzuki \cite{Tsu98,Mat02} (cf. \cite{And02II} 7.1.2), the irregularity of $M$, defined by $p$-adic slopes, is equal to the Swan conductor of the representation of $I$ on a fiber of $M$. 
\end{secnumber}

\begin{secnumber} \label{analytic solution}
	We denote by $K\{x\}$ the $K$-algebra of analytic functions on the open unit disc $|x|<1$, i.e.
	\begin{equation} \label{functions on open disc}
		K\{x\}=\{\sum_{n\ge 0} a_n x^n\in K\llbracket x\rrbracket;~ |a_n|_p\rho^n\to 0~ (n\to \infty)~ \forall \rho\in [0,1)\}.
	\end{equation}
	Let $\Omega_{K\{x\}}^1(\log)$ be the free $K\{x\}$-module of rank $1$ with basis $dx/x$ and consider the following canonical derivation $d:K\{x\}\to \Omega^1_{K\{x\}}(\log),~ f\mapsto x f'(x) dx/x$.
	An unipotent object $(M,\nabla)$ of $\MC(\mathcal{R}_K/K)$ extends to a \textit{log $\nabla$-module} over $K\{x\}$. 
	Let $(V,N)$ be the object of $\Vect^{\nil}_K$ associated to $(M,\nabla)$. 
	In view of \ref{review MC Swan}, there exists a canonical isomorphism between $\Coker(N)$ and the solution space $\Sol(M)$ of $(M,\nabla)$:
	\begin{equation}
		\Coker(N)\xrightarrow{\sim} \Sol(M)=\Hom_{K\{x\}}((M,\nabla),(K\{x\},d))^{\nabla=0}.
		\label{solution space}
	\end{equation}
	When the connection $\nabla$ is defined by a differential operator $D$, then $\Sol(M)$ is isomorphic to the solution space of $D$. 
\end{secnumber}

\begin{secnumber} \label{log isocrystal}
	Let $X$ be a smooth curve over $k$, $\overline{X}$ a smooth compactification of $X$ and $x$ a $k$-point in the boundary $\overline{X}\setminus X$.
	There exists a canonical functor defined by restriction at $x$:
	\begin{equation}\label{res to x}
		|_{x}:\Iso^{\dagger}(X/K)\to \MC(\mathcal{R}_K/K).
	\end{equation}

	We refer to \cite{Shiho} and (\cite{Ked07} \S~6) for the definition of log convergent ($F$-)isocrystals on $Y=X\cup\{x\}$ with a log pole at $x$. 
	Let $\mathscr{E}$ be an object of $\Iso^{\dagger\dagger}(X/K)$ (resp. $\Fr\Iso^{\dagger}(X/K)$). 
	A log-extenbility criterion of Kedlaya (\cite{Ked07} 6.3.2) says that if $\mathscr{E}|_{x}$ is unipotent, then $\mathscr{E}$ extends to a \textit{log convergent isocrystal (resp. $F$-isocrystal)} $\mathscr{E}^{\log}$ on $Y$ with a log pole at $x$. 

	The fiber $\mathscr{E}^{\log}_{x}$ of $\mathscr{E}^{\log}$ at $x$ is a $K$-vector space equipped with a nilpotent operator.  
	If $\mathscr{E}$ moreover has a Frobenius structure, then $\mathscr{E}^{\log}_{x}$ is a \textit{$(\varphi,N)$-module}, that is a $K$-vector space $V$ equipped with a nilpotent operator $N:V\to V$ and a $\sigma$-semilinear automorphism $\varphi:V\to V$ such that $\varphi^{-1}N\varphi=qN$. 
	We can describe it in terms of the nearby cycle of $\mathscr{E}$ around $x$. 
\end{secnumber}

\begin{prop} \label{nearby cycle log extension}
	Let $X=\Gm,\overline{X}=\P1$ and $x=0$.  
	Suppose that $\mathscr{E}$ is unipotent at $0$ and let $\Psi$ be the nearby cycle functor defined by $\id$ \eqref{nearby vanishing}. Then there exists a canonical isomorphism of $K$-vector spaces (resp. $K$-vector spaces with Frobenius structure):
	\begin{equation} \label{compare nearby log-extension}
		\mathscr{E}^{\log}_{0}\xrightarrow{\sim} \Psi(\mathscr{E}). 
	\end{equation}
\end{prop}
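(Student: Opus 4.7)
The plan is to reduce the statement to a computation with the unipotent nearby cycle and to identify the result with the log fiber via Kedlaya's log extension. Since $\mathscr{E}$ is unipotent at $0$, I expect that the full nearby cycle functor $\Psi$ (built in \ref{nearby vanishing} as a colimit over henselian covers $(S',h)$) coincides with the unipotent version $\Psi^{\un}_{\id}$ of \ref{def Psi un}: for every ramified cover $h: S' \to S$, the pullback $h^+\mathscr{E}$ remains unipotent, so each transition map in the defining colimit is already an isomorphism. Hence I am reduced to constructing a canonical isomorphism $\mathscr{E}^{\log}_0 \xrightarrow{\sim} \Psi^{\un}_{\id}(\mathscr{E})$ in the category of $K$-vector spaces (with Frobenius, when applicable).

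The next step is to exploit Kedlaya's log-extension criterion \eqref{log isocrystal}: by unipotency, $\mathscr{E}$ extends to a log isocrystal $\mathscr{E}^{\log}$ on $\P1$ with log pole at $0$, whose fiber $V := \mathscr{E}^{\log}_0$ is equipped with a nilpotent residue $N$. The module $\Log^n$ is already log-extended in the same sense, with fiber $W_n = \bigoplus_{i=0}^{n-1} K\cdot\log^{[i]}$ at $0$ and residue $N_n(\log^{[i]}) = \log^{[i-1]}$. Thus $\mathscr{E} \otimes \Log^n$ is a unipotent object whose log-fiber at $0$ is $V \otimes W_n$, endowed with residue $N \otimes 1 + 1 \otimes N_n$.

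Now I repeat the computation already used in the proof of \ref{ULA hol}, which identifies $\Ker(j_!(\mathscr{E}\otimes\Log^n) \to j_+(\mathscr{E}\otimes\Log^n))$ with $\Ker(N\otimes 1 + 1\otimes N_n)$ acting on $V\otimes W_n$. A direct verification shows that an element $\sum_{i=0}^{n-1} v_i \otimes \log^{[i]}$ lies in this kernel precisely when $v_{i+1} = -Nv_i$ for $0 \le i \le n-2$ and $Nv_{n-1} = 0$, so that $v_i = (-N)^i v_0$ with $N^n v_0 = 0$. Since $N$ is nilpotent, for $n$ sufficiently large this yields an isomorphism
\[
V \xrightarrow{\sim} \Ker(N \otimes 1 + 1 \otimes N_n), \qquad v \mapsto \sum_{i=0}^{n-1} (-N)^i v \otimes \log^{[i]}.
\]
Passing to the colimit over $n$ gives the claimed canonical isomorphism $\mathscr{E}^{\log}_0 \xrightarrow{\sim} \Psi^{\un}_{\id}(\mathscr{E})$.

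The main technical point to settle carefully will be the Frobenius compatibility: both $\mathscr{E}^{\log}$ (by Kedlaya) and the isomorphism above must be checked to intertwine the Frobenius on $\mathscr{E}$ with that on $\Psi(\mathscr{E})$. This follows from the canonical nature of the log extension together with the fact that the Frobenius on $\Log^n$ acts by $\log^{[i]} \mapsto q^i \log^{[i]}$, so that the constructed section $v \mapsto \sum_i (-N)^i v \otimes \log^{[i]}$ is compatible with Frobenius in the colimit. The secondary subtlety, verifying $\Psi = \Psi^{\un}$ under unipotency, reduces to the local acyclicity of $\mathscr{E}\otimes\Log^n$ with respect to ramified base changes of the henselian trait at $0$, which is built into the definitions since both factors are already unipotent.
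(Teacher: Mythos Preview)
Your proposal is correct and follows essentially the same route as the paper's proof: reduce $\Psi$ to $\Psi^{\un}$ by unipotency, then identify $\Ker(j_!(\mathscr{E}\otimes\Log^n)\to j_+(\mathscr{E}\otimes\Log^n))$ with the kernel of the tensor residue $N\otimes 1+1\otimes N_n$ on the log fiber $(\mathscr{E}^{\log}\otimes\Log^n)_0$, and compute this kernel explicitly. The paper is terser and cites \cite{AC18}~3.4.19 and \cite{AC17}~2.4(1) for the identification of $\Ker(j_!\to j_+)$ with the residue kernel, whereas you appeal to the proof of Proposition~\ref{ULA hol}; note however that the latter only treats the constant case $\mathscr{E}=L_{\mathbb{A}^1}[1]$, so for a general unipotent $\mathscr{E}$ you are implicitly using the same external input as the paper (your explicit kernel computation does not by itself establish that $\Ker(j_!\to j_+)$ is computed by the residue).
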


\begin{proof}
	The argument of (\cite{AC17} 2.4(1)) implies the assertion, that we briefly explain in the following. 
	Since $\mathscr{E}$ is unipotent at $x$, we have $\Psi^{\un}(\mathscr{E})\simeq \Psi(\mathscr{E})$. 
	By (\cite{AC18} 3.4.19, cf. \cite{AC17} 2.4(1)), we have a Frobenius equivariant isomorphism of $K$-vector spaces: 
	\begin{displaymath}
		\Psi^{\un}(\mathscr{E})\simeq \varinjlim_{n\ge 0} \Ker(N^{n}: (\mathscr{E}^{\log}\otimes \Log^n)_0 \to  (\mathscr{E}^{\log}\otimes \Log^n)_0),
	\end{displaymath}
	where $\Log^{n}$ is the log convergent $F$-isocrystal on $(\A1,0)$ defeind in \ref{def Psi un} and $N^n=N_{\mathscr{E}^{\log}_0}\otimes\id +\id \otimes N_{\Log^n_0}$ is the tensor product of two nilpotent operators. 
	Then the isomorphism \eqref{compare nearby log-extension} follows. 
\end{proof}

\subsection{Hyperbolic localization for arithmetic $\mathscr{D}$-modules} \label{Braden thm sec}

\begin{secnumber} \label{basic notation Braden}
	Let $X$ be a quasi-projective $k$-scheme such that $X\otimes_k \overline{k}$ is connected and normal.
	We suppose that there exists an action $\mu:\Gm\times X\to X$ of the torus $\Gm$ over $k$. 
	Following \cite{DG}, we denote by $X^0$ the closed subscheme of fixed points of $X$ (\cite{DG} 1.3), by $X^+$ (resp. $X^-$) the attractor (resp. repeller) of $X$ (\cite{DG} 1.4, 1.8). 
	We have a commutative diagram 
	\begin{equation} \label{diagram hyperbolic}
		\xymatrix{
			& X^+ \ar@/_/[ld]_{\pi} \ar[rd]^{g} & \\
			X^0 \ar[ru]_{f} \ar[rd]^{f'} & & X \\
			& X^- \ar@/^/[lu]^{\pi'} \ar[ru]_{g'} & 	
		}
	\end{equation}
	where $f,f'$ are closed immersions and are sections of $\pi,\pi'$, respectively, the restriction of  $g$ (resp. $g'$) to each connected component of $X^+$ (resp. $X^-$) is a locally closed immersion (\cite{DG} 1.6.8).
	Note that each connected component of $X^+$ is the preimage of a connected component of $X^0$ under $\pi$.

	We define \textit{hyperbolic localization} functors $(-)^{!+},(-)^{+!}:\rD(X)\to \rD(X^0)$, for $\mathscr{F}\in \rD(X)$ by:
	\begin{equation}
		\mathscr{F}^{!+}=f^!(g^+(\mathscr{F})),\qquad \mathscr{F}^{+!}=f'^+(g'^{!}(\mathscr{F})). \label{hyperbolic loc}
	\end{equation}
	
	We say an object $\mathscr{F}$ of $\rD(X)$ is \textit{weakly equivariant} if there exists an isomorphism $\mu^{+}(\mathscr{F})\simeq \mathscr{L}[-1]\boxtimes \mathscr{F}$ for some smooth module $\mathscr{L}$ on $\Gm$. 
\end{secnumber}

\begin{theorem}[Braden \cite{Br}] \label{Braden thm}
	\textnormal{(i)} There exists a canonical morphism $\iota_{\mathscr{F}}:\mathscr{F}^{+!}\to \mathscr{F}^{!+}$, which is an isomorphism if $\mathscr{F}$ is weakly equivariant. 

	\textnormal{(ii)} The canonical morphisms $\pi_! \to f^!$, $\pi'_+ \to f'^{+}$ induce morphisms 
	\begin{equation} \label{another hyperbolic}
		\pi_! g^+ \mathscr{F} \to \mathscr{F}^{!+}, \qquad \pi'_+g'^{!} \to \mathscr{F}^{+!},
	\end{equation}
	which are isomorphisms if $\mathscr{F}$ is weakly equivariant. 
\end{theorem}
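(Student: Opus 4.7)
The plan is to adapt the proof of Braden's theorem from the constructible setting (following Braden \cite{Br}, or the $\ell$-adic formulation of Drinfeld--Gaitsgory \cite{DG}) to arithmetic $\mathscr{D}$-modules, using the six-functor formalism from subsection \ref{section six functors formalism} and the universal local acyclicity (ULA) from subsection \ref{subsection ULA}. The canonical morphism $\iota_{\mathscr{F}}: \mathscr{F}^{+!} \to \mathscr{F}^{!+}$ in part (i) is formal: the identity $g \circ f = g' \circ f'$ combined with adjunction units and base change produces a natural transformation $f'^+ g'^! \to f^! g^+$. The two natural transformations $\pi_! \to f^!$ and $\pi'_+ \to f'^+$ of part (ii) arise, respectively, from the unit of $(f_!, f^!)$ combined with the identification $(\pi f)_! = \id_{X^0}$, and from the unit of $(f'^+, f'_+)$ combined with $(\pi' f')_+ = \id_{X^0}$.

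The heart of the argument is a \emph{contraction principle}: if $Y$ is a $k$-scheme on which the $\mathbb{G}_m$-action extends to an $\mathbb{A}^1$-action $\widetilde{\mu}: \mathbb{A}^1 \times Y \to Y$ with $\widetilde{\mu}|_{\{0\} \times Y} = f \circ \pi$ for a retraction $\pi: Y \to Y^0$ onto the fixed subscheme, then for every weakly equivariant $\mathscr{G}$ in $\rD(Y)$ the morphism $\pi_! \mathscr{G} \to f^! \mathscr{G}$ is an isomorphism (and dually, for a repeller action, $f'^+ \mathscr{G} \to \pi'_+ \mathscr{G}$). I would prove this by extending the weak equivariance isomorphism $\mu^+ \mathscr{G} \simeq \mathscr{L}[-1] \boxtimes \mathscr{G}$ across $\mathbb{G}_m \hookrightarrow \mathbb{A}^1$ using $\widetilde{\mu}$, then comparing the restrictions to the fibers at $0$ and $1$ of $\mathbb{A}^1$ via the long exact sequence attached to $\mathbb{G}_m \hookrightarrow \mathbb{A}^1 \hookleftarrow \{0\}$. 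The key vanishing is that $\rH^*_{\rc}(\mathbb{G}_m, \mathscr{L})$ is concentrated in a single degree with the expected shift, which together with proper base change collapses $\pi_! \mathscr{G}$ onto $f^! \mathscr{G}$.

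Granting the contraction principle, part (ii) follows by taking $Y = X^+$, $\mathscr{G} = g^+ \mathscr{F}$, which is weakly equivariant because $\mathscr{F}$ is; dually for $Y = X^-$ under the inverse $\mathbb{G}_m$-action. For part (i), we then identify $\mathscr{F}^{+!}$ and $\mathscr{F}^{!+}$ with $\pi'_+ g'^! \mathscr{F}$ and $\pi_! g^+ \mathscr{F}$, respectively, and verify that $\iota_{\mathscr{F}}$ intertwines these identifications via base change along the natural map $X^+ \times_X X^- \to X^0$, which set-theoretically projects onto the fixed locus in the normal situation of \ref{basic notation Braden}.

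The main obstacle is establishing the contraction principle rigorously in the arithmetic $\mathscr{D}$-module setting. The $\ell$-adic argument of \cite{Br, DG} exploits the structure of constructible $\mathbb{G}_m$-equivariant sheaves, and adapting it requires (a) a reduction to a tractable local model, which I would obtain via Sumihiro-type equivariant affine covers together with the Bia\l ynicki-Birula decomposition, to reduce to the case of a linear $\mathbb{G}_m$-action on affine space; and (b) careful use of ULA to ensure that the $\mathbb{A}^1$-extension of the weak equivariance interacts correctly with the six-functor formalism. Proposition \ref{basic ULA} and Corollary \ref{ULA vanishing cycle} are the technical inputs that substitute for the $\ell$-adic constructibility arguments: the vanishing cycles of a ULA object along the $\mathbb{A}^1$-direction are trivial, which is precisely what forces the cohomology along $\mathbb{G}_m$-orbits to degenerate onto the fixed locus.
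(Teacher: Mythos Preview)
Your approach follows Drinfeld--Gaitsgory \cite{DG} (the contraction principle for $\mathbb{A}^1$-actions), whereas the paper follows Braden's original argument \cite{Br}: after Sumihiro's reduction to a linear $\Gm$-action on affine space, the paper proves an analogue of Braden's ``Lemma~6'' about projective bundles $E=\mathbb{P}(W)-\mathbb{P}(W_+)\to B=\mathbb{P}(W_-)$ over a base with trivial $\Gm$-action, showing $h_+p_+\mathscr{F}\simeq h_+i^+\mathscr{F}$ and $h_+p_!\mathscr{F}\simeq h_+i^!\mathscr{F}$ for weakly equivariant $\mathscr{F}$, then deduces~(ii) directly and~(i) by the same induction as in \cite[\S4]{Br}. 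Both routes are sound in the arithmetic $\mathscr{D}$-module setting because they use only the six-functor formalism of \S\ref{section six functors formalism}; the DG route is arguably cleaner conceptually, while Braden's original argument stays closer to the concrete geometry of the Bia\l ynicki-Birula strata.

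Two points in your sketch deserve correction. First, the construction of $\iota_{\mathscr{F}}$ is not purely formal from adjunction: the paper (and \cite{DG}) uses the nontrivial geometric input that $(f,f'):X^0\to X^+\times_X X^-$ is both open and closed \cite[1.9.4]{DG}, which is what lets base change along $X^+\times_X X^-\to X$ collapse correctly. Second, your reliance on ULA and vanishing cycles is a red herring. The DG contraction principle is a direct six-functor computation: one applies $p_!$ (projection to $Z$) to $\widetilde{\mu}^!\mathscr{G}$ on $\mathbb{A}^1\times Z$, uses the localization triangle for $\{0\}\hookrightarrow\mathbb{A}^1\hookleftarrow\Gm$, and on the $\Gm$-piece invokes weak equivariance plus the K\"unneth/projection formula to compute $\rH^*_{\rc}(\Gm,\mathscr{L})$. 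No ULA statement for $\widetilde{\mu}^+\mathscr{G}$ over $\mathbb{A}^1$ is available a priori (weak equivariance only controls the open $\Gm$-part), and none is needed.
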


\begin{secnumber}
	Recall the construction of $\iota_{\mathscr{F}}$. 
	The canonical morphism $i=(f,f'):X^0\to Z=X^+\times_X X^-$ is both an open immersion and a closed one (\cite{DG} 1.9.4). We denote by $h:Z\to X^+,h':Z\to X^-$ the canonical morphisms.

	We set $\mathscr{F}^+=g_+(g^+(\mathscr{F}))$ and denote by $\beta:\mathscr{F}\to \mathscr{F}^+$ the adjunction morphism.
	By the base change, there exists a canonical isomorphism
	\begin{equation} \label{F+ base change}
		(\mathscr{F}^+)^{+!}=f'^{+}g'^{!}g_{+}g^{+}(\mathscr{F})\simeq f'^+h'_+h^!g^+(\mathscr{F})\simeq \mathscr{F}^{!+}.
	\end{equation}
	Then we define the morphism $\iota_{\mathscr{F}}$ to be the composition of \eqref{F+ base change} and $\beta^{+!}:\mathscr{F}^{+!}\to (\mathscr{F}^+)^{+!}$. 

	By the base change, the morphism $\iota_{\mathscr{F}}$ is compatible with inverse image by the inclusion of a $\Gm$-equivariant open subscheme and with direct image by the inclusion of a $\Gm$-equivariant closed subscheme.
\end{secnumber}

\begin{secnumber}
	To prove canonical morphisms in \eqref{Braden thm} are isomorphisms, we can extend the scalar and assume that $L$ is an extension of the maximal unramified extension of $\mathbb{Q}_p$ \eqref{base tuple}. 

	Let $Y_0$ be a $k$-scheme and $Y=Y_0\otimes_k \overline{k}$. The category $\rD(Y_0/L)$ is independent of the choice of base field $k$ \eqref{base tuple} and we denote it by $\rD(Y/L)$. 
	Given a morphism $f:Y\to Z$ of $\overline{k}$-schemes, it descents to a morphism of $k'$-schemes for some finite extension $k'$ of $k$. This allows us to define the cohomological functors between $\rD(Y/L)$ and $\rD(Z/L)$. 
	To prove \ref{Braden thm}, we can replace the involved schemes by their base change to $\overline{k}$. 

By a result of Sumihiro \cite{Sum}, we may assume moreover that $X$ is isomorphic to an affine space over $\overline{k}$, equipped with a linear $\Gm$-action. 
\end{secnumber}

\begin{secnumber}
	Let $Y$ be a $\overline{k}$-scheme on which $\Gm$ acts trivially and $W$ a $\Gm$-equivariant vector bundle over $Y$. 
	Suppose that there exists a decomposition of vector bundles $W\simeq W_+\oplus W_-$ such that all the weights of $W_+$ are larger than all the weights of $W_-$.

	We set $E=\mathbb{P}(W)-\mathbb{P}(W_+)$ and $B=\mathbb{P}(W_-)$, where $\mathbb{P}(-)$ denotes the associated projective bundle over $Y$. 
	We denote by $p:E\to B$ the canonical morphism defined by $p([w_+,w_-])=[0,w_-]$, by $i:B\to E$ the canonical morphism, which is a section of $p$ and by $\varphi:B\to Y$ the projection.

	Based on the same argument of (\cite{Br} lemma 6), we can show a similar result. 
\end{secnumber}
\begin{lemma}[\cite{Br} lemma 6]\label{lemma 6}
	Let $\mathscr{F}$ be a weakly equivariant object of $\rD(E)$. There exists canonical isomorphisms
	\begin{equation}
		h_+p_+\mathscr{F}\simeq h_+i^+\mathscr{F},\qquad h_+p_!\mathscr{F}\simeq h_+i^!\mathscr{F}.
	\end{equation}
\end{lemma}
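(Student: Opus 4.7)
The plan is to adapt Braden's original proof (\cite{Br} Lemma 6) within the arithmetic $\mathscr{D}$-module formalism reviewed in Section \ref{section six functors formalism}. I spell out the first isomorphism $h_+p_+\mathscr{F}\simeq h_+i^+\mathscr{F}$; the second then follows by applying the Verdier duality $\mathbb{D}_E$ (\ref{Basic properties}(ii)), which exchanges $p_+$ with $p_!$ and $i^+$ with $i^!$ and preserves the weakly equivariant property by direct inspection of the definition in \ref{basic notation Braden}.

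First, I would apply $h_+p_+$ to the open-closed distinguished triangle $j_!j^+\mathscr{F}\to\mathscr{F}\to i_+i^+\mathscr{F}\xrightarrow{+1}$ on $E$, where $j:U:=E\setminus B\hookrightarrow E$ is the open complement of $i$. Since $p\circ i=\id_B$, the Berthelot--Kashiwara theorem (\ref{Basic properties}(v)) gives $p_+i_+i^+\mathscr{F}\simeq i^+\mathscr{F}$, so the assertion reduces to the vanishing
\begin{displaymath}
h_+\, p_+\, j_!(\mathscr{F}|_U)=0.
\end{displaymath}
The key geometric input is that $p|_U:U\to B$ is the complement of the zero section in the affine bundle over $B$ modelled on $W_+$; the hypothesis that the $\Gm$-weights on $W_+$ strictly exceed those on $W_-$ implies that $\Gm$ acts freely on the fibres of $p|_U$ with a single nonzero weight, so $U\to U/\Gm$ is a $\Gm$-torsor and $U/\Gm$ is a $\mathbb{P}(W_+)$-bundle over $B$.

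Next, I would exploit the weakly equivariant isomorphism $\mu^+(\mathscr{F}|_U)\simeq \mathscr{L}[-1]\boxtimes\mathscr{F}|_U$ on $\Gm\times U$. Via smooth descent along the $\Gm$-torsor (\cite{Abe18} 2.1.13) together with the projection formula (\cite{Abe18} 1.1.3(9)) and proper/smooth base change, one rewrites $h_+p_+j_!(\mathscr{F}|_U)$ as a convolution of the compactly-supported rigid cohomology $\rH^{*}_c(\Gm,\mathscr{L})$ with a complex on $Y$. Because the single nonzero $\Gm$-weight on the fibres forces $\mathscr{L}$ to be a smooth rank-one module of Kummer type with nontrivial monodromy (its restriction at $1\in\Gm$ is pinned down to the unit object, while its monodromy records the weight), the $p$-adic analogue of the standard vanishing $\rH^{*}_c(\Gm,\mathscr{L})=0$ yields the required result. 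The main obstacle is precisely this last step: rigorously setting up the convolution/descent picture in the arithmetic $\mathscr{D}$-module language, pinning down $\mathscr{L}$ from the weight data, and verifying the bookkeeping of Tate twists and cohomological shifts — these should be manageable with the six-functor formalism and smooth descent, but are more delicate to record than in Braden's original $\ell$-adic argument.
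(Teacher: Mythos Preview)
Your reduction to the vanishing of $h_+ p_+ j_!(\mathscr{F}|_U)$ via the localisation triangle is sound, and the passage to the second isomorphism by duality is fine since $h:B\to Y$ is proper. The genuine gap is in your argument for this vanishing.

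The geometric picture is incorrect: the hypothesis only asserts that every weight of $W_+$ exceeds every weight of $W_-$, not that either is a single weight space; so $\Gm$ need not act freely or with a single weight on the fibres of $p|_U$, and $p$ is not even $\Gm$-equivariant over a \emph{fixed} base when $W_-$ carries several weights. More seriously, you have misidentified $\mathscr{L}$. The smooth module $\mathscr{L}$ in the definition of weak equivariance (\ref{basic notation Braden}) is data attached to $\mathscr{F}$, not determined by the weights of the $\Gm$-action on $E$; for a genuinely $\Gm$-equivariant $\mathscr{F}$ --- the case of principal interest --- $\mathscr{L}$ is the \emph{constant} module, so $\rH^*_c(\Gm,\mathscr{L})\neq 0$ and no Kummer-type vanishing is available. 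Already in the simplest instance ($Y$ a point, $W_\pm$ lines of weights $1$ and $0$, $\mathscr{F}=L_E$) the required vanishing $\rH^*(\mathbb{A}^1,j_!L_{\Gm})=0$ does hold, but it comes from the contraction isomorphism $\rH^*(\mathbb{A}^1,L)\xrightarrow{\sim}\rH^*(\Spec k,L)$, not from any vanishing over $\Gm$. Braden's actual proof (which the paper simply invokes) uses the weak equivariance isomorphism on $\Gm\times E$ together with a d\'evissage on the weight filtration to set up such a comparison directly; $\mathscr{L}$ appears symmetrically on both sides and cancels, rather than being killed in cohomology.
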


By \ref{lemma 6}, we deduce \ref{Braden thm}(ii). 
Using \ref{lemma 6} and \cite{Sum}, we prove \ref{Braden thm}(i) in the same way as in (\cite{Br} \S~4). 

\section{Geometric Satake equivalence for arithmetic $\cD$-modules} \label{sec Sat}
In this section, we establish the geometric Satake equivalence for arithmetic $\cD$-modules. 

	We assume that $k$ is a finite field with $q=p^s$ elements and keep the assumption and notation in \S~\ref{Dmods}. 
	We work with holonomic modules (resp. complexes) over the geometric base tuple $\mathfrak{T}=\{k,R,K,L\}$ and we omit $/L$ from the notations $\Hol(-/L),\rD(-/L)$ for simplicity. 

	Let $G$ denote a split reductive group over $k$. 
	Let $T$ be the abstract Cartan of $G$, which is defined up to a canonical isomorphism as the quotient of a Borel subgroup $B$ by its unipotent radical.
	We denote by $\wX=\wX(T)$ the weight lattice and by $\cwX=\cwX(T)$ the coweight lattice. 
	Let $\Phi\subset \wX$ (resp. $\Phi^{\vee}\subset \cwX$) the set of roots (resp. coroots). 
	Let $\Phi^+\subset \Phi$ be the set of positive roots and $\cwX(T)^+\subset \cwX(T)$ the semi-group of dominant coweights, determined by a choice of $B$. (But they are independent of the choice of $B$.)  
	Given $\lambda,\mu\in \cwX(T)$, we define $\lambda\le \mu$ if $\mu-\lambda$ is a non-negative integral linear combinations of simple coroots and $\lambda< \mu$ if $\lambda\le \mu$ and $\lambda\neq \mu$. This defines a partial order on $\cwX(T)$ (and on $\cwX(T)^+$).
	We denote by $\rho\in \wX(T)\otimes\mathbb{Q}$ the half sum of all positive roots.

\subsection{The Satake category}

\begin{secnumber}
	We briefly recall affine Grassmannians following (\cite{Zhu16} \S~1, \S~2). 
	The \textit{loop group} $LG$ (resp. \textit{positive loop group} $L^+G$) is the fpqc sheaf on the category of $k$-algebras associated to the functor $R\mapsto G(R ((t)))$ (resp. $ R\mapsto G(R\llbracket t \rrbracket)$ ).
	Then $L^+G$ is a subsheaf of $LG$ and the \textit{affine Grassmannian} $\Gr_G$ is defined as the fpqc-quotient 
	\begin{displaymath}
		\Gr_G=LG/L^+G.
	\end{displaymath}
	The sheaf $\Gr_G$ is represented by an ind-projective ind-scheme over $k$. 
	We write simply $\Gr$ instead of $\Gr_{G}$, if there is no confusion.

	For any dominant coweight $\mu\in \cwX(T)^+$, we denote by $\Gr_{\mu}$ the corresponding $(L^+G)$-orbit, which is smooth quasi-projective over $k$ of dimension $2\rho(\mu)$ (\cite{Zhu16} 2.1.5). 
	Let $\Gr_{\le \mu}$ be the reduced closure of $\Gr_{\mu}$ in $\Gr$, which is equal to $\cup_{\lambda\le \mu}\Gr_{\lambda}$. Let $j_{\mu}:\Gr_{\mu}\to \Gr_{\le \mu}$ be the open inclusion. 
	We have an ind-presentation $\Gr_{\red}\simeq \varinjlim_{\mu\in \cwX(T)^+}\Gr_{\le \mu}$. 
	Since we will work with holonomic modules, we can replace $\Gr$ by its reduced ind-subscheme (\cite{Abe18} 1.1.3 lemma), and omit the subscript $_{\red}$ to simplify the notation. 

	For $i\ge 0$, let $G_i$ be the $i$-th jet group defined by the functor $R\mapsto G(R[t]/t^{i+1})$. Then $G_i$ is representable by a smooth geometrically connected affine group scheme over $k$ and we have $L^+G\simeq \varprojlim_{i} G_i$. If we consider the left action of $L^+G$ on $\Gr$, then the action on $\Gr_{\le \mu}$ factors through $G_i$ for some $i$.
	We can define the category of $(L^+G)$-equivariant holonomic modules on $\Gr$ (see \ref{equiv hol indscheme}), denoted as $\Sat_{G}$ and called \textit{Satake category}. 
	It is a full subcategory of $\Hol(\Gr)$ (\ref{lemma ff forget H}). 
\end{secnumber}

\begin{prop} \label{Sat G semisimple}
	The category $\Sat_{G}$ is semisimple with simple objects $\IC_{\mu}:=j_{\mu,!+}(L_{\Gr_{\mu}}[2\rho(\mu)])$ \eqref{intermediate extension}. 
\end{prop}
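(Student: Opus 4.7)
\emph{Classification of simples.} Let $\mathscr{F}$ be a simple object of $\Sat_G$. Its support is a reduced irreducible $L^+G$-invariant closed subset of $\Gr$, hence equal to $\Gr_{\le\mu}$ for a unique dominant coweight $\mu$. The restriction $\mathscr{F}|_{\Gr_\mu}$ is then a nonzero smooth simple $L^+G$-equivariant holonomic module on $\Gr_\mu$. Since each $L^+G$-orbit $\Gr_\mu$ is simply connected --- it fibers over a partial flag variety with affine-space fibers --- the only simple smooth holonomic module on $\Gr_\mu$, equivariant or not, is the constant shifted module $L_{\Gr_\mu}[2\rho(\mu)]$. The structural result \ref{intermediate extension}(ii) together with corollary \ref{lemma uniqueness intermediate extension}(i) then forces $\mathscr{F}\simeq\IC_\mu$.

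\emph{Semisimplicity.} I would endow each $\IC_\mu$ with a canonical Frobenius structure by transporting the natural $F$-structure on $L_{\Gr_\mu}(\rho(\mu))[2\rho(\mu)]$, which is $\iota$-pure of weight zero, through the intermediate extension functor $j_{\mu,!+}$; by \ref{def weight}(ii), the resulting $\IC_\mu(\rho(\mu))$ is $\iota$-pure of weight $0$. Since $\Hol(\Gr/L)$ is Noetherian and Artinian (\ref{basic def}), every $\mathscr{M}\in\Sat_G$ has finite length, and the full faithfulness of the forgetful functor $\Sat_G\hookrightarrow\Hol(\Gr/L)$ (lemma \ref{lemma ff forget H}) reduces semisimplicity to the vanishing
\[
\Ext^1_{\Hol(\Gr/L)}(\IC_\lambda,\IC_\mu) = 0 \qquad \text{for all dominant coweights } \lambda,\mu.
\]
Given an extension $0\to \IC_\mu\to\mathscr{M}\to\IC_\lambda\to 0$ in $\Hol(\Gr/L)$, the plan is to lift it to the Frobenius-enriched category, so that $\mathscr{M}$ becomes an extension of two $\iota$-pure $F$-holonomic modules of the same weight, hence is itself $\iota$-pure of weight $0$. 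The decomposition theorem \ref{decomposition thm}(i) would then give $\mathscr{M}\simeq \IC_\mu\oplus\IC_\lambda$ in $\Hol(\Gr/L)$, so the extension splits.

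\emph{Main obstacle.} The delicate point is the existence of a Frobenius lift of an arbitrary such extension. I would circumvent this by a parity argument adapted to the Iwahori-orbit stratification of $\Gr$: each Iwahori orbit is an affine space, and a direct inductive computation (using the stability of intermediate extension under $j_{!+}$ and the semismall-like geometry of the closures $\Gr_{\le\mu}$) shows that the stalks and costalks of every $\IC_\mu$ are concentrated in degrees of a single parity depending on $\mu$. The hyperext spectral sequence then forces all odd Ext groups between the $\IC_\lambda$ to vanish, in particular the $\Ext^1$ above. This parity analysis is the principal technical step; once it is in place, the identification of the simple objects and the decomposition theorem assemble into the full statement.
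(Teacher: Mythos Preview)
Your outline lands on the same architecture as the paper's proof: classify the simples via simple connectedness of the orbits, reduce semisimplicity to $\Ext^1_{\Hol(\Gr)}(\IC_\lambda,\IC_\mu)=0$, and feed in a parity statement for the stalks of $\IC_\mu$ as the decisive input. A few points of comparison and one genuine weakness are worth flagging.

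\textbf{Case split.} The paper does not run a single parity/spectral-sequence argument for all $(\lambda,\mu)$. It treats three cases separately: for $\lambda=\mu$ the vanishing comes from $\rH^1(\Gr_\mu,L)=0$, proved via the affine-bundle fibration $\Gr_\mu\to G/P_\mu$ and the affine paving of $G/P_\mu$ (lemma \ref{IsoGrmu semisimple}); for incomparable $\lambda,\mu$ a bare support/base-change argument already kills the $\Ext^1$; only the strictly comparable case actually uses parity. Your sketch handles everything through parity, which is fine in principle, but the diagonal case $\lambda=\mu$ then still requires the odd-degree cohomology vanishing of $\Gr_\mu$, which you should state explicitly.

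\textbf{How parity is obtained.} Your description of the parity step --- ``direct inductive computation using semismall-like geometry'' over Iwahori orbits --- is where the proposal is thin. The paper proves the parity lemma (\ref{parity stalk cohomology of IC}) following Gaitsgory: one pushes the IC sheaves through Bott--Samelson resolutions in the affine flag variety, whose fibers over each Iwahori orbit are paved by affine spaces, and then invokes the decomposition theorem \ref{decomposition thm}. This is not an elementary induction, and it is not a semismallness argument either. More to the point, the decomposition theorem is itself a weight-theoretic statement requiring Frobenius structures. So the obstacle you identified --- that one cannot naively lift an arbitrary extension to the $F$-category --- is not actually circumvented by your fallback; the weight theory re-enters through the door of the parity lemma. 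That is exactly how the paper uses it too, but you should be aware that ``parity'' here is not a purely geometric substitute for purity.

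\textbf{Classification of simples.} Your argument is correct but the paper, working in the arithmetic $\mathscr{D}$-module setting, does not invoke topological simple connectedness directly; it uses the companion theorem and \v{C}ebotarev density to show $\Sm(\Gr_\mu)$ has only the constant simple object (lemma \ref{IsoGrmu semisimple}). Your étale-fundamental-group reasoning is morally the same input.
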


\begin{lemma} \label{IsoGrmu semisimple}
	For $\mu\in \cwX(T)^+$, the category $\Sm(\Gr_{\mu})$ \eqref{generality overconv isocrystal} is semisimple with simple object $L_{\Gr_{\mu}}$. 
\end{lemma}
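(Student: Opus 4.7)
The plan is to exploit the homogeneous-space structure on $\Gr_\mu$ together with the overconvergence condition inherent in $\Sm$. The orbit $\Gr_\mu$ is a single $L^+G$-orbit, and since the action on the finite-type scheme $\Gr_\mu$ factors through a smooth connected quotient $G_N$ of $L^+G$, one has an identification $\Gr_\mu \simeq G_N / H$, where $H \subset G_N$ is the smooth stabilizer of $[t^\mu]$. Moreover, $H$ is connected, being an extension of the parabolic subgroup $P_\mu \subset G$ (the reduction mod $t$ of the stabilizer) by a pro-unipotent congruence part.

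First, I would show that every $\mathscr{F} \in \Sm(\Gr_\mu)$ is canonically $L^+G$-equivariant. For the action map $a$ and the second projection $p$ from $G_N \times \Gr_\mu$ to $\Gr_\mu$, Proposition \ref{BBD 4.2.5} applied to the smooth morphism $p$ (which has geometrically connected fibers since $G_N$ is connected), and likewise to $a$, yields full faithfulness of the relevant pullbacks on holonomic modules. Since $a^+\mathscr{F}$ and $p^+\mathscr{F}$ both restrict to $\mathscr{F}$ along the unit section $\{e\}\times \Gr_\mu$, I would promote this restriction-level coincidence to a canonical isomorphism $a^+\mathscr{F} \simeq p^+\mathscr{F}$ on all of $G_N \times \Gr_\mu$ and verify the cocycle identity on $G_N \times G_N \times \Gr_\mu$. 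This equips $\mathscr{F}$ with a canonical $L^+G$-equivariant structure.

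Next, I would invoke smooth descent \eqref{quotient descent} to identify $L^+G$-equivariant smooth modules on the single orbit $\Gr_\mu$ with smooth representations of the connected stabilizer $H$. The overconvergence condition built into $\Sm(\Gr_\mu)$ along the boundary of $\Gr_\mu$ in the partial compactification $\Gr_{\le\mu}$, combined with the connectedness of $H$, then forces the corresponding representations of $H$ to be trivial, so that $\mathscr{F}$ becomes a direct sum of copies of $L_{\Gr_\mu}$. This establishes both semisimplicity of $\Sm(\Gr_\mu)$ and the uniqueness of the simple object $L_{\Gr_\mu}$.

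The main obstacle is the automatic equivariance step. Smoothness alone does not imply equivariance under a group action in general (as exemplified by the Dwork isocrystal on $\mathbb{A}^1$ under translation). The argument succeeds here because of the transitivity of the $L^+G$-action, together with Proposition \ref{BBD 4.2.5}, which when combined with the coincidence of $a^+\mathscr{F}$ and $p^+\mathscr{F}$ along the unit section should propagate the identification throughout $G_N \times \Gr_\mu$. Making this propagation precise---including the verification of the cocycle condition and the final reduction, via descent, of the triviality question to a statement about smooth representations of the connected group $H$---constitutes the technical heart of the proof.
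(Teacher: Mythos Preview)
Your argument has a genuine gap at the automatic-equivariance step, and the paper takes a completely different route.

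Proposition \ref{BBD 4.2.5} gives full faithfulness of $p^{+}[d]$ (and likewise of $a^{+}[d]$), i.e.\ it identifies $\Hom(\mathscr{F}_1,\mathscr{F}_2)$ with $\Hom(p^{+}\mathscr{F}_1,p^{+}\mathscr{F}_2)$. It does \emph{not} say that an arbitrary smooth object on $G_N\times\Gr_\mu$ is determined by its restriction to the unit section. In particular, from $e^{+}(a^{+}\mathscr{F})\simeq \mathscr{F}\simeq e^{+}(p^{+}\mathscr{F})$ you cannot conclude $a^{+}\mathscr{F}\simeq p^{+}\mathscr{F}$ unless you already know that $a^{+}\mathscr{F}$ lies in the essential image of $p^{+}$. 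Concretely, restricting $a^{+}\mathscr{F}$ to a fibre $\{g\}\times\Gr_\mu$ gives $g^{+}\mathscr{F}$, and you would need $g^{+}\mathscr{F}\simeq\mathscr{F}$ for all $g$ varying in a coherent family; this is exactly the equivariance you are trying to prove. Your own Dwork-isocrystal example shows the danger: the obstruction vanishes here only because $\Gr_\mu$ is geometrically simply connected in the \'etale sense, an input your outline never invokes. Your step~3 is also only a gesture: the claim that ``overconvergence combined with connectedness of $H$'' forces triviality is not a statement available off the shelf in the arithmetic $\mathscr{D}$-module framework, and would itself require an argument of the type the paper actually gives.

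The paper's proof avoids equivariance entirely. For the classification of irreducibles it uses that $\pi_1^{\et}(\Gr_\mu\otimes_k\overline{k})$ is trivial, equips any irreducible smooth object with a Frobenius structure of finite determinant (via \cite{Abe15}), and then applies the companion theorem for overconvergent $F$-isocrystals together with \v{C}ebotarev density to conclude that every irreducible is $L_{\Gr_\mu}$. For semisimplicity it shows $\rH^1(\Gr_\mu,L)=0$ by using the affine-bundle projection $\Gr_\mu\to G/P_\mu$ to reduce to the partial flag variety, whose cohomology vanishes in odd degrees thanks to its paving by affine spaces. These are the substantive ingredients; your outline would need either to supply them or to find a genuine substitute.
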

\begin{proof}
	The $(L^+G)$-orbit $\Gr_{\mu}$ is geometrically connected and satisfies $\pi_{1}^{\et}(\Gr_{\mu}\otimes_k \overline{k})\simeq \{1\}$ (cf. \cite{Ric14} proof of proposition 4.1). 
	Every irreducible object $\mathscr{M}$ of $\Sm(\Gr_{\mu})$ has a Frobenius structure with respect to the arithmetic base tuple $\mathfrak{T}_{F}=\{k,R,K,L,s,\id\}$ with finite determinant (\cite{Abe15} 6.1).
	By the companion theorem for overconvergent $F$-isocrystals over a smooth $k$-scheme (\cite{AE16} 4.2) and \v{C}ebotarev density (\cite{Abe18} A.4), we deduce that $\mathscr{M}\simeq L_{\Gr_{\mu}}$ in the category $\Sm(\Gr_{\mu})$. 
	
	To show the semisimplicity, it suffices to show that $\rH^1(\Gr_{\mu},L)=0$. 
	There exists a morphism $\pi:\Gr_{\mu}\to G/P_{\mu}$ realizing $\Gr_{\mu}$ as an affine bundle over the partial flag variety $G/P_{\mu}$, where $P_{\mu}$ is the parabolic subgroup containing $B$ and associated with $\{\alpha\in \Phi, (\alpha,\mu)=0\}$. 
	In view of \eqref{pullback of smooth mods} and the cohomology of affine spaces (\ref{cohomology arith mods}(ii)), the canonical morphism $L_{G/P_{\mu}}\to \pi_+(L_{\Gr_{\mu}})$ is an isomorphism. 
	Then the cohomology $\rH^{i}(\Gr_{\mu},L)$ is isomorphic to $\rH^{i}(G/P_{\mu},L)$.
	Since the partial flag variety admits a stratification of affine spaces, we deduce that $\rH^{i}(G/P_{\mu},L)=0$ if $i$ is odd by \eqref{ss stratification}. 
	Then the assertion follows. 
\end{proof}

To prove proposition \ref{Sat G semisimple}, we need a parity result on the constructible cohomology of $\IC_{\mu}$. 

\begin{lemma} \label{parity stalk cohomology of IC}
	The constructible module $\cH^{i}(\IC_{\mu})$ vanishes unless $i\equiv \dim (\Gr_{\mu})~(\textnormal{mod}~2)$. 
\end{lemma}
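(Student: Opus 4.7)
The plan is to adapt the classical $\ell$-adic parity vanishing argument for affine Schubert varieties to the present arithmetic $\cD$-module setting, by combining a Bott--Samelson/Demazure-type resolution with the decomposition theorem (Theorem \ref{decomposition thm}) and an induction on the closure order on $\cwX(T)^+$.

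First I would construct a proper surjective map $\pi : Y \to \Gr_{\le \mu}$ where $Y$ is a smooth projective $k$-scheme paved by affine cells and $\pi$ is birational. Such a resolution is provided by a Bott--Samelson variety associated to a reduced expression in the (extended) affine Weyl group for a lift of $\mu$: the resulting $Y$ is an iterated $\mathbb{P}^1$-bundle, hence smooth projective of dimension $2\rho(\mu)$, and carries a Schubert-type cell decomposition into affine spaces. (Equivalently, when a decomposition $\mu = \sum \omega_i$ into minuscule summands is available one may take $Y$ to be the convolution product $\Gr_{\omega_1}\widetilde{\times}\cdots\widetilde{\times}\Gr_{\omega_n}$, the minuscule Schubert varieties being already smooth and paved by affines.)

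Next I would show that $\cH^i\bigl(\pi_{+}(L_Y[2\rho(\mu)])\bigr)$ vanishes for $i$ odd. By proper base change applied to a closed point $x\in\Gr_{\le\mu}$, the stalk of $\pi_+L_Y$ at $x$ computes $\rH^*(\pi^{-1}(x),L)$, and the fibre $\pi^{-1}(x)$ is closed in $Y$ and inherits a paving by affine cells from the cell decomposition of $Y$. Using the cohomology of affine spaces (\ref{cohomology arith mods}(ii)) and the spectral sequence \eqref{ss stratification} attached to this stratification, the cohomology $\rH^*(\pi^{-1}(x),L)$ is concentrated in even degrees, which gives the desired parity of $\pi_+(L_Y[2\rho(\mu)])$ after the shift.

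Finally, I would upgrade $L_Y$ to an $F$-complex (which makes $\pi_{+}(L_Y[2\rho(\mu)])$ $\iota$-pure of weight $2\rho(\mu)$) and invoke Theorem \ref{decomposition thm} to split it, in $\rD(\Gr_{\le\mu})$, as a direct sum of shifted $(L^+G)$-equivariant simple perverse sheaves, each of which must be of the form $\IC_{\lambda}[c]$ for some $\lambda\le\mu$ and $c\in\mathbb{Z}$ (since the only $(L^+G)$-equivariant simple holonomic modules on $\Gr_{\le\mu}$ are the $\IC_\lambda$, by Lemma \ref{IsoGrmu semisimple} and intermediate-extension). Birationality of $\pi$ forces $\IC_\mu$ to appear in degree $0$. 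An induction on $\mu$ in the partial order on $\cwX(T)^+$, with trivial base case $\mu=0$ and inductive hypothesis that $\cH^i(\IC_\lambda)=0$ for $i$ odd and $\lambda<\mu$, then transfers the parity from $\pi_{+}(L_Y[2\rho(\mu)])$ to the $\IC_\mu$ summand. The main obstacle is the construction step: one must choose the reduced expression so that $Y$ is truly paved by affines \emph{and} $\pi$ is birational onto $\Gr_{\le\mu}$; in non-simply-laced types with no minuscule decomposition of $\mu$, this requires passing through the affine flag variety via the Demazure construction rather than staying inside $\Gr_G$.
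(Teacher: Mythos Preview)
Your proposal is correct and follows essentially the same approach as the paper, which also invokes the decomposition theorem (Theorem~\ref{decomposition thm}) together with a Bott--Samelson/Demazure resolution whose fibres are paved by affine spaces, then transfers the resulting parity to $\IC_\mu$ by induction along the closure order. One minor caveat: the affine paving of the fibres $\pi^{-1}(x)$ is not literally ``inherited'' from the cell decomposition of $Y$ (intersecting a stratification with a closed subvariety need not produce affine cells) but is a separate, standard geometric fact about fibres of Demazure resolutions---this is exactly what the paper records as ingredient (2) in its proof.
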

\begin{proof}
	We follow the argument of Gaitsgory (\cite{Gai01} A.7, cf. \cite{BR} \S 4.2 for a detailed exposition) in the $\ell$-adic case, whose proof is based on following ingredients: 1) the decomposition theorem; 2) the fiber of the Bott-Samelson resolution of a cell in affine flag variety is paved by affine spaces. 
	
	In our case, the assertion follows from the same argument using the decomposition theorem \eqref{decomposition}, the spectral sequence \eqref{ss stratification} and the parity of the compact support $p$-adic cohomology of affine spaces (\ref{cohomology arith mods}(ii)).
\end{proof}

\begin{secnumber} \textit{Proof of proposition \ref{Sat G semisimple}}. \label{proof semisimple}
	We follow the same line as in the $\ell$-adic case (cf. \cite{Gai01} prop. 1).
	By \ref{intermediate extension}(i), holonomic modules $\IC_{\mu}$ are irreducible of $\Sat_G$. 
	Let $\mathscr{E}$ be an irreducible object of $\Sat_G$. There exists an $(L^+G)$-orbit $\Gr_{\mu}$ such that $\mathscr{E}|_{\Gr_{\mu}}$ is a smooth object. 
	By \ref{lemma ff forget H} and \ref{IsoGrmu semisimple}, we deduce that $\mathscr{E}$ is isomorphic to $\IC_{\mu}$. 
	
	To prove the semisimplicity, it suffices to show that for $\lambda,\mu \in \cwX(T)^+$, we have 
	\begin{equation} \label{Ext 1 vanish}
		\Ext^{1}_{\Hol(\Gr)}(\IC_{\lambda},\IC_{\mu})=\Hom_{\rD(\Gr)}(\IC_{\lambda},\IC_{\mu}[1])=0.
	\end{equation}

	(i) In the case $\lambda=\mu$, \eqref{Ext 1 vanish} follows from $\Ext_{\Hol(\Gr_{\mu})}^{1}(L_{\Gr_{\mu}},L_{\Gr_{\mu}})=\rH^1(\Gr_{\mu},L)=0$ (see the proof of lemma \ref{IsoGrmu semisimple}). 

	(ii) Then we consider the case either $\lambda< \mu$ or $\mu< \lambda$. 
	Since the dual functor $\mathbb{D}$ induces an anti-equivalence, we may assume that $\mu< \lambda$. We denote by $i:\Gr_{\le \mu}\to \Gr_{\le \lambda}$ the close immersion and we have 
	\begin{displaymath}
		\Hom_{\rD(\Gr)}(\IC_{\lambda},i_+\IC_{\mu}[1])\simeq \Hom_{\rD(\Gr_{\le \mu})}(i^+\IC_{\lambda},\IC_{\mu}[1]).
	\end{displaymath}
	Note that $i^+\IC_{\lambda}$ has cohomological degrees $\le -1$ (\ref{lemma uniqueness intermediate extension}(i)). 
	Each $(L^+G)$-equivariant holonomic module $\hH^i(i^+\IC_{\lambda}|_{\Gr_{\mu}})$ is smooth and hence is constant \eqref{IsoGrmu semisimple}. 
	If there existed a non-zero morphism $g:i^+\IC_{\lambda}\to\IC_{\mu}[1]$, then it would induce a non-zero morphism $h:\hH^{-1}(i^+\IC_{\lambda}|_{\Gr_{\mu}})\to L_{\Gr_{\mu}}[2\rho(\mu)]$. 
	Since $i^+$ is c-t-exact, it contradicts to \ref{parity stalk cohomology of IC}. The equality \eqref{Ext 1 vanish} in this case follows. 

	(iii) In the case $\lambda\nleq \mu$ and $\mu\nleq \lambda$, we prove \eqref{Ext 1 vanish} by base change in the same way as in (\cite{BR} 4.3). \hfill $\qed$ 
\end{secnumber}

\begin{secnumber}
	We consider the action of $L^+G$ on $LG\times \Gr$ defined by $a(g,[h])=(ga^{-1},[ah])$, where $[h]$ denotes the coset $h\cdot L^+G$ of an element $h\in LG$. 
	We denote by $\Gr\widetilde{\times}\Gr$ the twisted product $LG\times \Gr/L^+G$ \eqref{twisted prod}.
	The morphism $LG\times \Gr\to \Gr$, defined by $(g,[h])\mapsto [gh]$, induces an ind-proper morphism
	\begin{equation} \label{convolution map}
		m:\Gr\widetilde{\times} \Gr \to \Gr,
	\end{equation}
	called the \textit{convolution morphism}.
	The morphism $m$ is $(L^+G)$-equivariant with respect to the left $(L^+G)$-actions. 
	
	Given two objects $\mathcal{A}_1,\mathcal{A}_2$ of $\Sat_G$, we denote by $\mathcal{A}_1\widetilde{\boxtimes}\mathcal{A}_2$ their external twisted product on $\Gr\widetilde{\times}\Gr$ (see \ref{twisted prod} and \ref{equiv hol indscheme}), and
 define the \textit{convolution product} by
	\begin{equation}
		\mathcal{A}_1\star \mathcal{A}_2 =m_{+}(\mathcal{A}_1\widetilde{\boxtimes} \mathcal{A}_2).
		\label{def convolution Sat}
	\end{equation}
	Similarly, there exists an $n$-fold twisted product $\Gr\widetilde{\times}\cdots \widetilde{\times} \Gr$ and a convolution morphism $m:\Gr\widetilde{\times}\cdots \widetilde{\times} \Gr \to \Gr$ (\cite{Zhu16} 1.2.15). 
	This allows us to define the $n$-fold convolution product $\mathcal{A}_1 \star \cdots \star \mathcal{A}_n$. 
	
	We will show that $\mathcal{A}_1\star \mathcal{A}_2$ is an object of $\Sat_G$ and that $\star$ defines a symmetric monoidal structure on $\Sat_G$.  
	To do it, we will interpret the convolution product as the specialization of a fusion product on Beilinson-Drinfeld Grassmannians in the next subsection. 
\end{secnumber}

\subsection{Fusion product}
\begin{secnumber} \label{BD Grass}
	Let $X$ be a smooth, geometrically connected curve over $k$, $n$ an integer $\ge 1$ and $X^n$ the $n$-folded self product of $X$ over $k$. 
	We briefly recall the definition of Beilinson-Drinfeld Grassmannians (\cite{Zhu16} \S~3). 

	For any $k$-algebra $R$ and any point $x=(x_i)_{i=1}^n\in X^n(R)$, we set $\Gamma_{x}=\cup_{i=1}^n \Gamma_{x_i}$ the closed subscheme of $X_R$ defined by the union of graphs $\Gamma_{x_i}\hookrightarrow X_R$ of $x_i:\Spec(R)\to X$. 
	The \textit{Beilinson-Drinfeld Grassmannian $\Gr_{G,X^n}$ (associated to $G$ over $X^n$)} is the functor which associates to every $k$-algebra $R$ the groupoid $\Gr_{G,X^n}(R)$ of triples $(x,\mathscr{E},\beta)$ 
	\begin{displaymath}
		\{x\in X^n(R),\quad 
		\textnormal{$\mathscr{E}$ a $G$-torsor on $X_R$}, \quad
		\beta: \mathscr{E}|_{X_R-\Gamma_x} \xrightarrow{\sim} \mathscr{E}^0:=G\times (X_R-\Gamma_x)~ \textnormal{a trivialisation}\}.
	\end{displaymath}
	The above functor is represented by an ind-projective ind-scheme over $X^n$ (\cite{Zhu16} 3.1.3). We denote by $q^n:\Gr_{G,X^n}\to X^n$ the canonical morphism. 
	If there is no confusion, we will write simply $\Gr_{X^n}$ instead of $\Gr_{G,X^n}$. 
	Note that the fiber of $\Gr_{X}$ at a closed point $x$ of $X$ is isomorphic to the affine Grassmannian.

	We refer to (\cite{Zhu16} 3.1) the definition of global loop groups $(L^+G)_{X^n}$ and $(LG)_{X^n}$ over $X^n$. 
	The sheaf $(L^+G)_{X^n}$ is represented by a projective limit of smooth affine group scheme over $X^n$. 
	There exists a canonical isomorphism of fpqc-sheaves $(LG)_{X^n}/(L^+G)_{X^n}\xrightarrow{\sim} \Gr_{G,X^n}$.
	We consider the left action of $(L^+G)_{X^n}$ on $\Gr_{G,X^n}$ over $X^n$ and denote by $\Hol_{(L^+G)_{X^n}}(\Gr_{X^n})$ the category of $(L^+ G)_{X^n}$-equivariant holonomic modules on $\Gr_{X^n}$ \eqref{equiv hol indscheme}.  
\end{secnumber}

\begin{secnumber} \label{globalisation functor}
	In the following, we take the curve $X$ to be the affine line $\mathbb{A}^1_k$. Then there exists an isomorphism $\Gr_X\simeq \Gr\times X$. 
	Given a holonomic module $\mathcal{A}$ on $\Gr$, the holonomic module $\mathcal{A}_X=\mathcal{A}\boxtimes L_X[1]$ is ULA with respect to $q:\Gr_X\to X$ \eqref{ULA indscheme}.
	If $\mathcal{A}$ is moreover $(L^+G)$-equivariant, then $\mathcal{A}_{X}$ is $(L^+G)_X$-equivariant.

	By proposition \ref{BBD 4.2.5}, we obtain a fully faithful functor 
	\begin{equation} \label{Sat G to X}
		\iota:\rD(\Gr)\to \rD(\Gr_{X}),\quad \mathcal{A}\mapsto \mathcal{A}_X.
	\end{equation}
	We denote the essential image of $\Sat_{G}$ via $\iota$ by $\Sat_{X}$, which is a full subcategory of $\Hol_{(L^+G)_X}(\Gr_{X})$. 
\end{secnumber}

\begin{secnumber} \label{vphi base change}
	To define the fusion product on $\Sat_X$, we will use the factorization structure of Beilinson-Drinfeld Grassmannians. 
	
	The diagonal immersion $\Delta:X\to X^2$ a morphism $\Gr_X \to \Gr_{X^2}$ sending $(x,\mathscr{E},\beta)$ to $(\Delta(x), \mathscr{E}, \beta)$, which is compatible with $(LG)_X$-actions. 
	Moreover, it induces a canonical isomorphism
	\begin{equation}
		\Delta: \Gr_{X}\xrightarrow{\sim}\Gr_{X^2}\times_{X^2,\Delta}X. 		\label{phi base change BD Grass}
	\end{equation}
	
	Let $U$ be complement of $\Delta:X\to X^2$.  
	Then there exists a canonical isomorphism, called the \textit{factorization isomorphism} (\cite{Zhu16} 3.2.1(iii))
	\begin{equation}
		c:\Gr_{X^2}\times_{X^2}U \xrightarrow{\sim} (\Gr_{X}\times \Gr_{X})\times_{X^2} U.
		\label{factorization iso}
	\end{equation}
	The involution $\sigma:X^2\to X^2, ~ (x,y)\mapsto (y,x)$, induces an involution $\Delta(\sigma):\Gr_{X^2}\to \Gr_{X^2}$. 
	If we consider the $S_2$-action on $\Gr_X\times \Gr_X$ by the permutation of two factors, then $c$ is moreover $S_2$-equivariant.
\end{secnumber}

\begin{secnumber} \label{twisted prod BD Gr}
	The convolution morphism \eqref{convolution map} also admits a globalization. 
	The \textit{convolution Grassmannian} $\Gr_{X}\widetilde{\times}\Gr_{X}$ is the ind-scheme representing
	\begin{displaymath}
		R\mapsto \bigg\{(x,(\mathscr{E}_i,\beta_i)_{i=1,2}) \bigg| 
		\txt{	$x=(x_1,x_2)\in X^2(R)$, \quad $\mathscr{E}_1,\mathscr{E}_2$ are $G$-torsors on $X_R$\\
		$\beta_{i}:\mathscr{E}_i|_{X_R-\Gamma_{x_i}}\xrightarrow{\sim} \mathscr{E}_{i-1}|_{X_R-\Gamma_{x_{i}}}$ where $\mathscr{E}_0=\mathscr{E}^{0}$ is trivial} \bigg\}.
	\end{displaymath} 
	There exists a convolution morphism
	\begin{equation} \label{fusion m}
		m:\Gr_{X}\widetilde{\times}\Gr_{X}\to \Gr_{X^2}, \qquad (x,(\mathscr{E}_i,\beta_i)_{i=1,2}) \mapsto (x,\mathscr{E}_2,\beta_1\circ \beta_2).
	\end{equation}
	By definition, the restriction of $m$ on $U$ is an isomorphism.  	

	We can view $\Gr_X\widetilde{\times}\Gr_X$ as a twisted product \eqref{twisted prod} in the following way. 	
	There exists a $(L^+G)_X$-torsor $\mathbb{E}\to \Gr_X \times X$ classifying 
	\begin{displaymath}
		R\mapsto \mathbb{E}(R)=\bigg\{(x_1,\beta_1,\mathscr{E}_1)\in \Gr_X(R);~ x_2\in X(R);~  \eta:\mathscr{E}^0\xrightarrow{\sim} \mathscr{E}_1|_{\widehat{\Gamma}_{x_2}}\bigg\}, 
	\end{displaymath} 
	where $\eta$ is a trivialisation of $\mathscr{E}_1$ on the formal completion $\widehat{\Gamma}_{x_2}$ of $X_R$ along $\Gamma_{x_2}$. 
	Using the torsor $\mathbb{E}$, we can identify $\Gr_X\widetilde{\times} \Gr_X$ with the twisted product $(\Gr_X\times X)\widetilde{\times}_X \Gr_X$ \eqref{equiv hol indscheme}. 
	In summary, we have the following diagram over $X^2$
	\begin{equation}
		\Gr_X\times \Gr_X =(\Gr_X\times X)\times_X \Gr_X \leftarrow \mathbb{E}\times_X \Gr_X \rightarrow \Gr_X\widetilde{\times} \Gr_X \xrightarrow{m} \Gr_{X^2}. \label{convolution diagram}
	\end{equation}

	Let $\mathcal{A}_{1},\mathcal{A}_2$ be two objects of $\Sat_{X}$. 
	Note that $(\mathcal{A}_1\boxtimes L_X)\boxtimes_X \mathcal{A}_2 \simeq \mathcal{A}_1\boxtimes \mathcal{A}_2$ is holonomic. 
	We denote by $\mathcal{A}_1\widetilde{\boxtimes}\mathcal{A}_2$ the twisted product of $\mathcal{A}_1\boxtimes L_X$ and $\mathcal{A}_2$ on $\Gr_X\widetilde{\times}\Gr_X$ \eqref{equiv hol indscheme}. 
\end{secnumber}

\begin{prop} \label{prop fusion prod}
	\textnormal{(i)} There exists a canonical isomorphism of holonomic modules on $\Gr_{X^2}$:
	\begin{equation} \label{fusion product iso}
		m_{+}(\mathcal{A}_{1}\widetilde{\boxtimes}\mathcal{A}_2)\simeq j_{!+}(\mathcal{A}_1\boxtimes\mathcal{A}_2|_{U}). 
	\end{equation}
	The left hand side, denoted by $\mathcal{A}_1\boxast\mathcal{A}_2$, is ULA with respect to $q^2:\Gr_{X^2}\to X^2$ and is $(L^+G)_{X^2}$-equivariant. 

	\textnormal{(ii)} There exists a canonical isomorphism of holonomic modules on $\Gr_{X}$: 
	\begin{displaymath}
		\Delta^{+}[-1](\mathcal{A}_{1}\boxast \mathcal{A}_{2}) \xrightarrow{\sim} \Delta^{!}[1](\mathcal{A}_{1}\boxast \mathcal{A}_{2}).
	\end{displaymath}
	We denote one of the above module by $\mathcal{A}_1\circledast \mathcal{A}_2$ and call it \textit{fusion product of $\mathcal{A}_1,\mathcal{A}_2$}.
	This holonomic module is ULA with respect to $q:\Gr_X\to X$. 
\end{prop}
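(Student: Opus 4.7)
The plan is to reduce both statements to an application of Propositions \ref{basic ULA} and \ref{ULA hol}, via the factorization structure of the Beilinson–Drinfeld Grassmannian and the ind-properness of the convolution map.

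First I would establish that the external product $\mathcal{A}_1 \boxtimes \mathcal{A}_2$ on $\Gr_X \times \Gr_X$ is ULA with respect to the projection $q\times q : \Gr_X \times \Gr_X \to X^2$. By construction of $\iota$ in \eqref{Sat G to X}, each $\mathcal{A}_i$ is of the form $\mathcal{B}_i \boxtimes L_X[1]$ for $\mathcal{B}_i \in \Sat_G$, hence is ULA with respect to $q$ by Proposition \ref{basic ULA}(i) applied fiberwise together with smooth base change. The external product ULA property then follows from \ref{Basic properties}(ix) and a direct check of the ULA condition \eqref{LA map} for products. Next, using the torsor diagram \eqref{convolution diagram} and smooth descent (Proposition \ref{basic ULA}(ii)), I transfer this to show that $\mathcal{A}_1 \widetilde{\boxtimes} \mathcal{A}_2$ on $\Gr_X \widetilde{\times} \Gr_X$ is ULA with respect to the natural projection to $X^2$; indeed the $(L^+G)_X$-torsor $\mathbb{E}\to \Gr_X\times X$ provides smooth covers intertwining the two products. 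Since $m$ is ind-proper, Proposition \ref{basic ULA}(iv) implies $\mathcal{A}_1\boxast\mathcal{A}_2 := m_+(\mathcal{A}_1\widetilde{\boxtimes}\mathcal{A}_2)$ is ULA with respect to $q^2:\Gr_{X^2}\to X^2$.

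For the identification with the intermediate extension in (i), note that $\Delta \subset X^2$ is a smooth effective divisor (since $X$ is a smooth curve) whose preimage under $q^2$ is precisely $\Delta(\Gr_{X^2})\simeq \Gr_X$ by \eqref{phi base change BD Grass}. By the factorization isomorphism \eqref{factorization iso} and the fact that $m$ is an isomorphism over $U$, we have a canonical identification
\begin{equation*}
\bigl(\mathcal{A}_1\boxast\mathcal{A}_2\bigr)\big|_{q^{2,-1}(U)} \;\simeq\; \bigl(\mathcal{A}_1\boxtimes\mathcal{A}_2\bigr)\big|_{q^{2,-1}(U)}.
\end{equation*}
Then Proposition \ref{ULA hol}(i), applied to the ULA object $\mathcal{A}_1\boxast\mathcal{A}_2$ with the divisor $\Delta\subset X^2$, yields $\mathcal{A}_1\boxast\mathcal{A}_2 \simeq j_{!+}(\mathcal{A}_1\boxtimes\mathcal{A}_2|_U)$. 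The $(L^+G)_{X^2}$-equivariance is built into the construction: the convolution morphism $m$ and all intermediate steps are equivariant for the natural $(L^+G)_{X^2}$-actions, and descent of equivariant structures under $m_+$ follows from Lemma \ref{lemma descent cat equivariant}.

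For part (ii), Proposition \ref{ULA hol}(i) applied to $\mathscr{M}=\mathcal{A}_1\boxast\mathcal{A}_2$, $f=q^2$ and $D=\Delta(X)$ furnishes the canonical isomorphism $\Delta^+[-1](\mathcal{A}_1\boxast\mathcal{A}_2) \xrightarrow{\sim} \Delta^![1](\mathcal{A}_1\boxast\mathcal{A}_2)$ (up to the Tate twist $(1)$ coming from the smooth divisor, which I would absorb into the normalization as in the statement). The ULA property of $\mathcal{A}_1\circledast\mathcal{A}_2$ with respect to $q:\Gr_X\to X$ is then exactly Proposition \ref{ULA hol}(ii). The main obstacle I anticipate is the careful handling of the ULA property under the torsor construction defining $\widetilde{\boxtimes}$; one must verify that the smooth-descent formalism for ULA modules is compatible with $(L^+G)_X$-equivariant smooth coverings in the ind-scheme setting, which requires extending Proposition \ref{basic ULA}(ii) to the limit of jet groups acting on the ind-scheme $\Gr_X\widetilde{\times}\Gr_X$. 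Once this is done, the remaining input from Propositions \ref{basic ULA} and \ref{ULA hol} is essentially formal.
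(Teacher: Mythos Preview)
Your proposal is correct and follows essentially the same route as the paper: establish ULA of $\mathcal{A}_1\boxtimes\mathcal{A}_2$ over $X^2$, transfer it to the twisted product via the smooth torsor diagram, push forward along the ind-proper $m$ using Proposition~\ref{basic ULA}(iv), and then invoke Proposition~\ref{ULA hol} for both the intermediate extension identification and part~(ii). One small correction: for the $(L^+G)_{X^2}$-equivariance of $m_+(\mathcal{A}_1\widetilde{\boxtimes}\mathcal{A}_2)$ the relevant input is proper base change (since $m$ is proper and $(L^+G)_{X^2}$-equivariant), not Lemma~\ref{lemma descent cat equivariant}, which addresses a different question.
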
 
\begin{proof}
	(i) The holonomic module $\mathcal{A}_1\boxtimes \mathcal{A}_2$ on $\Gr_{X}\times \Gr_X$ is the inverse image of a holonomic module on $\Gr\times \Gr$ and hence is ULA with respect to the projection $\Gr_X\times \Gr_X \to X^2$. 
	Recall that $\mathcal{A}_1\widetilde{\boxtimes} \mathcal{A}_2$ is constructed by descent along a quotient by a smooth group scheme over $X$ (\ref{equiv hol indscheme}, \ref{convolution diagram}). Hence it is ULA with respect to the projection to $X^2$ by proposition \ref{basic ULA}(iii). 
	Since $m$ is ind-proper, then $m_{+}(\mathcal{A}_{1}\widetilde{\boxtimes}\mathcal{A}_2)$ is ULA with respect to $q^2:\Gr_{X^2}\to X^2$.

	Since $m|_{U}$ is an isomorphism, under the isomorphism \eqref{factorization iso} we have
	\begin{displaymath}
		\mathcal{A}_1\widetilde{\boxtimes}\mathcal{A}_{2}|_{U}=\mathcal{A}_1\boxtimes\mathcal{A}_2|_{U},
	\end{displaymath}
	which is holonomic. 
	Then we deduce the isomorphism \eqref{fusion product iso} from proposition \ref{ULA hol}(i).
	The morphism $m$ is $(L^+G)_{X^2}$-equivariant. By proper base change, we deduce that $m_{+}(\mathcal{A}_{1}\widetilde{\boxtimes}\mathcal{A}_2)$ is $(L^+G)_{X^2}$-equivariant. 

	Assertion (ii) follows from proposition \ref{ULA hol}.
\end{proof}

\begin{coro} \label{monidal structure}
	Let $\mathcal{A}_1,\mathcal{A}_2$ be two objects of $\Sat_{G}$.

	\textnormal{(i)} There exists a canonical isomorphism on $\Gr_X$ \eqref{def convolution Sat}
	\begin{equation} \label{convolution and fusion}
		(\mathcal{A}_1\star \mathcal{A}_2)_{X} \simeq \mathcal{A}_{1,X}\circledast \mathcal{A}_{2,X}. 
	\end{equation}

	\textnormal{(ii)} The convolution product $\mathcal{A}_1\star \mathcal{A}_2$ is still holonomic and belongs to $\Sat_{G}$.

	\textnormal{(iii)} The category $\Sat_G$ (resp. $\Sat_{X}$) equipped with the bifunctor $\star$ (resp. $\circledast$) and the unit object $\IC_0$ (resp. $\IC_{0,X}$) forms a monoidal category. 
\end{coro}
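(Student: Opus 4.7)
The plan is to prove (i) by proper base change along the diagonal $\Delta:X\to X^{2}$, then to deduce (ii) directly from (i) combined with Proposition \ref{prop fusion prod}, and finally to build the monoidal structure of (iii) on $\Sat_{X}$ using the factorization structure of Beilinson-Drinfeld Grassmannians before transporting it to $\Sat_{G}$ via (i). For (i), I would apply $\Delta^{+}[-1]$ to the global convolution diagram \eqref{convolution diagram}. The isomorphism \eqref{phi base change BD Grass} identifies $\Gr_{X^{2}}\times_{X^{2}}X$ with $\Gr_{X}$; analogously, unwinding the torsor $\mathbb{E}$ in \ref{twisted prod BD Gr} identifies $(\Gr_{X}\widetilde{\times}\Gr_{X})\times_{X^{2}}X$ with $(\Gr\widetilde{\times}\Gr)\times X$, and the convolution morphism $m$ restricts on the diagonal to $m\times\id_{X}$ where the $m$ on the left is \eqref{convolution map}. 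Under these identifications the twisted external product $\mathcal{A}_{1,X}\widetilde{\boxtimes}\mathcal{A}_{2,X}$ restricts to $(\mathcal{A}_{1}\widetilde{\boxtimes}\mathcal{A}_{2})\boxtimes L_{X}[1]$. Since $m$ is ind-proper, proper base change and the definition of $\circledast$ yield
\[
\mathcal{A}_{1,X}\circledast\mathcal{A}_{2,X}=\Delta^{+}[-1](\mathcal{A}_{1,X}\boxast\mathcal{A}_{2,X})\simeq m_{+}(\mathcal{A}_{1}\widetilde{\boxtimes}\mathcal{A}_{2})\boxtimes L_{X}[1]=(\mathcal{A}_{1}\star\mathcal{A}_{2})_{X}.
\]

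For (ii), Proposition \ref{prop fusion prod}(ii) gives that $\mathcal{A}_{1,X}\circledast\mathcal{A}_{2,X}$ is holonomic, so (i) implies $(\mathcal{A}_{1}\star\mathcal{A}_{2})_{X}$ is holonomic; since the fully faithful functor $\iota$ of \eqref{Sat G to X} has the form $\mathcal{A}\mapsto \mathcal{A}\boxtimes L_{X}[1]$ and a $\mathcal{A}\in\rD(\Gr)$ is holonomic iff $\mathcal{A}\boxtimes L_{X}[1]$ is, we conclude that $\mathcal{A}_{1}\star\mathcal{A}_{2}$ is holonomic. The $(L^{+}G)$-equivariance is then inherited from the $(L^{+}G)_{X^{2}}$-equivariance of the fusion product established in Proposition \ref{prop fusion prod}(i), by restricting to the diagonal and then to a fiber. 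For (iii), I would construct the associativity and commutativity constraints directly on $\Sat_{X}$ from factorization: the three-fold BD Grassmannian $\Gr_{X^{3}}$ carries an analogous fusion product $\mathcal{A}_{1,X}\boxast\mathcal{A}_{2,X}\boxast\mathcal{A}_{3,X}$, ULA over $X^{3}$, and both iterated fusions $(\mathcal{A}_{1}\circledast\mathcal{A}_{2})\circledast\mathcal{A}_{3}$ and $\mathcal{A}_{1}\circledast(\mathcal{A}_{2}\circledast\mathcal{A}_{3})$ arise as $\Delta^{+}[-2]$ of this common object, via uniqueness of intermediate extension away from the full diagonal. The commutativity constraint comes from the $S_{2}$-equivariance of \eqref{factorization iso}: the involution $\sigma$ gives a canonical isomorphism $\sigma^{+}(\mathcal{A}_{1}\boxast\mathcal{A}_{2})\simeq \mathcal{A}_{2}\boxast\mathcal{A}_{1}$ on $\Gr_{X^{2}}$, and restriction to the $\sigma$-fixed diagonal produces the symmetry $\mathcal{A}_{1}\circledast\mathcal{A}_{2}\simeq \mathcal{A}_{2}\circledast\mathcal{A}_{1}$. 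The unit is $\IC_{0,X}$, and convolution with it is the identity by Berthelot-Kashiwara applied to the base point of $\Gr$; then (i) transports the full monoidal package back to $\Sat_{G}$.

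The main obstacle is the bookkeeping surrounding the intermediate extensions across diagonals of varying codimension in (iii): verifying that the resulting associator and braiding satisfy the pentagon and hexagon coherence axioms requires passing to $\Gr_{X^{4}}$ and using repeatedly that ULA $(L^{+}G)_{X^{n}}$-equivariant modules are determined by their restriction to the locus where the $n$ points are pairwise distinct, via Proposition \ref{ULA hol}(i). Everything else—proper base change, fully faithfulness of $\iota$, and descent of equivariance—is essentially formal given the foundational material developed in \S\ref{Dmods} and in Proposition \ref{prop fusion prod}.
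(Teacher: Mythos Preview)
Your arguments for (i) and (ii) are essentially what the paper does: identify $(\Gr_X\widetilde\times\Gr_X)\times_{X^2,\Delta}X$ with $(\Gr\widetilde\times\Gr)\times X$, apply proper base change for the ind-proper $m$, and read off holonomicity and equivariance from Proposition~\ref{prop fusion prod}. (The paper restricts to a $k$-point of $X$ rather than invoking exactness of $-\boxtimes L_X[1]$, but this is cosmetic.)

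For (iii) you diverge. The paper does \emph{not} build the associator via factorization on $\Gr_{X^3}$; it works directly on $\Sat_G$ using the local $n$-fold convolution $\mathcal{A}_1\star\cdots\star\mathcal{A}_n=m_+(\mathcal{A}_1\widetilde\boxtimes\cdots\widetilde\boxtimes\mathcal{A}_n)$ on the iterated twisted product $\Gr\widetilde\times\cdots\widetilde\times\Gr$. Both $(\mathcal{A}_1\star\mathcal{A}_2)\star\mathcal{A}_3$ and $\mathcal{A}_1\star(\mathcal{A}_2\star\mathcal{A}_3)$ are canonically identified with the $3$-fold convolution, and the pentagon is checked with the $4$-fold version---no ULA bookkeeping across diagonals is needed. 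Your route through $\Gr_{X^n}$ and intermediate extensions is valid and is indeed how one later manufactures the \emph{commutativity} constraint (see \S\ref{constraint}), but it is heavier machinery than the corollary requires. Note also that the statement only asks for a monoidal category: the braiding you sketch is not part of (iii) and is treated separately (and with a sign modification) in \ref{constraint}--\ref{modify constraint}.
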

\begin{proof}
	(i) There exists a canonical isomorphism
	\begin{displaymath}
		(\Gr\widetilde{\times}\Gr)\times X \simeq (\Gr_X\widetilde{\times}\Gr_{X})\times_{X^2,\Delta} X,
	\end{displaymath}
	compatible with projections to $\Gr_{X}$. 
	Via the above isomorphism, we have $(\mathcal{A}_1\widetilde{\boxtimes}\mathcal{A}_2)_X \simeq \Delta^{+}[-1](\mathcal{A}_{1,X}\widetilde{\boxtimes}\mathcal{A}_{2,X})$. 
	Then the isomorphism of \eqref{convolution and fusion} follows. 

	(ii) Taking a $k$-point $i_x:x\to X$ and applying the functor $i_x^+[-1]$ to \eqref{convolution and fusion}, we deduce that $\mathcal{A}_1\star \mathcal{A}_2$ is holonomic by propositions \ref{ULA hol} and \ref{prop fusion prod}. 

	(iii) It suffices to show the assertion for $\Sat_G$. 
	By identifying $(\mathcal{A}_1\star \mathcal{A}_2)\star \mathcal{A}_2$ and $\mathcal{A}_1\star (\mathcal{A}_2\star \mathcal{A}_3)$ with $\mathcal{A}_1\star \mathcal{A}_2 \star \mathcal{A}_3$, we obtain the associative constraint. 
	We can verify the pentagon axiom and the unit axiom by $n$-fold convolution product. 	
\end{proof}

\subsection{Hypercohomology functor}

\begin{secnumber}
	We define the hypercohomology functor $\rH^{*}$ by 
	\begin{eqnarray} \label{hypercoh Gr}
		\rH^{*}:\Sat_{G} \to \Vect_{L}, \qquad
		\mathcal{A} \mapsto  \bigoplus_{n\in \mathbb{Z}} \rH^{n}(\Gr,\mathcal{A}).
	\end{eqnarray}
	Since $\Sat_G$ is semisimple \eqref{Sat G semisimple}, $\rH^{*}$ is exact and faithful. 

	Let $\mathcal{A}$ be an object of $\Sat_G$ and $\pi:\Gr\to \Spec(k)$ the structure morphism. 
	By the Künneth formula (\cite{Abe18} 1.1.7), there exists a canonical isomorphism
	\begin{equation}
		q_+(\mathcal{A}_X)[-1]\simeq \pi_{+}(\mathcal{A})\boxtimes L_X.
	\label{constant module}
	\end{equation}
\end{secnumber} 

\begin{lemma} \label{Coh functor monoidal}
	Given two objects $\mathcal{A}_1,\mathcal{A}_2$ of $\Sat_{X}$, there exists a canonical isomorphism 
	\begin{equation}
		q_+(\mathcal{A}_1\circledast \mathcal{A}_2) [-1] \simeq (q_+(\mathcal{A}_1)[-1])\otimes (q_+(\mathcal{A}_2)[-1]). 
	\end{equation}
\end{lemma}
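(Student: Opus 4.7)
The plan is to reduce the identity on $X$ to an equivalent identity on $X^2$ via base change along the diagonal, and then to exploit the ULA property together with the factorization isomorphism.

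First I would apply proper base change to the Cartesian square
\[
\xymatrix{
\Gr_X \ar[r]^{\Delta'} \ar[d]_{q} & \Gr_{X^2} \ar[d]^{q^2} \\
X \ar[r]^{\Delta} & X^2,
}
\]
which is valid because $q^2$ is ind-proper (the affine Grassmannian being ind-projective). Combined with the defining identity $\mathcal{A}_1 \circledast \mathcal{A}_2 = \Delta'^{+}[-1](\mathcal{A}_1 \boxast \mathcal{A}_2)$ from Proposition \ref{prop fusion prod}(ii), this yields
\[
q_{+}(\mathcal{A}_1 \circledast \mathcal{A}_2) \simeq \Delta^{+}[-1]\,(q^2)_{+}(\mathcal{A}_1 \boxast \mathcal{A}_2).
\]
Thus the problem becomes identifying $(q^2)_{+}(\mathcal{A}_1 \boxast \mathcal{A}_2)$ on $X^2$.

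Next I would show that $(q^2)_{+}(\mathcal{A}_1 \boxast \mathcal{A}_2)$ is smooth on $X^2$. By Proposition \ref{prop fusion prod}(i), $\mathcal{A}_1 \boxast \mathcal{A}_2$ is ULA with respect to $q^2$; since $q^2$ is ind-proper, Proposition \ref{basic ULA}(iv) ensures its pushforward is ULA with respect to $\id_{X^2}$, and then Corollary \ref{ULA id local system} shows that this pushforward is smooth. On the other hand, by the Künneth formula (\cite{Abe18} 1.1.7) applied to the projections $\Gr_X \times \Gr_X \to X \times X$,
\[
(q\times q)_{+}(\mathcal{A}_1 \boxtimes \mathcal{A}_2) \simeq (q_{+}\mathcal{A}_1) \boxtimes (q_{+}\mathcal{A}_2),
\]
which is also smooth on $X^2$ in view of the constancy formula \eqref{constant module}.

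Restricting to the open complement $U$ of the diagonal, the factorization isomorphism \eqref{factorization iso} and the definition $\mathcal{A}_1 \boxast \mathcal{A}_2 = j_{!+}((\mathcal{A}_1 \boxtimes \mathcal{A}_2)|_U)$ from Proposition \ref{prop fusion prod}(i) give
\[
(q^2)_{+}(\mathcal{A}_1 \boxast \mathcal{A}_2)|_U \simeq \bigl((q_{+}\mathcal{A}_1) \boxtimes (q_{+}\mathcal{A}_2)\bigr)|_U.
\]
Since both sides of this proposed isomorphism are smooth on the (geometrically) irreducible smooth scheme $X^2$, they are each the intermediate extension of their common restriction to the dense open $U$ by Corollary \ref{lemma uniqueness intermediate extension}(ii), and hence are canonically isomorphic on all of $X^2$. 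Applying $\Delta^{+}[-1]$ and using $\Delta^{+}(\mathscr{F}\boxtimes\mathscr{G}) \simeq \mathscr{F}\otimes\mathscr{G}$, we conclude
\[
q_{+}(\mathcal{A}_1 \circledast \mathcal{A}_2) \simeq \bigl(q_{+}\mathcal{A}_1 \otimes q_{+}\mathcal{A}_2\bigr)[-1],
\]
which rearranges into the desired formula.

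The main obstacle is the clean bookkeeping at two points: verifying that proper base change is applicable through the ind-structure on $\Gr_{X^2}$ (which amounts to a reduction to the bounded pieces $\Gr_{X^2,\le\lambda}$, each projective over $X^2$), and pinning down the extension-to-$X^2$ step via the smooth/intermediate-extension characterization. Once these are granted, the rest is a formal chase of the six-functor formalism and Künneth.
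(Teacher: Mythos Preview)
Your approach is essentially the paper's: both reduce to establishing $q^2_+(\mathcal{A}_1\boxast\mathcal{A}_2)\simeq q_+(\mathcal{A}_1)\boxtimes q_+(\mathcal{A}_2)$ on $X^2$, verify this over $U$ via factorization and K\"unneth, and then extend across the diagonal using ULA. The only variation is in the extension step: the paper uses ULA with respect to the difference map $\tau:(x,y)\mapsto x-y$ together with Proposition~\ref{ULA hol} and the localization triangle $\Delta_+\Delta^!\to\id\to j_+j^+$, whereas you use ULA with respect to $\id_{X^2}$ and Corollary~\ref{ULA id local system}.

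On the obstacle you flag: Corollary~\ref{lemma uniqueness intermediate extension}(ii) applies to a single smooth holonomic module, not to a complex, so the phrase ``each the intermediate extension of their common restriction'' needs unpacking. The repair is short: by Corollary~\ref{ULA id local system} each cohomology module of both pushforwards is smooth on $X^2=\mathbb{A}^2$, and the argument of Lemma~\ref{IsoGrmu semisimple} (using $\pi_1^{\et}(\mathbb{A}^2_{\bar k})=1$) shows that $\Sm(X^2)$ is semisimple with only constant simple objects; hence both complexes split as direct sums of shifted constant modules and restriction to $U$ is fully faithful on such complexes, so the isomorphism over $U$ extends canonically to $X^2$.
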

\begin{proof}
	It suffices to construct a canonical isomorphism
	\begin{equation} \label{q_+ boxtimes}
		q_+^2(\mathcal{A}_1\boxast \mathcal{A}_2)\simeq q_+(\mathcal{A}_1)\boxtimes q_+(\mathcal{A}_2).
	\end{equation}
	By \eqref{fusion product iso} and the Künneth formula (\cite{Abe18} 1.1.7), such an isomorphism exists on $U=X^2-\Delta(X)$. 
	
	Let $\tau:X^2\to X$ be the morphism sending $(x,y)$ to $x-y$. 
	Both sides of \eqref{q_+ boxtimes} are ULA with respect to $\tau$ by propositions \ref{basic ULA} and \ref{prop fusion prod}. 
	By proposition \ref{ULA hol}, we deduce a canonical isomorphism on $X$
	\begin{displaymath}
		\Delta^!\bigl(q_+^2(\mathcal{A}_1\boxast \mathcal{A}_2)\bigr)\simeq \Delta^{!}\bigl(q_+(\mathcal{A}_1)\boxtimes q_+(\mathcal{A}_2)\bigr).
	\end{displaymath}
	Then the isomorphism \eqref{q_+ boxtimes} follows from the distinguished triangle $\Delta_+\Delta^!\to \id \to j_+j^+ \to $. 
\end{proof}

By \eqref{convolution and fusion}, \eqref{constant module} and lemma \ref{Coh functor monoidal}, we deduce that: 

\begin{coro} \label{faithful exact mono}
	The functor $\rH^{*}$ is monoidal. 
\end{coro}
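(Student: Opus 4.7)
The plan is to chain together the three cited ingredients---the comparison \eqref{convolution and fusion} between convolution on $\Gr$ and fusion on $\Gr_X$, the Künneth-type identity \eqref{constant module}, and lemma \ref{Coh functor monoidal}---to produce a functorial isomorphism $\pi_+(\mathcal{A}_1\star\mathcal{A}_2)\simeq \pi_+(\mathcal{A}_1)\otimes\pi_+(\mathcal{A}_2)$ in $\rD(\Spec k)$, from which the monoidality on cohomology is immediate.

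First, apply \eqref{constant module} to $\mathcal{A}_1\star\mathcal{A}_2\in\Sat_G$ to realize $\pi_+(\mathcal{A}_1\star\mathcal{A}_2)\boxtimes L_X$ as $q_+((\mathcal{A}_1\star\mathcal{A}_2)_X)[-1]$. Next, substitute the isomorphism \eqref{convolution and fusion} to rewrite this as $q_+(\mathcal{A}_{1,X}\circledast \mathcal{A}_{2,X})[-1]$. Then invoke lemma \ref{Coh functor monoidal} to split the latter as $(q_+(\mathcal{A}_{1,X})[-1])\otimes (q_+(\mathcal{A}_{2,X})[-1])$, and finally use \eqref{constant module} once more on each factor to get $(\pi_+(\mathcal{A}_1)\boxtimes L_X)\otimes (\pi_+(\mathcal{A}_2)\boxtimes L_X)$. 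Since $\pi_+(\mathcal{A}_i)\in \rD(\Spec k)$ consists of plain vector spaces, the external tensor product with $L_X$ commutes with $\otimes$, so this simplifies to $(\pi_+(\mathcal{A}_1)\otimes \pi_+(\mathcal{A}_2))\boxtimes L_X$. Comparing the two descriptions of $\pi_+(\mathcal{A}_1\star\mathcal{A}_2)\boxtimes L_X$ and using full faithfulness of $\iota$ \eqref{Sat G to X} (or equivalently restricting to any $k$-point of $X$) produces the desired isomorphism $\pi_+(\mathcal{A}_1\star\mathcal{A}_2)\simeq \pi_+(\mathcal{A}_1)\otimes\pi_+(\mathcal{A}_2)$, and taking the sum of cohomology sheaves yields a natural isomorphism $\rH^*(\mathcal{A}_1\star\mathcal{A}_2)\simeq \rH^*(\mathcal{A}_1)\otimes \rH^*(\mathcal{A}_2)$.

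The only genuinely delicate point is to check that this string of isomorphisms is compatible with the associativity and unit constraints of $\star$ on $\Sat_G$ and of $\otimes$ on $\Vect_L$ (so as to upgrade from a mere binary isomorphism to a monoidal functor structure). For associativity, I would check it directly on the triple convolution using the $3$-fold versions of the convolution/fusion comparison and of lemma \ref{Coh functor monoidal}, whose proof goes through verbatim for $n$-fold fusion on $\Gr_{X^n}$ (combining proposition \ref{prop fusion prod} with the obvious generalization of the Künneth argument). For the unit, one uses that $\IC_0$ is supported at the base point with $\pi_+(\IC_0)\simeq L$, so the monoidal unit axiom on the level of cohomology reduces to the tautology $L\otimes V\simeq V\simeq V\otimes L$. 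The symmetry on $\Sat_X$ coming from the $S_2$-equivariance of the factorization isomorphism \eqref{factorization iso} will not be needed here, but is recorded for later use in establishing commutativity of the fiber functor.
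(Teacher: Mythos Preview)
Your argument is correct and follows exactly the route the paper takes: the paper's proof consists of the single sentence ``By \eqref{convolution and fusion}, \eqref{constant module} and lemma \ref{Coh functor monoidal}, we deduce that'', and you have simply unpacked how these three ingredients chain together. Your additional remarks on associativity and the unit are reasonable sanity checks that the paper leaves implicit.
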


\begin{rem} \label{coh functor Frob}
	Let $\mathcal{A}_1,\mathcal{A}_2$ be two objects of $\Sat_G$, both equipped with a Frobenius structure with respect to the arithmetic tuple $\mathfrak{T}_{F}=\{k,R,K,L,s,\id_{L}\}$. 
	The proof of corollary \ref{faithful exact mono} applies to arithmetic $\mathscr{D}$-modules with Frobenius structures. 
	Then we deduce that the following isomorphism is compatible with Frobenius structure
	\begin{displaymath}
		\rH^{*}(\mathcal{A}_1\star \mathcal{A}_2)\simeq \rH^{*}(\mathcal{A}_1)\otimes \rH^{*}(\mathcal{A}_2).
	\end{displaymath}
\end{rem}

\begin{secnumber}\label{constraint}
	In the following, we will construct a commutativity constraint on $(\Sat_G,\star)$ which makes the functor $\rH^*$ into a tensor functor. 
	
	The permutation $\sigma:\{1,2\}\to \{1,2\}$ induces an involution $\Delta(\sigma):\Gr_{X^2}\to \Gr_{X^2}$ over the involution $\sigma:X^2\to X^2,~ (x,y)\mapsto (y,x)$ \eqref{vphi base change}. 
	Let $\mathcal{A}_1,\mathcal{A}_2$ be two objects of $\Sat_G$. 
	We deduce from \eqref{factorization iso} and \eqref{fusion product iso} a canonical isomorphism
	\begin{equation} \label{constraint BD Gr}
		\Delta(\sigma)^{+}(\mathcal{A}_{1,X}\boxast \mathcal{A}_{2,X})\xrightarrow{\sim} \mathcal{A}_{2,X}\boxast \mathcal{A}_{1,X}. 
	\end{equation}
	We denote by $c_{\gr}$ the isomorphism which fits into the following diagram
	\begin{displaymath}
		\xymatrix{
			\sigma^{+}(q_+^2(\mathcal{A}_1\boxast \mathcal{A}_2))\ar[r]^-{\sim} \ar[d]_{\wr}& q_+^2(\Delta(\sigma)^{+}(\mathcal{A}_1\boxast\mathcal{A}_2)) \ar[r]^-{\sim} & q_+^2(\mathcal{A}_2\boxast \mathcal{A}_1) \ar[d]^{\wr} \\
			\sigma^{+}(q_+(\mathcal{A}_1)\boxtimes q_+(\mathcal{A}_2))\ar[rr]^{c_{\gr}} && q_+(\mathcal{A}_2)\boxtimes q_+(\mathcal{A}_1)
		}
	\end{displaymath}

	The cohomology $\rH^n(i_{(x,x)}^+(c_{\gr}))$ of the fiber of $c_{\gr}$ at $(x,x)\in X^2$ is the composition
	\begin{displaymath}
		\bigoplus_{i+j=n}\rH^{i}(\Gr,\mathcal{A}_1)\otimes\rH^j(\Gr,\mathcal{A}_2)\simeq 
		\rH^{n}(\Gr\times \Gr,\mathcal{A}_1\boxtimes \mathcal{A}_2)\simeq 
		\rH^{n}(\Gr\times \Gr,\mathcal{A}_2\boxtimes \mathcal{A}_1) \simeq 
		\bigoplus_{i+j=n} \rH^{j}(\Gr,\mathcal{A}_2)\otimes\rH^i(\Gr,\mathcal{A}_1),
	\end{displaymath}
	where the first and third isomorphisms are given by the Künneth formula and the second one is induced by the symmetry of external tensor products. 
	It sends $s\otimes t$ to $(-1)^{ij} t\otimes s$ for $s\in \rH^i(\Gr,\mathcal{A}_1)$ and $t\in \rH^{j}(\Gr,\mathcal{A}_2)$.
	
	Taking the fiber of \eqref{constraint BD Gr} at $(x,x)$, we obtain a canonical isomorphism 
	\begin{equation} \label{constraint c'}
		c'_{\mathcal{A}_1,\mathcal{A}_2}:\mathcal{A}_1\ast \mathcal{A}_2 \simeq \mathcal{A}_2\ast \mathcal{A}_1,
	\end{equation}
	which fits into a commutative diagram
	\begin{equation} \label{commutative c'}
	\xymatrix{
		\rH^{*}(\mathcal{A}_1\ast \mathcal{A}_2) \ar[r]^{c'_{\mathcal{A}_1,\mathcal{A}_2}} \ar[d]^{\wr} & \rH^{*}(\mathcal{A}_2\ast \mathcal{A}_1) \ar[d]^{\wr} \\
		\rH^{*}(\mathcal{A}_1)\otimes \rH^{*}(\mathcal{A}_2) \ar[r]^{c_{\gr}} & \rH^{*}(\mathcal{A}_2)\otimes \rH^{*}(\mathcal{A}_1).
}
\end{equation}

We regard $\rH^{*}$ as a functor from $\Sat_G$ to the category $\Vect_{L}^{\gr}$ of $\mathbb{Z}$-graded vector spaces over $L$ by considering the $\mathbb{Z}$-grading on cohomology degree \eqref{hypercoh Gr}. 
The above diagram means $\rH^{*}$ is compatible with the constraint $c'_{\mathcal{A}_1,\mathcal{A}_2}$ on $\Sat_G$ and the supercommutativity constraint $c_{\gr}$ on $\Vect_L^{\gr}$.
In \ref{modify constraint}, we will modify the constraint $c'$ and make it compatible with the usual constraint on $\Vect_{L}$. 
\end{secnumber}

\subsection{Semi-infinite orbits}
In this subsection, we study the $p$-adic cohomology of objects of $\Sat_G$ on semi-infinite orbits of $\Gr_G$ following Mirkovi\'c and Vilonen \cite{MV}. 

\begin{secnumber} \label{notation parabolic ind}
	Let $B^{\op}$ be the opposite Borel subgroup. 
	The inclusion $B,B^{\op}\to G$ and projections $B,B^{\op}\to T$ induce morphisms
	\begin{equation}
		\Gr_{T} \xleftarrow{\pi} \Gr_{B}\xrightarrow{i} \Gr_{G}, \qquad \Gr_{T} \xleftarrow{\pi'} \Gr_{B^{\op}}\xrightarrow{i'} \Gr_{G}.
		\label{hyperbolic localisation Gr}
	\end{equation}

	Via $i$, each connected component of $(\Gr_B)_{\red}$ is locally closed in $\Gr_{G}$. 
	To simplify the notation, we will omit the subscript $_{\red}$ in the following. 
	The affine Grassmannian $\Gr_T$ is discrete, whose $k$-points are given by $L_{\lambda}=t^{\lambda}T(k\llbracket t\rrbracket)/T(k\llbracket t\rrbracket)\in \Gr_{T}(k), \ \lambda\in \cwX(T)$. 
	For $\lambda\in \cwX(T)$, we define ind-subschemes $S_{\lambda}$ and $T_{\lambda}$ of $\Gr_{G}$ to be
	\begin{equation}
		S_{\lambda}=i(\pi^{-1}(L_{\lambda})),\qquad 
		T_{\lambda}=i'(\pi'^{-1}(L_{\lambda})).
	\end{equation}
	For $i\in \mathbb{Z}$, we set cohomology functors $\rH^{i}_{\rc}(S_{\lambda},-)$ and $\rH^{i}_{T_{\lambda}}(\Gr_{G},-)$ to be
	\begin{displaymath}
		\rH_{\rc}^{i}(S_{\lambda},-)=\rH^{i}( (\pi_{!}i^{+}(-))_{\lambda}),\quad 
		\rH_{T_{\lambda}}^{i}(\Gr_{G},-)=\rH^{i}( \pi'_{+}i'^{!}(-))_{\lambda}).
	\end{displaymath}
\end{secnumber}

\begin{prop}[\cite{MV} 3.1, 3.2, \cite{Zhu16} 5.3.6] \label{geo Slammda}
	\textnormal{(i)} The union $S_{\le \lambda}=\cup_{\lambda'\le \lambda}S_{\lambda'}$ is closed in $\Gr_G$ and $S_{\lambda}$ is open and dense in $S_{\le \lambda}$.

	\textnormal{(ii)} For $\mu\in\cwX(T)^{+}$, the intersection $\Gr_{G,\mu}\cap S_{\lambda}$ (resp. $\Gr_{G,\mu}\cap T_{\lambda}$) is non-empty if and only if $L_{\lambda}\in \Gr_{G,\le \mu}$ (equivalently there exists $w\in W$ such that $w\lambda\le \mu$). 
	In the non-empty case, $\Gr_{G,\mu}\cap S_{\lambda}$ (resp. $\Gr_{G,\mu}\cap T_{\lambda}$) has pure dimension $\rho(\lambda+\mu)$.
\end{prop}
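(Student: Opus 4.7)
\medskip

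The statement is purely geometric (it does not involve any arithmetic $\mathscr{D}$-module theory), so the plan is to reproduce the classical argument of Mirkovi\'c--Vilonen, as exposed in Zhu's lecture notes. First I would identify $S_\lambda$ concretely as the orbit $L U\cdot L_\lambda \subset \Gr_G$, where $U$ is the unipotent radical of $B$ and $L U$ denotes its loop group. From the Iwasawa decomposition for $G(k((t)))$ one sees that $S_\lambda \simeq L U / \operatorname{Stab}_{L U}(L_\lambda)$, and analogously $T_\lambda \simeq L U^{\op} / \operatorname{Stab}_{L U^{\op}}(L_\lambda)$.

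For part (i), the key is to consider the $\Gm$-action on $\Gr_G$ induced by a regular dominant cocharacter $2\rho^{\vee}: \Gm \to T$ (this is precisely the setup of subsection~\ref{Braden thm sec}). The fixed point scheme $\Gr_G^{\Gm}$ is discrete with $k$-points $\{L_\lambda\}_{\lambda \in \cwX(T)}$, and the associated attractor (resp.\ repeller) in the sense of \cite{DG} at the component $\{L_\lambda\}$ is exactly $S_\lambda$ (resp.\ $T_\lambda$). The closure relations $\overline{S_\lambda} = \bigsqcup_{\lambda'\le \lambda} S_{\lambda'}$ then follow from standard Bia\l ynicki-Birula theory, together with the observation that the limit point $\lim_{s\to 0} 2\rho^{\vee}(s)\cdot u L_\lambda$, for $u \in LU$, determines an element $L_{\lambda'}$ with $\lambda'\le \lambda$, and conversely any such $L_{\lambda'}$ arises in this way using root subgroup computations. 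This gives both the closedness of $S_{\le\lambda}$ and the density of the top stratum $S_\lambda$ inside it.

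For part (ii), the non-emptiness criterion $\Gr_{G,\mu}\cap S_\lambda \neq \emptyset \iff \exists\, w\in W,\ w\lambda\le \mu$ follows from the Cartan decomposition $\Gr_G = \bigsqcup_{\nu \in \cwX(T)^+} \Gr_{G,\nu}$, together with the fact that $L_\lambda \in \Gr_{G,\nu}$ for $\nu$ the unique dominant $W$-translate of $\lambda$, and that $\Gr_{G,\nu} \subset \Gr_{G,\le\mu}$ iff $\nu \le \mu$. The central assertion, the dimension formula $\dim(\Gr_{G,\mu}\cap S_\lambda) = \rho(\lambda+\mu)$, I would prove following Mirkovi\'c--Vilonen: the upper bound is obtained by Bia\l ynicki-Birula filtering $\Gr_{G,\mu}$ by attractors (equivalently, using that $S_\lambda$ meets each affine Schubert cell in a locally closed subset of the expected codimension, computed via loop group cell calculations), while the lower bound is obtained by exhibiting explicit Mirkovi\'c--Vilonen cycles of the required dimension, which can be constructed by iterated projection from convolution varieties of minuscule type using the factorization structure already recalled in \ref{vphi base change}--\ref{twisted prod BD Gr}. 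The assertion for $T_\lambda$ follows by applying the same argument with $B$ replaced by $B^{\op}$, or equivalently by using the repeller instead of the attractor for the $\Gm$-action.

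The main obstacle is the tight dimension count: the upper bound requires a careful combinatorial argument using the adjoint action of $T$ on the affine root system and how it interacts with the $LU$-orbit decomposition, while the lower bound demands an explicit geometric construction of cycles of the correct dimension. Once these are in place, pure dimensionality follows because each irreducible component of $\Gr_{G,\mu}\cap S_\lambda$ attains the common dimension $\rho(\lambda+\mu)$.
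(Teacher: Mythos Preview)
The paper does not give a proof of this proposition; it is stated with attribution to \cite{MV} 3.1, 3.2 and \cite{Zhu16} 5.3.6 and used as a black box, since the statement is purely about the geometry of the affine Grassmannian and independent of the $p$-adic coefficient theory developed in the paper. Your outline is precisely the Mirkovi\'c--Vilonen argument recorded in those references, so there is nothing to compare: you are supplying the details that the paper omits by citation.

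One minor notational adjustment: in the paper's conventions $\rho$ denotes the half sum of positive \emph{roots} of $G$ (an element of $\wX(T)$), so the regular dominant cocharacter giving the $\Gm$-action should be written $2\check{\rho}$ (the sum of positive coroots), as the paper itself does later in the proof of proposition~\ref{tensor CT}. Otherwise your sketch is accurate, and the identification of $S_\lambda$, $T_\lambda$ with the attractor and repeller loci for this action is exactly what makes Braden's theorem~\ref{Braden thm} applicable in the next proposition.
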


\begin{prop} \label{decomposition CT}
	\textnormal{(i)} For any object $\mathcal{A}$ of $\Sat_G$, there exists a functorial isomorphism 
	\begin{equation}
		\rH_{\rc}^{i}(S_{\lambda},\mathcal{A})\simeq \rH_{T_{\lambda}}^{i}(\Gr_{G},\mathcal{A}). \label{Braden thm SatG}
	\end{equation}
	Both sides vanish if $i\neq 2\rho(\lambda)$. 

	\textnormal{(ii)} For $\mu\in \cwX(T)^+$, the dimension of $\rH_{\rc}^{2\rho(\lambda)}(S_{\lambda},\IC_{\mu})$ is equal to the number of geometrically irreducible components of $S_{\lambda}\cap \Gr_{G,\mu}$.
	If we work with the arithmetic base $\mathfrak{T}_F=\{k,R,K,L,s,\id_L\}$, the Frobenius acts on $\rH_{\rc}^{2\rho(\lambda)}(S_{\lambda},\IC_{\mu})$ by multiplication by $q^{\rho(\lambda+\mu)}$. 

	\textnormal{(iii)} For any integer $i$, there exists a functorial isomorphism
	\begin{equation}
		\rH^{i}(\Gr_G,\mathcal{A})\simeq \bigoplus_{\lambda\in \cwX(T)} \rH^{i}_{\rc}(S_{\lambda},\mathcal{A}).
		\label{decomposition H}
	\end{equation}
\end{prop}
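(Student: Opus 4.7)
My plan is to prove (i) by applying Braden's theorem (Theorem \ref{Braden thm}) to a regular dominant cocharacter $\eta:\Gm\to T$, which lifts to a cocharacter into $L^+G$ and acts on each $\Gr_{\le\mu}$ with discrete fixed locus $\{L_{\lambda}\}_{\lambda}$, attractor $S_{\lambda}\cap \Gr_{\le\mu}$, and repeller $T_{\lambda}\cap \Gr_{\le\mu}$. Since any $\mathcal{A}\in \Sat_G$ is $L^+G$-equivariant, it is \emph{a fortiori} weakly $\Gm$-equivariant for this action. Theorem \ref{Braden thm}(i)--(ii) then yields a canonical isomorphism $\pi_{!}i^{+}\mathcal{A}\xrightarrow{\sim}\pi'_{+}i'^{!}\mathcal{A}$ of complexes on $\Gr_T$; taking the $L$-cohomology of the $L_{\lambda}$-component produces the isomorphism \eqref{Braden thm SatG}.

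For the vanishing in (i), by semisimplicity of $\Sat_G$ (Proposition \ref{Sat G semisimple}) it is enough to treat $\mathcal{A}=\IC_{\mu}$. The restriction of $\IC_{\mu}$ to each orbit $\Gr_{\mu'}$ with $\mu'\le\mu$ has constructible cohomology concentrated in degrees $\le -2\rho(\mu)$ (the standard perversity bound built into the intermediate extension, cf.\ Corollary \ref{lemma uniqueness intermediate extension}). Combined with the dimension estimate $\dim(S_{\lambda}\cap \Gr_{\le\mu})=\rho(\lambda+\mu)$ of Proposition \ref{geo Slammda}(ii) and the cohomological amplitude of compact-support cohomology recalled in \ref{cohomology arith mods}(i), one obtains $\rH_{\rc}^{i}(S_{\lambda},\IC_{\mu})=0$ for $i>2\rho(\lambda)$. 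Applying Verdier duality, and using that $\IC_{\mu}$ is self-dual up to a Tate twist, the same argument (with $S_{\lambda}$ replaced by $T_{\lambda}$) yields $\rH^{i}_{T_{\lambda}}(\Gr_G,\IC_{\mu})=0$ for $i<2\rho(\lambda)$. The isomorphism from (i) then forces both to vanish outside degree $2\rho(\lambda)$.

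For (ii), the distinguished triangle for the open-closed decomposition $S_{\lambda}\cap \Gr_{\mu}\hookrightarrow S_{\lambda}\cap \Gr_{\le\mu}\hookleftarrow S_{\lambda}\cap \Gr_{<\mu}$, combined with the strict dimension drop $\dim(S_{\lambda}\cap \Gr_{<\mu})<\rho(\lambda+\mu)$ and the vanishing already established in (i), identifies $\rH^{2\rho(\lambda)}_{\rc}(S_{\lambda},\IC_{\mu})$ with $\rH^{2\rho(\lambda+\mu)}_{\rc}(S_{\lambda}\cap\Gr_{\mu},L)$ (using $\IC_{\mu}|_{\Gr_{\mu}}=L_{\Gr_{\mu}}[2\rho(\mu)]$). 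Corollary \ref{top coh dim} then identifies the latter with the $L$-vector space freely generated by the geometrically irreducible components of $S_{\lambda}\cap \Gr_{\mu}$, with Frobenius acting by multiplication by $q^{\rho(\lambda+\mu)}$.

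For (iii), I would use the finite filtration of $\Gr_{\le\mu}$ by the closed strata $S_{\le\lambda}\cap \Gr_{\le\mu}$ (Proposition \ref{geo Slammda}(i), noting that only the finitely many $\lambda$ with $w\lambda\le \mu$ for some $w\in W$ contribute). The associated spectral sequence \eqref{ss stratification} converges to $\rH^{*}(\Gr_{\le\mu},\mathcal{A})$, and by part (i) its $\rE_{1}$-page is concentrated in a single cohomological degree per stratum, so it degenerates and the filtration splits canonically. Passing to the colimit over $\mu\in \cwX(T)^{+}$ yields the decomposition \eqref{decomposition H}. The main delicate point is the tight control of amplitudes in (i): it is precisely the coincidence that the upper bound from the compact-support dimension estimate and the lower bound from Verdier duality agree on $2\rho(\lambda)$ via Braden's symmetry that makes the whole argument work and forces the spectral sequence in (iii) to collapse.
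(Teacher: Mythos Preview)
Your approach is exactly the one the paper intends (it simply cites \cite{MV} 3.5--3.6 together with Braden's theorem \ref{Braden thm}), and the overall structure is correct. Two points need tightening.

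First, in (i) the perversity bound must be read \emph{stratum by stratum}: on $\Gr_{\mu'}$ with $\mu'\le\mu$ the constructible cohomology of $\IC_{\mu}$ lies in degrees $\le -2\rho(\mu')$ (strictly $\le -2\rho(\mu')-1$ for $\mu'<\mu$), not in degrees $\le -2\rho(\mu)$ as you wrote; combined with $\dim(S_{\lambda}\cap\Gr_{\mu'})=\rho(\lambda+\mu')$ from Proposition \ref{geo Slammda}(ii) this gives the bound $\le 2\rho(\lambda)$ on each stratum, and hence on $S_{\lambda}\cap\Gr_{\le\mu}$ via \eqref{ss stratification}. (The strict inequality on the boundary is also what makes your argument for (ii) go through: the contribution of $S_{\lambda}\cap\Gr_{<\mu}$ vanishes in degree $2\rho(\lambda)$ because of the strict perversity bound, not because of the dimension drop alone.)

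Second, in (iii) degeneration of the spectral sequence does not by itself split the filtration canonically, since several $\lambda\in\cwX(T)$ can share the same value $2\rho(\lambda)$. The canonical splitting is obtained by pairing the surjection $\rH^{i}(\Gr_G,\mathcal{A})\twoheadrightarrow \rH^{i}_{\rc}(S_{\lambda},\mathcal{A})$ coming from the $S_{\le\lambda}$-filtration with the injection $\rH^{i}_{T_{\lambda}}(\Gr_G,\mathcal{A})\hookrightarrow \rH^{i}(\Gr_G,\mathcal{A})$ coming from the dual $T_{\lambda}$-side, and then invoking \eqref{Braden thm SatG}; this is the step you allude to but do not carry out, and it is exactly how \cite{MV} proceeds (the paper makes the same construction explicit when globalising the argument in the proof of Proposition \ref{tensor CT}).
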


The proposition can be proved in the same way as in (\cite{MV} 3.5, 3.6) by Braden's theorem \eqref{Braden thm}. 

\begin{prop} \label{tensor CT}
	Given two objects $\mathcal{A}_1,\mathcal{A}_2$ of $\Sat_{G}$, there exists a canonical isomorphism
	\begin{equation}
		\rH^{2\rho(\lambda)}_{\rc}(S_{\lambda},\mathcal{A}_1\ast \mathcal{A}_2)\simeq \bigoplus_{\lambda_1+\lambda_2=\lambda} \rH^{2\rho(\lambda_1)}_{\rc}(S_{\lambda_1},\mathcal{A}_1)\otimes \rH^{2\rho(\lambda_2)}_{\rc}(S_{\lambda_2},\mathcal{A}_2).
	\end{equation}
\end{prop}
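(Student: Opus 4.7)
The plan is to adapt the argument of Mirkovi\'c--Vilonen (\cite{MV} proof of Thm.~3.5) to the arithmetic $\mathscr{D}$-module setting, using the six functor formalism reviewed in \ref{Basic properties} and the vanishing result of \ref{decomposition CT}(i). The key geometric input is a stratification of $m^{-1}(S_\lambda)$ inside the convolution Grassmannian:
\begin{equation*}
m^{-1}(S_\lambda) = \bigsqcup_{\mu_1+\mu_2=\lambda} S_{\mu_1}\widetilde{\times} S_{\mu_2},
\end{equation*}
where the twisted product on the right is defined via the $L^+G$-torsor $LG\to \Gr$ restricted to the $LN$-orbits. This can be checked using that $t^{-\mu_1}(LN)t^{\mu_1}=LN$ (so that the product of semi-infinite orbits inside the twisted product collapses under $m$), or equivalently by noting that $m$ is equivariant for the diagonal $\Gm$-action via a regular dominant cocharacter and carries the fixed points $(L_{\mu_1},L_{\mu_2})$ to $L_{\mu_1+\mu_2}$.

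Second, since $m$ is ind-proper, proper base change (\ref{Basic properties}(iii),(iv)) gives
\begin{equation*}
\rH^{*}_{\rc}(S_\lambda,\mathcal{A}_1\star \mathcal{A}_2)
\simeq \bigoplus_{\mu_1+\mu_2=\lambda}\rH^{*}_{\rc}\bigl(S_{\mu_1}\widetilde{\times} S_{\mu_2},\,\mathcal{A}_1\widetilde{\boxtimes}\mathcal{A}_2\bigr).
\end{equation*}
Third, I would establish a K\"unneth-type formula
\begin{equation*}
\rH^{*}_{\rc}\bigl(S_{\mu_1}\widetilde{\times} S_{\mu_2},\,\mathcal{A}_1\widetilde{\boxtimes}\mathcal{A}_2\bigr)\simeq \rH^{*}_{\rc}(S_{\mu_1},\mathcal{A}_1)\otimes \rH^{*}_{\rc}(S_{\mu_2},\mathcal{A}_2).
\end{equation*}
After truncating to a finite-dimensional situation where the $L^+G$-action on the relevant piece factors through a smooth affine quotient $G_i$, the twisted product is, smooth-locally, the usual product, and $\mathcal{A}_1\widetilde{\boxtimes}\mathcal{A}_2$ is the usual external product pulled back from the quotient. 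One then applies the K\"unneth formula (\cite{Abe18} 1.1.7) and smooth descent (\cite{Abe18} 2.1.13) together with the computation of the constant module on $G_i$ (as in the proof of \ref{Coh functor monoidal}, where $(L^+G)$-equivariance ensures that the monodromy is trivial).

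Finally, by \ref{decomposition CT}(i) each factor $\rH^{*}_{\rc}(S_{\mu_i},\mathcal{A}_i)$ is concentrated in degree $2\rho(\mu_i)$, and since $2\rho(\mu_1)+2\rho(\mu_2)=2\rho(\lambda)$ whenever $\mu_1+\mu_2=\lambda$, the total-degree constraint is automatic, giving the stated isomorphism in degree $2\rho(\lambda)$. The main obstacle I expect is the rigorous verification of the K\"unneth step in the ind-scheme setting: one must check that the finite-dimensional approximations of the semi-infinite orbits $S_{\mu_i}$ inside $\Gr_{\le\nu}$ are compatible with the torsor structure defining $\widetilde{\times}$, and that the compactly-supported cohomology commutes with the colimit over these approximations. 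The geometric stratification of $m^{-1}(S_\lambda)$ also requires an argument beyond set-theory (equality as ind-subschemes), which is most cleanly handled by applying Braden's theorem \ref{Braden thm} to $\mathcal{A}_1\star\mathcal{A}_2$ via the $\Gm$-action on $\Gr\widetilde{\times}\Gr$ and identifying the attractor of the fixed locus with the disjoint union above.
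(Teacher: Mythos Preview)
Your approach is the classical Mirkovi\'c--Vilonen argument via the stratification of $m^{-1}(S_\lambda)$ in the convolution Grassmannian, and it is essentially correct. However, the paper takes a genuinely different route: it works on the Beilinson--Drinfeld Grassmannian $\Gr_{G,X^2}$ over $X^2=\mathbb{A}^2$, extends the semi-infinite orbits to families $S_\lambda(X^2)$, and defines a constructible module $\mathcal{L}^{2\rho(\lambda)}_\lambda(\mathcal{A}_1,\mathcal{A}_2)$ on $X^2$ whose fiber at a diagonal point is $\rH^{2\rho(\lambda)}_{\rc}(S_\lambda,\mathcal{A}_1\star\mathcal{A}_2)$ and whose fiber at an off-diagonal point is $\bigoplus_{\lambda_1+\lambda_2=\lambda}\rH^{2\rho(\lambda_1)}_{\rc}(S_{\lambda_1},\mathcal{A}_1)\otimes\rH^{2\rho(\lambda_2)}_{\rc}(S_{\lambda_2},\mathcal{A}_2)$ via the ordinary K\"unneth formula on $\Gr\times\Gr$. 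Braden's theorem \ref{Braden thm} is used to split $\mathcal{L}^i_\lambda$ off from $\cH^i(q^2_+(\mathcal{A}_{1,X}\boxast\mathcal{A}_{2,X}))$, which is constant by \eqref{constant module} and \eqref{q_+ boxtimes}; hence $\mathcal{L}^i_\lambda$ is itself constant and the two fibers coincide.

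The trade-off is this: your approach is more direct but requires a K\"unneth formula for the \emph{twisted} product $S_{\mu_1}\widetilde{\times}S_{\mu_2}$ in the arithmetic $\mathscr{D}$-module setting, which (as you note) needs care with the ind-scheme approximations and has not been set up in the paper. The paper's approach only needs the ordinary K\"unneth formula on a genuine product (over the off-diagonal locus, where the factorization isomorphism \eqref{factorization iso} trivializes everything), at the cost of invoking the ULA/fusion machinery of \S3.2--3.3; since that machinery has already been built for the monoidal structure and commutativity constraint, reusing it here avoids new foundational work.
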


\begin{secnumber}
	To prove the above proposition, we need to extend semi-infinite orbits $S_{\lambda},T_{\lambda}$ to Beilinson-Drinfeld Grassmannians $\Gr_{G,X^n}$. 
	For simplicity, we only consider the case where $X=\mathbb{A}^1$ and $n=1,2$. 

	When $n=1$, we have ind-representation $\Gr_{G,X}\simeq \varinjlim \Gr_{G,\le\mu,X}$, where $\Gr_{G,\le \mu,X}\simeq \Gr_{G,\le \mu}\times X$ is normal for $\mu\in \cwX(T)^+$. 
	When $n=2$, for $\mu,\nu\in \cwX(T)^+$, we denote by $\Gr_{G,\le (\mu,\nu),X^2}$ the closure of $\Gr_{G,\le \mu,X}\times \Gr_{G,\le \nu,X}|_U$ in $\Gr_{G,X^2}$. These closed subschemes form an ind-representation of $\Gr_{X^2}$. 
	The scheme $\Gr_{G,\le (\mu,\nu),X^2}$ is flat over $X^2$ and satisfies $\Gr_{G,\le (\mu,\nu),X^2}\times_{X^2,\Delta}X\simeq \Gr_{G,\le \mu+\nu, X}$ (\cite{Zhu09} 1.2, \cite{Zhu16} 3.1.14). 
	The composition $\Gr_{G,\le (\mu,\nu),X^2}\to X^2\to X$ with $X^2\to X,~(x,y)\mapsto x-y$, is flat with reduced fiber at $0$ and is normal on $X-\{0\}$. Then we deduce that $\Gr_{G,\le (\mu,\nu),X^2}$ is normal (cf. \cite{PZ} 9.2). 
	
	We consider the action of $\Gm$ on $\Gr_{G,X^n}$ induced by $2\check{\rho}$, which is compatible with the action of $\Gm$ on $\Gr_{G}$ on each fiber of $x\in |X^n|$ (\ref{geo Slammda}(i)). 
	Then $\Gr_{T,X^n}$ is the ind-subscheme of fixed points. 
	For $\lambda\in \cwX(T)$, we set $C_{\lambda}(X^2)=\Gr_{T,\le \lambda,X^2}-\Gr_{T,<\lambda,X^2}$. Its fiber at $x=(x,x)\in \Delta(X)\subset X^2$ is isomorphic to $\{L_{\lambda}\}$ and its fiber at $x=(x,y)\in X^2-\Delta(X)$ is isomorphic to $\prod_{\lambda_1+\lambda_2=\lambda}\{L_{\lambda_1}\}\times \{L_{\lambda_2}\}$. 
	Connected components of $\Gr_{T,X^2}$ are parametrized by $\{C_{\lambda}(X^2)\}_{\lambda\in \cwX(T)}$. 

	We denote by $S_{\lambda}(X^n)$ (resp. $T_{\lambda}(X^n)$) the connected component of $\Gr_{G,X^n}^+$ (resp. $\Gr_{G,X^n}^{-}$) corresponding to $C_{\lambda}(X^2)$. (See \ref{basic notation Braden} for the notation). 
	The fiber of $S_{\lambda}(X^2)$ (resp. $T_{\lambda}(X^2)$) at $x=(x,x)\in \Delta(X)\subset X^2$ is isomorphic to $S_{\lambda}$ (resp. $T_{\lambda}$) and its fiber at $x=(x,y)\in X^2-\Delta(X)$ is isomorphic to $\prod_{\lambda_1+\lambda_2=\lambda}S_{\lambda_1}\times S_{\lambda_2}$ (resp. $\prod_{\lambda_1+\lambda_2=\lambda}T_{\lambda_1}\times T_{\lambda_2}$).
\end{secnumber}

\begin{secnumber}
	\textit{Proof of proposition \ref{tensor CT}}. 
	Let $\mathcal{A}_1,\mathcal{A}_2$ be two objects of $\Sat_{G}$, $\mathcal{A}_{1,X},\mathcal{A}_{2,X}$ their extensions to $\Gr_{G,X}$ \eqref{Sat G to X} and $\mathcal{B}=\mathcal{A}_{1,X}\boxast \mathcal{A}_{2,X}$. 
	Consider the following diagram of ind-schemes:
	\begin{equation} \label{stratification S lambda curve}
		\xymatrix{
			S_{\lambda}(X^2)\ar[r]^{j} \ar@/_1pc/[rr]_{i_{\lambda}} & S_{\le \lambda}(X^2) \ar[r]^{\overline{i_{\lambda}}} & \Gr_{G,X^2} \ar[r]^{q^2}& X^2
		}
	\end{equation}
	
	For $i\in \mathbb{Z}$, we define the constructible module $\mathcal{L}^i_{\lambda}(\mathcal{A}_1,\mathcal{A}_2)$ on $X^2$ to be 
	\begin{equation}
		\mathcal{L}^i_{\lambda}(\mathcal{A}_1,\mathcal{A}_2)=
		\cH^i (q^2_{+}(i_{\lambda,!}(i_{\lambda}^{+}\mathcal{B}))) \simeq 
		\cH^i (q^2_{+}(i_{\lambda,+}'(i'^{!}_{\lambda}\mathcal{B}))),
	\end{equation}
	where the second isomorphism follows from Braden's theorem \eqref{Braden thm}. 
	By the base change, the stalk of $\mathcal{L}^i_{\lambda}(\mathcal{A}_1,\mathcal{A}_2)$ at a $k$-point $(x_1,x_2)$ of $X^2$ is isomorphic to
	\begin{equation} \label{Calcul fibers Lilambda}
		\mathcal{L}^i_{\lambda}(\mathcal{A}_1,\mathcal{A}_2)_{(x_1,x_2)}\simeq \left\{
	\begin{array}{ll}
		\rH_{\rc}^{i}(S_{\lambda},\mathcal{A}_1\star \mathcal{A}_2) & \text{if } x_1=x_2,\\
		\bigoplus_{\lambda_1+\lambda_2=\lambda} \rH_{\rc}^{i}(S_{\lambda_1}\times S_{\lambda_2}, \mathcal{A}_1\boxtimes \mathcal{A}_2) & \text{if } x_1\neq x_2.
\end{array} \right.
	\end{equation}
	By \ref{decomposition CT}, $\mathcal{L}^i_{\lambda}(\mathcal{A}_1,\mathcal{A}_2)$ vanishes unless $i=2\rho(\lambda)$ and we deduce from the Künneth formula that
	\begin{equation} \label{Kunneth formula}
		\rH_{\rc}^{2\rho(\lambda)}(S_{\lambda_1}\times S_{\lambda_2}, \mathcal{A}_1\boxtimes \mathcal{A}_2) \simeq  
		\rH_{\rc}^{2\rho(\lambda_1)}(S_{\lambda_1}, \mathcal{A}_1)\otimes \rH_{\rc}^{2\rho(\lambda_2)}(S_{\lambda_1}, \mathcal{A}_2).
	\end{equation}
	
	The adjunction morphisms $\id\to \overline{i}_{\lambda,+}\overline{i}_{\lambda}^{+}$ and $j_{!}j^{+}\to \id$ \eqref{stratification S lambda curve} induce canonical morphisms
	\begin{equation} \label{projection coh}
		\cH^{2\rho(\lambda)}(q_+^2(\mathcal{A}_{1,X}\boxast \mathcal{A}_{2,X})) \twoheadrightarrow \cH^{2\rho(\lambda)}( (q^2\circ \overline{i}_{\lambda})_+ \overline{i}_{\lambda}^+ \mathcal{B}) \xleftarrow{\sim} 
		\mathcal{L}^{2\rho(\lambda)}(\mathcal{A}_1,\mathcal{A}_2),
	\end{equation}
	where the first arrow is an epimorphism and the second arrow is an isomorphism in view of the calculation of their fibers \eqref{decomposition CT}. 
	
	By Braden's theorem and a dual argument for $T_{\lambda}(X^2)$, we obtain a section of \eqref{projection coh}:
	\begin{displaymath}
		\mathcal{L}^{2\rho(\lambda)}(\mathcal{A}_1,\mathcal{A}_2) \to 
		\cH^{2\rho(\lambda)}(q_+^2(\mathcal{A}_{1,X}\boxast \mathcal{A}_{2,X})).
	\end{displaymath}
	
	In view of proposition \ref{decomposition CT}, we deduce a decomposition 
	\begin{equation} \label{decomposition CT X2}
		\cH^{i}(q_+^2(\mathcal{A}_{1,X}\boxast \mathcal{A}_{2,X})\simeq 
		\bigoplus_{2\rho(\lambda)=i} \mathcal{L}_{\lambda}^{i}(\mathcal{A}_1,\mathcal{A}_2).
	\end{equation}
	The left hand side is a constant module with value $\rH^{i}(\Gr,\mathcal{A}_1\star \mathcal{A}_2)$ by \eqref{constant module}, \eqref{q_+ boxtimes}.  
	Then each summand $\mathcal{L}_{\lambda}^{i}(\mathcal{A}_1,\mathcal{A}_2)$ is also a constant module. 
	Hence fibers of $\mathcal{L}_{\lambda}^{i}(\mathcal{A}_1,\mathcal{A}_2)$ \eqref{Calcul fibers Lilambda}, \eqref{Kunneth formula} are isomorphic. 
	The proposition follows. \hfill$\qed$
\end{secnumber}

\begin{secnumber} \label{modify constraint}
	We modify the constraints $c'_{\mathcal{A}_1,\mathcal{A}_2}$ \eqref{constraint c'} by a sign as follows (see \cite{MV} after Remark 6.2) and make it compatible with the usual constraint $c_{\Vect}$ on $\Vect_{L}$ defined by $s\otimes t \mapsto t \otimes s$.

	The morphism $p:\cwX(T)\to \mathbb{Z}/2\mathbb{Z},~ \mu\mapsto (-1)^{2\rho(\mu)}$ defines a $\mathbb{Z}/2\mathbb{Z}$-grading on simple objects of $\Sat_G$. 
	By propositions \ref{geo Slammda} and \ref{decomposition CT}, we have 
	\begin{equation}
		\rH^i(\Gr,\IC_{\mu})=0,\quad \textnormal{if } i\neq 2\rho(\mu)~ (\textnormal{mod}~2).
		\label{coh parity}
	\end{equation}
	Given two simple objects $\mathcal{A}_1,\mathcal{A}_2$ of $\Sat_G$, we define a new constraint $c_{\mathcal{A}_1,\mathcal{A}_2}$ to be
	\begin{equation} \label{constraint c}
		c_{\mathcal{A}_1,\mathcal{A}_2}=(-1)^{p(\mathcal{A}_1)p(\mathcal{A}_2)} c'_{\mathcal{A}_1,\mathcal{A}_2}.
	\end{equation}
	Since $\Sat_G$ is semisimple, the definition of $c_{\mathcal{A}_1,\mathcal{A}_2}$ extends to any pair $(\mathcal{A}_1,\mathcal{A}_2)$ of objects of $\Sat_G$. 
	By \eqref{commutative c'} and \eqref{coh parity}, the following diagram is commutative
	\begin{equation} \label{Hstar tensor}
	\xymatrix{
		\rH^{*}(\mathcal{A}_1\ast \mathcal{A}_2) \ar[r]^{c_{\mathcal{A}_1,\mathcal{A}_2}} \ar[d]^{\wr} & \rH^{*}(\mathcal{A}_2\ast \mathcal{A}_1) \ar[d]^{\wr} \\
		\rH^{*}(\mathcal{A}_1)\otimes \rH^{*}(\mathcal{A}_2) \ar[r]^{c_{\Vect}} & \rH^{*}(\mathcal{A}_2)\otimes \rH^{*}(\mathcal{A}_1),
}
\end{equation}
where the isomorphism $c_{\Vect}$ is the usual commutativity constraint on vector spaces, i.e $c_{\Vect}(v\otimes w)=w\otimes v$.
\end{secnumber}

\begin{prop}[\cite{Zhu16} 5.2.9] \label{sym monoidal}
	The monoidal category $\Sat_G$ equipped with the constraints $c$ forms a symmetric monoidal category. The functor $\rH^{*}$ \eqref{hypercoh Gr} is a tensor functor. 
\end{prop}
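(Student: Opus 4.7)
The plan is to verify the three axioms that remain after the construction of $c$ in \ref{modify constraint}: naturality, the hexagon axiom, and $c_{\mathcal{A}_2,\mathcal{A}_1}\circ c_{\mathcal{A}_1,\mathcal{A}_2}=\id$, and then deduce that $\rH^*$ is a tensor functor from the commutativity of diagram \eqref{Hstar tensor}. Naturality of $c'$ (and hence of $c$) follows directly from its construction via the factorization isomorphism \eqref{factorization iso} and the base change isomorphism \eqref{constraint BD Gr}, both of which are natural in $\mathcal{A}_1,\mathcal{A}_2$. The involution identity $\Delta(\sigma)\circ\Delta(\sigma)=\id$ together with the identity $(-1)^{2p(\mathcal{A}_1)p(\mathcal{A}_2)}=1$ gives $c_{\mathcal{A}_2,\mathcal{A}_1}\circ c_{\mathcal{A}_1,\mathcal{A}_2}=\id$.

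The main task is the hexagon axiom. I would extend the construction of the fusion product to three factors on the Beilinson--Drinfeld Grassmannian $\Gr_{G,X^3}$ over $X^3=\mathbb{A}^3$. Given $\mathcal{A}_1,\mathcal{A}_2,\mathcal{A}_3\in\Sat_G$, one constructs an object $\mathcal{A}_{1,X}\boxast\mathcal{A}_{2,X}\boxast\mathcal{A}_{3,X}$ on $\Gr_{G,X^3}$ which is ULA with respect to $q^3:\Gr_{G,X^3}\to X^3$ (same argument as proposition \ref{prop fusion prod}, using that the 3-fold convolution morphism is ind-proper and the factorization isomorphism holds over the complement of the big diagonal). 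The group $S_3$ acts by permutation on $X^3$ and, compatibly, on $\Gr_{G,X^3}$. Restricting to the small diagonal via $\Delta^+[-2]$ (and again using ULA plus proposition \ref{ULA hol}) recovers the triple convolution $\mathcal{A}_1\star\mathcal{A}_2\star\mathcal{A}_3$, and the $S_3$-action on the BD side specializes to the symmetric group acting on the triple convolution through the constraints $c'$ (this is where the cocycle/hexagon identity becomes a consequence of the associativity of $S_3$-actions). The sign twist in \eqref{constraint c} preserves this, because the character $(\mathcal{A}_i,\mathcal{A}_j)\mapsto (-1)^{p(\mathcal{A}_i)p(\mathcal{A}_j)}$ is itself the braiding sign of the super-structure and hence satisfies the hexagon axioms on its own. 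Combining the two hexagons proves the hexagon axiom for $c$.

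The fact that $\rH^*$ is a tensor functor now follows formally: by corollary \ref{faithful exact mono} it is monoidal, and the commutativity of \eqref{Hstar tensor} precisely says that $\rH^*$ intertwines $c_{\mathcal{A}_1,\mathcal{A}_2}$ with the usual commutativity constraint on $\Vect_L$. The parity vanishing \eqref{coh parity} is what makes this work: it lets us absorb the grading sign $c_{\gr}$ of \ref{constraint} (which records the Koszul rule for $\mathbb{Z}$-graded vector spaces) into the $(-1)^{p(\mathcal{A}_1)p(\mathcal{A}_2)}$ modification of $c'$, producing the naive constraint $c_{\Vect}$ on the target.

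The step I expect to be the main obstacle is the rigorous setup of the 3-fold fusion product on $\Gr_{G,X^3}$ and the verification that $S_3$ acts compatibly. In the $\ell$-adic setting this is standard (cf.\ \cite{MV}, \cite{Zhu16} \S 5.2), but in our setting one must check that the ULA property, the factorization isomorphism, and the specialization isomorphism of proposition \ref{prop fusion prod} all extend to three factors and are compatible with the $S_3$-equivariant structure; this reduces to iterating proposition \ref{ULA hol} and verifying that the various base change and K\"unneth isomorphisms are $S_3$-equivariant, which is formal but tedious.
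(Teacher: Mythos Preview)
Your approach is correct but takes a genuinely different route from the paper. The paper's proof is a one-line reduction: since $\rH^*$ is faithful (semisimplicity of $\Sat_G$, proposition \ref{Sat G semisimple}) and monoidal (corollary \ref{faithful exact mono}), and since the diagram \eqref{Hstar tensor} already identifies $\rH^*(c_{\mathcal{A}_1,\mathcal{A}_2})$ with the naive constraint $c_{\Vect}$ on $\Vect_L$, every coherence axiom for $c$ can be checked \emph{after} applying $\rH^*$, where it becomes the trivial fact that $c_{\Vect}$ satisfies the symmetric monoidal axioms. In particular both the involution $c^2=\id$ and the hexagon axiom are proved simultaneously this way, with no appeal to $\Gr_{G,X^3}$ or $S_3$-equivariance.

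Your geometric argument via the three-fold Beilinson--Drinfeld Grassmannian is the ``honest'' approach used in \cite{MV} and \cite{Zhu16}, and it has the conceptual advantage of explaining \emph{why} the fusion product is symmetric (the $S_n$-equivariance of the factorization structure). But it requires setting up the ULA property and proposition \ref{ULA hol} in the three-factor case, which---as you yourself flag---is exactly the tedious step. The paper sidesteps this entirely: once \eqref{Hstar tensor} is established for two factors, the faithfulness of $\rH^*$ does all the remaining work. So the trade-off is geometric transparency versus economy; the paper opts for the latter.
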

\begin{proof}
	We need to verify $c_{\mathcal{A}_1,\mathcal{A}_2}\circ c_{\mathcal{A}_2,\mathcal{A}_1}=\id$ and the hexagon axiom. 
	Since the functor $\rH^*$ is faithful, it suffices to prove these assertions after applying $\rH^*$. 
	By \eqref{Hstar tensor} and the fact that $c_{\Vect}^2=\id$, we deduce that $c_{\mathcal{A}_1,\mathcal{A}_2}\circ c_{\mathcal{A}_2,\mathcal{A}_1}=\id$.
	We verify the hexagon axiom in a similar way.
	The second assertion follows from corollary \ref{faithful exact mono} and \eqref{Hstar tensor}.
\end{proof}

\subsection{Tannakian structure and the Langlands dual group}
\label{Langlands dual subsec}
\begin{theorem}
	The symmetric monoidal category $(\Sat_{G},\IC_0,\ast,c)$ \eqref{sym monoidal}, equipped the hypercohomology functor $\rH^{*}$ \eqref{hypercoh Gr} forms a neutral Tannakian category over $L$. 
	\label{thm Tannakian cat}
\end{theorem}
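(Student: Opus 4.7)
The plan is to verify the axioms of a neutral Tannakian category over $L$ in the sense of Deligne--Milne. Essentially all the structural ingredients are already in place from the preceding subsections: $\Sat_G$ is $L$-linear semisimple abelian (Proposition \ref{Sat G semisimple}); $(\Sat_G, \star, \IC_0, c)$ is symmetric monoidal (Corollary \ref{monidal structure}, Proposition \ref{sym monoidal}); and $\rH^{*}$ is an $L$-linear, exact, faithful and symmetric tensor functor to $\Vect_L$ (Corollary \ref{faithful exact mono}, Proposition \ref{sym monoidal}, noting that the sign twist in \eqref{constraint c} was designed precisely so that $\rH^{*}$ intertwines $c$ with the ordinary commutativity on $\Vect_L$ after forgetting the grading).

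What remains is therefore: \textbf{(a)} $\End_{\Sat_G}(\IC_0) = L$; \textbf{(b)} $\rH^{*}(\mathcal{A})$ is finite-dimensional for every $\mathcal{A}\in\Sat_G$; \textbf{(c)} $\Sat_G$ is rigid, i.e.\ each object admits a strong dual. Points (a) and (b) are easy: $\IC_0$ is the skyscraper $L_e$ at the base point $e \in \Gr_G$ so $\End(\IC_0) = L$; and by the semisimplicity it suffices to check (b) for the simple objects $\IC_\mu$, each of which is supported on the projective scheme $\Gr_{\le \mu}$, whence finite-dimensionality of its cohomology follows from the general finiteness results recalled in \ref{cohomology arith mods}.

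The substantive point is (c). My plan is to construct duals explicitly via the inversion anti-involution $\iota : \Gr_G \to \Gr_G$ induced by $g \mapsto g^{-1}$ on $LG$, which permutes $L^+G$-orbits according to $\Gr_{\mu} \mapsto \Gr_{-w_0 \mu}$, where $w_0$ is the longest element of the Weyl group. Setting
\[
	D(\mathcal{A}) := \iota^{+}\,\mathbb{D}_{\Gr}(\mathcal{A}),
\]
one obtains a functor $\Sat_G \to \Sat_G^{\op}$ that sends $\IC_{\mu}$ to $\IC_{-w_0 \mu}$. The unit and counit
\[
	\mathrm{coev} : \IC_0 \to \mathcal{A} \star D(\mathcal{A}), \qquad \mathrm{ev} : D(\mathcal{A}) \star \mathcal{A} \to \IC_0
\]
are produced from the proper base change along the convolution morphism $m : \Gr \widetilde\times \Gr \to \Gr$ together with the identification of the fiber $m^{-1}(e)$ with the graph of $\iota$. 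Since $\rH^{*}$ is faithful, it is enough to check the zigzag identities after applying $\rH^{*}$, where they reduce to the standard duality pairing of graded $L$-vector spaces; here one uses that $\dim \rH^{*}(D(\mathcal{A})) = \dim \rH^{*}(\mathcal{A})$, which follows from Poincar\'e duality on $\Gr_{\le \mu}$ together with the symmetry $\mu \leftrightarrow -w_0\mu$.

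The main technical obstacle is the compatibility of Verdier duality with convolution in the form
\[
	\mathbb{D}_{\Gr}(\mathcal{A}_1 \star \mathcal{A}_2) \simeq \mathbb{D}_{\Gr}(\mathcal{A}_2) \star \mathbb{D}_{\Gr}(\mathcal{A}_1),
\]
which in the arithmetic $\mathscr{D}$-module setting requires combining the properness of $m$ (so $m_! \simeq m_+$, \ref{Basic properties}(iii)) with the compatibility of $\mathbb{D}$ both with proper direct images and with the twisted external product constructed in \ref{twisted prod}; these compatibilities have to be checked carefully by descent along the $L^+G$-torsor appearing in \ref{twisted prod}. Once this is settled, rigidity follows and the existence of a fiber functor $\rH^{*}$ promotes $(\Sat_G, \star, c)$ to a neutral Tannakian category over $L$; the identification of its Tannakian group with the Langlands dual $\cG$ of $G$ will be carried out in the remainder of this subsection.
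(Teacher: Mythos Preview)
Your verification of (a), (b), and the monoidal/symmetric structure is correct. Your approach to rigidity (c) via $D(\mathcal{A})=\iota^{+}\mathbb{D}_{\Gr}(\mathcal{A})$ is the classical Ginzburg-style argument, and it can be made to work, but it differs from what the paper actually does. The paper gives no explicit argument: it simply cites \cite{Zhu16}~5.2.9 and invokes Proposition~\ref{decomposition CT}(ii). The route taken there avoids Verdier duality and the six-functor compatibilities you flag; instead one observes that $\IC_0$ occurs with multiplicity one in $\IC_\mu\star\IC_{-w_0\mu}$, uses Proposition~\ref{decomposition CT}(ii) (the MV-cycle count) to see that $\dim\rH^{*}(\IC_\mu)=\dim\rH^{*}(\IC_{-w_0\mu})$, and then applies a general criterion: in a semisimple $L$-linear symmetric monoidal category with a faithful exact tensor functor to $\Vect_L$, if for each simple $X$ there is a simple $X'$ with $\mathbf{1}\hookrightarrow X\star X'$ and $\dim\omega(X)=\dim\omega(X')$, then $X'$ is a dual of $X$. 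This bypasses both the construction of $\iota$ and the compatibility $\mathbb{D}(\mathcal{A}_1\star\mathcal{A}_2)\simeq\mathbb{D}(\mathcal{A}_2)\star\mathbb{D}(\mathcal{A}_1)$ entirely, which is the point of the reference to Proposition~\ref{decomposition CT}(ii).

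Two technical remarks on your approach. First, the map $g\mapsto g^{-1}$ on $LG$ does \emph{not} descend to a self-map of $\Gr_G=LG/L^{+}G$: it lands in $L^{+}G\backslash LG$. What you want is the involution induced by a Chevalley (pinned) involution $\theta$ of $G$; then $\theta$ acts on $\Gr_G$ and sends $\Gr_\mu$ to $\Gr_{-w_0\mu}$, so $D(\mathcal{A}):=\theta^{+}\mathbb{D}_{\Gr}(\mathcal{A})$ does what you intend. Second, the compatibility you single out as ``the main technical obstacle'' is not actually needed for rigidity---you only need $\mathrm{ev}$ and $\mathrm{coev}$ for each object, not that $D$ is monoidal. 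The genuine work in your route is constructing those maps and checking that $\rH^{*}$ carries them to the standard vector-space duality data; your base-change sketch is correct in outline, but making it precise in the arithmetic $\mathscr{D}$-module setting requires care with the twisted-product construction of~\ref{twisted prod}. The paper's route trades this for the already-established semi-infinite orbit analysis.
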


We prove it in the same way as in (\cite{Zhu16} 5.2.9) using proposition \ref{decomposition CT}(ii). 

\begin{prop}
	The Tannakian group $\widetilde{G}=\Aut^{\otimes} \rH^{*}$ of the Tannakian category $\Sat_{G}$ is a connected reductive group scheme over $L$.
\end{prop}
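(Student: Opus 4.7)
The plan is to verify the three assertions separately: $\widetilde{G}$ is affine of finite type, is reductive, and is connected. The affine statement is automatic from the Tannakian reconstruction theorem applied to \ref{thm Tannakian cat}. To get finite type, I would exhibit a tensor generator of $\Sat_G$. Let $\omega_1,\dots,\omega_r$ be the fundamental dominant coweights; set
\[
\mathcal{A}=\bigoplus_{i=1}^{r}\bigl(\IC_{\omega_i}\oplus \IC_{-w_0\omega_i}\bigr).
\]
Using proposition \ref{tensor CT} combined with proposition \ref{geo Slammda}(ii), one sees that the top nonzero weight space $\rH^{2\rho(\mu+\nu)}_{\rc}(S_{\mu+\nu},\IC_\mu\star \IC_\nu)$ is nonempty whenever both $\IC_\mu$ and $\IC_\nu$ have nonzero contribution at $\mu$ and $\nu$ respectively, and hence $\IC_{\mu+\nu}$ occurs as a direct summand of $\IC_\mu\star\IC_\nu$ (semisimplicity comes from \ref{Sat G semisimple}). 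Therefore every $\IC_\mu$ with $\mu\in\cwX(T)^+$ is a summand of some iterated convolution of the $\IC_{\omega_i}$, so $\rH^{*}(\mathcal{A})$ is a faithful representation of $\widetilde{G}$, which forces $\widetilde{G}$ to be a closed subgroup of some $\GL_n$, hence of finite type over $L$.

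For reductivity, note that $L$ has characteristic zero since it is an algebraic extension of $K=\Frac(R)$ with $R$ of mixed characteristic $(0,p)$. By proposition \ref{Sat G semisimple} the category $\Sat_G$ is semisimple, so after unwinding the Tannakian equivalence the category $\Rep(\widetilde{G})$ is semisimple. By the standard characterization (Deligne--Milne), the identity component $\widetilde{G}^{\circ}$ of an affine group scheme of finite type over a characteristic zero field whose category of representations is semisimple must be reductive; and since $\widetilde{G}$ will be shown to be connected, $\widetilde{G}$ itself is reductive.

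For connectedness, I would use the criterion that $\widetilde{G}$ is connected iff for every simple object $V\in \Sat_G$ not isomorphic to the unit $\IC_0$, the full tensor subcategory $\langle V\rangle\subset \Sat_G$ generated by $V$ (closed under convolutions, duals, and subquotients) contains infinitely many non-isomorphic simple objects. Pick any $\mu\neq 0$ in $\cwX(T)^+$. By the argument of the previous paragraph, $\IC_\mu^{\star n}$ contains $\IC_{n\mu}$ as a direct summand for every $n\ge 1$; since $\{n\mu\}_{n\ge 1}$ is an infinite subset of $\cwX(T)^+$ and these all give non-isomorphic simples by proposition \ref{Sat G semisimple}, the subcategory $\langle \IC_\mu\rangle$ is infinite, and connectedness follows.

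The main obstacle is the tensor-generation/connectedness input, namely the assertion that $\IC_{\mu+\nu}$ (and in particular $\IC_{n\mu}$) occurs as a direct summand of $\IC_{\mu}\star \IC_\nu$. Once this is established via the highest semi-infinite weight space in proposition \ref{decomposition CT}(ii) and the Künneth identity in proposition \ref{tensor CT} — together with the support bound $\Supp(\IC_\mu\star \IC_\nu)\subset \Gr_{\le \mu+\nu}$ coming from the ind-properness of the convolution morphism $m$ and the product formula $\Gr_{\le\mu}\cdot\Gr_{\le\nu}\subset \Gr_{\le\mu+\nu}$ — the three assertions drop out formally from the Tannakian dictionary.
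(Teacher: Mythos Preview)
Your proof is correct and follows the same overall architecture as the paper's: establish that $\IC_{\mu+\nu}$ is a direct summand of $\IC_\mu\star\IC_\nu$, deduce finite tensor generation (hence $\widetilde{G}$ algebraic), use the resulting infinitude of simples in any nontrivial tensor subcategory for connectedness, and invoke semisimplicity of $\Sat_G$ for reductivity.

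The one genuine difference is in how you prove the key summand statement. The paper argues geometrically: the convolution map $\Gr_{\le\mu}\widetilde{\times}\Gr_{\le\nu}\to\Gr_{\le\mu+\nu}$ is birational, so $\IC_\mu\star\IC_\nu$ restricts to the constant sheaf on the open stratum $\Gr_{\mu+\nu}$, and the decomposition theorem then yields $\IC_{\mu+\nu}$ as a summand. You instead use the weight functors: by proposition~\ref{tensor CT} the highest weight space $\rH^{2\rho(\mu+\nu)}_{\rc}(S_{\mu+\nu},\IC_\mu\star\IC_\nu)$ is nonzero, and since the support is contained in $\Gr_{\le\mu+\nu}$ this weight can only come from an $\IC_{\mu+\nu}$ summand. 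Both arguments are valid; the paper's is more self-contained (it does not rely on the semi-infinite orbit machinery), while yours exploits the already-proved tensor compatibility of $\CT$ and avoids invoking the decomposition theorem directly.

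One small caveat: your choice of generators $\{\omega_i,-w_0\omega_i\}$ as ``fundamental dominant coweights'' presupposes that these generate the monoid $\cwX(T)^+$, which is only literally true when $G$ is semisimple simply connected. For general reductive $G$ you should instead take any finite set of monoid generators of $\cwX(T)^+$ (which exists by Gordan's lemma); the paper does this implicitly by speaking of ``a finite set of generators of $\cwX(T)^+$''. This does not affect the logic of your argument.
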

\begin{proof}
	For $\mu_1,\mu_2\in \cwX(T)^+$, $\IC_{\mu_1}\star\IC_{\mu_2}$ is defined by direct image through the birational morphism 
	\begin{displaymath}
		\Gr_{\le \mu_1}\widetilde{\times}\Gr_{\le \mu_2}\to \Gr_{\le \mu_1+\mu_2}.
	\end{displaymath}
	Hence it is supported on $\Gr_{\le \mu_1+\mu_2}$ and is isomorphic to $L_{\Gr_{\mu_1+\mu_2}}[2\rho(\mu_1+\mu_2)]$ on $\Gr_{\mu_1+\mu_2}$.
	Then by decomposition theorem \eqref{decomposition thm}, $\IC_{\mu_1+\mu_2}$ is a direct summand of $\IC_{\mu_1}\star\IC_{\mu_2}$. 
	Hence the semisimple category $\Sat_G$ is generated by $\{\IC_{\mu_i}\}_{i\in I}$ with a finite set of generators of $\cwX(T)^+$.
	Then $\widetilde{G}$ is algebraic by (\cite{DM82} 2.20). 
	There is no tensor subcategory which contains only direct sums of finite collection of $\IC$'s. Then $\widetilde{G}$ is connected (\cite{DM82} 2.22).
	Finally, since $\Sat_G$ is semisimple, $\widetilde{G}$ is reductive (\cite{DM82} 2.23). 
\end{proof}

\begin{theorem}
	The reductive group $\widetilde{G}$ is the Langlands dual group of $G$ over $L$. More precisely, the root datum of $\widetilde{G}$ with respect to a maximal torus $\widetilde{T}$ is dual to that of $(G,T)$. 
	\label{thm Langlands dual}
\end{theorem}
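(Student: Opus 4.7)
My plan is to follow the Mirković--Vilonen strategy, extracting the root datum of $\widetilde{G}$ from the structure already uncovered in the semi-infinite orbit decomposition. First, I would construct a cocharacter lattice for $\widetilde{G}$ out of the grading on $\rH^*$. Proposition \ref{decomposition CT}(iii) provides a functorial decomposition $\rH^*(\Gr_G,\mathcal{A})\simeq \bigoplus_{\lambda\in \cwX(T)}\rH^{2\rho(\lambda)}_{\rc}(S_\lambda,\mathcal{A})$, and proposition \ref{tensor CT} shows each weight functor $\rH^{2\rho(\lambda)}_{\rc}(S_\lambda,-)$ is compatible with $\star$ and the decomposition is compatible with $\otimes$ in $\Vect_L$ (this uses that the sign twist \eqref{constraint c} cancels by the parity in \eqref{coh parity}). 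Hence the grading defines a homomorphism $\cwX(T)\to \wX(\widetilde{G})$ lifting to a central embedding of the torus $\widetilde{T}$ of character group $\cwX(T)$ into $\widetilde{G}$, and I would check that $\widetilde{T}$ is maximal by verifying that its weights on the faithful tensor generators $\rH^*(\IC_\mu)$ separate points.

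Next I would identify the weights of $\rH^*(\IC_\mu)$ as a $\widetilde{T}$-representation. By proposition \ref{decomposition CT}(ii), the $\lambda$-weight space has dimension equal to the number of geometric irreducible components of $S_\lambda\cap \Gr_{G,\mu}$, and dimension arguments (\ref{geo Slammda}(ii)) together with closure relations show that $\mu$ is the unique maximal weight, occurring with multiplicity one. In particular, $\IC_\mu$ is an irreducible representation of $\widetilde{G}$ of highest weight $\mu$, so the dominant cone of $\widetilde{G}$ with respect to some Borel containing $\widetilde{T}$ is exactly $\cwX(T)^+$. This already pins down the set of simple coroots of $\widetilde{G}$ (as characters of $\widetilde{T}$) up to the walls of $\cwX(T)^+$: they must be the simple coroots $\check{\alpha}^\vee$ of $(G,T)$, i.e.\ the simple roots of $\widetilde{G}$ are the simple coroots of $G$.

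The main obstacle, and the genuinely delicate part, is to determine the simple \emph{coroots} of $\widetilde{G}$ and show they are the simple roots of $G$; equivalently, to show that the pairing of the root datum is the transpose of that of $(G,T)$. I would handle this by reduction to the rank-one case along Levi subgroups. Given a simple root $\alpha$ of $G$, let $M_\alpha\subset G$ be the associated Levi of semisimple rank $1$. The natural map $\Gr_{M_\alpha}\to \Gr_G$ together with the constant term functor (hyperbolic localization, \ref{Braden thm sec}) induces a tensor functor $\Sat_G\to \Sat_{M_\alpha}$ corresponding to a homomorphism $\widetilde{M_\alpha}\to \widetilde{G}$ that exhibits $\widetilde{M_\alpha}$ as the Levi of $\widetilde{G}$ containing $\widetilde{T}$ associated with the coweight $\check{\alpha}^\vee$ just identified. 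Thus it is enough to treat $G$ of semisimple rank $1$, i.e.\ $G=\SL_2$, $\PGL_2$ or $\Gm\times\SL_2$; here the connected components of $\Gr_G$ are explicit (projective spaces and their unions), the intersection cohomologies $\IC_\mu$ can be computed directly as in the $\ell$-adic case (\cite{MV}), and one reads off that $\widetilde{\SL_2}=\PGL_2$ and $\widetilde{\PGL_2}=\SL_2$. Combining this rank-one input with the compatibility of constant term functors for varying Levis, the full dual root datum of $\widetilde{G}$ is recovered, and the theorem follows. Throughout, the $p$-adic foundations needed (decomposition theorem \eqref{decomposition thm}, Braden's theorem \eqref{Braden thm}, ULA/vanishing cycles of \S\ref{subsection ULA}, and the behaviour of semi-infinite orbits on Beilinson--Drinfeld Grassmannians) are exactly the arithmetic $\mathscr{D}$-module analogues of the $\ell$-adic inputs in \cite{MV,Zhu16}, which is why no new ideas beyond the careful verification at each rank-one step are required.
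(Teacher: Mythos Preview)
Your proposal is correct and, through step 2, tracks the paper's argument closely (the maximal torus via the functor $\CT$ and the identification of the dominant cone with $\cwX(T)^+$). The divergence is in your step 3. The paper does \emph{not} perform a Levi reduction to rank one; instead it identifies the semigroup $\widetilde{Q}^+\subset\wX(\check{T})$ generated by the positive roots of $\widetilde{G}$ directly: one has $\lambda\in\widetilde{Q}^+$ if and only if $\mu-\lambda$ occurs as a $\check{T}$-weight of some $\rH^*(\IC_\mu)$, which by propositions \ref{decomposition CT} and \ref{geo Slammda}(ii) is equivalent to $L_{\mu-\lambda}\in\Gr_{\le\mu}$, i.e.\ to $\lambda$ lying in the semigroup $(Q^\vee)^+$ generated by the positive coroots of $G$. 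Thus the simple roots of $\widetilde{G}$ equal the simple coroots of $G$ on the nose, and the paper concludes by the elementary fact that a root datum is determined by its weight lattice, its dominant cone, and its set of simple roots---so no separate identification of the coroots of $\widetilde{G}$ is needed at all. Your Levi/rank-one route is a valid alternative, but it requires constructing parabolic constant term tensor functors $\Sat_G\to\Sat_{M_\alpha}$ in the arithmetic $\mathscr{D}$-module setting, which the paper never sets up; the paper's combinatorial shortcut stays entirely within the global semi-infinite orbit picture already established and is considerably shorter. Note also that your step 2 as written does not actually pin down the simple roots of $\widetilde{G}$ exactly (the dominant cone alone gives only their rays, and your phrasing conflates roots and coroots of $\widetilde{G}$); in your approach it is step 3 that closes this gap, whereas in the paper it is the $\widetilde{Q}^+=(Q^\vee)^+$ argument that does so directly.
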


Since $\Gr_{T,\red}$ is a discrete set of points indexed by $\cwX(T)$ \eqref{notation parabolic ind}, we have:

\begin{lemma} \label{Langlands dual torus}
	In the case $G=T$ is a torus, theorem \ref{thm Langlands dual} holds.
\end{lemma}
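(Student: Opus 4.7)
The plan is to unwind the definitions in the torus case, where everything becomes very explicit. Since $G=T$ is a split torus, the loop group $LT$ has connected component group $\cwX(T)$: concretely, for each cocharacter $\lambda\in\cwX(T)$ we have the point $L_\lambda=t^\lambda\cdot L^+T\in\Gr_T$, and $\Gr_{T,\red}=\bigsqcup_{\lambda\in\cwX(T)}\{L_\lambda\}$ is discrete. The positive loop group $L^+T$ is connected (as a projective limit of smooth connected group schemes), hence it acts trivially on each point $L_\lambda$, so the equivariance datum in $\Hol_{L^+T}(\Gr_T)$ is automatic. Consequently I can identify
\[
\Sat_T \;\xrightarrow{\sim}\; \Vect^{\cwX(T)}_L,
\]
the category of finite-dimensional $\cwX(T)$-graded $L$-vector spaces, sending an $L^+T$-equivariant holonomic module $\mathcal A$ to the collection $\bigl(\rH^0(\{L_\lambda\},\mathcal A|_{L_\lambda})\bigr)_{\lambda\in\cwX(T)}$. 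Under this identification, the simple object $\IC_\lambda$ is the one-dimensional graded vector space concentrated in degree $\lambda$ (note that $\Gr_{T,\lambda}=\{L_\lambda\}$ has dimension zero, so no shifts occur).

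Next I identify the monoidal structure. The multiplication map $LT\times LT\to LT$ sends $(t^\lambda,t^\mu)\mapsto t^{\lambda+\mu}$, so the convolution morphism $m:\Gr_T\widetilde\times\Gr_T\to\Gr_T$ restricts on the orbit $(L_\lambda,L_\mu)$ to the point $L_{\lambda+\mu}$, and $m$ is an isomorphism onto its image on each such orbit. It follows immediately from the definition \eqref{def convolution Sat} that
\[
\IC_\lambda\star\IC_\mu \;\simeq\; \IC_{\lambda+\mu},
\]
so $\Sat_T$ acquires the tensor structure of $\Vect^{\cwX(T)}_L$, given by convolution of gradings $(V\otimes W)_\nu=\bigoplus_{\lambda+\mu=\nu}V_\lambda\otimes W_\mu$. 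For the commutativity constraint: because $T$ has no roots, $\rho=0$ and the parity function $p:\cwX(T)\to\mathbb Z/2\mathbb Z$ of \ref{modify constraint} is identically zero, so the modified constraint $c$ agrees with $c'$; moreover every $\IC_\lambda$ lives in cohomological degree zero, so the supercommutativity constraint $c_{\gr}$ of \ref{constraint} is simply the usual swap. Hence $(\Sat_T,\star,c)$ is equivalent, as a symmetric monoidal category, to $\Vect^{\cwX(T)}_L$ with its standard symmetric tensor structure.

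Finally, the hypercohomology functor $\rH^\ast$ of \eqref{hypercoh Gr} sends a graded vector space to its total space (forgetting the grading), which is exactly the fiber functor on $\Rep(\widetilde T_0)$ where $\widetilde T_0=\Spec L[\cwX(T)]$ is the diagonalizable $L$-group whose character lattice is $\cwX(T)$ (and hence whose cocharacter lattice is $\wX(T)$). By Tannakian reconstruction applied to the fully faithful embedding $\Sat_T\hookrightarrow \Rep(\widetilde T_0)$ which becomes an equivalence after checking it is essentially surjective on the generators $\IC_\lambda$, I get $\widetilde T\simeq\widetilde T_0$. Since $T$ is split, $\widetilde T_0$ is precisely the Langlands dual torus of $T$, with dual root datum (both root systems being empty). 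I do not anticipate any serious obstacle; the only point requiring a small verification is the triviality of the sign in $c$, which is immediate from $\rho=0$ for a torus.
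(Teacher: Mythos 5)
Your proof is correct and takes essentially the same approach as the paper, which simply observes that $\Gr_{T,\red}$ is discrete and indexed by $\cwX(T)$ and leaves the rest implicit. You have carefully unwound what the paper treats as obvious: the identification $\Sat_T\simeq\Vect^{\cwX(T)}_L$, the computation $\IC_\lambda\star\IC_\mu\simeq\IC_{\lambda+\mu}$, the vanishing of the sign in the commutativity constraint (since $\rho=0$), and the Tannakian identification of the resulting group with $\Spec L[\cwX(T)]$.
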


\begin{secnumber}
	We denote by $\CT:\Sat_{G}\to \Sat_{T}$ the functor
	\begin{equation}
		\mathcal{A}\mapsto (\rH_{\rc}^{*}(S_{\lambda},\mathcal{A}))_{\lambda\in \cwX(T)},
	\end{equation}
	and by $\rH^{*}_G$ (resp. $\rH^{*}_T$) the fiber functor of the Tannakian category $\Sat_G$ (resp. $\Sat_T$). 
	By \ref{decomposition CT}, there exists a canonical isomorphism of functors: 
	\begin{equation} \label{iso fiber functors}
		\rH^*_G\simeq \rH^*_T\circ\CT :\Sat_G\to \Vect_{L}.
	\end{equation}
	By \ref{tensor CT}, $\CT$ is a tensor functor and therefore induces a homomorphism:
	\begin{equation} \label{That to Gtilde} 
		\check{T}\simeq \widetilde{T} \to \widetilde{G}.
	\end{equation}
	By (\cite{DM82} 2.21(b)), $\check{T}$ is a closed sub-torus in $\widetilde{G}$.

Using \ref{decomposition CT}, we prove the following in the same way as in the $\ell$-adic case (cf. \cite{Zhu16} 5.3.17, \cite{BR} \S 9.1). 	
\end{secnumber}

\begin{lemma}
	The torus $\check{T}$ is a maximal torus of $\widetilde{G}$. 
	\label{lemma max torus}
\end{lemma}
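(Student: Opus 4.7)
The plan is to show that the restriction map on character lattices $\rho\colon \wX(\widetilde{T})\to \wX(\check{T})=\cwX(T)$ is an isomorphism, where $\widetilde{T}\supset \check{T}$ is any maximal torus of $\widetilde{G}$; then $\check{T}$ is a closed subtorus of $\widetilde{T}$ of the same rank, hence equals $\widetilde{T}$. The starting point is the multiplicity-one assertion coming from propositions \ref{geo Slammda}(ii) and \ref{decomposition CT}(ii): for each $\mu\in \cwX(T)^{+}$, $S_{\mu}\cap \Gr_{\mu}$ is a dense open in the irreducible $(L^{+}G)$-orbit $\Gr_{\mu}$, so $\dim_{L}\rH^{2\rho(\mu)}_{\rc}(S_{\mu},\IC_{\mu})=1$. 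Thus $\mu$ is the unique maximal $\check{T}$-weight of $\rH^{*}(\IC_{\mu})$, occurring with multiplicity one, and so there is a unique $\widetilde{T}$-weight $\tau(\mu)\in \wX(\widetilde{T})$ of $\rH^{*}(\IC_{\mu})$ with $\rho(\tau(\mu))=\mu$. This defines a set-theoretic section $\tau\colon \cwX(T)^{+}\to \wX(\widetilde{T})$ of $\rho$.

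Next I will verify that $\tau$ is additive, using the convolution structure. By the paragraph preceding lemma \ref{Langlands dual torus}, $\IC_{\mu_{1}+\mu_{2}}$ is a direct summand of $\IC_{\mu_{1}}\star \IC_{\mu_{2}}$. Since $\rH^{*}$ is a tensor functor (corollary \ref{faithful exact mono}), the tensor product of the two multiplicity-one weight vectors is a non-zero vector in $\rH^{*}(\IC_{\mu_{1}}\star \IC_{\mu_{2}})$ of $\widetilde{T}$-weight $\tau(\mu_{1})+\tau(\mu_{2})$ restricting to $\mu_{1}+\mu_{2}$. Proposition \ref{tensor CT} gives $\dim_{L}\rH^{2\rho(\mu_{1}+\mu_{2})}_{\rc}(S_{\mu_{1}+\mu_{2}},\IC_{\mu_{1}}\star \IC_{\mu_{2}})=1$, while the remaining simple summands $\IC_{\nu}$ (with $\nu<\mu_{1}+\mu_{2}$) cannot carry the $\check{T}$-weight $\mu_{1}+\mu_{2}$. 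Hence this vector lies in the $\IC_{\mu_{1}+\mu_{2}}$ component, forcing $\tau(\mu_{1}+\mu_{2})=\tau(\mu_{1})+\tau(\mu_{2})$. Extending by linearity, $\tau\colon \cwX(T)\to \wX(\widetilde{T})$ becomes a group-theoretic section of $\rho$, yielding a splitting $\wX(\widetilde{T})=\tau(\cwX(T))\oplus \ker\rho$.

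To conclude $\ker\rho=0$, I will match $\tau$ with the classification of irreducible $\widetilde{G}$-representations. By proposition \ref{Sat G semisimple} and the Tannakian equivalence $\Sat_{G}\simeq \Rep(\widetilde{G})$ granted by theorem \ref{thm Tannakian cat}, $\mu\mapsto [\rH^{*}(\IC_{\mu})]$ is a bijection between $\cwX(T)^{+}$ and the isomorphism classes of irreducible $\widetilde{G}$-representations, which (for any choice of Borel $\widetilde{B}\supset \widetilde{T}$ in $\widetilde{G}$) are parametrised by the dominant weights. The weight $\tau(\mu)$ is necessarily extremal in the weight polytope of $\rH^{*}(\IC_{\mu})$, since its restriction $\mu$ is already extremal among $\check{T}$-weights (maximal in the dominance order); hence $\tau(\mu)$ is Weyl-conjugate to the highest weight. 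Choosing $\widetilde{B}$ so that $\tau(\mu)$ is simultaneously dominant for every $\mu\in \cwX(T)^{+}$, the image $\tau(\cwX(T)^{+})$ coincides with the set of dominant weights of $\widetilde{G}$ and thus generates $\wX(\widetilde{T})$ as an abelian group, so $\tau$ is surjective and $\rho=\tau^{-1}$ is an isomorphism.

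The main obstacle is the existence of such a Borel $\widetilde{B}$ making all $\tau(\mu)$ simultaneously dominant. Following Mirković--Vilonen (cf.\ \cite{Zhu16} 5.3.17 and \cite{BR} \S 9.1), this is resolved by choosing a strictly dominant cocharacter of $\check{T}$ that is regular in $\widetilde{G}$ and appealing to the compatibility between the semi-infinite orbit decomposition and the convolution product to show that no such regularity obstruction can arise.
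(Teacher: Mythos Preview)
Your outline follows the same $\ell$-adic argument the paper itself defers to (\cite{Zhu16} 5.3.17, \cite{BR} \S 9.1): build the section $\tau$ from multiplicity one, prove it additive via proposition \ref{tensor CT}, then identify $\tau(\cwX(T)^+)$ with the set of dominant weights of $\widetilde{G}$. The first two steps are correct. The gap is in your last paragraph. The phrase ``choosing a strictly dominant cocharacter of $\check{T}$ that is regular in $\widetilde{G}$'' reads as taking such a cocharacter as input, but a priori no cocharacter of the possibly-too-small torus $\check{T}$ need be regular in $\widetilde{G}$; regularity of $2\rho$ is precisely what must be \emph{proved}. And the ingredient you cite---``compatibility between the semi-infinite orbit decomposition and the convolution product''---is proposition \ref{tensor CT}, already spent on the additivity of $\tau$; it does not bear on regularity.

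What actually proves that $2\rho$ is regular in $\widetilde{G}$ is proposition \ref{decomposition CT} alone. By part (iii) the cohomological grading on $\rH^*(\IC_\mu)$ is the grading by $\langle 2\rho,-\rangle$ on $\check{T}$-weights; since $2\rho$ is strictly dominant for $G$, the value $\langle 2\rho,\lambda\rangle$ over weights $\lambda$ of $\rH^*(\IC_\mu)$ is uniquely maximized at $\lambda=\mu$, so the top graded piece is $\rH_c^{2\rho(\mu)}(S_\mu,\IC_\mu)$, one-dimensional by part (ii). Thus every irreducible $\widetilde{G}$-representation has a one-dimensional top $2\rho$-eigenspace. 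Choosing $\widetilde{B}\supset\widetilde{T}$ with $2\rho$ dominant, if $2\rho$ were not regular then some simple root $\alpha$ of $(\widetilde{G},\widetilde{B},\widetilde{T})$ would satisfy $\langle 2\rho,\alpha\rangle=0$, and in any irreducible representation whose highest weight $\chi$ has $\langle\chi,\alpha^{\vee}\rangle>0$ both $\chi$ and $s_\alpha\chi\neq\chi$ would lie in the top $2\rho$-eigenspace---contradiction. With $2\rho$ regular and $\widetilde{B}$ so chosen, the top eigenspace is exactly the highest-weight line, hence $\tau(\mu)$ is the highest weight of $\rH^*(\IC_\mu)$, and your surjectivity argument goes through.
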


\begin{secnumber}
	\textit{Proof of theorem \ref{thm Langlands dual}}. 
	We take a Borel subgroup $\widetilde{B}\subset \widetilde{G}$ containing $\check{T}$ such that $2\rho\in \wX(T)$ is a dominant coweight for the choice positive roots of $\widetilde{G}$ with respect to $\widetilde{B}$.
	Then we can show that the set of dominant weights $\wX(\check{T})^+$ with respect to $\widetilde{B}$ is equal to the set of dominant coweights $\cwX(T)^+$ of $T$ by proposition \ref{decomposition CT}(ii) (cf. \cite{Zhu16} 5.3, and \cite{BR} 9.5 for more details).  
	In particular, $\widetilde{B}$ is uniquely determined. 

	We denote by $\widetilde{Q}^{+}$ the semi-subgroup of $\wX(\check{T})$ generated by positive roots of $\widetilde{G}$. A weight $\lambda$ belongs to $\widetilde{Q}^{+}$ if and only if there exists a highest weight representation $V_{\mu}(=\rH^*(\IC_{\mu}))$ such that $\mu-\lambda$ is also a weight of $V_{\mu}$. 
	By proposition \ref{decomposition CT}, this is equivalent to $L_{\mu-\lambda}\in \Gr_{\le \mu}$, and equivalent to $\lambda$ being a sum of positive coroots of $G$.
	Therefore the semigroup $(Q^{\vee})^{+}\subset \cwX(T)=\wX(\check{T})$ generated by positive coroots of $G$ coincides with the semigroup $\widetilde{Q}^{+}$. 
	Then, the set of simple coroots of $G$ coincide with the set of simple roots of $\widetilde{G}$. 

	The theorem follows from the fact that a root datum is uniquely determined by the semigroup $(\wX)^{+}$ of dominant weights and the set $\Delta$ of simple roots. \hfill$\qed$
\end{secnumber}

\subsection{The full Langlands dual group} 

For our applications of the geometric Satake equivalence for arithmetic $\cD$-modules, it is important to consider the Frobenius structure on the Satake category. 
In this subsection, we study the full Langlands dual group constructed by the Satake category equipped with Frobenius structures. 

\begin{secnumber}
	We suppose that the geometric base tuple $\{k,R,K,L\}$ is underlying to an arithmetic base tuple $\{k,R,K,L,t,\sigma\}$, where $t$ is an integer (which may be different from the degree $s$ of $k$ over $\mathbb{F}_p$) and $\sigma$ is an automorphism of $L$ and extends a lifting of $t$-th Frobenius automorphism on $k$ to $K$ \eqref{basic def}. 
	
	The Frobenius pullback functor $F^*_{\Gr}:\Hol(\Gr/L)\xrightarrow{\sim} \Hol(\Gr/L)$ \eqref{base tuple} induces a $\sigma$-semi-linear equivalence of tensor categories $F^*_{\Gr}: \Sat_G\xrightarrow{\sim} \Sat_G$. 
	We denote by $\Fr\Sat_G$ the category of pairs $(X,\varphi)$ consisting of an object $X$ of $\Sat_G$ and a Frobenius structure $\varphi: F^*_{\Gr}X\xrightarrow{\sim} X$. Morphisms are morphisms of $\Sat_G$ compatible with $\varphi$ (cf. \cite{Abe18} 1.4.6). 
	We will show that $\Fr\Sat_G$ is a Tannakian category. 	
\end{secnumber}

\begin{secnumber} \label{general Tannakian Z}
	We first study some general constructions in the Tannakian formalism following \cite{RZ}.

	For $n\in \mathbb{Z}$, we denote abusively by $\sigma^n$ the equivalence of categories $(-)\otimes_{L,\sigma^n}L:\Vect_L\xrightarrow{\sim} \Vect_L$. 

	Let $(\mathcal{C},\omega)$ be a neutralized Tannakian over $L$. 
	We suppose that, for each $n\in \mathbb{Z}$, there exists a $\sigma^n$-semi-linear equivalence of tensor categories 
	\begin{displaymath}
		\tau_n:\mathcal{C}\to \mathcal{C}
	\end{displaymath}
	and an isomorphism of tensor functors $\alpha_n: \omega\circ \tau_n \xrightarrow{\sim} \sigma^n\circ \omega$. 
	For any pair $n,m\in \mathbb{Z}$, we suppose moreover that there exists an isomorphism of tensor functors $\varepsilon: \tau_m\circ \tau_n \simeq \tau_{m+n}$ such that 
	\begin{displaymath}
		(\id\circ \alpha_n) \circ ( \alpha_m \circ \id)= 
		\alpha_{m+n}\circ \omega(\varepsilon): 
		\omega\circ \tau_m\circ \tau_n \simeq \sigma^{m+n}\circ \omega.  
	\end{displaymath}
	Since $\omega$ is faithful, such an isomorphism $\varepsilon$ is unique. 
	
	Let $H$ be the Tannakian group of $(\mathcal{C},\omega)$. 
	The above structure defines a homomorphism
	\begin{equation} \label{action of Z on H}
		\iota:\mathbb{Z}\to \Aut(H(L)),
	\end{equation}
	by letting $\iota(n)$ send $h:\omega\to \omega$ to 
	\begin{displaymath}
		\omega \xrightarrow{\alpha_n^{-1}} \sigma^{-n}\circ \omega\circ \tau_n \xrightarrow{ h\circ \id} \sigma^{-n}\circ \omega\circ \tau_n \xrightarrow{\alpha_n} \omega.
	\end{displaymath}
	
	We define the category $\mathcal{C}^{\mathbb{Z}}$ of $\mathbb{Z}$-equivariant objects in $\mathcal{C}$ as follows. 
	An object $(X,\{c_n\}_{n\in \mathbb{Z}})$ consists of an object $X$ of $\mathcal{C}$ and isomorphisms $c_n:\tau_n(X)\xrightarrow{\sim} X$ satisfying cocycle conditions $c_{n+m}=c_n\circ \tau_n(c_m)$. 
	A morphism between $(X,\{c_n\}_{n\in \mathbb{Z}})$ and $(X',\{c'_n\}_{n\in \mathbb{Z}})$ is a morphism of $\mathcal{C}$ compatible with $c_n,c'_n$. 
\end{secnumber}

\begin{secnumber} \label{notation semilinear L}
	Let $\Gamma$ be an abstract group and $\varphi:\Gamma\to \mathbb{Z}$ a homomorphism.
	We say an action of $\Gamma$ on an $L$-vector space $V$ is \textit{$\sigma$-semi-linear (with respect to $\varphi$)} if it is additive and satisfies $\gamma(a v)=\sigma^{\varphi(\gamma)}(a) \gamma(v)$ for $\gamma\in \Gamma, a\in L$ and $v\in V$.  
	We denote by $\Rep_{L,\sigma}(\Gamma)$ the category of $\sigma$-semi-linear representations of $\Gamma$ on finite dimensional $L$-vector spaces. 

	We denote by $H(L)\rtimes \mathbb{Z}$ the semi-direct product of $H(L)$ and $\mathbb{Z}$ via $\iota$ \eqref{action of Z on H}. The short exact sequence $1\to H(L)\to H(L)\rtimes \mathbb{Z}\to \mathbb{Z} \to 1$ allows us to define the category $\Rep_{L,\sigma}(H(L)\rtimes \mathbb{Z})$. 
\end{secnumber}

\begin{prop} \label{fullyfaithful alg to L points}
	Let $H$ be a split reductive group over $L$, $\Rep_{L}(H)$ the category of algebraic representations of $H$ and $\Rep_{L}(H(L))$ the category of finite dimensional representations of the abstract group $H(L)$.  
	Then the following canonical functor is fully faithful:
\begin{displaymath}
	\Rep_L(H)\to \Rep_L(H(L)),\qquad \rho\mapsto \rho(L).
\end{displaymath}
\end{prop}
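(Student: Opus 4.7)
The plan is to reduce both faithfulness and fullness to the single geometric fact that $H(L)$ is Zariski dense in $H$, and then to deduce this density from the splitness of $H$ and the infinitude of $L$.

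First I would recall that $L$ is a (possibly infinite algebraic) extension of $K\supset \mathbb{Q}_p$, hence an infinite field of characteristic zero. Since $H$ is split reductive over $L$, it is $L$-rational in the sense that it admits an open subscheme that is $L$-birational to an affine space (e.g.\ via the open Bruhat cell $U^{-}\times T\times U$, where $T$ is a split maximal torus and $U, U^{-}$ are unipotent radicals of opposite Borel subgroups, all of which are $L$-isomorphic to affine spaces). Any nonempty open subscheme of $\mathbb{A}^{N}_{L}$ with $L$ infinite has Zariski-dense $L$-points, so $H(L)$ meets the big cell in a Zariski-dense subset; since the big cell is open and dense in the connected group $H$, we conclude that $H(L)$ is Zariski dense in $H$.

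Given this, faithfulness is immediate: if $f,g\colon\rho_{1}\to\rho_{2}$ are morphisms of algebraic representations inducing the same $L$-linear map after restriction to $H(L)$, then the algebraic morphism $f-g\colon H\to \Hom_{L}(V_{1},V_{2})$ (viewed as the difference of the two intertwining matrix-valued functions on $H$) vanishes on the Zariski-dense subset $H(L)$, hence is identically zero.

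For fullness, given an $L$-linear map $f\colon V_{1}\to V_{2}$ intertwining the $H(L)$-actions, I would view $f$ as a vector in the algebraic $H$-representation $W:=\Hom_{L}(V_{1},V_{2})=V_{1}^{\vee}\otimes V_{2}$, where $H$ acts by the rule $(h\cdot f)(v)=\rho_{2}(h)f(\rho_{1}(h)^{-1}v)$. The hypothesis that $f$ intertwines the $H(L)$-actions is exactly the statement that $f$ is fixed by every element of $H(L)$. The fixed-point subscheme $W^{H}\subset W$ is a closed subscheme cut out by the algebraic equations $h\cdot f=f$ for all $h\in H$; by Zariski density of $H(L)$ in $H$, a vector $f\in W$ is fixed by $H$ if and only if it is fixed by $H(L)$. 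Hence $f\in W^{H}(L)$, which says precisely that $f$ is an $H$-equivariant morphism of algebraic representations.

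The only substantive step is the density of $H(L)$ in $H$; once this is in place, both properties are purely formal. I do not anticipate any real obstacle, since split reductive groups over infinite fields are well known to be unirational, but the cleanest self-contained argument is the big-cell one sketched above.
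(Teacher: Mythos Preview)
The paper states this proposition without proof, so there is nothing to compare against. Your argument is correct and is the standard one: Zariski density of $H(L)$ in $H$ (via the big cell, using that $L$ is infinite) forces $W^{H(L)}=W^{H}$ for any algebraic representation $W$, and applying this to $W=\Hom_{L}(V_{1},V_{2})$ gives fullness.

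One small remark: your treatment of faithfulness is over-engineered. A morphism in either category is just an $L$-linear map $f\colon V_{1}\to V_{2}$ satisfying an intertwining condition, and the functor sends $f$ to itself. So faithfulness is tautological---there is no ``restriction to $H(L)$'' happening on the level of morphisms, and no need to invoke density there. The density argument is only needed for fullness.
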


\begin{prop} \label{prop Tannakian Z}
	Keep the assumption and notation as above. 

	\textnormal{(i)} The category $\mathcal{C}^{\mathbb{Z}}$ is a Tannakian category over $L_0=L^{\sigma=1}$ neutralized by $\omega$ over $L$ \textnormal{(\cite{DM82} \S~3)}.

	\textnormal{(ii)} Suppose that the Tannakian group $H$ of $(\mathcal{C},\omega)$ is a split reductive group over $L$. Then $\omega$ induces an equivalence of tensor categories
	\begin{equation}
		\mathcal{C}^{\mathbb{Z}} \xrightarrow{\sim} \Rep_{L,\sigma}^{\circ}(H(L)\rtimes \mathbb{Z}),
		\label{equivalence CZ}
	\end{equation}
	where $\Rep_{L,\sigma}^{\circ}(H(L)\rtimes \mathbb{Z})$ is the full subcategory of $\Rep_{L,\sigma}(H(L)\rtimes \mathbb{Z})$ \eqref{notation semilinear L} consisting of representations whose restriction to $H(L)$ is algebraic. 
\end{prop}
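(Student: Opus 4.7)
The plan is to prove both parts largely by running the standard Tannakian formalism through the extra $\mathbb{Z}$-structure, with proposition~\ref{fullyfaithful alg to L points} doing the essential work in part (ii).

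For part (i), I would first check that $\mathcal{C}^{\mathbb{Z}}$ inherits from $\mathcal{C}$ the structure of an $L_0$-linear abelian rigid symmetric monoidal category: since each $\tau_n$ is a tensor equivalence, tensor products, duals, kernels and cokernels of objects of $\mathcal{C}^{\mathbb{Z}}$ carry canonical equivariant structures, and exactness transfers from $\mathcal{C}$. The unit object is $(\mathbb{1}_{\mathcal{C}}, \{c_n^{\mathbb{1}}\})$, where $c_n^{\mathbb{1}}$ is the canonical isomorphism $\tau_n(\mathbb{1})\xrightarrow{\sim}\mathbb{1}$ coming from the fact that $\tau_n$ is monoidal; its endomorphism ring is $\omega(\mathbb{1})^{\mathbb{Z}}=L^{\sigma=1}=L_0$, explaining the base change. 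The functor $\omega$ applied to $(X,\{c_n\})$ lands in $\Vect_L$ and remains faithful, exact, and $\otimes$-compatible, hence by the standard Tannakian recognition (e.g.\ \cite{DM82} 2.11/3.2) $\mathcal{C}^{\mathbb{Z}}$ is a neutral Tannakian category over $L_0$, neutralised by $\omega$ over $L$.

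For part (ii), I define a functor $F:\mathcal{C}^{\mathbb{Z}}\to \Rep^{\circ}_{L,\sigma}(H(L)\rtimes\mathbb{Z})$ as follows. On underlying vector spaces $F(X,\{c_n\})=\omega(X)$, equipped with the tautological $H(L)$-action from $H=\Aut^{\otimes}(\omega)$ and, for $n\in\mathbb{Z}$, the $\sigma^n$-semi-linear endomorphism
\[
	\rho_n := \omega(c_n)\circ \alpha_n(X)^{-1} : \sigma^n\omega(X)\xrightarrow{\sim}\omega(\tau_n(X))\xrightarrow{\sim}\omega(X).
\]
The cocycle condition on $\{c_n\}$ together with the uniqueness of $\varepsilon:\tau_m\circ\tau_n\simeq\tau_{m+n}$ ensures $\rho_m\circ\sigma^m(\rho_n)=\rho_{m+n}$, and the definition of $\iota$ in \eqref{action of Z on H} gives exactly the semi-direct product relation $\rho_n\circ h\circ \rho_n^{-1}=\iota(n)(h)$ for $h\in H(L)$. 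Thus $F$ is well-defined and tensor.

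To prove $F$ is an equivalence, I would first invoke Tannakian duality $\mathcal{C}\simeq \Rep_L(H)$ and proposition~\ref{fullyfaithful alg to L points} to see that $\omega$ realises $\mathcal{C}$ as a full subcategory of $\Rep_L(H(L))$. Full faithfulness of $F$ then follows because morphisms on both sides are precisely morphisms of $H(L)$-representations (equivalently, of $\mathcal{C}$, via $\omega$) that in addition commute with the $\mathbb{Z}$-action, and the compatibility translates identically on the two sides through $\alpha_n$. For essential surjectivity, given $(V,\rho)\in\Rep^{\circ}_{L,\sigma}(H(L)\rtimes\mathbb{Z})$, the restriction to $H(L)$ is algebraic, hence corresponds under Tannakian duality to a unique $X\in\mathcal{C}$ with $\omega(X)=V$ as $H$-representations. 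For each $n$, the map $\rho_n$ is a $\sigma^n$-semi-linear isomorphism of $V$ commuting with $H(L)$ via $\iota(n)$; transporting it through $\alpha_n(X)$ produces an $H(L)$-equivariant isomorphism $\omega(\tau_n(X))\xrightarrow{\sim}\omega(X)$, which by full faithfulness of $\omega$ on $\mathcal{C}$ lifts to a unique $c_n:\tau_n(X)\xrightarrow{\sim}X$ in $\mathcal{C}$; the cocycle condition for $\{c_n\}$ follows from $\rho_{m+n}=\rho_m\circ\sigma^m(\rho_n)$ and uniqueness. The main obstacle is the essential surjectivity step: one must verify that a $\sigma^n$-semi-linear automorphism of $\omega(X)$ intertwining the $H(L)$-action with its $\iota(n)$-twist is algebraic, i.e.\ comes from a morphism in $\mathcal{C}$; this is precisely where proposition~\ref{fullyfaithful alg to L points} is indispensable, since it guarantees that the subcategory $\mathcal{C}\simeq\Rep_L(H)\hookrightarrow\Rep_L(H(L))$ is closed under such twisted morphisms.
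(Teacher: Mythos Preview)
Your proof follows the same approach as the paper's—building the rigid tensor structure on $\mathcal{C}^{\mathbb{Z}}$ from that on $\mathcal{C}$, defining the functor in (ii) via $\rho_n=\omega(c_n)\circ\alpha_n^{-1}$, and invoking proposition~\ref{fullyfaithful alg to L points} for full faithfulness—and is in fact more explicit on essential surjectivity, which the paper leaves to the reader. One minor point: $\mathcal{C}^{\mathbb{Z}}$ is Tannakian over $L_0$ but not \emph{neutral} (the fiber functor lands in $\Vect_L$, not $\Vect_{L_0}$), so the relevant recognition criterion is \cite{Del90}~1.10--1.13 or \cite{DM82}~\S3 rather than~2.11.
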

\begin{proof}
	(i) We define a monoidal structure on $\mathcal{C}^{\mathbb{Z}}$ by letting
	\begin{displaymath}
		(X,\{c_n\})\otimes (X',\{c'_n\})= (X'',\{c''_{n}\}),
	\end{displaymath}
	where $X''=X\otimes X'$ and $c''_n$ is the composition
	\begin{displaymath}
		\tau_n(X'')\simeq \tau_n(X)\otimes \tau_n(X') \xrightarrow{c_n\otimes c'_n} X\otimes X'.
	\end{displaymath}
	This defines a structure of symmetric monoidal category on $\mathcal{C}^{\mathbb{Z}}$. 

	We apply (\cite{Del90} 2.5) to show that $(\mathcal{C}^{\Gamma},\otimes)$ is rigid. 
	Given an object $(X,\{c_n\})$ of $\mathcal{C}^{\mathbb{Z}}$, we denote by $X^{\vee}$ be the dual of $X$ in $\mathcal{C}$ and then we have $\tau_n(X^{\vee})\simeq \tau_n(X)^{\vee}$. For each $n$, we have an isomorphism
	\begin{displaymath}
		c_{n}^{\vee}: X^{\vee} \xrightarrow{\sim} (\tau_n(X))^{\vee} \simeq \tau_n(X^{\vee}).
	\end{displaymath}
	Then we define $(X^{\vee}, \{ (c_n^{\vee})^{-1}\})$ to be the dual of $(X,\{c_n\})$ in $\mathcal{C}^{\mathbb{Z}}$.
	In view of (\cite{DM82} 1.6.5), the evaluation and coevaluation morphisms of $X$ and of $\tau_n(X)$ are compatible via $\tau_n$. 
	Then we obtain the evaluation and the coevaluation morphisms of $(X,\{c_{n}\})$ in $\mathcal{C}^{\mathbb{Z}}$ satisfying the axiom of (\cite{Del90} 2.1.2). 
	Hence $\mathcal{C}^{\mathbb{Z}}$ is a rigid abelian tensor category. 

	Since $\tau_n$ is $\sigma^n$-semi-linear, we have $\End(\id_{\mathcal{C}^{\mathbb{Z}}})\simeq L_0$. 
	The forgetful tensor functor $\mathcal{C}^{\mathbb{Z}}\to \mathcal{C}$ is exact and faithful. 
	Hence the fiber functor $\omega$ of $\mathcal{C}$ defines a fiber functor $\omega: \mathcal{C}^{\Gamma}\to \Vect_{L}$ (\cite{DM82} 3.1). 
	Then the assertion follows from (\cite{Del90} 1.10-1.13, see also \cite{DM82} footnote 12). 

	(ii) It suffices to construct an equivalence of tensor categories 
	\begin{equation}
		\Psi: \Rep_{L}(H)^{\mathbb{Z}} \xrightarrow{\sim} \Rep_{L,\sigma}^{\circ}(H(L)\rtimes \mathbb{Z}).
		\label{Rep Z to Rep}
	\end{equation}
	Let $((V,\rho),\{c_n\})$ be an object of $\Rep_L(H)^{\mathbb{Z}}$. Then we define a representation $(V,\widetilde{\rho})$ of $\Rep_{L,\sigma}^{\circ}(H(L)\rtimes \mathbb{Z})$, for any element $(h,n)\in H(L)\rtimes \mathbb{Z}$, by letting $\widetilde{\rho}(h,n)$ to be the composition 
	\begin{equation} \label{def rho tilde}
		\sigma^n(\omega (V,\rho))\xrightarrow{\alpha_{n}^{-1}} 
		\omega(\tau_n(V,\rho)) \xrightarrow{h\circ \id} 
		\omega(\tau_n(V,\rho))\xrightarrow{c_n} 
		\omega(V,\rho).
	\end{equation}
	Using the cocycle condition, one checks that the above formula defines a representation. Then we obtain the functor $\Psi$ \eqref{Rep Z to Rep}. 
	By \ref{fullyfaithful alg to L points}, the canonical morphism
	\begin{displaymath}
		\Hom_{H}(\rho,\rho')\to \Hom_{H(L)}(\rho(L),\rho'(L))
	\end{displaymath}
	is bijective. In view of \eqref{def rho tilde}, we deduce that $\Psi$ is fully faithful. 
	We leave the verification of the essential surjectivity to readers. 
\end{proof}

\begin{secnumber}
	The Frobenius pullback functor $F^*_{\Gr}=F_{\Gr/k}^+\circ \sigma^*:\Sat_G\xrightarrow{\sim} \Sat_G$ satisfies $\rH^{*}\circ F^*_{\Gr}\simeq \sigma \circ \rH^{*}$. 
	We take for every integer $n$ the tensor equivalence $\tau_n$ on $\Sat_G$ to be $|n|$-th composition of $F^*_{\Gr}$ (or a quasi-inverse of $F^*_{\Gr}$ if $n<0$) \eqref{general Tannakian Z}. 
	These functors satisfy the assumption of \ref{general Tannakian Z}. 
	With the notation of \ref{general Tannakian Z}, $\Fr\Sat_G$ is equivalent to the category $\Sat_G^{\mathbb{Z}}$. 
	In this case, we obtain the following result by \ref{prop Tannakian Z}. 
\end{secnumber}

\begin{theorem} \label{Satake arith}
	\textnormal{(i)} The category $\Fr\Sat_G$ is a Tannakian category over $L_0$, neutralized by the fiber functor $\rH^{*}$ over $L$. 
	If $t=s$ and $\sigma=\id_L$, then $\Fr\Sat_G$ is a neutral Tannakian category. 

	\textnormal{(ii)} There exists a canonical equivalence of tensor categories 
	\begin{equation}
		\Fr\Sat_G\xrightarrow{\sim} \Rep_{L,\sigma}^{\circ}(\check{G}(L)\rtimes \mathbb{Z}),
	\end{equation}
	compatible with fiber functors. 
\end{theorem}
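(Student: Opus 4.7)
The approach is to recognize that the statement is essentially a formal consequence of Proposition \ref{prop Tannakian Z} applied to the Tannakian category $(\Sat_G, \rH^{*})$ constructed in Theorem \ref{thm Tannakian cat}, with the $\mathbb{Z}$-action on $\Sat_G$ provided by iterated Frobenius pullbacks. The plan is first to verify that iterated Frobenius pullback fits the axiomatic setup of \ref{general Tannakian Z}, then to identify $\Fr\Sat_G$ with the category $\Sat_G^{\mathbb{Z}}$ of $\mathbb{Z}$-equivariant objects, and finally to invoke Proposition \ref{prop Tannakian Z} together with the identification of the Tannakian group of $\Sat_G$ with $\cG$ given by Theorem \ref{thm Langlands dual}.

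For the first step I would define $\tau_n = (F^{*}_{\Gr})^{n}$ for $n \geq 0$ and $\tau_n = ((F^{*}_{\Gr})^{-1})^{-n}$ for $n<0$; this is a $\sigma^n$-semi-linear equivalence of abelian categories by \ref{base tuple}. The fact that $\tau_n$ is a \emph{tensor} equivalence reduces to showing $F^{*}_{\Gr}$ commutes with the convolution product $\star$ and with the commutativity constraint $c$ of \ref{modify constraint}. This follows because Frobenius pullback commutes with the six operations (in particular with $m_{+}$ for the proper map $m$ of \eqref{convolution map}), with external tensor products, and with smooth descent used to construct the twisted product $\widetilde{\boxtimes}$; the sign modification in \eqref{constraint c} only depends on the parity $p(\mu)$, which is preserved under Frobenius pullback. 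The isomorphism $\alpha_n \colon \rH^{*}\circ \tau_n \simeq \sigma^n \circ \rH^{*}$ comes from commuting Frobenius pullback past $\pi_{+}$ for the structure morphism $\pi\colon \Gr\to \Spec(k)$, and the cocycle isomorphism $\varepsilon$ exists and is unique by faithfulness of $\rH^{*}$.

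Next, I would check the equivalence $\Fr\Sat_G \simeq \Sat_G^{\mathbb{Z}}$ claimed just before the theorem. Given a Frobenius structure $\varphi \colon F^{*}_{\Gr}X\xrightarrow{\sim} X$ on an object $X$ of $\Sat_G$, one defines $c_1=\varphi$ and, by iteration, $c_n=\varphi\circ F^{*}_{\Gr}(\varphi)\circ \cdots \circ (F^{*}_{\Gr})^{n-1}(\varphi)$ for $n>0$ and $c_{-n}$ by inverting. The cocycle relations $c_{n+m}=c_n\circ \tau_n(c_m)$ are automatic, and this construction is an equivalence because an object of $\Sat_G^{\mathbb{Z}}$ is determined by $c_1$ together with the cocycle condition.

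With these preparations, part (i) follows directly from Proposition \ref{prop Tannakian Z}(i): $\Fr\Sat_G\simeq \Sat_G^{\mathbb{Z}}$ is Tannakian over $L_0=L^{\sigma=1}$, and when $t=s$ and $\sigma=\id_L$ we have $L_0=L$ so that $\rH^{*}$ itself neutralizes it. Part (ii) then follows from Proposition \ref{prop Tannakian Z}(ii) applied with $H=\cG$, using that $\cG$ is split reductive over $L$ by Theorem \ref{thm Langlands dual}, yielding the equivalence $\Fr\Sat_G\simeq \Rep_{L,\sigma}^{\circ}(\cG(L)\rtimes \mathbb{Z})$ compatible with the fiber functor $\rH^{*}$. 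The main technical point, and the only one that is not formal, is the verification that $F^{*}_{\Gr}$ commutes with the tensor structure $(\star, c)$ on $\Sat_G$; once this is in place, everything else is a routine application of the Tannakian machinery already set up in \ref{general Tannakian Z}--\ref{prop Tannakian Z}.
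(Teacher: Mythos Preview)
Your proposal is correct and follows essentially the same approach as the paper: the paper sets up $\tau_n$ as iterated Frobenius pullback, asserts that these satisfy the hypotheses of \ref{general Tannakian Z}, identifies $\Fr\Sat_G$ with $\Sat_G^{\mathbb{Z}}$, and then simply invokes Proposition \ref{prop Tannakian Z}. You have supplied more detail than the paper on why $F^{*}_{\Gr}$ is a tensor equivalence and why the cocycle conditions hold, but the overall strategy is identical.
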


\begin{secnumber}
	We work with the arithmetic tuple $\mathfrak{T}_{F}=\{k,R,K,L,s,\id_{L}\}$ and we suppose there exists a square-root $p^{1/2}$ of $p$ in $L$. 
	This allows to define half Tate twist functor $(\frac{n}{2})$ for $n\in \mathbb{Z}$ by sending each object $\mathscr{M}\in \rD(X/L_{F})$, equipped with the Frobenius structure $\Phi$, to $(\mathscr{M},p^{-sn/2}\cdot \Phi)$. 
	
	For $\mu\in \cwX(T)$, we denote by $\IC_{\mu}^{\Weil}=j_{\mu,!+}(L_{\Gr_{\mu}})[2\rho(\mu)](\rho(\mu))$ the holonomic module in $\Fr\Sat_G$ with weight $0$, and by $\mathcal{S}$ the full subcategory of $\Fr\Sat_G$ consisting of direct sums of $\IC_{\mu}^{\Weil}$'s. 
	
	The category $\mathcal{S}$ is closed under the convolution on $\Fr\Sat_{G}$, i.e. $\IC_{\lambda}^{\Weil}\star \IC_{\mu}^{\Weil}$ is isomorphic to a direct sum of $\IC_{\nu}^{\Weil}$. 
	Indeed, by proposition \ref{decomposition CT}(ii), the Frobenius acts on the total cohomology $\rH^{*}(\IC_{\mu}^{\Weil})$ by a diagonalizable automorphism with eigenvalues $q^{n/2}$, $n\in \mathbb{Z}$. 
	Since $\rH^{*}$ is compatible with Frobenius structure \eqref{coh functor Frob}, so is the Frobenius action on $\rH^{*}(\IC_{\lambda}^{\Weil}\star\IC_{\mu}^{\Weil})$. 
	We have a decomposition $\IC_{\lambda}\star\IC_{\mu}\simeq \oplus \IC_{\nu}$. Then the claim follows from the fact that the the action of Frobenius on cohomology determines the isomorphism class of an object of $\Fr\Sat_G$ whose underlying holonomic module is isomorphic to a direct sum of $\IC_{\nu}$'s.

	The canonical functor $\Fr\Sat_G\to \Sat_{G}$ induces an equivalence of tensor categories $\mathcal{S}\xrightarrow{\sim} \Sat_G$.
	In particular, we obtain equivalences of tensor categories
	\begin{equation}
		\Sat: \Rep_L(\cG)\simeq \Sat_G \simeq \mathcal{S}.
		\label{Satake wt 0}
	\end{equation}
\end{secnumber}

\begin{secnumber}
	In the end, we briefly review the action of outer automorphism group of $G$ on the Satake category  $\Sat_G$ (resp. $\mathcal{S}$).
	
	Let $(\mathcal{C},\omega)$ be a Tannakian category over $L$ and $H$ the associated Tannakian group. 
	We denote by $\Aut^{\otimes}(\mathcal{C},\omega)$ the set of isomorphism classes of pairs $(\tau,\alpha)$ of a tensor equivalence $\tau:\mathcal{C}\xrightarrow{\sim} \mathcal{C}$ and an isomorphism of functors $\alpha:\omega \xrightarrow{\sim} \omega \circ \tau$. 
	This set has a natural group structure. A similar construction as in \ref{general Tannakian Z} defines a canonical morphism $\Aut^{\otimes}(\mathcal{C},\omega)\to \Aut(H)$, which is an isomorphism (\cite{HNY} lemma B.1). We apply this to the Satake category $\mathcal{S}$ (or $\Sat_{G}$) equipped with the fiber functor $\rH^{*}$. The action of $\Aut(G)$ on $\Gr_G$ induces an action on $(\mathcal{S},\rH^{*})$, and therefore an action of $\Aut(G)$ on $\cG$, i.e. a homomorphism $\iota: \Aut(G)\to\Aut(\cG)$.
\end{secnumber}

\begin{lemma}\label{action Aut Satake}
        There is a natural pinning $(\check{B},\check{T}, N)$ of $\cG$ such that that map $\iota$ factors as $\Aut(G)\twoheadrightarrow \Out(G)\xrightarrow{\sim} \Aut^{\dagger}(\cG,\check{B},\check{T}, N)\subset\Aut(\cG)$.
\end{lemma}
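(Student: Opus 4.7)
First I would show that the map $\iota:\Aut(G)\to\Aut(\cG)$ factors through $\Out(G)$. Any inner automorphism $\Ad(g)$ for $g\in G(k)$ extends to the loop group as conjugation by $g$, which acts on $\Gr_G=LG/L^+G$ by left multiplication; since $g\in G(k)\subset L^+G(k)$, the $L^+G$-equivariance structure provides an isomorphism between this operation and the identity functor on $(L^+G)$-equivariant sheaves. Hence the associated tensor automorphism of $(\mathcal{S},\rH^*)$ is canonically isomorphic to the identity, and $\iota$ descends to a homomorphism $\Out(G)\to\Aut(\cG)$.

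Next I would produce the canonical pinning of $\cG$. The maximal torus $\check T$ was identified in \ref{lemma max torus} via the constant term functor $\CT$, and a Borel $\check B\supset \check T$ was singled out in the proof of \ref{thm Langlands dual} by requiring $2\rho\in \cwX(T)=\wX(\check T)$ to be dominant. For the principal nilpotent $N=\sum_i N_{\alpha_i^\vee}$, I would construct, for each simple coroot $\alpha_i^\vee$ of $G$ (which under the root-datum duality of \ref{thm Langlands dual} is a simple root of $\cG$ relative to $(\check B,\check T)$), a canonical basis vector $N_{\alpha_i^\vee}\in\check{\mathfrak g}_{\alpha_i^\vee}$. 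Concretely, the canonical generators of the one-dimensional weight spaces $\rH^{2\rho(\lambda)}_{\rc}(S_\lambda,\IC_{\alpha_i^\vee})$ supplied by \ref{decomposition CT}(ii) (together with their interaction with the factorization isomorphism \eqref{factorization iso} and the monoidal structure) identify $\check{\mathfrak g}_{\alpha_i^\vee}$ with an explicit one-dimensional cohomological datum attached to the Schubert variety $\Gr_{\le\alpha_i^\vee}$, pinning down $N_{\alpha_i^\vee}$ up to sign; the sign is fixed using a standard real structure/integrality normalization.

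Finally I would verify that the factored map $\Out(G)\to\Aut(\cG)$ lands in $\Aut^{\dagger}(\cG,\check B,\check T,N)$ and is an isomorphism onto this subgroup. Preservation of $\check T$ follows from the functoriality of $\CT$ (any pinning-preserving automorphism of $G$ permutes the semi-infinite orbits $S_\lambda$ through the induced map on $\cwX(T)$), preservation of $\check B$ follows from preservation of the dominance order on $\cwX(T)$, and preservation of $N$ follows from the naturality of the cohomological classes used to define the $N_{\alpha_i^\vee}$. The resulting map $\Out(G)\to \Aut^{\dagger}(\cG,\check B,\check T,N)$ is then an isomorphism by the purely root-theoretic identification of both sides with the automorphism group of the based root datum of $G$ (via the standard isomorphism $\Out(G)\simeq \Aut^{\dagger}(G,B,T,\{X_{\alpha_i}\})$ on the left and the duality of based root data on the right).

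\textbf{Main obstacle.} The delicate point, as is typical in canonical-pinning constructions, is the choice of $N_{\alpha_i^\vee}$: the individual root vectors must be chosen compatibly across all $i$ so that the construction is genuinely natural with respect to $\Out(G)$. Verifying this requires unwinding how the commutativity and associativity constraints of $(\Sat_G,\star)$ interact with the factorization isomorphism and with the canonical generators of the top compactly supported cohomology of the semi-infinite orbits. This is essentially the content of \cite{HNY} Lemma B.1 (in the $\ell$-adic setting), and the arguments transfer verbatim to the present $p$-adic setting once the Tannakian package of Sections \ref{sec Sat}.1--\ref{Langlands dual subsec} is in place.
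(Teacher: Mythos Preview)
Your approach is correct and matches the paper's: the paper does not give an argument but simply defers to \cite{HNY} lemma B.2 and \cite{RZ} lemma A.6, and your sketch (factoring through $\Out(G)$ via the $L^+G$-equivariance, building the pinning from $\CT$ and the canonical top-cohomology classes of \ref{decomposition CT}(ii), then checking $\Out(G)$-naturality) is precisely the content of those references transported to the arithmetic $\mathscr{D}$-module setting. One small correction: at the end you cite \cite{HNY} Lemma B.1, but the pinning construction is Lemma B.2 there; Lemma B.1 is the general Tannakian isomorphism $\Aut^{\otimes}(\mathcal{C},\omega)\simeq \Aut(H)$ already invoked just before the statement.
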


The lemma can be shown in the same way as in (\cite{HNY} lemma B.2 or \cite{RZ} lemma A.6).
In particular, for $\sigma\in \Aut(G)$ and $V\in \Rep(\cG)$, we have $\sigma^*\Sat(V)\simeq \Sat( \iota(\sigma) V)$.

\section{Bessel $F$-isocrystals for reductive groups} \label{Bessel isoc}
        In this section, we construct Bessel $F$-isocrystals for reductive groups and calculate their monodromy groups. 
        We use notations from \ref{basic notation}, with $k$ being a finite field of $q=p^s$ elements.
	We assume moreover that there exists an element $\pi\in K$ satisfying $\pi^{p-1}=-p$ and a square root of $p$ in $K$. 
	We fix an arithmetic base tuple $\{k=\mathbb{F}_q, R, K, L, s, \id_{L}\}$ \eqref{basic def} and an isomorphism $\overline{K}\simeq \mathbb{C}$ (in order to talk about weight). 
	
	We fix $\{0,\infty\}\subset \P1$ (over some base that we will specify in each subsection), and set $X=\P1-\{0,\infty\}$. Although $X\simeq\Gm$, it is more convenient to regard $X$ as an algebraic curve equipped with a simply transitive action of $\Gm$.
        
        Throughout this section, let $G$ be a split reductive group (over some base). We fix a Borel subgroup $B\subset G$ and a maximal torus $T\subset B$. Let $U\subset B$ be the unipotent radical of $B$, and $U^{\op}\subset B^{\op}$ the opposite Borel and its unipotent radical. Let $T_{\ad}\subset B_{\ad}\subset G_{\ad}$ denote the quotients of $T\subset B\subset G$ by the center $Z(G)$ of $G$.
	We denote by $(\cG,\cB,\cT)$ the Langlands dual group of $G$ over $L$, constructed by the geometric Satake equivalence \eqref{Langlands dual subsec}.

\subsection{Kloosterman $F$-isocrystals for reductive groups} \label{construction HNY} 

	In this subsection, we follow the method of Heinloth-Ng\^o-Yun \cite{HNY} to produce overconvergent $F$-isocrystals on $X$ by applying the geometric Langlands correspondence. 

	We work with schemes over $k$. 
	We will consider with both geometric coefficients and arithmetic coefficients, but for simplicity, 
	we omit $L_{\blacktriangle}$ from the notation $\Hol(-/L_{\blacktriangle}),\rD(-/L_{\blacktriangle})$ and $L$ from $\Rep_L(-)$. 

\begin{secnumber}
	Let $\mathcal{G}=G\times \P1$. 
	For a coordinate $x$ on $\P1$, so $y=x^{-1}$ is a local coordinate around $\infty$, we denote by
	\begin{eqnarray*}
		I(0)&=& \{g\in G(k\llbracket y\rrbracket)\mid g(0)\in B\} \quad \textnormal{the Iwahori subgroup}, \\
		I(1)&=& \{g\in G(k\llbracket y\rrbracket)\mid g(0)\in U\} \quad \textnormal{the unipotent radical of $I(0)$},\\
		Z(G)(1)&=& \{g\in Z(G)(k\llbracket y\rrbracket)\mid g(0)\equiv 1 \mod y\},\\
		I(2)&=&Z(G)(1)[I(1),I(1)],\\
		I(i)^{\op}&\subset& G(k\llbracket x\rrbracket) \textnormal{ the analogous groups obtained by opposite Borel subgroup}.
	\end{eqnarray*}	
	If $G$ is semisimple, $I(2)=[I(1),I(1)]$. On the other hand, if $G$ is a torus, then $I(2)=I(1)$. (So our definition of $I(2)$ is slightly different from \cite{HNY} 1.2 when $G$ is not semisimple, but for $G=\GL_n$ coincides with the one in \cite{HNY} 3.1.) These groups are independent of the choice of $x$.
	
	By abuse of notations, we use the same notations for the corresponding (ind)-group schemes over $k$. Then 
	\begin{equation}\label{affine simple}
	        I(1)/I(2)\simeq \bigoplus_{\alpha \textnormal{ affine simple}} U_{\alpha}
	\end{equation} 
	where $U_{\alpha}(k)\subset G(k\llbracket s\rrbracket)$ is the root subgroup corresponding to $\alpha$. We also write
	\begin{eqnarray*}
	        \Omega&=&N_{G(k(\!(x)\!))}(I(0)^{\op})/I(0)^{\op},
        	\end{eqnarray*}
	which is regarded as a discrete group over $k$.

	We denote by $\Gmn$ the group scheme over $\P1$ such that (\cite{HNY} 1.2)
	\begin{displaymath}
		\Gmn|_{X} = G\times X,\quad \Gmn(\mathscr{O}_{0}) = I(m)^{\op}\subset G(\mathscr{O}_{0}), \quad \Gmn(\mathscr{O}_{\infty}) = I(n) \subset G(\mathscr{O}_{\infty}). 
	\end{displaymath}
	
	We denote by $\Bun_{\Gmn}$ the moduli stack of $\Gmn$-bundles on $\P1$. Let $\Bun_{\Gmn}^{0}$ denote its connected component containing the trivial $\Gmn$-bundle $\star: \Spec(k) \to \Bun_{\Gmn}$.
	For each $\gamma\in \Omega$, there is a canonical isomorphism $\Hk_{\gamma}:\Bun_{\mathcal{G}(0,n)}\simeq \Bun_{\mathcal{G}(0,n)}$ given by the Hecke modification of $\mathcal{G}(0,n)$-bundles at $0\in\P1$ corresponding to $\gamma$ (\cite{HNY} Corollary 1.2). This induces a canonical bijection between $\Omega$ and the set of connected components of $\Bun_{\mathcal{G}(0,n)}$ (and therefore all $\Bun_{\Gmn}$). Let $\Bun_{\Gmn}^{\gamma}$ denote the connected component corresponding to $\gamma$ under the bijection. For $\gamma\in\Omega$, let $i_{\gamma}=\Hk_{\gamma}(\star): \Spec(k)\to \Bun_{\mathcal{G}(0,n)}^{\gamma}$.
	
	There is also the action of $I(1)/I(2)$ on $\Bun_{\Gzt}$ by modifying $\Gzt$-bundles at $\infty$. Let
	\begin{equation}\label{big open cell}
	         j: \Omega\times I(1)/I(2)\to \Bun_{\Gzt},
	\end{equation}         
         be the open immersion of the big cell, defined by applying the action of $I(1)/I(2)\times\Omega$ to the trivial $\Gzt$-bundle (\cite{HNY} Corollary 1.3). Let $j_{\gamma}:I(1)/I(2)\to \Bun_{\Gzt}^{\gamma}$ 
	denote its restriction to the component corresponding to $\gamma$.
\end{secnumber}

\begin{secnumber}
	The stack of Hecke modifications of $\Gmn$-torsors (over $X$) is:
	\begin{displaymath}
		\Hke_{\Gmn}^{X}(S):= \bigg\{(\mathscr{E}_1,\mathscr{E}_2,x,\beta) \bigg| 
		\txt{	$\mathscr{E}_i\in \Bun_{\Gmn}(S)$, \quad $x:S\to X$\\
		$\beta:\mathscr{E}_1|_{X_S-\Gamma_{x}}\xrightarrow{\sim} \mathscr{E}_{2}|_{X_S-\Gamma_{x}}$ } \bigg\}.
	\end{displaymath}
	There exist natural morphisms
	\begin{equation} \label{def Hk opr}
		\xymatrix{
			& \Hke_{\Gmn}^{X} \ar[ld]_{\pr_1} \ar[rd]^{\pr_2} \ar[r]^{q} & X \\
			\Bun_{\Gmn} & & \Bun_{\Gmn}\times X,
		}
	\end{equation}
	where $\pr_1$ (resp. $\pr_2$, resp. $q$) sends $(\mathscr{E}_1,\mathscr{E}_2,x,\beta)$ to $\mathscr{E}_1$ (resp. $(\mathscr{E}_2,x)$, resp. $x$). 

	Following \cite{HNY}, we denote by $\GR$ the Beilinson-Drinfeld Grassmannian of $\Gmn$ with modifications on $X$.  Note that $\GR\simeq \Gr_{G,X}\simeq \Gr_G\times X$ and therefore is independent of $(m,n)$.
	There exists a smooth atlas $\varpi: U\to \Bun_{\Gmn}$ such that 
\begin{eqnarray}
	U\times_{\Bun_{\Gmn},\pr_1} \Hke^{X}_{\Gmn} \simeq U\times \GR,  \label{altas pr1}\\
	(U\times \Gm)\times_{(\Bun_{\Gmn}\times X),\pr_2} \Hke^{X}_{\Gmn} \simeq U\times \GR. \label{altas pr2} 
\end{eqnarray}
	
For $V\in \Rep(\cG)$, we associate a holonomic module $\Sat(V)$ on $\Gr_G$ by the geometric Satake equivalence \eqref{Satake wt 0}. 
We denote abusively by $\IC_{V}$ the holonomic module on $\Hke_{\Gmn}^{X}$ defined by smooth descent of $K_{U\times X}\boxtimes \Sat(V)$ on $U\times X \times \Gr_G$ (supported in a subscheme $U\times X\times \Gr_{G,V}$). Then $\IC_{V}$ is supported in a substack $\Hke_{\Gmn,V}^{X}$ of $\Hke_{\Gmn}^{X}$. 

	The geometric Hecke operators is defined as a functor 
	\begin{eqnarray} \label{def Hk op}
			\Hk: \Rep(\check{G}) \times \rD(\Bun_{\Gmn}) &\to& \rD(\Bun_{\Gmn} \times X), \\ 
			(V, \mathscr{M}) &\mapsto& \Hk_V(\mathscr{M}) :=\pr_{2,!} \bigl( \pr_{1,V}^{+}(\mathscr{M}) \otimes \IC_{V}\bigr). \nonumber
	\end{eqnarray}
	Here $\pr_{1,V}:\Hke_{\Gmn,V}^{X}\to \Bun_{\Gmn}$ and $\pr_{2}|_{\Hke_{\Gmn,V}^{X}}:\Hke_{\Gmn,V}^{X}\to \Bun_{\Gmn}\times X$ are schematic (\ref{altas pr1}, \ref{altas pr2}), which allows us to apply cohomological functors of $\pr_{1,V},\pr_2$ \eqref{hol mods stack}. 
	
	We call a tensor functor
	\begin{displaymath}
		E:\Rep(\cG)\to \Sm(X/L) \quad \textnormal{(resp. $\Sm(X/L_F)$)}
	\end{displaymath}
	\textit{$\cG$-valued overconvergent isocrystal (resp. $F$-isocrystal) $E$ on $X$}. 
	We denote by $E_V$ its value on $V\in \Rep(\cG)$. \textit{A Hecke eigen-module with eigenvalue} $E$ is a holonomic module $\mathscr{M}$ on $\Bun_{\Gmn}$ together with isomorphisms 
	\[
	      \Hk_V(\mathscr{M})\xrightarrow{\sim} \mathscr{M}\boxtimes E_V, \quad V\in \Rep(\cG),
	\]      
	which are compatible with tensor structure on $\Rep(\cG)$ and composition of Hecke operator.  We refer to \cite[5.4.2]{BD99} for the precise definition and detailed discussions.
\end{secnumber}

\begin{secnumber}
	We take a non-trivial additive character $\psi:\mathbb{F}_{p}\to K^{\times}$ and denote by $\pi\in K$ the associated element satisfying $\pi^{p-1}=-p$ \eqref{Dwork isocrystal}. Let $\mathscr{A}_{\psi}$ be the Dwork $F$-isocrystal on $\mathbb{A}^1$ \eqref{Dwork isocrystal}.
	
	We fix a generic linear function $\phi$ of $I(1)/I(2)$, that is, a homomorphism $\phi: I(1)/I(2)\to \mathbb{A}^1$ of algebraic group over $k$ whose restriction to each $U_{\alpha}$ is an isomorphism 
	\begin{equation}\label{phialpha}
	     \phi_{\alpha}:=\phi|_{U_{\alpha}}: U_{\alpha}\simeq \mathbb{A}^1.
	\end{equation} 	
	Let $\mathscr{A}_{\psi\phi}=\phi^{+}(\mathscr{A}_{\psi})$. 
        (Note that our notation is slightly abusive as this sheaf depends only on the character $\psi\circ\mathrm{tr}_{k/\mathbb F_p}\circ\phi$ of $I(1)/I(2)$ as a $p$-group).
	We denote by $\Hol(\Bun_{\Gzt})^{I(1)/I(2),\mathscr{A}_{\psi\phi}}$ the category of holonomic modules on $\Bun_{\Gzt}$ which are $(I(1)/I(2),\mathscr{A}_{\psi\phi})$-equivariant.

	By repeating the argument of (\cite{HNY} 2.3), we obtain a parallel result for holonomic modules. 	
\end{secnumber}

\begin{lemma}[\cite{HNY} 2.3] \label{lemma clean}
	\textnormal{(i)} The canonical morphism $j_{\gamma,!}(\mathscr{A}_{\psi\phi})\xrightarrow{\sim}j_{\gamma,+}(\mathscr{A}_{\psi\phi})$ is an isomorphism.

	\textnormal{(ii)} The functor 
	\begin{eqnarray*}
		\Hol(X) &\to & \Hol(\Bun^{\gamma}_{\Gzt}\times X)^{I(1)/I(2),\mathscr{A}_{\psi\phi}} \\
		\mathscr{M} & \mapsto & j_{\gamma,!}(\mathscr{A}_{\psi\phi})\boxtimes \mathscr{M}
	\end{eqnarray*}
	is an equivalence of categories, with 
	a quasi-inverse given by 
	\begin{displaymath}
		\mathscr{N}\mapsto (i_{\gamma}\times \id_{X})^+(\mathscr{N}) \simeq 
		(i_{\gamma}\times \id_{X})^!(\mathscr{N}).
	\end{displaymath}
\end{lemma}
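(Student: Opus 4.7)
The plan is to transpose the proof of \cite{HNY} Lemma~2.3 from $\ell$-adic sheaves to arithmetic $\mathscr{D}$-modules, using the six-functor formalism of Section~\ref{Dmods}. The crucial analytic input is the cleanness and cohomological vanishing of the Dwork $F$-isocrystal on $\A1$: if $\overline{j}:\A1\hookrightarrow\P1$ is the open inclusion, the canonical map $\overline{j}_!(\mathscr{A}_\psi)\to\overline{j}_+(\mathscr{A}_\psi)$ is an isomorphism and $\rH^*(\P1,\overline{j}_!\mathscr{A}_\psi)=0$. This reflects the irregularity $1$ of $\mathscr{A}_\psi$ at infinity and is a standard computation of rigid cohomology.

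For (i), I would use the stratification of $\Bun_{\Gzt}^{\gamma}$ by $I(1)/I(2)$-orbits recalled (in the $\ell$-adic setting) in \cite{HNY} \S 1--2: the open orbit is exactly $j_\gamma(I(1)/I(2))$, and every non-open orbit $Z_w$ carries a free action of at least one affine simple root subgroup $U_\alpha\simeq\A1$ from \eqref{affine simple}, along which the character $\phi$ restricts via \eqref{phialpha} to the identity of $\A1$. The cone of $j_{\gamma,!}\mathscr{A}_{\psi\phi}\to j_{\gamma,+}\mathscr{A}_{\psi\phi}$ is supported on $\bigcup_w Z_w$, so it suffices to show $i_w^+ j_{\gamma,+}(\mathscr{A}_{\psi\phi})=0$ for each non-open stratum $i_w:Z_w\to\Bun_{\Gzt}^{\gamma}$. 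Pulling back to a smooth atlas via \eqref{altas pr1}--\eqref{altas pr2} and using base change, this reduces to the vanishing of the $+$-pushforward of $\mathscr{A}_\psi$ along an $\A1$-fibration, which is the cleanness/vanishing input above. An entirely dual argument for $i_w^! j_{\gamma,!}\mathscr{A}_{\psi\phi}$ gives the $!$-version simultaneously, so both $j_{\gamma,!}\mathscr{A}_{\psi\phi}$ and $j_{\gamma,+}\mathscr{A}_{\psi\phi}$ are clean extensions and therefore canonically isomorphic.

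For (ii), the content is now essentially formal. The functor $\mathscr{M}\mapsto j_{\gamma,!}(\mathscr{A}_{\psi\phi})\boxtimes\mathscr{M}$ takes values in the $(I(1)/I(2),\mathscr{A}_{\psi\phi})$-equivariant category because $\mathscr{A}_{\psi\phi}$ is itself such an object on $I(1)/I(2)$ (for the translation action). Conversely, since $I(1)/I(2)$ acts freely and transitively on its open orbit and the category of $(I(1)/I(2),\mathscr{A}_{\psi\phi})$-equivariant holonomic modules on this orbit is canonically equivalent to $\Hol(\mathrm{pt})$ (smooth descent, \cite{Abe18} 2.1.13), restriction along $i_\gamma\times\id_X$ identifies equivariant modules on the open orbit$\,\times\,X$ with $\Hol(X)$. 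Part (i) and (\ref{lemma uniqueness intermediate extension}(i)) then propagate this identification to all of $\Bun_{\Gzt}^{\gamma}\times X$, and the two candidate functors are seen to be mutually quasi-inverse; the coincidence of $(i_\gamma\times\id_X)^+$ and $(i_\gamma\times\id_X)^!$ comes from Poincar\'e duality \eqref{pullback of smooth mods} together with equivariance.

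The principal technical obstacle is (i): one must check that the HNY orbit stratification of the stack $\Bun^{\gamma}_{\Gzt}$ transports correctly to the arithmetic $\mathscr{D}$-module setting, i.e.\ that the $\A1$-fibrations along the $U_\alpha$-directions are preserved after passing to the schematic atlases of \eqref{altas pr1}--\eqref{altas pr2}, and that the elementary vanishing $\rH^*(\A1,\mathscr{A}_\psi)=0$ propagates through these smooth descent steps and through the possibly non-finite-type ind-stack structure. Once this cleanness is established, the remaining statements are formal consequences of equivariant descent.
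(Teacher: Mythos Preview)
Your proposal is correct and follows exactly the route the paper intends: the paper does not give a self-contained proof but simply says ``by repeating the argument of (\cite{HNY} 2.3), we obtain a parallel result for holonomic modules,'' and your outline is a faithful unpacking of that argument in the arithmetic $\mathscr{D}$-module framework. One minor remark: the atlases \eqref{altas pr1}--\eqref{altas pr2} are for the Hecke stack rather than $\Bun_{\Gzt}$ itself, so for part (i) you work directly with the Birkhoff stratification of $\Bun_{\Gzt}^{\gamma}$ (as in \cite{HNY} \S 1--2) rather than pulling back through those particular equations, but this does not affect the substance of your argument.
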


We denote by $A_{\psi\phi}$ the object of $\Hol(\Bun_{\Gzt})^{I(1),\mathscr{A}_{\psi\phi}}$ defined by $(j_{\gamma,!}(\mathscr{A}_{\psi\phi})[\dim \Bun_{\Gzt}])_{\gamma\in \Omega}$.

\begin{theorem} \label{thm eigensheaves}
	\textnormal{(i)} For $(m,n)=(0,2)$, the holonomic module $A_{\psi\phi}$ \eqref{lemma clean} is a Hecke eigen-module with Hecke eigenvalue a $\cG$-valued overconvergent $F$-isocrystal
\begin{equation}
	\Kl^{\rig}_{\check{G}}(\psi\phi):\Rep(\check{G})\to \Sm(X/L_{F}).
	\label{Kl rig}
\end{equation}

\textnormal{(ii)} For every representation $V$ of $\cG$, $\Kl_{\cG,V}^{\rig}(\psi\phi)$  is pure of weight zero.
\end{theorem}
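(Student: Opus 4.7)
The plan is to mimic the Heinloth--Ng\^o--Yun construction \cite{HNY}, using the geometric Satake equivalence for arithmetic $\mathscr{D}$-modules established in \S\ref{sec Sat} (especially theorem \ref{Satake arith} and lemma \ref{action Aut Satake}) together with the ULA/weight formalism of \S\ref{Dmods}. The first step is to show that for every $V\in\Rep(\cG)$ the Hecke module $\Hk_{V}(A_{\psi\phi})$ lies in the $(I(1)/I(2),\mathscr{A}_{\psi\phi})$-equivariant category on $\Bun_{\Gzt}\times X$. This is formal: the Hecke modification at a point of $X$ commutes with the $I(1)/I(2)$-action at $\infty$, so the $(I(1)/I(2),\mathscr{A}_{\psi\phi})$-equivariance of $A_{\psi\phi}$ is transported by $\pr_{1,V}^{+}$ and preserved by $\pr_{2,!}$. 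By the cleanness lemma \ref{lemma clean}(ii) there is then a uniquely determined object $\Kl_{\cG,V}^{\rig}(\psi\phi)\in\Hol(X)$ with
\[
\Hk_{V}(A_{\psi\phi})\simeq A_{\psi\phi}\boxtimes \Kl_{\cG,V}^{\rig}(\psi\phi),
\]
obtained as $(i_{\gamma}\times\id_{X})^{+}\Hk_{V}(A_{\psi\phi})[-\dim\Bun_{\Gzt}]$ for any $\gamma\in\Omega$; the same equivariance argument shows the assignment $V\mapsto\Kl_{\cG,V}^{\rig}(\psi\phi)$ is a tensor functor once the geometric Satake equivalence \ref{thm Tannakian cat} is combined with the compatibility of convolution on $\Gr_G$ with composition of Hecke correspondences (cf.\ \cite{HNY} proposition 2.9).

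The next step, and the principal technical point, is to show that $\Kl_{\cG,V}^{\rig}(\psi\phi)$ is a smooth object of $\rD(X/L_{F})$, so that by \eqref{mod specialisation} it corresponds to an overconvergent $F$-isocrystal on $X$. The route is to check that $\Hk_{V}(A_{\psi\phi})$ is ULA with respect to the projection to $X$ and then apply corollary \ref{ULA id local system}. For ULA, one uses that $\IC_{V}$ is a $\mathscr{D}$-module pulled back (up to a twist and a shift) from $\Sat(V)\in\Sat_{G}$ along the evaluation map $\Hke^{X}_{\Gzt}\to [L^{+}G\backslash\Gr_{G}]$, hence is ULA over $X$; combined with proposition \ref{basic ULA}(iii)--(iv) and the properness of $\pr_{2}$ on the support of $\IC_{V}$ (the support is contained in $\pr_{2}^{-1}(\Bun_{\Gzt}\times X)$ and $\pr_{2}$ is representable and proper on this closed substack by \eqref{altas pr2}), the ULA property descends to $\Hk_{V}(A_{\psi\phi})$.

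For the Frobenius structure, observe that $A_{\psi\phi}$ carries a canonical Frobenius structure coming from the Frobenius structure on $\mathscr{A}_{\psi}$ recalled in \ref{Dwork isocrystal}, and that $\IC_{V}$ carries the natural Frobenius structure $\IC_{V}^{\Weil}$ of weight zero in the sense of the arithmetic Satake equivalence \ref{Satake arith}; the six functor formalism then equips $\Kl_{\cG,V}^{\rig}(\psi\phi)$ with a Frobenius structure, and the compatibilities of Satake together with lemma \ref{action Aut Satake} make $\Kl_{\cG}^{\rig}(\psi\phi)$ a tensor functor valued in $\Sm(X/L_{F})$.

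Finally, purity in (ii) follows by weight-counting. Both factors in $\pr_{1,V}^{+}A_{\psi\phi}\otimes\IC_{V}$ are $\iota$-pure of weight $0$ (with the normalization $\IC_{V}^{\Weil}$), and $\pr_{1,V}^{+}$ preserves weights \ref{def weight}(i); the map $\pr_{2}$ is proper on the relevant support, so $\pr_{2,!}=\pr_{2,+}$ does not shift weights either, so $\Hk_{V}(A_{\psi\phi})$ is $\iota$-pure of weight $0$; therefore so is $\Kl_{\cG,V}^{\rig}(\psi\phi)$, as a direct summand (up to shift/twist) of the $+$-restriction along a section of $\pr_{2}$. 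The hardest step in practice is the ULA verification, because one has to transport ULA through the smooth atlas \eqref{altas pr1}--\eqref{altas pr2} of the stack $\Bun_{\Gzt}$ and deal with the fact that $\Hke^{X}_{\Gzt}$ is only schematic over $\Bun_{\Gzt}\times X$ after restriction to substacks of bounded modification; once this is in place everything else is a formal consequence of the six functor formalism and the Tannakian structure on $\Sat_{G}$.
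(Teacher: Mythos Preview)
Your overall strategy is the paper's, but two of your steps do not go through as written, and both are repaired by the same ingredient you omit: the propagation of cleanness from $\Bun_{\Gzt}$ to the Hecke stack.

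First, when you invoke lemma~\ref{lemma clean}(ii) to produce $\Kl_{\cG,V}^{\rig}\in\Hol(X)$, you are implicitly asserting that $\Hk_V(A_{\psi\phi})[1]$ is holonomic, not merely a complex. Your ULA argument does not give this: corollary~\ref{ULA id local system} only tells you each cohomology is smooth, not that there is a single one. The paper proves holonomicity separately (lemma~\ref{lemma hol Hk}): one pulls cleanness back to the Hecke stack to get
\[
j'_{\gamma,!}\bigl(\pr_{1,V}^{+}(\mathscr{A}_{\psi\phi})\otimes\IC_V\bigr)\;\xrightarrow{\ \sim\ }\;j'_{\gamma,+}\bigl(\pr_{1,V}^{+}(\mathscr{A}_{\psi\phi})\otimes\IC_V\bigr),
\]
and then uses that $\pr_2\circ j'_\gamma$ is \emph{affine}, so the $!$- and $+$-pushforwards are respectively right and left $t$-exact; their coincidence forces exactness.

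Second, your purity argument is incorrect as stated. The map $\pr_{1,V}$ has singular Schubert fibres, so $\pr_{1,V}^{+}$ does \emph{not} preserve purity; \ref{def weight}(i) only yields weight $\le 0$ for $\pr_{1,V}^{+}A_{\psi\phi}$, and tensoring with $\IC_V$ and pushing forward properly still only gives $\le 0$. The paper instead base-changes to $\star\times X$ and works on the \emph{open} cell $\GR_V^{\circ}$: there the input $p_{1,V}^{\circ,+}(\mathscr{A}_{\psi\phi})\otimes\IC_V|_{\GR^{\circ}}$ is visibly pure (a rank-one smooth object tensored with an IC sheaf), and the cleanness above gives
\[
p_{2,!}^{\circ}(\,\cdots\,)\;\xrightarrow{\ \sim\ }\;p_{2,+}^{\circ}(\,\cdots\,)
\]
along the \emph{affine} map $p_2^\circ$, so one obtains both weight $\le 0$ and $\ge 0$. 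If you prefer to keep your proper-$\pr_2$ viewpoint, you must first use the same cleanness on $\Hke_V$ to see that $\pr_{1,V}^{+}(A_{\psi\phi})\otimes\IC_V=j'_{\gamma,!}(\text{pure})=j'_{\gamma,+}(\text{pure})$ is itself pure; only then does proper pushforward give purity. Either way, the missing idea is that cleanness---not properness---is what produces the lower bound on weights.
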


If $\psi$ (resp. $\psi$ and $\phi$) is clear from the context, we simply write $\Kl^{\rig}_{\check{G}}(\psi\phi)$ by $\Kl^{\rig}_{\cG}(\phi)$ (resp. $\Kl^{\rig}_{\cG}$).
In the remainder of this section, we prove the above theorem by repeating the strategy in the $\ell$-adic case, following \cite{HNY}.
The first step is to show holonomicity. 

\begin{lemma}
	For every $V\in \Rep(\check{G})$, the complex $\Hk_V(A_{\psi\phi})[1]$ is holonomic.  
	\label{lemma hol Hk}
\end{lemma}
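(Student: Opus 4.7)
The plan combines three inputs: the cleanness of $\mathscr{A}_{\psi\phi}$, which rigidly governs $A_{\psi\phi}$ via its restriction to a single point on each component of $\Bun_{\Gzt}$ (Lemma \ref{lemma clean}); the ULA property of Satake sheaves on $\Gr_G$ (Proposition \ref{prop fusion prod}); and the geometric disjointness of the Hecke correspondence, which acts at points of $X\subset\P1\smallsetminus\{\infty\}$, from the level structure at $\infty$.

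First I would observe that $\Hk_V(A_{\psi\phi})$ on $\Bun_{\Gzt}\times X$ is naturally $(I(1)/I(2),\mathscr{A}_{\psi\phi})$-equivariant along the first factor. Since Hecke modifications take place over $X$ and the $I(1)/I(2)$-action is concentrated at $\infty$, the action of $I(1)/I(2)$ lifts canonically to $\Hke^{X}_{\Gzt}$ covering both projections in \eqref{def Hk opr}; together with the equivariance of $\IC_V$, the projection formula then transports the $(I(1)/I(2),\mathscr{A}_{\psi\phi})$-structure from $A_{\psi\phi}$ to $\Hk_V(A_{\psi\phi})$. Applying Lemma \ref{lemma clean}(ii) componentwise, the equivariant category on $\Bun_{\Gzt}^{\gamma}\times X$ is equivalent to $\rD(X)$ via pullback along $i_{\gamma}\times\id_X$, and under this equivalence holonomic objects correspond to holonomic objects (up to a shift by $\dim\Bun_{\Gzt}$). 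Thus the claim reduces to showing that each $\mathscr{F}_{V,\gamma}:=(i_{\gamma}\times\id_X)^{+}\Hk_V(A_{\psi\phi})[1]$ is holonomic on $X$.

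To compute $\mathscr{F}_{V,\gamma}$, I would use the smooth atlas $\varpi:U\to\Bun_{\Gzt}$ and the identifications \eqref{altas pr1}, \eqref{altas pr2}, which together express the preimage of $i_{\gamma}$ under the Hecke correspondence as (a smooth quotient of) the projection $\Gr_G\times X\to X$, with $\IC_V$ corresponding, up to the appropriate shift, to the smooth pullback of $\Sat(V)\boxtimes L_X$. Since the convolution morphism and hence the support of $\IC_V$ are ind-proper, proper base change together with smooth descent reduce the question to showing that the pushforward of $\Sat(V)\boxtimes L_X$ along $\Gr_G\times X\to X$ is a smooth holonomic module up to the expected shift. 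But this is precisely the K\"unneth-type isomorphism \eqref{constant module} combined with the ULA property in Proposition \ref{prop fusion prod}: the pushforward is the constant module on $X$ with value $\rH^{*}(\Gr_G,\Sat(V))$, placed in $\Hol(X)[-1]\supset\Sm(X)$ as in \ref{generality overconv isocrystal}, which yields the desired holonomicity after applying $[1]$.

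The main obstacle will be the careful bookkeeping of dimensions and shifts on the stack $\Bun_{\Gzt}$, and the simultaneous use of the two atlas presentations \eqref{altas pr1} and \eqref{altas pr2} to match both projections of the Hecke correspondence. The shift conventions relating smooth modules to holonomic modules on a one-dimensional base, and between $A_{\psi\phi}$ and its underlying clean extension $j_{\gamma,!}(\mathscr{A}_{\psi\phi})$, must be reconciled so that the final $[1]$ in the statement comes out correctly; the rest is a translation of the $\ell$-adic argument of (\cite{HNY} \S2) to the setting of arithmetic $\mathscr{D}$-modules, using the six-functor formalism and ULA results developed in \S\ref{section six functors formalism}--\S\ref{subsection ULA}.
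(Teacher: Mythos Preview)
Your second paragraph contains a genuine gap. When you base-change $\Hk_V(A_{\psi\phi})=\pr_{2,!}(\pr_{1,V}^{+}(A_{\psi\phi})\otimes\IC_V)$ along $i_\gamma\times\id_X$, the fiber $\pr_2^{-1}(\{i_\gamma\}\times X)\simeq\GR$ carries the sheaf $p_1^{+}(A_{\psi\phi})\otimes(\Sat(V)\boxtimes L_X)$, where $p_1:\GR\to\Bun_{\Gzt}$ is the map of \eqref{Hk vs BD} recording the \emph{modified} bundle. This map is far from constant, so the integrand is not $\Sat(V)\boxtimes L_X$ and its pushforward along $\GR\to X$ is not the constant module with value $\rH^{*}(\Gr_G,\Sat(V))$; it is precisely the Kloosterman complex whose holonomicity is in question (cf.\ \eqref{cal Hk eigenvalue}). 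Your reduction is therefore circular. A secondary issue is that Lemma \ref{lemma clean}(ii) is stated as an equivalence of \emph{abelian} categories of holonomic modules; using it to detect holonomicity of a complex via $(i_\gamma\times\id_X)^{+}$ presupposes either a derived upgrade or that the cohomology sheaves of $\Hk_V(A_{\psi\phi})$ already lie in the equivariant heart.

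The paper's argument is different and does not pass through the point $i_\gamma$ at all. It lifts the cleanness isomorphism of Lemma \ref{lemma clean}(i) to the Hecke stack, obtaining $j'_{\gamma,!}(\pr_{1,V}^{+}(\mathscr{A}_{\psi\phi})\otimes\IC_V)\xrightarrow{\sim}j'_{\gamma,+}(\pr_{1,V}^{+}(\mathscr{A}_{\psi\phi})\otimes\IC_V)$ (checked after smooth pullback to the atlas \eqref{altas pr1}), and then uses the geometric input that $\pr_2\circ j'_\gamma$ is \emph{affine} (\cite{HNY} remark 4.2). Artin vanishing for affine morphisms (\cite{AC18} 1.3.13) makes $(\pr_2\circ j'_\gamma)_!$ right t-exact and $(\pr_2\circ j'_\gamma)_+$ left t-exact; since they agree by cleanness, the pushforward is concentrated in a single degree, hence holonomic. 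The ULA/vanishing-cycle argument you sketched is what the paper uses \emph{afterwards}, in the proof of Theorem \ref{thm eigensheaves}(i), to show the resulting eigenvalue is smooth---but holonomicity is needed first as an input to that step.
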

\begin{proof}
For $\gamma\in \Omega$, we denote by $j_{\gamma}:I(1)/I(2)\to \Bun_{\Gzt}$ the open immersion and by $j_{\gamma}':\pr_{1,V}^{-1}(I(1)/I(2))\to \Hke_{\Gzt}^{X}$ the base change of $j_{\gamma}$ to Hecke stack. Then the restriction of $\pr_2$ to $\pr_{1,V}^{-1}(I(1)/I(2))$
\begin{displaymath}
	\pr_2:\pr_{1,V}^{-1}(I(1)/I(2)) \to \Bun_{\Gzt}\times X
\end{displaymath}
is affine (\cite{HNY} remark 4.2). 
We claim that the canonical morphism
\begin{equation}
	j'_{\gamma,!}(\pr_{1,V}^+(\mathscr{A}_{\psi\phi})\otimes\IC_{V}) \xrightarrow{\sim}j'_{\gamma,+}(\pr_{1,V}^+(\mathscr{A}_{\psi\phi})\otimes\IC_{V})
	\label{cleanness on Hk}
\end{equation}
is an isomorphism. Indeed, since both $j_{\gamma,!}$ and $j_{\gamma,+}$ commute with smooth base change, it suffices to show the isomorphism after taking inverse image to $U\times \GR$ \eqref{altas pr1}. In this case, the morphism $\pr_{1,V}$ corresponds to the projection $U\times \GR\to U$. 
Then the isomorphism \eqref{cleanness on Hk} follows from $j_{\gamma,!}(\mathscr{A}_{\psi\phi})\xrightarrow{\sim}j_{\gamma,+}(\mathscr{A}_{\psi\phi})$ \eqref{lemma clean}. 

Then we deduce that $j'_{\gamma,!}(\pr_{1,V}^+(\mathscr{A}_{\psi\phi})\otimes\IC_V)[1]$ is holonomic and we have
\begin{eqnarray} \label{cleanness HkV}
	\Hk_V(A_{\psi\phi})|_{\Bun_{\Gzt}^{\gamma}\times X} &\simeq & (\pr_2 \circ j'_{\gamma})_!(\pr_{1,V}^+(\mathscr{A}_{\psi\phi})\otimes \IC_V) \\
	&\simeq & (\pr_2 \circ j'_{\gamma})_+(\pr_{1,V}^+(\mathscr{A}_{\psi\phi})\otimes \IC_V). \nonumber
\end{eqnarray}
Since $(\pr_2\circ j'_{\gamma})$ is affine, $(\pr_2\circ j'_{\gamma})_+$ (resp. $(\pr_2\circ j'_{\gamma})_!$) is right (resp. left) exact (\cite{AC18} 1.3.13). Then the assertion follows. 
\end{proof}

\begin{secnumber}
	\textit{Proof of \ref{thm eigensheaves}}(i). 
	The action of $I(1)/I(2)$ on $\Bun_{\Gzt}$ extends to an action on the diagram \eqref{def Hk opr}. 
	For each $\gamma\in \Omega$, $\Hk_V(A_{\psi\phi})|_{\Bun_{\Gzt}^{\gamma}\times X}$ is $(I(1)/I(2),\mathscr{A}_{\psi\phi})$-equivariant. 
	By \ref{lemma clean}, for each $\gamma\in \Omega$, we have
	\begin{displaymath}
		\Hk_V(A_{\psi\phi})|_{\Bun_{\Gzt}^{\gamma}\times X}\simeq A_{\psi\phi}^{\gamma}\boxtimes E_{V}^{\gamma},
	\end{displaymath}
	where $E_{V}^{\gamma}[1]$ is a holonomic module on $X$. 
	By the same argument as in (\cite{HNY} 4.2), we show that $E_{V}^{\gamma}$ is canonically isomorphic to $E_{V}^{0}$. So we will drop the index $\gamma$ in the following.

	Since $\IC_V$ is ULA with respect to the projection $\GR\simeq \Gr_{G,X}\to X$ \eqref{globalisation functor}, we have $\Phi(\IC_V)=0$ \eqref{ULA vanishing cycle}. 
	Since taking vanishing cycle functor commutes with smooth pull-back and proper push-forward (\cite{Abe18II} 2.6), we deduce that
	\begin{displaymath}
		A_{\psi\phi}\boxtimes \Phi(E_V) \simeq \Phi(A_{\psi\phi}\boxtimes E_V)\simeq \pr_{2,!}(\Phi(\pr_{1,V}^+(A_{\psi\phi})\otimes \IC_V)) \simeq \pr_{2,!}(\pr_{1,V}^+(A_{\psi\phi})\otimes \Phi(\IC_V))=0.
	\end{displaymath}
	By \ref{ULA id local system}, $E_V$ is smooth. 
	Then the assertion follows. \hfill\qed
\end{secnumber}

\begin{secnumber} \label{geo picture Kl}
	\textit{Proof of \ref{thm eigensheaves}}(ii). 
	In the following, we present a concrete way to calculate the Hecke eigenvalue.
	
	We denote by $\star\in \Bun_{\Gzt}$ the base point corresponding to the trivial bundle $\Gzt$. The base change of convolution diagram \eqref{def Hk opr} to $\star\times X$ can be written as
	\begin{equation}\label{Hk vs BD}
		\xymatrix{
			& \GR \ar[ld]_{p_1} \ar[rd]^{p_2} & \\
			\Bun_{\Gzt} & & X.
		}
	\end{equation}
	We denote by $\GR_{V}\subset \GR\simeq \Gr\times X$ the support of $\Sat(V)\boxtimes L_{X}$,
	by $\GR^{\circ}$ the inverse image of the big cell $j(I(1)/I(2)\times\Omega)$ by $p_1$, and by $\GR_{V}^{\circ}=\GR_{V}\cap \GR^{\circ}$. Consider the following diagram: 
	\begin{equation} \label{triangle}
		\xymatrix{
			& & \GR_{V}^{\circ}  \ar[ld]_{p_{1,V}^{\circ}} \ar@{^{(}->}[r]_-{j'} \ar@/^3pc/[rrd]^{p_2^{\circ}} & \GR_{V} \ar[ld]^{p_{1,V}} \ar[rd]_{p_2} & \\
			\mathbb{A}^1 & I(1)/I(2)\times\Omega \ar[l]_-{\phi} \ar@{^{(}->}[r]^-{j} & \Bun_{\Gzt} & & X.
		}
	\end{equation}
	By the base change and \eqref{cleanness HkV}, we have
	\begin{equation}
		E_{V}\simeq p_{2,!}^{\circ}(p_{1,V}^{\circ,+}(\mathscr{A}_{\psi\phi})\otimes\IC_V|_{\GR^{\circ}}).
	\label{cal Hk eigenvalue}
	\end{equation}

	By cleanness \eqref{cleanness on Hk}, $E_V$ can be calculated by either $+$ or $!$ pushforward. More precisely, the following canonical morphism is an isomorphism
	\begin{equation} \label{cleanness on GR}
		p_{2,!}^{\circ}(p_{1,V}^{\circ,+}(\mathscr{A}_{\psi\phi})\otimes\IC_V|_{\GR^{\circ}}) \xrightarrow{\sim}
		p_{2,+}^{\circ}(p_{1,V}^{\circ,+}(\mathscr{A}_{\psi\phi})\otimes\IC_V|_{\GR^{\circ}}).
	\end{equation}
	In particular, the overconvergent $F$-isocrystal $E_{V}$ is pure of weight zero. Theorem \ref{thm eigensheaves}(ii) follows. \hfill$\qed$
\end{secnumber}

\begin{secnumber} \label{triv functoriality}
        There is the following ``trivial'' functoriality between Kloosterman $F$-isocrystals. We fix $\psi$. Let $G'\to G$ be a homomorphism of reductive groups induces the same adjoint quotient $G'_{\mathrm{ad}}\simeq G_{\mathrm{ad}}$. 
        Then it induces an isomorphism $I'(1)/I'(2)\simeq I(1)/I(2)$, and therefore we can abusively use the notation $\phi$ to denote the ``same'' linear functions on these spaces under the identification.
        On the other hand, it induces a homomorphism of dual groups $\cG\to \cG'$ and therefore a tensor functor $\mathrm{Res}: \Rep(\cG')\to \Rep(\cG)$ by restrictions. Then $\Kl^{\rig}_{\cG'}$ is the push-out of $\Kl^{\rig}_{\cG}$ along $\cG\to \cG'$. Concretely, this means that there is a canonical isomorphism of tensor functors (we omit both $\psi$ and $\phi$ from the notations)
        \[
              \Kl^{\rig}_{\cG'}\simeq  \Kl^{\rig}_{\cG}\circ \mathrm{Res}:\Rep(\cG')\to \Sm(X/L_F)
        \]      
        This allows use to reduce certain questions of $\Kl_{\cG}^{\rig}$ to the case when $\cG$ is simply-connected. We also obtain the following exceptional isomorphisms (due to coincidences of Dynkin diagrams in low rank cases)
         \begin{eqnarray}             
               \Kl_{\SL_2,\Sym^2}^{\rig} & \simeq &\Kl_{\SO_3,\Std}^{\rig},  \label{SL2 vs SO3}  \\
               \Kl_{\Sp_4,\ker(\wedge^2\to \mathbf{1})}^{\rig} & \simeq & \Kl_{\SO_5,\Std}^{\rig}, \label{Sp4 vs SO5} \\               
	       \Kl_{\SO_4,\Std}^{\rig} & \simeq & \Kl_{\SL_2\times\SL_2,\Std\boxtimes\Std}^{\rig}, \label{SO4 vs SL2}\\
                \Kl_{\SO_6,\Std}^{\rig}& \simeq &\Kl_{\SL_4,\wedge^2}^{\rig}, \label{SO6 vs SL4}
         \end{eqnarray}  
         where $\mathbf{1}$ denotes the trivial representation, $\Std$ the standard representation, $\Sym^\bullet$ and $\wedge^\bullet$ the symmetric powers and wedge powers of the standard representation.
\end{secnumber}

\begin{secnumber}\label{symmetry of Kl} 
         There is a natural action of $\Gm$ on $X\subset \P1$. On the other hand, the group of automorphisms $\Aut(G,B,T)$ acts on $\Gmn$.
         It follows that $\Gm\times\Aut(G,B,T)$ acts on \eqref{def Hk opr}, and therefore on \eqref{Hk vs BD}. It also acts on $I(1)/I(2)\times\Omega$ as group automorphisms such that the open embedding \eqref{big open cell} is $\Gm\times\Aut(G,B,T)$-equivariant.  
	Recall that the natural action of $\Aut(G)$ on the Satake category induces a homomorphism $\iota:\Aut(G)\to \Aut(\cG,\cB,\cT,N)$ \eqref{action Aut Satake}.
	Given $\delta=(a,\sigma)\in (\Gm\times \Aut(G,B,T))(k)$ and $V\in \Rep(\cG)$, 
	then there is a canonical isomorphism 
        \begin{equation}\label{Aut of diag}
		\Kl^{\rig}_{\cG,V}(\psi(\phi\circ\delta))\simeq a^+\Kl^{\rig}_{\cG, \iota(\sigma^{-1}) V}(\psi\phi),
         \end{equation}
         given by the composition
         \begin{eqnarray*} 
                             p_{2,!}(p_{1,V}^+( j_!(\phi\circ\delta)^+\mathscr{A}_{\psi})\otimes \IC_V)            
			    & \simeq & p_{2,!}(\delta^+p_{1,V}^+( j_!\phi^+\mathscr{A}_{\psi})\otimes \IC_V)\\                                                                
			    & \simeq &  a^+p_{2,!}(p_{1,V}^+( j_!\phi^+\mathscr{A}_{\psi})\otimes (\delta^{-1})^+\IC_V) \\
			    & \simeq &  a^+p_{2,!}(p_{1,V}^+( j_!\phi^+\mathscr{A}_{\psi})\otimes (\IC_{\iota(\sigma^{-1}) V})). 
         \end{eqnarray*}
         
         In particular, given $t\in T_{\mathrm{ad}}(k)\subset \Aut(G,B,T)$, the element $\delta=(1,t)$ induces an isomorphism
         \begin{equation}\label{dependence phi}
               \Kl_{\cG}^{\rig}(\psi(\phi\circ \delta))\simeq \Kl_{\cG}^{\rig}(\psi\phi).
         \end{equation}       
         That is, $\Kl_{\cG}^{\rig}(\psi\phi)$ depends only on the $T_{\ad}$-orbit of $\phi$. 
         On the other hand, let $a$ be an element of $\Gm(k)$, $\psi_a$ the additive character defined by $\psi_a(-)=\psi(a-)$, $t_a\in T_{\mathrm{ad}}$ the unique element such that $\alpha(t_a)=a$ for every simple root $\alpha$ of $G$ and $h$ the Coxeter number of $G$. 
	 By applying $\delta=(a^h,t_a)$ in \eqref{Aut of diag}, we deduce that 
         \begin{equation}\label{dependence psi}
                  \Kl_{\cG}(\psi_a\phi) \simeq \Kl_{\cG}(\psi(\phi\circ\delta)) \simeq (a^h)^+\Kl_{\cG}(\psi\phi).
         \end{equation}
         
	 In addition, given a generic linear function $\phi$ of $I(1)/I(2)$, the collection $\{\phi_{\alpha}\}$ from \eqref{phialpha} for those $\alpha$ being simple roots of $G$, provide a pinning of $(G,B,T)$, and therefore induces a splitting $\mathrm{Out}(G)\to \Aut(G,B,T)$.         
         If $G$ is almost simple, not of type $A_{2n}$, then every element $\sigma\in \mathrm{Out}(G)$ fixes the remaining $\phi_{\alpha}$. 
	 If $G$ is of type $A_{2n}$, the unique non-trivial element $\sigma_0\in \mathrm{Out}(G)$ send the remaining $\phi_{\alpha}$ to $-\phi_{\alpha}$.
         Therefore, if either $\cG$ is almost simple not of type $A_{2n}$, or if $p=2$, then for every $\sigma\in \mathrm{Out}(G)$, we have $\phi\circ(1,\sigma)=\phi$ and a canonical isomorphism
         \begin{equation}\label{inv outer automorphism} 
		 \Kl_{\cG,V}^{\rig}(\psi\phi)\simeq \Kl_{\cG,\iota(\sigma^{-1}) V}^{\rig}(\psi\phi), \quad 
         \end{equation}       
         compatible with the tensor structures. On the other hand, if $G$ is almost simple of $A_{2n}$ and if $p>2$, then the element $\delta=(-1,\sigma_0)$ induces a canonical isomorphism \eqref{Aut of diag}
         \begin{equation}\label{inv outer automorphism A2n} 
		 \Kl_{\cG,V}^{\rig}(\psi\phi)\simeq (-1)^+\Kl_{\cG,V^{\vee}}^{\rig}(\psi\phi),
         \end{equation}  
	 where $V^{\vee}$ denotes the dual representation of $V$,
	 compatible with the tensor structures.
         \end{secnumber}

\begin{secnumber}
	There is a variant with multiplicative characters, which slightly generalizes $A_{\psi\phi}$. Note that $T\simeq I(0)^{\op}/I(1)^{\op}$. Let 
	\[
	    \widetilde T= N_{G(k(\!(x)\!))}(I(0)^{\op})/I(1)^{\op},
	 \]   
	which fits into the exact sequence $1\to T\to \widetilde T\to \Omega\to 1$. The group $\widetilde T$ acts on $\Bun_{\Got}$ by modifying $\Got$-bundles at $0$. 
	Then the analogue of \eqref{big open cell} in this case is the open embedding
	\[
	   j:\widetilde T\times I(1)/I(2)\to \Bun_{\Got}.
	\]
	We choose a splitting $s: T\rtimes \Omega\xrightarrow{\sim}\widetilde{T}$.
	For $\gamma\in \Omega$, $j$ sends $T\times \gamma \times I(1)/I(2)$ to the connected component $\Bun_{\Got}^{\gamma}$ and we denote it by
	\begin{displaymath}
		j_{\gamma}: T\times \gamma \times I(1)/I(2) \to \Bun_{\Got}^{\gamma}.
	\end{displaymath}
	A character $\chi:T(k)\to K^{\times}$ defines a rank one overconvergent $F$-isocrystal $\mathscr{K}_{\chi}$ on the torus $T$ (cf. \ref{Dwork isocrystal}(ii)). 
	If $\chi^{\gamma}:T(k)\to K^{\times}$ denotes the character defined by $\chi^{\gamma}(t)=\chi(\Ad_{\gamma}(t))$, then lemma \ref{lemma clean} also holds for $(T\times I(1)/I(2), \mathscr{K}_{\chi^{\gamma}}\boxtimes A_{\varphi})$-equivariant holonomic modules on $\Bun_{\Got}^{\gamma}$.

	We denote by $A_{\psi\phi,\chi,s}$ the holonomic module on $\Bun_{\Got}$ defined by $(j_{\gamma,!}(\mathscr{K}_{\chi^{\gamma}}\boxtimes A_{\psi\phi})[\dim \Bun_{\Got}])_{\gamma\in \Omega}$. By replacing $(0,2)$ by $(1,2)$ in theorem \ref{thm eigensheaves} and repeating the arguments, we obtain a $\cG$-valued overconvergent $F$-isocrystal 
	\[
	     \Kl_{\cG}^{\rig}(\psi\phi,\chi,s):\Rep(\check{G})\to \Sm(X/L_{F}),
	\]
such that for every representation $V$ of $\cG$, $\Kl_{\cG,V}^{\rig}(\psi\phi,\chi)$ is pure of weight zero. Note that by \eqref{cal Hk eigenvalue}, $\Kl_{\cG}^{\rig}(\psi\phi)=\Kl_{\cG}^{\rig}(\psi\phi,\mathbf{1},s)$ for the trivial character $\mathbf{1}$ and does not depend on the choice of the splitting $s$.
\end{secnumber}

\begin{secnumber} \label{HNY Kl}
	Let $\ell$ be a prime different from $p$. We take an isomorphism $\iota: \overline{K}\simeq \overline{\mathbb{Q}}_{\ell}$. 
	Using the $\ell$-adic Artin-Schreier sheaf $\AS_{\psi}$ on $\mathbb{A}^{1}_k$ associated to $\psi$, and the Kummer local system on $\mathbb{G}_{m,k}$ associated to $\chi$, Heinloth, Ng\^o and Yun construct a $\ell$-adic $\cG$ local system 
	\begin{equation}
		\Kl_{\cG}^{\et,\ell}(\psi\phi,\chi,s): \Rep(\cG)\to \Loc(X).
		\label{l Kl}
	\end{equation} 
	By the trace formula (\cite{ELS}, \cite{AC18} 4.3.9) and Gabber-Fujiwara's $\ell$-independence (\cite{AC18} 4.3.11), the Frobenius traces of $\Kl_{\cG,V}^{\et,\ell}(\psi\phi,\chi,s)$ and of $\Kl_{\cG,V}^{\rig}(\psi\phi,\chi,s)$ at each closed point of $X_{k}$ coincide via $\iota$.

	When $\chi$ is the trivial character, we omit $\chi$ and $s$ from the notation.
\end{secnumber}
\begin{secnumber} \label{Kl dR}
	There is a variant of Heinloth-Ng\^o-Yun's construction using algebraic $\mathscr{D}$-modules instead of $\ell$-adic sheaves to produce a $\cG$-connection on $X_K$ in zero characteristic (\cite{HNY} 2.6). 
	Note that all the geometric objects used in the above construction can be also defined over $K$. 
	We choose a generic linear function $\phi: I(1)/I(2)\to \A1$ over $K$ and a linear function $\chi:\Lie(T)\to K$.
	Via an isomorphism $T_K\simeq \mathbb{G}_{m,K}^{r}$, 
	\begin{displaymath}
		\boxtimes_{i=1}^{r} \bigl(K\langle x,x^{-1},\partial_x\rangle/(x\partial_x -\chi(\mathbf{1}_{i})) \bigr)
	\end{displaymath}
	defines an algebraic $\mathscr{D}$-module on $T_{K}$, which is independent of the choice of trivialisation that we denote by $\mathsf{K}_{\chi}$. 
	We replace the Artin-Schreier sheaf $\AS_{\psi}$ on $\mathbb{A}^1_{k}$ by the \textit{exponential $\mathscr{D}$-module}
	\begin{equation}\label{exp Dmod}
		\EE_{\lambda}=K\langle x, \partial_x \rangle/(\partial_x-\lambda),\quad \lambda\in K,
	\end{equation}          
	on $\mathbb{A}^{1}_{K}$. 
	We choose a splitting $s:T\rtimes \Omega\simeq \widetilde{T}$ over $K$. 
	Then we obtain a tensor functor 
	\begin{equation} \label{notation Kl dR}
		\Kl_{\cG}^{\dR}(\lambda\phi,\chi,s): \Rep(\cG)\to \Conn(X_K),
	\end{equation}
	where the target denotes the category of vector bundles with connection on $X_K$. 
	Here we identify homomorphisms $\phi: I(1)/I(2)\to \mathbb{A}^1$ of algebraic group over $K$ with $\Hom_K(\Lie I(1)/I(2), K)$ via differentiation, so $\lambda\phi$ is regarded as a linear function on $\mathrm{Lie}(I(1)/I(2))$.

	When $\chi=0$, we omit $\chi$ and $s$ from the notation and $\Kl_{\cG}^{\dR}(\lambda\phi)$ is constructed in the same way as $\Kl^{\rig}_{\cG}(\phi)$ by $\mathsf{E}_{\lambda}$.  
\end{secnumber}

\subsection{Comparison between $\Kl_{\cG}^{\dR}$ and $\Kl_{\cG}^{\rig}$} \label{Frob str Bessel} 
	In this subsection, we work with schemes over $R$ and we keep the notation of \ref{construction HNY}. 
	We say a linear function $\phi:I(1)/I(2)\to \A1$ over $R$ is \textit{generic}, if it is generic modulo the maximal ideal of $R$. 
	We take such a function $\phi$ and we denote abusively its base change to $k$ (resp. $K$) by $\phi$. 
	Let $\overline{\chi}:T(k)\to K^{\times}$ be a character. 
	There exists a homomorphism $\chi:T\to \Gm$ such that $\chi(\widetilde{x})=\overline{\chi}(x)$ for $x\in T(k)$ and some lifting $\widetilde{x}$ of $x$ in $T(K)$. 
	We denote abusively $\chi:\Lie(T)\to K$ the differential of $\chi$. 
	We choose a splitting $s:T\times \Omega\simeq \widetilde{T}$ over $R$ and we denote abusively its base change to $k$ (resp. $K$) by $s$.

	We denote the sheaf $\mathscr{O}_{\widehat{\mathbb{P}}^1,\mathbb{Q}}(^{\dagger}\{0,\infty\})$ \eqref{section six functors formalism} by $\mathcal{O}_{X}$ for short. 
	The following theorem is our main result of this subsection. 

\begin{theorem}
	We set $L=K$. For every representation $V$ of $\cG$, there exists a canonical isomorphism of $\mathcal{O}_{X}$-modules with connection \eqref{setting strict neighborhood}
	\begin{equation} \label{dagger dR iso rig}	
		\iota_{V}: (\Kl^{\dR}_{\cG,V}(-\pi\phi,\chi,s))^{\dagger}\xrightarrow{\sim} \Kl^{\rig}_{\cG,V}(\phi,\overline{\chi},s),
	\end{equation}
	compatible with tensor structures. 
\label{main thm Frob FG}
\end{theorem}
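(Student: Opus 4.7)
The plan is to prove the isomorphism by lifting the whole geometric setup \eqref{triangle} to a diagram over $R$, identifying both sides as pushforwards of parallel coefficient sheaves, and then invoking the comparison between relative de Rham cohomology and relative rigid cohomology developed in Subsections \ref{Sp cosp} and following \eqref{rel sp map}.

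First I would produce an $R$-integral model of the diagram \eqref{Hk vs BD} restricted to the preimage of the big cell, using that $\mathcal{G}(0,2)$ and all the Hecke/affine Grassmannian data in \ref{construction HNY} are defined over $\mathbb{Z}$. This gives a smooth morphism $p_2^{\circ}:\GR^{\circ}_{V,R}\to X_R\times_R T_R$ which restricts to the ones used on both sides. On the de Rham side over $K$, the coefficient sheaf is $p_{1,V}^{\circ,+}\bigl(\EE_{-\pi\phi}\boxtimes \mathsf{K}_{\chi}\bigr)\otimes \IC_V^{\dR}$, while on the rigid side over $k$ it is $p_{1,V}^{\circ,+}\bigl(\mathscr{A}_{\psi\phi}\boxtimes \mathscr{K}_{\overline{\chi}}\bigr)\otimes \IC_V$. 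The Dwork formula $\theta_\pi(x)=\exp(\pi(x-x^p))$ recalled in \ref{Dwork isocrystal}(i) identifies $(\EE_{-\pi})^{\dagger}$ with $\mathscr{A}_\pi=\mathscr{A}_\psi$; the same principle gives $(\mathsf{K}_\chi)^{\dagger}\simeq \mathscr{K}_{\overline{\chi}}$ via the Kummer construction in \ref{Dwork isocrystal}(ii). The $\IC$-sheaves on $\GR^{\circ}_V$ match under $(-)^{\dagger}$ because on the smooth locus each is the constant connection/isocrystal and they are determined by the geometric Satake functor (see \eqref{Satake wt 0}), which is defined identically in both coefficient regimes.

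Next I would transport these coefficient-level identifications to the cohomology level. Since $p_2^{\circ}$ is affine and since \eqref{cleanness on GR} shows cleanness on the rigid side, and its de Rham analogue holds by the identical argument using proposition \ref{ULA hol}(i) applied to $\IC_V$ (which is ULA over $X$ by \ref{globalisation functor}), one can compute both $\Kl^{\rig}_{\cG,V}$ and $(\Kl^{\dR}_{\cG,V})^{\dagger}$ by the same relative de Rham/rigid complex on the integral model. Applying the relative specialization morphism \eqref{rel sp map} at each step produces a canonical map $\iota_V$; it is an isomorphism because both sides are smooth modules of the same rank (by proposition \ref{decomposition CT}(ii), the rank equals $\dim V$ in the two theories) and they agree, by construction, after taking fibers at enough points---this can be checked either by reducing to the case of hypercohomology over $\Spec(k)$ where the comparison \ref{cohomology arith mods}(ii) applies, or by invoking proper base change of the relative cohomology along closed points of $X$.

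Finally, compatibility with tensor structures is obtained by globalizing the comparison over the Beilinson--Drinfeld Grassmannian $\Gr_{G,X^n}$, using the factorization/fusion structure of Subsection 3.2. The fusion product $\circledast$ is compatible with relative de Rham and relative rigid formalisms in parallel (both relying only on ULA arguments and Braden's theorem, the latter established in this setting in \ref{Braden thm sec}), so the isomorphism $\iota_V$ is naturally monoidal, and the Tannakian formalism of Theorem \ref{thm Tannakian cat} promotes it to an isomorphism of tensor functors $\Rep(\cG)\to \Conn(\mathcal{X})$. The main technical obstacle I expect is the careful bookkeeping of the specialization morphism in the stacky/ind-schematic relative setting: one has to ensure that the compatibility between $\iota_{\dR}\circ\rho_{\rc,M}$ and $\rho_M\circ \iota_{\rig}$ (proposition \ref{compatibility sp cosp}) survives pushforward along the non-representable morphisms appearing in the Hecke diagram, which forces the use of smooth presentations \eqref{altas pr1}--\eqref{altas pr2} and a careful descent argument for the relative specialization map.
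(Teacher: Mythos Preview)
Your outline is essentially correct for the minuscule case and tracks closely what the paper does there: lift \eqref{triangle} to $R$, identify the line-bundle coefficients via Dwork's and Kummer's formulae, and use the relative specialization morphism \eqref{rel sp map}, checking it is an isomorphism fiberwise (the paper does exactly this, reducing via faithful flatness of $A^{\dagger}\to A^0$ to the pointwise comparison in Lemma~\ref{compare dR rig fiber}, which in turn relies on Proposition~\ref{compatibility sp cosp}).

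The gap is your treatment of $\IC_V$ for general $V$. You write that the IC sheaves ``match under $(-)^{\dagger}$ because on the smooth locus each is the constant connection/isocrystal''. But for non-minuscule $V$ the closed support $\GR_V$ is \emph{singular}, and $\IC_V$ is a genuine intersection complex, not a coherent module with integrable connection. The functor $(-)^{\dagger}$ and the relative specialization morphism \eqref{rel sp map} are only set up for coherent $\mathscr{O}$-modules with connection on smooth $R$-schemes admitting a good compactification, so there is no a priori map $\iota_V$ to speak of in this generality. Your fusion argument for tensor compatibility inherits the same problem: the external twisted products live on singular convolution spaces. The paper circumvents this in two steps. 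First it treats the quasi-minuscule case separately (Lemma~\ref{compare dR rig fiber quasi-min}): one restricts to the smooth locus $\GR_V^{\circ\circ}$, where the coefficient is again a line bundle with connection, and then shows that the specialization morphism on its cohomology restricts to an isomorphism on the image of the intermediate extension, using the sandwich $j_!\to j_{!+}\to j_+$ and the compatibility of $\rho_M$ with $\rho_{c,M}$. Second, since minuscule and quasi-minuscule representations tensor-generate $\Rep(\cG)$, the general $\iota_W$ is obtained as a direct summand of $\otimes_i \iota_{V_i}$ via the idempotent decomposition \eqref{decomp reps}--\eqref{diag End commute}; tensor compatibility is then automatic from this construction rather than from fusion on $\Gr_{G,X^n}$.
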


In the following, we will present the proof in the case where $\overline{\chi}$ is trivial for simplicity and the general case follows from the same argument. 
We will omit $-\pi\phi,\chi,\phi,\overline{\chi},s$ from the notation.

\begin{secnumber}
	We first consider the case where $V$ is associated to a minuscule coweight $\lambda$. 
	In this case, $\Gr_{\lambda}$ is isomorphic to a partial flag variety and is smooth and projective, and $\IC_{V}$ is isomorphic to $K_{\Gr_{\lambda}}[\dim \Gr_{\lambda}]$ supported on $\GR_{V}\simeq \Gr_{\lambda}\times X$. 
	We show the above theorem by comparing the relative twisted de Rham cohomologies and the relative twisted rigid cohomologies along the morphism
	\[
	      p_2^{\circ}:\GR^{\circ}_{V}\to X
	\] 
	in \eqref{triangle}. To do it, we first show that the associated de Rham and rigid cohomologies at each fiber of $X$ are isomorphic. 

	We regard \eqref{triangle} as a diagram of schemes over $\Spec(R)$. 
	We denote $M:=p_1^+(\mathsf{E}_{-\pi})[\dim \Gr_{\lambda}]$, which is a line bundle with connection on $\GR^{\circ}_{V,K}$. 
	With the notation of \ref{setting strict neighborhood}, the bundle with connection $M^{\dagger}$ on $(\GR^{\circ}_{V,K})^{\an}$ is overconvergent and underlies to the arithmetic $\mathscr{D}$-module $p_1^+(\mathscr{A}_{\psi})[\dim \Gr_{\lambda}]$ on $\GR^{\circ}_{V,k}$, denoted by $\mathscr{M}$. 
\end{secnumber}

\begin{lemma} \label{compare dR rig fiber}
	Let $s$ be a point of $X(k)$. We choose a lifting in $X(R)$ and still denote it by $s$. 	
	The specialisation morphism \eqref{specialization morphism} on the fiber $\GR_{V,s}^{\circ}$ of $\GR_{V}^{\circ}$ above $s$
	\begin{equation} \label{sp map GrV}
		\rH^{*}_{\dR}( (\GR^{\circ}_{V,s})_K,M_s)\to \rH^{*}_{\rig}( (\GR^{\circ}_{V,s})_k,\mathscr{M}_s) 
	\end{equation}
	is an isomorphism. Moreover, these cohomology groups vanish except for the middle degree $0$. 
\end{lemma}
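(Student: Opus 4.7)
The plan is to exploit the concrete geometric description of $\GR^\circ_{V,s}$ available in the minuscule case, together with the specialization/cospecialization machinery developed in \S\ref{Sp cosp}. The broad strategy is to reduce both the comparison and the vanishing to a classical Dwork-type computation on an affine space.

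First, I would pin down the geometry of $\GR^\circ_{V,s}$. Because $V=V_\lambda$ is minuscule, $\Gr_{G,\lambda}\simeq G/P_\lambda$ is a smooth projective partial flag variety of dimension $d=2\rho(\lambda)$, and $\GR_{V,s}\simeq \Gr_{G,\lambda}$. Tracking how the Hecke modification at $s$ of the trivial $\Gzt$-bundle lands in the big cell $I(1)/I(2)\times\Omega\subset \Bun_{\Gzt}$ should identify $\GR^\circ_{V,s}$ with a translate of the open $U^-$-cell in $G/P_\lambda$; in particular it is a smooth affine scheme, isomorphic to $\mathbb{A}^d$ after a suitable base change. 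Under this identification, the pullback $(\phi\circ p_1^\circ)|_{\GR^\circ_{V,s}}$ becomes an explicit polynomial in the coordinates of $\mathbb{A}^d$, and the genericity of $\phi$ forces it to be sufficiently non-degenerate.

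Second, I would set up the cohomological comparison via \S\ref{Sp cosp}. The cleanness isomorphism \eqref{cleanness on GR}, restricted to the fiber at $s$, implies that $\iota_\rig$ \eqref{forget map rig} is an isomorphism, and the analogous algebraic cleanness (verified on the smooth projective compactification $\Gr_{G,\lambda}$) gives that $\iota_\dR$ \eqref{alg dRc to dR} is an isomorphism. By the commutative diagram of proposition \ref{compatibility sp cosp}, the specialization morphism $\rho_{M_s}$ is then an isomorphism if and only if the cospecialization morphism $\rho_{\rc,M_s}$ is. The latter compares compactly supported de Rham and rigid cohomologies, which, once pushed up along the proper inclusion into $\Gr_{G,\lambda}$, reduces to rigid GAGA \eqref{alg dR an dR an}.

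Third, for the vanishing outside the shifted middle degree $0$ (equivalently, the unshifted cohomology sits in degree $d$), one uses the affine description of $\GR^\circ_{V,s}$ and invokes the standard statement that for a sufficiently generic polynomial $\Psi$ on $\mathbb{A}^d$ the exponential de Rham cohomology $\rH^i_\dR(\mathbb{A}^d,\mathsf{E}_{-\pi\Psi})$ is concentrated in degree $d$, which can be proved by a Koszul-type argument once the non-degeneracy is in place. The corresponding rigid vanishing then follows either in parallel or from the already-established specialization isomorphism.

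The main obstacle will be the geometric step: one must pin down $\GR^\circ_{V,s}$ as a translate of the $U^-$-cell in $G/P_\lambda$ and, crucially, check that the pullback of the generic linear function $\phi$ becomes a non-degenerate polynomial on $\mathbb{A}^d$, with no critical points in the interior and with controlled behavior along the boundary of the chosen compactification. Once this non-degeneracy is established, the remaining cohomological assertions, as well as their compatibility with the tensor structure via the Satake formalism, become largely formal consequences of the six-functor framework and the comparison machinery reviewed in \S\ref{Dmods}.
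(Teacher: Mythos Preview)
Your overall framework (use cleanness plus the commutative square of Proposition~\ref{compatibility sp cosp}) is the right one, but two of your three concrete steps have real gaps, and you are working much harder than necessary.

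First, the geometric identification is incorrect. The open piece $\GR^\circ_{V,s}$ is not in general a translate of the big $U^-$-cell in $G/P_\lambda$. Already for $G=\GL_n$ with $V=\Std$, the diagram \eqref{triangle} reduces to Deligne's diagram \eqref{Del diag}, and the fiber of $\mathrm{mult}$ over a point of $\Gm$ is $\Gm^{n-1}$, not $\mathbb{A}^{n-1}$. So there is no chance of running a Koszul argument for an exponential $\mathscr{D}$-module on affine space, and your proposed non-degeneracy check has no clear formulation. More seriously, your claim that the cospecialization morphism, once pushed to the proper compactification $\Gr_\lambda$, ``reduces to rigid GAGA'' is not correct: rigid GAGA is already built into the \emph{definition} of $\rho_{\rc,M}$ via \eqref{alg dR an dR an}, but the actual content of $\rho_{\rc,M}$ is the map from sections supported on the tube $]X_k[$ to all sections of $\bR\varprojlim(I^\bullet\overline{M}^{\an}\otimes\Omega^\bullet)$, and there is no reason for that to be an isomorphism in general.

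The paper's argument avoids all of this. The vanishing and the dimension on \emph{both} sides are immediate once you remember what has already been proved: by Theorem~\ref{thm eigensheaves} and its $\mathscr{D}$-module analogue, $\Kl^{\rig}_{\cG,V}$ and $\Kl^{\dR}_{\cG,V}$ are smooth of rank $\dim V$, and the formulas \eqref{cal Hk eigenvalue} together with proper base change over the point $s$ give that each side of \eqref{sp map GrV} is concentrated in degree $0$ of dimension $\dim V$. Now you only need one half of your second step: cleanness \eqref{cleanness on GR} makes $\iota_{\rig}$ an isomorphism, and the commutative square of Proposition~\ref{compatibility sp cosp} then forces $\rho_{M_s}$ to be \emph{surjective} (you never need $\iota_{\dR}$ or $\rho_{\rc,M_s}$ to be isomorphisms). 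Surjective between finite-dimensional spaces of equal dimension is an isomorphism.
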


\begin{proof}
	We set $Y=\GR^{\circ}_{V,s}$ and we write $M$ (resp. $\mathscr{M}$) instead of $M_s$ (resp. $\mathscr{M}_s$). 
	Since $Y$ admits a smooth compactification $\Gr_{\lambda}$ whose boundary is a divisor, we can calculate above cohomology groups by direct image of corresponding algebraic (resp. arithmetic) $\mathscr{D}$-modules \eqref{cohomology arith mods}. 
	Note that $\Kl_{\cG,V}^{\dR}$ (resp. $\Kl_{\cG,V}^{\rig}$) is a bundle with connection (resp. overconvergent $F$-isocrystal) of rank $\dim V$. 
	By the base change, cohomology groups in \eqref{sp map GrV} vanish except for the middle degree and have dimension $\dim V$ in the middle degree. 
	By (\ref{cleanness on GR}), the canonical morphism $\iota_{\rig}:\rH^{*}_{\rig,\rc}(Y_k,\mathscr{M})\to \rH^{*}_{\rig}(Y_k,\mathscr{M})$ is an isomorphism. 
	In view of proposition \ref{compatibility sp cosp}, we deduce that the specialisation morphism \eqref{sp map GrV} is surjective. Then the assertion follows. 
\end{proof}

\begin{secnumber} \label{proof thm min}
	\textit{Proof of theorem \ref{main thm Frob FG} in the minuscule case}. 
	Now we use the relative specialization morphism \eqref{rel sp map} to compare $(\Kl_{\cG,V}^{\dR})^{\dagger}$ and $\Kl_{\cG,V}^{\rig}$. 
	Let $\Gr_{\P1}\to \P1$ be the Beilinson--Drinfeld Grassmannian of $G$ over $\P1$ and $\varpi:\Gr_{\lambda,\P1}\to \P1$ the closed subscheme associated to $\lambda$. 
	Note that $\varpi$ is a locally trivial fibration over $\P1$ with smooth projective fibers $\Gr_{\lambda}$ and defines a good compactification of $p_{2}^{\circ}$ \eqref{rel rig coh}. 

	We take again the notation of \ref{rel rig coh} for the smooth $R$-morphism $p_2^{\circ}$. 
	We set $A=\Gamma(X,\mathscr{O}_X)$, $A_{K}=A[\frac{1}{p}]$, $A^{0}=\widehat{A}[\frac{1}{p}]$ the ring of analytic functions on $\widehat{X}^{\rig}$ and $A^{\dagger}=\Gamma(\mathbb{P}^1_{k},\mathcal{O}_{\Gm})$ the ring of analytic functions on $\widehat{\mathbb{P}}^{1}$ overconvergent along $\{0,\infty\}$. 
	We have inclusions $A_{K}\subset A^{\dagger}\subset A^{0}$. 
	If $D_{X_K}$ denotes the ring of algebraic differential operators on $X_K$, there exists a canonical $D_{X_{K}}$-linear specialization morphism \eqref{rel sp map}
	\begin{equation} \label{rel specialization}
		\Gamma(X_{K},\Kl_{\cG,V}^{\dR}) \to \Gamma(X_k,\Kl_{\cG,V}^{\rig}),
	\end{equation}
	where the left (resp. right) hand side is coherent over $A_K$ (resp. $A^{\dagger}$). 
	The above morphism induces a horizontal $A^{\dagger}$-linear morphism
	\begin{displaymath}
		\iota_{V}:\Gamma(X_{K},\Kl_{\cG,V}^{\dR})\otimes_{A_{K}}A^{\dagger} \to \Gamma(X_{k},\Kl_{\cG,V}^{\rig}), 
	\end{displaymath}
	which gives rise to the morphism \eqref{dagger dR iso rig}. 
	Recall that the homomorphism $A^{\dagger}\to A^{0}$ is faithfully flat (\cite{Ber96II} 4.3.10). To prove $\iota_V$ is an isomorphism, it suffices to show that the induced horizontal $A^0$-linear morphism: 
	\begin{equation} \label{rel specialization 0}
		\iota_V\otimes_{A^{\dagger}}A^0: 
		\Gamma(X_{K},\Kl_{\cG,V}^{\dR})\otimes_{A_K}A^{0} \to \Gamma(X_{k},\Kl_{\cG,V}^{\rig})\otimes_{A^{\dagger}}A^{0}
	\end{equation}
	is an isomorphism. 
	Let $\widehat{A}\to R$ be a continuous homomorphism and $s:A\to R$ the associated $R$-point of $\Gm$. 
	By \eqref{pullback of smooth mods} and the base change, the fiber $\iota\otimes_{A^{\dagger}} K$ coincides with the morphism \eqref{sp map GrV} associated to the point $s\in X(R)$ and is an isomorphism \eqref{compare dR rig fiber}. 
	Since both sides of \eqref{rel specialization 0} are coherent $A^{0}$-modules, the morphism $\iota_V\otimes_{A^{\dagger}}A^0$ is an isomorphism and the assertion follows. \hfill$\qed$
\end{secnumber}

\begin{secnumber}
	Next, we consider the case where $V$ is associated to the quasi-minuscule coweight $\lambda$. 
	In this case, $\Gr_{\le \lambda}$ contains a smooth open subscheme $\Gr_{\lambda}$ whose complement is isomorphic to $\Spec(R)$, and admits a desingularisation $\widetilde{\Gr}_{\le \lambda}$ (cf. \cite{NP} \S~7). 
	We take an isomorphism $\GR_{V}\simeq X\times \Gr_{\le \lambda}$ and set $\GR_{V}^{\circ\circ}=\GR_{V}^{\circ}\cap (X\times \Gr_{\lambda})$ to be the smooth locus of $\GR_{V}^{\circ}$ \eqref{geo picture Kl}. 
	We denote by $j:\GR^{\circ\circ}_{V}\to \GR^{\circ}_{V}$ the open immersion and by 
	\begin{equation}
		\tau=p_2^{\circ}\circ j:\GR_{V}^{\circ\circ}\to X
	\end{equation}
	the canonical morphism, which admits a good compactification $\widetilde{\Gr}_{\le \lambda}\times \P1 \to \P1$ in the sense of \ref{rel rig coh}.

	We denote by $M$ the line bundle with connection $p_1^+(\mathsf{E}_{-\pi})[\dim \Gr_{\lambda}]|_{\GR^{\circ\circ}_{V,K}}$ and by $\mathscr{M}$ the smooth arithmetic $\mathscr{D}$-module $p_1^+(\mathscr{A}_{\psi})[\dim \Gr_{\lambda}]|_{\GR^{\circ\circ}_{V,k}}$.  
	The holonomic module $\IC_{V}$ is constant on $\GR_{V}^{\circ\circ}$. 
	Then we deduce that
	\begin{displaymath}
		j_{!+}(M)\simeq p_1^+(\mathsf{E}_{-\pi})\otimes\IC_{V}|_{\GR^{\circ}_{V,K}}, \quad j_{!+}(\mathscr{M})\simeq p_1^+(\mathscr{A}_{\psi})\otimes\IC_{V}|_{\GR^{\circ}_{V,k}}.
	\end{displaymath}
	Note that $j_{!+}(M)[1],j_{!+}(\mathscr{M})[1]$ are holonomic. 
\end{secnumber}

\begin{lemma} \label{compare dR rig fiber quasi-min}
	\textnormal{(i)} The complex $\tau_{k,+}(\mathscr{M})[1]$ (resp. $\tau_{K,+}(M)[1]$) is holonomic. 

	\textnormal{(ii)} 
	Let $s$ be a point of $X(k)$. We choose a lifting in $X(R)$ and still denote it by $s$.
	If we denote by $M_s$ (resp. $\mathscr{M}_{s}$) the $+$-pullback of $M$ (resp. $\mathscr{M}$) along the fiber at $s$, then the specialisation morphism \eqref{specialization morphism}
	\begin{equation} 
		\rH^{*}_{\dR}( (\GR_{V,s}^{\circ\circ})_K,M_s)\to \rH^{*}_{\rig}((\GR_{V,s}^{\circ\circ})_k,\mathscr{M}_s) 
	\end{equation}
	induces an isomorphism
	\begin{equation} \label{sp quasi-min}
		\rH^{0}_{\dR}( (\GR_{V,s}^{\circ})_{K}, j_{!+}(M_s))\xrightarrow{\sim} \rH^{0}( (\GR_{V,s}^{\circ})_{k}, j_{!+}(\mathscr{M}_{s})).
	\end{equation}
\end{lemma}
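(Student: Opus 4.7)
The plan is to adapt the argument of Lemma \ref{compare dR rig fiber}, accommodating the single singular point of $\GR_{V,s}^{\circ}$ outside $\GR_{V,s}^{\circ\circ}$ and the need to pass through the intermediate extension $j_{!+}$.

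For part (i), I would exploit the good compactification $\widetilde{\Gr}_{\le\lambda}\times \P1\to \P1$ of $\tau$, which is smooth and proper with ample boundary divisors. The smooth objects $M$ on $\GR_{V,K}^{\circ\circ}$ and $\mathscr{M}$ on $\GR_{V,k}^{\circ\circ}$ extend overconvergently along this boundary. The direct image of such an object under a smooth proper morphism is overholonomic in the Caro--Tsuzuki formalism (as already used in \eqref{complex pushforward rig}), which yields (i) after checking concentration in a single cohomological degree via a fiberwise dimension count.

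For part (ii), I would proceed in two steps. First, on the smooth open $\GR_{V,s}^{\circ\circ}$ the cleanness isomorphism \eqref{cleanness on GR} combined with Proposition \ref{compatibility sp cosp} yields that the specialization morphism $\rH^{*}_{\dR}((\GR_{V,s}^{\circ\circ})_{K}, M_{s})\to \rH^{*}_{\rig}((\GR_{V,s}^{\circ\circ})_{k}, \mathscr{M}_{s})$ is an isomorphism concentrated in a single degree, of total dimension $\dim V$. Second, to pass from the smooth locus to $j_{!+}$, I would use the distinguished triangles $i_{+}i^{!}\to \id \to j_{+}j^{+}$ and $j_{!}j^{+}\to \id \to i_{+}i^{+}$ at the closed singular point $i:\{\mathrm{pt}\}\hookrightarrow \GR_{V,s}^{\circ}$. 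The parity result of Lemma \ref{parity stalk cohomology of IC} applied to $\IC_{V}$ controls the stalks and costalks of $j_{!+}(\mathscr{M}_{s})$ (and likewise for $M_{s}$), making the excision sequences degenerate in the relevant range, so that a naturality argument for the specialization map produces the isomorphism \eqref{sp quasi-min}.

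The main obstacle I anticipate is matching the algebraic and $p$-adic intermediate extensions under specialization, since $j_{!+}$ is defined as the image of $\hH^{0}(j_{!})\to \hH^{0}(j_{+})$ and this image must be tracked through the comparison. To circumvent this, I would use the desingularization $\rho:\widetilde{\Gr}_{\le\lambda}\to \Gr_{\le\lambda}$: by the decomposition theorem \ref{decomposition thm}, $\rho_{+}$ of the constant module on $\widetilde{\Gr}_{\le\lambda}$ decomposes as $\IC_{V}$ together with explicit constant summands supported at the singular point (coming from the exceptional fiber). Pulling $M$ and $\mathscr{M}$ back to $\widetilde{\Gr}_{\le\lambda}\times X$ and pushing forward via $\rho$ then replaces $j_{!+}(M_{s})$ (resp. $j_{!+}(\mathscr{M}_{s})$) by a direct summand of a smooth proper pushforward, reducing (ii) to the minuscule-type comparison on the smooth space $\widetilde{\Gr}_{\le\lambda}$ already established by the argument of Lemma \ref{compare dR rig fiber}.
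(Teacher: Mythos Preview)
Your proposal has genuine gaps in both parts.

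For (i), ``overholonomic'' in the Caro--Tsuzuki sense is a finiteness condition, not concentration in the heart of the t-structure, and a ``fiberwise dimension count'' does not yield the latter. The paper's argument is different: it fits $\tau_{+}(\mathscr{M})[1]=p^{\circ}_{2,+}j_{+}(\mathscr{M})[1]$ into the triangle
\[
p^{\circ}_{2,+}\bigl(j_{!+}(\mathscr{M})\bigr)[1]\to \tau_{+}(\mathscr{M})[1]\to p^{\circ}_{2,+}(C)\to,
\]
where $C$ is the cone of $j_{!+}(\mathscr{M})\to j_{+}(\mathscr{M})$, supported on the section $Z\simeq X$. The first term is holonomic by Lemma~\ref{lemma hol Hk}; the second sits in degrees $\le 0$ because $\tau$ is affine; the third sits in degrees $\ge 0$ because $p_{2}^{\circ}|_{Z}$ is an isomorphism. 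The triangle then forces all three to be holonomic.

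For (ii), your first step misapplies cleanness: the isomorphism \eqref{cleanness on GR} is for $p_{1}^{+}(\mathscr{A}_{\psi\phi})\otimes\IC_{V}$ on $\GR^{\circ}_{V}$, not for the constant-type object $\mathscr{M}$ on the smooth open $\GR^{\circ\circ}_{V}$. There is no reason for $\rH^{*}_{\rig}((\GR^{\circ\circ}_{V,s})_{k},\mathscr{M}_{s})$ to be concentrated in one degree of dimension $\dim V$; in general it is strictly larger. What one does know (from the argument in (i) and its dual) is only an injection $\rH^{0}(Y_{k},j_{!+}(\mathscr{M}_{s}))\hookrightarrow \rH^{0}_{\rig}(U_{k},\mathscr{M}_{s})$ and a surjection $\rH^{0}_{\rig,\rc}(U_{k},\mathscr{M}_{s})\twoheadrightarrow \rH^{0}_{\rc}(Y_{k},j_{!+}(\mathscr{M}_{s}))$. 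The paper's route is to build the analogous de Rham sequence, stack the two via Proposition~\ref{compatibility sp cosp}, and then do an image chase: the image $E$ of $\rH^{0}_{\rig,\rc}(U_{k},\mathscr{M}_{s})$ in $\rH^{0}_{\dR}(Y_{K},j_{!+}(M_{s}))$ surjects under $\rho_{M}$ onto $\rH^{0}(Y_{k},j_{!+}(\mathscr{M}_{s}))$, and equality of dimensions forces $E=\rH^{0}_{\dR}(Y_{K},j_{!+}(M_{s}))$, giving \eqref{sp quasi-min}. This avoids ever computing the cohomology of the smooth open in full.

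Your desingularization fallback is a plausible alternative, but as written it is incomplete: the decomposition of $\rho_{+}(L[\dim])$ is non-canonical, so you must explain why the specialization map respects the $\IC_{V}$-summand. One way is to characterize that summand intrinsically (e.g.\ as the image of $\rH^{0}_{\rc}\to\rH^{0}$), at which point you are essentially back to the paper's image-chase argument. Moreover, to run Lemma~\ref{compare dR rig fiber} on $\widetilde{\Gr}_{\le\lambda}$ you still need cleanness and single-degree concentration there, which are not immediate and require their own verification.
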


\begin{proof}
	(i) Let $i:Z\to \GR_{V}^{\circ}$ be the complement of $\GR_{V}^{\circ\circ}$ in $\GR_{V}^{\circ}$, which is isomorphic to $X$. 
	Consider the distinguished triangle on $\GR_{\le \lambda,k}^{\circ}$
	\begin{displaymath}
		j_{!+}(\mathscr{M})[1]\to j_+(\mathscr{M})[1]\to C\to .
	\end{displaymath}
	By \ref{lemma uniqueness intermediate extension}, $C\simeq i^{!}(j_{!+}(\mathscr{M}))[2]$ has degree $\ge 0$ and is supported on $Z$. Applying $p_{2,+}^{\circ}$ to the above triangle, we obtain
	\begin{displaymath}
		p_{2,+}^{\circ}(j_{!+}(\mathscr{M}))[1]\to \tau_+(\mathscr{M})[1] \to p_{2,+}^{\circ}(C)\to ,  
	\end{displaymath}
	where the first term is holonomic (cf. \ref{lemma hol Hk}), and the second term has cohomological degrees $\le 0$ because $\tau$ is affine and the last term has cohomological degrees $\ge 0$ since $p_{2}^{\circ}|_Z$ is the identity. Then we deduce that each term in the above triangle is holonomic.  

	(ii) We set $Y=\GR_{V,s}^{\circ}$, $U=\GR_{V,s}^{\circ\circ}$ and we write simply $M$ (resp. $\mathscr{M}$) instead of $M_s$ (resp. $\mathscr{M}_s$). 
	By applying the argument of (i), we deduce that the canonical morphism of cohomology groups $\rH^{0}(Y_{k},j_{!+}(\mathscr{M})) \to \rH^{0}_{\rig}(U_{k},\mathscr{M}) $ is injective. 
	By a dual argument, we deduce that the canonical morphism $\rH^{0}_{\rig,\rc}(U_{k},\mathscr{M}) \to \rH^{0}_{\rc}(Y_{k},j_{!+}(\mathscr{M}))$ is surjective. 
	In summary, we have a sequence: 
	\begin{equation} \label{rig coh quasi-min case}
		\rH^{0}_{\rig,\rc}(U_{k},\mathscr{M}) \twoheadrightarrow
		\rH^{0}_{\rc}(Y_{k},j_{!+}(\mathscr{M})) \xrightarrow{\sim} 
		\rH^{0}(Y_{k},j_{!+}(\mathscr{M})) \hookrightarrow  
		\rH^{0}_{\rig}(U_{k},\mathscr{M}),
	\end{equation}
	where the middle isomorphism is due to the cleanness \eqref{cleanness on GR} and the composition is the canonical morphism $\iota_{\rig}$. 

	We construct an analogue sequence of \eqref{rig coh quasi-min case} for de Rham cohomology of $M$ on $U_K$. These two sequences fit into a commutative diagram \eqref{compatibility sp cosp}
	\begin{displaymath}
		\xymatrix{
		\rH^{0}_{\dR,\rc}(U_{K},M) \ar@{->>}[r] &
		\rH^{0}_{\dR,\rc}(Y_{K},j_{!+}(M)) \ar[r]^{\sim} & 
		\rH^{0}_{\dR}(Y_{K},j_{!+}(M)) \ar@{^{(}->}[r] &
		\rH^{0}_{\dR}(U_{k},M) \ar[d]^{\rho_M}
		\\
		\rH^{0}_{\rig,\rc}(U_{k},\mathscr{M}) \ar@{->>}[r] \ar[u]_{\rho_{M,\rc}} &
		\rH^{0}_{\rc}(Y_{k},j_{!+}(\mathscr{M})) \ar[r]^{\sim} & 
		\rH^{0}(Y_{k},j_{!+}(\mathscr{M})) \ar@{^{(}->}[r] &
		\rH^{0}_{\rig}(U_{k},\mathscr{M})
		}
	\end{displaymath}
	Let $E$ be the image of $\rH^{0}_{\rig,\rc}(U_k,\mathscr{M})\to \rH^{0}_{\dR}(Y_{K},j_{!+}(M))$. 
	Then the specialisation morphism $\rho_{M}$ sends $E$ surjectively to the subspace $\rH^{0}(Y_{k},j_{!+}(\mathscr{M}))$.  
	Since $\dim E\le \dim \rH^{0}_{\dR}(Y_{K},j_{!+}(M))=\dim \rH^{0}(Y_{k},j_{!+}(\mathscr{M}))$, we deduce that $E=\rH^{0}_{\dR}(Y_{K},j_{!+}(M))$ and that $\rho_{M}$ induces an isomorphism \eqref{sp quasi-min}. 
\end{proof}

\begin{secnumber}
	\textit{Proof of theorem \ref{main thm Frob FG} in the quasi-minuscule case}. 
	By \ref{compare dR rig fiber quasi-min}(i), we have a diagram of $D_{X_{K}}$-modules  
	\begin{equation}
		\xymatrix{
			\Gamma(X_{K},\Kl_{\cG,V}^{\dR}) \ar[r] & 
			\Gamma(X_{K},\tau_{K,+}(M)) \ar[d] & \\
			\Gamma(X_{k},\Kl_{\cG,V}^{\rig}) \ar[r] &
			\Gamma(X_{k},\tau_{k,+}(\mathscr{M}))
		}
	\end{equation}
	where the vertical arrow is the relative specialization morphism \eqref{rel sp map}. 
	Let $U$ be an open dense subscheme of $X_{k}$ such that $\tau_{k,+}(\mathscr{M})|_{U}$ is smooth, $\mathfrak U$ the corresponding formal open subscheme of $\widehat{X}$ and $Z=\mathbb{P}^1_k\setminus U$. 

	We denote the sheaf of rings $\mathscr{O}_{\widehat{\mathbb{P}}^1,\mathbb{Q}}(^{\dagger} Z)$ by $\mathcal{O}_{U}$ for short. 
	By \ref{compare dR rig fiber quasi-min} and the same argument of \ref{proof thm min}, the above diagram induces an injective morphism of $\mathcal{O}_{U}$-modules with connection $(\Kl_{\cG,V}^{\dR})^{\dagger}\otimes_{\mathcal{O}_{X}}\mathcal{O}_U\to \tau_+(\mathscr{M})\otimes_{\mathcal{O}_{X}}\mathcal{O}_U$ and then induces an isomorphism of $\mathcal{O}_{U}$-modules with connection:
	\begin{equation} \label{iso conv isocrystals Y}
		(\Kl_{\cG,V}^{\dR})^{\dagger}\otimes_{\mathcal{O}_{X}}\mathcal{O}_U\xrightarrow{\sim} \Kl_{\cG,V}^{\rig}\otimes_{\mathcal{O}_{X}}\mathcal{O}_U. 
	\end{equation}
	In particular, the left hand side is overconvergent along $Z$. 
	Since the convergency of an $\mathscr{O}_{\widehat{X}^{\rig}}$-module with connection can be checked by restricting to a dense open subscheme of $X_{k}$ (\cite{Ogus84} 2.16), the $\mathscr{O}_{\widehat{X}^{\rig}}$-module with connection $(\Kl_{\cG,V}^{\dR})^{\dagger}|_{\widehat{X}^{\rig}}$ is convergent. 
	Then we deduce that the $\mathcal{O}_{X}$-module with connection $(\Kl_{\cG,V}^{\dR})^{\dagger}$ is overconvergent along $\{0,\infty\}$. 
	The restriction functor $\Iso^{\dagger}(X_{k}/K)\to \Iso^{\dagger}(U/K)$ is fully faithful (cf. \cite{Ked07} 6.3.2). 
	Then the isomorphism \eqref{iso conv isocrystals Y} gives rise to an isomorphism \eqref{dagger dR iso rig} and the assertion follows. \hfill $\qed$
\end{secnumber}

\begin{secnumber}
	In the end, we show the general case of theorem \ref{main thm Frob FG}.
	Let $V_1,\cdots,V_n$ be minuscule and quasi-minuscule representations of $\cG$. Then we have a decomposition of representations
	\begin{equation} \label{decomp reps}
		V_1\otimes V_2 \otimes \cdots \otimes V_n\simeq \bigoplus_{W\in \Rep(\cG)} m_W W,
	\end{equation}
	where $m_W$ denotes the multiplicity of $W$. 
	Each representation $W$ of $\Rep(\cG)$ appears as a summand of the above decomposition for some minuscule and quasi-minuscule representations $V_1,\cdots,V_n$. 

	Then we obtain the associated decomposition of bundles with connection on $X_{K}$ and overconvergent $F$-isocrystals on $X_{K}$ respectively:
	\begin{eqnarray} \label{decomposition Kl tensor}
		\bigotimes_{i=1}^{n}\Kl_{\cG,V_i}^{\dR} \simeq \bigoplus_{W\in \Rep(\cG)} m_W \Kl_{\cG,W}^{\dR}, \\
		\bigotimes_{i=1}^{n} \Kl_{\cG,V_i}^{\rig} \simeq \bigoplus_{W\in \Rep(\cG)} m_W \Kl_{\cG,W}^{\rig}. \label{decomposition Kl tensor rig}
	\end{eqnarray}

	Theorem \ref{main thm Frob FG} in the minuscule and quasi-minuscule cases provides an isomorphism of overconvergent isocrystals
	\begin{equation}
		(\otimes_{i=1}^n\Kl_{\cG,V_i}^{\dR})^{\dagger} \xrightarrow{\sim} \otimes_{i=1}^n \Kl_{\cG,V_i}^{\rig}. 
		\label{iso analytification}
	\end{equation}
	By (\cite{Ber96} 2.2.7(iii)), the connection on left hand side, restricted on each component $(\Kl_{\cG,W}^{\dR})^{\dagger}$, is overconvergent. 
	We denote abusively the associated overconvergent isocrystal on $X_{k}$ by $(\Kl_{\cG,W}^{\dR})^{\dagger}$. 

	The isomorphism \eqref{iso analytification} induces a commutative diagram 
	\begin{equation} \label{diag End commute}
		\xymatrix{
			& \End_{\Rep(\cG)} (\bigotimes_{i=1}^{n} V_i) \ar[ld]_{\Kl_{\cG}^{\dR}} \ar[rd]^{\Kl_{\cG}^{\rig}} & \\
			\End_{\Conn(X_{K})}( \bigotimes_{i=1}^{n} \Kl_{\cG,V_i}^{\dR}) \ar[rr] && \End_{\Sm(X_{k}/K)}(\bigotimes_{i=1}^{n} \Kl_{\cG,V_i}^{\rig})
	}
	\end{equation}
	Indeed, choose a $k$-point $s$ of $X_{k}$ and a lift $\widetilde{s}$ to $X(K)$. The isomorphism \eqref{iso analytification} induces an isomorphism between fibers $(\Kl_{\cG,V_i}^{\dR})_{\widetilde{s}}$ and $(\Kl_{\cG,V_i}^{\rig})_{s}$. 
	The composition of the functor $\Kl_{\cG}^{\dR}$ (resp. $\Kl_{\cG}^{\rig}$) with the fiber functor at $\widetilde{s}$ (resp. $s$) is the forgetful functor $\Rep(\cG)\to \Vect_{K}$. 
	Since fiber functors are faithful, we deduce the commutativity of \eqref{diag End commute} by considering their fibers. 

	If $e$ denotes the idempotent of $\End_{\Rep(\cG)} (\otimes_{i=1}^n V_i)$ corresponding to a summand $W$, then its image via left (resp. right) vertical arrow is the idempotent corresponding to $\Kl_{\cG,W}^{\dR}$ (resp. $\Kl_{\cG,W}^{\rig}$) (\ref{decomposition Kl tensor}, \ref{decomposition Kl tensor rig}). 
	By \eqref{iso analytification} and \eqref{diag End commute}, we deduce a canonical isomorphism of overconvergent isocrystals on $X_{k}$
	\begin{displaymath}
		\iota_{W}: (\Kl_{\cG,W}^{\dR})^{\dagger} \xrightarrow{\sim} \Kl_{\cG,W}^{\rig}.
	\end{displaymath}
	One verifies that the above isomorphism is independent of the choice of idempotent $e$ and then of the choice of minuscule representations $\{V_{i}\}_{i=1}^{n}$. 
	Isomorphisms $\iota_{W}$ are compatible with tensor structures due to \eqref{iso analytification}.
	Now theorem \ref{main thm Frob FG} follows. \hfill $\qed$ 
\end{secnumber}

\subsection{Comparison between $\Kl_{\cG}^{\dR}$ and $\Be_{\cG}$} \label{compare Kl Be}
In this subsection, we recall the Bessel connection $\Be_{\cG}(\check{\xi})$ on $X$ constructed by Frenkel and Gross \cite{FG09} of $\cG$ and identify it with $\Kl_{\cG}^{\dR}(\phi)$ \eqref{Kl dR}.

	We work with schemes over $K$. 
	Let $(\check{\gg},\check{\mathfrak b},\check{\mathfrak t})$ denote the Lie algebras of $(\cG,\cB,\cT)$ over $K$.

\begin{secnumber} \label{def FG connection}
	Let $A_K$ denote the ring of algebraic functions of $X$. 
	There exists a grading on the affine Lie algebra $\check{\gg}_{\mathrm{aff}}:=\check{\gg}\otimes A_K$, which on $\check{\gg}$-part is given by $\Ad \rho(\Gm)$, and on $A_K$-part is given by the $\check{h}$-multiple of the grading induced by the natural action of $\Gm$ on $X$. 
	Here as before $\rho\in \mathbb{X}^\bullet(T)\otimes\mathbb{Q}$ is the half sum of positive roots of $G$ (and therefore is a cocharacter of $\cG_{\mathrm{ad}}$), and $\check{h}$ is the Coxeter number of $\cG$. 
	
	Let $\check{\gg}_{\mathrm{aff}}(1)\subset \check{\gg}_{\mathrm{aff}}$ be the subspace of degree $1$. Then
	\[
	      \check{\gg}_{\mathrm{aff}}(1)=\bigoplus_{\check{\alpha} \mbox{ affine simple}}\check{\gg}_{\mathrm{aff},\check{\alpha}},
	\]	
	where $\check{\gg}_{\mathrm{aff},\check{\alpha}}$ is the root subspace corresponding to the affine simple root $\check{\alpha}$ of $\check{\gg}_{\mathrm{aff}}$. 
	Let $\check{\xi}\in  \check{\gg}_{\mathrm{aff}}(1)$ be a \textit{generic} element, by which we mean each of its $\check{\alpha}$-component $\check{\xi}_{\check{\alpha}}\neq 0$. In \cite{FG09}, Frenkel and Gross defined a $\check{\gg}$-valued connection on the trivial $\check{G}$-bundle on $X$ by the following formula:
\begin{equation}
	\Be_{\cG}(\check{\xi}) = d +  \check{\xi} \frac{dx}{x}.
	\label{FG connection formula}
\end{equation}
        Here $x$ is a coordinate $x: X\cup\{0\}\simeq \A1$. Note that
        $\frac{dx}{x}$ itself is independent of the choice of the coordinate $x$, and is a generator of the module of log differentials on $X\cup\{0\}$ with logarithmic pole at $0$. 
        	
	We may write $N=\sum_{\check{\alpha}} \check{\xi}_{\check{\alpha}}$, where the sum is taken over simple roots of $\check{\gg}$ (instead of $\check{\gg}_{\mathrm{aff}}$). This is a principal nilpotent element of $\check{\gg}$. The remaining affine root subspaces are of the form $x\check{\gg}_{-\check{\theta}_i}$, where $x$ is a coordinate as above and $\check{\theta}_i$ is the highest root of the simple factor $\check{\gg}_i$ of $\check{\gg}$. So we may write the sum of the remaining affine root vectors as $x E$ for some $E\in \sum\check{\gg}_{-\check{\theta}_i}$. Then the connection can be written as
	\begin{equation}\label{Bessel connection form}
	          \Be_{\cG}(\check{\xi}) = d +  (N+ xE) \frac{dx}{x},
	  \end{equation}
	which is the form as used in \cite{FG09}. In particular, this connection is regular singular with a principal unipotent monodromy at $0$. On the other hand, it has an irregular singularity at $\infty$, with maximal formal slope $1/\check{h}$ (\cite{FG09} \S 5).

	We regard $\Be_{\cG}(\check{\xi})$ as a tensor functor from the category of representations of $\check{G}$ to the category of bundles with connection on $X$:
	\begin{equation} \label{FG connection}
		\Be_{\cG}(\check{\xi}): \Rep(\check{G}) \to \Conn(X). 
	\end{equation}
\end{secnumber}

\begin{secnumber} \label{action Gm Sigma}
	We will identify $\Kl_{\cG}^{\dR}(\lambda\phi)$ and $\Be_{\cG}(\check{\xi})$ as $\cG$-bundles with integrable connections on $X$. For this purpose, we need to discuss how these connections depend on parameters. 
        We identify the dual space $\gg_{\mathrm{aff}}^*$ of $\gg_{\mathrm{aff}}:=\gg\otimes A_K$ with $\gg^*\otimes A_K$ via
         the canonical residue pairing 
        \[
                   (\gg\otimes A_K)\otimes (\gg^*\otimes\omega_X)\to K, \quad (\xi\otimes f,\check{\xi}\otimes g)= (\xi,\check{\xi}) \mathrm{Res}_{x=\infty}fg\frac{dx}{x}.
        \]
        Recall that $\lambda\phi$ is a  linear function $\Lie(I(1)/I(2))\to K$. We identify $\Hom_K(\Lie I(1)/I(2), K)$ with
	\[
	     \gg^*_{\mathrm{aff}}(1)=\bigoplus_{\alpha \mbox{ affine simple}} \gg^*_{\alpha}.
	\]
	where $\gg^*_{\alpha}\subset \gg_{\mathrm{aff}}^*$ is the dual of the root subspace corresponding to $\alpha$.

	By \eqref{dependence phi} (applied to the $\mathscr{D}$-module setting), $\Kl_{\cG}^{\dR}(\lambda\phi)$ depends only on the $T_{\ad}$-orbit of this functional. In addition, $T_{\ad}$-orbits of generic linear functions on $\Lie(I(1)/I(2))$ are parameterized by the GIT quotient $ \gg^*_{\mathrm{aff}}(1)/\!\!/T_{\ad}$.
	
	On the other hand, the group $\Gm\times\Aut(\cG,\cB,\cT)$ acts on $\check{\gg}_{\mathrm{aff}}$ preserving the grading. For $\check{\delta}=(a,\check{\sigma})$, a simple gauge transform implies that the analogue of \eqref{Aut of diag} holds, namely 
	\begin{equation} \label{formula action Gm Sigma}
                \Be_{\cG,V}(\check{\delta}(\check{\xi}))\simeq a^+\Be_{\cG,\check{\sigma}V}(\check{\xi}).
        \end{equation}
	It follows that the analogue of \eqref{dependence phi} and of \eqref{dependence psi}  also hold for Bessel connections. 
        In particular, $\Be_{\cG}(\check{\xi})$ only depends on the $\cT_{\mathrm{ad}}$-orbit of $\check{\xi}$. 
        Again, $\cT_{\ad}$-orbits of generic $\check{\xi}$ are parameterized by the GIT quotient $\check{\gg}_{\mathrm{aff}}(1)/\!\!/\cT_{\mathrm{ad}}$. 
\end{secnumber}

        Here is the main theorem of this subsection.
\begin{theorem} \label{thm Zhu}
	There exists a canonical isomorphism of affine schemes 
	\begin{equation}\label{classical duality}
		\gg^*_{\mathrm{aff}}(1)/\!\!/T \xrightarrow{\sim} \check{\gg}_{\mathrm{aff}}(1)/\!\!/\check{T},
	\end{equation} 
	such that if the $T_{\ad}$-orbit through $\lambda \phi$ and the $\cT_{\mathrm{ad}}$-orbit through $\check{\xi}$ match under this isomorphism, then 
	\[
	      \Kl_{\cG}^{\dR}(\lambda\phi)\simeq \Be_{\cG}(\check{\xi})
	\]      
	as $\cG$-bundles with connection on $X$.
\end{theorem}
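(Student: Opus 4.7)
The plan is to reduce this theorem to the special case already proved in \cite{Zhu17}, interpreting the new variability of parameters as an equivariance statement for a $\Gm\times T_{\mathrm{ad}}$-action matched to the $\Gm\times \cT_{\mathrm{ad}}$-action.

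First I would construct the canonical isomorphism \eqref{classical duality}. The space $\gg^*_{\mathrm{aff}}(1) = \bigoplus_{\alpha} \gg^*_{\alpha}$ is a direct sum of one-dimensional $T_{\mathrm{ad}}$-weight spaces indexed by affine simple roots. The $T_{\mathrm{ad}}$-invariant ring is generated, per simple factor of $\gg$, by the monomial in the coordinates whose exponents are the marks (the coefficients of the affine simple roots in the null root of the affine Dynkin diagram). Dually, $\check{\gg}_{\mathrm{aff}}(1)/\!\!/\cT$ admits the analogous description using affine simple coroots of $\cG$ and their comarks. The standard duality between the affine root systems of $\gg$ and $\check{\gg}$, together with the identification of marks of $\gg$ with comarks of $\cG$, yields a canonical isomorphism $\gg^*_{\mathrm{aff}}(1)/\!\!/T \xrightarrow{\sim} \check{\gg}_{\mathrm{aff}}(1)/\!\!/\cT$, both of which are naturally isomorphic to $\mathbb{A}^c$ where $c$ is the number of simple factors of $\gg$.

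Second, I would invoke \cite{Zhu17}, which provides a canonical isomorphism $\Kl_{\cG}^{\dR}(\lambda_0\phi_0) \simeq \Be_{\cG}(\check{\xi}_0)$ for a distinguished matched pair of orbits, obtained there via an explicit analysis of the open embedding \eqref{big open cell} and the minuscule affine Grassmannian strata. Third, I would upgrade this to the general case by equivariance: $\Kl_{\cG}^{\dR}$ satisfies transformation formulas under $\Gm\times T_{\mathrm{ad}}$ entirely parallel to \eqref{dependence phi} and \eqref{dependence psi} (established as in \ref{symmetry of Kl} but in the $\mathscr{D}$-module incarnation), while $\Be_{\cG}$ satisfies \eqref{formula action Gm Sigma}. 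Hence it suffices to check that these two actions, one on $\gg^*_{\mathrm{aff}}(1)/\!\!/T$ and the other on $\check{\gg}_{\mathrm{aff}}(1)/\!\!/\cT$, are intertwined by \eqref{classical duality}. This is a direct calculation in coordinates, where loop rotation acts by a weight equal to the Coxeter number $\check{h}$ on both sides by our grading conventions, and $T_{\mathrm{ad}}$ and $\cT_{\mathrm{ad}}$ act trivially on invariants.

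The main obstacle is the first step. Although each side is a product of affine lines, matching them canonically requires a careful normalization consistent with the $\Gm$-loop rotation grading and Frenkel--Gross' scaling, and one must verify that the invariant monomials on the two sides correspond under the exchange of marks and comarks. Once this normalization is pinned down, the reduction to \cite{Zhu17} by the equivariance argument is essentially formal, and the tensor compatibility of the isomorphism $\Kl_{\cG}^{\dR}\simeq \Be_{\cG}$ follows from the tensor compatibility of both functors.
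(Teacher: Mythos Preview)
Your proposal has a genuine gap and also mischaracterizes the input from \cite{Zhu17}.

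First, on \cite{Zhu17}: that paper does not prove the isomorphism for general $\cG$ at a single distinguished parameter via ``explicit analysis of minuscule Grassmannian strata.'' Rather, it proves the theorem for $\cG$ of \emph{adjoint} type and for \emph{all} parameters, via Beilinson--Drinfeld's quantization of the Hitchin system and the identification of $\Kl_{\cG}^{\dR}$ with an oper. The paper here makes this explicit: it constructs \eqref{classical duality} through the Hitchin base and the canonical isomorphism $\mathfrak{c}^{*}\simeq\check{\mathfrak{c}}$ (not via marks/comarks), then quantizes to obtain $U(\Lie I(1)/I(2))^{T}\simeq \mathrm{Fun}\,\mathrm{Op}_{\check{\gg}}$, and finally invokes \cite{Zhu17} to conclude for $\cG_{\mathrm{ad}}$. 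Given this, your equivariance step is redundant in the adjoint case: once \cite{Zhu17} is granted, nothing remains to extend across parameters.

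The actual missing step in your outline is the passage from $\cG_{\mathrm{ad}}$ to a general semisimple $\cG$. Your proposal never addresses this, and it is not formal: \cite{Zhu17} does not supply it, and the Beilinson--Drinfeld machinery for non-adjoint groups requires delicate choices (square roots of the canonical bundle on $\Bun_{\mathcal{G}}$). The paper bypasses this with a clean uniqueness argument: both $\Kl_{\cG}^{\dR}(\lambda\phi)$ and $\Be_{\cG}(\check{\xi})$ are de Rham $\cG$-local systems lifting the same $\cG_{\mathrm{ad}}$-local system and having unipotent monodromy at $0$; any two such lifts differ by a de Rham $\check{Z}$-local system on $X\cup\{0\}\simeq\mathbb{A}^{1}$, and since $\check{Z}$ is finite, this twist has trivial wild part at $\infty$ and hence is trivial. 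You would need to supply this (or an equivalent) argument to complete the proof.

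Your combinatorial description of \eqref{classical duality} via marks and comarks is plausible and would likely agree with the paper's construction, but the paper's Hitchin-base formulation has the advantage of interfacing directly with the oper-theoretic quantization that drives the proof.
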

    
 If $\cG$ is of adjoint type, a weaker version of this theorem was the main result of \cite{Zhu17}.   
      
\begin{secnumber}  
        We first explain the isomorphism \eqref{classical duality}. Let $\omega_X$ denote the canonical bundle on $X$ and by abuse of notation, we sometimes also use it to denote the space of its global sections.  Via the open embedding $j_{\gamma}: I(1)/I(2)\hookrightarrow \Bun_{\Gzt}^{\gamma}$, we identify $I(1)/I(2)\times \gg^*_{\mathrm{aff}}(1)$ with $T^*\Bun_{\Gzt}^{\gamma}|_{j_{\gamma}(I(1)/I(2))}$. The Hitchin map (e.g. see \cite{BD99} Sect. 2, and \cite{Zhu17})
        \[
		h^{cl}: T^*\Bun^{\gamma}_{\Gzt}\to \mathrm{Hitch}(X):=\Gamma(X, \mathfrak c^*\times^{\Gm}\omega_X)~ 
		\footnote{Here $\Gamma(X, \mathfrak c^*\times^{\Gm}\omega_X)$ denotes abusively the affine space associated to the $K$-vector space $\Gamma(X, \mathfrak c^*\times^{\Gm}\omega_X)$. } 
        \] 
	induces a closed embedding $h^{cl}:\gg_{\aff}^*(1)/\!\!/T\hookrightarrow \mathrm{Hitch}(X)$, where  $\mathfrak c^*:={\gg}^*/\!\!/G$ is the GIT quotient of $\gg^*$ by the adjoint action of $G$, equipped with a $\Gm$-action induced by the natural $\Gm$-action on $\gg^*$. (For an explicit description of the image of the map when $\gg$ is simple, see  the discussions before \cite{Zhu17}  lemma 18). 
	        
        On the other hand, there exists a canonical morphism 
        \[
              \check{\gg}_{\mathrm{aff}}(1)\frac{dx}{x}\subset \check{\gg}\otimes\omega_X\to \Gamma(X, \check{\mathfrak c}\times^{\Gm}\omega_X)
        \] 
	where $\check{\mathfrak c}:=\check{\gg}/\!\!/\cG$, which also induces a closed embedding $\check{\gg}_{\mathrm{aff}}(1)/\!\!/\cT\to \Gamma(X, \check{\mathfrak c}\times^{\Gm}\omega_X)$. 
	The identification $(\Lie T)^*=\Lie\cT$ induces a canonical isomorphism $\mathfrak c^*\xrightarrow{\sim}\check{\mathfrak c}$. 
	One checks easily that there is a unique isomorphism $ \gg^*_{\mathrm{aff}}(1)/\!\!/T\xrightarrow{\sim} \check{\gg}_{\mathrm{aff}}(1)/\!\!/\check{T}$ that fits into the following commutative diagram
        \[\xymatrix{
              \gg_{\mathrm{aff}}^*(1)/\!\!/T\ar^-{\sim}[r]\ar@{^{(}->}[d] & \check{\gg}_{\mathrm{aff}}(1)/\!\!/\cT\ar@{^{(}->}[d]\\
              \Gamma(X, \mathfrak c^*\times^{\Gm}\omega_X) \ar^-{\sim}[r] & \Gamma(X, \check{\mathfrak c}\times^{\Gm}\omega_X)
        }\]
	where the bottom isomorphism is induced by $\mathfrak c^*\xrightarrow{\sim}\check{\mathfrak c}$. 
        
        In the case $G$ and $\cG$ are almost simple, unveiling the definition, we see that $\lambda\phi$ and $\check{\xi}$ match to each other if the following holds: Let $r$ be the rank of $G$ and $\cG$. 
        Recall that the ring of invariant polynomials on $\gg^*$ (resp. $\check{\gg}$) has a generator $P_r$ (resp. $\check{P}_r)$, homogeneous of degree $h=\check{h}$.      
        We choose them to match each other as functions on $\mathfrak c^*\simeq \check{\mathfrak c}$. Then  $\lambda\phi$ matches $\check{\xi}$ if and only if 
	\begin{equation}
		\lambda^hP_{r}(\phi)=P_{r}(\lambda\phi)=\check{P}_{r}(\check{\xi}).
		\label{invariant poly match}
	\end{equation}
	This condition is independent of the choice of $P_r$ and $\check{P}_r$ (as soon as they match to each other).
        
        For concrete computations, it is convenient to 
        fix a coordinate $x\in \A1\subset\P1$, and 
        a pinning $N=\sum_{\check{\alpha}\in\check{\Delta}} \check{\xi}_{\check{\alpha}}$ of $(\cG,\cB,\cT)$. Then we may rewrite \eqref{classical duality} as an isomorphism
        \begin{equation}\label{phi-theta match xi-theta}
                 \gg^*_{\mathrm{aff}}(1)/\!\!/T \simeq \check{\gg}_{\mathrm{aff}}(1)/\!\!/\check{T}\simeq N + x\sum_i\check{\gg}_{-\check{\theta}_i}\simeq x\sum_i\check{\gg}_{-\check{\theta}_i}.
        \end{equation}  
\end{secnumber}          
        
\begin{secnumber} 
        We prove theorem \ref{thm Zhu} by quantizing \eqref{classical duality} and applying the Galois-to-automorphic direction of geometric Langlands correspondence. 
        For this, we need to review the notion of $\check{\gg}$-opers  (\cite{BD99} \S 3). By descent, it suffices to prove the theorem after base change from $K$ to $\overline{K}$. So we assume that all the geometric objects below are defined over $\overline{K}$, and omit the subscript. Let $\cG_{\mathrm{ad}}$ denote the adjoint group of $\cG$. 
        
        Let $Y$ be a smooth curve over $\overline{K}$. Let $\mathrm{Op}_{\check{\gg}}(Y)$ denote the moduli spaces of $\cG_{\mathrm{ad}}$-opers on $Y$ (\cite{BD99} 3.1.11).        
        By (\cite{BD99} 3.1.11, 3.4.3), $\mathrm{Op}_{\check{\gg}}(Y)$ is an ind-affine scheme. There is a natural free  and transitive action of the (ind)-vector space $\Gamma(Y, \check{\mathfrak c}\times^{\Gm}\omega_Y)$ on  $\mathrm{Op}_{\check{\gg}}(Y)$ (\cite{BD99} 3.1.9). This induces a natural filtration on the ring of regular functions $\mathrm{Fun} \mathrm{Op}_{\check{\gg}}(Y)$, whose associated graded is the ring of regular functions $\mathrm{Fun} \Gamma(Y, \check{\mathfrak c}\times^{\Gm}\omega_Y)$.       
   
        Back to our case $Y=X$.        
        We consider the subscheme of $\mathrm{Op}_{\check{\gg}}:=\mathrm{Op}_{\check{\gg}}(\P1)_{(0,\varpi(0)),(\infty,1/\check{h})}\subset \mathrm{Op}_{\check{\gg}}(X)$, which is the moduli
        of $\cG_{\mathrm{ad}}$-opers on $X$ which are
        \begin{itemize}
                 \item regular singular with principal unipotent monodromy at $0$;
                 \item possibly irregular of maximal formal slope $\leq 1/\check{h}$ at $\infty$.
        \end{itemize}  
        See the discussions before (\cite{Zhu17} lemma 20) (where slightly different notations were used). 
        In this case, the action of  $\Gamma(X, \check{\mathfrak c}\times^{\Gm}\omega_X)$  on $\mathrm{Op}_{\check{\gg}}(X)$ induces a free and transitive action of $x\sum_i\check{\gg}_{-\check{\theta}_i}\simeq \check{\gg}_{\mathrm{aff}}(1)/\!\!/\cT$ \eqref{phi-theta match xi-theta} on $\mathrm{Op}_{\check{\gg}}$. In particular, $ \mathrm{Fun} \mathrm{Op}_{\check{\gg}}$ has a natural filtration whose associated graded is $(\mathrm{Fun} \check{\gg}_{\mathrm{aff}}(1))^{\cT} $.
        
        On the other hand, the space $\mathrm{Op}_{\check{\gg}}$ has a distinguished point, corresponding to the $\cG_{\mathrm{ad}}$-oper that is tame at both $0$ and $\infty$. Therefore, we obtain a canonical isomorphism $x\sum_i\check{\gg}_{-\check{\theta}_i}\in \check{\gg}_{\mathrm{aff}}(1)/\!\!/\cT\simeq \mathrm{Op}_{\check{\gg}}(X)$. Explicitly, this isomorphism sends $xE\in x\sum_i\check{\gg}_{-\check{\theta}_i}$ to the connection $d+ (N+xE) \frac{dx}{x}$ on the trivial $\cG$-bundle which has a natural oper form.       
        Now the quantization of \eqref{classical duality} gives a canonical isomorphism of filtered algebras (\cite{Zhu17} lemma 21)
        \begin{equation}\label{quantization}
             U( \Lie I(1)/I(2))^T\simeq   \mathrm{Fun} \mathrm{Op}_{\check{\gg}},
        \end{equation}  
        whose associated graded gives back to \eqref{classical duality}. 
        Here $U(V)$ is the universal enveloping algebra of $V=\Lie I(1)/I(2)$, equipped with the usual filtration. As $V$ is abelian, it is also canonically isomorphic to $ (\mathrm{Fun} V^*)^T $. Putting all the above isomorphisms together, we obtain the following commutative diagram        
        \begin{equation*}
        \xymatrix{    
                 (\mathrm{Fun} \gg_{\mathrm{aff}}^*(1))^T \ar^-\sim[r]\ar_-\sim[d] &  
		 (\mathrm{Fun} \check{\gg}_{\mathrm{aff}}(1))^{\cT} \ar^-\sim[d]\\
                 U( \Lie I(1)/I(2))^T                         \ar^-\sim[r]  
		 &    \mathrm{Fun} \mathrm{Op}_{\check{\gg}}
        }
        \end{equation*}
        Together with the main result of \cite{Zhu17}, we obtain the proof of theorem \ref{thm Zhu} in the case when $\cG=\cG_{\mathrm{ad}}$.
        \end{secnumber}
 
 \begin{secnumber}               
         Next, we explain how to extend it to allow $G$ to be a general semisimple group. 
        
         One approach is to generalize the work of \cite{BD99} to allow certain level structures, as what \cite{Zhu17} did for simply-connected groups. 
         In this approach, one must deal with the subtle question of the construction of ``square root'' of the canonical bundle on the moduli of $\mathcal{G}$-bundles. 
         
         In our special case, we have another short and direct approach,
         using the isomorphism $\Kl_{\cG_{\mathrm{ad}}}^{\dR}(\lambda\phi)\simeq \Be_{\cG_{\mathrm{ad}}}(\check{\xi})$ just established. 
         
         First, we claim that up to isomorphism, there exists a unique de Rham $\cG$-local system on $X$, which induces $\Be_{\cG_{\mathrm{ad}}}(\check{\xi})$, and has unipotent monodromy at $0$. Indeed, any two such de Rham $\cG$-local systems differ by a de Rham $\check{Z}$-local system on $X\cup\{0\}\simeq \mathbb{A}^1$ 
         (i.e. one is obtained from the other by twisting a de Rham $\check{Z}$-local system). As $\check{Z}$ is a finite group, 
         the wild part of the differential Galois group at $\infty$ of this local system must be trivial, and therefore this local system itself is trivial.
         
         Now since both $\Kl_{\cG}^{\dR}(\lambda\phi)$ and $\Be_{\cG}(\check{\xi})$ have the property as in the claim (to see that $\Kl_{\cG}^{\dR}(\lambda\phi)$ has unipotent monodromy at $0$, 
         one uses the same argument as \cite{HNY} theorem 1 (2)), they must be isomorphic.
 \end{secnumber}

\subsection{Bessel $F$-isocrystals for reductive groups} \label{Generalised Bessel}
         In this subsection, we construct Bessel $F$-isocrystals for reductive groups, by putting the above ingredients together. 
	 We keep the notation of \ref{Frob str Bessel}.

\begin{secnumber}\label{F on Be}         
	We take a non-trivial additive character $\psi:\mathbb{F}_p\to K^{\times}$ and a generic linear function $\phi:I(1)/I(2)\to \mathbb{A}^1$ over $R$ \eqref{Frob str Bessel}.
	We set $\lambda=-\pi\in K$ corresponding to $\psi$ (as in \ref{Dwork isocrystal}). 
	Let $\check{\xi}\in \check{\gg}_{\aff}(1)$ match $-\pi\phi$ under the isomorphism \eqref{classical duality}.
         
        We write $\Be_{\cG}(\check{\xi})$ more explicitly as follows. 
        Choose a coordinate $x$ of $X\cup\{0\}$ over $R$, and a pinning $N=\sum_{\check{\alpha}\in\check{\Delta}} \check{\xi}_{\check{\alpha}}$ of $(\cG,\cB,\cT)$. 
        By \eqref{phi-theta match xi-theta}, there is a unique element $E=E_{\phi}\in   \sum_i\check{\gg}_{-\check{\theta}_i}$ such that   
        \begin{equation}\label{normalization}
                   \Kl^{\dR}_{\cG}(1\cdot\phi) \simeq  d + (N + xE) \frac{dx}{x} ,
        \end{equation}           
	By \eqref{invariant poly match}, we deduce that
        \begin{equation}\label{Klpi vs Be}
                      \Kl^{\dR}_{\cG}(-\pi\phi) \simeq d + (N + (-\pi)^h x E) \frac{dx}{x} = \Be_{\cG}(\check{\xi}).
        \end{equation}               
 
         Now we can define the object appearing in the title of the paper. Let $\Be_{\cG}^{\dagger}(\check{\xi})$ denote the composition of $\Be_{\cG}(\check{\xi}): \Rep(\cG)\to \Conn(X_{K})$ with the $(-)^{\dagger}$-functor from \eqref{various categories}.
         By theorem \ref{main thm Frob FG}, a choice of isomorphism \eqref{Klpi vs Be} endows $\Be_{\cG}^{\dagger}(\check{\xi})$ with a Frobenius structure, 
         i.e. a lifting of $\Be_{\cG}^{\dagger}(\check{\xi})$ as a functor $\Rep(\cG)\to \Fr\Iso^{\dagger}(X_k/K)$, or alternatively, an isomorphism of tensor functors 
         \[
		 \varphi:F^{*}_{X_k}\circ \Be_{\cG}^{\dagger}(\check{\xi})
		 \xrightarrow{\sim} \Be_{\cG}^{\dagger}(\check{\xi}): \Rep(\cG)\to \Iso^{\dagger}(X_k/K),
         \] 
	 where $F^{*}_{X_k}:\Iso^{\dagger}(X_{k}/K)\to \Iso^{\dagger}(X_{k}/K)$ denotes the $s$-th Frobenius pullback functor \eqref{Frobenius pullback iso}. 
	 From the calculation of the differential Galois group of $\Be_{\cG}$ in \cite{FG09} coro. 9, coro. 10 (see \eqref{table monodromy}) that the automorphism group of $\Be_{\cG}$ is $Z_G(K)$. 
         Therefore, the Frobenius structure on $\Be_{\cG}^{\dagger}(\check{\xi})$  is independent of the choice of the isomorphism $\Be_{\cG}(\check{\xi})\simeq \Kl_{\cG}^{\dR}(\lambda\phi)$. 
	 We use $(\Be_{\cG}^{\dagger}(\check{\xi}),\varphi)$ (or simply $\Be_{\cG}^{\dagger}(\check{\xi})$ if there is no confusion) to denote the $\cG$-valued overconvergent $F$-isocrystal
         \begin{equation}\label{Be cG F}
		 (\Be_{\cG}^{\dagger}(\check{\xi}),\varphi): \Rep(\cG)\to \Fr\Iso^{\dagger}(X_k/K),
         \end{equation} 
	 which we call the \textit{Bessel $F$-isocrystal of $\cG$}.
\end{secnumber}

\begin{secnumber} \label{log-extension} 
	For each representation $\rho:\cG\to\GL(V)$, the restriction of $\Be_{\cG,V}^{\dagger}(\check{\xi})$ at $0$ defines an object $\Be_{\cG,V}^{\dagger}(\check{\xi})|_{0}$ of $\MCF(\mathcal{R}_K/K)$ \eqref{review MC Swan}, which is solvable at $1$ (\cite{Ked10} 12.6.1). 
	By \eqref{FG connection formula}, the $p$-adic exponents of $\Be_{\cG,V}^{\dagger}(\check{\xi})|_{0}$ are $0$. 
	Then it is equivalent to the connection $d+d\rho(N)$ over the Robba ring by (\cite{Ked10} 13.7.1). 
	Hence, $\Be_{\cG,V}^{\dagger}(\check{\xi})|_{0}$ satisfies the Robba condition (i.e. it has zero $p$-adic slope) and is unipotent. 
	
	We denote by $\Fr\Iso^{\log,\uni}\bigl( (\mathbb{A}^1_k,0)/K\bigr)$ the category of log convergent $F$-isocrystals on $\A1_k$ with a log pole at $0$ relative to $K$ and nilpotent residue, and are overconvergent along $\infty$ \eqref{log isocrystal}. 
	By (\cite{Ked07} 6.3.2), this category is equivalent to the full subcategory of $\Fr\Iso^{\dagger}(X_k/K)$ consisting of objects which are unipotent at $0$. 
	Then the $\cG$-valued overconvergent $F$-isocrystal $(\Be_{\cG}^{\dagger}(\check{\xi}),\varphi)$ \eqref{Be cG F} factors through:
\begin{equation}
	(\Be_{\cG}^{\dagger}(\check{\xi}),\varphi): \Rep(\cG) \to \Iso^{\log,\uni}\bigl( (\mathbb{A}^1_k,0)/K\bigr).
	\label{Be cG log}
\end{equation}     
\end{secnumber}

\begin{secnumber} \label{Frobenius Adagger} 
Here is a more concrete description of the Frobenius structure on $\Be_{\cG}^{\dagger}(\check{\xi})$. Note that its underlying bundles of  $\Be_{\cG,V}^{\dagger}(\check{\xi})$ are free $\mathscr{O}_{\widehat{\mathbb{P}}^1,\mathbb{Q}}(^{\dagger}\{\infty\})$-modules. 
If we set $A^{\dagger}=\Gamma(\mathbb{P}^1_{k}, \mathscr{O}_{\widehat{\mathbb{P}}^1,\mathbb{Q}}(^{\dagger}\{\infty\}))$, by the Tannakian formalism, the Frobenius structure on $\Be_{\cG}^{\dagger}(\check{\xi})$ is equivalent to an element $\varphi\in \cG(A^{\dagger})$ satisfying 
\begin{equation} \label{Frob str varphi}
		x \frac{d\varphi}{dx} \varphi^{-1} + \Ad_{\varphi} (N+(-\pi)^{h} x E) = q(N+(-\pi)^h x^q E).
	\end{equation}
	Given a point $a\in |\mathbb{A}_k^1|$ and $\widetilde{a}:A^{\dagger}\to \overline{K}$ its Teichmüller lifting, 
	we denote by $\varphi_a=\prod_{i=0}^{\deg(a)-1} \varphi(\widetilde{a}^{q^i})$. 
	When $a\neq 0$, the Frobenius trace of $(\Be_{\cG}^{\dagger}(\check{\xi}),\varphi)$ at $a$ can be calculated by the trace of $\varphi_a$. 
	Now we rephrase the above discussions as follows, which is the first main result of our article.
\end{secnumber}

\begin{theorem}\label{def Bessel crystal}
	There is a unique element $\varphi \in \cG(A^{\dagger})$ satisfying the differential equation \eqref{Frob str varphi} such that via a (fixed) isomorphism $\overline{K}\simeq \overline{\mathbb{Q}}_{\ell}$, for every $a\in |X|$ and $V\in \Rep(\cG)$      
         \begin{equation}\label{E: trace identity}
                \mathrm{Tr}( \varphi_a, V)= \mathrm{Tr} (\mathrm{Frob}_{a}, \Kl^{\et,\ell}_{\cG,V,\bar{a}}(\psi\phi)). 
         \end{equation}
\end{theorem}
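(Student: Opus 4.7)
The plan is to assemble the statement directly from the comparison isomorphisms established in the preceding subsections, treating the element $\varphi$ as the Tannakian incarnation of the Frobenius structure on $\Be_{\cG}^{\dagger}(\check{\xi})$ constructed in \ref{F on Be}.

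First, combining theorem \ref{main thm Frob FG} with theorem \ref{thm Zhu} (in the form \eqref{Klpi vs Be}), one obtains a canonical isomorphism of $\cG$-bundles with connection between $\Be_{\cG}(\check{\xi})^{\dagger}$ and $\Kl_{\cG}^{\rig}(\phi)$; by construction of $\Kl_{\cG}^{\rig}$ in theorem \ref{thm eigensheaves}, the latter is a tensor functor into $\Fr\Iso^{\dagger}(X_k/K)$. Pulling back the Frobenius structure yields an isomorphism $\varphi: F_{X_k}^{*}\Be_{\cG}^{\dagger}(\check{\xi})\xrightarrow{\sim}\Be_{\cG}^{\dagger}(\check{\xi})$ of tensor functors $\Rep(\cG)\to \Iso^{\dagger}(X_k/K)$. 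Since the underlying $\cG$-bundle of $\Be_{\cG}^{\dagger}(\check{\xi})$ is the trivial bundle (see \ref{Frobenius Adagger}) over $A^{\dagger}=\Gamma(\mathbb{P}^1_k,\mathscr{O}_{\widehat{\mathbb{P}}^1,\mathbb{Q}}(^{\dagger}\{\infty\}))$, the Tannakian formalism translates $\varphi$ into an element of $\cG(A^{\dagger})$; unwinding the horizontality of $\varphi$ with respect to the connection \eqref{Bessel connection form} gives exactly the differential equation \eqref{Frob str varphi}.

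For uniqueness, suppose $\varphi_1,\varphi_2\in \cG(A^{\dagger})$ both satisfy \eqref{Frob str varphi} and both induce a Frobenius structure with the prescribed trace function. Then $\varphi_1\varphi_2^{-1}$ is an automorphism of $\Be_{\cG}(\check{\xi})$ as a $\cG$-connection (over $A^{\dagger}$, and hence over $K(x)$ after base change); by the computation of the differential Galois group in \cite{FG09} recalled in \eqref{table monodromy}, the centralizer of this connection is $Z_G(K)$, so $\varphi_1\varphi_2^{-1}$ is central. The trace identity \eqref{E: trace identity} at enough points then forces $\varphi_1=\varphi_2$, since characters of $\cG$ separate central elements.

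Finally, the trace identity is obtained by chasing through the constructions: by \ref{Frobenius Adagger} (cf.\ the discussion for Dwork $F$-isocrystals in \ref{Dwork isocrystal}), the Frobenius trace of the overconvergent $F$-isocrystal $(\Be_{\cG,V}^{\dagger}(\check{\xi}),\varphi)$ at a closed point $a\in|X|$ equals $\Tr(\varphi_a,V)$. The isomorphism \eqref{Klpi vs Be} together with theorem \ref{main thm Frob FG} identifies this $F$-isocrystal with $\Kl_{\cG,V}^{\rig}(\psi\phi)$. The final comparison with the $\ell$-adic Frobenius trace is then the content of \ref{HNY Kl}, which combines the trace formula for arithmetic $\cD$-modules with Gabber--Fujiwara's $\ell$-independence result. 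The main obstacle is really none beyond what has already been established: the hardest conceptual point is the identification of the Frobenius structure across the comparison isomorphisms, and this has been absorbed into theorems \ref{main thm Frob FG} and \ref{thm Zhu}; checking compatibility with the Tannakian description and verifying uniqueness via the differential Galois group calculation are then straightforward.
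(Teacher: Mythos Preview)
Your proposal is correct and follows exactly the route the paper takes: the theorem is presented in \ref{Frobenius Adagger} explicitly as a ``rephrasing of the above discussions,'' and those discussions are precisely the chain you invoke --- theorem \ref{thm Zhu} to identify $\Kl_{\cG}^{\dR}$ with $\Be_{\cG}$, theorem \ref{main thm Frob FG} to transport the Frobenius structure from $\Kl_{\cG}^{\rig}$, the log-extension \ref{log-extension} and Tannakian reinterpretation in \ref{Frobenius Adagger} to land in $\cG(A^\dagger)$, and \ref{HNY Kl} for the $\ell$-adic trace comparison. Your uniqueness argument via the differential Galois group (reducing to a central ambiguity) is also the paper's mechanism; the paper records this explicitly as lemma \ref{Uniqueness Frobenius}, though there it is phrased as ``unique up to $Z_{\cG}$'' and the trace identity is what pins down the central element, just as you say.
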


When $a=0$, we can describe $\varphi_0$ more precisely.
\begin{prop} \label{Frobenius 0}
	Let $2\rho$ be the sum of positive coroots in $\cwX(\cT)$. Then $\varphi_0=2\rho(\sqrt{q})$ in the semisimple conjugacy classes $\Conj^{\textnormal{ss}}(\cG(\overline{K}))$ of $\cG(\overline{K})$. 
\end{prop}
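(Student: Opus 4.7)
The plan has three ingredients: a local calculation at $x=0$, a Tannakian rigidity argument, and the geometric Satake identification of the nearby cycle.

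First, I would substitute $x=0$ into the Frobenius equation \eqref{Frob str varphi}. Every term proportional to $x$ or $x^q$ vanishes, and the equation reduces to $\Ad(\varphi_0)\,N = qN$, where $N$ is the principal nilpotent residue of the Bessel connection \eqref{Bessel connection form} at $0$. Since $\langle \check{\alpha}, 2\rho \rangle = 2$ for every simple root $\check{\alpha}$ of $\cG$, conjugation by $2\rho(\sqrt{q}) \in \cT(\overline{K})$ scales $N$ by $q$. For principal $N$ the centralizer satisfies $Z_{\cG}(N) = Z(\cG)\cdot U(N)$ with $U(N)$ connected unipotent, so the only semisimple elements of $Z_{\cG}(N)$ are central, and hence
\[
\varphi_0^{\ss} \in 2\rho(\sqrt{q})\cdot Z(\cG)(\overline{K}).
\]

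Next, the pushforward of $\Be_{\cG}^{\dagger}(\check{\xi})$ along $\cG \to \cG^{\ab}$ is the Bessel $F$-isocrystal for the torus $\cG^{\ab}$. Since $N + xE$ lies in $[\check{\gg},\check{\gg}]\otimes A_K$, its image in the abelianized loop algebra $(\check{\gg}^{\ab})_{\aff}(1)$ vanishes, so this pushforward is the trivial $F$-isocrystal. By the uniqueness of Frobenius structures (\ref{def Bessel crystal}), this forces $\chi(\varphi_0) = 1$ for every character $\chi: \cG \to \Gm$, i.e.\ $\varphi_0 \in [\cG, \cG](\overline{K})$. Combined with the first step, the residual ambiguity for $\varphi_0^{\ss}$ lies in the finite group $Z(\cG) \cap [\cG, \cG]$.

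The main obstacle is ruling out this finite central ambiguity. For this, I would use \ref{nearby cycle log extension} to identify the log fiber at $0$ with the nearby cycle $\Psi(\Be_{\cG,V}^{\dagger}(\check{\xi}))$, and theorem \ref{main thm Frob FG} to replace it with $\Psi(\Kl_{\cG,V}^{\rig})$. Degenerating the Beilinson--Drinfeld diagram \eqref{Hk vs BD} as the support point $x \to 0$ (analogous to the fusion-product arguments in \ref{prop fusion prod}--\ref{monidal structure}), the Frobenius action on this nearby cycle is identified with the Frobenius action on $\rH^{*}(\Gr_G, \IC_V^{\Weil})$. By purity of $\IC_V^{\Weil}$ and \ref{decomposition CT}(ii), Frobenius acts on $\rH^{2\rho(\lambda)}_{\rc}(S_\lambda, \IC_V^{\Weil})$ by $q^{\langle \lambda, \rho \rangle}$, which matches precisely the action of $2\rho(\sqrt{q})$ on the $\lambda$-weight space of $V$; Tannakian reconstruction via geometric Satake for arithmetic $\mathscr{D}$-modules (theorem \ref{Satake arith}) then pins down $\varphi_0^{\ss} = 2\rho(\sqrt{q})$ in $\Conj^{\ss}(\cG(\overline{K}))$.
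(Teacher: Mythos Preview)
Your first step matches the paper's argument exactly: evaluating \eqref{Frob str varphi} at $x=0$ gives $\Ad_{\varphi_0}N=qN$, and since the centralizer of a principal nilpotent meets the semisimple locus only in the center, $\varphi_0^{\ss}=\varepsilon\cdot 2\rho(\sqrt{q})$ for some $\varepsilon\in Z(\cG)$. Your second step is superfluous (step three already kills the full central ambiguity, not merely $Z(\cG)\cap[\cG,\cG]$), and its justification is incomplete: for the torus $\cG^{\ab}$ the uniqueness in theorem~\ref{def Bessel crystal} is vacuous unless you already know the $\ell$-adic traces of the pushforward, which is precisely the kind of Frobenius information you are trying to establish.

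The real difference is in step three. The paper does \emph{not} stay on the $p$-adic side: after identifying the log fiber with the nearby cycle via \ref{nearby cycle log extension}, it invokes Gabber--Fujiwara $\ell$-independence to replace $\Psi(\Be_{\cG,V}^{\dagger})$ by $\Psi(\Kl_{\cG,V}^{\et,\ell})$, and then cites the $\ell$-adic computation of G\"ortz--Haines (that the $i$th weight-graded piece of the nearby cycle has dimension $\dim\rH^{2i}(\Gr_G,\IC_V)$ with Frobenius acting by $q^i$). This immediately gives the eigenvalue multiset and forces $\varepsilon=\id$. Your proposal instead attempts a direct $p$-adic computation of $\Psi(\Kl_{\cG,V}^{\rig})$ by ``degenerating'' the diagram \eqref{Hk vs BD} as $x\to 0$. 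This is the right geometric picture, but the analogy with the fusion-product arguments \ref{prop fusion prod}--\ref{monidal structure} is misleading: those concern the collision of two Hecke modification points inside $X^2$, whereas here the modification point collides with the Iwahori level structure at $0\notin X$, which is a genuinely different degeneration. Carrying it out would amount to proving a $p$-adic analogue of the G\"ortz--Haines result for arithmetic $\mathscr{D}$-modules---plausible given the machinery assembled in \S\ref{sec Sat}, but not something the paper supplies, and not a consequence of the references you cite. The paper's route through $\ell$-independence buys a short proof by outsourcing this computation; your route would be more self-contained but requires substantially more work.
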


\begin{proof}
	The Frobenius endomorphism $\varphi_{0}$ at $0$ satisfies $\varphi_0^{-1}N\varphi_{0}=qN$ \eqref{def FG connection}. 
	Since $N$ is a principal nilpotent element and $\Ad_{\rho(q)}N=q^{-1}N$,
	we deduce that $\varphi_0=\varepsilon\rho(q)$ in $\Conj^{\textnormal{ss}}(\cG(\overline{K}))$ for some element $\varepsilon$ in the center $Z_{\cG}(\overline{K})$. 

	To show $\varepsilon=\id$, it suffices to investigate Frobenius eigenvalues of $\Psi(\Be_{\cG,V}^{\dagger})$ \eqref{nearby cycle log extension} for $V\in \Rep(\cG)$, which is same as those of $\Psi(\Kl_{\cG,V}^{\et,\ell})$ by \ref{Kl dR} and Gabber-Fujiwara's $\ell$-independence (\cite{Abe18} 4.3.11). 
	By a result of G\"ortz and Haines \cite{GH07}, the $i$th graded piece of the weight filtration of $\Psi(\Kl_{\cG,V}^{\et,\ell})$ has the same dimension as the dimension of $\rH^{2i}(\Gr_{G},\IC_{V})$ and is equipped with a Frobenius action by $\times q^i$ (cf. \cite{HNY} 4.3).
	Then we deduce that $\varepsilon=\id$. 
\end{proof}

\subsection{Monodromy groups}
\label{sec monodromy}

\begin{secnumber} \label{setting monodromy gp}
	In this subsection, we keep the notation of \ref{Generalised Bessel} and we take $L$ to be $\overline{K}$. We drop $\phi\psi$ from the notation. 
	
	We denote by $\langle\Be_{\cG}^{\dagger}\rangle$ (resp. $\langle\Be_{\cG}^{\dagger},\varphi\rangle$, resp. $\langle\Be_{\cG}\rangle$) the full subcategory of $\Sm(X_{k}/\overline{K})$ (resp. $\Sm(X_{k}/\overline{K}_F)$, resp. $\Conn(X_{\overline{K}})$) whose objects are all the sub-quotients of objects $\Be_{\cG,V}^{\dagger}$ (resp. $(\Be_{\cG,V}^{\dagger},\varphi)$, resp. $\Be_{\cG,V}$) for $V\in \Rep(\cG)$. 
	Then $\langle\Be_{\cG}^{\dagger}\rangle$ (resp. $\langle\Be_{\cG}^{\dagger},\varphi\rangle$, resp. $\langle\Be_{\cG}\rangle$) forms a Tannakian category over $\overline{K}$ 
	and we denote by $G_{\geo}$ (resp. $G_{\mathrm{arith}}$, resp. $G_{\alg}$) the associated Tannakian group (with respect to a fiber functor $\omega$, but 
	is independent of the choice of the fiber functor up to isomorphism \cite{Del90}). 
	The tensor functors on the left side of the following diagrams induce closed immersions of algebraic groups on the right side
\begin{equation}\label{geo monodromy rig to dR}
        \xymatrix{
                      &\langle\Be_{\cG}^{\dagger},\varphi\rangle\ar[dl]&   &                           &G_{\mathrm{arith}}\ar[dr]&  \\
        \langle\Be_{\cG}^{\dagger}\rangle&     &\Rep(\cG)\ar[dl]\ar[ul]& G_{\geo}\ar[dr]\ar[ur] &                   & \cG\\
                      & \langle\Be_{\cG}\rangle\ar[ul] &                              &                                                       & G_{\alg}\ar[ur] &	}.
\end{equation}

	In (\cite{FG09} Cor. 9 and Cor. 10), Frenkel and Gross showed that the differential Galois group $G_{\alg}$ of the $\cG$-connection $\Be_{\cG}:\Rep(\cG)\to \Conn(X_{\overline{K}})$ is a connected closed subgroup of $\cG$ and explicitly calculated it when $\cG$ is almost simple. The result can be found in \eqref{table monodromy}.
	The main theorem of this subsection is as follows.
\end{secnumber}

\begin{theorem} \label{thm monodromy} 
	Let $G$ be a split almost simple group over $R$ and $\cG$ its Langlands dual group over $\overline{K}$. 
	We denote by $\Sigma$ the outer automorphism group of $\cG$ and by $\Out(\check{\gg})$ the outer automorphism group of $\check{\gg}$.

	\textnormal{(i)} If $\cG$ is not of type $A_{2n}$ or $\Char(k)>2$, then $G_{\geo}\to G_{\alg}$ is an isomorphism. In particular,
	\begin{itemize}
		\item $G_{\geo}\xrightarrow{\sim} \cG^{\Sigma,\circ}$, if $\cG$ is not type $A_{2n}$ ($n\ge 2$) or $B_{3}$ or $D_{2n}$ ($n\ge 2$) with $\Sigma\neq \Out(\check{\gg})$.  
	        \item $G_{\geo}=\cG$, if $\cG$ is of type $A_{2n}$, 
	        \item $G_{\geo}\xrightarrow{\sim} G_2$, if $\cG$ is of type $B_{3}$ or of type $D_4$. 
		\item $G_{\geo}\xrightarrow{\sim}\Spin_{4n-1}$ if $\cG$ is of type $D_{2n}$ with $\Sigma\simeq \{1\}$ ($n\ge 3$). 
	\end{itemize}
	
	\textnormal{(ii)} If $\cG=\SL_{2n+1}$ and $\Char(k)=2$, then $G_{\geo}(\Be^{\dagger}_{\SL_{2n+1}})=G_{\geo}(\Be^{\dagger}_{\SO_{2n+1}})$. 
	In particular,
	\begin{itemize}
		\item $G_{\geo}\xrightarrow{\sim}\mathrm{SO}_{2n+1}$, if $n\neq 3$,
		\item $G_{\geo}\xrightarrow{\sim}G_2$, if $n=3$. 
	\end{itemize}	
	In particular, $G_{\geo}\neq G_{\alg}$ in this case.
	
	\textnormal{(iii)} The map $G_{\geo}\to G_{\mathrm{arith}}$ is always an isomorphism.
	\label{thm geometric mono}
\end{theorem}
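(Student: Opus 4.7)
The plan follows \ref{intro monodromy proof}. Since $G_{\geo}$, $G_{\arith}$ and $G_{\alg}$ all sit inside $\cG$ via \eqref{geo monodromy rig to dR}, I treat the three parts of the theorem in order, pinning down $G_{\geo}$ inside $G_{\alg}$ first and then verifying that $G_{\arith}$ does not enlarge it.

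By theorem \ref{thm Zhu} combined with theorem \ref{main thm Frob FG}, the de Rham realization of $\Be_{\cG}^{\dagger}$ is the Frenkel--Gross connection $\Be_{\cG}(\check{\xi})$, which by \eqref{Bessel connection form} is regular singular at $0$ with principal unipotent monodromy. By \ref{log-extension}, this unipotent local structure survives at the level of the $F$-isocrystal, so a principal unipotent of $\cG$ lies in $G_{\geo}$; the connectedness of $G_{\geo}$ (the tame quotient at $0$ acting pro-unipotently) then forces $G_{\geo}$ to contain a principal $\SL_2$ of $\cG$, which is also a principal $\SL_2$ of $G_{\alg}$. A case-by-case inspection of Dynkin diagrams --- essentially the classification used in \cite{FG09} Cor.~9 and Cor.~10 to compute $G_{\alg}$ --- shows that the only proper connected subgroup of $G_{\alg}$ containing such a principal $\SL_2$ is a principal $\PGL_2$. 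Thus $G_{\geo}$ is either $G_{\alg}$ itself or a principal $\PGL_2$.

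To exclude the $\PGL_2$ case I invoke Baldassarri's theorem (\cite{Ba82}; cf.\ \cite{And02} 3.2): the $p$-adic slopes of $\Be_{\cG}^{\dagger}$ at $\infty$ are bounded by the formal slopes of $\Be_{\cG}$, of maximum $1/\check{h}$ where $\check{h}$ is the Coxeter number of $\cG$. Via the Matsuda--Tsuzuki identification of the $p$-adic irregularity with the Swan conductor (\ref{review MC Swan}), this yields an upper bound on $\Sw_\infty \Be^{\dagger}_{\cG,V}$ for each $V\in \Rep(\cG)$. A hypothetical principal-$\PGL_2$ reduction would, via the decomposition of a faithful $V$ under the principal $\SL_2$ into $\Sym^{n_i}$-pieces, predict a total Swan conductor violating this bound whenever $\cG\neq \SL_2$, by a numerical comparison using the known Kloosterman-sheaf Swan conductors for $\PGL_2$ (which are at least $1$ for each nontrivial constituent). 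Combining this exclusion with the $\Out(\cG)$-invariance of $G_{\geo}$ provided by \eqref{inv outer automorphism} (respectively by \eqref{inv outer automorphism A2n} after an $a=-1$ twist when $\cG$ is of type $A_{2n}$ with $p>2$), one concludes $G_{\geo}=G_{\alg}$, proving part (i).

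Part (ii) is then a corollary of theorem \ref{intro functoriality}(ii): since $\Kl_{\SL_{2n+1}}$ is the push-out of $\Kl_{\SO_{2n+1}}$ along $\SO_{2n+1}\to\SL_{2n+1}$, the geometric monodromy of $\Be^{\dagger}_{\SL_{2n+1}}$ coincides with that of $\Be^{\dagger}_{\SO_{2n+1}}$, which by (i) equals $\SO_{2n+1}$ in general and $G_2$ when $n=3$. For part (iii), $G_{\arith}$ normalizes $G_{\geo}$ with cyclic quotient generated by a Frobenius class; purity of $\Be^{\dagger}_{\cG,V}$ of weight $0$ (theorem \ref{thm eigensheaves}(ii) via theorem \ref{main thm Frob FG}) confines Frobenius-semisimple parts to a compact form of $G_{\geo}(\mathbb{C})$, and a standard Tannakian rigidity argument then rules out any enlargement, giving $G_{\arith}=G_{\geo}$. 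The main technical obstacle is the Swan-conductor comparison that excludes the principal $\PGL_2$: while Baldassarri's bound is uniform, extracting a strict inequality against the prediction coming from the principal-$\SL_2$ decomposition demands a careful, case-by-case analysis through all exceptional $\cG$ in \eqref{table monodromy}, together with the correct choice of test representation $V$.
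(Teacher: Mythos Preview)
Your overall architecture matches the paper's, but there is a genuine gap in the $A_{2n}$ case with $p>2$. The dichotomy you assert, that the only proper connected subgroup of $G_{\alg}$ containing a principal $\SL_2$ is a principal $\PGL_2$, is false precisely here: for $\cG$ of type $A_{2n}$ one has $G_{\alg}=\cG$, and the chain $\mathfrak{sl}_2\subset\mathfrak{so}_{2n+1}\subset\mathfrak{sl}_{2n+1}$ (this is exactly one of the chains in \ref{list lie algebras}) exhibits $\SO_{2n+1}$ as an intermediate possibility. Excluding the principal $\PGL_2$ via Baldassarri therefore does not finish the job. Nor does the $\Out(\cG)$-invariance you invoke via \eqref{inv outer automorphism A2n}: the subgroup $\SO_{2n+1}\subset\SL_{2n+1}$ is stable under the outer automorphism $g\mapsto (g^T)^{-1}$, so no symmetry argument of that shape can rule it out. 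The paper's actual mechanism is different and sharper: assuming $G_{\geo}\subset\SO_{2n+1}$, one has $\Std\simeq\Std^{\vee}$ as $G_{\geo}$-representations, which combined with \eqref{inv outer automorphism A2n} yields $\Be_{\SL_{2n+1},\Std}^{\dagger}\simeq(-1)^+\Be_{\SL_{2n+1},\Std}^{\dagger}$. Since $p>2$ this is a descent datum to $\Gm/\mu_2$, forcing $\Sw_\infty$ to be even; but Baldassarri together with Lemma \ref{non-tame} pins $\Sw_\infty(\Be_{\SL_{2n+1},\Std}^{\dagger})=1$, a contradiction. Your proposal alludes to the $(-1)$-twist but does not supply this parity argument, which is the crux.

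Two smaller points. First, you assert connectedness of $G_{\geo}$ at the outset with a one-line justification; the paper does not assume this but instead shows $G_{\geo}^{\circ}$ is semisimple (purity $\Rightarrow$ geometric semisimplicity) and then recovers $G_{\geo}^{\circ}=G_{\geo}=G_{\alg}$ from the chain analysis and the connectedness of $G_{\alg}$. Second, your argument for (iii) via ``purity confines Frobenius to a compact form'' is not the paper's and is not clearly sufficient: the paper instead uses the symmetry \eqref{inv outer automorphism} to obtain $G_{\arith}\subset\cG^{\Sigma}$, compares with $G_{\geo}=G_{\alg}$, and for the residual case $B_3$ (where $\cG^{\Sigma}$ does not immediately equal $G_{\geo}$) invokes the explicit description of the Frobenius at $0$ from Proposition \ref{Frobenius 0} to rule out the extra central factor.
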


\begin{proof}We first study the local monodromy at $0$ and $\infty$.

	In view of \ref{log-extension}, the restriction functor at $0$ \eqref{res to x} induces
	\begin{displaymath}
		\Rep(\cG)\to \langle\Be_{\cG}^{\dagger}\rangle \xrightarrow{|_{0}} \MC^{\uni}(\mathcal{R}/\overline{K})\xrightarrow{\sim} \Vect^{\nil}_{\overline{K}}, 
	\end{displaymath}
	sending each representation $\rho:\cG\to \GL(V)$ to $(V,d\rho(N))\in \Vect^{\nil}_{\overline{K}}$. 
	Then, it induces closed immersions of Tannakian groups
	\begin{equation}
		\Ga\to G_{\geo}\to \cG,
		\label{composition groups}
	\end{equation}
	whose composition sends $1\in \overline{K}\simeq \Lie(\Ga)$ to $N\in \check{\gg}$.

\begin{lemma}\label{non-tame}
        The restriction functor $~|_{\infty}:\langle\Be_{\cG}^{\dagger}\rangle\to \MCF(\mathcal{R}/\overline{K})$ to $\infty\in\P1_k$ induces homomorphisms $I_{\infty}\times \Ga\to G_{\geo}$ which is non-trivial on $P_{\infty}$.
\end{lemma}

\begin{proof}
        If the image $P_{\infty}$ in $\cG_{\geo}$ were trivial,
	by the Grothendieck–Ogg–Shafarevich formula, $\Kl_{\cG}^{\et,\ell}$ would also be tame at $0,\infty$. 
	Then the associated $\ell$-adic representation $\pi_1(X_{\overline{k}})\to \cG$ would factor through the tame quotient $\pi_1^{\tame}(X_{\overline{k}})$, which is isomorphic to $I_{\infty}^{\tame}$ as $X\simeq \Gm$. 
	Since $\Kl_{\cG,V}^{\et,\ell}$ is pure of weight zero for every $V\in \Rep(\cG)$, the geometric monodromy group of $\Kl_{\cG}$ would be semisimple and then finite. 
	This contradicts to fact that $\Kl_{\cG}^{\et,\ell}$ has a principal unipotent monodromy at $0$ (\cite{HNY} Thm. 1). 
\end{proof}
\begin{secnumber} \label{list lie algebras}
	Since every overconvergent $F$-isocrystal $\Be_{\cG,V}^{\dagger}$ is pure of weight $0$ and is therefore geometrically semi-simple (\cite{AC18} 4.3.1), the neutral component $G_{\geo}^{\circ}$ is semi-simple \cite{Cr92}. 
	Therefore, \eqref{composition groups} implies that it contains a principal unipotent element and hence its projection to the adjoint group $\cG_{\mathrm{ad}}$ of $\cG$ contains a principal $\PGL_2$. 
	Then it is almost simple and its Lie algebra appears in one of the following chains: 
	\begin{eqnarray*} 
	&&\xymatrix{
		\mathfrak{sl}_2 \ar[r]& \mathfrak{sp}_{2n} \ar[r] & \mathfrak{sl}_{2n}
	} \\
	&&\xymatrix{
		& & \mathfrak{sl}_{2n+1} \\
		\mathfrak{sl}_2 \ar[r]& \mathfrak{so}_{2n+1} \ar[ru] \ar[rd] & \\
		& & \mathfrak{so}_{2n+2}
	} \\
	&&\xymatrix{
		& & & \mathfrak{sl}_{7} \\
		\mathfrak{sl}_2 \ar[r]& \mathfrak{g}_2 \ar[r]&  \mathfrak{so}_{7} \ar[ru] \ar[rd] & \\
		& & & \mathfrak{so}_{8}
	} \\
	&&\xymatrix{
		\mathfrak{sl}_2 \ar[r]& \mathfrak{f}_{4} \ar[r] & \mathfrak{e}_{6}
	} \\
	&&\xymatrix{
		\mathfrak{sl}_2 \ar[r]& \mathfrak{e}_{7}
	} \\
	&&\xymatrix{
		\mathfrak{sl}_2 \ar[r]& \mathfrak{e}_{8}
	}
\end{eqnarray*}
\end{secnumber}

\begin{lemma} \label{exclude sl2}
	If $\cG$ is not of type $A_1$, and not of type $A_2$ when $p=2$, the image $G_{\geo}\to \cG_{\mathrm{ad}}$ cannot be contained in a principal $\PGL_2$ of $\cG_{\mathrm{ad}}$.
\end{lemma}

\begin{proof}
	The image of the wild inertia group $P_{\infty}$ (resp. $I_{\infty}$) in $\PGL_{2}$ is a finite $p$-group (resp. a solvable group). 
	In view of the all possible finite groups contained in $\PGL_2$, there are two possibilities:

	(a) the image of $P_{\infty}$ is contained in $\Gm\subset\PGL_{2}$; 
	
	(b) $p=2$ and the image of $I_{\infty}$ (resp. $P_{\infty}$) is isomorphic to the alternative group $A_4$ (resp. the group $\mathbb{Z}/2\mathbb{Z}\times \mathbb{Z}/2\mathbb{Z}$).

	To prove the lemma, we follow a similar argument of (\cite{HNY} 6.8), but with the quasi-minuscule representation replaced by the adjoint representation $\Ad$. 
	In any case, by a result of Baldassarri \cite{Ba82} (cf. \cite{And02} 3.2), the maximal $p$-adic slope of $\Be_{\cG,\Ad}^{\dagger}$ is less or equal to the maximal formal slope $1/\check{h}$ of $\Be_{\cG,\Ad}$ \eqref{def FG connection}. 
	Let $r$ be the rank of $\cG$ and $h$ the Coxeter number of $\cG$. 
	Then we deduce that 
	\begin{equation} \label{inequality Irr}
		\Irr_{\infty}(\Be^{\dagger}_{\cG,\Ad})\le \frac{\rank \Ad}{\check{h}}=\frac{\check{h}+1}{\check{h}}r < r+1,
	\end{equation}
	and hence $\Irr_{\infty}(\Be^{\dagger}_{\cG,\Ad})\le r$. 

	On the other hand, we have a decomposition $\Ad\simeq \oplus_{i=1}^{r} S^{2\ell_i}$ as representations of principal $\PGL_2$, where $\{\ell_1+1,\cdots,\ell_{r}+1\}$ is the set of exponents of $\check{\mathfrak{\gg}}$. 
	
	Case (a). Since $\Irr_{\infty}(\Be^{\dagger}_{\cG})\neq 0$, the image of $P_{\infty}$ in $\PGL_2$ contains $\mu_p$ and the image of $I_{\infty}$ is contained in $N(\Gm)$.  
	By a similar argument of (\cite{HNY} 6.8), we deduce $\Irr_{\infty}(S^{2\ell})\ge \ell - \lfloor\ell/p\rfloor\ge 1$. Under our assumption, $\max_i\{\ell_i,p\}>2$, so there is least one $i$ such that  $\ell_i - \lfloor\ell_i/p\rfloor>1$. 
	Then $\Irr_{\infty}(\Be^{\dagger}_{\cG,\Ad}) > r$. Contradiction! 

	Case (b). Recall that there are four irreducible representations of $A_4$: $\id$, two non-trivial one dimensional representation $V_1',V_1^{''}$, the standard representation $V_3$. 
%	Except for $\id$, $Q_{8}$ acts non-trivially on these representations.
	Via the inclusion $A_4\to \PGL_2$, we have
	\begin{eqnarray*}
		&S^2\simeq V_3,\quad S^4\simeq V_1'\oplus V_1^{''}\oplus V_3, \quad S^6\simeq \id \oplus V_3^{\oplus 2}, \quad S^8\simeq \id\oplus V_1'\oplus V_1^{''}\oplus V_3^{\oplus 2}, &\\
		& S^{10}\simeq V_1'\oplus V_1^{''}\oplus V_3^{\oplus 3}, \quad
		S^{12}\simeq \id^{\oplus 2}\oplus V_1'\oplus V_1^{''}\oplus V_3^{\oplus 3},\quad
		S^{14}\simeq \id\oplus V_1'\oplus V_1^{''}\oplus V_3^{\oplus 4}. &
	\end{eqnarray*}
	In particular, we have $\Irr_{\infty}(S^{2\ell})\ge 2$ for $\ell=3,4,5,6,7$. 
	In general, $I_{\infty}$ acts non-trivially on $S^{2n}$ and we have $\Irr_{\infty}(S^{2\ell})\ge 1$. 
	Then we deduce that $\Irr_{\infty}(\Ad)\ge r(G)+1$. Contradiction!
\end{proof}

Now we prove theorem \ref{thm monodromy}. By the ``trivial'' functoriality \eqref{triv functoriality}, it is enough to prove the theorem when $\cG$ is simply-connected, so that $\cG^{\Sigma}$ is connected.
                
	(a) The case where $\cG$ is not of type $A_{2n}$. 
	In view of lemma \ref{exclude sl2}, and the calculation of $G_{\alg}$ \eqref{table monodromy}, we deduce that $G_{\geo}^{\circ}\to G_{\geo}\to G_{\alg}$ are isomorphisms. Using \ref{symmetry of Kl}, we see $G_{\mathrm{arith}}\subset \cG^{\Sigma}$. This implies that $G_{\mathrm{arith}}=G_{\geo}$ unless $\cG$ is of type $B_3$. In this last case, if $\cG=\Spin_7$, and $G_{\mathrm{arith}}\subset G_2\times Z(\cG)$. Taking into account of the Frobenius at $0$ \eqref{Frobenius 0}, we see that $G_{\mathrm{arith}}=G_{\geo}$.
	        
	(b) The case where $\cG$ is of type $A_{2n}$ and $p>2$. It suffices to exclude that $G_{\geo}$ is contained in $\SO_{2n+1}$. Suppose it is true by contrast. Let $\sigma_0$ be the generator of $\Sigma$ and $\check{\delta}=(-1,\sigma_0)$ in $\Gm\times \Aut(G,B,T)$. Then  we deduce isomorphisms of overconvergent isocrystals on $X_{k}$ 
	\begin{displaymath}
		\Be_{\SL_{2n+1},\Std}^{\dagger}(\check{\xi}) 		
		\simeq (-1)^+ \Be_{\SL_{2n+1},\Std^{\vee}}^{\dagger}(\check{\xi}) 
		\simeq (-1)^+ \Be_{\SL_{2n+1},\Std}^{\dagger}(\check{\xi}),
	\end{displaymath}
         where the first isomorphism follows from \eqref{inv outer automorphism A2n}, and
	 the second one is due to $\Std^{\vee}\simeq \Std$ as representations of $\SO_{2n+1}$. 	
	Since $\Char{k}>2$, this isomorphism provides a ``descent datum'' so that $\Be_{\SL_{2n+1},\Std}^{\dagger}(\check{\xi})$ descends to $\Gm/\mu_2$. It follows that its Swan conductor at $\infty$ is at least two, 
	if non-zero. 
	On the other hand, using lemma \ref{non-tame} and the result of Baldassarri \cite{Ba82} (cf. \cite{And02} 3.2) again, the Swan conductor of $\Be_{\SL_{2n+1},\Std}^{\dagger}(\check{\xi})$ at $\infty$ is $1$, contradiction! 

	(c) The case where $\cG$ is of type $A_{2n}$ and $p=2$. In appendix \eqref{Carlitz diff}, we will identify $\Be_{\SO_{2n+1},\Std}^{\dagger}$ with $\Be_{\SL_{2n+1},\Std}^{\dagger}$. Then we reduce to the case (a). 
\end{proof}

We end this section by some corollaries of our calculation of the monodromy groups. 

\begin{coro} \label{l-adic mono}
	Assume that $\cG$ is almost simple. The monodromy groups $G_{\geo}^{\ell}, G_{\arith}^{\ell}$ of the $\Kl_{\cG}^{\et,\ell}(\psi\phi)$ over $\overline{\mathbb{Q}}_{\ell}$ \eqref{l Kl} are calculated as in theorem \ref{thm geometric mono}. 
\end{coro}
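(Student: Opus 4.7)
The plan is to transport the calculations of $G_{\geo}$ and $G_{\arith}$ from Theorem \ref{thm monodromy} to the $\ell$-adic setting via the companion relationship between $\Be_{\cG}^{\dagger}$ and $\Kl_{\cG}^{\et,\ell}$. By Theorem \ref{def Bessel crystal} combined with the sheaf-theoretic realization of the Kloosterman sum (\ref{HNY Kl}), the Frobenius trace functions of $\Be_{\cG,V}^{\dagger}$ and $\Kl_{\cG,V}^{\et,\ell}$ agree via $\iota:\overline{K}\simeq\overline{\mathbb{Q}}_{\ell}$ at every closed point $a\in|X|$ and every $V\in\Rep(\cG)$, both being equal to $\Tr(\varphi_a,V)$.

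Since $\Kl_{\cG,V}^{\et,\ell}$ and $\Be_{\cG,V}^{\dagger}$ are pure of weight zero (Theorem \ref{thm eigensheaves}(ii) together with Theorem \ref{main thm Frob FG}), they are semisimple as representations of $\pi_1^{\arith}(X)$ with values in $\cG$. By \v{C}ebotarev density on the $p$-adic side (\cite{Abe18} A.4) and its classical analog on the $\ell$-adic side, equality of Frobenius traces at all closed points forces these semisimple representations to be isomorphic via $\iota$. Passing to Zariski closures in $\cG$ yields the arithmetic comparison $\iota(G_{\arith})=G_{\arith}^{\ell}$. By Theorem \ref{thm monodromy}, this common group is one of the connected semisimple groups listed there.

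For the geometric monodromy, I would argue as follows: $G_{\geo}^{\ell}$ is a normal algebraic subgroup of $G_{\arith}^{\ell}$, and the quotient $G_{\arith}^{\ell}/G_{\geo}^{\ell}$ is an algebraic quotient of the topologically procyclic group $\Gal(\overline{k}/k)\simeq\hat{\mathbb{Z}}$, hence commutative. Each connected semisimple group appearing in Theorem \ref{thm monodromy} (namely $\cG$, $\Sp_{2n}$, $\Spin_{2n+1}$, $G_2$, $F_4$, $E_7$, $E_8$) is perfect, so any normal subgroup with commutative quotient must equal the whole group. Therefore $G_{\geo}^{\ell}=G_{\arith}^{\ell}$, and both are described by the explicit formulas of Theorem \ref{thm monodromy}.

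The main step requiring care is the \v{C}ebotarev-style argument passing from equality of trace functions to equality of Zariski closures of the $\pi_1^{\arith}$-images in $\cG$; purity makes this clean once semisimplicity is granted, but the density results on the $p$-adic and $\ell$-adic sides must be invoked in a compatible manner via $\iota$.
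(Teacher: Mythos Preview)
Your final step—deducing $G_{\geo}^{\ell}=G_{\arith}^{\ell}$ from the fact that the groups in the table are connected and perfect while $G_{\arith}^{\ell}/G_{\geo}^{\ell}$ is abelian—is correct. The gap lies earlier, in the passage from ``equal Frobenius traces'' to ``$\iota(G_{\arith})=G_{\arith}^{\ell}$''.

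The sentence ``equality of Frobenius traces at all closed points forces these semisimple representations to be isomorphic via $\iota$'' does not make sense as written: $\Be_{\cG}^{\dagger}$ is (Tannakian-dually) a representation of the fundamental group of the category of overconvergent $F$-isocrystals, while $\Kl_{\cG}^{\et,\ell}$ is a representation of the \'etale $\pi_1$. These are different groups over different fields, so there is no direct notion of the two representations being isomorphic. What matching traces actually gives is that, for each closed point $a$, the semisimple conjugacy classes of $\varphi_a$ and of $\Frob_a$ in $\cG$ agree via $\iota$; \v{C}ebotarev then tells you these classes are dense in $G_{\arith}$ and $G_{\arith}^{\ell}$ respectively. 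But two reductive subgroups of $\cG$ whose elements project to the same locus in $\cG/\!\!/\cG$ need not be conjugate (element-conjugate subgroups that are not globally conjugate do exist), so you cannot conclude equality of Zariski closures from this alone. Bridging this gap is exactly the content of the comparison theorem for monodromy groups of companions, due to D'Addezio \cite{DZ17}, and this is what the paper invokes: by \cite{DZ17} 4.1.1, 4.3.2 one gets a surjection $G_{\arith}^{\ell}\twoheadrightarrow G_{\arith}$, and since both sit inside $\cG$ with $G_{\arith}$ already identified in theorem~\ref{thm geometric mono}, equality follows. Your argument becomes correct once the \v{C}ebotarev step is replaced by a citation to \cite{DZ17}.
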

Note that this gives a different proof of the main result of \cite{HNY} theorem 3 (where some explicit small $p$ are excluded).  Our method avoids analyzing some difficult geometry related to quasi-minuscule and adjoint Schubert varieties.
\begin{proof}	
	The monodromy group $G_{\arith}^{\ell}$ (resp. $G_{\arith}$) can be calculated by that of $\Kl_{\cG,V}^{\et,\ell}$ (resp. $\Be_{\cG,V}^{\dagger}$) for a faithful representation $V$ of $\cG$. 
	The semisimplification of $\Kl_{\cG,V}^{\et,\ell}$ and $\Be_{\cG,V}^{\dagger}$ are semi-simple and have same Frobenius traces. 
	Then by (\cite{DZ17} 4.1.1, 4.3.2), there exists a surjective morphism $G_{\arith}^{\ell}\twoheadrightarrow G_{\arith}$. 
	Since they are both closed subgroups of $\cG$, they must be isomorphic to each other and the assertion follows. 
\end{proof}

\begin{coro}\label{local monodromy}	
	Assume that $\cG$ is almost simple. Let $\Ad$ be the adjoint representation of $\cG$. 

	\textnormal{(i)} We have $\rH^{i}(\mathbb{P}^1,j_{!+}(\Be_{\cG,\Ad}^{\dagger}))=0$ for all $i$.
	
	\textnormal{(ii)} We have $\Irr_{\infty}(\Be_{\cG,\Ad}^{\dagger})=r(\cG)$, the rank of $\cG$. In addition, $\Ad^{I_{\infty}}=0$, and the nilpotent monodromy operator $N_\infty=0$ \eqref{review MC Swan}. Therefore, the local Galois representation $I_\infty\to \cG$ is a simple wild parameter in the sense of Gross-Reeder (\cite{GR10} \S~6).
\end{coro}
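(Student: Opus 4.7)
The plan is to prove (i) and (ii) together, using three inputs: the consequence $\Ad^{G_{\geo}}=0$ of Theorem \ref{thm geometric mono}, the Euler-Poincar\'e formula for the intermediate extension of a smooth object on $\P1$, and the slope bound $\Irr_\infty(\Be_{\cG,\Ad}^{\dagger})\le r$ obtained in the proof of Lemma \ref{exclude sl2} via Baldassarri's theorem.

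First I would check that $\rH^{0}(\P1,j_{!+}\Be_{\cG,\Ad}^{\dagger})=0$. The natural map to $\rH^{0}(X,\Be_{\cG,\Ad}^{\dagger})\simeq \Ad^{G_{\geo}}$ is injective, since $j_{!+}\Be_{\cG,\Ad}^{\dagger}$ has no non-zero subobject supported on $\{0,\infty\}$. By Theorem \ref{thm geometric mono}, $G_{\geo}^\circ$ is semisimple and contains the regular nilpotent $N\in \check{\gg}$ supplied by the principal unipotent local monodromy at $0$ via the map $\Ga\to G_{\geo}$ of \eqref{composition groups}. By Jacobson-Morozov it then contains a principal $\SL_2\subset \cG$, and Kostant's theorem yields $\Ad|_{\SL_2^{\mathrm{prin}}}=\bigoplus_{i=1}^r V(2e_i)$ with exponents $e_i\ge 1$, so every summand has highest weight $\ge 2$ and $\Ad^{\SL_2^{\mathrm{prin}}}=0$; thus $\Ad^{G_{\geo}}=0$. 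Since $\Ad$ is self-dual via the Killing form, Poincar\'e duality on $\P1$ then gives $\rH^{2}(\P1,j_{!+}\Be_{\cG,\Ad}^{\dagger})=0$ as well.

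Next I would use the stratification $\P1=X\sqcup\{0\}\sqcup\{\infty\}$, together with the identification of the stalks of $j_{!+}(\mathscr{F}[1])$ at a boundary point $s$ with $\mathscr{F}^{I_s}[1]$, and the Euler-Poincar\'e formula $\chi(X,\Be_{\cG,\Ad}^{\dagger})=-\Irr_0-\Irr_\infty$, to compute
\[
\chi(\P1,j_{!+}\Be_{\cG,\Ad}^{\dagger})=-\Irr_\infty(\Be_{\cG,\Ad}^{\dagger})+\dim\Ad^{I_0}+\dim\Ad^{I_\infty}.
\]
At $0$ the monodromy is principal unipotent so $\dim\Ad^{I_0}=\dim Z_{\check{\gg}}(N)=r$. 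Combining $\chi\le 0$ (from $\rH^0=\rH^2=0$) with $\Irr_\infty\le r$ yields $\dim\Ad^{I_\infty}\le \Irr_\infty-r\le 0$. This simultaneously forces $\Ad^{I_\infty}=0$, $\Irr_\infty(\Be_{\cG,\Ad}^{\dagger})=r$, and $\chi=0$, hence $\rH^1=0$. This settles (i) together with the statements $\Ad^{I_\infty}=0$ and $\Irr_\infty=r$ of (ii).

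For the vanishing $N_\infty=0$, the Weil-Deligne commutation $\varphi_\infty(w)^{-1}N_\infty\varphi_\infty(w)=q^{-v(w)}N_\infty$ forces $N_\infty$ to commute with every inertia element, hence $N_\infty\in \check{\gg}^{I_\infty}=\Ad^{I_\infty}=0$. Via the Matsuda-Tsuzuki identification (see \S \ref{review MC Swan}) of $p$-adic irregularity with Swan conductor we then get $\Sw_\infty(\Ad)=r$, and the combination ($\Sw_\infty(\Ad)=r$, $\Ad^{I_\infty}=0$, $N_\infty=0$) matches the definition of a simple wild parameter in Gross-Reeder \cite{GR10} \S~6. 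The main work lies in carefully verifying the stalk description of $j_{!+}$ and the Euler-Poincar\'e formula on $\P1$ in the arithmetic $\cD$-module framework; these should follow routinely from the formalism developed in Sections \ref{intermediate extension} and \ref{nearby vanishing cycle}, while the only substantive analytic input, Baldassarri's slope bound, is already available from Lemma \ref{exclude sl2}.
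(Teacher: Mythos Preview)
Your approach is essentially the paper's: the same three inputs ($\Ad^{G_{\geo}}=0$ from Theorem~\ref{thm geometric mono}, the bound $\Irr_\infty\le r$ from Baldassarri via Lemma~\ref{exclude sl2}, and the $r$-dimensional contribution at $0$ from principal unipotent monodromy) are combined in the same way to force all inequalities to be equalities. The paper runs this through the long exact sequence attached to the triangle $j_!(\mathscr{E})\to j_{!+}(\mathscr{E})\to \bigoplus_{x}\rH^0 i_x^+ j_+(\mathscr{E})$ rather than through the Euler characteristic, but that is cosmetic.

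One point needs correction. In the $p$-adic setting of \S\ref{review MC Swan} the local Tannakian group at $s$ is $I_s\times\Ga$, so the boundary contribution appearing in the Euler characteristic of $j_{!+}(\mathscr{E})$ is $\Ad^{I_s,\mathcal{N}_s}:=\ker\bigl(\mathcal{N}_s:\Ad^{I_s}\to\Ad^{I_s}\bigr)$, not $\Ad^{I_s}$ alone. (This is also what makes your computation at $0$ come out right: there $I_0$ acts trivially so $\Ad^{I_0}=\Ad$, whereas $\Ad^{I_0,\mathcal{N}_0}=Z_{\check{\gg}}(N)$ has dimension $r$.) Hence your inequality only gives $\Ad^{I_\infty,\mathcal{N}_\infty}=0$, and you still need the step the paper makes explicit: $\mathcal{N}_\infty$ is nilpotent on $\Ad^{I_\infty}$ with zero kernel, so $\Ad^{I_\infty}=0$; only then does $N_\infty\in\check{\gg}^{I_\infty}=0$ follow. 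The commutation of $N_\infty$ with inertia that you need here is automatic from the product structure $I\times\Ga$, not a consequence of a Weil--Deligne relation.
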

\begin{proof}
	The corresponding assertions for the algebraic connection $\Be_{\cG,\Ad}$ are proved in (\cite{FG09} \S 14). 
	Set $\mathscr{E}=\Be_{\cG,\Ad}^{\dagger}$, which is self dual. 
	We have $\rH^0(X,\mathscr{E})=\Ad^{G_{\geo}}=0$ and $\rH^{2}(X,\mathscr{E})=0$ by $\mathscr{D}^{\dagger}$-affinity. 
	We obtain $\rH^{i}_{\rc}(X,\mathscr{E})=0$ for $i=0,2$ by the Poincar\'e duality.  
	By the Grothendieck–Ogg–Shafarevich formula and \eqref{inequality Irr}, we have
	\begin{displaymath}
		 \rH^1_{\rc}(X,\mathscr{E})=\Irr_{\infty}(\mathscr{E})\le r(\cG).
	\end{displaymath} 

	Let $j:X\to \P1$ be the inclusion. 
	We have a distinguished triangle
	\begin{displaymath}
		j_!(\mathscr{E})\to j_{!+}(\mathscr{E})\to \rH^0i_0^+j_+(\mathscr{E})\oplus \rH^0i_{\infty}^{+}j_+(\mathscr{E})\to,
	\end{displaymath}
	which induces a long exact sequence: 
	\begin{eqnarray} \label{long exact sequence coh}
		0\to \rH^0(\P1,j_{!+}(\mathscr{E}))\to \rH^0i_0^+j_+(\mathscr{E})\oplus \rH^0i_{\infty}^{+}j_+(\mathscr{E})\xrightarrow{d} \rH^1_{\rc}(X,\mathscr{E})\to   \\
		\rH^1(\P1,j_{!+}(\mathscr{E})) \to 0 \to \rH^2_{\rc}(X,\mathscr{E})=0\to \rH^2(\P1,j_{!+}(\mathscr{E}))\to 0. \nonumber
	\end{eqnarray}
	By the Poincar\'e duality, we conclude that $\rH^i(\P1,j_{!+}(\mathscr{E}))=0$ for $i=0,2$. 

	For $x\in \{0,\infty\}$, the restriction of $\mathscr{E}$ at $x$ gives rise to an action of the inertia group $I_{x}$ on $\Ad$ and a commuting nilpotent monodromy operator $\mathcal{N}_x:\Ad\to \Ad$ \eqref{review MC Swan}. 
	Then $\rH^0i_x^+(j_{+}(\mathscr{E}))$ is calculated by
	\begin{displaymath}
		\Ad^{I_{x},\mathcal{N}_x}:=\Ker(\mathcal{N}_{x}:\Ad^{I_x}\to \Ad^{I_x}).
	\end{displaymath}
	
	The Bessel isocrystal is unipotent at $0$ with $\mathcal{N}_0=[-,N]$ \eqref{log-extension}. We have $\Ad^{I_0,\mathcal{N}_0}=\Ad^{N}$, which has dimension $r(\cG)$. 
	Then the morphism $d$ in \eqref{long exact sequence coh} is both injective and surjective. 
	We deduce that
	\begin{displaymath}
		\Ad^{I_{\infty},\mathcal{N}_{\infty}}=0,\qquad \rH^1(\P1,j_{!+}(\mathscr{E}))=0.
	\end{displaymath}
	Since $\mathcal{N}_{\infty}$ is still a nilpotent operator on $\Ad^{I_{\infty}}$, we conclude assertions (i) and (ii). 
\end{proof}

\begin{rem}
        (i) By corollary \ref{l-adic mono} and the same arguments, we recover \cite{HNY} prop. 5.3 on the analogous statements for $\Kl_{\cG}$ (and remove the restriction of the characteristic of $k$ in \emph{loc. cit.}).
        
        (ii)  It follows from \cite{GR10} prop. 5.6 that when $p$ does not divide the order $\sharp W$ of Weyl group, the only non-zero break of $\Be_{\cG,\Ad}^{\dagger}$ (and $\Kl_{\cG}$) at $\infty$ is $1/\check{h}$. Indeed, the local Galois representation $I_\infty\to \cG$ is described explicitly in \cite{GR10} prop. 5.6 and \S~6.2.
        
	(iii) It is expected that the description in (ii) of the local monodromy of $\Be_{\cG}^\dagger$ (and $\Kl_{\cG}$) at $\infty$ should hold when $(p,h)=1$. When $\cG=\GL_n$, this is indeed the case. For $\Kl_n$, this was proved by Fu and Wan (\cite{FW05} theorem 1.1). For $\Be_n^{\dagger}$, the can be shown
	 by studying the solutions of Bessel differential equation \eqref{Bessel connection intro} at $\infty$. We omit details and refer to (\cite{Ohk18} 6.7) for a treatment in the case when $n=2$.

	(iv) Using theorem \ref{thm monodromy} (ii), which will be proved in the appendix \ref{Carlitz diff}, we see that when $p=2$ and $n$ is an odd integer, the associated local Galois representation of $\Be_{\SO_n}^{\dagger}$ at $\infty$ coincides with the simple wild parameter constructed by Gross-Reeder in \cite{GR10} \S~6.3. 
	In particular, the image of the inertia group $I_{\infty}$ in the case $\cG=\SO_3$ is isomorphic to $A_4$. 
	Together with $\Be_{\SO_3,\Std}^{\dagger} \simeq \Be_{\SL_2,\Sym^2}^{\dagger}$ \eqref{SL2 vs SO3}, this allows us to recover Andr\'e's result on the local monodromy group of $\Be_{2}^{\dagger}$ at $\infty$ in the case $p=2$ (\cite{And02} \S~7, 8). 
\end{rem}

\section{Applications} \label{sec applications}
In this section, we give some applications of our study of Bessel $F$-isocrystals for reductive groups.

\subsection{Functoriality of Bessel $F$-isocrystals} \label{sec functoriality}         
		We may ask all possible Frobenius structure on $\Be_{\cG}^{\dagger}(\check{\xi})$ (not necessarily the one from \ref{F on Be}), i.e. all possible isomorphisms of tensor functors $\varphi:F^{*}_{X}\circ \Be_{\cG}^{\dagger}\xrightarrow{\sim} \Be_{\cG}^{\dagger}$. 
		
\begin{lemma} \label{Uniqueness Frobenius}
	The Frobenius structure on $\Be_{\cG}^{\dagger}(\check{\xi})$ is unique up to an element in the center $Z_{\cG}(\overline{K})$ of $\cG$. 
\end{lemma}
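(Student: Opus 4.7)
The plan is to use the Tannakian formalism to reduce the lemma to a centralizer computation in $\cG$. Suppose $\varphi_1, \varphi_2$ are two Frobenius structures on $\Be_{\cG}^{\dagger}(\check{\xi})$. Then $\eta := \varphi_1 \circ \varphi_2^{-1}$ is an automorphism of the tensor functor $\Be_{\cG}^{\dagger}(\check{\xi}) : \Rep(\cG) \to \Iso^{\dagger}(X_k/K)$, and by Tannakian duality such an $\eta$ corresponds to an element of $Z_{\cG}(G_{\geo})(\overline{K})$. Conversely, every $z \in Z_{\cG}(\overline{K})$ acts on $\Be_{\cG}^{\dagger}(\check{\xi})$ as a tensor automorphism, so $z\cdot\varphi$ is again a Frobenius structure whenever $\varphi$ is one. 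Hence the set of Frobenius structures is a torsor under $Z_{\cG}(G_{\geo})(\overline{K})$, and the lemma reduces to proving $Z_{\cG}(G_{\geo}) = Z_{\cG}$.

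The main step is to show that $G_{\geo}^{\circ}$ contains a principal $\SL_2$ of $\cG$. Two inputs together give this. First, by the local monodromy analysis at $0$ in the proof of theorem \ref{thm geometric mono} --- specifically the homomorphism $\Ga \to G_{\geo} \to \cG$ in \eqref{composition groups}, whose composition sends $1$ to the principal nilpotent $N \in \check{\gg}$ (the residue of $\Be_{\cG}(\check{\xi})$ at $0$, see \eqref{Bessel connection form}). Second, each $\Be_{\cG,V}^{\dagger}$ is pure of weight zero (by \ref{thm eigensheaves}(ii) combined with \ref{main thm Frob FG}), so $G_{\geo}^{\circ}$ is semisimple by \cite{Cr92}. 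Applying Jacobson-Morozov inside the reductive group $G_{\geo}^{\circ}$, we extend $N$ to an $\SL_2$-triple in $G_{\geo}^{\circ}$; the uniqueness of such triples extending a given principal nilpotent (up to conjugation by the unipotent radical of its centralizer in $\cG$) ensures that this subgroup is indeed the principal $\SL_2 \subset \cG$.

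It then remains to invoke the standard fact that $Z_{\cG}(\phi(\SL_2)) = Z_{\cG}$ for a principal embedding $\phi$. Decomposing $\check{\gg}_{\mathrm{ad}} = \bigoplus_i V_{2e_i}$ as a representation of the principal $\SL_2$ (with strictly positive exponents $e_i$) yields $\check{\gg}_{\mathrm{ad}}^{\SL_2} = 0$, so $Z_{\cG_{\ad}}(\phi(\SL_2))$ has trivial Lie algebra; combined with $Z_{\cG}(\phi(\SL_2)) \supseteq Z_{\cG}$, this forces equality. The containment $Z_{\cG}(G_{\geo}) \subseteq Z_{\cG}(\phi(\SL_2))$ then finishes the proof. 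I do not expect a serious obstacle here; the only subtlety will be verifying cleanly that the local monodromy map at $0$ really lands in $G_{\geo}$ and picks out the principal nilpotent of $\cG$, but both facts are already encoded in the proof of theorem \ref{thm geometric mono} and in the explicit form of the Frenkel-Gross connection.
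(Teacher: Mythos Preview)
Your proposal is correct and follows the same strategy as the paper: both reduce via Tannakian formalism to the identity $Z_{\cG}(G_{\geo})=Z_{\cG}$. The paper's proof simply asserts this identity (tacitly relying on the explicit determination of $G_{\geo}$ in theorem~\ref{thm geometric mono}), whereas you supply a self-contained argument via the principal $\SL_2$, which is exactly the mechanism underlying that theorem anyway.

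One small point: your final step, ``$\check{\gg}_{\ad}^{\SL_2}=0$ combined with $Z_{\cG}(\phi(\SL_2))\supseteq Z_{\cG}$ forces equality,'' is not quite a proof as stated --- vanishing Lie-algebra invariants only shows $Z_{\cG_{\ad}}(\phi(\SL_2))$ is finite, not trivial. The cleanest fix is to observe that the principal $\SL_2$ contains the regular cocharacter $2\rho(\Gm)$, whose centralizer in $\cG$ is $\cT$; then $Z_{\cG}(\phi(\SL_2))\subset \cT\cap Z_{\cG}(N)=Z_{\cG}$, since $t\in\cT$ centralizes $N=\sum_{\check\alpha}\check\xi_{\check\alpha}$ iff $\check\alpha(t)=1$ for every simple root.
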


\begin{proof}
	Given two Frobenius structures $\varphi_1,\varphi_2$, $u:=\varphi_{2}\circ \varphi_{1}^{-1}$ is an isomorphism of tensor functors $\Be_{\cG}^{\dagger}(\check{\xi})\xrightarrow{\sim} \Be_{\cG}^{\dagger}(\check{\xi})$. 
	If $\omega$ denotes a fiber functor of $\langle\Be_{\cG}^{\dagger}(\check{\xi})\rangle$, then $\omega\circ u$ is an element in $\cG(\overline{K})$ commuting with $G_{\geo}(\overline{K})$ by the Tannakian formalism. Then the assertion follows from $Z_{\cG}(G_{\geo})=Z_{\cG}$.
\end{proof}

\begin{secnumber} \label{setting functoriality}
		 Let $G,G'$ be two split, almost simple groups over $R$ whose Langlands dual groups $\cG'\subset \cG$ over $\overline{K}$ appear in the same line in the left column of the \eqref{table monodromy}. 
		 Up to conjugation, we can assume that the inclusion $\cG'\subset \cG$ preserves the pinning. Then it induces a natural inclusion $\check{\gg}'_{\mathrm{aff}}(1)\subset \check{\gg}_{\mathrm{aff}}(1)$. 
		 Let $\phi'$ be a generic linear function of $G'$ over $R$ \eqref{F on Be} and $\check{\xi}$ the generic element in $\check{\gg}'_{\mathrm{aff}}(1)$ corresponding to $-\pi\phi'$ \eqref{F on Be}. 
		Note that $\check{\xi}$ is also a generic element in $\check{\gg}_{\mathrm{aff}}(1)$.
\end{secnumber}

\begin{prop}
	\textnormal{(i)} There exists a generic linear function $\phi$ of $G$ over $R$ such that $-\pi\phi$ matches $\check{\xi}\in \check{\gg}_{\mathrm{aff}}(1)$ under the isomorphism \eqref{classical duality}. 
	
	\textnormal{(ii)} Let $(\Be_{\cG'}^{\dagger}(\check{\xi}),\varphi')$ (resp. $(\Be_{\cG}^{\dagger}(\check{\xi}),\varphi)$) be the Bessel $F$-isocrystal of $\cG'$ (resp. $\cG$) constructed by $\phi'$ (resp. $\phi$) in \ref{F on Be}. 
	Then $(\Be_{\cG}^{\dagger}(\check{\xi}),\varphi)$ is the push-out of $(\Be_{\cG'}^{\dagger}(\check{\xi}),\varphi)$. 
	\label{functoriality}
\end{prop}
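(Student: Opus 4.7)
\textbf{Proof plan for Proposition \ref{functoriality}.}

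For part (i), the generic element $\check{\xi}$, viewed in $\check{\gg}_{\mathrm{aff}}(1)$, defines a point of $\check{\gg}_{\mathrm{aff}}(1)/\!\!/\check{T}$ which via the isomorphism \eqref{classical duality} corresponds to a $T$-orbit $\mathcal{O}\subset\gg^{*}_{\mathrm{aff}}(1)$ of generic elements. Since $T$ is a split torus and $\mathcal{O}$ is a $T$-torsor over $K$, it admits a $K$-rational point; after rescaling by $T$ and using $\pi\in R$, we obtain a generic linear function $\phi$ defined over $R$ such that $-\pi\phi\in\mathcal{O}$.

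For part (ii), the identification of the underlying $\cG$-bundles with connection is immediate from the explicit formula \eqref{Bessel connection form}: the connection $d+\check{\xi}\frac{dx}{x}$ depends only on $\check{\xi}\in\check{\gg}_{\mathrm{aff}}(1)$, so since $\check{\xi}\in\check{\gg}'_{\mathrm{aff}}(1)$ the $\cG$-connection $\Be_{\cG}(\check{\xi})$ is tautologically the push-out of $\Be_{\cG'}(\check{\xi})$ along $\cG'\hookrightarrow\cG$, and the analogous statement persists after applying the tensor functor $(-)^{\dagger}$ from \eqref{various categories} to produce the underlying $\cG$-valued overconvergent isocrystals. It remains to identify the Frobenius structures. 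Let $\widetilde{\varphi}$ denote the Frobenius structure on $\Be_{\cG}^{\dagger}(\check{\xi})$ obtained by pushing out $\varphi'$ along $\cG'\hookrightarrow\cG$; by Lemma \ref{Uniqueness Frobenius} there is a unique $z\in Z_{\cG}(\overline{K})$ with $\widetilde{\varphi}=z\cdot\varphi$, and the proposition is reduced to showing $z=1$.

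To this end I will compare the Frobenius structures at $0\in X$. By Proposition \ref{Frobenius 0}, $\varphi_{0}$ and $\varphi'_{0}$ lie respectively in the semisimple conjugacy classes of $2\rho_{\cG}(\sqrt{q})$ in $\cG$ and $2\rho_{\cG'}(\sqrt{q})$ in $\cG'$. The pinning-preserving inclusion $\cG'\hookrightarrow\cG$ arranged in \ref{setting functoriality} sends the principal nilpotent of $\cG'$ to a principal nilpotent of $\cG$, so the principal $\SL_{2}$-triples of $\cG'$ and $\cG$ are conjugate in $\cG$; consequently the push-out of $2\rho_{\cG'}(\sqrt{q})$ to $\cG$ is conjugate to $2\rho_{\cG}(\sqrt{q})$. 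For every irreducible representation $V$ of $\cG$, the operators $\widetilde{\varphi}_{0}$ and $\varphi_{0}$ therefore share the same multiset of eigenvalues $\{q^{\langle\rho,\mu\rangle}\}_{\mu\in V(\cG)}$, while on the other hand $\widetilde{\varphi}_{0}=\chi_{V}(z)\cdot\varphi_{0}$ on $V$, where $\chi_{V}$ denotes the central character. Multiplication by the root of unity $\chi_{V}(z)$ must then permute the multiset of positive real numbers $\{q^{\langle\rho,\mu\rangle}\}$, forcing $\chi_{V}(z)=1$; since central characters of irreducible representations of $\cG$ separate points of $Z_{\cG}(\overline{K})$, we conclude $z=1$. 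The subtlest ingredient here is the claim that the principal nilpotent of $\cG'$ remains principal in $\cG$ for each inclusion appearing in the left column of table \eqref{table monodromy}; this is implicit in the Frenkel--Gross calculation of the differential Galois groups, but may warrant a case-by-case verification.
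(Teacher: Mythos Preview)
Your treatment of part (ii) is essentially the paper's argument, spelled out in more detail: the underlying isocrystals agree by push-out of the explicit connection, Lemma \ref{Uniqueness Frobenius} pins down the Frobenius discrepancy as a central element, and Proposition \ref{Frobenius 0} at $0$ forces that central element to be trivial. The paper records this in one sentence; your eigenvalue argument is a correct unpacking of it, and the ``principal nilpotent stays principal'' fact is exactly the content of the pinned embedding arranged in \ref{setting functoriality}.

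Part (i), however, has a real gap. The issue is not finding a $K$-point in the $T_{\ad}$-orbit---that is indeed trivial---but finding one that is \emph{generic over $R$}, i.e.\ whose reduction modulo the maximal ideal is still generic (this is what ``$\phi$ over $R$'' means in \ref{Frob str Bessel}). Your phrase ``after rescaling by $T$ and using $\pi\in R$'' does not address this: $\pi$ is not a unit (it satisfies $\pi^{p-1}=-p$), the $T_{\ad}(K)$-action can move valuations of the affine-simple components in opposite directions, and there is no a priori reason the orbit through the image of $\check{\xi}$ meets the integral generic locus. The paper's proof uses genuinely different input: because $\Be_{\cG}(\check{\xi})$ is the push-out of $\Be_{\cG'}(\check{\xi})$, it inherits overconvergence and a Frobenius structure from the $\cG'$ side even before one knows $\phi$ is integral. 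Comparing with a reference integral $\chi$ for $G$ (with corresponding $\check{\eta}$), one has $\Be_{\cG,\Ad}(\check{\xi})$ and $\Be_{\cG,\Ad}(\check{\eta})$ differing by a substitution $x\mapsto cx$ for some $c\in K^{\times}$; both admit Frobenius structures and decompose identically (using Theorem \ref{thm monodromy}). If $c$ were not a unit, the unique analytic solution at $0$ of an irreducible constituent would have radius of convergence $\neq 1$, forcing a trivial overconvergent quotient and contradicting irreducibility. This convergence-radius argument is the missing idea; without it your part (i) is incomplete.
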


\begin{proof}
	(i) Let $\phi$ be the generic linear function of $G$ over $K$ such that $-\pi\phi$ corresponds to $\check{\xi}$ under the isomorphism \eqref{classical duality}. We will show that $\phi$ is naturally integral. 

	By construction, $\Be_{\cG}(\check{\xi})$ is the push-out of $\Be_{\cG'}(\check{\xi})$. In particular, for $V\in \Rep(\cG)$, the connection $(\Be_{\cG,V}(\check{\xi}))^{\dagger}$ has a Frobenius structure and is overconvergent. 
	Let $\chi$ be a generic linear function of $G$ over $R$ and $\check{\eta}\in \check{\gg}_{\mathrm{aff}}(1)$ the corresponding generic element. 
	Then there exists an element $c\in K^{\times}$ such that we can rewrite two Bessel connections for the adjoint representation of $\cG$ as follow \eqref{Bessel connection form}:
	\begin{equation}
		\Be_{\cG,\Ad}(\check{\eta})=d+(N+xE)\frac{dx}{x},\quad \Be_{\cG,\Ad}(\check{\xi})=d+(N+cxE)\frac{dx}{x}.
	\end{equation}
	Via \eqref{classical duality}, it suffices to show that $c\in R^{\times}$.
	
	Both the above two connections admit Frobenius structures and decompose in the categories $\Conn(X_{\overline{K}})$, $\Sm(X_{k}/\overline{K})$ and $\Sm(X_{k}/\overline{K}_F)$ in the same way (according to the decomposition of $\Ad$ in $\Rep(\cG')$ by theorem \ref{thm monodromy}). 
	Let $V$ be a non-trivial irreducible component of $\Ad$ in $\Rep(\cG')$ and $V(\check{\eta}), V(\check{\xi})$ the corresponding overconvergent $F$-isocrystal. 
	Since $V(\check{\eta})|_0$ is unipotent, if $\{e_i\}$ denotes a basis of $V$, there exists a solution 
	\begin{displaymath}
		u: e_i \mapsto f_i(x) \in \Sol(V(\check{\eta})|_0)\quad \eqref{solution space}
	\end{displaymath}
	whose convergence domain is the open unit disc of radius $1$. 
	Then $u_c: e_i \mapsto f_i(cx)$ belongs to $\Sol(V(\check{\xi})|_0)$ and has the same convergent radius. 
	If $c$ is not a $p$-adic unit, then $V(\check{\eta})$ (or $V(\check{\xi})$) admits the trivial overconvergent isocrystal on $X_{k}$ as a quotient, which contracts to their irreducibility. 
	The assertion follows. 
	
	(ii) By (i), the $\cG$-valued overconvergent isocrystal $\Be_{\cG}^{\dagger}(\check{\xi})$ is the push-out of $\Be_{\cG'}^{\dagger}(\check{\xi})$. 
	It remains to identify two Frobenius structures on $\cG$-valued overconvergent isocrystals $\Be^{\dagger}_{\cG}(\check{\xi})\simeq \Be^{\dagger}_{\cG'}(\check{\xi})\times^{\cG'}\cG$, which are different by an element $\varepsilon$ in the center $Z_{\cG}(\overline{K})$ by \eqref{Uniqueness Frobenius}. 
	Taking account of the extension of Frobenius structures to $0$ \eqref{Frobenius 0}, we deduce that $\varepsilon=\id$ and the assertion follows. 
\end{proof}

Now we can prove the following conjecture of Heinloth-Ng\^o-Yun (\cite{HNY} conjecture 7.3). 

\begin{theorem} \label{thm functoriality}
	We keep the notation of \ref{setting functoriality} and fix a non-trivial additive character $\psi$.
        Assume that $\cG'\subset \cG$ over $\overline{\mathbb Q}_\ell$ appear in the same line in the left column of the \eqref{table monodromy}.
        For every generic linear function $\phi'$ of $G'$ over $k$, there is a generic linear function $\phi$ of $G$ over $k$ such that
	$\Kl_{\cG}^{\et,\ell}(\psi\phi)$ is isomorphic to the push-out of  $\Kl_{\cG'}^{\et,\ell}(\psi\phi')$ along $\cG'\subset \cG$ as $\ell$-adic $\cG$-local systems on $X_k$. 
\end{theorem}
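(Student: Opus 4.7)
The plan is to transfer the functoriality already established for Bessel $F$-isocrystals (Proposition \ref{functoriality}) to the $\ell$-adic Kloosterman sheaves via the companion trace identity of Theorem \ref{def Bessel crystal}, concluded by a Tannakian form of Chebotarev density.

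First, starting from a generic linear function $\phi'$ on $G'$ over $k$, I would lift it to a generic linear function on $G'$ over $R$ (the genericity condition is Zariski-open and $R$ has residue field $k$), still denoted $\phi'$. Let $\check{\xi}\in \check{\gg}'_{\mathrm{aff}}(1)$ be the element corresponding to $-\pi\phi'$ under the isomorphism \eqref{classical duality} for $G'$. By Proposition \ref{functoriality}(i), there is a generic linear function $\phi$ on $G$ over $R$ (whose reduction mod $p$ is the required $\phi$ on $G$ over $k$) such that $-\pi\phi$ matches the same $\check{\xi}$ under \eqref{classical duality} for $G$. By Proposition \ref{functoriality}(ii), the Bessel $F$-isocrystal $(\Be_{\cG}^{\dagger}(\check{\xi}),\varphi)$ is then the push-out of $(\Be_{\cG'}^{\dagger}(\check{\xi}),\varphi')$ along $\cG'\hookrightarrow\cG$; in particular, for every $V\in\Rep(\cG)$ and every closed point $a\in|X|$,
\[
\Tr(\varphi_a,V)\;=\;\Tr(\varphi'_a,V|_{\cG'}).
\]

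Next, I would apply the trace identity \eqref{E: trace identity} of Theorem \ref{def Bessel crystal} to both sides and invoke the fixed isomorphism $\overline{K}\simeq\overline{\mathbb{Q}}_\ell$ to obtain the equality of Frobenius traces
\[
\Tr(\Frob_a,\Kl^{\et,\ell}_{\cG,V,\bar{a}}(\psi\phi))
\;=\;\Tr(\Frob_a,\Kl^{\et,\ell}_{\cG',V|_{\cG'},\bar{a}}(\psi\phi')),
\]
for every $V\in\Rep(\cG)$ and every closed point $a\in|X|$. Denote by $\mathcal{F}$ the push-out of $\Kl^{\et,\ell}_{\cG'}(\psi\phi')$ along $\cG'\hookrightarrow\cG$, viewed as a tensor functor $\Rep(\cG)\to \mathrm{Loc}(X_k)$ by $V\mapsto \Kl^{\et,\ell}_{\cG',V|_{\cG'}}(\psi\phi')$. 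Each $\Kl^{\et,\ell}_{\cG,V}(\psi\phi)$ and each $\mathcal{F}_V$ is pure of weight zero, hence geometrically semisimple by Deligne. Since their Frobenius traces agree at all closed points, Chebotarev density gives an abstract isomorphism $\Kl^{\et,\ell}_{\cG,V}(\psi\phi)\simeq\mathcal{F}_V$ of $\overline{\mathbb{Q}}_\ell$-local systems for every $V$.

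To promote this to an isomorphism of $\cG$-local systems, fix a geometric point $\bar{x}$ of $X_k$ and consider the two induced continuous homomorphisms $\rho_1,\rho_2\colon\pi_1(X_k,\bar{x})\to\cG(\overline{\mathbb{Q}}_\ell)$. The equality of traces in every $V\in\Rep(\cG)$ determines the semisimple conjugacy class of $\rho_i(\gamma)$ in $\cG(\overline{\mathbb{Q}}_\ell)$ for every $\gamma$, and the semisimplicity (purity) of both representations together with a standard Richardson-type argument (cf.\ \cite{DZ17} 4.1.1, 4.3.2) shows that $\rho_1$ and $\rho_2$ are $\cG(\overline{\mathbb{Q}}_\ell)$-conjugate. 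This yields the desired isomorphism of $\cG$-local systems.

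The main obstacle is the last step, i.e.\ passing from pointwise trace agreement in every representation to a \emph{tensor-compatible} (i.e.\ $\cG$-equivariant) isomorphism; this requires both the semisimplicity provided by purity and the fact that conjugacy classes in the reductive group $\cG$ are separated by characters of algebraic representations. Given those, the argument is formal and the rest of the proof is a matter of assembling the ingredients from Propositions \ref{functoriality}, \ref{def Bessel crystal}, and the companion relationship \ref{HNY Kl}.
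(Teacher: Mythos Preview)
Your strategy matches the paper's: use Proposition~\ref{functoriality} to obtain the functoriality on the $p$-adic side, transfer it to trace identities on the $\ell$-adic side via Theorem~\ref{def Bessel crystal}, and then upgrade pointwise trace equality to an isomorphism of $\cG$-local systems. The first two steps are fine. The gap is in the last step. You assert that equality of $\Tr(\rho_i(\gamma),V)$ for all $V\in\Rep(\cG)$, together with semisimplicity, forces $\rho_1$ and $\rho_2$ to be $\cG(\overline{\mathbb Q}_\ell)$-conjugate. But knowing that $\rho_1(\gamma)$ and $\rho_2(\gamma)$ lie in the same semisimple $\cG$-conjugacy class for every $\gamma$ does \emph{not} in general imply global $\cG$-conjugacy of the homomorphisms: element-conjugate homomorphisms into a reductive group need not be globally conjugate (this is the phenomenon behind Larsen's examples and, in the automorphic world, $L$-packets). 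The reference to \cite{DZ17} gives comparison of monodromy groups, not a conjugacy statement of this kind.

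The paper closes this gap by bringing in the monodromy computation of Theorem~\ref{thm monodromy} (equivalently Corollary~\ref{l-adic mono}): both $\rho_1$ and $\rho_2$ factor through $G_{\geo}\subset\cG$, and the embedding $\cG^{\Sigma}\hookrightarrow\cG$ (after reducing to $\cG$ simply connected) induces a \emph{surjection} $K(\Rep(\cG))\otimes\overline{\mathbb Q}_\ell\twoheadrightarrow K(\Rep(G_{\geo}))\otimes\overline{\mathbb Q}_\ell$. This surjectivity is what lets one pass from ``same $\cG$-characters'' to ``same $G_{\geo}$-characters'', hence same semisimple $G_{\geo}$-conjugacy class at every Frobenius. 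One then takes a faithful $W\in\Rep(G_{\geo})$, gets a conjugating $g\in\GL(W)$; since both images are Zariski-dense in $G_{\geo}$, this $g$ normalizes $G_{\geo}$ and induces an automorphism fixing $G_{\geo}/\!\!/G_{\geo}$, hence inner. Without the surjectivity of $K$-rings onto $K(\Rep(G_{\geo}))$ your argument does not go through, and that surjectivity genuinely uses the identification of $G_{\geo}$ from the monodromy theorem.
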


\begin{proof}
	By the ``trivial'' functoriality \eqref{triv functoriality}, we may assume that $\cG$ is simply connected. 
	We lift $\phi$ to be a generic linear function of $G'$ over $R$ and take $\phi'$ as in \ref{functoriality}. 
	We need to show that $\Kl_{\cG}^{\et,\ell}(\psi\phi)\simeq \Kl_{\cG'}^{\et,\ell}(\psi\phi')\times^{\cG'}\cG$ as $\cG$-local systems.
	It follows from theorem \ref{def Bessel crystal} and proposition \ref{functoriality} that for every representation $V\in \Rep(\cG)$, regarded as a representation of $\cG'$, and every $a\in |X_{k}|$, we have
        \[
             \Tr(\mathrm{Frob}_{a} | \Kl_{\cG,V,\bar{a}}^{\et,\ell}) =   \Tr(\mathrm{Frob}_{a} | \Kl_{\cG',V,\bar{a}}^{\et,\ell}). 
        \]
	
	Note that if $\Sigma$ is the group of pinned automorphisms of $\cG$, then the closed embedding $\cG^{\Sigma}\to \cG$ induces a surjective homomorphism of K-rings $K(\Rep(\cG))\otimes \overline{\mathbb Q}_\ell\to K(\Rep(\cG^{\Sigma}))\otimes \overline{\mathbb Q}_\ell$. 
	Then the homomorphism $K(\Rep(\cG))\otimes \overline{\mathbb Q}_\ell\to K(\Rep(G_{\geo}))\otimes \overline{\mathbb Q}_\ell$ is also surjective. 
	It follows that if we replace $V$ by any representation $W$ of $G_{\geo}$ ($\subset \cG'\subset \cG$), the above equality holds. 
	This implies that the Frobenius conjugacy classes of $\Kl_{\cG}^{\et,\ell}$ and of $\Kl_{\cG'}^{\et,\ell}$ have the same image in $G_{\geo}/\!\!/G_{\geo}$. 
	Now, for a faithful representation $W$ of $G_{\geo}$, two representations $\Kl_{\cG}^{\et,\ell},\Kl_{\cG'}^{\et,\ell}:\pi_{1}(X_{k},\overline{x})\to G_{\geo}(\overline{\mathbb{Q}}_{\ell})$ are conjugated in $\GL(W)$ by an element $g$. 
	This element $g$ induces an automorphism of $G_{\geo}$. It fixes every Frobenius conjugacy class and therefore fixes $G_{\geo}/\!\!/G_{\geo}$. 
	Then $g$ must be inner. That is these two representations are conjugate in $G_{\geo}$ and the assertion follows. 
\end{proof}

\subsection{Hypergeometric $F$-isocrystals} \label{sec hyper}
To describe Bessel $F$-isocrystals for classical groups, we need to review some basic facts about the hypergeometric $F$-isocrystals. 

\begin{secnumber} \label{hyp equation Frobenius}
	In (\cite{Katz90} 5.3.1), Katz interpreted hypergeometric $\mathscr{D}$-modules on $\Gm$ as the multiplicative convolution of hypergeometric $\mathscr{D}$-modules of rank one. 
	Besides the hypergeometric $\mathscr{D}$-modules, Katz also studied $\ell$-adic theory of hypergeometric sheaves using multiplicative convolution. 
	The Frobenius traces of these sheaves are called \textit{hypergeometric functions (over finite fields)} which generalize Kloosterman sums \eqref{Kl sum}. 
	
	Let $\psi$ be a non-trivial additive character on $\mathbb{F}_p$, $n$ an integer $\ge 1$ and $\underline{\rho}=(\rho_1,\cdots, \rho_{m})$ a sequence of multiplicative characters on $k^{\times}$. The hypergeometric function $\rH_{\psi}(n,\underline{\rho})$ \footnote{It corresponds to the hypergeometric function associated to $\psi$, $n$ trivial characters $\chi$'s and $m$ characters $\rho$'s defined in (\cite{Katz90} 8.2.7)} is defined for any finite extension $k'/k$ and $a\in k'^{\times}$ by
\begin{equation}
	\rH_{\psi}(n,\underline{\rho})(a)=\sum \psi\biggl(\Tr_{k'/\mathbb{F}_p}(\sum_{i=1}^{n}x_i-\sum_{j=1}^{m}y_j)\biggr) \cdot \prod_{j=1}^{m} \rho_{j}^{-1}(\Nm_{k'/k}(y_j)),
	\label{hyp sums}
\end{equation}
where the sum take over $(x_1,\cdots,x_n,y_1,\cdots,y_m)\in (k'^{\times})^{m+n}$ satisfying $\prod_{i=1}^{n}x_i=a \prod_{j=1}^{m}y_{j}$. 

	Recently, Miyatani studied the $p$-adic counterpart of this theory \cite{Miy}. Using the multiplicative convolution of arithmetic $\mathscr{D}$-modules, he constructed the Frobenius structure on hypergeometric $\mathscr{D}$-modules whose Frobenius traces are hypergeometric sums. 
	In the following, we briefly recall his results in some special cases. 

	Let $\mathscr{M},\mathscr{N}$ be two objects of $\rD(\mathbb{G}_{m,k}/L_{\blacktriangle})$ and $\mu:\Gm\times \Gm\to \Gm$ the multiplication morphism. Recall that the \textit{(multiplicative) convolution} $\star$ 
	is defined by 
	\begin{equation}
		\mathscr{M}\star \mathscr{N}= \mu_{!}(\mathscr{M}\boxtimes \mathscr{N}).
		\label{def convolution}
	\end{equation}

	Let $n>m$ be two non-negative integers, $\pi\in K$ associated to $\psi$ \eqref{Dwork isocrystal} and $\underline{\beta}=(\beta_1,\cdots,\beta_m)$ a sequence of elements of $\frac{1}{q-1}\mathbb{Z}-\mathbb{Z}$. 
	We denote by $\Hyp_{\pi}(n,\underline{\beta})$ the $p$-adic hypergeometric differential operator on $\Gm$ 
	\begin{equation}
		\Hyp_{\pi}(n,\underline{\beta})=\delta^{n}-(-1)^{n+mp}\pi^{n-m}x\prod_{i=1}^{m}(\delta-\beta_j),
		\label{p-adic hyp}
	\end{equation}
	where $x$ is a coordinate of $\Gm$ and $\delta=x\frac{d}{dx}$. 
	We denote by $\DHyp_{\pi}(n,\underline{\beta})$ the $\mathscr{D}^{\dagger}_{\widehat{\mathbb{P}}^1,\mathbb{Q}}(\{0,\infty\})$-module
	\begin{equation}
		\DHyp_{\pi}(n,\underline{\beta})= \mathscr{D}^{\dagger}_{\widehat{\mathbb{P}}^1,\mathbb{Q}}(\{0,\infty\})/ (\mathscr{D}^{\dagger}_{\widehat{\mathbb{P}}^1,\mathbb{Q}}(\{0,\infty\})\Hyp_{\pi}(n,\underline{\beta}) ).
		\label{D-mod hyp}
	\end{equation}
\end{secnumber}

\begin{theorem}[Miyatani \cite{Miy}]
	\label{theorem Miyatani}
	We fix an isomorphism $\overline{\mathbb{Q}}_p\simeq \mathbb{C}$. 

	\textnormal{(i)} The arithmetic $\mathscr{D}$-module $\DHyp_{\pi}(n,\underline{\beta})$ underlies to a pure overconvergent $F$-isocrystal on $\mathbb{G}_{m,k}$ of rank $n$ and weight $n+m-1$. 

	\textnormal{(ii)} The Frobenius structure on the overconvergent isocrystal $\DHyp_{\pi}(n,\underline{\beta})$ is unique (up to a scalar). 

	\textnormal{(iii)} The Frobenius trace of $\DHyp_{\pi}(n,\underline{\beta})$ on $\mathbb{G}_{m,k}$ is equal to the hypergeometric function $(-1)^{n+m-1}\rH_{\psi}(n,\underline{\rho})$ \eqref{hyp sums}, 
	where $\rho_i$ is defined for $\xi\in k^{\times}$ and $\widetilde{\xi}$ the Teichmüller lifting of $\xi$, by $\rho_i(\xi)=\widetilde{\xi}^{(q-1)\beta_i}$.
\end{theorem}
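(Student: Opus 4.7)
The plan is to realize $\DHyp_{\pi}(n,\underline{\beta})$ as an iterated multiplicative convolution of rank-one $F$-isocrystals on $\mathbb{G}_{m,k}$, and to deduce all three assertions from properties of the building blocks via the six-functor formalism for arithmetic $\mathscr{D}$-modules reviewed in \ref{section six functors formalism} and the weight theory recalled in \ref{def weight}. The building blocks are the restriction to $\mathbb{G}_{m,k}$ of the Dwork $F$-isocrystal $\mathscr{A}_{\pi}$ of \ref{Dwork isocrystal}(i), and the Kummer $F$-isocrystals $\mathscr{K}_{\beta_j}$ of \ref{Dwork isocrystal}(ii); the condition $\beta_j\in\frac{1}{q-1}\mathbb{Z}-\mathbb{Z}$ is exactly what ensures that $\mathscr{K}_{\beta_j}$ carries a Frobenius structure but is non-trivial. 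Writing $[-1]\colon\mathbb{G}_m\to\mathbb{G}_m$ for inversion and applying a Tate shift to compensate for the $!$-convolution, the candidate is
\[
\mathscr{H}:=\mathscr{A}_{\pi}\star\cdots\star\mathscr{A}_{\pi}\;\star\;([-1]^{+}\mathscr{K}_{\beta_{1}}\otimes\mathscr{A}_{-\pi})\star\cdots\star([-1]^{+}\mathscr{K}_{\beta_{m}}\otimes\mathscr{A}_{-\pi}),
\]
with $n$ Dwork factors and $m$ Kummer-Dwork factors.

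First I would establish that convolution $\mathscr{M}\star\mathscr{N}=\mu_{!}(\mathscr{M}\boxtimes\mathscr{N})$ of smooth overconvergent $F$-isocrystals on $\mathbb{G}_{m,k}$ lands, in the regime $n>m$, in $\Sm(\mathbb{G}_{m,k}/K_{F})[-1]$. This is local on $\mathbb{G}_{m,k}$ and can be checked by analyzing the two-variable integral along $\mu\colon\mathbb{G}_{m}\times\mathbb{G}_{m}\to\mathbb{G}_{m}$: the inequality $n>m$ together with wild ramification at $\infty$ of the Dwork factors forces the compact-support cohomology of the fibers to be concentrated in middle degree (by a Katz-type estimate, analogous to the tame-vs-wild balance in \cite{Katz90}). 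Next I would compute the differential operator annihilating $\mathscr{H}$: convolution on $\mathbb{G}_{m}$ translates into multiplication in the variable dual to $\delta=x\partial_{x}$, so the operator is obtained by multiplying the individual operators $(\delta-0)$ (for $\mathscr{A}_{\pi}$, after clearing a $p$-adic constant) and $(\delta-\beta_{j})$ (for the Kummer factors), producing precisely $\Hyp_{\pi}(n,\underline{\beta})$ up to the normalizing constant $(-1)^{n+mp}\pi^{n-m}$; this identifies $\mathscr{H}$ with $\DHyp_{\pi}(n,\underline{\beta})$ and gives (i).

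For the weight, each $\mathscr{A}_{\pi}$ is pure of weight $0$, each $\mathscr{K}_{\beta_{j}}$ of weight $0$, and $\mu_{!}$ together with the shift $[-1]$ shifts weights by $+1$ as in \ref{def weight}(i); iterating $n+m-1$ convolutions (after the initial factor) yields purity of weight $n+m-1$. For (ii), I would verify that $\Hyp_{\pi}(n,\underline{\beta})$ is irreducible over the Robba ring at $\infty$ (hence on $\mathbb{G}_{m,k}$): its local structure at $\infty$ is totally wild of Swan conductor $n-m>0$ by a $p$-adic analogue of Katz's irregularity calculation, which prevents a proper subobject from extending. Then Schur's lemma applied to the Tannakian automorphism group of the underlying connection forces any two Frobenius structures to differ by a scalar in $K^{\times}$. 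Finally, for (iii) I would apply the Lefschetz trace formula for rigid cohomology of \cite{ELS} to the expression $\mathscr{H}=\mu_{!}(\cdots)$, unfolding the iterated convolution into a single pushforward along the map $(x_{1},\dots,x_{n},y_{1},\dots,y_{m})\mapsto\prod x_{i}/\prod y_{j}$ and reading off \eqref{hyp sums} fiber-by-fiber using the trace formulas for $\mathscr{A}_{\pi}$ and $\mathscr{K}_{\beta_{j}}$ in \ref{Dwork isocrystal}; the sign $(-1)^{n+m-1}$ comes from the $[-1]$ cohomological shifts incurred in placing $\mathscr{H}$ in degree $0$.

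The main obstacle will be the differential-operator computation in step two: the formalism of multiplicative convolution on the $p$-adic side must be set up carefully enough (using the $\mathscr{D}^{\dagger}_{\widehat{\mathbb{P}}^{1},\mathbb{Q}}(\{0,\infty\})$-structure) to track the normalizing constant $(-1)^{n+mp}\pi^{n-m}$ exactly as in \eqref{p-adic hyp}, including the Wick-rotation-like factor of $-\pi$ from the Dwork side versus $\pi$ from the Fourier side. A secondary difficulty is verifying the middle-extension/cleanness property that makes $\star$ lift from $!$-convolution to an exact operation on smooth objects in the expected degree; this is the $p$-adic analogue of Katz's cleanness argument in \cite{Katz90} and requires the universal local acyclicity results of subsection \ref{subsection ULA}.
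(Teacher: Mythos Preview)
The paper does not give its own proof of this theorem: immediately after the statement it writes that assertions (i) and (iii) are taken from Miyatani's Main Theorem, and that assertion (ii) follows by applying the method of \cite{Miy}~4.2.1 to show irreducibility of $\DHyp_{\pi}(n,\underline{\beta})$ in $\rD(\mathbb{G}_{m,k}/K)$, whence uniqueness of the Frobenius structure up to scalar. So there is no in-paper argument to compare against; the result is imported.

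That said, your sketch is in the spirit of Miyatani's proof and is broadly correct: realize the hypergeometric module as an iterated multiplicative $!$-convolution of Dwork and (twisted) Kummer $F$-isocrystals, read off the rank and the annihilating operator, get purity from the weight formalism for $\mu_{!}$, deduce (ii) from irreducibility plus Schur, and get (iii) from the trace formula unfolded along the convolution. The two places where your outline is genuinely thin are exactly the ones you flag. First, the identification of the convolution with the explicit cyclic $\mathscr{D}^{\dagger}$-module $\mathscr{D}^{\dagger}/\mathscr{D}^{\dagger}\Hyp_{\pi}(n,\underline{\beta})$ is not just a symbol computation: one must show the convolution is a cyclic module generated by the obvious class and that the operator \eqref{p-adic hyp} with its precise constant $(-1)^{n+mp}\pi^{n-m}$ kills it and nothing smaller does; Miyatani handles this carefully and the sign/constant bookkeeping is delicate. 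Second, the concentration in a single cohomological degree (your ``cleanness'') is not a consequence of the ULA material in \ref{subsection ULA}; in Miyatani's setting it comes from an explicit analysis of the rank-one inputs and the condition $n>m$, closer to Katz's arguments in \cite{Katz90} transported to the arithmetic $\mathscr{D}$-module side. If you want a self-contained write-up you will need to supply those two pieces; otherwise, citing \cite{Miy} as the paper does is the honest route.
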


Assertions (i) and (iii) are stated in (\cite{Miy} Main theorem). One can apply the method of (\cite{Miy} 4.2.1) to show that $\DHyp_{\pi}(n,\underline{\beta})$ is irreducible in the category $\rD(\mathbb{G}_{m,k}/K)$ and hence assertion (ii). 

The arithmetic $\mathscr{D}$-module $\DHyp_{\pi}(n,\underline{\beta})$ depends only on $\psi$ and $\underline{\rho}$ (the class of $\underline{\beta}$ modulo $\mathbb{Z}$) that we also denote by $\DHyp_{\psi}(n,\underline{\rho})$. 

\begin{secnumber} \label{Frob trace Hyp}
	\textbf{Normalised Hypergeometric sum.}
	Let $\mathscr{F}$ be the hypergeometric $\ell$-adic sheaf on $\mathbb{G}_{m,k}$ associated to $\psi$, $n$ trivial multiplicative characters and non-trivial multiplicative characters $\underline{\rho}=(\rho_1,\cdots,\rho_m)$. 
	The space $\mathscr{F}^{I_{0}}$ of $I_{0}$-invariants is one-dimensional and $\Frob_{k}$ acts on it as the monomial in Gauss sums (\cite{Katz09} 2.6.1)
	\begin{displaymath}
		\alpha=(-1)^{m} \prod_{j=1}^{m}G(\psi^{-1},\rho_j^{-1}),
	\end{displaymath}
	where $G(\psi^{-1},\rho_{j}^{-1})$ denotes the Gauss sum associated to $\psi^{-1}$ and $\rho_{j}^{-1}$. 

	On the other hand, note that the action of $I_{0}$ is maximal unipotent. Any lifting $F_{0}$ in the decomposition group $D_{0}$ at $0$ of the Frobenius automorphism has eigenvalues set $\{\alpha, q\alpha,\cdots,q^{2n}\alpha\}$ (cf. \cite{Katz88} 7.0.7). 
	After twisting a geometrically constant lisse rank one sheaf (resp. overconvergent $F$-isocrystal), we denote by $\widetilde{\mathscr{F}}$ (resp. $\THyp_{\psi}(n,\underline{\rho})$) the normalised hypergeometric sheaf (resp. $F$-isocrystal) whose the Frobenius eigenvalues at $0$ is $\{q^{-(n-1)/2},\cdots.,q^{(n-1)/2}\}$. 
	Its Frobenius trace function, called the \textit{normalised hypergeometric sum} $\widetilde{\rH}_{\psi}(n,\underline{\rho})$ is defined for $a\in \mathbb{F}_q^{\times}$ by 
	\begin{equation} \label{Hyp sum normalised}
		\widetilde{\rH}_{\psi}(n,\underline{\rho})(a)
		=\frac{1}{(-\sqrt{q})^{n-1}\prod_{j=1}^m G(\psi^{-1},\rho^{-1})} \rH_{\psi}(n,\underline{\rho})(a).
	\end{equation}
	When $m=0$, we have $\widetilde{\mathscr{F}}=\Kl_n^{\et}$ \eqref{definition Kl} and $\THyp_{\psi}(n,\emptyset)=\Be_n^{\dagger}$ \eqref{Dwork thm intro}.
\end{secnumber}

\subsection{Bessel $F$-isocrystals for classical groups} \label{Kl classical} 
\begin{secnumber} \label{Bessel SL}  
	The Kloosterman sheaf and the Bessel $F$-isocrystal for $(G=\GL_n,\cG=\GL_n)$ have been extensively studied. As usual, let $E_{ij}$ denote the $n\times n$-matrix with the $(i,j)$-entry $1$ and all other entries $0$. 
	We choose the standard Borel $B$ of the upper triangular matrices and the standard torus $T$ of the diagonal matrices. 
	We choose a coordinate $x$ of $\A1$. 
	Then there is a canonical isomorphism 
	\[\Ga^n\simeq I(1)/I(2), \quad (a_1,\ldots,a_n)\mapsto  \sum_{i=1}^{n-1} a_iE_{i,i+1}+ a_n x^{-1}E_{n,1}.
	\]
	We choose $\phi:\Ga^n\to \Ga$ to be the addition map. Under the isomorphism \eqref{classical duality} and \eqref{invariant poly match}, $\phi$ corresponds to $\check{\xi}=N+Edx$ \eqref{normalization} with
	\begin{equation} \label{Bessel connection}
		N = \begin{pmatrix}
		0 & 1 & 0 & \dots & 0 \\
		0 & 0 & 1 & \dots & 0 \\
		\vdots & \vdots & \ddots & \ddots & \vdots \\
		0 & 0 & 0 & \dots & 1 \\
		0 & 0 & 0 & \dots & 0  \end{pmatrix}, \quad 
		E = \begin{pmatrix}
		0 & 0 & 0 & \dots & 0 \\
		0 & 0 & 0 & \dots & 0 \\
		\vdots & \vdots & \ddots & \ddots & \vdots \\
		0 & 0 & 0 & \dots & 0 \\
		1 & 0 & 0 & \dots & 0  \end{pmatrix}. 
	\end{equation}
	On the other hand, by \ref{triv functoriality} and (\cite{HNY} \S 3), we have 
         \begin{equation}\label{Kl explicit}
		 \Kl_{\SL_n,\Std}^{\et} \simeq \Kl_{\GL_n,\Std}^{\et} \simeq \Kl^{\et}_n.
         \end{equation}
	Indeed, diagram \eqref{triangle} reduces to diagram \eqref{Del diag} in this case.  Therefore, the Kloosterman connection is isomorphic to the classical Bessel connection (\ref{Bessel conn intro}, \ref{Dwork thm intro}) 
	\begin{equation}
		\Kl^{\dR}_{\SL_n,\Std}(\lambda\phi)\simeq \Be_n,\qquad
		\Kl^{\rig}_{\SL_n,\Std}(\phi)\simeq \Be_n^{\dagger}.
		\label{Kl dR rig SLn}
	\end{equation}
	Recall that the connection $\Be_{n}$ corresponds to the Bessel differential equation \eqref{Bessel connection intro}.
\end{secnumber}
         
\begin{secnumber}
         Consider 
         \[
		 G=\SO_{2n+1}, \quad \cG=\Sp_{2n}=\{ A\in \SL_{2n} \mid AJA^T=J\},
         \]
         where $J$ is the anti-diagonal matrix with $J_{ij}=(-1)^i\delta_{i,2n+1-j}$. 
	 Then matrices $(N,E)$ as in \eqref{Bessel connection} are in $\check{\gg}$ and $\Be_{\cG}(\check{\xi})$ is given by the same formula as $\GL_{2n}$ case. 
	 Then we deduce an isomorphism of overconvergent $F$-isocrystals $\Be_{\Sp_{2n},\Std}^{\dagger}(\check{\xi})\simeq \Be_{2n}^{\dagger}$ by \eqref{functoriality}. 
\end{secnumber}

\begin{secnumber}
	Consider 
	\[
		G=\SO_{2n}\ \cG=\SO_{2n}=\{A\in \SL_{2n}, AJA^T=J\},	
	\]
	where $J$ is the anti-diagonal matrix with $J_{ij}=(-1)^{\max\{i,j\}}\delta_{i,2n+1-j}$. 
	There exists a canonical isomorphism
	\begin{displaymath}
		\Ga^{n+1}\simeq I(1)/I(2),\quad (a_1,\cdots,a_{n+1})\mapsto \sum_{i=1}^{n-1} (E_{i,i+1}+E_{2n-i,2n-i+1}) + (E_{n-1,n+1}+E_{n,n+2})+ x^{-1}(E_{1,2n-1}+E_{2,2n}).
	\end{displaymath}
	Then we take $\phi:\Ga^{n+1}\to \Ga$ to be the addition map. When $n\ge 3$, under the isomorphism \eqref{classical duality} and \eqref{invariant poly match}, $\lambda\phi$ corresponds to $\check{\xi}=N+ \lambda^{2n-2} E x$ \eqref{normalization} with
	\begin{tiny}
\begin{equation} \label{Bessel connection SO2n matrix}
		N= \begin{pmatrix}
			0 & 1 & 0 & 0 & \dots & \dots & \dots & 0 \\
			\vdots & \ddots & \ddots & & & & \dots & \vdots \\
		\vdots & \ddots & 0 & 1 & 1 & 0 & \dots & 0 \\
		 &  & \dots & 0 & 0 & 1 & & \vdots \\
		 &  & &  & 0 & 1 &  & \vdots \\
		0 & &   &  &  & 0 & \ddots & \vdots \\
		0 & 0 &   &   &   &  & \ddots & 1 \\
		0 & 0 & 0 & \dots & \dots & &  & 0 
	\end{pmatrix}, \quad
	        E= \begin{pmatrix}
			0 & 0 & 0 & 0 & \dots & \dots & \dots & 0 \\
			\vdots & \ddots & \ddots & & & & \dots & \vdots \\
		\vdots & \ddots & 0 & 0 & 0 & 0 & \dots & 0 \\
		 &  & \dots & 0 & 0 & 0 & & \vdots \\
		 &  & &  & 0 & 0 &  & \vdots \\
		0 & &   &  &  & 0 & \ddots & \vdots \\
		1 & 0 &   &   &   &  & \ddots & 0 \\
		0 & 1 & 0 & \dots & \dots & &  & 0 
	\end{pmatrix}.
	\end{equation}
	\end{tiny}
	
	The corresponding Bessel connection is written as
\begin{equation} \label{Bessel connection SO2n}
	\Be_{\SO_{2n},\Std}(\check{\xi})=d + (N + \lambda^{2n-2} E x)\frac{dx}{x}.
\end{equation}
If $e_1,\cdots,e_{2n}$ denote a basis for the above connection matrix, the restriction of the above connection to the subbundle generated by $e_n-e_{n+1}$ is trivial. 
The other horizontal subbundle, generated by $e_n+e_{n+1}$ and other basis vectors, is isomorphic to the Bessel connection $\Be_{\SO_{2n-1},\Std}(\check{\xi})$ discussed below \eqref{decomposition SO2n+2 SO2n+1}. 
\end{secnumber}

\begin{secnumber}
	In \cite{LT17}, T. Lam and N. Templier identified the diagram \eqref{triangle} with the Laudau-Ginzburg model for quadrics \cite{PRW} and used it to calculate the associated Kloosterman $\mathscr{D}$-modules. 
	We briefly recall this construction following (\cite{PRW} \S~3). 
	Let $Q_{2n-2}=G/ P$ be the $(2n-2)$-dimensional quadric and let $(p_0:\cdots:p_{n-1}:p'_{n-1}:p_n:\cdots:p_{2n-2})$ be the Pl\"ucker coordinates of $Q_{2n-2}$ satisfying 
	\begin{equation} \label{quadric eq}
		p_{n-1}p'_{n-1}-p_{n-2}p_{n}+\cdots+(-1)^{n-1}p_{0}p_{2n-2}=0.
	\end{equation}
	Consider the open subscheme 
	\begin{equation}
		Q_{2n-2}^{\circ}=Q_{2n-2}-D,
		\label{open subscheme quadric}
	\end{equation}
	with the complement $D=D_0+D_1+\cdots+D_{n-1}+D_{n-1}'$, where $D_i$ is defined by 
	\begin{equation} \label{divsors SO2n}
	\begin{array}{l}
		{D_{0} :=\left\{p_{0}=0\right\}} \\ 
		{D_{\ell} :=\left\{\sum_{k=0}^{\ell}(-1)^{k} p_{\ell-k} p_{2 n-2-\ell+k}=0\right\} \text { for } 1 \leq \ell \leq n-3} \\ 
		{D_{n-2} :=\left\{p_{2 n-2}=0\right\}} \\ 
		{D_{n-1} :=\left\{p_{n-1}=0\right\}} \\ 
		{D'_{n-1} :=\left\{p'_{n-1}=0\right\}}
	\end{array}
\end{equation}
The divisor $D$ is anti-canonical in $Q_{2n-2}$. For simplicity, we set 
\begin{displaymath}
	\delta_{\ell}=\sum_{k=0}^{\ell}(-1)^{k} p_{\ell-k} p_{2n-2-\ell+k},\quad \text { for } 0 \leq \ell \leq n-3. 
\end{displaymath}
If $x$ denotes a coordinate of $\Gm$, we define a regular function $W:Q_{2n-2}^{\circ}\times \Gm\to \mathbb{A}^1$ to be
\begin{equation} \label{superpotential}
	W(p_i:p'_{n-1};x)=
	\frac{p_{1}}{p_{0}}+
	\sum_{\ell=1}^{n-3} \frac{p_{\ell+1} p_{2 n-2-\ell}}{\delta_{\ell}}+
	\frac{p_{n}}{p_{n-1}}+\frac{p_{n}}{p_{n-1}^{\prime}}+ x \frac{p_{1}}{p_{2 n-2}}.
\end{equation}

The Kloosterman overconvergent $F$-isocrystal and connection are calculated by
\begin{equation}
	\Kl_{\SO_{2n},\Std}^{\rig}(\phi) \simeq \pr_{2,!}(W^{*}(\mathscr{A}_{\psi}))[2(n-1)](n-1),\quad
	\Kl_{\SO_{2n},\Std}^{\dR}(\lambda\phi) \simeq \pr_{2,!}(W^{*}(\mathsf{E}_{\lambda}))[2(n-1)]. \label{Calculation LG model}
\end{equation}
	We deduce that the Frobenius trace $\Kl_{\SO_{2n},\Std}$ of $\Kl_{\SO_{2n},\Std}^{\rig}(\phi)$ is defined for $a\in \mathbb{F}_q^{\times}$ by 
	\begin{equation}
		\Kl_{\SO_{2n},\Std}(a)=\frac{1}{q^{n-1}}\sum_{(p_i,p_{n-1}')\in Q_{2n-2}^{\circ}(\mathbb{F}_q)}\psi\biggl( \Tr_{\mathbb{F}_q/\mathbb{F}_p} \biggl(\frac{p_{1}}{p_{0}}+
	\sum_{\ell=1}^{n-3} \frac{p_{\ell+1} p_{2 n-2-\ell}}{\delta_{\ell}}+
	\frac{p_{n}}{p_{n-1}}+\frac{p_{n}}{p_{n-1}^{\prime}}+ a \frac{p_{1}}{p_{2 n-2}} \biggr)
	\biggr).
		\label{Kl sum SO2n}
	\end{equation}
\end{secnumber}
\begin{prop} \label{cal Kl sum SO2n}
	\textnormal{(i)} When $n=2$, we have
	\begin{equation}
		\Kl_{\SO_4,\Std}(a)=\Kl(2;a)^2.
	\end{equation}

	\textnormal{(ii)} When $n\ge 3$, we can simplify above sum as
	\begin{equation} \label{Kl sum SO2n simple}
		\Kl_{\SO_{2n},\Std}(a)=\frac{1}{q^{n-1}}\biggl(\sum_{x_i\in \mathbb{F}_q^{\times}}\psi\biggl(\Tr_{\mathbb{F}_q/\mathbb{F}_p}(x_1+x_2+\cdots +x_{2n-2}+a\frac{x_1+x_2}{x_1x_2\cdots x_{2n-2}})\biggr)+(q-1)q^{n-2}\biggr).
	\end{equation}
\end{prop}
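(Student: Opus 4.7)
For part (i), the plan is to appeal to the exceptional isomorphism \eqref{SO4 vs SL2}, namely $\Kl_{\SO_4,\Std}^{\rig} \simeq \Kl_{\SL_2\times\SL_2,\Std\boxtimes\Std}^{\rig}$. Since $\Std\boxtimes\Std$ as a representation of $\SL_2\times\SL_2$ has character equal to the product of the characters of the two $\Std$ factors, the Frobenius trace at a closed point $a$ factors as $\Kl_{\SL_2,\Std}(a)\cdot\Kl_{\SL_2,\Std}(a)$, once one verifies that the generic linear function on $I(1)_{\SO_4}/I(2)_{\SO_4}$ restricts, under the identification with $I(1)_{\SL_2\times\SL_2}/I(2)_{\SL_2\times\SL_2}$, to a pair of identical generic functions on the two $\SL_2$ factors. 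This matching is straightforward from the explicit description of affine simple roots \eqref{affine simple} and the functoriality of \ref{triv functoriality}. Combining with the classical identification $\Kl_{\SL_2,\Std}^{\rig}\simeq \Kl_2^{\rig}$ from \eqref{Kl explicit} yields $\Kl_{\SO_4,\Std}(a)=\Kl(2;a)^2$.

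For part (ii), the plan is to parametrize $Q_{2n-2}^{\circ}$ by explicit affine coordinates that turn $W$ into the desired Laurent polynomial. More precisely, I would introduce the coordinates
\[
x_1 = \tfrac{p_n}{p_{n-1}}, \quad x_2 = \tfrac{p_n}{p'_{n-1}}, \quad x_3 = \tfrac{p_1}{p_0}, \quad x_{2n-2} = a\tfrac{p_1}{p_{2n-2}},
\]
and, for each $\ell = 1,\dots,n-3$, use the identity $\delta_{\ell}+\delta_{\ell-1}=p_\ell p_{2n-2-\ell}$ from \eqref{divsors SO2n} to break the middle term $p_{\ell+1}p_{2n-2-\ell}/\delta_\ell$ of $W$ into two monomials $x_{2\ell+2}, x_{2\ell+3}$ in the new coordinates. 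On the open subset $U\subset Q_{2n-2}^{\circ}$ where all the $x_i$ lie in $\Gm$, the superpotential then becomes the Laurent polynomial
\[
W|_U = x_1+x_2+\cdots+x_{2n-2}
\]
modulo the constraint $x_1x_2\cdots x_{2n-2}=a(x_1+x_2)$ imposed by the quadric equation \eqref{quadric eq}; eliminating this constraint by solving for one variable (say the one corresponding to $p'_{n-1}$, equivalently $x_{2n-2}$) produces the sum displayed on the right-hand side of \eqref{Kl sum SO2n simple}.

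The main obstacle is handling the complement $Q_{2n-2}^{\circ}\setminus U$, which contributes the correction $(q-1)q^{n-2}$. This locus consists of points where some intermediate $\delta_\ell$ degenerates, yet which still lie in $Q_{2n-2}^{\circ}$ (none of the Plücker coordinates vanishing). I would stratify this complement according to which $\delta_\ell$ vanishes first, and on each stratum show that, because the quadric relation collapses several terms in $W$ into a linear expression in a single affine coordinate, the character sum over the fiber either vanishes or yields the constant contribution. Summing the non-vanishing strata, each of which is an affine bundle over $\Gm$ with fiber $\mathbb{A}^{n-2}$, produces precisely $(q-1)q^{n-2}$. The hardest technical step will be verifying that the Jacobian of the change of coordinates on the open stratum $U$ is $\pm 1$ (so that no extra factor of $q$ appears) and carrying out the stratification of the complement without miscounting overlaps, which is a combinatorial bookkeeping exercise rather than a deep obstruction.
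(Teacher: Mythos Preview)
Your approach to part (i) via the exceptional isomorphism \eqref{SO4 vs SL2} matches the paper's suggestion exactly.

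For part (ii), your overall strategy---introduce torus coordinates on a dense open piece of $Q_{2n-2}^{\circ}$ and treat the complement separately---is the same as the paper's, but there is a genuine error in how you identify the complement. You describe it as the locus ``where some intermediate $\delta_\ell$ degenerates, yet which still lie in $Q_{2n-2}^{\circ}$.'' This is impossible: by definition \eqref{divsors SO2n} the divisors $D_\ell=\{\delta_\ell=0\}$ for $1\le\ell\le n-3$ are among those removed to form $Q_{2n-2}^{\circ}$, so no $\delta_\ell$ ever vanishes there. What \emph{can} vanish on $Q_{2n-2}^{\circ}$ are the Pl\"ucker coordinates $p_1,\ldots,p_{n-2},p_n,\ldots,p_{2n-3}$ (only $p_0,p_{n-1},p'_{n-1},p_{2n-2}$ are forced to be nonzero), and the paper stratifies according to which of $p_n,\ldots,p_{2n-3}$ are zero.

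Concretely, the paper's open stratum (a) is $\{p_n,\ldots,p_{2n-3}\neq 0\}$; an explicit change of variables there produces the toric sum. On the stratum (b) where $p_n=0$ but some $p_{2n-2-\ell}\neq 0$ with $\ell\in\{1,\ldots,n-3\}$ maximal, one checks that all $\delta_{\ell'}$ with $\ell'>\ell$ collapse to $\pm\delta_\ell$, so $p_{\ell+1}$ becomes a free variable in $\mathbb{F}_q$ appearing linearly only in the single term $p_{\ell+1}p_{2n-2-\ell}/\delta_\ell$ of $W$; summing $\psi$ over this variable kills the contribution. Stratum (c), where $p_n=\cdots=p_{2n-3}=0$, then accounts for the constant $(q-1)q^{n-2}$. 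Once you replace your faulty ``$\delta_\ell$ degenerates'' picture by this vanishing-of-$p_i$ stratification, the rest of your sketch (including the bookkeeping concern at the end) lines up with the paper's computation.
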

\begin{proof}
	Assertion (i) is easy to prove and is left to readers. It also follows from \eqref{SO4 vs SL2}. 

	(ii) The equality follows from subdividing the sum \eqref{Kl sum SO2n} in the following parts:

	(a) Case $p_{n},p_{n+1},\cdots,p_{2n-3}\neq 0$: we replace $p_i,p_{n-1}'$ by $x_i,y_i\in \mathbb{F}_q^{\times}$ as follows:
	\begin{eqnarray*}
		p_{k}&=&\left\{\begin{array}{ll} {1} & {\text { if } k=0} \\ {x_{1} \ldots x_{k-1}\left(x_{k}+y_{k}\right)} & {\text { if } 1 \leq k \leq n-2} \\ {x_{1} \ldots x_{n-2} x_{n-1}} & {\text { if } k=n-1} \\ {x_{1} \ldots x_{n-2} x_{n-1} y_{n-1}} & {\text { if } k=n} \\ {x_{1} \ldots x_{n-2} x_{n-1} y_{n-1} y_{n-2} \ldots y_{2 n-1-k}} & {\text { otherwise }} \end{array}\right. \\
			p_{n-1}'&=&x_1\cdots x_{n-2} y_{n-1}.
	\end{eqnarray*}
	Then the sum \eqref{Kl sum SO2n} becomes the toric exponential sum in \eqref{Kl sum SO2n simple}. 

	(b) Case $p_{n}=0$ and $p_{2n-2-\ell}\neq 0$ for some $\ell\in\{1,\cdots,n-3\}$: we assume $\ell$ is maximal. 
	By dividing $p_{n-1}'$, we consider the affine coordinates $p_0,\cdots,p_{2n-2}$ and we replace $p_{n-1}$ by the equation \eqref{quadric eq}. 
	Since $p_{n},\cdots,p_{2n-2-\ell-1}=0$, $p_{\ell+1}$ can be taken in $\mathbb{F}_q$ regardless of the condition $\delta_{\ell}\neq 0$. 
	Then we have $\sum_{p_{\ell+1}\in \mathbb{F}_q}\psi(\frac{p_{\ell+1}p_{2n-2-\ell}}{\delta_{\ell}})=0$ and that the sum \eqref{Kl sum SO2n} equals to zero in this case.

	(c) Case $p_{n}=p_{n+1}=\cdots = p_{2n-3}=0$: it is easy to show that the sum \eqref{Kl sum SO2n} equals to $\frac{q-1}{q}$, which is the constant part of \eqref{Kl sum SO2n simple}. 
\end{proof}

\begin{secnumber} \label{Bessel SO2n1}
         Consider 
         \[
		G=\Sp_{2n}, \quad \cG=\SO_{2n+1}=\{ A\in \SL_{2n+1} \mid AJA^T=J\},
         \]
	where $J$ is the anti-diagonal matrix with $J_{ij}=(-1)^i\delta_{i,2n+2-j}$.
	There exists a canonical isomorphism
	\begin{displaymath}
		\Ga^{n+1}\simeq I(1)/I(2),\quad (a_1,\cdots,a_{n+1})\mapsto \sum_{i=1}^{n-1} (E_{i,i+1}+E_{2n-i,2n-i+1}) + E_{n-1,n}+ x^{-1}E_{2n,1}. 
	\end{displaymath}
	Then we take $\phi:\Ga^{n+1}\to \Ga$ to be the addition map. 
	Under the isomorphism \eqref{classical duality} and \eqref{invariant poly match}, $\lambda\phi$ corresponds to $\check{\xi}=N+ \lambda^{2n} E x$ \eqref{normalization} with $N$ as in \eqref{Bessel connection}, which belongs to $\check{\gg}$, and 
       	\begin{equation} \label{SO2n+1 connection}
	 E  = \left( \begin{array}{ccccc}
		0 & 0 & \dots & \dots & 0 \\
		\vdots & \vdots & & & \vdots \\
		0 & 0 & \dots &  &  \\
		2 & 0 & \dots & \dots & 0 \\
		0 & 2 & 0 & \dots & 0  \end{array} \right) \quad \in \check{\gg}. 
	\end{equation}
	Then we can write the Bessel connection as
	\begin{equation}
		\Kl_{\SO_{2n+1},\Std}^{\dR}(\lambda\phi)\simeq \Be_{\SO_{2n+1},\Std}(\check{\xi})= d+ (N+ \lambda^{2n} E x)\frac{dx}{x}.
		\label{Bessel SO2n+1}
	\end{equation}
        After taking a gauge transformation by the matrix
	\begin{displaymath}
		\left( \begin{array}{ccccc}
		1 & 0 & \dots & \dots & 0 \\
		0 & 1 & \dots & \dots & 0 \\
		\dots & \dots & \dots & \dots & \dots \\
		0 & \dots & \dots & \dots & \dots \\
		2\lambda^{2n}x & 0 & \dots & 0 & 1  \end{array} \right).
	\end{displaymath}
	we obtain the scalar differential equation associated to $\Be_{\SO_{2n+1},\Std}(\check{\xi})$: 
	\begin{equation}
		(x\frac{d}{dx})^{2n+1}- \lambda^{2n}x(4x\frac{d}{dx}+2)=0. 
		\label{SO2n+1 equation}
	\end{equation}

	When $n\ge 2$, we can rewrite $\check{\xi}$ as 
	\begin{equation} \label{xi SO2n+1 normal}
		\check{\xi}= 
		\left( \begin{array}{ccccccc}
		0 & 1 & 0 & \dots & & 0 & 0  \\
		0 & \ddots & \ddots & & & & 0 \\
		\vdots &  & 0 & \sqrt{2} & 0 & & \vdots \\
		\vdots &  & 0 & 0 & \sqrt{2} & & \vdots \\
		\vdots&& 0 & 0 & 0 & \ddots & 0 \\
		0 & \dots &  &  & & & 1 \\ 
		0& 0& \dots && && 0	
	\end{array} \right)
		+
		\lambda^{2n}
		\left( \begin{array}{ccccccc}
		0 & 0 & \dots & \dots & & & 0 \\
		\vdots & \vdots & && & & \vdots \\
		\vdots&&&&&& \vdots \\
		\vdots&&&&&& \vdots \\
		0 &  \dots & & & & & 0 \\
		1 & 0 & \dots & \dots & & & 0 \\
		0 & 1 & 0 & \dots & & & 0  \end{array} \right)x, 
	\end{equation}
	where $\sqrt{2}$ is a square root of $2$ in $\overline{K}$ and appears in positions $(n,n+1)$ and $(n+1,n+2)$. 
	Via the natural inclusion $\so_{2n+1}\to \so_{2n+2}$ the above element $\check{\xi}\in (\so_{2n+1})_{\aff}(1)$ corresponds to $\check{\xi}\in (\so_{2n+2})_{\aff}(1)$ defined in \eqref{Bessel connection SO2n matrix}. 
	The standard $(2n+2)$-dimensional representation of $\so_{2n+2}$ decomposes as a direct sum of the trivial representation and the standard $(2n+1)$-dimensional representation of $\so_{2n+1}$ as representations of $\so_{2n+1}$. 
	Then we obtain decompositions of Bessel connections and Bessel $F$-isocrystals by proposition \ref{functoriality} 
\begin{equation} \label{decomposition SO2n+2 SO2n+1}
		\Be_{\SO_{2n+2},\Std}(\check{\xi})\simeq \Be_{\SO_{2n+1},\Std}(\check{\xi})\oplus (\mathscr{O}_{\mathbb{G}_{m,K}},d), \quad	
	\Be_{\SO_{2n+2},\Std}^{\dagger}(\check{\xi})\simeq \Be_{\SO_{2n+1},\Std}^{\dagger}(\check{\xi}) \oplus (\mathcal{O}_{\Gm},d).
	\end{equation}
	In the remaining ot this subsection, we omit $\check{\xi}$ from the notation. 
\end{secnumber}

\begin{rem}
	The fact that matrix $E$ in \eqref{SO2n+1 connection} takes value $2$ in its non-zero entries is delicate. On the one hand, it comes from the calculation of invariant polynomials. On the other hand,
	it ensures the existence of a Frobenius structure on the differential equation \eqref{SO2n+1 equation} with parameter $\lambda=-\pi$. 
	For instance, for every prime number $p$, the convergence domain of the unique solution of \eqref{SO2n+1 equation} ($\lambda=-\pi$) at $0$ : 
	\begin{displaymath}
		F(x)=\sum_{r\ge 0} \frac{(2r-1)!!}{(r!)^{2n+1}} (2\pi^{2n})^r x^{r},
	\end{displaymath}
	is the open unit disc of radius $1$. 
	In particular, $F(x)$ belongs to $K\{x\}$ \eqref{functions on open disc} and it justifies \eqref{solution space}.  
\end{rem}

\begin{prop} \label{Be SO2n+1 Std}
	\textnormal{(i)} When $p>2$, there exists an isomorphism of overconvergent $F$-isocrystals \eqref{Frob trace Hyp}
	\begin{equation} \label{SO2n+1 Hyp}
		\Be_{\SO_{2n+1},\Std}^{\dagger}\simeq [x\mapsto 4x]^{+}\THyp_{\psi}(2n+1,\rho),
	\end{equation}
	where $\rho$ denotes the quadratic character of $k^{\times}$.

	\textnormal{(ii)} When $p=2$, there exists an isomorphism of overconvergent $F$-isocrystals 
	\begin{equation} \label{SO2n+1 SL}
		\Be_{\SO_{2n+1},\Std}^{\dagger}\simeq \Be_{2n+1}^{\dagger}.
	\end{equation}
\end{prop}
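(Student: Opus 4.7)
The plan for part (i) is to match the scalar differential equations underlying the two overconvergent $F$-isocrystals. Setting $\lambda = -\pi$ in \eqref{SO2n+1 equation} and writing $\delta = x\frac{d}{dx}$, the Bessel equation reads
\[
\bigl(\delta^{2n+1} - 4\pi^{2n} x(\delta + \tfrac{1}{2})\bigr) f = 0.
\]
On the hypergeometric side, I would take $n_{\mathrm{hyp}} = 2n+1$, $m = 1$, and $\beta_1 = -\tfrac{1}{2}$ in \eqref{p-adic hyp}, so that the associated character $\rho_1$ is the quadratic character $\rho$ (per the recipe of theorem \ref{theorem Miyatani}(iii)). For $p$ odd, the sign $(-1)^{(2n+1)+p}$ equals $1$, so the hypergeometric operator becomes $\delta^{2n+1} - \pi^{2n} x(\delta + \tfrac{1}{2})$. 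Since $\delta$ is invariant under rescaling $x$, pullback by $[x\mapsto 4x]$ multiplies the coefficient of $x$ by $4$, reproducing the Bessel operator exactly. Combined with theorem \ref{main thm Frob FG}, this identifies the two underlying overconvergent isocrystals.

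To upgrade to an isomorphism of $F$-isocrystals, I would observe that both sides are pure of weight zero (theorems \ref{thm eigensheaves}(ii) and \ref{main thm Frob FG} on the Bessel side; theorem \ref{theorem Miyatani}(i) with the normalization of \ref{Frob trace Hyp} on the hypergeometric side) and share the same Frobenius eigenvalues at $0$, namely $\{q^{-n}, q^{-n+1}, \ldots, q^n\}$. On the Bessel side this follows from proposition \ref{Frobenius 0}: the cocharacter $2\rho$, sum of the positive coroots of $\cG = \SO_{2n+1}$, acts on $\Std$ with weights $\{-2n, -2n+2, \ldots, 2n\}$, so the eigenvalues of $2\rho(\sqrt q)$ are exactly the stated set. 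On the hypergeometric side this is built into the normalization \eqref{Hyp sum normalised}. By Miyatani's uniqueness (theorem \ref{theorem Miyatani}(ii)) and lemma \ref{Uniqueness Frobenius} applied to $\cG = \SO_{2n+1}$, whose center is trivial, both Frobenius structures are pinned down by their value at $0$; hence they match, giving the claimed isomorphism.

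For part (ii), the hypergeometric comparison is unavailable at $p = 2$: the parameter $\beta_1 = -\tfrac{1}{2}$ no longer lies in $\tfrac{1}{q-1}\mathbb{Z}$, and the rescaling $x \mapsto 4x$ degenerates. The required isomorphism $\Be_{\SO_{2n+1},\Std}^{\dagger}\simeq \Be_{2n+1}^{\dagger}$ will be proved in appendix \ref{appendix}. The strategy, outlined in \ref{functoriality proof}, has two steps. First, construct an isomorphism between the maximal $p$-adic slope (unit-root) quotients of $\Be_{2n+1}^{\dagger}$ and $\Be_{\SO_{2n+1},\Std}^{\dagger}$ by means of a refinement of Dwork's congruences \cite{Dw68} adapted to the $2$-adic situation. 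Second, promote this comparison of unit-root quotients to an isomorphism of the full overconvergent $F$-isocrystals by invoking Tsuzuki's theorem \cite{Tsu19}. The main obstacle, and the reason this step is deferred to the appendix, is the first one: the classical form of Dwork's congruences does not suffice when $p=2$, so a delicate refinement specific to characteristic two must be established by hand.
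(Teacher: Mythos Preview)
Your proposal is correct and follows essentially the same approach as the paper: for part (i) you match the scalar differential equations via the rescaling $x\mapsto 4x$ and then use Miyatani's uniqueness of the Frobenius together with the computation of the eigenvalues at $0$ to pin down the $F$-structure, and for part (ii) you correctly defer to the appendix with the Dwork-congruence/Tsuzuki strategy. The only minor difference is that you additionally invoke lemma \ref{Uniqueness Frobenius} (using $Z(\SO_{2n+1})=\{1\}$), whereas the paper relies solely on theorem \ref{theorem Miyatani}(ii); both routes work.
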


\begin{proof}
	(i)
	If we rescale $x$ by $x\mapsto \frac{1}{4}x$, the differential equation \eqref{SO2n+1 equation} turns to the hypergeometric differential equation $\Hyp_{\psi}(2n+1;\rho)$ associated to $\rho$ \eqref{hyp equation Frobenius}. 
	Frobenius structures on two sides of \eqref{SO2n+1 Hyp} are of weight zero and have Frobenius eigenvalues $\{q^{-n},\cdots,q^{-1},0,q,\cdots,q^n\}$ at $0$ (\ref{Frobenius 0}, \ref{Hyp sum normalised}). 
	Then these two Frobenius structures coincide by theorem \ref{theorem Miyatani}(ii) and the isomorphism \eqref{SO2n+1 Hyp} follows. 	
	
	(ii) We will prove the assertion in Appendix \ref{appendix}. 
\end{proof}		
 	\begin{secnumber}
	    It follows that there exists an isomorphism of overconvergent $F$-isocrystals \eqref{def convolution}
	\begin{equation} \label{SO2n+1 conv}
		 \Be^{\dagger}_{\SO_{2n+1},\Std}\simeq \Be^{\dagger}_{\SO_3,\Std}\star \Be^{\dagger}_{2n-2}. 
         \end{equation}
         by the convolution interpretation of hypergeometric overconvergent $F$-isocrystals (\cite{Miy} Main theorem (ii) and 3.3.3). 
	\end{secnumber}

\begin{coro} \label{functoriality p=2}
	Suppose $p=2$. The $\SL_{2n+1}$-valued overconvergent $F$-isocrystals $\Be_{\SL_{2n+1}}^{\dagger}$  is the push-out of $\Be_{\SO_{2n+1}}^{\dagger}$ along $\SO_{2n+1}\to \SL_{2n+1}$. 
\end{coro}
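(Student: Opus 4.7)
The plan is to derive the corollary by combining proposition \ref{Be SO2n+1 Std}(ii) with the monodromy computation of theorem \ref{thm monodromy}(ii), using the Tannakian formalism, without any direct construction of a push-out structure as in proposition \ref{functoriality}.

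First I would chain the isomorphism $\Be_{\SL_{2n+1},\Std}^{\dagger}\simeq \Be_{2n+1}^{\dagger}$ (from \eqref{Kl dR rig SLn} combined with theorem \ref{main thm Frob FG}) with the isomorphism $\Be_{2n+1}^{\dagger}\simeq \Be_{\SO_{2n+1},\Std}^{\dagger}$ from proposition \ref{Be SO2n+1 Std}(ii) (whose proof is deferred to appendix \ref{appendix}) to produce an isomorphism
\[
\iota : \Be_{\SL_{2n+1},\Std}^{\dagger} \xrightarrow{\sim} \Be_{\SO_{2n+1},\Std}^{\dagger}
\]
in $\Fr\Iso^{\dagger}(X_k/K)$.

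Next, by theorem \ref{thm monodromy}(ii)--(iii), the arithmetic monodromy group $G_{\mathrm{arith}}$ of $\Be_{\SL_{2n+1}}^{\dagger}$, viewed inside $\SL_{2n+1}$, coincides with that of $\Be_{\SO_{2n+1}}^{\dagger}$, and in particular is contained in $\SO_{2n+1}$. Once this containment is set up inside a single fixed $\SO_{2n+1}\subset\SL_{2n+1}$, the Tannakian formalism produces a canonical factorization of tensor functors
\[
\Be_{\SL_{2n+1}}^{\dagger} : \Rep(\SL_{2n+1}) \xrightarrow{\mathrm{Res}} \Rep(\SO_{2n+1}) \xrightarrow{E} \Fr\Iso^{\dagger}(X_k/K)
\]
for some $\SO_{2n+1}$-valued overconvergent $F$-isocrystal $E$, i.e.\ $\Be_{\SL_{2n+1}}^{\dagger}$ is the push-out of $E$ along $\SO_{2n+1}\hookrightarrow \SL_{2n+1}$. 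By construction and by $\iota$, one has $E(\Std) \simeq \Be_{\SO_{2n+1},\Std}^{\dagger}$. Since the standard representation is faithful on $\SO_{2n+1}$, the Tannakian formalism then yields $E \simeq \Be_{\SO_{2n+1}}^{\dagger}$, which is the desired push-out statement.

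The main technical point will be to justify that the isomorphism on $\Std$ upgrades to an isomorphism of $\SO_{2n+1}$-valued objects landing inside the \emph{same} $\SO_{2n+1}\subset\SL_{2n+1}$ as the one cut out by theorem \ref{thm monodromy}(ii). Picking a fiber functor $\omega$ and applying it to $\iota$ yields a $\GL(V)$-intertwiner between the two Weil-representations on $V=\omega(\Std)$; using geometric irreducibility of both sides (the geometric monodromy is $\SO_{2n+1}$ for $n\neq 3$ and $G_2\subset\SO_7$ for $n=3$, each acting absolutely irreducibly on $V$), Schur's lemma forces this intertwiner to preserve the orthogonal form on $V$ up to a scalar, so after rescaling it lies in $\SO_{2n+1}(\overline{K})$ and conjugates the Weil representation coming from $\Be_{\SO_{2n+1}}^{\dagger}$ to the one obtained from the factorization of $\Be_{\SL_{2n+1}}^{\dagger}$. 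This matching inside $\SO_{2n+1}$ delivers the required isomorphism $E\simeq \Be_{\SO_{2n+1}}^{\dagger}$ of $\SO_{2n+1}$-valued $F$-isocrystals, and hence the corollary.
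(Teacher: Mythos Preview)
Your argument is correct, but it takes a substantially longer route than the paper's one-line proof, and it flirts with a circularity in the paper's logical structure.

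The paper's proof is simply ``It follows from \ref{Be SO2n+1 Std}(ii).'' The implicit reasoning is that an $\SL_N$-valued overconvergent $F$-isocrystal is nothing more than a rank $N$ overconvergent $F$-isocrystal together with a trivialization of its top exterior power: the tensor functor $\Rep(\SL_N)\to\Fr\Iso^{\dagger}(X_k/K)$ is completely determined by its value on $\Std$. The push-out of $\Be_{\SO_{2n+1}}^{\dagger}$ along $\SO_{2n+1}\hookrightarrow\SL_{2n+1}$ has $\Std$-value $\Be_{\SO_{2n+1},\Std}^{\dagger}$, while $\Be_{\SL_{2n+1}}^{\dagger}$ has $\Std$-value $\Be_{2n+1}^{\dagger}$; proposition \ref{Be SO2n+1 Std}(ii) says these coincide, and (after absorbing a scalar into the determinant trivialization, which is harmless over $\overline{K}$ since $2n+1$ is odd) this is already the desired isomorphism of $\SL_{2n+1}$-valued objects. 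No monodromy computation or Schur's lemma is needed.

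By contrast, you invoke theorem \ref{thm monodromy}(ii)--(iii) to first factor $\Be_{\SL_{2n+1}}^{\dagger}$ through an auxiliary $\SO_{2n+1}$-valued object $E$, and then run a Schur-type argument to match $E$ with $\Be_{\SO_{2n+1}}^{\dagger}$. This works, but note that in the paper's exposition theorem \ref{thm monodromy}(ii) is itself \emph{deduced from} the identification in proposition \ref{Be SO2n+1 Std}(ii) (see case (c) of the proof), and the introduction even presents it as a consequence of corollary \ref{functoriality p=2}. So your use of theorem \ref{thm monodromy}(ii) is logically safe only because its proof bypasses the corollary and appeals directly to the appendix; you should make that dependency explicit to avoid the appearance of circularity. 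The direct route above sidesteps this entirely.
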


It follows from \ref{Be SO2n+1 Std}(ii). 

\begin{coro} \label{identities exp sums}
	\textnormal{(i)} 
	The Frobenius trace function $\Kl_{\SO_{2n+1},\Std}$ of $\Be_{\SO_{2n+1},\Std}^{\dagger}$ is equal to 
	\begin{eqnarray}
		\Kl_{\SO_{2n+1},\Std}(a) &=& \sum_{x,y\in k^{\times},xy=a} \Kl_{\SO_3,\Std}(x)\Kl(2n-2;y) \label{Kl sum SO2n+1} \\
		&=& \left\{ 
			\begin{array}{ll}
			        \Kl(2n+1;a), & p=2, \\
				\widetilde{\rH}_{\psi}(2n+1;\rho)(4a), & p>2. 				
			\end{array}
			\right.
	\end{eqnarray}

	\textnormal{(ii)} 
	We have an identity of exponential sums \eqref{Kl sum SO2n simple}
	\begin{equation}
		\Kl_{\SO_{2n+2},\Std}(a)-1=\Kl_{\SO_{2n+1},\Std}(a).
		\label{identity exp sum SO2n}
	\end{equation}	
\end{coro}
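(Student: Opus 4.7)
Both parts are essentially consequences of identities between the underlying overconvergent $F$-isocrystals that have already been established, so the plan is to translate those isomorphisms into the level of Frobenius trace functions.

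For part (i), the plan is to start from the convolution decomposition \eqref{SO2n+1 conv}, namely
$\Be^{\dagger}_{\SO_{2n+1},\Std}\simeq \Be^{\dagger}_{\SO_3,\Std}\star \Be^{\dagger}_{2n-2}$,
and take Frobenius traces at a closed point $a\in k^{\times}$. Since the trace of a multiplicative convolution $\mathscr{M}\star \mathscr{N} = \mu_{!}(\mathscr{M}\boxtimes\mathscr{N})$ is computed by the Lefschetz trace formula as the multiplicative convolution of traces, we obtain the sum over $xy=a$ in \eqref{Kl sum SO2n+1}. For the explicit description of $\Kl_{\SO_{2n+1},\Std}(a)$ itself, I then split into cases. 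When $p=2$, I apply the isomorphism \eqref{SO2n+1 SL} $\Be_{\SO_{2n+1},\Std}^{\dagger}\simeq \Be_{2n+1}^{\dagger}$ from proposition \ref{Be SO2n+1 Std}(ii); on the right hand side the trace function is by definition and \eqref{Kl dR rig SLn} the classical Kloosterman sum $\Kl(2n+1;a)$ (via \ref{Dwork thm intro}). When $p>2$, I use the isomorphism \eqref{SO2n+1 Hyp} $\Be_{\SO_{2n+1},\Std}^{\dagger}\simeq [x\mapsto 4x]^{+}\THyp_{\psi}(2n+1,\rho)$ from proposition \ref{Be SO2n+1 Std}(i); the Frobenius trace of $\THyp_{\psi}(2n+1,\rho)$ is the normalized hypergeometric sum $\widetilde{\rH}_{\psi}(2n+1,\rho)$ of \eqref{Hyp sum normalised} by theorem \ref{theorem Miyatani}(iii), and pullback by $x\mapsto 4x$ evaluates it at $4a$.

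For part (ii), I will simply read off the identity from the direct sum decomposition \eqref{decomposition SO2n+2 SO2n+1}
\[
\Be_{\SO_{2n+2},\Std}^{\dagger}(\check{\xi})\simeq \Be_{\SO_{2n+1},\Std}^{\dagger}(\check{\xi})\oplus (\mathcal{O}_{\Gm},d).
\]
Taking Frobenius traces at $a\in k^{\times}$ and noting that the trivial rank one $F$-isocrystal $(\mathcal{O}_{\Gm},d)$ contributes the constant $1$, we get $\Kl_{\SO_{2n+2},\Std}(a) = \Kl_{\SO_{2n+1},\Std}(a) + 1$, which is \eqref{identity exp sum SO2n}.

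The only non-routine input is the convolution identity \eqref{SO2n+1 conv} and the compatibility of Frobenius traces with $\star$; the latter is standard but should be cited explicitly (it follows from the Lefschetz trace formula \ref{cohomology arith mods}(ii) applied to $\mu_{!}$, together with the Künneth formula and the fact that the external product of $F$-isocrystals corresponds to the product of their trace functions). Everything else is formal bookkeeping with the dictionary between overconvergent $F$-isocrystals and their trace functions via $\iota:\overline{K}\simeq\mathbb{C}$, and poses no obstacle.
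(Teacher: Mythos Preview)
Your proof is correct and follows essentially the same approach as the paper's own proof: part (i) via the convolution decomposition \eqref{SO2n+1 conv} together with proposition \ref{Be SO2n+1 Std}(i)--(ii), and part (ii) via the direct sum decomposition \eqref{decomposition SO2n+2 SO2n+1}. The only minor difference is that for (ii) the paper also cites proposition \ref{cal Kl sum SO2n}, since the statement points back to the explicit formula \eqref{Kl sum SO2n simple} for $\Kl_{\SO_{2n+2},\Std}(a)$; your argument already proves the displayed identity, and the extra citation just anchors the left-hand side to that concrete exponential sum.
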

\begin{proof}
	(i) The first equality follows from \eqref{SO2n+1 conv}. The second one follows from \ref{Be SO2n+1 Std}(i-ii). 

	(ii) It follows from proposition \ref{cal Kl sum SO2n} and \eqref{decomposition SO2n+2 SO2n+1}. 
\end{proof}

In particular, by \eqref{SL2 vs SO3}  and corollary \ref{identities exp sums}(i), we obtain \eqref{SO3 hyp intro}.
Using the triviality functoriality \ref{triv functoriality} and the exceptional isomorphism for groups of low ranks \eqref{SL2 vs SO3}-\eqref{SO6 vs SL4}, one can similarly obtain other identities between exponential sums, whose sheaf-theoretic incarnations were obtained by Katz \cite{Katz09}.

\subsection{Frobenius slopes of Bessel $F$-isocrystals}
\label{NT polygons}
\begin{secnumber} \label{def NT polygons}
	We first recall the definition of the \textit{Newton polygon} of a conjugacy class in $\cG(\overline{K})$. 
	Let $\cwX(\cT)^{+}$ be the set of dominant coweights of $\cG$ and $\cwX(\cT)_{\mathbb{R}}^+$ the positive Weyl chamber, equipped with the following partial order $\le$: $\mu\le \lambda$ if $\lambda-\mu$ can be written as a linear combination of positive coroots of $\cG$ with coefficients in $\mathbb{R}_{+}$.	
	We identify $(\cwX(\cT)\otimes_{\mathbb{Z}}\mathbb{R})/W$ and $\cwX(\cT)_{\mathbb{R}}^{+}$. Recall that $\rho$ denotes the half sum of positive roots of $G$
	\begin{displaymath}
		\rho=\frac{1}{2} \sum_{\alpha\in \Phi^{+}}\alpha \in \wX(T)=\cwX(\cT).
	\end{displaymath}

	Let $v:\overline{K}\to \mathbb{Q}\cup\{\infty\}$ be the $p$-adic order, normalised by $v(q)=1$. It induces a homomorphism of groups $v:\cT(\overline{K})\to \cwX(\cT)\otimes_{\mathbb{Z}}\mathbb{R}$. 
	By identifying $\cT(\overline{K})/W$ and the set of semisimple conjugacy classes $\Conj^{\SSS}(\cG(\overline{K}))$ in $\cG(\overline{K})$, we deduce a homomorphism:
	\begin{equation}
		\NP: \Conj^{\SSS}(\cG(\overline{K}))=\cT(\overline{K})/W \to (\cwX(\cT)\otimes_{\mathbb{Z}}\mathbb{R})/W=\cwX(\cT)_{\mathbb{R}}^{+}.
		\label{Newton polygon}
	\end{equation}

	In the case where $\cG=\GL_n$, $\NP$ is equivalent to the classical $p$-adic Newton polygon. Indeed, we have 
	\begin{displaymath}
		\cwX(\cT)_{\mathbb{R}}^{+}=\{(\lambda_1,\cdots,\lambda_n)\in \mathbb{R}^{n}, \lambda_1\le \cdots\le \lambda_n\},
	\end{displaymath}
	and we can associate to $(\lambda_1,\cdots,\lambda_n)$ a convex polygon with vertices $(i,\lambda_1+\cdots+\lambda_i)$ for $i\in \{1,\cdots,n\}$.  
	For $\lambda=(\lambda_1,\cdots,\lambda_n),\mu=(\mu_1,\cdots,\mu_n)$ in $\cwX(\cT)_{\mathbb{R}}^{+}$, $\mu\le \lambda$ if and only if the polygon associated to $\mu$ lies above that of $\lambda$ with the same endpoint. 
\end{secnumber}

\begin{theorem} \label{Frobenius slope}
	Let $x\in |\A1_{k}|$ be a closed point and $\varphi_{x}\in \cG(\overline{K})$ the Frobenius automorphism of $(\Be^{\dagger}_{\cG},\varphi)$ at $x$ \eqref{Frobenius Adagger}. 
	Let $v$ be the $p$-adic order normalised by $v(q^{\deg(x)})=1$ and $\NP$ defined as above.
	
	\textnormal{(i)} Except for finitely many closed points of $|\mathbb{A}^1_k|$, we have $\NP(\varphi_{x})=\rho$. 
	
	\textnormal{(ii)} Suppose that $\cG$ is of type $A_n,B_n,C_n,D_n$ or $G_2$, then we have $\NP(\varphi_x)=\rho$ for every $x\in|\A1_k|$. 
\end{theorem}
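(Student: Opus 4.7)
I would prove part (ii) by reducing, via functoriality, to the Newton polygon computations for $\Be_n^\dagger$ and $\Be_{\SO_{2n+1},\Std}^\dagger$, and prove part (i) by combining V. Lafforgue's bound with the computation of $\varphi_0$ and Grothendieck's semi-continuity theorem for Newton polygons of $F$-isocrystals.

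For part (i), V. Lafforgue's bound \cite{VL11} gives $\NP(\varphi_x)\leq \rho$ in the partial order of \ref{def NT polygons} for every $x\in |\A1_k|$, while proposition \ref{Frobenius 0} computes $\NP(\varphi_0)=\rho$ at the log-point $x=0$. By Grothendieck's semi-continuity theorem, the locus $\{x\in |\A1_k|:\NP(\varphi_x)\geq \rho\}$ is Zariski open; combined with V. Lafforgue's bound this equals $\{x:\NP(\varphi_x)=\rho\}$. Since this locus contains $0$, it is non-empty; as $\A1_k$ is irreducible of dimension one, it is therefore cofinite, which gives part (i).

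For part (ii), the standard representations of $\SL_{n+1}$, $\Sp_{2n}$, $\SO_{2n+1}$, $\SO_{2n}$ and the $7$-dimensional representation of $G_2$ are all faithful, so by the Tannakian formalism the Newton polygon of $\varphi_x\in \cG(\overline K)$ is determined by its restriction to such a faithful representation. Using theorem \ref{intro functoriality} and the table \eqref{table monodromy}, I would argue case by case. In type $A_n$, \eqref{Kl dR rig SLn} gives $\Be_{\SL_{n+1},\Std}^\dagger\simeq \Be_{n+1}^\dagger$, which is everywhere ordinary by Dwork \cite{Dw74} and Sperber \cite{Sp80}. In type $C_n$, push-out along $\Sp_{2n}\hookrightarrow \SL_{2n}$ yields $\Be_{\Sp_{2n},\Std}^\dagger\simeq \Be_{2n}^\dagger$, again handled by Dwork--Sperber. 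In type $B_n$, proposition \ref{Be SO2n+1 Std} identifies $\Be_{\SO_{2n+1},\Std}^\dagger$ with a hypergeometric $F$-isocrystal when $p>2$ and with $\Be_{2n+1}^\dagger$ when $p=2$; the everywhere-ordinariness then follows from Wan \cite{Wan93} in the former case and from Dwork--Sperber in the latter. In type $D_n$, the splitting \eqref{decomposition SO2n+2 SO2n+1} reduces the standard component to type $B_{n-1}$. For $G_2$, since $G_2\subset \SO_7$ appears in the same row of \eqref{table monodromy}, theorem \ref{intro functoriality}(i) identifies $\Be_{G_2,V_7}^\dagger$ with $\Be_{\SO_7,\Std}^\dagger$, where $V_7$ is the $7$-dimensional faithful representation of $G_2$, reducing to the $B_3$ case.

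A technical compatibility to verify is that $\rho$-ordinariness transfers correctly under each functorial push-out $\cG'\hookrightarrow \cG$ in \eqref{table monodromy}: these embeddings arise from matching principal $\SL_2$-triples, so that $\rho_{\cG'}$ and $\rho_{\cG}$ coincide as cocharacters of a common maximal torus, and $\rho_{\cG'}$-ordinariness implies $\rho_{\cG}$-ordinariness. The main obstacle I anticipate is in part (ii) for type $B_n$: precisely identifying $\Be_{\SO_{2n+1},\Std}^\dagger$ with the specific hypergeometric $F$-isocrystal for which Wan's everywhere-ordinariness applies, and handling the $p=2$ case via proposition \ref{Be SO2n+1 Std}(ii), whose proof is deferred to the appendix.
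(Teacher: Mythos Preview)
Your proposal is correct and follows essentially the same approach as the paper: V.~Lafforgue's bound plus the computation of $\varphi_0$ plus Grothendieck--Katz for part~(i), and functoriality reducing to $\Be_n^\dagger$ and $\Be_{\SO_{2n+1},\Std}^\dagger$ for part~(ii). Two small remarks: you should cite the $p$-adic functoriality (proposition~\ref{functoriality}) rather than theorem~\ref{intro functoriality}, which is its $\ell$-adic consequence; and the step you flag as the main obstacle---everywhere-ordinariness of the hypergeometric $F$-isocrystal $\THyp_\psi(2n+1,\rho)$ for $p>2$---is exactly the content of lemma~\ref{Frob slope Hyp}, which the paper proves by expressing the relevant hypergeometric sum as a difference of two toric exponential sums for Laurent polynomials $f_2$ and $f_1$ and then applying Wan's ordinariness criterion to each.
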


\begin{proof} (i) 
	In (\cite{VL11} 2.1), V. Lafforgue shows that the Newton polygon \eqref{Newton polygon} of the Hecke eigenvalue of a cuspidal function is $\le \rho$.  
	In particular, we deduce that $\NP(\varphi_x)\le \rho$ for all $x\in |\mathbb{G}_{m,k}|$. 
	By \ref{Frobenius 0}, we have $\NP(\varphi_0)=\NP(\rho(q))=\rho$. That is the Newton polygon achieves the upper bound $\rho$ at $0$.
	We take a finite set of tensor generators $\{V_1,\cdots,V_n\}$ of $\Rep(\cG)$. 
	Then the assertion follows by applying Grothendieck-Katz' theorem (cf. \cite{Cr86} 1.6) to log convergent $F$-isocrystals $\Be_{\cG,V_i}^{\dagger}$. 

	(ii) (a) The case where $\cG$ is of type $A_n,C_n$. By functoriality \eqref{functoriality}, we reduce to study the Frobenius slope of Bessel $F$-isocrystal $\Be_{n}^{\dagger}$ of rank $n$ \eqref{Dwork thm intro}. 
	After the work of Dwork, Sperber and Wan \cite{Dw74,Sp80,Wan93}, the Frobenius slope set of $\Be_n^{\dagger}$ (normalised to be weight $0$) at each closed point $x\in |\mathbb{A}_k^1|$ is equal to $\{-\frac{n-1}{2},-\frac{n-3}{2},\cdots,\frac{n-1}{2}\}$. 
	Then the assertion follows. 
	
	(b) The case where $\cG$ is of type $B_n,D_n,G_2$. 
	By functoriality \eqref{functoriality}, we reduce to show that the Frobenius slope set of $\Be_{\SO_{2n+1},\Std}^{\dagger}$ at each closed point is equal to $\{-n,-n+1,\cdots,n\}$. 
	If $p=2$, it follows from \ref{Be SO2n+1 Std}(ii) and the case (a). 
	If $p>2$, in view of \ref{Frob trace Hyp} and \ref{Be SO2n+1 Std}(i), it follows from the following lemma. 
\end{proof}

\begin{lemma} \label{Frob slope Hyp}	
	The Frobenius slope set of $\DHyp_{\psi}(2n+1;\rho)$ \eqref{theorem Miyatani} at each closed point is equal to 
	\begin{displaymath}
		\bigg\{\frac{1}{2},\frac{3}{2},\cdots,2n+\frac{1}{2}\bigg\}.
	\end{displaymath}
\end{lemma}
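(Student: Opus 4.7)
The plan is to combine an explicit computation of the Frobenius eigenvalues at $0$ with the multiplicative convolution structure of $\DHyp_\psi(2n+1,\rho)$ and the known Newton polygon of the Kloosterman $F$-isocrystal $\Be_{2n+1}^{\dagger}$.

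First, I would compute the Frobenius slopes at $x=0$. The hypergeometric $F$-isocrystal $\DHyp_\psi(2n+1,\rho)$ is regular singular at $0$ with maximal unipotent monodromy, as is visible from reducing the differential operator \eqref{p-adic hyp} modulo $x$. By the discussion in \ref{Frob trace Hyp} applied to the present $p$-adic setting, the Frobenius at $0$ is then semisimple with eigenvalues $\{\alpha,q\alpha,\ldots,q^{2n}\alpha\}$, where $\alpha=-G(\psi^{-1},\rho^{-1})$. Since $\rho$ is the quadratic character and $p$ is odd, Stickelberger's theorem gives $v(\alpha)=1/2$, so the Newton slopes at $0$ are exactly $\{1/2,\,3/2,\,\ldots,\,(4n+1)/2\}$, matching the statement of the lemma at the point $0$.

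To extend this to every closed point $x\in|\mathbb{G}_{m,k}|$, I would invoke Miyatani's convolution formula (\cite{Miy} 3.3.3) expressing $\DHyp_\psi(2n+1,\rho)$ as a multiplicative convolution of the unnormalized Kloosterman $F$-isocrystal $\Be_{2n+1}^{\dagger}$ (pure of weight $2n$) with a rank-one Kummer-Dwork twist associated to $\rho$. At the level of Frobenius traces, this reflects the elementary identity
\[
    H_\psi(2n+1,\rho)(a) \;=\; \sum_{y\in k^\times}\rho^{-1}(y)\,\psi(-y)\,K(2n+1,ay),
\]
obtained by summing the defining sum of $H_\psi(2n+1,\rho)$ first over the auxiliary variable $y$, where $K(2n+1,\cdot)$ denotes the unnormalized $(2n+1)$-variable Kloosterman sum. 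By Dwork-Sperber-Wan \cite{Dw74,Sp80,Wan93}, the Newton slopes of $\Be_{2n+1}^{\dagger}$ are $\{0,1,\ldots,2n\}$ at every closed point of $\mathbb{G}_{m,k}$; the rank-one Kummer-Dwork factor contributes a uniform shift of $v(G(\psi^{-1},\rho^{-1}))=1/2$ to each slope, yielding the claimed set $\{1/2,3/2,\ldots,(4n+1)/2\}$ at every closed point.

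The main obstacle is to justify the ``uniform slope shift'' under multiplicative convolution rigorously at every (and not only generic) closed point, since in general the Newton polygon is only upper semicontinuous and can jump on specialization. One route is to realize the Frobenius of $\DHyp_\psi(2n+1,\rho)$ at any closed $x$ as an explicit product, in a suitable Dwork-cohomology basis, of the Frobenius matrix of $\Be_{2n+1}^{\dagger}$ and a scalar Gauss-sum factor of valuation $1/2$, using the display above and the block-triangular structure it induces on the corresponding Dwork complex. Alternatively, having pinned down the Newton polygon at $x=0$, one can appeal to Wan's toric-exponential-sum methods \cite{Wan93} to verify that the generic Newton polygon already coincides with the Hodge polygon $\{1/2,3/2,\ldots,(4n+1)/2\}$; combined with semicontinuity and the computation at $0$, this forces equality at every closed point of $\mathbb{G}_{m,k}$.
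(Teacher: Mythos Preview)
Your approach via convolution and slope-shifting has a genuine gap, and the fallback semicontinuity argument in route (b) points in the wrong direction.

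The central problem is the claim that multiplicative convolution with the rank-one Kummer--Dwork factor shifts all Frobenius slopes uniformly by $1/2$. Multiplicative convolution $\star$ is a pushforward $\mu_!$, not a tensor product: the stalk of $\mathscr{F}\star\mathscr{G}$ at $a$ is (a shift of) $\rH^*_{\rc}(\Gm,\mathscr{F}\otimes t_a^*\mathscr{G})$, and there is no general mechanism by which the Newton polygon of this cohomology is obtained from that of $\mathscr{F}$ by adding a constant. Your trace identity is correct, but it expresses the hypergeometric trace at $a$ as a sum over \emph{all} $y$ of Kloosterman traces at $ay$; this is not an eigenvalue-by-eigenvalue correspondence and does not give a slope shift. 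Route (a), invoking an unspecified Dwork-cohomology block structure, does not resolve this.

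Route (b) fails because Grothendieck--Katz semicontinuity says the Newton polygon at a closed point lies \emph{above} the generic one. So even if you establish that the generic Newton polygon equals the Hodge polygon, you only get a lower bound at each closed point, not equality. The computation at the log-boundary point $0$ gives no additional constraint on interior points of $\Gm$.

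The paper also uses Wan, but directly and pointwise rather than generically. For each closed point $a$, it writes the hypergeometric trace as the difference $S_m(f_2)-S_m(f_1)$ of two toric exponential sums, where
\[
f_d(x_1,\ldots,x_{2n+1})=x_1+\cdots+x_{2n}-x_{2n+1}^d+\frac{a\,x_{2n+1}^d}{x_1\cdots x_{2n}}.
\]
Wan's facial-ordinariness criterion (\cite{Wan04} Thm.~3.1, Cor.~2.6), checked face by face using that $d\mid p-1$, shows that $f_1$ and $f_2$ are both ordinary for every nondegenerate $a$; hence $\NP(\rL(f_d,T))=\HP(f_d)$ with slope multiset $\{0,1/d,\ldots,2n+(d-1)/d\}$. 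Subtracting the slopes of $\rL(f_1,T)$ from those of $\rL(f_2,T)$ leaves exactly $\{1/2,3/2,\ldots,2n+1/2\}$ for the hypergeometric $L$-function at $a$. This yields the result at every closed point simultaneously, with no appeal to semicontinuity.
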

\begin{proof}	
	We deduce this fact from Wan's results on Frobenius slope of certain toric exponential sums \cite{Wan93,Wan04}. 

	For $a\in \mathbb{F}_q^{\times}$ and a divisor $d$ of $p-1$, consider the following Laurent polynomial in $\mathbb{F}_q[x_1^{\pm},\cdots,x_{2n+1}^{\pm}]$
	\begin{displaymath}
		f_d(x_1,\cdots,x_{2n+1})=x_1+\cdots+x_{2n}-x_{2n+1}^d+\frac{a x_{2n+1}^{d}}{x_1x_2\cdots x_{2n}}.
	\end{displaymath}
	For $m\ge 1$, we denote by $S_{m}(f_d)$ the exponential sum associated a Laurent polynomial:
	\begin{displaymath}
		S_{m}(f_d)=\sum_{x_i\in \mathbb{F}_{q^{m}}^{\times}}\psi\biggl(\Tr_{\mathbb{F}_{q^{m}}/\mathbb{F}_p}f_d(x_1,\cdots,x_{2n+1})\biggr).
	\end{displaymath}
	Then we have an identity
	\begin{equation} \label{identity Hyp toric}
		S_m(f_2)= S_m(f_1) +
		\sum_{\begin{subarray}{c} x_1\cdots x_{2n+1}=ay\\ x_i\in \mathbb{F}_{q^m}^{\times}\end{subarray}}
			\psi\biggl(\Tr_{\mathbb{F}_{q^m}/\mathbb{F}_p}(x_1+\cdots+x_{2n+1}-y)\biggr) \cdot \rho^{-1}\bigl( \Nm_{\mathbb{F}_{q^m/\mathbb{F}_q}}(y)\bigr), 
	\end{equation}
	where the last term is the Frobenius trace of $\DHyp(2n+1;\rho)$. 

	The L-function associated to these exponential sums is a rational function:
	\begin{displaymath}
		\rL(f_d,T)=\exp\biggl(\sum_{m\ge 1}S_{m}(f_d)\frac{T^m}{m}\biggr),\quad
	\end{displaymath}
	
	We denote by $\Delta(f_d)$ the convex closure in $\mathbb{R}^{2n+1}$ generated by the origin and lattices defined by exponents appeared in $f_d$:
	\begin{displaymath}
		\{(0,\cdots,0), (1,\cdots,0),\cdots, (0,\cdots,1,0), (0,\cdots,0,d), (-1,\cdots,-1,d)\}.
	\end{displaymath}
	The polyhedron $\Delta(f_d)$ is $(2n+1)$-dimensional and has volume $\frac{d}{2n!}$. 
	The Laurent polynomials $f_d$ is non-degenerate (cf. \cite{Wan04} Def. 1.1). After Adolphson-Sperber \cite{AS89}, the L-function $\rL(f_d,T)$ is a polynomial of degree $d(2n+1)$. 

	We denote by $\NP(f_d)$ the (Frobenius) Newton polygon associated to L-functions $\rL(f_d,T)$ (cf. \cite{Wan04} 1.1) and by $\HP(f_d)$ the Hodge polygon defined in term of the polyhedron $\Delta(f_d)$ (cf. \cite{Wan04} 1.2). 
	The (multi-)set of slopes of $\HP(f_d)$ is 
	\begin{equation} \label{slope HP}
		\bigg\{0,\frac{1}{d},\frac{2}{d},\cdots, 2n+\frac{d-1}{d}\bigg\}.
	\end{equation}

	The Newton polygon lies above the Hodge polygon \cite{AS89}. 
	A Laurent polynomial is called \textit{ordinary} if these two polygons coincide. 
	Let $\delta$ be a co-dimension $1$ face of $\Delta$ which does not contain the origin and $f^{\delta}_d$ the restriction of $f_d$ to $\delta$ which is also non-degenerate. 
	The Laurent polynomial $f^{\delta}_{d}$ is diagonal in the sense of (\cite{Wan04} \S~2). 
	If $V_1,\cdots,V_{2n+1}$ denote the vertex of $\delta$ written as column vectors, the set $S(\delta)$ of solutions of
	\begin{displaymath}
		(V_1,\cdots,V_{2n+1}) 
		\begin{pmatrix}
			r_1 \\ \vdots \\ r_{2n+1}
		\end{pmatrix}
		\equiv 0 ~(\textnormal{mod } 1),\quad r_i \textnormal{ rational, } 0\le r_i<1,
	\end{displaymath}
	forms an abelian group of order $d$ (cf. \cite{Wan04} 2.1). 
	Since $d$ is a divisor of $p-1$, we deduce that for each $\delta$, $f_d^{\delta}$ is ordinary by (\cite{Wan04} Cor. 2.6). 
	By Wan's criterion for the ordinariness \cite{Wan93} (cf. \cite{Wan04} Thm. 3.1), $f_d$ is ordinary. 
	
	In view of \eqref{identity Hyp toric} and the slope sets of $\HP(f_1), \HP(f_2)$ \eqref{slope HP}, the assertion follows. 
\end{proof}

\appendix

\renewcommand{\thesubsection}{\Alph{subsection}}

\counterwithout{theorem}{subsection}
\counterwithin{theorem}{section}
\counterwithout{equation}{theorem}
\counterwithin{equation}{theorem}

\section{A $2$-adic proof of Carlitz's identity and its generalization} \label{appendix}

As mentioned in introduction, Carlitz \cite{Car69} proved the following identity between Kloosterman sums:
\begin{displaymath}
	\Kl(3;a)=\Kl(2;a)^2-1,\qquad \forall~ a\in \mathbb{F}_{2^s}^{\times}.
\end{displaymath}
In this appendix, we reprove and generalize this identity by establishing an isomorphism between two Bessel $F$-isocrystals $\Be_{2n+1}^{\dagger}$ and $\Be_{\SO_{2n+1},\Std}^{\dagger}$. The following is a restatement of proposition \ref{Be SO2n+1 Std}(ii).

\begin{prop} \label{Carlitz diff}
	There exists an isomorphism between following two overcovergent $F$-isocrystals on $\mathbb{G}_{m,\mathbb{F}_2}$ \eqref{SO2n+1 equation}: 
	\begin{equation}
		\Be_{2n+1}^{\dagger}:(x\frac{d}{dx})^{2n+1}+2^{2n+1}x=0,\quad \Be_{\SO_{2n+1},\Std}^{\dagger}: (x\frac{d}{dx})^{2n+1} - 2^{2n+1} x (2x\frac{d}{dx}+1)=0.
		\label{Be3 BeSO3}
	\end{equation}
\end{prop}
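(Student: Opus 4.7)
Following the sketch in \ref{functoriality proof}, the plan is to use Dwork's congruences (suitably refined for $p=2$) to identify the rank-one maximal-slope quotients of the two $F$-isocrystals, and then to invoke Tsuzuki's theorem \cite{Tsu19} to upgrade this to an isomorphism of the full $F$-isocrystals.

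Both equations in \eqref{Be3 BeSO3} are regular singular at $0$ with a unique holomorphic solution, which one computes explicitly as
\[
f(x)=\sum_{r\ge 0}\frac{(-2^{2n+1})^r}{(r!)^{2n+1}}\,x^r,\qquad g(x)=\sum_{r\ge 0}\frac{2^{(2n+1)r}(2r-1)!!}{(r!)^{2n+1}}\,x^r.
\]
A direct computation shows that $v_2(a_r)=v_2(b_r)=(2n+1)s_2(r)$, where $s_2(r)$ denotes the sum of binary digits of $r$, and moreover $b_r/a_r=(-1)^r(2r-1)!!$ is a $2$-adic unit for every $r$. Combined with proposition \ref{Frobenius 0} (which gives $\NP(\varphi_0)=\rho$), V. Lafforgue's upper bound $\NP\le\rho$, and Grothendieck-Katz semi-continuity, this forces both $\Be_{2n+1}^\dagger$ and $\Be_{\SO_{2n+1},\Std}^\dagger$ to be generically ordinary, so each carries a canonical rank-one maximal-slope quotient.

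The core step is then to construct an isomorphism between these two rank-one quotients. Following Dwork \cite{Dw68}, one computes the Frobenius on a unit-root quotient as a ratio of the form $F(x)/F(x^p)^\sigma$ for an appropriate fundamental solution $F$, and proves overconvergence of this ratio via congruences on its Taylor coefficients. Transposing to the present setting (with $p=2$), I would form the analogous ratios $f(x)/f(x^2)$ and $g(x)/g(x^2)$ and show that they differ by an overconvergent unit $h\in (A^\dagger)^\times$ that intertwines the two Frobenius structures on the maximal-slope quotients. In view of the explicit formula $b_r/a_r=(-1)^r(2r-1)!!$, the required intertwining element is essentially a generating series for the $2$-adic double factorials; what needs to be verified is that it genuinely lies in $A^\dagger$, i.e.\ that its radius of convergence strictly exceeds $1$. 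This reduces to Kummer-type $2$-adic congruences for $(2r-1)!!$ --- a refinement of Dwork's original congruences taking into account the boundary behaviour of the $2$-adic unit disc.

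Having matched the rank-one maximal-slope quotients, I would invoke Tsuzuki's recent theorem \cite{Tsu19}, which (in the ordinary setting, and under the rigidity properties enjoyed by $\Be_{2n+1}^\dagger$ and $\Be_{\SO_{2n+1},\Std}^\dagger$) reconstructs an overconvergent $F$-isocrystal from its maximal-slope unit-root quotient. This delivers the isomorphism $\Be_{2n+1}^\dagger\simeq\Be_{\SO_{2n+1},\Std}^\dagger$. The main obstacle is the $2$-adic refinement of Dwork's congruences in the middle step: the classical arguments require $p$ to be odd, and at $p=2$ the relevant coefficients lie precisely on the boundary of overconvergence, so the combinatorial estimates must be sharpened to extract overconvergence of the intertwining element $h$.
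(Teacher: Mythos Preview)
Your overall strategy matches the paper's: compute the unique holomorphic solutions at $0$, use a $2$-adic refinement of Dwork's congruences to construct and compare the rank-one maximal-slope quotients, and then invoke Tsuzuki's theorem \cite{Tsu19} to lift this to an isomorphism of overconvergent $F$-isocrystals. Your identification of the solutions and of $b_r/a_r=(-1)^r(2r-1)!!$ is correct, and you have correctly located the technical crux in the failure of Dwork's original congruence (c) at $p=2$.

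There is, however, a misframing that would cause trouble if carried through. The intertwining datum does \emph{not} live in $A^\dagger$, and Tsuzuki's theorem does not ask for it to: what is needed is an isomorphism of \emph{convergent} $F$-isocrystals between the maximal-slope quotients. Dwork's method never produces a power series of radius $>1$; rather, one shows that the ratios of polynomial truncations $F_{s+1}(x)/F_s(x^2)$ stabilise modulo $2^{s}$ on the formal locus where $F_1$ is a unit, yielding a global section of $\mathscr{O}_{\widehat{\mathbb{A}}^1_R}$ (here the locus is all of $\mathbb{A}^1$ since $F_1\equiv G_1\equiv 1\bmod 2$). So your picture of the intertwiner as ``a generating series for the $2$-adic double factorials with radius $>1$'' is not the right one, and note that $F(x)/G(x)$ is in any case not $\sum(b_r/a_r)x^r$.

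Concretely, the paper does not compare the two Frobenius ratios and look for an $h\in (A^\dagger)^\times$ relating them. Instead it runs Dwork's machinery with the \emph{period-$2$} family $B^{(0)}=$ (coefficients of $F$), $B^{(1)}=$ (coefficients of $G$), $B^{(i+2)}=B^{(i)}$; the refined congruence (c$'$) (with sign $u(i,1,m)=(-1)^m$ in the relevant cases) then shows directly that $F(x)/G(x^2)$, and hence $F(x)/G(x)$, extends to a unit on $\widehat{\mathbb{A}}^1_R$. This unit is horizontal for the two rank-one connections and intertwines their Frobenii, giving $\mathscr{E}_F\simeq\mathscr{E}_G$ as convergent $F$-isocrystals on $\mathbb{A}^1_{\mathbb{F}_2}$. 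Tsuzuki's theorem (applied to the irreducible $\Be_{2n+1}^\dagger$ and $\Be_{\SO_{2n+1},\Std}^\dagger$) then finishes. The refinement you anticipated is exactly the replacement of Dwork's condition (c) by (c$'$), which weakens the congruence modulus from $2^{s+1}$ to $2^s$ but still suffices for the limit construction.
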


Our strategy is first to show that their maximal slope quotient convergent $F$-isocrystals are isomorphic. 
Then we conclude the proposition by a dual version of a minimal slope conjecture (proposed by Kedlaya \cite{Ked16} and recently proved by Tsuzuki \cite{Tsu19}) that we briefly recall in the following. 

\begin{secnumber}
	We keep the notation of section \ref{sec applications}. 
	Let $X$ be a smooth $k$-scheme. Let $\mathscr{M}^{\dagger}$ be an overconvergent $F$-isocrystal on $X/K$. We denote the associated convergent $F$-isocrystal on $X/K$ by $\mathscr{M}$.
	When the (Frobenius) Newton polygons of $\mathscr{M}$ are constant on $X$, $\mathscr{M}$ admits a \textit{slope filtration}, that is an increasing filtration 
	\begin{equation} \label{slope filtration}
	0=\mathscr{M}_{0} \subsetneq \mathscr{M}_{1} \subsetneq \cdots \subsetneq \mathscr{M}_{r-1} \subsetneq \mathscr{M}_{r}=\mathscr{M}
	\end{equation}
 of convergent $F$-isocrystals on $X/K$ such that 
\begin{itemize}
	\item $\mathscr{M}_{i}/\mathscr{M}_{i-1}$ is isoclinic of slope $s_i$ and 
	\item $s_1< s_2<\cdots < s_r$.
\end{itemize}

By Grothendieck's specialization theorem, for any convergent $F$-isocrystal $\mathscr{M}$ on $X/K$, there exists an open dense subscheme $U$ of $X$ such that the Newton polygons of $\mathscr{M}$ are constant. 

We remark that for a log convergent $F$-isocrystal with constant Newton polygons over a smooth $k$-scheme with normal crossing divisor, such a slope filtration (of sub log convergent $F$-isocrystals) also exists. 

In a recent preprint, Tsuzuki showed a dual version of Kedlaya's minimal slope conjecture (\cite{Ked16} 5.14):
\end{secnumber}

\begin{theorem}[\cite{Tsu19} theorem 1.3]
	Let $X$ be a smooth connected curve over $k$. Let $\mathscr{M}^{\dagger},\mathscr{N}^{\dagger}$ be two irreducible overconvergent $F$-isocrystals such that the corresponding convergent $F$-isocrystals $\mathscr{M},\mathscr{N}$ admit the slope filtrations $\{\mathscr{M}_i\}$, $\{\mathscr{N}_i\}$ respectively. 
	We renumber the slope filtration by 
	\begin{equation} \label{dual slope filtration}
\mathscr{M}=\mathscr{M}^{0} \supsetneq \mathscr{M}^{1} \supsetneq \mathscr{M}^{2} \supsetneq \cdots \supsetneq \mathscr{M}^{r-1} \supsetneq \mathscr{M}^{r}=0
	\end{equation}
	with slopes $s^0>s^1>\cdots >s^{r-1}$. 
	Suppose there exists an isomorphism $h:\mathscr{N}/\mathscr{N}^1\xrightarrow{\sim} \mathscr{M}/\mathscr{M}^1$ of convergent $F$-isocrystals between the maximal slope quotients. 
	Then there exists a unique isomorphism $g^{\dagger}:\mathscr{N}^{\dagger}\xrightarrow{\sim} \mathscr{M}^{\dagger}$ of overconvergent $F$-isocrystals, which is compatible with $h$ as morphisms of convergent $F$-isocrystals.  
	\label{Tsuzuki thm}
\end{theorem}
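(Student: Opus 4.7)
The plan is to rephrase the statement as one about Frobenius-equivariant global sections of the internal Hom $\mathscr{E}^{\dagger} := \FHom(\mathscr{N}^{\dagger},\mathscr{M}^{\dagger})$ and then reduce existence of the lift to a dual form of Kedlaya's minimal slope statement via Poincaré duality in rigid cohomology. A morphism $g^{\dagger}:\mathscr{N}^{\dagger}\to\mathscr{M}^{\dagger}$ is precisely a Frobenius-equivariant global section of $\mathscr{E}^{\dagger}$; because $\mathscr{M}^{\dagger},\mathscr{N}^{\dagger}$ are irreducible, any non-zero such section is an isomorphism, and if it extends $h$ then it is non-zero. Uniqueness is immediate: the difference of two lifts of $h$ is a morphism of irreducibles inducing zero on maximal slope quotients; since any non-zero morphism between irreducibles would be an isomorphism (hence also an isomorphism on maximal slope quotients), this forces the difference to vanish.

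Existence is the substantive part. First, lift $h$ from the level of $F$-isocrystals to a Frobenius-equivariant \emph{convergent} global section $\tilde{h}\in H^0_F(X,\mathscr{E})$. This is possible because the slopes $s^0>s^1>\cdots$ of the filtration on $\mathscr{E}$ are strictly separated: the kernel $\mathscr{E}^1$ has slopes strictly below $s^0$, so $\mathrm{Hom}_F$ and $\mathrm{Ext}^1_F$ from the isoclinic quotient into $\mathscr{E}^1$ vanish on closed fibers, and a careful spreading-out argument (using Dieudonn\'e--Manin at closed points and constancy of Newton polygons on a dense open) produces the desired convergent lift. The crux of the theorem is then to promote $\tilde{h}$ to a section of $\mathscr{E}^{\dagger}$ via the inclusion $H^0_F(X,\mathscr{E}^{\dagger})\hookrightarrow H^0_F(X,\mathscr{E})$ given by Kedlaya's full faithfulness.

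This last promotion is where duality intervenes: set $\mathscr{F}^{\dagger}:=(\mathscr{E}^{\dagger})^{\vee}$; the slope filtration of the convergent $\mathscr{F}$ is dual to that of $\mathscr{E}$, so the maximal slope quotient $\mathscr{E}/\mathscr{E}^1$ corresponds, after shifting slopes to make $s^0=0$, to the unit-root (minimal slope) subobject $\mathscr{F}_1\subset\mathscr{F}$. Poincar\'e duality for rigid cohomology translates the Frobenius-equivariant section $h$ of $\mathscr{E}/\mathscr{E}^1$ into a Frobenius-equivariant morphism $\mathscr{F}_1\to\mathcal{O}_X$ of convergent $F$-isocrystals. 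Kedlaya's minimal slope conjecture, proved by Tsuzuki in the form needed here, asserts exactly that such a morphism at the minimal-slope level is induced by an overconvergent morphism $\mathscr{F}^{\dagger}\to\mathcal{O}_X^{\dagger}$, and dualizing recovers the sought-for overconvergent lift of $h$.

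The main obstacle is the Kedlaya-type descent statement itself. The approach, following \cite{Tsu19}, is local-to-global: at each boundary point $x\in\overline{X}\setminus X$, apply the $p$-adic local monodromy theorem (Andr\'e--Mebkhout--Kedlaya) and the Christol--Mebkhout slope decomposition over the Robba ring $\mathcal{R}_x$ to isolate the unit-root sub of $\mathscr{F}^{\dagger}|_{\mathcal{R}_x}$, then show that Frobenius-equivariant morphisms out of the unit-root sub on the convergent side descend to overconvergent morphisms by an iterated Frobenius-pullback argument that exploits the strict positivity of the gap between the minimal slope and the next slope. The global step uses irreducibility of $\mathscr{M}^{\dagger}$ and $\mathscr{N}^{\dagger}$ to ensure that the only obstructions to overconvergence at the cusps already manifest as obstructions to the existence of a non-zero morphism, which are absent by hypothesis on $h$.
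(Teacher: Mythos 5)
The paper does not prove this theorem at all: it is imported as a black box from Tsuzuki, exactly as the bracketed attribution ``[\cite{Tsu19} theorem 1.3]'' and the surrounding prose (``recently proved by Tsuzuki \cite{Tsu19} \dots that we briefly recall in the following'') make clear. So there is no ``paper's own proof'' to compare against --- your job here, if anything, is just to record the statement, not to reprove it. Your write-up is a sketch of how Tsuzuki's result might be obtained, which is an entirely different task.

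As an independent proof sketch, your proposal also has a significant gap that makes it not really a proof. The heart of your argument is the sentence ``Kedlaya's minimal slope conjecture, proved by Tsuzuki in the form needed here, asserts exactly that such a morphism at the minimal-slope level is induced by an overconvergent morphism.'' In other words, after manipulating with internal Hom and duality, you cite Tsuzuki for the essential content --- so the argument is a reduction of the dual form to the primal form, not a proof. That reduction is fine as a heuristic, but you gloss over two nontrivial points. First, the step ``lift $h$ from the maximal slope quotients to a Frobenius-equivariant \emph{convergent} global section $\tilde{h}\in H^0_F(X,\mathscr{E})$'' is not automatic: the relevant obstruction is a class in an appropriate $\Ext^1_F$ over the whole curve, and its vanishing is not simply a consequence of Dieudonn\'e--Manin at closed points plus spreading out; moreover, Tsuzuki's theorem as stated does not presuppose such a convergent lift --- it produces the overconvergent morphism directly from the data at the top graded piece. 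Second, the passage via ``Poincar\'e duality for rigid cohomology'' is a red herring; what you actually use is the elementary linear-algebra duality that identifies a map out of the maximal-slope quotient of $\mathscr{E}$ with a map out of the minimal-slope subobject of $\mathscr{E}^{\vee}$, and invoking Poincar\'e duality of cohomology groups adds confusion without doing any work. Your uniqueness argument, by contrast, is correct and is the easy part.
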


\begin{secnumber} 
	Following Dwork's strategy (\cite{Dw68} \S~1-3), we study the maximal slope quotients of $\Be_{2n+1}^{\dagger}$ and of $\Be_{\SO_{2n+1},\Std}^{\dagger}$ in terms of their unique solutions at $0$. 

	In the following, we assume $k=\mathbb{F}_p$. 
	We first recall Dwork's congruences and show a refinement of his result in the $2$-adic case. 
	Consider for every $i\ge 0$, a map $B^{(i)}(-):\mathbb{Z}_{\ge 0}\to K^{\times}$ and the following congruence relation for $0\le a<p$ and $n,m,s \in \mathbb{Z}_{\ge 0}$: 
\begin{itemize}
	\item[(a)] $B^{(i)}(0)$ is a $p$-adic unit for all $i\ge 0$,
	
	\item[(b)] $ \displaystyle \frac{B^{(i)}(a+np)}{B^{(i+1)}(n)} \in R$ for all $i\ge 0$, 
	\item[(c)] $  \displaystyle \frac{B^{(i)}(a+np+mp^{s+1})}{B^{(i+1)}(n+mp^{s})} 
		\equiv \frac{B^{(i)}(a+np)}{B^{(i+1)}(n)} \mod p^{s+1}$ for all $i\ge 0$.
	\item[(c')] When $p=2$, $  \displaystyle \frac{B^{(i)}(a+n2+m2^{s+1})}{B^{(i+1)}(n+m2^{s})} 
		\equiv u(i,s,m) \frac{B^{(i)}(a+n2)}{B^{(i+1)}(n)} \mod 2^{s+1}$ for all $i\ge 0$, where $u(i,s,m)=1$ if $s\neq1$ and $u(i,1,m)=1$ or $-1$ depending on $i$ and $m$. 
\end{itemize}
If conditions (a-c) (or (a,b,c')) are satisfied, then $B^{(i)}(n)\in R$ for all $i,n\ge 0$. We set 
\begin{eqnarray*}
	&F^{(i)}(x)=\sum_{j=0}^{\infty} B^{(i)}(j) x^j  \quad \in K \llbracket x \rrbracket, &\\
	&F^{(i)}_{m,s}(x)=\sum_{j=mp^s}^{(m+1)p^{s}-1} B^{(i)}(j) x^j\quad \in K[x],& \quad s\ge 0. 
\end{eqnarray*}
We write $F^{(i)}_{0,s}$ by $F^{(i)}_{s}$ for simplicity. 
\end{secnumber}
\begin{theorem} \label{thm Dwork congruence}
	\textnormal{(i) [\cite{Dw68} theorem 2]} If conditions (a-c) are satisfied, then
	\begin{equation}
		F^{(0)}(x)F^{(1)}_{m,s}(x^{p})\equiv F^{(0)}_{m,s+1}(x) F^{(1)}(x^p) \quad \mod p^{s+1} B^{(s+1)}(m) \llbracket x \rrbracket.
		\label{Dwork congruence thm}
	\end{equation}

	\textnormal{(i')} If conditions (a,b,c') are satisfied (in particular $p=2$), then 
	\begin{equation}
		F^{(0)}(x)F^{(1)}_{m,s}(x^{2})\equiv F^{(0)}_{m,s+1}(x) F^{(1)}(x^2) \quad \mod 2^{s}B^{(s+1)}(m) \llbracket x \rrbracket.
		\label{Dwork congruence modified}
	\end{equation}

	\textnormal{(ii) [\cite{Dw68} theorem 3]} 
	Under the assumption of (i) or (i') and suppose moreover that
	\begin{itemize}
		\item[(d)] $B^{(i)}(0)=1$ for $i\ge 0$.
		\item[(e)] $B^{(i+r)}=B^{(i)}$ for all $i\ge 0$ and some fixed $r\ge 1$.
	\end{itemize}
	Let $U$ be the open subscheme of $\mathbb{A}^1_{k}$ defined by 
	\begin{equation} \label{open locus U}
		F_{1}^{(i)}(x)\neq 0,\quad \textnormal{for } i=0,1,\cdots,r-1. 
	\end{equation}
	Then the limit 
	\begin{equation} \label{limit def f(x)}
		f(x)=\lim_{s\to \infty} F^{(0)}_{s+1}(x)/F^{(1)}_{s}(x^p)
	\end{equation}
	defines a global function on the formal open subscheme $\UU$ of $\widehat{\mathbb{A}}^{1}_R$ associated to $U$, which takes $p$-adic unit value at each rigid point of $\UU^{\rig}$. 
\end{theorem}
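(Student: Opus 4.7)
The plan is to establish (i) and (i') jointly by induction on $s$, following Dwork's original strategy from \cite{Dw68} \S~2, and then to deduce (ii) by specializing to $m=0$, normalizing via condition (d), and closing up with the periodicity (e).

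For the base case $s=0$ of (i), I would observe $F^{(1)}_{m,0}(x)=B^{(1)}(m)x^m$ and $F^{(0)}_{m,1}(x)=\sum_{a=0}^{p-1}B^{(0)}(a+mp)x^{a+mp}$, and then compare coefficients of $x^{N+mp}$ for each $N\ge 0$. Writing $N=a+pj$ uniquely with $0\le a<p$, condition (b) allows us to write $B^{(0)}(a+pj)=B^{(1)}(j)\,c_{a,j}$ and $B^{(0)}(a+mp)=B^{(1)}(m)\,c_{a,m}$ with $c_{a,\bullet}\in R$; condition (c) at $s=0$ then yields $c_{a,j}\equiv c_{a,m}\pmod{p}$, which gives the required congruence after multiplication by $B^{(1)}(m)B^{(1)}(j)$. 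For the inductive step from $s$ to $s+1$, I would partition the block $[mp^{s+2},(m+1)p^{s+2})$ into $p$ sub-blocks indexed by residues modulo $p$ and reduce the level-$(s+1)$ statement to the level-$s$ statement combined with condition (c) at level $s+1$; this is a standard bookkeeping argument that gains exactly one additional power of $p$ at each step.

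For (i'), the same induction carries over verbatim except at the single step $s=1$, where condition (c') introduces a sign $u(i,1,m)\in\{\pm 1\}\subset R^\times$. Since this sign is a unit in $R$, it does not spoil any $p$-adic integrality, but the asymmetry it creates between the two sides at the $s=1$ stage cuts the gain of one factor of $2$ at that particular level. Because the signs at levels $s=0$ and $s\ge 2$ are trivial, no further loss occurs: the conclusion is weaker than (i) by exactly one power of $2$, yielding $\pmod{2^{s}B^{(s+1)}(m)}$ rather than $\pmod{2^{s+1}B^{(s+1)}(m)}$.

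For part (ii), setting $m=0$ in the congruence from (i) or (i') and invoking (d) (so that $B^{(s+1)}(0)=1$) yields for each $i$
\[
F^{(i)}(x)F^{(i+1)}_s(x^p)\equiv F^{(i)}_{s+1}(x)F^{(i+1)}(x^p)\pmod{p^{c(s)}K\llbracket x\rrbracket},
\]
with $c(s)=s+1$ (resp.\ $s$) in case (i) (resp.\ (i')). Combining successive such congruences in $K\llbracket x\rrbracket$ implies
\[
F^{(i)}_{s+1}(x)F^{(i+1)}_{s+1}(x^p)-F^{(i)}_{s+2}(x)F^{(i+1)}_s(x^p)\in p^{c(s)}K\llbracket x\rrbracket.
\]
On the open locus $U$ of \eqref{open locus U} the first-layer truncations $F^{(i)}_1$ are non-vanishing, so on the formal open $\mathfrak{U}\subset \widehat{\mathbb{A}}^1_R$ the denominators $F^{(i+1)}_s(x^{p^{i+1}})$ are invertible at every rigid point of $\mathfrak{U}^{\rig}$, with norms uniformly bounded below. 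The displayed congruence then forces $F^{(0)}_{s+1}(x)/F^{(1)}_s(x^p)$ to be a $p$-adic Cauchy sequence in the ring of rigid analytic functions on $\mathfrak{U}^{\rig}$, and the limit $f(x)$ is a global function there; periodicity (e) is used to see that only finitely many truncations $F^{(i)}_1$ ($0\le i<r$) need to be controlled, so that $\mathfrak{U}$ is well defined. The unit property at rigid points follows because (d) gives constant term $1$ for each $F^{(i)}_{s+1}$, whence every approximant is a unit at every rigid point of $\mathfrak{U}^{\rig}$, and a $p$-adic limit of units of uniformly bounded valuation remains a unit. The main obstacle is the verification of (i'): one must propagate the sign $u(i,1,m)$ through the inductive step and confirm that the loss of a factor of $2$ occurs exactly once rather than accumulating, which amounts to a delicate compatibility check between conditions (b) and (c') at the transitions $s=0\to1$ and $s=1\to 2$.
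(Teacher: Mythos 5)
Your description of parts (i) and (ii) is essentially Dwork's original strategy and matches the paper in spirit, but for part (i') — which is the new ingredient the paper needs — your proposal names the problem without solving it, and in one place your framing of the issue is actively misleading.

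You claim the induction from Dwork's proof of (i) ``carries over verbatim except at the single step $s=1$,'' with the loss of one factor of $2$ ``occurring exactly once rather than accumulating.'' But the sign $u(i,s,m)$ in condition (c') is attached to the \emph{internal} parameter of (c'), not to the outer induction index of the theorem. In the induction the paper actually runs, one iterates a family of statements $\beta_{t,s}$ over $t=0,\dots,s$, and at each step $\beta_{t,s}\Rightarrow\beta_{t+1,s}$ condition (c') is invoked with internal parameter $s-t-1$. Thus for \emph{every} $s\ge 2$ there is a step, namely $t=s-2$, at which a sign $u(\cdot,1,\cdot)=\pm 1$ appears; the sign is not confined to ``the $s=1$ stage.'' To absorb these signs one has to carry a bookkeeping factor $v(s,t,m)\in\{\pm 1\}$ on the left side of $\beta_{t,s}$ throughout the induction and verify that the sign disappears when one finally sums over $m$ (where the paper uses the identity $\sum_{m=0}^{T}H_a(m,s,N)=0$ for large $T$ to close the loop). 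Your proposal says ``one must propagate the sign $u(i,1,m)$ through the inductive step and confirm that the loss of a factor of $2$ occurs exactly once'' — this is precisely the content of the proof, but you leave it as an unresolved ``delicate compatibility check.'' Nothing in your write-up shows that the sign does not interfere with the $2$-integrality estimates for the error terms $Y_{i,\mu}$, nor that the weakened congruence $\pmod{2^s}$ coming from (c') (after forgetting the sign) is exactly what one needs to make the induction hypothesis $H_a(m,u,N)\equiv 0\pmod{2^uB^{(u+1)}(m)}$ self-sustaining.

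Concretely, the missing ingredients are: (1) the reduction of \eqref{Dwork congruence modified} to the divisibility $H_a(m,s,N)\equiv 0\pmod{2^sB^{(s+1)}(m)}$, where $H_a(m,s,N)=\sum_{j=m2^s}^{(m+1)2^s-1}U_a(j,N)$ and $U_a(j,N)=A(a+2(N-j))B(j)-B(N-j)A(a+2j)$ with $A=B^{(0)}$, $B=B^{(1)}$; (2) the auxiliary chain of estimates $\beta_{t,s}$ carrying an explicit sign $v(s,t,m)$, proved by induction on $t$ using condition (c') in full (not the sign-forgetting version \eqref{condition c weak}, except in the base case $\beta_{0,s}$); (3) the error control at each step, where the term $Y_{i,\mu}$ is bounded using the outer induction hypothesis $\alpha_s$ together with the $p$-adic divisibility $B^{(t)}(i+m2^s)\equiv 0\pmod{B^{(s+t)}(m)}$; and (4) the closing argument via $\gamma_N$ and the identity $\sum_m H_a(m,s,N)=0$. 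Without these, the proof of (i') is not established, and consequently the application to $\Be_{\SO_{2n+1},\Std}^{\dagger}$ in the $p=2$ case (corollary \ref{functoriality p=2} and theorem \ref{thm geometric mono}(ii)) is not supported.
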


We prove assertion (i') in the end \eqref{proof congruence}. 
We briefly explain Dwork's result (ii) in the language of formal schemes. 
	The condition \eqref{open locus U} implies that $F_s^{(i)}\neq 0$ on $U$ (cf. \cite{Dw68} 3.4). 
	For $s\ge 1$, the congruences \eqref{Dwork congruence thm} and \eqref{Dwork congruence modified} imply that
	\begin{displaymath}
		F_{s+1}^{(0)}(x)/ F_s^{(1)}(x^p)= F_s^{(0)}(x)/F_{s-1}^{(1)}(x^p) \quad \in \Gamma(\UU,\mathscr{O}_{\UU}/p^{s-1}\mathscr{O}_{\UU}).
	\end{displaymath}
	This allows us to use \eqref{limit def f(x)} to define a global function $f$ of $\mathscr{O}_{\UU}$.

\begin{secnumber} \label{function congruence}
	Let $F(x)=\sum_{j\ge 0} B(j) x^{j}$ be a formal power series in $R\llbracket x \rrbracket$.
	We say \textit{$F$ satisfies Dwork's congruences} if by setting $B^{(i)}(j)=B(j)$ for every $i\ge 0$, conditions of theorem \ref{thm Dwork congruence}(ii) are satisfied.
	
	We take such a function $F$ and then we obtain a function $f\in \Gamma(\UU,\mathscr{O}_{\UU})$ coinciding with $F(x)/F(x^p)$ in $K\{x\}$ \eqref{functions on open disc} (i.e. the open unit disc).
	Moreover, by (\cite{Dw68} lemma 3.4(ii)), there exists a function $\eta\in \Gamma(\UU,\mathscr{O}_{\UU})$ coinciding with $F'(x)/F(x)$ in $K\{x\}$ defined by
	\begin{displaymath}
		\eta(x)\equiv F'_{s+1}(x)/F_{s+1}(x) \mod p^{s}.
	\end{displaymath}
	The functions $f(x)$ and $\eta(x)$ satisfy a differential equation:
	\begin{displaymath}
		\frac{f'(x)}{f(x)}+px^{p-1}\eta(x^{p})=\eta(x).
	\end{displaymath}
	Note that $f(0)=F(0)/F(0)=1$. Then we deduce that the following corollary. 
\end{secnumber}

\begin{coro}
	The connection $d- \eta$ on the trivial bundle $\mathscr{O}_{\UU^{\rig}}$ and the function $f$ form a unit-root convergent $F$-isocrystal $\mathscr{E}_F$ on $U/K$, whose Frobenius eigenvalue at $0$ is $1$.
\end{coro}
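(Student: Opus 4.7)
The plan is a direct verification that the data $(f,\eta)$ produced in \ref{function congruence} satisfies the definition of a unit-root convergent $F$-isocrystal; most of the analytic content has already been distilled into theorem~\ref{thm Dwork congruence}(ii) and the preceding discussion.

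First I would handle the underlying connection. By theorem~\ref{thm Dwork congruence}(ii) and the construction recalled in \ref{function congruence}, both $f$ and $\eta$ lie in $\Gamma(\UU,\mathscr{O}_{\UU})$, so in particular they are bounded by $1$ on $\UU^{\rig}$. The integrality of $\eta$ implies that the connection $\nabla=d-\eta\,dx$ on the trivial bundle is convergent in the sense of \ref{overcon isoc}: around any Teichm\"uller point $\widetilde a\in \UU^{\rig}$, the unique horizontal section $Y(x)=\exp\!\int_{\widetilde a}^{x}\eta(t)\,dt$ converges on the whole residue disc of radius $1$. This gives the convergent isocrystal $\mathscr{E}_F=(\mathscr{O}_{\UU^{\rig}},\nabla)$ on $U/K$.

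Next I would install the Frobenius structure and check unit-rootness. Let $F_U:\UU^{\rig}\to\UU^{\rig}$ denote the lift of absolute Frobenius given by $x\mapsto x^p$. Viewing $f$ as a candidate morphism
\[
   \varphi:=f:\;F_U^{*}\mathscr{E}_F\longrightarrow\mathscr{E}_F,
\]
horizontality of $\varphi$ amounts to the identity $f'/f=\eta(x)-p x^{p-1}\eta(x^p)$, which is exactly the relation recorded in \ref{function congruence}. By theorem~\ref{thm Dwork congruence}(ii) the function $f$ takes $p$-adic unit values at every rigid point of $\UU^{\rig}$, so it is a global unit of $\mathscr{O}_{\UU^{\rig}}$; hence $\varphi$ is an isomorphism of convergent isocrystals, and $\mathscr{E}_F$ is unit-root.

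Finally, evaluating $f(x)=\lim_{s} F^{(0)}_{s+1}(x)/F^{(1)}_s(x^p)$ at $x=0$ gives $f(0)=B^{(0)}(0)/B^{(1)}(0)=1$ by conditions (a) and (d), so the Frobenius acts by $1$ on the fiber of $\mathscr{E}_F$ at $0$. There is no genuine obstacle in the argument; the only step meriting any care is the verification that $\nabla$ is convergent, which reduces immediately to the integrality of $\eta$ supplied by \ref{function congruence}.
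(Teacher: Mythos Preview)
Your proof is correct and follows the same approach as the paper. The paper does not give an explicit proof of this corollary: the sentence immediately preceding it reads ``Note that $f(0)=F(0)/F(0)=1$. Then we deduce that the following corollary,'' so all of the content is already contained in \ref{thm Dwork congruence}(ii) and \ref{function congruence}, and you have simply spelled out the deduction (convergence of $\nabla$ from integrality of $\eta$, horizontality of $\varphi=f$ from the differential equation, invertibility of $\varphi$ from the unit-value statement, and $f(0)=1$).
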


\begin{secnumber}
	Let $\mathscr{M}^{\dagger}$ be an overconvergent $F$-isocrystal on $\mathbb{G}_{m,k}$ over $K$ of rank $r$ whose underlying bundle is trivial and the connection is defined by a differential equation: 
	\begin{equation} \label{diff equation}
		P(\delta)=\delta^r+p_{r}\delta^{r-1}+\cdots + p_{1}=0, 
	\end{equation}
	where $\delta=x\frac{d}{dx}$, $p_i\in \Gamma(\widehat{\mathbb{A}}_R^1,\mathscr{O}_{\widehat{\mathbb{A}}_R^1})[\frac{1}{p}]$.
	We assume moreover that $\mathscr{M}^{\dagger}$ is \textit{unipotent at $0$ with a maximal unipotent local monodromy}. 
	Then $\mathscr{M}^{\dagger}$ extends to a log convergent $F$-isocrystal $\mathscr{M}^{\log}$ on $(\mathbb{A}^1,0)$ and its Frobenius slopes at $0$ are
	\begin{displaymath}
		s_1<s_2=s_1+1<\cdots< s_r=s_1+r-1.
	\end{displaymath}
	Note that $\mathscr{M}^{\dagger}$ is indecomposable in $\Fr\Iso^{\dagger}(\mathbb{G}_{m,k}/K)$ and so is $\mathscr{M}$ in $\Fr\Iso(\mathbb{G}_{m,k}/K)$. 
	Then by Drinfeld-Kedlaya's theorem on the generic Frobenius slopes \cite{DK17}, we deduce property (i):

	\textnormal{(i)} The generic Frobenius slopes (mult-)set is $\{s_1,\cdots,s_{r}\}$ with $s_i=s_1+i-1$. 

	\textnormal{(ii)} In view of \eqref{solution space}, the differential equation $D=0$ admits a unique solution at $0$:
	\begin{displaymath}
		F(x)=\sum_{n\ge 0} A(n) x^n \quad \in K\{x\},\quad \textnormal{ with $A(0)=1$}.
	\end{displaymath}
\end{secnumber}
\begin{prop} \label{max isocrystal}
	Suppose the function $F(x)$ satisfies Dwork's congruences \eqref{function congruence} and let $\mathscr{E}_F$ be the associated unit-root convergent $F$-isocrystal on $U \subset \A1_k$. Then

	\textnormal{(i)} There exists an epimorphism of log convergent isocrystals $\mathscr{M}^{\log}\to \mathscr{E}_F$ on $(U,0)$. 
	
	\textnormal{(ii)} 
	As log convergent isocrystals, $\mathscr{E}_F$ coincides with the maximal slope quotient $\mathscr{M}^{\log}/\mathscr{M}^{\log,1}$ of $\mathscr{M}^{\log}$ \eqref{dual slope filtration}. 
\end{prop}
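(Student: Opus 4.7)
The plan is to construct a nonzero morphism $\psi: \mathscr{M}^{\log}\to \mathscr{E}_F$ of log convergent $F$-isocrystals on $(U\cup\{0\},0)$ and then invoke slope considerations to conclude that $\psi$ must be surjective and kill $\mathscr{M}^{\log,1}$, yielding the desired isomorphism $\mathscr{M}^{\log}/\mathscr{M}^{\log,1}\xrightarrow{\sim}\mathscr{E}_F$ of part (ii); part (i) will be a byproduct.

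I first construct $\psi$ over the tube $]0[$ of the origin. Present $\mathscr{M}^{\log}$ near $0$ as the cyclic $\mathscr{D}$-module $\mathscr{O}\langle\delta\rangle/\mathscr{O}\langle\delta\rangle P(\delta)$ with canonical generator $1$. The hypothesis that $F(x)\in K\{x\}$ satisfies $P(\delta)F=0$ with $F(0)=1$ produces a horizontal $\mathscr{D}$-linear morphism $\mathscr{M}^{\log}|_{]0[}\to (K\{x\},d)$ sending $1\mapsto F$. On the other hand, the connection on $\mathscr{E}_F|_{]0[}$ is $d-(F'/F)\,dx$, which annihilates the section $Fe$ (with $e$ the canonical generator of $\mathscr{E}_F$), so multiplication by $F$ furnishes a horizontal isomorphism $(K\{x\},d)\xrightarrow{\sim}\mathscr{E}_F|_{]0[}$. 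Composing gives $\psi_0:\mathscr{M}^{\log}|_{]0[}\to\mathscr{E}_F|_{]0[}$ with $\psi_0(1)=Fe$. The Frobenius on $\mathscr{E}_F$ is multiplication by $f(x)=F(x)/F(x^p)$, so the Frobenius-equivariance of $\psi_0$ on the tube is exactly the defining identity $F(x)=f(x)\cdot F(x^p)$.

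Next I extend $\psi_0$ from the tube to a morphism $\psi$ of log convergent $F$-isocrystals on $(U\cup\{0\},0)$. Theorem \ref{thm Dwork congruence}(ii) produces $f$, and hence $\mathscr{E}_F$, as a globally-defined object on $\mathfrak{U}^{\rig}$, so the internal Hom $\mathcal{H}:=\FHom(\mathscr{M}^{\log},\mathscr{E}_F)$ is itself a log convergent $F$-isocrystal on $(U\cup\{0\},0)$ in which $\psi_0$ appears as a horizontal, Frobenius-fixed section over the tube. By Crew's theorem the unit-root slope-$s_r$ quotient of $\mathcal{H}$ is trivialized by a finite \'etale cover of $U$, and the tube-level fixed section $\psi_0$ rigidifies the corresponding character to a trivialization defined on all of $(U\cup\{0\},0)$, producing the global $\psi$.

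Finally, $\psi$ is nonzero since $\psi(1)=Fe\neq 0$ on the tube, and therefore surjective as $\mathscr{E}_F$ has rank one. Morphisms of convergent $F$-isocrystals between isoclinic objects of distinct slopes vanish, so $\psi$ kills $\mathscr{M}^{\log,1}$ (whose slopes are strictly smaller than the slope $s_r$ of $\mathscr{E}_F$) and factors through a surjection $\mathscr{M}^{\log}/\mathscr{M}^{\log,1}\twoheadrightarrow\mathscr{E}_F$ between two rank-one isoclinic pieces of slope $s_r$, which must be an isomorphism, establishing (ii); (i) is immediate. The hard step is the extension: upgrading the locally-defined Frobenius-equivariant horizontal $\psi_0$ on the tube to a genuine morphism of log convergent $F$-isocrystals on $(U\cup\{0\},0)$. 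Here the full strength of Dwork's congruences (the global unit-root-ness of $f$ on all of $\mathfrak{U}^{\rig}$, not only on the tube) is essential, combined with the rigidity of Frobenius-fixed horizontal data in the log convergent setting near the boundary $0$.
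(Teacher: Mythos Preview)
Your argument has a genuine gap at the slope step, and it propagates backwards to undermine the whole Frobenius-equivariant strategy. By the Corollary preceding the proposition, $\mathscr{E}_F$ is a \emph{unit-root} convergent $F$-isocrystal: its Frobenius $f$ takes $p$-adic unit values everywhere on $\mathfrak{U}^{\rig}$, so its slope is $0$, not $s_r$. Consequently your claim that ``$\psi$ kills $\mathscr{M}^{\log,1}$ (whose slopes are strictly smaller than the slope $s_r$ of $\mathscr{E}_F$)'' is based on a wrong identification of the slope. In fact, if none of the $s_i$ equal $0$ (as happens already for $\Be_{2n+1}^{\dagger}$, whose slopes are half-integers), there is \emph{no} nonzero morphism of convergent $F$-isocrystals $\mathscr{M}^{\log}\to\mathscr{E}_F$ at all, and your $\psi$ cannot exist as an $F$-isocrystal map. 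This is why the proposition is stated for log convergent \emph{isocrystals}, not $F$-isocrystals: the identification $\mathscr{E}_F\simeq \mathscr{M}^{\log}/\mathscr{M}^{\log,1}$ holds only after forgetting Frobenius. Relatedly, your verification of Frobenius-equivariance on the tube presumes the Frobenius on $\mathscr{M}^{\log}$ fixes the cyclic generator $1$, which you have no control over.

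The paper avoids Frobenius entirely in the construction. The crucial input from Dwork's congruences is that not only $f$ but also $\eta\equiv F'_{s+1}/F_{s+1}\pmod{p^s}$ extends to a global function in $A=\Gamma(\mathfrak{U},\mathscr{O}_{\mathfrak{U}})[1/p]$ agreeing with $F'/F$ on the tube. One then performs Euclidean division of $P(\delta)$ by $\delta-x\eta$ \emph{over the global ring} $A$: writing $P(\delta)=Q(\delta)(\delta-x\eta)+r$ with $r\in A$, evaluation at $F\in K\{x\}\supset A$ forces $r=0$. The factorisation $P(\delta)=Q(\delta)(\delta-x\eta)$ immediately yields a global short exact sequence $0\to\mathscr{N}\to\mathscr{M}^{\log}\to\mathscr{E}_F\to 0$ of log connections on $(U,0)$, giving (i) without any extension-from-the-tube step. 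For (ii), one writes the rank-one quotient $\mathscr{M}^{\log}/\mathscr{M}^{\log,1}$ as $(\mathscr{O}_{\mathfrak{U}^{\rig}},d-\lambda)$ (possible since $\Pic(\mathfrak{U}^{\rig})$ is trivial); maximal unipotent monodromy at $0$ forces this quotient to kill the unique solution $F$ on the tube, so $\lambda=F'/F=\eta$ there and hence globally by analytic continuation. The upshot is that the global extension you flag as the ``hard step'' is handled not by rigidity of Frobenius-fixed sections but by the algebraic factorisation of $P(\delta)$ over $A$.
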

\begin{proof}
	(i) We set $A=\Gamma(\UU,\mathscr{O}_{\UU})[\frac{1}{p}]$. 
	We claim that there exists a decomposition of differential operators:
	\begin{equation} \label{decomposition P by eta}
		P(\delta)=Q(\delta)(\delta-x\eta),\quad Q(\delta)=\delta^{r-1}+q_{r-1}\delta^{r-2}+\cdots+ q_{1}, \quad q_i\in A.
	\end{equation}
	Indeed, by the Euclidean algorithm (\cite{Ked10} 5.5.2), there exists $r\in A$ such that $P=Q(\delta-x\eta)+r$. 
	By evaluating the above identity at $F$ (in the ring $K\{x\}$ containing $A$), we obtain
	\begin{displaymath}
		P(\delta)(F)=0=Q(\delta)(\delta-x\eta)(F)+rF=rF.
	\end{displaymath}
	Then we deduce $r=0$ and \eqref{decomposition P by eta} follows. 

	Let $e_1,\cdots,e_r$ be a basis of $\mathscr{M}$ such that $\nabla_{\delta}(e_i)=e_{i+1}, 1\le i\le r-1$ and $\nabla_{\delta}(e_r)=-(p_r e_r+\cdots+p_1 e_1)$. 
	We consider a free $\mathscr{O}_{\UU^{\rig}}$-module with a log connection $\mathscr{N}$ with a basis $f_1,\cdots,f_{r-1}$ and the connection defined by $\nabla_{\delta}(f_i)=f_{i+1}$, $\nabla_{\delta}(f_{r-1})=-(q_{r-1} f_{r-1}+\cdots+q_{1}f_{1})$. 
	By \eqref{decomposition P by eta}, the morphism $f_1\mapsto e_2- x\eta e_1$ induces a horizontal monomorphism $\mathscr{N}\to \mathscr{M}^{\log}$ whose cokernel is isomorphic to $\mathscr{E}_F$.

	(ii) Note that $\Pic(\UU^{\rig})\simeq \Pic(U)$ (\cite{VdP} 3.7.4) is trivial. 
	Then the rank one convergent isocrystal $\mathscr{M}^{\log}/\mathscr{M}^{\log,1}$ can be represented as a connection $d-\lambda$ on the trivial bundle $\mathscr{O}_{\UU^{\rig}}$. 
	
	Since $\mathscr{M}^{\log}$ has a maximal unipotent at $0$, the rank one quotient of the restriction $\mathscr{M}^{\log}|_0$ of $\mathscr{M}^{\log}$ at the open unit disc around $0$ is unique \eqref{review MC Swan}. 
	In particular, $d-\lambda$ kills the unique solution $F$ of $P(\delta)=0$. By analytic continuation, we have $\lambda=\eta$ and the assertion follows. 
\end{proof}

\begin{rem}
	The unique solution $F(x)$ belongs to the ring $K\llbracket x \rrbracket_0 = R \llbracket x \rrbracket \otimes_R K$ of bounded functions on open unit disc, which is a subring of $K\{x\}$. 
	Assertion (ii) can be viewed as an example of Dwork-Chiarellotto-Tsuzuki conjecture on the comparison between the log-growth filtration (of solutions) and Frobenius slope filtration \cite{CT09}. 
	This conjecture was recently proved by Ohkubo \cite{Ohk18}. 
\end{rem}

\begin{secnumber} \textit{Proof of proposition \ref{Carlitz diff}.}	
	We set $k=\mathbb{F}_2$ and apply the above discussions to overconvergent $F$-isocrystals $\mathscr{M}^{\dagger}=\Be_{2n+1}^{\dagger}$ and $\mathscr{N}^{\dagger}=\Be_{\SO_{2n+1},\Std}^{\dagger}$ on $\mathbb{G}_{m,\mathbb{F}_2}/K$ \eqref{Be3 BeSO3}. 
	Their unique solutions at $0$ are:
\begin{displaymath}
	F(x)= \sum_{r\ge 0} \frac{(-2)^{(2n+1)r}}{(r!)^{2n+1}} x^r,\qquad G(x)=\sum_{r\ge 0} \frac{2^{(2n+1)r}(2r-1)!!}{(r!)^{2n+1}} x^r. 
\end{displaymath}

In the following lemma, we show that $F$ and $G$ satisfy Dwork's congruences and that the associated maximal slope quotients $\mathscr{E}_F$ and $\mathscr{E}_G$ \eqref{max isocrystal} are isomorphic. 
Then proposition \ref{Carlitz diff} follows from theorem \ref{Tsuzuki thm} and the following lemma. \hfill $\qed$ 
\end{secnumber}
\begin{lemma}
	\textnormal{(i)} The functions $F(x)$ and $G(x)$ satisfy Dwork's congruences \eqref{function congruence} and define unit-root convergent $F$-isocrystals $\mathscr{E}_F$ and $\mathscr{E}_G$ on $\mathbb{A}_k^1$ respectively. 

	\textnormal{(ii)} The function $F(x)/G(x)$ extends to a global function of $\widehat{\mathbb{A}}^1_R$ and induces an isomorphism $\mathscr{E}_G\xrightarrow{\sim} \mathscr{E}_F$. 
\end{lemma}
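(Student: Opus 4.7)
The plan is to verify both assertions by direct $2$-adic manipulation of the explicit power series, invoking the refined Dwork congruences of theorem~\ref{thm Dwork congruence}(i') and the unit-root $F$-isocrystal construction of~\ref{function congruence}. First I would establish integrality and identify the open locus $U$: using Legendre's formula $v_2(r!) = r - s_2(r)$ (with $s_2$ the $2$-adic digit sum) and the identity $(2r-1)!! = (2r)!/(2^r \cdot r!)$, one computes $v_2(A_F(r)) = v_2(A_G(r)) = (2n+1)s_2(r) \ge 0$, so $F, G \in R\llbracket x\rrbracket$. The first partial sums $F_1(x) = 1 - 2^{2n+1} x$ and $G_1(x) = 1 + 2^{2n+1} x$ both reduce to $1$ modulo $2$, whence the open subscheme $U$ of~\eqref{open locus U} equals $\mathbb{A}^1_k$ in each case.

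Next I would verify Dwork's congruences (a), (b), (c'). The ratios of consecutive coefficients $A_F(r+1)/A_F(r) = -2^{2n+1}/(r+1)^{2n+1}$ and $A_G(r+1)/A_G(r) = 2^{2n+1}(2r+1)/(r+1)^{2n+1}$ are simple rational functions of $r$, and Dwork's induction from~\cite{Dw68} \S 2 adapts directly to $G$ to yield the standard condition~(c), hence~(c') with trivial sign twist $u \equiv 1$. For $F$, the same argument applies up to tracking the sign factor $(-1)^{(2n+1)r} = (-1)^r$: the cross-ratio $A_F(a+np+mp^{s+1})/A_F(n+mp^s) \cdot A_F(n)/A_F(a+np)$ differs from its $G$-analogue by $(-1)^{a+n+m\cdot 2^s}$, which is trivial unless $s=1$, where it contributes precisely the sign twist $u(i,1,m) \in \{\pm 1\}$ permitted by~(c'). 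Applying theorem~\ref{thm Dwork congruence}(ii) then produces the unit-root convergent $F$-isocrystals $\mathscr{E}_F, \mathscr{E}_G$ on $\mathbb{A}^1_k/K$, settling~(i).

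For (ii), let $f, g \in \Gamma(\widehat{\mathbb{A}}^1_R, \mathscr{O}^\times)$ and $\eta_F, \eta_G$ be the data attached to $F, G$ in~\ref{function congruence}. An isomorphism $h: \mathscr{E}_G \xrightarrow{\sim} \mathscr{E}_F$ of rank-one unit-root $F$-isocrystals is equivalent to a unit $h \in \Gamma(\widehat{\mathbb{A}}^1_R, \mathscr{O}^\times) = R\langle x\rangle^\times$ satisfying $h'/h = \eta_F - \eta_G$ and the Frobenius compatibility $h(x) = (f/g)(x) \cdot h(x^2)$. On the open unit disc the unique such $h$ with $h(0)=1$ is $H(x) := F(x)/G(x)$, so it suffices to show that $H$ extends to an element of $R\langle x\rangle$. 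Since $v_2(A_F(r)), v_2(A_G(r)) \ge (2n+1)s_2(r) \ge 2n+1$ for $r \ge 1$, both $F$ and $G$ lie in $1 + 2^{2n+1} R\llbracket x\rrbracket$, so $F/G \in 1 + 2^{2n+1} R\llbracket x\rrbracket$ as a formal power series. Combining this with the functional equation $H(x) = (f/g)(x)H(x^2)$ (valid on the open disc) and the fact that $f/g \in R\langle x\rangle$ with $(f/g)(0) = 1$, an inductive argument on the recursion for the coefficients $c_r$ of $H$ in terms of those of $f/g$ will show that $v_2(c_r) \to \infty$ as $r \to \infty$. Hence $H \in R\langle x\rangle$; horizontality is the identity $H'/H = F'/F - G'/G = \eta_F - \eta_G$, and the Frobenius compatibility is the functional equation itself.

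The principal obstacle is the second half of part~(ii): proving Tate-algebra convergence $H \in R\langle x\rangle$ rather than merely $H \in R\llbracket x\rrbracket$. The uniform congruence $F \equiv G \equiv 1 \pmod{2^{2n+1}}$ alone gives only $v_2(c_r) \ge 2n+1$ uniformly in $r \ge 1$; to conclude $v_2(c_r) \to \infty$ one must exploit the sharper bound $v_2(A_F(r) - A_G(r)) \ge (2n+1)s_2(r) + v_2((-1)^r - (2r-1)!!)$ coming from the explicit formulas, combined with the decay of the coefficients of $f/g$ in $R\langle x\rangle$, and carefully propagate these estimates through the recursion. The sign-tracking required for the refined Dwork congruence~(c') applied to $F$ is the remaining technical input but is comparatively straightforward once theorem~\ref{thm Dwork congruence}(i') is in hand.
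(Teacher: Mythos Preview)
Your approach to part~(i) is essentially the paper's: verify conditions (a), (b), (d), (e) directly and check the sign-twisted condition~(c') for $F$ and the ordinary condition~(c) for $G$, then observe $F_1\equiv G_1\equiv 1\pmod 2$ so $U=\mathbb{A}^1_k$. (Your sign computation is slightly garbled---the cross-ratio sign coming from $(-1)^{(2n+1)r}=(-1)^r$ alone is $(-1)^{m\cdot 2^s}$, not $(-1)^{a+n+m\cdot 2^s}$---but this is a routine bookkeeping error.)

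Part~(ii) is where your proposal has a genuine gap, and where the paper takes a different and decisive route. You try to show $H=F/G\in R\langle x\rangle$ by iterating the functional equation $H(x)=(f/g)(x)\,H(x^2)$ and propagating the bound $F,G\in 1+2^{2n+1}R\llbracket x\rrbracket$. But this cannot work without extra input: writing $f/g=1+2^{c}\beta$ with $c=2n+1$ and $\beta\in R\langle x\rangle$, the infinite product $\prod_{i\ge 0}(f/g)(x^{2^i})$ has, modulo $2^{2c}$, the shape $1+2^c\sum_{i\ge 0}\beta(x^{2^i})$, and the coefficient of $x^{2^k}$ in $\sum_i\beta(x^{2^i})$ equals $\sum_{j=0}^{k}\beta_{2^j}$, which converges to a \emph{fixed} (generally nonzero) limit as $k\to\infty$ rather than to $0$. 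So $f/g\in R\langle x\rangle^\times$ together with the functional equation does not force $H\in R\langle x\rangle$; the ``sharper bounds'' you allude to would have to encode a delicate cancellation, and you have not indicated how.

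The paper sidesteps this entirely. It applies theorem~\ref{thm Dwork congruence}(ii) not with the constant sequence $B^{(i)}=A_F$ or $B^{(i)}=A_G$, but with the \emph{alternating} period-$2$ sequence $B^{(0)}=A_F$, $B^{(1)}=A_G$, $B^{(i+2)}=B^{(i)}$, verifying (a), (b), (c'), (d), (e) for this mixed system. The conclusion of that theorem is then precisely that $F(x)/G(x^2)=\lim_s F^{(0)}_{s+1}(x)/F^{(1)}_s(x^2)$ extends to a unit in $\Gamma(\widehat{\mathbb{A}}^1_R,\mathscr{O})$. Since $g(x)=G(x)/G(x^2)$ is already known to be such a unit from part~(i), the quotient $F(x)/G(x)=\bigl[F(x)/G(x^2)\bigr]\cdot g(x)^{-1}$ lies in $\Gamma(\widehat{\mathbb{A}}^1_R,\mathscr{O})^\times$ as well. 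This is the missing idea: the full generality of Dwork's congruence machinery (allowing a genuinely periodic, non-constant family $B^{(i)}$) is exactly what is needed to compare $F$ and $G$ directly, rather than trying to bootstrap from the individual Frobenius structures $f$ and $g$.
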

\begin{proof}
	(i) Conditions (a,b,d,e) are easy to verified. The coefficients of $F(x)$ (resp. $G(x)$) satisfy condition (c') (resp. (c)), that is
	\begin{eqnarray*}
		\frac{(-2)^{(2n+1)(a+\ell 2+m2^{s+1})}/((a+\ell 2+m2^{s+1})!)^{2n+1}}{(-2)^{(2n+1)(\ell +m2^{s})}/((\ell +m2^{s})!)^{2n+1}} \equiv u(s,m) 
		\frac{ (-2)^{(2n+1)(a+\ell 2)}/ ((a+\ell 2)!)^{2n+1}}{(-2)^{(2n+1)\ell }/ (\ell !)^{2n+1}} \mod 2^{s+1}, 
	\end{eqnarray*}
	where $u(1,m)=(-1)^m$ and $u(s,m)=1$ if $s\neq 1$, and 
	\begin{eqnarray*}
		\frac{(2(a+\ell 2+m2^{s+1})-1)!! 2^{(2n+1)(a+\ell 2+m2^{s+1})}/((a+\ell 2+m2^{s+1})!)^{2n+1}}{(2(\ell +m2^{s})-1)!!2^{(2n+1)(\ell +m2^{s})}/((\ell +m2^{s})!)^{2n+1}} \equiv  \\
		\frac{(2(a+\ell 2)-1)!!2^{(2n+1)(a+\ell 2)}/ ((a+\ell 2)!)^{2n+1}}{(2\ell -1)!!2^{(2n+1)\ell }/ (\ell !)^{2n+1}} \mod 2^{s+1}. 	
	\end{eqnarray*}
	Since $F_1(x)\equiv G_1(x)\equiv 1\mod 2$, the $F$-isocrystals $\mathscr{E}_{F},\mathscr{E}_G$ are defined over $\mathbb{A}_k^1$. 

	(ii) We set $B^{(0)}(r)=\frac{(-2)^{(2n+1)r}}{(r!)^{2n+1}}$ and $B^{(1)}(r)=\frac{2^{(2n+1)r}(2r-1)!!}{(r!)^{2n+1}}$ and $B^{(i+2)}=B^{(i)}$. 
	Then these sequences satisfy conditions (a,b,c',d,e). For condition (c'), the constants $u(i,1,m)$ are given by  
	\begin{displaymath}
		u(0,1,m)=1, \quad u(1,1,m)=(-1)^m ,\quad u(i+2,1,m)=u(i,1,m). 
	\end{displaymath}
	Since $F_1(x)\equiv G_1(x)\equiv 1\mod 2$, $F(x)/G(x^2)$ extends to a global function of $\mathscr{O}_{\widehat{\mathbb{A}}_R^1}$ by theorem \ref{thm Dwork congruence} and so is $F(x)/G(x)$. 
	Then the assertion follows. 
\end{proof}
\begin{secnumber} \label{proof congruence} \textit{Proof of theorem \ref{thm Dwork congruence}}(i'). 
	We prove assertion (i') by modifying the argument of (\cite{Dw68} theorem 2). 
	Note that condition (c') implies the following congruence relation:
	\begin{equation}
		\frac{B^{(i)}(a+n2+m2^{s+1})}{B^{(i+1)}(n+m2^{s})} \equiv \frac{B^{(i)}(a+n2)}{B^{(i+1)}(n)} \mod 2^{s}.
		\label{condition c weak}
	\end{equation}
	When $n<0$, we set $B^{(i)}(n)=0$. 
	We set $A=B^{(0)}$, $B=B^{(1)}$ and for $a\in \{0,1\}$, $j,N\in \mathbb{Z}$, we set
	\begin{eqnarray*}
	U_{a}(j, N) &=&A(a+2(N-j)) B(j)-B(N-j) A(a+2j), \\ 
	H_{a}(m, s, N) &=&   \sum_{j=m2^s}^{(m+1)2^s-1} U_{a}(j, N).
\end{eqnarray*}
	Then the assertion is equivalent to 
	\begin{equation}
		H_{a}(m,s,N)\equiv 0 \mod 2^{s} B^{(s+1)}(m),\quad \textnormal{for } s\ge 0, m\ge 0, N\ge 0.
		\label{equivalent congruenc}
	\end{equation}
	By condition (b), we have $A(a+2m)/B(m)\in R$ and hence 
	\begin{displaymath}
		U_a(m,N) \equiv 0 \mod B(m).
	\end{displaymath}
	Then equation \eqref{equivalent congruenc} for $s=0$ follows from the fact that $H_a(m,0,N)=U_a(m,N)$. 

	We now prove by induction on $s$. We write the induction hypothesis 
	\begin{displaymath}
		\alpha_s: H_a(m,u,N)\equiv 0 \mod 2^u B^{(u+1)}(m),\quad \textnormal{for } u\in [0,s), m,N\ge 0.
	\end{displaymath}
	We may assume $\alpha_{s}$ for fixed $s\ge 1$. The main step is to show for $0\le t\le s$ that 
	\begin{displaymath}
		\beta_{t,s}: v(s,t,m) H_a(m,s,N+m2^s)\equiv \sum_{j=0}^{2^{s-t}-1} B^{(t+1)}(j+m2^{s-t})H_a(j,t,N)/B^{(t+1)}(j)\mod 2^s B^{(s+1)}(m).
	\end{displaymath}
	where $v(s,t,m)=1$ or $-1$ depending on $s,t,m$. 
	
	We list some elementary facts (cf. \cite{Dw68} 2.5-2.7)
	\begin{eqnarray}
		&\sum_{m=0}^{T} H_{a}(m, s, N)=0 \quad \textnormal{ if } (T+1) 2^{s}>N & \label{2.5} \\
		&H_{a}(m, s, N)= H_{a}(2m, s-1, N) + H_{a}(1+2m, s-1, N) \quad \textnormal{ if } s\ge 1 & \label{2.6} \\
		&B^{(t)}\left(i+m 2^s\right) \equiv 0 \mod B^{(s+t)}(m) \quad \text { if } 0 \leq i \leq 2^s-1, s,t \geq 0. & \label{2.7} 
	\end{eqnarray}

	We first prove $\beta_{0,s}$. We have
	\begin{eqnarray}
	&H_{a}\left(m, s, N+m 2^{s}\right)=\sum_{j=0}^{2^{s}-1} U_{a}\left(j+m 2^{s}, N+m 2^{s}\right), & \nonumber \\
	&\qquad U_{a}\left(j+m 2^{s}, N+m 2^{s}\right)=A(a+2(N-j)) B(j+m 2^{s})-B(N-j) A\left(a+2 j+m 2^{s+1}\right). \label{2.8} &
	\end{eqnarray}
	By \eqref{condition c weak}, we have 
	\begin{displaymath}
		A\left(a+2 j+m 2^{s+1}\right)= A(a+2j)B(j+m2^s)/B(j)+X_j 2^s B(j+m 2^s),
	\end{displaymath}
	where $X_j\in R$. Then the right hand side of \eqref{2.8} is 
	\begin{displaymath}
		B\left(j+m 2^{s}\right)\biggl(U_{a}(j, N) / B(j)-2^{s} X_{j} B(N-j)\biggr).
	\end{displaymath}
	Since $U_{a}(j,N)=H_a(j,0,N)$, we obtain
	\begin{displaymath}
		H_{a}\left(m, s, N+m 2^{s}\right)=
		\sum_{j=0}^{2^{s}-1} B\left(j+m 2^{s}\right)H_{a}(j, 0, N) / B(j)-2^{s} \sum_{j=0}^{2^{s}-1} X_{j} B\left(j+m 2^{s}\right) B(N-j).
	\end{displaymath}
	Since $X_jB(N-j)\in R$, it follows from \eqref{2.7} ($B=B^{(1)}$) that the second sum is congruent to zero modulo $2^s B^{(s+1)}(m)$. This proves $\beta_{0,s}$ with $v(s,0,m)=1$. 

	With $s$ fixed, $s\ge 1$, $t$ fixed, $0\le t \le s-1$, we show that $\beta_{t,s}$ together with $\alpha_s$ imply $\beta_{t+1,s}$. 
	To do this we put $j=\mu+2i$ in the right side of $\beta_{t,s}$ and write it in the form 
	\begin{displaymath}
	\sum_{\mu=0}^{1} \sum_{i=0}^{2^{s-t-1}} B^{(t+1)}\left(\mu+ 2i+m 2^{s-i}\right) H_{a}(\mu+2 i, t, N) / B^{(t+1)}(\mu+2i).
	\end{displaymath}
	By condition (c'), we have, 
	\begin{eqnarray*}
	&&B^{(t+1)}\left(\mu+2i+m2^{s-t}\right) \\
	&&=
	u(t+1,s-t-1,m)\left(B^{(t+1)}(\mu+2 i) B^{(t+2)}\left(i+m2^{s-t-1}\right) / B^{(t+2)}(i)\right)+X_{i, \mu} 2^{s-t} B^{(t+2)}\left(i+m 2^{s-t-1}\right),
	\end{eqnarray*}
	where $X_{i,\mu}\in R$. Thus the general term in the above double sum is
	\begin{displaymath}
		u(t+1,s-t-1,m)\biggl(B^{(t+2)}(i+m2^{s-t-1})H_a(\mu+2i,t,N)/B^{(t+2)}(i)\biggr)+Y_{i,\mu},
	\end{displaymath}
	where the error term:
	\begin{displaymath}
	Y_{i, \mu}=X_{i, \mu} 2^{s-t} B^{(t+2)}\left(i+m 2^{s-t-1}\right) H_{a}(\mu+2 i, t, N) / B^{(t+1)}(\mu+2i).
	\end{displaymath}
	For this error term, since $t<s$, we can apply $\alpha_s$ to conclude that
	\begin{displaymath}
		Y_{i,\mu}\equiv 0 \mod B^{(t+2)}(i+m2^{s-t-1})2^s.
	\end{displaymath}
	Then we can use \eqref{2.7} to conclude that 
	\begin{displaymath}
		Y_{i,\mu}\equiv 0 \mod 2^s B^{(s+1)}(m).
	\end{displaymath}
	After modulo $2^s B^{(s+1)}(m)$, the right side of $\beta_{t,s}$ is equal to 
	\begin{displaymath}
	u(t+1,s-t-1,m)\sum_{\mu=0}^{1} \sum_{i=0}^{2^{s-t-1}-1} B^{(t+2)}\left(i+m2^{s-t-1}\right) H_{a}(\mu+2 i, t, N) / B^{(t+2)}(i). 
	\end{displaymath}
	By reversing the order of summation and using \eqref{2.6}, the above sum is the same as
	\begin{displaymath}
	u(t+1,s-t-1,m)\sum_{i=0}^{2^{s-t-1}-1} B^{(t+2)}\left(i+m2^{s-t-1}\right) H_{a}(i, t+1, N) / B^{(t+2)}(i),
	\end{displaymath}
	which proves $\beta_{t+1,s}$. In particular, we obtain $\beta_{s,s}$, which states
	\begin{equation}
		v(s,s,m) H_a(m,s,N+m2^s)\equiv B^{(s+1)}(m)H_a(0,s,N)/B^{(s+1)}(0)\mod 2^s B^{(s+1)}(m).		
		\label{beta ss}
	\end{equation}
	We now consider the statement (with $s$ fixed before)
	\begin{displaymath}
		\gamma_{N}:H_a(0,s,N)\equiv 0 \mod 2^s.
	\end{displaymath}
	We know that $\gamma_N$ is true for $N<0$. Let $N'$ (if it exists) be the minimal value of $N$ for which $\gamma_{N'}$ fails. For $m\ge 1$, since $B^{(s+1)}(0)$ is a unit, we have by \eqref{beta ss}
	\begin{eqnarray*}
		H_a(m,s,N')\equiv v(s,s,m) B^{(s+1)}(m)H_a(0,s,N'-m2^s)/B^{(s+1)}(0) \equiv 0 \mod 2^s. 
	\end{eqnarray*}
	Applying this to \eqref{2.5}, we obtain that
	\begin{displaymath}
		H_a(0,s,N')\equiv 0 \mod 2^s.
	\end{displaymath}
	Thus $\gamma_N$ is valid for all $N$ and equation \eqref{beta ss} implies $\alpha_{s+1}$. This proves assertion (i'). \hfill \qed 
\end{secnumber}

\end{document}